\documentclass[12pt, a4paper, abstracton, bibliography=totoc]{scrartcl}
\pdfoutput=1

\usepackage{amsmath, amsthm, amsfonts, amssymb}
\usepackage[utf8]{inputenc}
\usepackage[T1]{fontenc}

\usepackage{slashed} 
\usepackage[all, cmtip]{xy}
\usepackage{hyperref} 

\let\counterwithout\relax

\usepackage{chngcntr} 

\usepackage{graphicx} 
\usepackage[english]{babel}
\usepackage{microtype}
\usepackage{bm} 
\usepackage{mathtools} 

\newcommand{\IU}{\mathcal{U}^\ast_{-\infty}}

\newcommand{\IN}{\mathbb{N}}
\newcommand{\IZ}{\mathbb{Z}}

\newcommand{\IR}{\mathbb{R}}
\newcommand{\IC}{\mathbb{C}}
\newcommand{\IH}{\mathcal{H}}
\newcommand{\frakS}{\mathfrak{S}}
\newcommand{\injrad}{\operatorname{inj-rad}}
\newcommand{\Rm}{\operatorname{Rm}}

\newcommand{\LLip}{L\text{-}\operatorname{Lip}}
\newcommand{\IB}{\mathfrak{B}}
\newcommand{\IK}{\mathfrak{K}}
\newcommand{\frakD}{\mathfrak{D}}
\newcommand{\frakC}{\mathfrak{C}}

\newcommand{\supp}{\operatorname{supp}}
\newcommand{\diam}{\operatorname{diam}}
\newcommand{\Hom}{\operatorname{Hom}}
\newcommand{\Mat}{\operatorname{Mat}}

\newcommand{\id}{\operatorname{id}}
\newcommand{\kernel}{\operatorname{ker}}

\newcommand{\image}{\operatorname{im}}

\newcommand{\card}{\#}
\newcommand{\Vect}{\operatorname{Vect}}
\newcommand{\Idem}{\operatorname{Idem}}

\newcommand{\closure}{\operatorname{cl}}
\newcommand{\trace}{\operatorname{tr}}
\newcommand{\ch}{\operatorname{ch}}
\newcommand{\vol}{\operatorname{vol}}
\newcommand{\ind}{\operatorname{ind}}

\newcommand{\op}{\mathrm{op}}
\newcommand{\pt}{\mathrm{pt}}
\newcommand{\Symb}{\operatorname{Symb}}

\newcommand{\HPucont}{H \! P_\mathrm{cont}}
\newcommand{\HCucont}{H \! C_\mathrm{cont}}
\newcommand{\HHucont}{H \! H_\mathrm{cont}}

\newcommand{\HudR}{H^{u, \mathrm{dR}}}

\newcommand{\HbdR}{H_{b, \mathrm{dR}}}
\newcommand{\HudRco}{H_{u, \mathrm{dR}}}

\newcommand{\Cucont}{C_{\mathrm{cont}}}
\newcommand{\Clucont}{C_{\lambda, \mathrm{cont}}}
\newcommand{\Winftyone}{W^{\infty, 1}}

\newcommand{\ev}{\mathrm{ev}}
\newcommand{\odd}{\mathrm{odd}}
\newcommand{\UPsiDO}{\mathrm{U}\Psi\mathrm{DO}}

\newcommand{\Frechet}{Fr\'{e}chet }
\newcommand{\Poincare}{Poincar\'{e} }
\newcommand{\Folner}{F{\o}lner }
\newcommand{\Spakula}{\v{S}pakula }
\newcommand{\spinc}{spin$^c$ }

\newcommand*{\largecdot}{\raisebox{-0.25ex}{\scalebox{1.2}{$\cdot$}}}

\DeclareMathOperator{\hatotimes}{\hat{\otimes}}
\DeclareMathOperator{\barotimes}{\bar{\otimes}}

\newtheorem{thm}{Theorem}[section]
\newtheorem*{thm*}{Theorem}

\newtheorem*{roesthm*}{Roe's Index Theorem}

\newtheorem*{atiyahsingerthm*}{Atiyah--Singer Index Theorem}

\newtheorem*{thmlocaltwisted*}{Theorem~\ref{thm:local_thm_twisted}}
\newtheorem*{thmlocalpseudos*}{Theorem~\ref{thm:local_thm_pseudos}}
\newtheorem*{corpairingcompactlysupported*}{Corollary~\ref{cor:pairing_compactly_supported}}
\newtheorem*{thmexternalprodhomology*}{Theorem~\ref{thm:external_prod_homology}}
\newtheorem*{corpairingglobal*}{Corollary~\ref{cor:pairing_global}}
\newtheorem{cor}[thm]{Corollary}
\newtheorem*{cor*}{Corollary}
\newtheorem{lem}[thm]{Lemma}
\newtheorem{prop}[thm]{Proposition}
\newtheorem{conj}[thm]{Conjecture}
\newtheorem{question}[thm]{Question}
\theoremstyle{definition}
\newtheorem{rem}[thm]{Remark}
\newtheorem{example}[thm]{Example}
\newtheorem{examples}[thm]{Examples}
\newtheorem{defn}[thm]{Definition}

\numberwithin{equation}{section}

\counterwithout{footnote}{section}

\makeatletter
\def\blfootnote{\gdef\@thefnmark{}\@footnotetext}
\makeatother

\begin{document}

\title{Index theory of uniform pseudodifferential operators}
\author{Alexander Engel}
\date{}
\maketitle

\vspace*{-3\baselineskip}
\begin{center}
\footnotesize{
\textit{
Fakult\"{a}t f\"{u}r Mathematik\\
Universit\"{a}t Regensburg\\
93040 Regensburg, GERMANY\\
\href{mailto:alexander.engel@mathematik.uni-regensburg.de}{alexander.engel@mathematik.uni-regensburg.de}
}}
\end{center}

\vspace*{\baselineskip}
\begin{abstract}
\blfootnote{\textit{$2010$ Mathematics Subject Classification.} Primary:\ 58J20; Secondary:\ 19K56, 47G30.}
\blfootnote{\textit{Keywords and phrases.} Index theory, pseudodifferential operators, uniform $K$-homology.}
We generalize Roe's index theorem for graded generalized Dirac operators on amenable manifolds to multigraded elliptic uniform pseudodifferential operators.

This generalization will follow from a local index theorem that is valid on any manifold of bounded geometry. This local formula incorporates the uniform estimates that are present in the definition of our class of pseudodifferential operators.

We will revisit \v{S}pakula's uniform $K$-homology and show that multigraded elliptic uniform pseudodifferential operators naturally define classes in it. For this we will investigate uniform $K$-homology more closely, e.g., construct the external product and show invariance under weak homotopies. The latter will be used to refine and extend \v{S}pakula's results about the rough Baum--Connes assembly map.

We will identify the dual theory of uniform $K$-homology. We will give a simple definition of uniform $K$-theory for all metric spaces and in the case of manifolds of bounded geometry we will give an interpretation of it via vector bundles of bounded geometry. Using a version of Mayer--Vietoris induction that is adapted to our needs, we will prove \Poincare duality between uniform $K$-theory and uniform $K$-homology for \spinc manifolds of bounded geometry.

We will construct Chern characters from uniform $K$-theory to bounded de Rham cohomology and from uniform $K$-homology to uniform de Rham homology. Using the adapted Mayer--Vietoris induction we will also show that these Chern characters induce isomorphisms after a certain completion that also kills torsion.
\end{abstract}

\newpage
\tableofcontents

\section{Introduction}

Recall the following index theorem of Roe for amenable manifolds (with notation adapted to the one used in this article).

\begin{thm*}[{\cite[Theorem 8.2]{roe_index_1}}]
Let $M$ be a Riemannian manifold of bounded geometry and $D$ a generalized Dirac operator associated to a graded Dirac bundle $S$ of bounded geometry over $M$.

Let $(M_i)_i$ be a \Folner sequence\footnote{That is to say, for every $r > 0$ we have $\frac{\vol B_r(\partial M_i)}{\vol M_i} \stackrel{i \to \infty}\longrightarrow 0$. Manifolds admitting such a sequence are called \emph{amenable}.} for $M$, $\tau \in (\ell^\infty)^\ast$ a linear functional associated to a free ultrafilter on $\IN$, and $\theta$ the corresponding trace on the uniform Roe algebra of $M$.

Then we have
\[\theta(\mu_u(D)) = \tau \Big( \frac{1}{\vol M_i} \int_{M_i} \ind(D) \Big).\]
\end{thm*}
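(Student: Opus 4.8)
The strategy is to realise both sides as values of the heat supertrace $t \mapsto \operatorname{str}_\theta(e^{-tD^2})$, where $\operatorname{str}_\theta(A) := \theta(\epsilon A)$ and $\epsilon$ denotes the grading operator of $S$: the left-hand side will equal this quantity at an arbitrary fixed $t > 0$ by a McKean--Singer argument, while the right-hand side will be its $t \to 0$ limit by the local index theorem. Amenability will enter only to guarantee that $\theta$ is genuinely tracial on the operators involved.

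First I would collect the analytic input coming from bounded geometry. The parametrix construction together with elliptic estimates show that $e^{-tD^2}$ and $De^{-tD^2/2}$ are uniform smoothing operators with Gaussian off-diagonal decay --- smooth Schwartz kernels $k(x,y)$, of bounded geometry in $(x,y)$, with $|k(x,y)| \le C\, e^{-d(x,y)^2/(5t)}$ uniformly in $x,y$ --- and likewise that the operators arising in the functional-calculus description of $\mu_u(D)$ below are uniform smoothing with rapid off-diagonal decay; this class is closed under products. Truncating the kernel at a large radius exhibits each such operator as a uniform-norm limit of locally compact finite-propagation operators, hence as an element of the uniform Roe algebra $C^\ast_u(M)$, and on them the trace $\theta$ is computed by the diagonal average $\theta(A) = \tau\big(\tfrac{1}{\vol M_i}\int_{M_i}\operatorname{tr} k_A(x,x)\,dx\big)$. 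That $\theta$ is a trace on $C^\ast_u(M)$ --- so that $\operatorname{str}_\theta$ is a graded trace --- is the one point where amenability is used: for finite-propagation $A,B$ the diagonal contribution of the kernel of $AB - BA$ is supported within a fixed distance of $\partial M_i$, so the \Folner condition forces $\tfrac{1}{\vol M_i}\int_{M_i}\operatorname{tr} k_{AB-BA}(x,x)\,dx \to 0$, and the same estimate covers the operators with rapid decay above.

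Next I would identify the left-hand side analytically. Writing $D = \left(\begin{smallmatrix} 0 & D^-\\ D^+ & 0 \end{smallmatrix}\right)$ with respect to the grading and representing $\mu_u(D) \in K_0(C^\ast_u(M))$ through the functional calculus of $D$ --- as the class of the standard idempotent attached to a partial isometry for $D^+$ modulo smoothing operators --- one shows that $\theta$ of this class equals $\operatorname{str}_\theta(\varphi(D^2))$ for every rapidly decaying function $\varphi$ on $[0,\infty)$ with $\varphi(0) = 1$ and $\varphi(D^2)$ a uniform smoothing operator. Taking $\varphi(s) = e^{-ts}$ gives
\[
\theta(\mu_u(D)) = \operatorname{str}_\theta\!\big(e^{-tD^2}\big) = \tau\!\left(\frac{1}{\vol M_i}\int_{M_i}\operatorname{str} k_t(x,x)\,dx\right) \qquad \text{for every } t > 0,
\]
where $k_t$ is the kernel of $e^{-tD^2}$. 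I expect this identification --- reconciling the $K$-theoretic definition of $\mu_u(D)$ with an analytic supertrace while keeping every intermediate operator inside the domain on which $\theta$ is a genuine trace --- to be the main obstacle; it amounts to the Chern--Connes character computation of the index class, carried out with uniform estimates throughout. That the right-hand side is independent of $\varphi$, hence of $t$, is McKean--Singer: $\operatorname{str}_\theta(e^{-sD^2}) - \operatorname{str}_\theta(e^{-tD^2}) = -\int_t^s \operatorname{str}_\theta\!\big((De^{-uD^2/2})^2\big)\,du = 0$, because $De^{-uD^2/2}$ is odd and a graded trace annihilates squares of odd elements.

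Finally I would let $t \to 0$. By the local index theorem of Atiyah--Bott--Patodi the density $\operatorname{str} k_t(x,x)\,dx$ converges to $\ind(D)$, and since the relevant heat coefficients are universal polynomial expressions in the curvature of $S$ and its covariant derivatives --- all of which are uniformly bounded on a manifold of bounded geometry --- this convergence is uniform in $x \in M$, with an $O(t)$ error that is uniform in $x$. Hence $\tfrac{1}{\vol M_i}\int_{M_i}\operatorname{str} k_t(x,x)\,dx \to \tfrac{1}{\vol M_i}\int_{M_i}\ind(D)$ as $t \to 0$, uniformly in $i$, and since $\tau$ is a norm-one functional on $\ell^\infty$ it commutes with this uniform limit. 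Combining with the $t$-independence from the previous paragraph,
\[
\theta(\mu_u(D)) = \lim_{t \to 0}\,\tau\!\left(\frac{1}{\vol M_i}\int_{M_i}\operatorname{str} k_t(x,x)\,dx\right) = \tau\!\left(\frac{1}{\vol M_i}\int_{M_i}\ind(D)\right),
\]
which is the desired formula.
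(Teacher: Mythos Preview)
Your proof outline is correct and is essentially Roe's original argument from \cite{roe_index_1}: a McKean--Singer argument shows $\operatorname{str}_\theta(e^{-tD^2})$ is independent of $t$, the F{\o}lner condition makes $\theta$ a trace, and the uniform local index theorem on manifolds of bounded geometry identifies the $t\to 0$ limit with the topological side. The identification $\theta(\mu_u(D)) = \operatorname{str}_\theta(e^{-tD^2})$ that you flag as the main obstacle is exactly the content of Roe's construction of the analytic index class.

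The paper, however, does not reprove Roe's theorem by this route. It obtains the statement as the special case $P=D$, $u=[\IC]$ of its Corollary generalizing the theorem to arbitrary multigraded, elliptic, symmetric uniform pseudodifferential operators. The paper's argument is structural rather than heat-kernel: it builds uniform $K$-theory and uniform $K$-homology, proves a local index theorem of the form ``$(\alpha_\ast\circ\ch^\ast)[P]$ is Poincar\'e dual to $\ind(P)$'' via the Connes--Moscovici cyclic-cohomology calculations, and then shows that the index pairing $\langle\,\cdot\,,\,\cdot\,\rangle_\theta$ on $K$-groups is compatible with the Chern characters and with the de Rham pairing $\langle\,\cdot\,,\,\cdot\,\rangle_{(M_i)_i,\tau}$. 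Roe's formula then drops out by reading off the diagram.

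The trade-off is exactly what the paper spells out in its introduction: your heat-kernel approach is direct and self-contained but is tied to Dirac-type operators (one needs finite propagation of $e^{itD}$, or at least the Gaussian estimates, and a well-behaved short-time asymptotic expansion), so it does not extend to general uniform pseudodifferential operators. The paper's detour through uniform $K$-Poincar\'e duality and the Thom isomorphism lets it reduce the general pseudodifferential case to twisted Dirac operators, at the cost of a much heavier machinery.
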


Here $\ind(D)$ is the usual integrand for the topological index of $D$ in the Atiyah--Singer index formula, so the right hand side is topological in nature. On the left hand side of the formula we have the coarse index class $\mu_u(D) \in K_0(C_u^\ast(M))$ of $D$ in the $K$-theory of the uniform Roe algebra of $M$ evaluated under the trace $\theta$.  This is an analytic expression and may be computed as $\theta(\mu_u(D)) = \tau \Big( \frac{1}{\vol M_i} \int_{M_i} \trace_s k_{f(D)}(x,x) \ dx \Big)$, where $k_{f(D)}(x,y)$ is the integral kernel of the smoothing operator $f(D)$, where $f$ is an even Schwartz function with $f(0) = 1$.

In this article we will generalize this theorem to multigraded, elliptic, symmetric uniform pseudodifferential operators. So especially we also encompass Toeplitz operators since they are included in the ungraded case. This generalization will follow from a local index theorem that will hold on any manifold of bounded geometry, i.e., without an amenability assumption on $M$.

Let us state our local index theorem in the formulation using twisted Dirac operators associated to \spinc structures.

\begin{thmlocaltwisted*}
Let $M$ be an $m$-dimensional \spinc manifold of bounded geometry and without boundary. Denote the associated Dirac operator by $D$.

Then we have the following commutative diagram:
\[\xymatrix{
K^\ast_u(M) \ar[r]^-{\largecdot \cap [D]}_-{\cong} \ar[d]_-{\ch(\largecdot) \wedge \ind(D)} & K_{m-\ast}^u(M) \ar[d]^-{\alpha_\ast \circ \ch^\ast}\\
\HbdR^\ast(M) \ar[r]_-{\cong} & \HudR_{m-\ast}(M)}\]
where in the top row $\ast$ is either $0$ or $1$ and in the bottom row $\ast$ is either $\ev$ or $\odd$.
\end{thmlocaltwisted*}

Here $K_{m-\ast}^u(M)$ is the uniform $K$-homology of $M$ invented by \Spakula \cite{spakula_uniform_k_homology} and $K_u^\ast(M)$ is the corresponding uniform $K$-theory which we will define in Section \ref{sec:uniform_k_th}. The map $\largecdot \cap [D]$ is the cap product and that it is an isomorphism will be shown in Section \ref{sec:poincare_duality}. Moreover, $\HbdR^\ast(M)$ denotes the bounded de Rham cohomology of $M$ and $\ind(D)$ the topological index class of $D$ in there. Furthermore, $\HudR_{m-\ast}(M)$ is the uniform de Rham homology of $M$ to be defined in Section \ref{sec:u_de_Rham_currents} via Connes' cyclic cohomology, and that it is \Poincare dual to bounded de Rham cohomology is stated in Theorem \ref{thm:Poincare_de_Rham}. Finally, let us note that we will also prove in Section \ref{sec:chern_isos} that the Chern characters induce isomorphisms after a certain completion that also kills torsion, similar to the case of compact manifolds.

Using a series of steps as in Connes' and Moscovici's proof of \cite[Theorem 3.9]{connes_moscovici} we will generalize the above computation of the \Poincare dual of $(\alpha_\ast \circ \ch^\ast)([D]) \in \HudR_{m-\ast}(M)$ to symmetric and elliptic uniform pseudodifferential operators.

\begin{thmlocalpseudos*}
Let $M$ be an oriented Riemannian manifold of bounded geometry and without boundary, and $P$ be a symmetric and elliptic uniform pseudodifferential operator.

Then $\ind(P) \in \HbdR^\ast(M)$ is the \Poincare dual of $(\alpha_\ast \circ \ch^\ast)([P]) \in \HudR_\ast(M)$.
\end{thmlocalpseudos*}

Using the above local index theorem we will derive as a corollary the following local index formula:

\begin{corpairingcompactlysupported*}
Let $[\varphi] \in H_{c, \mathrm{dR}}^k(M)$ be a compactly supported cohomology class and define the analytic index $\ind_{[\varphi]}(P)$ as Connes--Moscovici \cite{connes_moscovici} for $P$ being a multigraded, symmetric, elliptic uniform pseudodifferential operator of positive order. Then we have
\[\ind_{[\varphi]}(P) = \int_M \ind(P) \wedge [\varphi]\]
and this pairing is continuous, i.e., $\int_M \ind(P) \wedge [\varphi] \le \|\ind(P) \|_\infty \cdot \| [\varphi] \|_1$, where $\| \largecdot \|_\infty$ denotes the sup-seminorm on $\HbdR^{m-k}(M)$ and $\| \largecdot \|_1$ the $L^1$-seminorm on $H_{c, \mathrm{dR}}^k(M)$.
\end{corpairingcompactlysupported*}

Note that the corollary reads basically the same as the local index formula of Connes and Moscovici \cite{connes_moscovici}. The fundamentally new thing in it is the continuity statement for which we need the uniformity assumption for $P$.

As a second corollary to the above local index theorem we will, as already written, derive the generalization of Roe's index theorem for amenable manifolds.

\begin{corpairingglobal*}
Let $M$ be a manifold of bounded geometry and without boundary, let $(M_i)_i$ be a \Folner sequence for $M$ and let $\tau \in (\ell^\infty)^\ast$ be a linear functional associated to a free ultrafilter on $\IN$. Denote the from the choice of \Folner sequence and functional $\tau$ resulting functional on $K_0(C_u^\ast(M))$ by $\theta$.

Then for both $p \in \{ 0,1 \}$, every $[P] \in K_p^u(M)$ for $P$ a $p$-graded, symmetric, elliptic uniform pseudodifferential operator over $M$, and every $u \in K_u^p(M)$ we have
\[\langle u, [P] \rangle_\theta = \langle \ch(u) \wedge \ind(P), [M] \rangle_{(M_i)_i, \tau}.\]
\end{corpairingglobal*}

Roe's theorem \cite{roe_index_1} is the special case where $P = D$ is a graded (i.e., $p = 0$) Dirac operator and $u = [\IC]$ is the class in $K_u^0(M)$ of the trivial, $1$-dimensional vector bundle over $M$.

To put the above index theorems into context, let us consider manifolds with cylindrical ends. These are the kind of non-compact manifolds which are studied to prove for example the Atiyah--Patodi--Singer index theorem. In the setting of this paper, the relevant algebra would be that of bounded functions with bounded derivatives, whereas in papers like \cite{melrose_eta} or \cite{moroianu_nistor} one imposes conditions at infinity like rapid decay of the integral kernels (see the definition of the suspended algebra in \cite[Section~1]{melrose_eta}).

Note that this global index theorem arising from a \Folner sequence is just a special case of a certain rough index theory, where one pairs classes from the so-called rough cohomology with classes in the $K$-theory of the uniform Roe algebra, and \Folner sequences give naturally classes in this rough cohomology. For details see the thesis \cite{mavra} of Mavra. It seems that it should be possible to combine the above local index theorem with this rough index theory, since it is possible in the special case of \Folner sequences. The author investigated this in \cite{engel_rough}.

Let us say a few words about the proof of the above index theorem for uniform pseudodifferential operators. Roe used in \cite{roe_index_1} the heat kernel method to prove his index theorem for amenable manifolds and therefore, since the heat kernel method does only work for Dirac operators, it can not encompass uniform pseudodifferential operators. So what we will basically do in this paper is to set up all the necessary theory in order to be able to reduce the index problem from pseudodifferential operators to Dirac operators.

As it turns out, the only useful definition of pseudodifferential operators on non-compact manifolds is the uniform one since otherwise we can not guarantee that the operators do have continuous extensions to Sobolev spaces (we will elaborate more on this at the beginning of the next Section \ref{sec:PDOs}). Now in the reduction of the index problem from pseudodifferential operators to Dirac operators one can use, e.g., the fact that for \spinc manifolds we have $K$-\Poincare duality between $K$-theory and $K$-homology (i.e., every class of an abstractly elliptic operator may be represented by a difference of twisted Dirac operators). In order to do the same here in our uniform case we therefore first need uniform $K$-theory and $K$-homology theories (since usual $K$-homology does not consider at all any uniformity that we might have for our operators, and since we are forced to work with uniform pseudodifferential operators, it is quite clear that we also need a new $K$-homology theory that incorporates uniformity). To our luck, \Spakula already did this for us, i.e., he already defined in \cite{spakula_uniform_k_homology} a uniform $K$-homology theory and it turns out that this is exactly what we need. Evidence for the latter statement is provided by the fact that \Spakula constructed a rough assembly map from uniform $K$-homology to the $K$-theory of the uniform Roe algebra, and Roe uses in \cite{roe_index_1} the latter groups as receptacles for the analytic index classes for his index theorem.

So from the above it is quite clear what we have to do: after defining and investigating the class of uniform pseudodifferential operators, that we are interested in, we have to look at uniform $K$-homology more closely (i.e., \Spakula did not construct the Kasparov product for it, but we need it crucially to show homotopy invariance of uniform $K$-homology and therefore we will have to do this construction here). Then we have to identify the corresponding dual theory to uniform $K$-homology and prove the uniform $K$-\Poincare duality theorem. With this at our disposal we will then be able to reduce the uniform index problem for uniform pseudodifferential operators to Dirac operators (for this we will also have to prove a uniform version of the Thom isomorphism in order to be able to conclude that symbol classes of uniform pseudodifferential operators may be represented by symbol classes of Dirac operators). So it remains to show the uniform index theorem for Dirac operators. But since up to this point we will already have set up all the needed machinery, its proof will be basically the same as the proof of the local index theorem of Connes and Moscovici in \cite{connes_moscovici}.

Let us highlight some of the above mentioned theorems that we will prove along our path to the index theorems stated at the beginning of the introduciton. Let us start with two properties of uniform $K$-homology that were not proved by \Spakula in his paper where he introduced this theory, but which will be crucial to us.

\begin{thmexternalprodhomology*}
Let $X,Y$ be locally compact and separable metric spaces of jointly bounded geometry. Then there exists an associative product
\[\times\colon K^u_p(X) \otimes K^u_q(Y) \to K^u_{p+q}(X \times Y)\]
having the same properties as the usual external product in $K$-homology of compact spaces.
\end{thmexternalprodhomology*}

\begin{cor*}[Theorem~\ref{thm:weak_homotopy_equivalence_K_hom}]
Weakly homotopic uniform Fredholm modules define the same uniform $K$-homology class.
\end{cor*}

A fundamental result for this paper is of course that symmetric and elliptic uniform pseudodifferential operators define uniform $K$-homology classes and that these classes do only depend on the principal symbols of the operators. Note that to prove this result we will have to derive a lot of analytical properties of uniform pseudodifferential operators.

\begin{thm*}[Theorem~\ref{thm:elliptic_symmetric_PDO_defines_uniform_Fredholm_module} and Proposition~\ref{prop:same_symbol_same_k_hom_class}]
Let $P$ be a symmetric and elliptic uniform pseudodifferential operator acting on a vector bundle of bounded geometry over a manifold $M$ of bounded geometry.

Then $P$ defines naturally a uniform $K$-homology class $[P] \in K_\ast^u(M)$ and this class does only depend on the principal symbol of $P$.
\end{thm*}

The last collection of results that we want to highlight in this introduction are all the various uniform \Poincare duality and uniform Chern character isomorphisms. Note that the uniform $K$-theory and the uniform de Rham homology appearing in the following theorem are for the first time defined in this article, and note also that we are not mentioning some intermediate steps like the identification of uniform de Rham homology with periodic cyclic cohomology of a certain Sobolev space over $M$.

\begin{thm*}[Theorem~\ref{thm23w2}, Theorem~\ref{thm:Poincare_de_Rham} and Theorem~\ref{thm:Poincare_duality_K}]
Let $M$ be an $m$-dimensional manifold of bounded geometry and without boundary. Then the Chern characters induce linear, continuous isomorphisms
\[K^\ast_u(M) \barotimes \IC \cong \HbdR^\ast(M) \text{ and } K^u_\ast(M) \barotimes \IC \cong \HudR_\ast(M).\]
If $M$ is oriented we have also an isomorphism
\[\HbdR^\ast(M) \cong \HudR_{m-\ast}(M)\]
and if $M$ is \spinc we additionally have
\[K_u^\ast(M) \cong K^\ast_{m-\ast}(M).\]
\end{thm*}

\paragraph{Acknowledgements} This article arose out of the Ph.D.\ thesis \cite{engel_phd} of the author written at the University of Augsburg. Therefore, first and foremost, I would like to thank my doctoral advisor Professor Bernhard Hanke for his support and guidance. Without his encouragement throughout the last years, my thesis, and so also this article, would not have been possible.

Further, I want to express my gratitude to Professor John Roe from Pennsylvania State University with whom I had many inspirational conversations especially during my research stay there in summer 2013. I would also like to thank the Professors Alexandr Sergeevich Mishchenko and Evgeni Vadimovich Troitsky from Lomonosov Moscow State University, with whom I had the pleasure to work with during my stay in Moscow from September 2010 to June 2011. Moreover, I thank Professors Ulrich Bunke and Bernd Ammann from the University of Regensburg, with whom I had fruitful discussions related to this topic.

It is a pleasure for me to thank all my fellow graduate students at the University of Augsburg for the delightful time I spent with them, and furthermore I thank all my colleagues from the University of Regensburg, since also with them I had a good time.

I gratefully acknowledge the financial and academic support from the graduate program TopMath of the Elite Network of Bavaria, the TopMath Graduate Center of TUM Graduate School at the Technische Universität München, the German National Academic Foundation (Studienstiftung des deutschen Volkes) and from the SFB 1085 ``Higher Invariants'' situated at the University of Regensburg and financed by the DFG (Deutsche Forschungsgemeinschaft) .


\section{Uniform pseudodifferential operators on manifolds of bounded geometry}\label{sec:PDOs}

Let us explain why on non-compact manifolds we necessarily have to look at uniform pseudodifferential operators. Recall that on $\IR^m$ an operator $P$ is called pseudodifferential, if it is given by
\[(Pu)(x) = (2 \pi)^{-n/2} \int_{\IR^m} e^{i \langle x, \xi\rangle} p(x, \xi) \hat u(\xi) \ d\xi,\]
where $\hat u$ denotes the Fourier transform of $u$ and the function $p(x,\xi)$ satisfies for some $k \in\IZ$ the estimates $\|D_x^\alpha D_\xi^\beta p(x,\xi)\| \le C^{\alpha \beta} (1 + |\xi|)^{k - |\beta|}$ for all multi-indices $\alpha$ and $\beta$. On manifolds one calls an operator pseudodifferential if one has the above representation in any local chart. But if the manifold is not compact, we get the problem that this is not sufficient to guarantee that the operator has continuous extensions to Sobolev spaces.\footnote{We are ignoring in this discussion the fact that on non-compact manifolds we also need a condition on the behaviour of the integral kernel of $P$ at infinity. So assume for the moment that $P$ has finite propagation. Our final definition will require $P$ to be quasilocal at infinity (see Section \ref{sec:quasiloc}).} For this we additionally have to require that the above bounds $C^{\alpha \beta}$ are uniform across all the local charts. But since this is not well-defined (choosing a different atlas may distort the bounds arbitrarily large across the charts of the atlas), we will have to restrict the charts to exponential charts and additionally we will have to assume that our manifold has bounded geometry (these restrictions become clear when one looks at Lemma \ref{lem:transition_functions_uniformly_bounded}).

The local definition of uniform pseudodifferential operators that we investigate was already given by Kordyukov in \cite{kordyukov}, by Shubin in \cite{shubin} and by Taylor in \cite{taylor_pseudodifferential_operators_lectures}. Besides the local uniformity one also needs to control the behaviour of the integral kernels of these operators at infinity. One possibility is to impose finite propagation, i.e., demanding that there is an $R > 0$ such that the integral kernel $k(x,y)$ of the pseudodifferential operator vanishes for all $x,y$ with $d(x,y) > R$ (note that pseudodifferential operators always have an integral kernel that is smooth outside the diagonal). More generally, one can require an exponential decay of the integral kernel at infinity, and usually this decay should be faster than the volume growth of the manifold. In the present article we will require that our pseudodifferential operators are quasilocal\footnote{An operator $A \colon H^r(E) \to H^s(F)$ is \emph{quasilocal}, if there is some function $\mu\colon \IR_{> 0} \to \IR_{\ge 0}$ with $\mu(R) \to 0$ for $R \to \infty$ and such that for all $L \subset M$ and all $u \in H^r(E)$ with $\supp u \subset L$ we have $\|A u\|_{H^s, M - B_R(L)} \le \mu(R) \cdot \|u\|_{H^r}$.}, since this seems to be in a certain sense the most general notion which we may impose (see, e.g., the proof of Corollary \ref{cor:quasilocal_neg_order_uniformly_locally_compact} for how quasi-locality is used).

Let us explain why we want our operators to be quasilocal: Recall that in order to compute Roe's analytic index of an operator $D$ of Dirac type, we have to consider the operator $f(D)$, where $f$ is an even Schwartz function with $f(0) = 1$. Now usually $f(D)$ will not have finite propagation, but it will be a quasilocal operator. This was proven by Roe and we will generalize this crucial fact to uniform pseudodifferential operators. So though we could restrict to finite propagation uniform pseudodifferential operators and use the fact that $f(P)$ will be quasilocal where we need it, we would leave by this our class of finite propagation operators that we consider. So working from the beginning with quasilocal operators leads to the fact that we never have to leave this class. Note that the proof of the fact that $f(P)$ is quasilocal requires substantial analysis and is one of our key technical lemmas.

\subsection{Bounded geometry}

We will define in this section the notion of bounded geometry for manifolds and for vector bundles and discuss basic facts about uniform $C^r$-spaces and Sobolev spaces on them.

\begin{defn}
We will say that a Riemannian manifold $M$ has \emph{bounded geometry}, if
\begin{itemize}
\item the curvature tensor and all its derivatives are bounded, i.e., $\| \nabla^k \Rm (x) \| < C_k$ for all $x \in M$ and $k \in \IN_0$, and
\item the injectivity radius is uniformly positive, i.e., $\injrad_M(x) > \varepsilon > 0$ for all points $x \in M$ and for a fixed $\varepsilon > 0$.
\end{itemize}
If $E \to M$ is a vector bundle with a metric and compatible connection, we say that \emph{$E$ has bounded geometry}, if the curvature tensor of $E$ and all its derivatives are bounded.
\qed
\end{defn}

\begin{examples}
There are plenty of examples of manifolds of bounded geometry. The most important ones are coverings of compact Riemannian manifolds equipped with the pull-back metric, homogeneous manifolds with an invariant metric, and leafs in a foliation of a compact Riemannian manifold (this is proved by Greene in \cite[lemma on page 91 and the paragraph thereafter]{greene}).

For vector bundles, the most important examples are of course again pull-back bundles of bundles over compact manifolds equipped with the pull-back metric and connection, and the tangent bundle of a manifold of bounded geometry.

Furthermore, if $E$ and $F$ are two vector bundles of bounded geometry, then the dual bundle $E^\ast$, the direct sum $E \oplus F$, the tensor product $E \otimes F$ (and so especially also the homomorphism bundle $\Hom(E, F) = F \otimes E^\ast$) and all exterior powers $\Lambda^l E$ are also of bounded geometry. If $E$ is defined over $M$ and $F$ over $N$, then their external tensor product\footnote{The fiber of $E \boxtimes F$ over the point $(x,y) \in M \times N$ is given by $E_x \otimes F_y$.} $E \boxtimes F$ over $M \times N$ is also of bounded geometry.
\qed
\end{examples}

Greene proved in \cite[Theorem 2']{greene} that there are no obstructions against admitting a metric of bounded geometry, i.e., every smooth manifold without boundary admits one. On manifolds of bounded geometry there is also no obstruction for a vector bundle to admit a metric and compatible connection of bounded geometry. The proof (i.e., the construction of the metric and the connection) is done in a uniform covering of $M$ by normal coordinate charts and subordinate uniform partition of unity (we will discuss these things in a moment) and we have to use the local characterization of bounded geometry for vector bundles from Lemma \ref{lem:equiv_characterizations_bounded_geom_bundles}.

We will now state an important characterization in local coordinates of bounded geometry since it will allow us to show that certain local definitions (like the one of uniform pseudodifferential operators) are independent of the chosen normal coordinates.

\begin{lem}[{\cite[Appendix A1.1]{shubin}}]\label{lem:transition_functions_uniformly_bounded}
Let the injectivity radius of $M$ be positive.

Then the curvature tensor of $M$ and all its derivatives are bounded if and only if for any $0 < r < \injrad_M$ all the transition functions between overlapping normal coordinate charts of radius $r$ are uniformly bounded, as are all their derivatives (i.e., the bounds can be chosen to be the same for all transition functions).
\end{lem}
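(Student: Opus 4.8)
The plan is to prove both implications by working in geodesic normal coordinates and exploiting the fact that the metric coefficients and Christoffel symbols in such charts are controlled by the curvature tensor through the Jacobi equation. First I would recall the standard ODE comparison argument: fixing a point $p$ and normal coordinates on $B_r(p)$ with $r < \injrad_M$, the metric coefficients $g_{ij}(x)$ in these coordinates can be expressed via Jacobi fields along radial geodesics, and the Jacobi equation $J'' + R(J, \dot\gamma)\dot\gamma = 0$ shows that bounds on $\Rm$ and its covariant derivatives up to order $k$ yield bounds on $g_{ij}$ and its Euclidean partial derivatives up to order $k+1$ on $B_r(p)$, uniformly in $p$; moreover one gets a uniform lower bound $g_{ij} \geq c\,\delta_{ij}$ from the injectivity radius bound together with the curvature bound (so the charts are uniformly bi-Lipschitz to Euclidean balls). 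This is the ``$\Rightarrow$'' direction applied to a single chart; to get it for transition functions, note that if $p, q$ are such that $B_r(p) \cap B_r(q) \neq \emptyset$, the transition function is $\exp_q^{-1} \circ \exp_p$, and its derivatives can be estimated by differentiating the geodesic flow, which is again governed by the Jacobi equation with coefficients the curvature tensor; the uniform curvature bounds therefore give uniform bounds on the transition functions and all their derivatives, with constants depending only on $r$, the dimension, and the $C_k$'s.

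For the converse ``$\Leftarrow$'' direction, suppose all transition functions between overlapping normal charts of radius $r$ are uniformly $C^\infty$-bounded. I would argue that this forces the metric coefficients in each normal chart to be uniformly bounded in $C^\infty$: pick any $p$, and cover $B_r(p)$ by finitely many normal charts centered at points near $p$ (the number needed is bounded in terms of $r$ and dimension because the charts are uniformly bi-Lipschitz, which itself follows from the uniform transition-function bounds by a connectedness/chaining argument starting from one reference chart). Writing $g$ in the chart at $p$ in terms of $g$ in a neighbouring chart via the (uniformly bounded) transition function, and iterating, one sees that $g_{ij}$ at $p$ in its own normal coordinates has uniformly bounded derivatives. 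Then the curvature tensor, being a universal polynomial expression in the $g_{ij}$, their inverses, and derivatives up to second order, is uniformly bounded; similarly $\nabla^k \Rm$ is a universal expression in $g_{ij}$ and its derivatives up to order $k+2$, hence also uniformly bounded. This gives the curvature side of bounded geometry (the injectivity radius is assumed positive throughout, so nothing more is needed).

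I expect the main obstacle to be the bookkeeping in the ``$\Leftarrow$'' direction: turning the hypothesis on transition functions — which a priori only compares overlapping charts pairwise — into a genuinely uniform bound on the metric in a single chart requires controlling how many charts one must chain through, and that in turn requires first extracting a uniform bi-Lipschitz (or at least uniform doubling) estimate purely from the transition-function hypothesis. Once one knows the normal charts of radius $r$ are uniformly bi-Lipschitz to the Euclidean ball, the covering number is uniformly bounded and the iteration terminates after a bounded number of steps, so the constants stay uniform; but verifying that preliminary estimate is the delicate point. The ``$\Rightarrow$'' direction, by contrast, is the classical Jacobi-field estimate and is essentially routine, so I would state it and refer to standard sources (e.g.\ the computations underlying \cite{shubin}) rather than reproduce the ODE comparison in full.
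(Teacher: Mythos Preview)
The paper does not supply its own proof of this lemma --- it is stated with a citation to \cite[Appendix A1.1]{shubin} and nothing more --- so there is no in-paper argument to compare your proposal against; I will simply assess it on its merits.

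Your forward direction is the standard Jacobi-field estimate and is fine. The backward direction, however, has a real gap, and it is not the bookkeeping issue you flag. The pull-back relation $g^p = (D\tau)^T\,(g^q\circ\tau)\,(D\tau)$ is \emph{invertible}: uniform $C^k$-bounds on $\tau$ and $\tau^{-1}$ let you transfer bounds on $\partial^j g^q$ to bounds on $\partial^j g^p$ and back, but they do not create a bound on either one from nothing. Your ``iteration through finitely many neighbouring charts'' therefore has no base case --- you never reach a chart in which $\partial^k g$ is known a priori for $k\ge 2$ (you only know $g=I$ and $\partial g=0$ at each chart's centre, which is a single point) --- so the chain is circular regardless of its length. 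A route that does work is to run the geodesic and Jacobi equations in the $q$-chart: radial lines $t\mapsto tv$ in the $p$-chart map via $\tau_{pq}$ to geodesics $c(t)=\tau_{pq}(tv)$ in the $q$-chart, and evaluating the geodesic and Jacobi equations at $c(t_0)=0$ (taking $q=\exp_p(t_0 v)$, so that $\Gamma^q(0)=0$) expresses first $\Gamma^q$ near $0$ and then $(\partial\Gamma^q)(0)$ directly in terms of $D^2\tau$ and $D^3\tau$; varying $p$ (hence the tangent $c'(t_0)$) and the Jacobi initial data (hence $J(t_0)$) then bounds all components of $(\partial\Gamma^q)(0)$, and with it the curvature at $q$, uniformly. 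Higher covariant derivatives of $\Rm$ follow by differentiating the Jacobi equation further.
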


Another fact which we will need about manifolds of bounded geometry is the existence of uniform covers by normal coordinate charts and corresponding partitions of unity. A proof may be found in, e.g., \cite[Appendix A1.1]{shubin} (Shubin addresses the first statement about the existence of such covers actually to the paper \cite{gromov_curvature_diameter_betti_numbers} of Gromov).

\begin{lem}\label{lem:nice_coverings_partitions_of_unity}
Let $M$ be a manifold of bounded geometry.

For every $0 < \varepsilon < \tfrac{\injrad_M}{3}$ there exists a covering of $M$ by normal coordinate charts of radius $\varepsilon$ with the properties that the midpoints of the charts form a uniformly discrete set in $M$ and that the coordinate charts with double radius $2\varepsilon$ form a uniformly locally finite cover of $M$.

Furthermore, there is a subordinate partition of unity $1 = \sum_i \varphi_i$ with $\supp \varphi_i \subset B_{2\varepsilon}(x_i)$, such that in normal coordinates the functions $\varphi_i$ and all their derivatives are uniformly bounded (i.e., the bounds do not depend on $i$).
\end{lem}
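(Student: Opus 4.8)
The plan is to construct the cover greedily and then upgrade the partition of unity via a uniform bump function, using Lemma \ref{lem:transition_functions_uniformly_bounded} at the end to control derivatives in coordinates. First I would fix $0 < \varepsilon < \tfrac{\injrad_M}{3}$ and choose a maximal $\varepsilon$-separated subset $\{x_i\}_{i \in I} \subset M$, i.e.\ a set with $d(x_i, x_j) \ge \varepsilon$ for $i \ne j$ that is maximal with this property (its existence follows from Zorn's lemma). Maximality forces $\bigcup_i B_\varepsilon(x_i) = M$, so the normal coordinate charts $\exp_{x_i}^{-1} \colon B_\varepsilon(x_i) \to \IR^m$ (well-defined since $\varepsilon < \injrad_M$) cover $M$, and by construction $\{x_i\}$ is uniformly discrete. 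For the local finiteness of the double-radius cover, I would observe that if $x \in B_{2\varepsilon}(x_i)$ for $N$ distinct indices $i$, then the balls $B_{\varepsilon/2}(x_i)$ are pairwise disjoint (by $\varepsilon$-separation) and all contained in $B_{5\varepsilon/2}(x)$; hence $N \cdot \inf_y \vol B_{\varepsilon/2}(y) \le \sup_y \vol B_{5\varepsilon/2}(y)$. Bounded geometry gives two-sided volume bounds on balls of a fixed radius (via Rauch comparison, using the curvature bounds and $\varepsilon < \injrad_M$), so $N$ is bounded independently of $x$ and $i$. This proves the first paragraph of the statement.

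For the partition of unity, I would start from a single smooth function $\beta \colon \IR^m \to [0,1]$ with $\beta \equiv 1$ on $B_1(0)$ and $\supp \beta \subset B_2(0)$, rescale it to $\beta_\varepsilon(v) = \beta(v/\varepsilon)$, and set $\psi_i := \beta_\varepsilon \circ \exp_{x_i}^{-1}$ on $B_{2\varepsilon}(x_i)$, extended by zero. Then $\psi_i \ge 1$ on $B_\varepsilon(x_i)$, so $\Psi := \sum_i \psi_i \ge 1$ everywhere, and the sum is locally finite with a uniformly bounded number of nonzero terms at each point by the previous paragraph. Define $\varphi_i := \psi_i / \Psi$; this is the desired partition of unity with $\supp \varphi_i \subset B_{2\varepsilon}(x_i)$.

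The remaining point — and the main technical obstacle — is the uniform bound on $\varphi_i$ and all its derivatives when expressed in the normal coordinates centered at $x_i$. In these coordinates $\psi_i$ is literally $\beta_\varepsilon$, whose derivatives are bounded by constants depending only on $\beta$ and $\varepsilon$, hence uniform in $i$. The difficulty is $\Psi = \sum_j \psi_j$: each summand $\psi_j$ with $j \ne i$ must be re-expressed in the $x_i$-chart, which amounts to composing $\beta_\varepsilon$ with the transition function $\exp_{x_j}^{-1} \circ \exp_{x_i}$ on the overlap $B_{2\varepsilon}(x_i) \cap B_{2\varepsilon}(x_j)$. Here Lemma \ref{lem:transition_functions_uniformly_bounded} is exactly what is needed: it gives uniform bounds on these transition functions and all their derivatives (the radii involved, $2\varepsilon < \tfrac{2}{3}\injrad_M < \injrad_M$, are admissible). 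By the Fa\`a di Bruno chain rule, each $\psi_j$ and all its derivatives are then uniformly bounded in the $x_i$-chart, and since only a uniformly bounded number of them are nonzero at any given point, $\Psi$ and all its derivatives are uniformly bounded in the $x_i$-chart, with $\Psi \ge 1$. Finally, $\varphi_i = \psi_i/\Psi$: applying the quotient rule together with the lower bound $\Psi \ge 1$ and the uniform upper bounds on the derivatives of $\psi_i$ and $\Psi$ yields uniform bounds on $\varphi_i$ and all its derivatives in the $x_i$-coordinates, independently of $i$. This completes the proof.
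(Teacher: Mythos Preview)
Your argument is correct and is precisely the standard construction; the paper does not supply its own proof but merely cites \cite[Appendix A1.1]{shubin} (and Gromov for the cover), where essentially the same maximal-packing plus volume-comparison argument for the cover and the same bump-function normalization for the partition of unity are carried out. Your explicit invocation of Lemma~\ref{lem:transition_functions_uniformly_bounded} to control the derivatives of $\Psi$ in the $x_i$-chart is the right way to close the argument.
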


If the manifold $M$ has bounded geometry, we have analogous equivalent local characterizations of bounded geometry for vector bundles as for manifolds. The equivalence of the first two bullet points in the next lemma is stated in, e.g., \cite[Proposition 2.5]{roe_index_1}. Concerning the third bullet point, the author could not find any citable reference in the literature (though Shubin uses in \cite{shubin} this as the actual definition).

\begin{lem}\label{lem:equiv_characterizations_bounded_geom_bundles}
Let $M$ be a manifold of bounded geometry and $E \to M$ a vector bundle. Then the following are equivalent:

\begin{itemize}
\item $E$ has bounded geometry,
\item the Christoffel symbols $\Gamma_{i \alpha}^\beta(y)$ of $E$ with respect to synchronous framings (considered as functions on the domain $B$ of normal coordinates at all points) are bounded, as are all their derivatives, and this bounds are independent of $x \in M$, $y \in \exp_x(B)$ and $i, \alpha, \beta$, and
\item the matrix transition functions between overlapping synchronous framings are uniformly bounded, as are all their derivatives (i.e., the bounds are the same for all transition functions).
\end{itemize}
\end{lem}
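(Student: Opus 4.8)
The plan is to prove the cycle of implications: bounded geometry of $E$ $\Rightarrow$ boundedness of the Christoffel symbols with respect to synchronous framings $\Rightarrow$ boundedness of the transition matrices between overlapping synchronous framings $\Rightarrow$ bounded geometry of $E$. The key point throughout is that one is already allowed to use the corresponding statement for the manifold $M$ (Lemma \ref{lem:transition_functions_uniformly_bounded}), so all the estimates ``in a single normal chart'' may be transferred from chart to chart at a uniformly bounded cost. I would fix once and for all a radius $0 < r < \injrad_M/3$ and work with the uniform cover by normal charts of radius $r$ provided by Lemma \ref{lem:nice_coverings_partitions_of_unity}, together with synchronous framings of $E$ obtained by parallel transport along radial geodesics from each center $x_i$.

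First I would establish the implication from bounded geometry of $E$ to bounded Christoffel symbols. Fix a center $x$ and the synchronous framing of $E$ over $B = \exp_x(B_r(0))$. In this framing the connection one-form $\omega = (\Gamma_{i\alpha}^\beta)$ satisfies, by the definition of parallel transport along radial geodesics, the ODE $\iota_{\partial_\rho}\omega = 0$ together with the structure equation $d\omega + \omega \wedge \omega = \Omega$, where $\Omega$ is the curvature two-form of $E$ expressed in this framing. Since $E$ has bounded geometry, $\|\nabla^k R^E\|$ is bounded uniformly, and since $M$ has bounded geometry the components of $\Omega$ and their coordinate derivatives on $B$ are bounded uniformly in $x$ (this uses Lemma \ref{lem:transition_functions_uniformly_bounded} to pass between the geometric norm and the coordinate expression). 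Integrating the radial ODE $\partial_\rho(\rho\,\Gamma) = \rho\,\Omega(\partial_\rho, \cdot)$ (the standard ``Poincaré-lemma'' normal form of a connection in a synchronous gauge) and then differentiating under the integral sign gives bounds on $\Gamma_{i\alpha}^\beta$ and all its derivatives that depend only on the radius $r$ and on the curvature bounds of $M$ and $E$ — hence are independent of $x$, $y$, and the indices. This is essentially a uniform version of the classical fact that curvature controls the connection in a synchronous gauge.

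For the second implication, let $x, x'$ be centers of two overlapping charts and let $g(y) \in \GL$ be the transition matrix between the two synchronous framings on the overlap. Then $g$ satisfies $dg = \omega' g - g\,\omega$ (gauge transformation law), where $\omega, \omega'$ are the two connection one-forms just shown to be uniformly bounded with all derivatives. On the overlap, expressed in the normal coordinates centered at $x$, this is a linear first-order ODE system for $g$ with uniformly bounded coefficients, and $g$ is normalized (for instance, $g = \id$ along a chosen radial geodesic, or one simply notes $g$ is an isometry of the fiber metric so $\|g\| = 1$); a Grönwall-type estimate together with repeated differentiation of the ODE then yields uniform bounds on $g$ and all its derivatives. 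The third implication, from uniformly bounded transition matrices back to bounded geometry of $E$, is the reverse of the first: given a uniform cover and uniformly bounded synchronous transition functions, one reconstructs a global metric and connection whose curvature is, in each synchronous framing, a uniformly bounded expression in the (bounded) local data, and one checks $\|\nabla^k R^E\|$ is bounded by expanding it in a synchronous framing and using that covariant derivatives differ from coordinate derivatives by the (bounded) Christoffel symbols of both $M$ and $E$.

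I expect the main obstacle to be bookkeeping rather than conceptual: one must carefully track that every constant produced depends only on $r$, on the dimension, on the rank of $E$, and on finitely many of the curvature bounds $C_k$ of $M$ and $E$, and never on the point $x$ — in particular the passage between the intrinsic norms $\|\nabla^k R^E\|$ and their coordinate/framing expressions has to be done once and used uniformly, which is exactly where Lemma \ref{lem:transition_functions_uniformly_bounded} (and the uniformity of the atlas from Lemma \ref{lem:nice_coverings_partitions_of_unity}) enters. The induction on the order of derivatives in each of the three ODE arguments is routine once the base case is set up, so I would only spell out the base case and the inductive step schematically.
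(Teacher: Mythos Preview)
The paper does not actually prove this lemma: it cites \cite[Proposition 2.5]{roe_index_1} for the equivalence of the first two bullet points and remarks that Shubin takes the third as his definition, then moves on. So there is no ``paper's own proof'' to compare against; your proposal is essentially supplying the argument the paper omits.

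Your treatment of (1)$\Rightarrow$(2) and (2)$\Rightarrow$(3) is the standard one and is fine: radial integration of the structure equation in synchronous gauge for the first, and the gauge-transformation relation $dg = \omega' g - g\,\omega$ together with $\|g\|=1$ (orthonormal framings) for the second. One small correction: for (2)$\Rightarrow$(3) you do not really need Gr\"onwall or an ODE along a curve; since $g$ is already bounded (unitary), the relation $dg = \omega' g - g\,\omega$ directly bounds $dg$, and repeated differentiation bounds higher derivatives.

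There is, however, a genuine gap in your (3)$\Rightarrow$(1). You write that one ``checks $\|\nabla^k R^E\|$ is bounded by expanding it in a synchronous framing and using that covariant derivatives differ from coordinate derivatives by the (bounded) Christoffel symbols''. But in this direction you have \emph{not} yet established that the Christoffel symbols are bounded --- that was (2), and your cycle only proved (1)$\Rightarrow$(2), not (3)$\Rightarrow$(2). The phrase ``reconstructs a global metric and connection'' is also misleading: the metric and connection on $E$ are given, not reconstructed from the transition data.

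The clean fix is to close the cycle via (3)$\Rightarrow$(2) instead. Given a point $y$ in the chart centered at $x$, take the synchronous framing centered at $y$; at the point $y$ the connection form $\omega_y$ vanishes, and the gauge relation $\omega_x = g^{-1}\,dg + g^{-1}\omega_y\,g$ then gives $\omega_x(y) = g^{-1}(y)\,dg(y)$, which is bounded by hypothesis (3) together with $\|g^{-1}\|=1$. Differentiating this identity and inducting yields uniform bounds on all derivatives of $\omega_x$, i.e.\ (2). Then (2)$\Rightarrow$(1) is immediate from $\Omega = d\omega + \omega\wedge\omega$ and the fact that, with both the $M$- and $E$-Christoffel symbols bounded, covariant and coordinate derivatives of tensors are uniformly comparable.
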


We will now give the definition of uniform $C^\infty$-spaces together with a local characterization on manifolds of bounded geometry. The interested reader is refered to, e.g., the papers \cite[Section 2]{roe_index_1} or \cite[Appendix A1.1]{shubin} of Roe and Shubin for more information regarding these uniform $C^\infty$-spaces.

\begin{defn}[$C^r$-bounded functions]
Let $f \in C^\infty(M)$. We will say that $f$ is a \emph{$C_b^r$-function}, or equivalently that it is \emph{$C^r$-bounded}, if $\| \nabla^i f \|_\infty < C_i$ for all $0 \le i \le r$.
\qed
\end{defn}

If $M$ has bounded geometry, being $C^r$-bounded is equivalent to the statement that in every normal coordinate chart $|\partial^\alpha f(y)| < C_\alpha$ for every multiindex $\alpha$ with $|\alpha| \le r$ (where the constants $C_\alpha$ are independent of the chart).

Of course, the definition of $C^r$-boundedness and its equivalent characterization in normal coordinate charts for manifolds of bounded geometry make also sense for sections of vector bundles of bounded geometry (and so especially also for vector fields, differential forms and other tensor fields).

\begin{defn}[Uniform $C^\infty$-spaces]
\label{defn:uniform_frechet_spaces}
Let $E$ be a vector bundle of bounded geometry over $M$. We will denote the \emph{uniform $C^r$-space} of all $C^r$-bounded sections of $E$ by $C_b^r(E)$.

Furthermore, we define the \emph{uniform $C^\infty$-space $C_b^\infty(E)$}
\[C_b^\infty(E) := \bigcap_r C_b^r(E)\]
which is a \Frechet space.
\qed
\end{defn}

Now we get to Sobolev spaces on manifolds of bounded geometry. Much of the following material is from \cite[Appendix A1.1]{shubin} and \cite[Section 2]{roe_index_1}, where an interested reader can find more thorough discussions of this matters.

Let $s \in C^\infty_c(E)$ be a compactly supported, smooth section of some vector bundle $E \to M$ with metric and connection $\nabla$. For $k \in \IN_0$ and $p \in [1,\infty)$ we define the global $W^{k,p}$-Sobolev norm of $s$ by
\begin{equation}\label{eq:sobolev_norm}
\|s\|_{W^{k,p}}^p := \sum_{i=0}^k \int_M \|\nabla^i s(x)\|^p dx.
\end{equation}

\begin{defn}[Sobolev spaces $W^{k,p}(E)$]\label{defn:sobolev_spaces}
Let $E$ be a vector bundle which is equipped with a metric and a connection. The \emph{$W^{k,p}$-Sobolev space of $E$} is the completion of $C^\infty_c(E)$ in the norm $\|\largecdot\|_{W^{k,p}}$ and will be denoted by $W^{k,p}(E)$.
\qed
\end{defn}

If $E$ and $M^m$ both have bounded geometry than the Sobolev norm \eqref{eq:sobolev_norm} for $1 < p < \infty$ is equivalent to the local one given by
\begin{equation}\label{eq:sobolev_norm_local}
\|s\|_{W^{k,p}}^p \stackrel{\text{equiv}}= \sum_{i=1}^\infty \|\varphi_i s\|^p_{W^{k,p}(B_{2\varepsilon}(x_i))},
\end{equation}
where the balls $B_{2\varepsilon}(x_i)$ and the subordinate partition of unity $\varphi_i$ are as in Lemma \ref{lem:nice_coverings_partitions_of_unity}, we have chosen synchronous framings and $\|\largecdot\|_{W^{k,p}(B_{2\varepsilon}(x_i))}$ denotes the usual Sobolev norm on $B_{2\varepsilon}(x_i) \subset \IR^m$. This equivalence enables us to define the Sobolev norms for all $k \in \IR$, see Triebel \cite{triebel_2} and Gro{\ss}e--Schneider \cite{grosse_sobolev}. There are some issues in the case $p=1$, see the discussion by Triebel \cite[Section~2.2.3]{triebel_1}, \cite[Remark~4 on Page~13]{triebel_2}.

Assuming bounded geometry, the usual embedding theorems are true:

\begin{thm}[{\cite[Theorem 2.21]{aubin_nonlinear_problems}}]\label{thm:sobolev_embedding}
Let $E$ be a vector bundle of bounded geometry over a manifold $M^m$ of bounded geometry and without boundary.

Then we have for all values $(k-r)/m > 1/p$ continuous embeddings
\[W^{k,p}(E) \subset C^r_b(E).\]
\end{thm}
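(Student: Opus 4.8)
The plan is to reduce the statement to the classical Sobolev embedding theorem on Euclidean balls, using the local characterisations of both the Sobolev norm and $C^r$-boundedness that are available on manifolds of bounded geometry. Fix $0 < \varepsilon < \tfrac{\injrad_M}{3}$ and, by Lemma \ref{lem:nice_coverings_partitions_of_unity}, choose a uniformly locally finite cover of $M$ by normal coordinate balls $B_{2\varepsilon}(x_i)$ together with a subordinate partition of unity $1 = \sum_i \varphi_i$; over each such ball trivialise $E$ by a synchronous framing, so that $\varphi_i s$ becomes a compactly supported smooth vector-valued function on the fixed Euclidean ball $B_{2\varepsilon}(0) \subset \IR^m$. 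Let $Q \in \IN$ be a uniform bound, valid for every $x \in M$, on the number of indices $i$ with $x \in B_{2\varepsilon}(x_i)$.

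The first step is to establish the a priori estimate
\[\|s\|_{C^r_b(E)} \le C \, \|s\|_{W^{k,p}(E)} \qquad \text{for all } s \in C^\infty_c(E),\]
with $C$ independent of $s$. Given such an $s$, the local characterisation \eqref{eq:sobolev_norm_local} gives $\sum_i \|\varphi_i s\|^p_{W^{k,p}(B_{2\varepsilon}(x_i))} \le C_1 \|s\|^p_{W^{k,p}}$, hence $\|\varphi_i s\|_{W^{k,p}(B_{2\varepsilon}(x_i))} \le C_1^{1/p} \|s\|_{W^{k,p}}$ for every single $i$. Since $(k-r)/m > 1/p$, the classical Sobolev embedding on the fixed ball $B_{2\varepsilon}(0)$ supplies a constant $C_2 = C_2(m,k,p,r,\varepsilon)$ --- crucially independent of $i$ --- with $\|\varphi_i s\|_{C^r(B_{2\varepsilon}(x_i))} \le C_2 \|\varphi_i s\|_{W^{k,p}(B_{2\varepsilon}(x_i))}$. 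Now take any $x \in M$ and work in a normal chart about it; there $s = \sum_i \varphi_i s$ with at most $Q$ nonzero summands, so for every multi-index $\alpha$ with $|\alpha| \le r$,
\[|\partial^\alpha s(x)| \le \sum_i |\partial^\alpha(\varphi_i s)(x)| \le Q \cdot C_2 \cdot C_1^{1/p} \, \|s\|_{W^{k,p}}.\]
As the bound is independent of $x$ and of the chart, the local characterisation of $C^r$-boundedness for sections of bundles of bounded geometry (together with the uniform comparison between the intrinsic norms $\|\nabla^i s\|_\infty$ and coordinate derivatives, which is part of Lemmas \ref{lem:transition_functions_uniformly_bounded} and \ref{lem:equiv_characterizations_bounded_geom_bundles}) gives $s \in C^r_b(E)$ and the displayed estimate.

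It then remains to upgrade this to an inclusion of Banach spaces. Since $W^{k,p}(E)$ is the completion of $C^\infty_c(E)$ in $\|\largecdot\|_{W^{k,p}}$ and $C^r_b(E)$ is complete, the estimate shows that any $\|\largecdot\|_{W^{k,p}}$-Cauchy sequence of compactly supported smooth sections is also $\|\largecdot\|_{C^r_b}$-Cauchy, hence converges in $C^r_b(E)$; uniform convergence implies $L^p_{\mathrm{loc}}$ convergence, so this $C^r_b$-limit coincides with the corresponding element of $W^{k,p}(E)$, yielding a continuous injection $W^{k,p}(E) \hookrightarrow C^r_b(E)$. For non-integer $k$ the same scheme applies verbatim, invoking the fractional-order Sobolev embedding on $B_{2\varepsilon}(0)$ in place of the integer one.

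The main obstacle --- in fact the only genuine point --- is the uniformity of all constants across the infinitely many charts, and this is exactly what bounded geometry provides: by Lemma \ref{lem:transition_functions_uniformly_bounded} the transition functions between the normal charts, and by Lemma \ref{lem:equiv_characterizations_bounded_geom_bundles} those between the synchronous framings, are bounded together with all their derivatives independently of the charts, so the Euclidean Sobolev and $C^r$ norms on $B_{2\varepsilon}(0)$ are uniformly equivalent to the intrinsic ones; likewise Lemma \ref{lem:nice_coverings_partitions_of_unity} ensures that multiplication by $\varphi_i$ is a uniformly bounded operation and that the multiplicity $Q$ is finite. Once this uniformity is secured, a single application of the classical Euclidean embedding on one fixed ball does all the work.
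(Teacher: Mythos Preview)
The paper does not give its own proof of this statement; it is quoted verbatim from \cite[Theorem 2.21]{aubin_nonlinear_problems} and used as a black box. Your argument is the standard reduction to the Euclidean Sobolev embedding via a uniformly locally finite cover by normal coordinate balls and a partition of unity with uniformly bounded derivatives, and it is correct; this is precisely the approach one finds in references such as Aubin or Shubin for manifolds of bounded geometry.

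One small point worth making explicit: when you write ``work in a normal chart about $x$'' and then bound $|\partial^\alpha(\varphi_i s)(x)|$ by the $C^r$-norm obtained from the Sobolev embedding, note that the latter was computed in the chart centred at $x_i$, not at $x$. You do address this in the final paragraph via Lemmas \ref{lem:transition_functions_uniformly_bounded} and \ref{lem:equiv_characterizations_bounded_geom_bundles}, which guarantee that the change-of-chart Jacobians and their derivatives are uniformly bounded, so the $C^r$-norms in overlapping charts are uniformly comparable; it would be cleaner to state this comparison at the moment you pass from one chart to the other rather than deferring it. Apart from that, the argument is complete.
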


We define the space
\begin{equation}
\label{eq:defn_W_infty}
W^{\infty,p}(E) := \bigcap_{k \in \IN_0} W^{k,p}(E)
\end{equation}
and equip it with the obvious \Frechet topology. The Sobolev Embedding Theorem tells us now that we have for all $p$ a continuous embedding
\[W^{\infty,p}(E) \hookrightarrow C^\infty_b(E).\]

For $p=2$ we will write $H^k(E)$ for $W^{k,2}(E)$. This are Hilbert spaces and for $k < 0$ the space $H^k(E)$ coincides with the dual of $H^{-k}(E)$, regarded as a space of distributional sections of $E$.

We will now investigate the Sobolev spaces $H^\infty(E)$ and $H^{-\infty}(E)$ of infinite orders. They are crucial since they will allow us to define smoothing operators and hence the important algebra $\IU(E)$ in the next section.

\begin{lem}
The topological dual of $H^\infty(E)$ is given by
\[H^{-\infty}(E) := \bigcup_{k \in \IN_0} H^{-k}(E).\]
\end{lem}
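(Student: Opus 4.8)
The plan is to identify $H^{-\infty}(E) = \bigcup_{k} H^{-k}(E)$ with the topological dual of the \Frechet space $H^\infty(E) = \bigcap_k H^k(E)$ by the standard duality for a countable inverse limit of Hilbert spaces. First I would record that $H^\infty(E)$ carries the \Frechet topology defined by the increasing family of norms $\|\largecdot\|_{H^k}$, $k \in \IN_0$, so that a basis of neighbourhoods of $0$ is given by the sets $\{ s : \|s\|_{H^k} < \varepsilon \}$. A continuous linear functional $\ell$ on $H^\infty(E)$ must therefore be bounded with respect to one of these norms: there exist $k \in \IN$ and $C > 0$ with $|\ell(s)| \le C \|s\|_{H^k}$ for all $s \in H^\infty(E)$. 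Since $C^\infty_c(E)$ is dense in $H^\infty(E)$ (it is dense in each $H^k(E)$ and the \Frechet topology is the projective-limit one, so a diagonal/truncation argument gives density in the intersection — this is where one uses bounded geometry, via the local characterization \eqref{eq:sobolev_norm_local} and the uniform partition of unity of Lemma \ref{lem:nice_coverings_partitions_of_unity}, to smooth and cut off without losing control of any fixed Sobolev norm), the functional $\ell$ extends uniquely to a bounded functional on the completion $H^k(E)$, i.e.\ defines an element of $(H^k(E))^\ast = H^{-k}(E)$. Conversely, any $t \in H^{-k}(E)$ pairs continuously with $H^k(E)$, hence restricts to a continuous functional on $H^\infty(E)$ since the inclusion $H^\infty(E) \hookrightarrow H^k(E)$ is continuous; and these inclusions $H^{-k}(E) \hookrightarrow (H^\infty(E))^\ast$ are injective (again by density of $C^\infty_c(E)$) and compatible as $k$ grows, so their union embeds into $(H^\infty(E))^\ast$.

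Putting the two directions together gives a bijection $\bigcup_k H^{-k}(E) \xrightarrow{\sim} (H^\infty(E))^\ast$, which is what the lemma asserts. I would phrase the statement as an identification of the underlying vector spaces, noting in passing that under this identification the pairing is the natural extension of the $L^2$-pairing on $C^\infty_c(E)$, and that $\bigcup_k H^{-k}(E)$ carries the locally convex inductive-limit topology, which coincides with the strong dual topology on $(H^\infty(E))^\ast$ (this last point is routine for a reflexive \Frechet space arising as a countable intersection of Hilbert spaces and need not be belaboured).

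The only genuinely nontrivial input is the density of $C^\infty_c(E)$ in $H^\infty(E)$ for the \Frechet topology — i.e.\ simultaneous approximation in all Sobolev norms by compactly supported smooth sections — which is where bounded geometry of $M$ and $E$ is essential: one needs uniform control of the cutoff functions so that multiplying by a cutoff supported on a large ball costs only a constant (independent of the ball) in each $H^k$-norm. I expect this to be the main obstacle, and I would either cite the relevant statement from \cite{shubin} or \cite{roe_index_1} or supply a short argument using Lemma \ref{lem:nice_coverings_partitions_of_unity} to build an exhausting sequence of uniformly-$C^\infty$-bounded cutoffs. Everything else is the textbook duality between a countable projective limit of Hilbert spaces and the corresponding countable inductive limit of their duals.
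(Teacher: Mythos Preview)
Your argument is correct and is the standard one; the paper itself states the lemma without proof, treating it as a routine fact about projective limits of Hilbert spaces. So there is nothing to compare against.

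One remark on what you flag as the ``main obstacle'': density of $C_c^\infty(E)$ in $H^\infty(E)$ for the \Frechet topology is in fact \emph{not} needed for this lemma. For the extension step you only need that $H^\infty(E)$ is dense in $H^k(E)$ for the $H^k$-norm, and this is immediate since $C_c^\infty(E) \subset H^\infty(E) \subset H^k(E)$ and $H^k(E)$ is by Definition~\ref{defn:sobolev_spaces} the $\|\largecdot\|_{H^k}$-completion of $C_c^\infty(E)$. Likewise, injectivity of $H^{-k}(E) \hookrightarrow (H^\infty(E))^\ast$ follows from density of $C_c^\infty(E)$ in $H^k(E)$, not in $H^\infty(E)$. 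So the bounded-geometry-dependent argument you sketch (uniform cutoffs via Lemma~\ref{lem:nice_coverings_partitions_of_unity}) is true and interesting in its own right, but unnecessary here; the lemma holds by pure functional analysis once the Sobolev spaces are defined as completions of $C_c^\infty(E)$.
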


Let us equip the space $H^{-\infty}(E)$ with the locally convex topology defined as follows: the \Frechet space $H^\infty(E) = \operatorname{\underleftarrow{\lim}} H^k(E)$ is the projective limit of the Banach spaces $H^k(E)$, so using dualization we may put on the space $H^{-\infty}(E)$ the inductive limit topology denoted $\iota(H^{-\infty}(E), H^\infty(E))$:
\[H^{-\infty}_\iota(E) := \operatorname{\underrightarrow{\lim}} H^{-k}(E).\]
It enjoys the following universal property: a linear map $A \colon H^{-\infty}_\iota(E) \to F$ to a locally convex topological vector space $F$ is continuous if and only if $A|_{H^{-k}(E)}\colon H^{-k}(E) \to F$ is continuous for all $k \in \IN_0$.

Later we will need to know how the bounded\footnote{A subset $B \subset H^{-\infty}_\iota(E)$ is \emph{bounded} if and only if for all open neighbourhoods $U \subset H^{-\infty}_\iota(E)$ of $0$ there exists $\lambda > 0$ with $B \subset \lambda U$.} subsets of $H^{-\infty}_\iota(E)$ look like, which is the content of the following lemma. In its proof we will also deduce, solely for the enjoyment of the reader, some nice properties of the spaces $H^\infty(E)$ and $H^{-\infty}_\iota(E)$.

\begin{lem}\label{lem:regular_inductive_limit}
The space $H^{-\infty}_\iota(E) := \operatorname{\underrightarrow{\lim}} H^{-k}(E)$ is a \emph{regular inductive limit}, i.e., for every bounded subset $B \subset H^{-\infty}_\iota(E)$ exists some $k \in \IN_0$ such that $B$ is already contained in $H^{-k}(E)$ and bounded there.\footnote{Note that the converse does always hold for inductive limits, i.e., if $B \subset H^{-k}(E)$ is bounded, then it is also bounded in $H^{-\infty}_\iota(E)$.}
\end{lem}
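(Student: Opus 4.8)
The plan is to prove regularity of the inductive limit $H^{-\infty}_\iota(E) = \operatorname{\underrightarrow{\lim}} H^{-k}(E)$ by exploiting the special structure of this particular system: the connecting maps $H^{-k}(E) \hookrightarrow H^{-k-1}(E)$ are the duals of the compact-like restriction maps $H^{k+1}(E) \hookrightarrow H^k(E)$, and more importantly the dual $H^\infty(E)$ is a nice \Frechet space. The cleanest route is to identify $H^\infty(E)$ as a \Frechet--Schwartz (or at least reflexive, or Montel) space and then invoke a general duality principle: the strong dual of a \Frechet--Schwartz space is a (DFS)-space, which is automatically a regular — in fact strictly webbed, ultrabornological — inductive limit of Banach spaces. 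So the real content is (a) checking $H^\infty(E) = \operatorname{\underleftarrow{\lim}} H^k(E)$ is \Frechet--Schwartz, i.e.\ that the linking maps $H^{k+1}(E) \to H^k(E)$ are not merely continuous but precompact (equivalently, Schwartz), and (b) translating this into regularity of the dual inductive limit.

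For step (a), I would argue as follows. On a compact manifold the inclusion $H^{k+1} \hookrightarrow H^k$ is compact by Rellich--Kondrachov. On a manifold of bounded geometry this fails globally, but one does not need full compactness: being a \emph{Schwartz space} only requires that for every continuous seminorm $p$ (here $\|\cdot\|_{H^{k+1}}$) there is another one $q$ ($\|\cdot\|_{H^{k+\ell}}$ for suitable $\ell$) such that the unit ball of $q$ is \emph{precompact} in the $p$-topology — and actually for the projective limit to be Schwartz it suffices that the canonical map be a limit of precompact maps. Hmm — on bounded geometry even $H^{k+1} \hookrightarrow H^k$ need not be compact (think of $\IR^m$, where $L^2$-functions can escape to infinity). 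The correct statement is therefore weaker: I expect $H^\infty(E)$ to be a reflexive (indeed Montel?) \Frechet space is also false on $\IR^m$. Let me instead take the most robust approach that definitely works: prove regularity \emph{directly} using the structure of the norms, via a Grothendieck-type argument.

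Concretely, here is the self-contained plan I would actually carry out. Let $B \subset H^{-\infty}_\iota(E)$ be bounded. By the universal property, $B$ is bounded iff it is absorbed by every absolutely convex neighbourhood of $0$. Take a decreasing sequence $\varepsilon_k \searrow 0$ and form the neighbourhood $U = \Gamma\big(\bigcup_k \varepsilon_k U_{-k}\big)$ (absolutely convex hull), where $U_{-k}$ is the unit ball of $H^{-k}(E)$; such $U$ are a neighbourhood basis of $0$ in the inductive limit topology. Boundedness of $B$ gives, for each such sequence, a $\lambda$ with $B \subset \lambda U$. The key lemma (a standard fact about countable inductive limits of Banach spaces, essentially the Baire-type argument of Köthe / Floret) is that if a countable inductive limit is \emph{not} regular, then there is a bounded set $B$ and points $b_n \in B$ with $b_n \in H^{-n}(E) \setminus H^{-(n-1)}(E)$ and $\|b_n\|_{H^{-n}} \to \infty$; one then constructs a single neighbourhood $U$ of the above form that fails to absorb $B$, a contradiction. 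So I would reduce to showing: \emph{no} bounded $B$ can meet infinitely many ``shells'' $H^{-n} \setminus H^{-(n-1)}$ with unbounded norms. Here is where the concrete Sobolev structure enters and where I expect the main obstacle: I need that the linking maps have enough compactness that the increasing sequence of unit balls $U_{-k}$ does not ``blow up'' too fast — precisely, I expect to use that on bounded geometry the inclusions $H^{k}(E) \hookrightarrow H^{k-1}(E)$, while not compact, are \emph{still such that $H^\infty(E)$ is distinguished / its strong dual is bornological}, which is automatic because $H^\infty(E)$ is a \emph{reflexive \Frechet} space (each $H^k(E)$ is a Hilbert space, hence reflexive, and a countable reduced projective limit of reflexive spaces is reflexive) — and the strong dual of a reflexive \Frechet space, which is $(DF)$, need not a priori be a regular inductive limit, but it \emph{is} if the \Frechet space is moreover \emph{quasinormable}, and Hilbertian projective limits are quasinormable. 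This is the crux: establish quasinormability of $H^\infty(E)$ (which for a projective limit of Hilbert spaces amounts to a routine interpolation-inequality check, $\|\cdot\|_{H^{k+1}} \le C_\delta\, \delta \|\cdot\|_{H^{k+2}} + C_\delta \delta^{-1}\|\cdot\|_{H^k}$-type estimates, valid since Sobolev norms interpolate), and then quote that quasinormable \Frechet $\Rightarrow$ strong dual is a regular (LB)-space.

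Summarizing the write-up order: (1) recall the neighbourhood basis of $0$ in $\operatorname{\underrightarrow{\lim}} H^{-k}(E)$ consisting of sets $\Gamma(\bigcup_k \varepsilon_k U_{-k})$; (2) prove $H^\infty(E)$ is quasinormable via the interpolation inequality $\|s\|_{H^{k+1}}^2 \le \|s\|_{H^k}\,\|s\|_{H^{k+2}}$ for the Hilbertian norms (or the Young-inequality form above), which holds on bounded geometry by the local characterization \eqref{eq:sobolev_norm_local} of the Sobolev norms; (3) invoke the general theorem (Meise--Vogt, \emph{Introduction to Functional Analysis}, or Grothendieck) that the strong dual of a quasinormable \Frechet space is an (LB)-space with a compact/regular defining spectrum — in particular every bounded subset is contained and bounded in one step $H^{-k}(E)$; (4) note the footnoted converse is trivial since $H^{-k}(E) \hookrightarrow H^{-\infty}_\iota(E)$ is continuous. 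The promised ``nice properties of $H^\infty(E)$ and $H^{-\infty}_\iota(E)$ for the enjoyment of the reader'' would be: reflexivity of $H^\infty(E)$, hence $H^{-\infty}_\iota(E) = (H^\infty(E))'_\beta$ is reflexive; completeness of $H^{-\infty}_\iota(E)$; and the fact that it is \emph{not} metrizable. The main obstacle, as flagged, is getting the quasinormability estimate in a form that respects bounded geometry uniformly — but this follows from the local expression of the Sobolev norms and the standard one-dimensional interpolation inequality applied chart-by-chart with uniform constants.
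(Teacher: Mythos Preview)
Your approach is in the same spirit as the paper's---both exploit duality and structural properties of $H^\infty(E)$---but you take a harder route than necessary, and the specific step you rely on has a gap.

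You actually write down the paper's argument and then discard it. You correctly observe that $H^\infty(E)$, as a countable projective limit of Hilbert spaces, is a reflexive \Frechet space and hence distinguished, so that $H^{-\infty}_\iota(E) = H^{-\infty}_\beta(E)$ (inductive dual coincides with strong dual). You then say this ``need not a priori'' give regularity and reach for quasinormability instead. But it \emph{does} suffice, and this is precisely the paper's proof: the strong dual of any \Frechet space is complete; hence $H^{-\infty}_\iota(E)$ is complete; and for a countable inductive limit of \Frechet (here Banach) spaces, a corollary of Grothendieck's factorization theorem (the paper cites P\'erez-Carreras--Bonet) gives regular $\iff$ locally complete. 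Completeness implies local completeness, so regularity follows. No interpolation, no quasinormability---only that each $H^k(E)$ is reflexive, which is free for Hilbert spaces. (The paper invokes the slightly stronger ``totally reflexive'' via Bierstedt--Bonet, but the mechanism is identical.)

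The gap in your chosen route is step (2): the interpolation inequality $\|s\|_{H^{k+1}}^2 \le \|s\|_{H^k}\|s\|_{H^{k+2}}$, or its Young-inequality form, does \emph{not} by itself imply quasinormability. Log-convexity conditions of this kind encode tameness properties (of type (DN) or $(\Omega)$), which are logically independent of quasinormability. Quasinormability asks that for every $k$ there exist $m$ such that elements of the $H^m$-unit-ball can be approximated in $H^k$-norm by elements of a \emph{fixed bounded subset of $H^\infty(E)$}; the interpolation inequality only trades one norm against two others and produces no such approximant. It is plausible that $H^\infty(E)$ is quasinormable on a manifold of bounded geometry (a frequency-cutoff argument in local charts should do it), but that is a separate and nontrivial verification, not a consequence of interpolation. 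The paper avoids this entirely by stopping at ``distinguished'' and using completeness.
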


\begin{proof}
Since all $H^{-k}(E)$ are \Frechet spaces, we may apply the following corollary of Grothendieck's Factorization Theorem: the inductive limit $H^{-\infty}_\iota(E)$ is regular if and only if it is locally complete (see, e.g., \cite[Lemma 7.3.3(i)]{perezcarreras_bonet}). To avoid introducing more burdensome vocabulary, we won't define the notion of local completeness here since we will show something stronger: $H^{-\infty}_\iota(E)$ is actually complete\footnote{That is to say, every Cauchy net converges. In locally convex spaces, being Cauchy and to converge is meant with respect to each of the seminorms simultaneously.}.

From \cite[Sections 3.(a \& b)]{bierstedt_bonet} we conclude the following: since each $H^k(E)$ is a Hilbert space, the \Frechet space $H^\infty(E)$ is the projective limit of reflexive Banach spaces and therefore totally reflexive\footnote{That is to say, every quotient of it is reflexive, i.e., the canonical embeddings of the quotients into their strong biduals are isomorphisms of topological vector spaces.}. It follows that $H^\infty(E)$ is distinguished, which can be characterized by $H^{-\infty}_\beta(E) = H^{-\infty}_\iota(E)$, where $\beta(H^{-\infty}(E), H^\infty(E))$ is the strong topology on $H^{-\infty}(E)$. Now without defining the strong topology we just note that strong dual spaces of \Frechet space are always complete.
\end{proof}

\subsection{Quasilocal smoothing operators}\label{sec:quasiloc}

We will discuss in this section the definition and basic properties of smoothing operators on manifolds of bounded geometry and we will introduce the notion of quasilocal operators. The quasilocal smoothing operators will be the $(-\infty)$-part of our uniform pseudodifferential operators that we are going to define in the next section.

\begin{defn}[Smoothing operators]
Let $M$ be a manifold of bounded geometry and $E$ and $F$ two vector bundles of bounded geometry over $M$. We will call a continuous linear operator $A \colon H^{-\infty}_\iota(E) \to H^\infty(F)$ a \emph{smoothing operator}.
\qed
\end{defn}

\begin{lem}\label{lem:smoothing_operator_iff_bounded}
A linear operator $A \colon H^{-\infty}_\iota(E) \to H^\infty(F)$ is continuous if and only if it is bounded as an operator $H^{-k}(E) \to H^l(F)$ for all $k, l \in \IN_0$.
\end{lem}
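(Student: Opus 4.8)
The plan is to deduce the equivalence purely formally from the two universal properties already recorded in the excerpt. On the one hand, $H^{-\infty}_\iota(E)=\operatorname{\underrightarrow{\lim}} H^{-k}(E)$ carries the inductive limit topology, whose universal property was stated just after its definition: a linear map out of $H^{-\infty}_\iota(E)$ into a locally convex space is continuous if and only if its restriction to each $H^{-k}(E)$ is continuous. On the other hand, $H^\infty(F)=\operatorname{\underleftarrow{\lim}} H^l(F)$ carries the projective limit topology, so a linear map into $H^\infty(F)$ is continuous if and only if its composition with each coordinate projection $\pi_l\colon H^\infty(F)\to H^l(F)$ is continuous. The only genuinely analytic ingredient is the elementary fact that a linear map between normed spaces is continuous if and only if it is bounded; here this is applied to the Hilbert spaces $H^{-k}(E)$ and $H^l(F)$.

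For the forward implication I would argue as follows. Assume $A$ is continuous, and fix $k,l\in\IN$. The inclusion $\iota_k\colon H^{-k}(E)\hookrightarrow H^{-\infty}_\iota(E)$ is continuous by definition of the inductive limit topology, and $\pi_l\colon H^\infty(F)\to H^l(F)$ is continuous by definition of the projective limit topology. Hence $\pi_l\circ A\circ\iota_k\colon H^{-k}(E)\to H^l(F)$ is continuous as a composition of continuous maps. But this composition is precisely $A$ regarded as an operator $H^{-k}(E)\to H^l(F)$ — this makes sense because $A\bigl(H^{-k}(E)\bigr)\subset A\bigl(H^{-\infty}_\iota(E)\bigr)\subset H^\infty(F)\subset H^l(F)$ — and a continuous linear operator between Hilbert spaces is bounded.

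For the converse, assume $A$ is bounded as an operator $H^{-k}(E)\to H^l(F)$ for all $k,l\in\IN$, i.e. there are constants $C_{k,l}$ with $\|Au\|_{H^l}\le C_{k,l}\|u\|_{H^{-k}}$ for $u\in H^{-k}(E)$. To show $A\colon H^{-\infty}_\iota(E)\to H^\infty(F)$ is continuous it suffices, by the universal property of the projective limit, to show $\pi_l\circ A\colon H^{-\infty}_\iota(E)\to H^l(F)$ is continuous for each fixed $l$. By the universal property of the inductive limit, that is in turn equivalent to $(\pi_l\circ A)|_{H^{-k}(E)}\colon H^{-k}(E)\to H^l(F)$ being continuous for every $k$. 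But this restriction is again just $A$ viewed as a map $H^{-k}(E)\to H^l(F)$, which is bounded by hypothesis and therefore continuous. Chaining the two universal properties yields continuity of $A$.

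I expect no real obstacle: the argument is entirely formal once the two universal properties are invoked. The one point worth stating carefully is that $A$ is given a priori as a single linear map $H^{-\infty}_\iota(E)\to H^\infty(F)$, so all the "restrictions" $A|_{H^{-k}(E)}$ and the compositions with the projections $\pi_l$ refer to one and the same underlying linear map; in particular there is no well-definedness issue, and the phrase "$A$ is bounded as an operator $H^{-k}(E)\to H^l(F)$" means nothing more than the displayed estimate above.
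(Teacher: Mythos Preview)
Your proof is correct. The paper actually states this lemma without proof, presumably because it is an immediate consequence of the two universal properties recorded just before it; your argument spells out precisely that deduction, so it is exactly the intended one.
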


Let us denote by $\IB(H^{-\infty}_\iota(E), H^\infty(E))$ the algebra of all smoothing operators from $E$ to itself. Due to the above lemma we may equip it with the countable family of norms $(\| \largecdot \|_{-k,l})_{k,l \in \IN_0}$ so that it becomes a \Frechet space\footnote{That is to say, a topological vector space whose topology is Hausdorff and induced by a countable family of seminorms such that it is complete with respect to this family of seminorms.}.

Now let us get to the main property of smoothing operators that we will need, namely that they can be represented as integral operators with a uniformly bounded smooth kernel. Let $A \colon H^{-\infty}_\iota(E) \to H^\infty(F)$ be given. Then we get by the Sobolev Embedding Theorem \ref{thm:sobolev_embedding} a continuous operator $A \colon H^{-\infty}_\iota(E) \to C_b^\infty(F)$ and so may conclude by the Schwartz Kernel Theorem for regularizing operators\footnote{Note that the usual wording of the Schwartz Kernel Theorem for regularizing operators requires the domain $H^{-\infty}(E)$ to be equipped with the weak-$^\ast$ topology $\sigma(H^{-\infty}(E), H^{\infty}(F))$ and $A$ to be continuous against it. But one actually only needs the domain to be equipped with the inductive limit topology. To see this, one can look at the proof of the Schwartz Kernel Theorem for regularizing kernels as in, e.g., \cite[Theorem 3.18]{ganglberger}.} that $A$ has a smooth integral kernel $k_A \in C^\infty(F \boxtimes E^\ast)$, which is uniformly bounded as are all its derivatives, because of the bounded geometry of $M$ and the vector bundles $E$ and $F$, i.e., $k_A \in C_b^\infty(F \boxtimes E^\ast)$.

From the proof of the Schwartz Kernel Theorem for regularizing operators we also see that the assignment of the kernel to the operator is continuous against the \Frechet topology on $\IB(H^{-\infty}_\iota(E), H^\infty(F))$. Furthermore, because of Lemma \ref{lem:regular_inductive_limit} this topology coincides with the topology of bounded convergence\footnote{A basis of neighbourhoods of zero for the topology of bounded convergence is given by the subsets $M(B, U) \subset \IB(H^{-\infty}_\iota(E), H^\infty(F))$ of all operators $T$ with $T(B) \subset U$, where $B$ ranges over all bounded subsets of $H^{-\infty}_\iota(E)$ and $U$ over a basis of neighbourhoods of zero in $H^\infty(F)$.} on $\IB(H^{-\infty}_\iota(E), H^\infty(F))$. We need this equality of topologies in order for the next proposition (which is a standard result in distribution theory) to be equivalent to the version stated in \cite[Proposition 2.9]{roe_index_1}.

\begin{prop}\label{prop:smoothing_op_kernel}
Let $A\colon H^{-\infty}_\iota(E) \to H^\infty(F)$ be a smoothing operator. Then $A$ can be written as an integral operator with kernel $k_A \in C_b^\infty(F \boxtimes E^\ast)$. Furthermore, the map
\[\IB(H^{-\infty}_\iota(E), H^\infty(F)) \to C_b^\infty(F \boxtimes E^\ast),\]
which associates a smoothing operator its kernel, is continuous.
\end{prop}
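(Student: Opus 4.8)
The plan is to establish the two assertions of the proposition — existence of the smooth, uniformly bounded kernel and continuity of the kernel assignment — essentially by assembling the facts that have already been put in place in the preceding discussion, and then verifying the uniform bounds via the local Sobolev description from \eqref{eq:sobolev_norm_local}. First I would compose $A$ with the Sobolev embeddings of Theorem \ref{thm:sobolev_embedding}: since $A$ is continuous as a map $H^{-\infty}_\iota(E) \to H^\infty(F)$ and $H^\infty(F) \hookrightarrow C^\infty_b(F)$ continuously, we obtain a continuous operator $A \colon H^{-\infty}_\iota(E) \to C^\infty_b(F)$. In particular, by the universal property of the inductive limit, each restriction $A|_{H^{-k}(E)} \colon H^{-k}(E) \to C^\infty_b(F)$ is continuous, hence in fact $A|_{H^{-k}(E)} \colon H^{-k}(E) \to C^r_b(F)$ is bounded for every $r$.

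Next I would invoke the Schwartz Kernel Theorem for regularizing operators, in the form noted in the footnote (continuity against the inductive limit topology on the domain suffices; cf.\ the proof of \cite[Theorem 3.18]{ganglberger}), to produce a smooth kernel $k_A \in C^\infty(F \boxtimes E^\ast)$ representing $A$ as an integral operator. The crucial point is then the \emph{uniform} bound $k_A \in C^\infty_b(F \boxtimes E^\ast)$. To see this I would work locally: fix the uniform cover by normal coordinate charts $B_{2\varepsilon}(x_i)$ and the subordinate partition of unity $\varphi_i$ from Lemma \ref{lem:nice_coverings_partitions_of_unity}, together with synchronous framings, so that all transition data are uniformly controlled by Lemmas \ref{lem:transition_functions_uniformly_bounded} and \ref{lem:equiv_characterizations_bounded_geom_bundles}. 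For a pair of indices $i, j$, the value $k_A(x,y)$ for $x \in B_{2\varepsilon}(x_i)$, $y \in B_{2\varepsilon}(x_j)$ is recovered by testing $A$ against bump sections supported near $y$; the derivatives $\partial^\alpha_x \partial^\beta_y k_A(x,y)$ are then estimated by the operator norm of $A \colon H^{-k}(E) \to C^r_b(F)$ for suitable $k, r$ depending on $|\alpha|, |\beta|$, against the $H^{-k}$-norm of a fixed family of local bump sections. Since the local Sobolev norms are uniformly equivalent to the global one by \eqref{eq:sobolev_norm_local} and the bump sections may be chosen by translation to have uniformly bounded $H^{-k}$-norm in every chart, these estimates are uniform in $i, j$, giving $k_A \in C^\infty_b(F \boxtimes E^\ast)$.

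Finally, for the continuity statement I would observe that the very same estimate exhibits each $C^r_b$-seminorm of $k_A$ as dominated by finitely many of the operator norms $\|A\|_{-k,r}$ (equivalently, by the $\|\cdot\|_{-k,l}$-seminorms defining the \Frechet topology on $\IB(H^{-\infty}_\iota(E), H^\infty(F))$ via Lemma \ref{lem:smoothing_operator_iff_bounded}), which is precisely continuity of the assignment $A \mapsto k_A$; here the identification of the \Frechet topology with the topology of bounded convergence, guaranteed by Lemma \ref{lem:regular_inductive_limit}, is what makes the cited \cite[Proposition 2.9]{roe_index_1} directly applicable and ensures the wording matches. I expect the main obstacle to be the bookkeeping in the uniform local estimate: one must check carefully that the choice of test bump sections, the passage through the normal-coordinate charts, and the conversion between global and local Sobolev norms can all be done with constants independent of the chart index, which is exactly where bounded geometry of $M$, $E$ and $F$ enters and where the cited results of Shubin and Roe are doing the real work.
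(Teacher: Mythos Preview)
Your proposal is correct and follows essentially the same approach as the paper: the paper does not give a formal proof of this proposition (it is cited from Roe), but the discussion immediately preceding the statement outlines exactly the argument you give --- compose with the Sobolev embedding to land in $C_b^\infty(F)$, invoke the Schwartz Kernel Theorem for regularizing operators (with the same footnote about the inductive limit topology sufficing), appeal to bounded geometry for the uniform bounds, and use Lemma~\ref{lem:regular_inductive_limit} to match the \Frechet topology with the topology of bounded convergence so that Roe's wording applies. Your version simply fills in more detail on the local uniform estimate, which the paper compresses into the phrase ``because of the bounded geometry of $M$ and the vector bundles $E$ and $F$.''
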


Let $L \subset M$ be any subset. We will denote by $\|\largecdot\|_{H^r, L}$ the seminorm on the Sobolev space $H^r(E)$ given by
\[\|u\|_{H^r, L} := \inf \{ \|u^\prime\|_{H^r} \ | \ u^\prime \in H^r(E), u^\prime = u \text{ on a neighbourhood of }L\}.\]

\begin{defn}[Quasilocal operators, {\cite[Section 5]{roe_index_1}}]\label{defn:quasiloc_ops}
We will call a continuous operator $A \colon H^r(E) \to H^s(F)$ \emph{quasilocal}, if there is a function $\mu\colon \IR_{> 0} \to \IR_{\ge 0}$ with $\mu(R) \to 0$ for $R \to \infty$ and such that for all $L \subset M$ and all $u \in H^r(E)$ with $\supp u \subset L$ we have
\[\|A u\|_{H^s, M - B_R(L)} \le \mu(R) \cdot \|u\|_{H^r}.\]

Such a function $\mu$ will be called a \emph{dominating function for $A$}.

We will say that an operator $A\colon C_c^\infty(E) \to C^\infty(F)$ is a \emph{quasilocal operator of order $k$}\footnote{Roe calls such operators ``\emph{uniform} operators of order $k$'' in \cite[Definition 5.3]{roe_index_1}. But since the word ``uniform'' will have another meaning for us (see, e.g., the definition of uniform $K$-homology), we changed the name.} for some $k \in \IZ$, if $A$ has a continuous extension to a quasilocal operator $H^s(E) \to H^{s-k}(F)$ for all $s \in \IZ$.

A smoothing operator $A\colon H^{-\infty}_\iota(E) \to H^\infty(F)$ will be called \emph{quasilocal}, if $A$ is quasilocal as an operator $H^{-k}(E) \to H^l(F)$ for all $k,l \in \IN_0$ (from which it follows that $A$ is also quasilocal for all $k,l \in \IZ$).
\qed
\end{defn}

If we regard a smoothing operator $A$ as an operator $L^2(E) \to L^2(F)$, we get a uniquely defined adjoint $A^\ast\colon L^2(F) \to L^2(E)$. Its integral kernel will be given by
\[k_{A^\ast}(x,y) := k_A(y,x)^\ast \in C_b^\infty(E \boxtimes F^\ast),\]
where $k_A(y,x)^\ast \in F_y^\ast \otimes E_x$ is the dual element of $k_A(y,x) \in F_y \otimes E^\ast_x$.

\begin{defn}[cf. {\cite[Definition 5.3]{roe_index_1}}]\label{defn:quasiloc_smoothing}
We will denote the set of all quasilocal smoothing operators $A\colon H^{-\infty}_\iota(E) \to H^\infty(F)$ with the property that their adjoint operator $A^\ast$ is also a quasilocal smoothing operator $H^{-\infty}_\iota(F) \to H^\infty(E)$ by $\IU(E,F)$.

If $E=F$, we will just write $\IU(E)$.
\qed
\end{defn}

\begin{rem}
Roe defines in \cite[Definition 5.3]{roe_index_1} the algebra $\mathcal{U}_{-\infty}(E)$ instead of $\IU(E)$, i.e., he does not demand that the adjoint operator is also quasilocal smoothing. The reason why we do this is that we want adjoints of uniform pseudodifferential operators to be again uniform pseudodifferential operators (and the algebra $\IU(E)$ is used in the definition of uniform pseudodifferential operators).
\qed
\end{rem}

\subsection{Definition of uniform pseudodifferential operators}

Let $M^m$ be an $m$-dimensional manifold of bounded geometry and let $E$ and $F$ be two vector bundles of bounded geometry over $M$. Now we will get to the definition of uniform pseudodifferential operators acting on sections of vector bundles of bounded geometry over manifolds of bounded geometry.

Our definition is almost the same as the ones of Shubin \cite{shubin} and Kordyukov \cite{kordyukov}. The difference is that our definition is slightly more general, because we do not restrict to finite propagation operators (since we allow the term $P_{-\infty}$ in the definition below). The reason why we have to do this is because of our results in Section~\ref{sec:functions_of_PDOs}: there we always only get quasi-local operators and not operators of finite propagation (in fact, the main technical result of that section is probably Lemma~\ref{lem:exp(itP)_quasilocal} stating that the wave operators are quasi-local), and therefore we would leave our calculus of pseudodifferential operators if we would insist of them having finite propagation. So most of the results stated in this subsection and in Subsection~\ref{secio23ed} are basically already known, resp., it is straight-forward to generalize the corresponding statements in the finite propagation case to our quasi-local case here. We nevertheless include a discussion of these statements in order for our exposition here to be self-contained.

\begin{defn}\label{defn:pseudodiff_operator}
An operator $P\colon C_c^\infty(E) \to C^\infty(F)$ is a \emph{uniform pseudodifferential operator of order $k \in \IZ$}, if with respect to a uniformly locally finite covering $\{B_{2\varepsilon}(x_i)\}$ of $M$ with normal coordinate balls and corresponding subordinate partition of unity $\{\varphi_i\}$ as in Lemma \ref{lem:nice_coverings_partitions_of_unity} we can write
\begin{equation}
\label{eq:defn_pseudodiff_operator_sum}
P = P_{-\infty} + \sum_i P_i
\end{equation}
satisfying the following conditions:
\begin{itemize}
\item $P_{-\infty} \in \IU(E,F)$, i.e., it is a quasilocal smoothing operator,
\item for all $i$ the operator $P_i$ is with respect to synchronous framings of $E$ and $F$ in the ball $B_{2\varepsilon}(x_i)$ a matrix of pseudodifferential operators on $\IR^m$ of order $k$ with support\footnote{An operator $P$ is \emph{supported in a subset $K$}, if $\supp Pu \subset K$ for all $u$ in the domain of $P$ and if $Pu = 0$ whenever we have $\supp u \cap K = \emptyset$.} in $B_{2\varepsilon}(0) \subset \IR^m$, and
\item the constants $C_i^{\alpha \beta}$ appearing in the bounds
\[\|D_x^\alpha D_\xi^\beta p_i(x,\xi)\| \le C^{\alpha \beta}_i (1 + |\xi|)^{k - |\beta|}\]
of the symbols of the operators $P_i$ can be chosen to not depend on $i$, i.e., there are $C^{\alpha \beta} < \infty$ such that
\begin{equation}
\label{eq:uniformity_defn_PDOs}
C^{\alpha \beta}_i \le C^{\alpha \beta}
\end{equation}
for all multi-indices $\alpha, \beta$ and all $i$. We will call this the \emph{uniformity condition} for pseudodifferential operators on manifolds of bounded geometry.
\end{itemize}
We denote the set of all such operators by $\UPsiDO^k(E,F)$.
\qed
\end{defn}

\begin{rem}
From Lemma \ref{lem:transition_functions_uniformly_bounded} and Lemma \ref{lem:equiv_characterizations_bounded_geom_bundles} together with \cite[Theorem III.§3.12]{lawson_michelsohn} (and its proof which gives the concrete formula how the symbol of a pseudodifferential operator transforms under a coordinate change) we conclude that the above definition of uniform pseudodifferential operators on manifolds of bounded geometry does neither depend on the chosen uniformly locally finite covering of $M$ by normal coordinate balls, nor on the subordinate partition of unity with uniformly bounded derivatives, nor on the synchronous framings of $E$ and $F$.
\qed
\end{rem}

\begin{rem}
We could also have given an equivalent definition of uniform pseudodifferential operators, which does not need a choice of covering: firstly, for each $\varepsilon > 0$ there should be a quasilocal smoothing operator $P_\varepsilon$ such that for any $\phi, \psi \in C_c^\infty(M)$ with $d(\supp \phi, \supp \psi) > \varepsilon$ and any $v \in C_c^\infty(E)$ we have $\psi \cdot P(\phi \cdot v) = \psi \cdot P_\varepsilon(\phi \cdot v)$. This encodes that the integral kernel of a uniform pseudodifferential operator $P$ is off-diagonally a quasilocal smoothing operator.

Secondly, to encode the behaviour of the integral kernel of $P$ at its diagonal, we must demand that in any normal coordinate chart of radius less than the injectivity radius of the manifold with any choice of cut-off function for this coordinate chart and with any choice of synchronous framings of $E$ and $F$ in this coordinate chart the operator $P$ looks like a pseudodifferential operator on $\IR^m$, and for the collection of all of these local representatives of $P$ computed with respect to cut-off functions that have common bounds on their derivatives we have the Uniformity Condition \eqref{eq:uniformity_defn_PDOs}.
\qed
\end{rem}

\begin{rem}
We consider only operators that would correspond to H\"ormander's class $S_{1, 0}^k(\Omega)$, if we consider open subsets $\Omega$ of $\IR^m$ instead of an $m$-dimensional manifold $M$, i.e., we do not investigate operators corresponding to the more general classes $S_{\rho, \delta}^k(\Omega)$. The paper \cite[Definition 2.1]{hormander_ess_norm} is the one where H\"ormander introduced these classes.
\qed
\end{rem}

Recall that in the case of compact manifolds a pseudodifferential operator $P$ of order~$k$ has an extension to a continuous operator $H^s(E) \to H^{s-k}(F)$ for all $s \in \IZ$ (see, e.g., \cite[Theorem III.§3.17(i)]{lawson_michelsohn}). Due to the uniform local finiteness of the sum in \eqref{eq:defn_pseudodiff_operator_sum} and due to the Uniformity Condition \eqref{eq:uniformity_defn_PDOs}, this result does also hold in our case of a manifold of bounded geometry.

\begin{prop}\label{prop:pseudodiff_extension_sobolev}
Let $P \in \UPsiDO^k(E,F)$. Then $P$ has for all $s \in \IZ$ an extension to a continuous operator $P\colon H^s(E) \to H^{s-k}(F)$.
\end{prop}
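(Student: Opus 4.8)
The plan is to reduce the global statement on $M$ to the classical continuity of pseudodifferential operators on $\IR^m$, using the local characterization \eqref{eq:sobolev_norm_local} of the Sobolev norms together with the uniformity built into Definition \ref{defn:pseudodiff_operator}. Write $P = P_{-\infty} + \sum_i P_i$ as in \eqref{eq:defn_pseudodiff_operator_sum}. The summand $P_{-\infty} \in \IU(E,F)$ is a quasilocal smoothing operator, so by Lemma \ref{lem:smoothing_operator_iff_bounded} it is bounded $H^{-k'}(E) \to H^{l'}(F)$ for all $k', l'$; in particular it extends continuously to $H^s(E) \to H^{s-k}(F)$ for every $s$, and we may ignore it from now on. Thus it suffices to treat $P' := \sum_i P_i$.

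First I would handle a single term $P_i$. After passing to synchronous framings in $B_{2\varepsilon}(x_i)$, the operator $P_i$ is (a matrix of) a classical order-$k$ pseudodifferential operator on $\IR^m$, compactly supported in $B_{2\varepsilon}(0)$, with symbol estimates governed by the constants $C^{\alpha\beta}_i$. By the standard continuity theorem for H\"ormander class $S^k_{1,0}$ operators on $\IR^m$ (e.g.\ \cite[Theorem III.\S3.17(i)]{lawson_michelsohn}), $P_i$ is bounded $H^s(\IR^m) \to H^{s-k}(\IR^m)$ with an operator norm that depends only on $s$, on $m$, and on finitely many of the constants $C^{\alpha\beta}_i$ — here one must note that the proof of that theorem uses only finitely many symbol seminorms, the number depending on $s$ and $m$. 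By the Uniformity Condition \eqref{eq:uniformity_defn_PDOs}, $C^{\alpha\beta}_i \le C^{\alpha\beta}$ independently of $i$, so there is a single constant $K = K(s,m,\{C^{\alpha\beta}\})$ with $\|P_i u\|_{H^{s-k}} \le K \|u\|_{H^s}$ for all $i$ and all $u$.

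Next I would assemble the local pieces. Fix the uniform cover $\{B_{2\varepsilon}(x_i)\}$ and partition of unity $\{\varphi_i\}$ from Lemma \ref{lem:nice_coverings_partitions_of_unity}; let $N$ be the uniform bound on the multiplicity of the cover. Choose auxiliary cutoffs $\psi_i$, uniformly bounded in $C^\infty_b$, with $\psi_i \equiv 1$ on $B_{2\varepsilon}(x_i)$ and supported in a ball of radius $3\varepsilon < \injrad_M$ around $x_i$; then $P_i = \psi_i P_i \psi_i$ since $P_i$ is supported in $B_{2\varepsilon}(0)$. Given $u \in C^\infty_c(E)$, write $u = \sum_j \varphi_j u$. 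Using the local norm \eqref{eq:sobolev_norm_local} for $P'u$ on the ball around $x_i$, only the terms $P_j(\varphi_j u)$ with $B_{3\varepsilon}(x_j) \cap B_{3\varepsilon}(x_i) \neq \emptyset$ contribute (at most $N'$ of them, $N'$ uniform), and for each such term the single-operator bound together with the fact that multiplication by $\varphi_j$ and by the transition/framing functions is uniformly bounded on Sobolev norms (Lemmas \ref{lem:transition_functions_uniformly_bounded}, \ref{lem:equiv_characterizations_bounded_geom_bundles}) gives $\|P_j(\varphi_j u)\|_{H^{s-k}(B_{3\varepsilon}(x_j))} \le K' \|\varphi_j u\|_{H^s(B_{2\varepsilon}(x_j))}$ with $K'$ uniform. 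Summing the $p=2$ local norms over $i$, using $\sum_i \|\varphi_i u\|_{H^s(B_{2\varepsilon}(x_i))}^2 \stackrel{\mathrm{equiv}}{=} \|u\|_{H^s}^2$ from \eqref{eq:sobolev_norm_local} and the bounded overlap, yields $\|P'u\|_{H^{s-k}} \le C \|u\|_{H^s}$ with $C$ independent of $u$. Density of $C^\infty_c(E)$ in $H^s(E)$ then gives the continuous extension, first for $s \in \IZ$ (and the equivalence \eqref{eq:sobolev_norm_local} in fact lets one state it for all real $s$).

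The main obstacle is the bookkeeping in the assembly step: one must verify that all the gluing data — the partition of unity $\varphi_i$, the cutoffs $\psi_i$, the coordinate transition functions between overlapping charts, and the framing transition matrices — act on the local Sobolev spaces with bounds independent of $i$, which is exactly where bounded geometry (via Lemmas \ref{lem:transition_functions_uniformly_bounded} and \ref{lem:equiv_characterizations_bounded_geom_bundles}) and the uniform boundedness of derivatives of $\varphi_i$ (Lemma \ref{lem:nice_coverings_partitions_of_unity}) are essential. The underlying analytic input — continuity of a fixed classical $\Psi$DO on $\IR^m$ between Sobolev spaces, with norm controlled by finitely many symbol seminorms — is entirely standard; the novelty here is purely that the uniformity hypotheses make the constants in the patching argument independent of the patch.
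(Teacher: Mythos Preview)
Your proposal is correct and follows exactly the approach the paper indicates: the paper simply remarks (without a formal proof) that the compact-manifold result \cite[Theorem III.\S3.17(i)]{lawson_michelsohn} carries over ``due to the uniform local finiteness of the sum in \eqref{eq:defn_pseudodiff_operator_sum} and due to the Uniformity Condition \eqref{eq:uniformity_defn_PDOs}'', and you have spelled out precisely those two ingredients together with the treatment of $P_{-\infty}$ via Lemma~\ref{lem:smoothing_operator_iff_bounded}.
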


\begin{rem}\label{rem:bound_operator_norm_PDO}
Later we will need the following fact: we can bound the operator norm of $P\colon H^s(E) \to H^{s-k}(F)$ from above by the maximum of the constants $C^{\alpha 0}$ with $|\alpha| \le K_s$ from the Uniformity Condition \eqref{eq:uniformity_defn_PDOs} for $P$ multiplied with a constant $C_s$, where $K_s \in \IN_0$ and $C_s$ only depend on $s \in \IZ$ and the dimension of the manifold $M$. This can be seen by carefully examining the proof of \cite[Proposition III.§3.2]{lawson_michelsohn} which is the above proposition for the compact case.\footnote{To be utterly concrete, we have to choose normal coordinate charts and a subordinate partition of unity as in Lemma \ref{lem:nice_coverings_partitions_of_unity} and also synchronous framings for $E$ and $F$ and then use Formula \eqref{eq:sobolev_norm_local} which gives Sobolev norms that can be computed locally and that are equivalent to the global norms \eqref{eq:sobolev_norm}.}
\qed
\end{rem}

Let us define
\[\UPsiDO^{-\infty}(E,F) := \bigcap_k \UPsiDO^k(E,F).\]

We will show $\UPsiDO^{-\infty}(E,F) = \IU(E,F)$: from the previous Proposition \ref{prop:pseudodiff_extension_sobolev} we conclude that $P \in \UPsiDO^{-\infty}(E,F)$ is a smoothing operator (using Lemma \ref{lem:smoothing_operator_iff_bounded}). Since we can write $P = P_{-\infty} + \sum_i P_i$, where $P_{-\infty} \in \IU(E,F)$ and the $P_i$ are supported in balls with uniformly bounded radii, the operator $\sum_i P_i$ is of finite propagation. So $P$ is the sum of a quasilocal smoothing operator $P_{-\infty}$ and a smoothing operator $\sum_i P_i$ of finite propagation, and therefore a quasilocal smoothing operator. The same arguments also apply to the adjoint $P^\ast$ of $P$, so that in the end we can conclude $P \in \IU(E,F)$, i.e., we have shown $\UPsiDO^{-\infty}(E,F) \subset \IU(E,F)$.

Since the other inclusion does hold by definition, we get the claim.\footnote{Of course, our definition of pseudodifferential operators was arranged such that this lemma holds.}

\begin{lem}\label{lem:PDO_-infinity_equal_quasilocal_smoothing}
$\UPsiDO^{-\infty}(E,F) = \IU(E,F)$.
\end{lem}

One of the important properties of pseudodifferential operators on compact manifolds is that the composition of an operator $P \in \UPsiDO^k(E,F)$ and $Q \in \UPsiDO^l(F,G)$ is again a pseudodifferential operator of order $k+l$: $PQ \in \UPsiDO^{k+l}(E,G)$. We can prove this also in our setting by writing
\begin{align*}
PQ & = \Big(P_{-\infty} + \sum_i P_i\Big) \Big(Q_{-\infty} + \sum_j Q_j\Big)\\
& = P_{-\infty} Q_{-\infty} + \sum_i P_i Q_{-\infty} + \sum_j P_{-\infty} Q_j + \sum_{i,j} P_i Q_j
\end{align*}
and then arguing as follows.
\begin{itemize}
\item The first summand is an element of $\IU(E,G)$: in \cite[Proposition 5.2]{roe_index_1} it was shown that the composition of two quasilocal operators is again quasilocal and it is clear that composing smoothing operators again gives smoothing operators, resp. it is easy to see that composing two operators which may be approximated by finite propagation operators again gives such an operator.

\item The second and third summands are from $\IU(E,G)$ due to Proposition \ref{prop:pseudodiff_extension_sobolev} and since the sums are uniformly locally finite, the operators $P_i$ and $Q_j$ are supported in coordinate balls of uniform radii (i.e., have finite propagation which is uniformly bounded from above) and their operator norms are uniformly bounded due to the uniformity condition in the definition of pseudodifferential operators.

\item The last summand is a uniformly locally finite sum of pseudodifferential operators of order $k+l$ (here we use the corresponding result for compact manifolds) and to see the Uniformity Condition \eqref{eq:uniformity_defn_PDOs} we use \cite[Theorem III.§3.10]{lawson_michelsohn}: it states that the symbol of $P_i Q_j$ has formal development $\sum_\alpha \frac{i^{|\alpha|}}{\alpha !} (D_\xi^\alpha p_i)(D_x^\alpha q_j)$. So we may deduce the uniformity condition for $P_i Q_j$ from the one for $P_i$ and for $Q_j$.
\end{itemize}

Other properties that immediately generalize from the compact to the bounded geometry case is firstly, that the commutator of two uniform pseudodifferential operators whose symbols commute (Definition~\ref{defnnkdf893}) is of one order lower than it should a priori be, and secondly, that multiplication with a function $f \in C_b^\infty(M)$ defines a uniform pseudodifferential operator of order $0$.

So we have the following important proposition:

\begin{prop}\label{prop:PsiDOs_filtered_algebra}
$\UPsiDO^\ast(E)$ is a filtered $^\ast$-algebra, i.e., for all $k, l \in\IZ $ we have
\[\UPsiDO^k(E) \circ \UPsiDO^l(E) \subset \UPsiDO^{k+l}(E),\]
and so $\UPsiDO^{-\infty}(E)$ is a two-sided $^\ast$-ideal in $\UPsiDO^\ast(E)$.

Furthermore, we have $[P, Q] \in \UPsiDO^{k+l-1}(E)$ for $P \in \UPsiDO^k(E)$, $Q \in \UPsiDO^l(E)$, $k,l \in \IZ$, provided the symbols of $P$ and $Q$ commute.

Moreover, multiplication with a function $f \in C_b^\infty(M)$ defines a uniform pseudodifferential operator of order $0$ whose symbol commutes with any other symbol.
\end{prop}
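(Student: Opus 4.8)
The plan is to verify each of the three assertions by combining the corresponding statements for compact manifolds (essentially the pointwise/local symbol calculus) with the uniform bookkeeping that the bounded-geometry setup provides. The decomposition $P = P_{-\infty} + \sum_i P_i$ and $Q = Q_{-\infty} + \sum_j Q_j$ relative to a fixed uniformly locally finite cover $\{B_{2\varepsilon}(x_i)\}$ with subordinate partition of unity $\{\varphi_i\}$ (Lemma \ref{lem:nice_coverings_partitions_of_unity}) is the organizing device throughout; since Definition \ref{defn:pseudodiff_operator} is independent of these choices, we are free to work with one such cover.

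First, for the multiplicativity $\UPsiDO^k(E) \circ \UPsiDO^l(E) \subset \UPsiDO^{k+l}(E)$, I would expand $PQ$ into the four sums displayed in the excerpt just before the proposition and treat each one exactly as indicated there: the cross terms and $P_{-\infty}Q_{-\infty}$ land in $\IU(E) = \UPsiDO^{-\infty}(E)$ using \cite[Propositions 5.2]{roe_index_1} (composition of quasilocal operators is quasilocal), Lemma \ref{lem:PDO_-infinity_equal_quasilocal_smoothing}, and the uniform operator-norm bound of Remark \ref{rem:bound_operator_norm_PDO} together with the uniform finite propagation of $\sum_i P_i$ and $\sum_j Q_j$; the main term $\sum_{i,j} P_iQ_j$ is handled by the local composition formula \cite[Theorem III.\S3.10]{lawson_michelsohn}, whose asymptotic expansion $\sum_\alpha \frac{i^{|\alpha|}}{\alpha!}(D_\xi^\alpha p_i)(D_x^\alpha q_j)$ lets one read off the Uniformity Condition \eqref{eq:uniformity_defn_PDOs} for $P_iQ_j$ from those for $P_i$ and $Q_j$. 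One must also check that $\sum_{i,j}P_iQ_j$ can be reorganized as a uniformly locally finite sum subordinate to the cover: only pairs $(i,j)$ with $B_{2\varepsilon}(x_i)\cap B_{2\varepsilon}(x_j)\neq\emptyset$ contribute, and by uniform local finiteness each index meets boundedly many others, so after regrouping by $i$ one obtains finitely-many-terms-per-point operators supported in the required balls, with uniform symbol bounds inherited from the product estimate. The statements that $\UPsiDO^{-\infty}(E)$ is a two-sided $^\ast$-ideal and closed under adjoints then follow: it is closed under the composition just proved (it absorbs anything, by the cross-term analysis), and $\IU(E)$ is $^\ast$-closed by Definition \ref{defn:quasiloc_smoothing}, while the $^\ast$-operation preserves $\UPsiDO^k$ since the adjoint of a matrix of $S^k_{1,0}$-operators on $\IR^m$ is again one with symbol bounds controlled by the original ones (the adjoint symbol has an asymptotic expansion $\sum_\alpha \frac{i^{|\alpha|}}{\alpha!}D_\xi^\alpha D_x^\alpha p^*$, uniform in $i$), and adjoints of quasilocal smoothing operators are quasilocal smoothing by construction of $\IU$.

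Second, the commutator improvement $[\UPsiDO^k(E),\UPsiDO^l(E)] \subset \UPsiDO^{k+l-1}(E)$ is immediate once multiplicativity is in place: in the asymptotic expansion of the symbol of $PQ - QP$, the order-$(k+l)$ leading terms are $p\,q$ and $q\,p$, which cancel, so the symbol of the commutator lies in $S^{k+l-1}_{1,0}$ locally, and the cancellation and the resulting bounds are uniform in $i$ because the full expansions were. The quasilocal-smoothing remainders contribute only to $\UPsiDO^{-\infty}(E) \subset \UPsiDO^{k+l-1}(E)$, so there is nothing further to check.

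Third, that multiplication by $f \in C_b^\infty(M)$ is a pseudodifferential operator of order $0$: its local symbol in the chart $B_{2\varepsilon}(x_i)$ is simply $f$ (read in normal coordinates), viewed as a $\xi$-independent, hence order-$0$, symbol; the Uniformity Condition holds because $C^r$-boundedness of $f$ means exactly that $|\partial^\alpha f| \le C_\alpha$ in normal coordinates with $C_\alpha$ independent of the chart (the equivalent characterization stated after the definition of $C^r_b$-functions), and a $\xi$-independent symbol trivially satisfies the $(1+|\xi|)^{-|\beta|}$-type bounds for $\beta \neq 0$. One cuts off using $\varphi_i$ to get the supported pieces $P_i = \varphi_i \cdot f \cdot (\text{reconstruction})$ in the required form with $P_{-\infty}=0$.

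The main obstacle is the bookkeeping in the product term $\sum_{i,j}P_iQ_j$: one must make precise that regrouping a doubly-indexed, uniformly locally finite family into the single-indexed form demanded by Definition \ref{defn:pseudodiff_operator} preserves both the support condition and, crucially, the \emph{uniformity} of the symbol constants, i.e. that the finitely many summands contributing at each point, when assembled, still obey bounds $C^{\alpha\beta}$ independent of the point. This rests on the uniform local finiteness of the cover (Lemma \ref{lem:nice_coverings_partitions_of_unity}) bounding the multiplicity, on the uniform bounds for $\varphi_i$ and the transition functions (Lemmas \ref{lem:transition_functions_uniformly_bounded} and \ref{lem:equiv_characterizations_bounded_geom_bundles}), and on the uniform form of the composition formula; everything else is a routine transcription of the compact-manifold symbol calculus.
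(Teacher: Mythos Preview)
Your proposal is correct and follows essentially the same approach as the paper: the four-term expansion of $PQ$, the handling of the three cross terms via quasilocality/smoothing arguments and uniform operator-norm bounds, and the treatment of $\sum_{i,j}P_iQ_j$ via the local composition formula from \cite[Theorem III.\S3.10]{lawson_michelsohn} are exactly what the paper does. You actually supply more detail than the paper, which dispatches the commutator statement and the multiplication-by-$f$ statement in a single sentence saying they ``immediately generalize from the compact to the bounded geometry case''; your explicit treatment of the cancellation of top-order symbols and of the regrouping of the doubly-indexed sum is a welcome elaboration rather than a deviation.
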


The last property that generalizes to our setting and that we want to mention is the following (the proof of \cite[Theorem III.§3.9]{lawson_michelsohn} generalizes directly):

\begin{prop}\label{prop:Pu_smooth_if_u_smooth}
Let $P \in \UPsiDO^k(E,F)$ be a uniform pseudodifferential operator of arbitrary order and let $u \in H^s(E)$ for some $s \in \IZ$.

Then, if $u$ is smooth on some open subset $U \subset M$, $Pu$ is also smooth on $U$.
\end{prop}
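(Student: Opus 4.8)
The plan is to adapt the classical proof that a pseudodifferential operator is pseudolocal (e.g.\ \cite[Theorem III.§3.9]{lawson_michelsohn}); the mechanism is that a pseudodifferential operator has a Schwartz kernel which is smooth off the diagonal, so flanking it with cutoff functions having disjoint supports produces a smoothing operator.

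Fix a point $x_0 \in U$. Choose $\psi \in C_c^\infty(M)$ with $\supp \psi \subset U$ and $\psi \equiv 1$ on an open neighbourhood $W \ni x_0$, and $\phi \in C_c^\infty(M)$ with $\supp \phi \subset W$ and $\phi \equiv 1$ near $x_0$. Decompose $u = \psi u + (1-\psi)u$ in $H^s(E)$, which is legitimate since $\psi u \in C_c^\infty(E) \subset H^s(E)$ (the section $\psi u$ is smooth because $u$ is smooth on $U \supset \supp \psi$, and compactly supported) and $P$ extends to $H^s(E) \to H^{s-k}(F)$ by Proposition \ref{prop:pseudodiff_extension_sobolev}. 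From the very definition of a pseudodifferential operator, $P(\psi u) \in C^\infty(F)$, so it suffices to show that $\phi \cdot P((1-\psi)u)$ is smooth. Write $P = P_{-\infty} + \sum_i P_i$ as in Definition \ref{defn:pseudodiff_operator}. The contribution $P_{-\infty}((1-\psi)u)$ already lies in $C_b^\infty(F)$ by Proposition \ref{prop:smoothing_op_kernel}, since $P_{-\infty} \in \IU(E,F)$. Because $\supp \phi$ is compact and the cover $\{B_{2\varepsilon}(x_i)\}$ is uniformly locally finite, only finitely many indices $i$ satisfy $B_{2\varepsilon}(x_i) \cap \supp \phi \neq \emptyset$; for every other $i$ we have $\phi \cdot P_i v \equiv 0$ for all $v$, because $P_i$ is supported in $B_{2\varepsilon}(x_i)$. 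Hence only these finitely many $P_i$ remain to be treated.

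For such an index, $P_i$ is, in the synchronous framings over the normal coordinate chart at $x_i$, a matrix of classical pseudodifferential operators on $\IR^m$ supported in $B_{2\varepsilon}(0)$, so I may invoke the Euclidean theory. The composition of multiplication by $\phi$, the operator $P_i$, and multiplication by $1-\psi$ has Schwartz kernel $\phi(x)\, k_{P_i}(x,y)\, (1-\psi(y))$; since $\psi \equiv 1$ on $W \supset \supp \phi$, this kernel is supported in $\supp \phi \times (M \setminus W)$, a set disjoint from the diagonal, on which $k_{P_i}$ is smooth. Therefore this composition is a smoothing operator and $\phi \cdot P_i((1-\psi)u)$ is smooth. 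Summing the finitely many such terms together with $\phi \cdot P_{-\infty}((1-\psi)u) \in C_b^\infty(F)$ shows that $\phi \cdot P((1-\psi)u)$, and hence $Pu$, is smooth near $x_0$; as $x_0 \in U$ was arbitrary, $Pu$ is smooth on $U$. The only genuinely non-formal inputs are the two classical Euclidean facts just used --- that the kernel of a pseudodifferential operator is smooth off the diagonal, and that an operator whose kernel avoids the diagonal is regularizing --- whereas the rest is bookkeeping with cutoffs and the reduction to finitely many $P_i$, which is where uniform local finiteness (hence bounded geometry) enters; I expect that reduction, i.e.\ checking that the infinitely many discarded $P_i$ really contribute nothing near $x_0$, to be the only step deserving an explicit word.
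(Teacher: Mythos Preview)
Your proof is correct and is precisely the approach the paper has in mind: the paper does not spell out a proof but simply asserts that ``the proof of \cite[Theorem III.\S3.9]{lawson_michelsohn} generalizes directly,'' and what you have written is exactly that generalization, with the expected extra ingredients being the handling of the quasilocal smoothing part $P_{-\infty}$ and the reduction to finitely many $P_i$ via uniform local finiteness.
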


\subsection{Uniformity of operators of nonpositive order}\label{sec:uniformity_PDOs}

Now we get to the important statement that the uniform pseudodifferential operators we have defined are, in fact, ``uniform'' in the meaning to be defined now (the discussion here is strongly related to the fact that symmetric and elliptic uniform pseudodifferential operators will define uniform $K$-homology classes).

Let $T \in \IK(L^2(E))$ be a compact operator. We know that $T$ is the limit of finite rank operators, i.e., for every $\varepsilon > 0$ there is a finite rank operator $k$ such that $\|T - k\| < \varepsilon$. Now given a collection $\mathcal{A} \subset \IK(L^2(E))$ of compact operators, it may happen that for every $\varepsilon > 0$ the rank needed to approximate an operator from $\mathcal{A}$ may be bounded from above by a common bound for all operators. This is formalized in the following definition.

\begin{defn}[Uniformly approximable collections of operators]\label{defn:uniformly_approximable_collection}
A collection of operators $\mathcal{A} \subset \IK(L^2(E))$ is said to be \emph{uniformly approximable}, if for every $\varepsilon > 0$ there is an $N > 0$ such that for every $T \in \mathcal{A}$ there is a rank-$N$ operator $k$ with $\|T - k\| < \varepsilon$.
\qed
\end{defn}

\begin{examples}\label{ex:uniformly_approximable_collections}
Every collection of finite rank operators with uniformly bounded rank is uniformly approximable.

Furthermore, every finite collection of compact operators is uniformly approximable and so also every totally bounded subset of $\IK(L^2(E))$.

The converse is in general false since a uniformly approximable family need not be bounded (take infinitely many rank-$1$ operators with operator norms going to infinity).

Even if we assume that the uniformly approximable family is bounded we do not necessarily get a totally bounded set: let $(e_i)_{i \in \IN}$ be an orthonormal basis of $L^2(E)$ and $P_i$ the orthogonal projection onto the $1$-dimensional subspace spanned by the vector $e_i$. Then the collection $\{P_i\} \subset \IK(L^2(E))$ is uniformly approximable (since all operators are of rank $1$) but not totally bounded (since $\|P_i - P_j\| = 1$ for $i \not= j$)\footnote{Another way to see that $\{P_i\}$ is not totally bounded is to use the characterization of totally bounded subsets of $\IK(H)$ from \cite[Theorem 3.5]{anselone_palmer}: a family $\mathcal{A} \subset \IK(H)$ is totally bounded if and only if both $\mathcal{A}$ and $\mathcal{A}^\ast$ are collectively compact, i.e., the sets $\{T v \ | \ T \in \mathcal{A}, v \in H \text{ with } \|v\| = 1\} \subset H$ and $\{T^\ast v \ | \ T \in \mathcal{A}, v \in H \text{ with } \|v\| = 1\} \subset H$ have compact closure.}.
\qed
\end{examples}

Let us define
\begin{equation*}\label{defn:L-Lip_R(M)}
\LLip_R(M) := \{ f \in C_c(M) \ | \ f \text{ is }L\text{-Lipschitz}, \diam(\supp f) \le R \text{ and } \|f\|_\infty \le 1\}.
\end{equation*}

\begin{defn}[{\cite[Definition 2.3]{spakula_uniform_k_homology}}]\label{defn:uniform_operators_manifold}
Let $T \in \IB(L^2(E))$. We say that $T$ is \emph{uniformly locally compact}, if for every $R, L > 0$ the collection
\[\{fT, Tf \ | \ f \in \LLip_R(M)\}\]
is uniformly approximable.

We say that $T$ is \emph{uniformly pseudolocal}, if for every $R, L > 0$ the collection
\[\{[T, f] \ | \ f \in \LLip_R(M)\}\]
is uniformly approximable.
\qed
\end{defn}

\begin{rem}\label{rem:renaming_l_dependence}
In \cite{spakula_uniform_k_homology} uniformly locally compact operators were called ``$l$-uniform'' and uniformly pseudolocal operators ``$l$-uniformly pseudolocal''.
\qed
\end{rem}

We will now show that uniform pseudodifferential operators of negative order are uniformly locally compact and that uniform pseudodifferential operators of order $0$ are uniformly pseudolocal. We will start with the operators of negative order.

\begin{prop}\label{prop:quasilocal_negative_order_uniformly_locally_compact}
Let $A\in \IB(L^2(E))$ be a finite propagation operator of negative order $k < 0$\footnote{See Definition \ref{defn:quasiloc_ops}. Note that we do not assume that $A$ is a pseudodifferential operator.} such that its adjoint operator $A^\ast$ also has finite propagation and is of negative order $k^\prime < 0$. Then $A$ is uniformly locally compact. Even more, the collection
\[\{fT, Tf \ | \ f \in B_R(M)\}\]
is uniformly approximable for every $R > 0$, where $B_R(M)$ consists of all bounded Borel functions $h$ on $M$ with $\diam(\supp h) \le R$ and $\|h\|_\infty \le 1$.
\end{prop}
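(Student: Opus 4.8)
\emph{Standing observations.} Since $A$ has negative order, Definition~\ref{defn:quasiloc_ops} with $s=0$ gives a bounded extension $A\colon L^2(E)\to H^{|k|}(E)$, and likewise $A^\ast\colon L^2(E)\to H^{|k'|}(E)$ is bounded, with $|k|,|k'|\ge 1$; let $R_0$ be a common bound for the propagations of $A$ and $A^\ast$. Also $\LLip_R(M)\subseteq B_R(M)$ for every $L$ (an $L$-Lipschitz function with the stated bounds is a bounded Borel function with those bounds, and its support, being bounded, is compact as $M$ is complete), so it suffices to prove the ``even more'' assertion. I would derive everything from the following, where $M_g$ denotes multiplication by $g$: \emph{Core Claim} --- if $B\colon L^2(E)\to H^t(E)$ is bounded for some $t>0$ and has propagation $\le R_0$, then for every $\rho>0$ the family $\{\,BM_g\mid g\in B_\rho(M)\,\}$ is uniformly approximable. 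Granting this, applying it to $B=A$ handles $\{Af\}$; for $\{fA\}$ I pass to adjoints, since $(fA)^\ast=A^\ast M_{\bar f}$ is of the form covered by the Core Claim applied to $B=A^\ast$ (which has propagation $\le R_0$ and order $k'<0$), while $\bar f$ runs over $B_R(M)$ together with $f$, so $\{fA\}$ is the family of adjoints of a uniformly approximable family and hence uniformly approximable itself ($\|T-\kappa\|=\|T^\ast-\kappa^\ast\|$ and $\operatorname{rank}\kappa^\ast=\operatorname{rank}\kappa$). With $\LLip_R(M)\subseteq B_R(M)$ this gives both the displayed refinement and uniform local compactness of $A$.

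\emph{Reduction of the Core Claim.} Fix $\varepsilon>0$; for $g\in B_\rho(M)$ put $S_g:=\supp g$ and $\Omega_g:=B_{R_0}(S_g)$, so $\diam\Omega_g\le\rho+2R_0=:\rho'$. For $u\in L^2(E)$ the section $M_gu$ is supported in $S_g$ with $\|M_gu\|\le\|u\|$, so by finite propagation $B(M_gu)$ is supported in $\Omega_g$ with $\|B(M_gu)\|_{H^t}\le\|B\|\,\|u\|$. Hence it suffices to prove a \emph{uniform Rellich estimate}: there is $N=N(\rho',\varepsilon)$, depending otherwise only on the geometry of $M$ and $E$, such that every $\Omega\subseteq M$ with $\diam\Omega\le\rho'$ admits a subspace $\mathcal V_\Omega\subseteq L^2(E)$ with $\dim\mathcal V_\Omega\le N$ and $\operatorname{dist}_{L^2}(v,\mathcal V_\Omega)\le\tfrac{\varepsilon}{\|B\|+1}\|v\|_{H^t}$ for every $v\in H^t(E)$ supported in $\Omega$. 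Indeed, then $\|BM_g-P_{\mathcal V_{\Omega_g}}BM_g\|=\sup_{\|u\|\le 1}\operatorname{dist}\!\big(B(M_gu),\mathcal V_{\Omega_g}\big)<\varepsilon$ while $P_{\mathcal V_{\Omega_g}}BM_g$ has rank $\le N$, uniformly in $g$.

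\emph{Proof of the uniform Rellich estimate.} This is where bounded geometry enters. Fix $\varepsilon_0<\tfrac{\injrad_M}{3}$ and take the covering $\{B_{2\varepsilon_0}(x_i)\}$ with subordinate partition of unity $\{\phi_i\}$ from Lemma~\ref{lem:nice_coverings_partitions_of_unity}. Uniform local finiteness together with the uniform volume bound for balls of radius $\rho'$ show that $\mathcal J_\Omega:=\{\,i\mid B_{2\varepsilon_0}(x_i)\cap\Omega\ne\emptyset\,\}$ has cardinality $\le J=J(\rho')$, independently of $\Omega$. For $v$ supported in $\Omega$ write $v=\sum_{i\in\mathcal J_\Omega}\phi_iv$; read in the normal chart at $x_i$ through a synchronous framing of $E$, each $\phi_iv$ becomes a $\IC^{\operatorname{rk}E}$-valued function supported in $B_{2\varepsilon_0}(0)\subset\IR^m$, and the local characterization \eqref{eq:sobolev_norm_local} gives $\|\phi_iv\|_{H^t(\IR^m)}\le C_1\|v\|_{H^t(M)}$ with $C_1$ independent of $i$ and $\Omega$. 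The set $K:=\{\,h : \supp h\subseteq B_{2\varepsilon_0}(0),\ \|h\|_{H^t(\IR^m)}\le 1\,\}$ is the \emph{same} for every chart and, by Rellich--Kondrachov, precompact in $L^2$; covering it by $\delta$-balls around $h_1,\dots,h_{n(\delta)}$ and putting $\mathcal V_0:=\operatorname{span}\{h_1,\dots,h_{n(\delta)}\}$ yields $\operatorname{dist}_{L^2}(h,\mathcal V_0)<\delta\|h\|_{H^t}$ for every $h$ supported in $B_{2\varepsilon_0}(0)$. Transport $\mathcal V_0$ back through the $i$-th chart to $\mathcal V_0^{(i)}\subseteq L^2(E)$ and set $\mathcal V_\Omega:=\sum_{i\in\mathcal J_\Omega}\mathcal V_0^{(i)}$, of dimension $\le Jn(\delta)$. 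Approximating each $\phi_iv$ by its $\mathcal V_0$-projection, summing over the $\le J$ charts, and using that the chart identifications distort $L^2$-norms by a uniform factor, one obtains $\operatorname{dist}_{L^2}(v,\mathcal V_\Omega)\le C_2(\rho')\,\delta\,\|v\|_{H^t}$; choosing $\delta$ with $C_2(\rho')\,\delta\le\tfrac{\varepsilon}{\|B\|+1}$ gives the required $N=Jn(\delta)$.

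\emph{Main obstacle.} The only real difficulty is the uniformity of $N$ over all regions $\Omega$ of bounded diameter: this is what forces the localization onto fixed Euclidean balls and requires that every estimate in the previous paragraph --- the number of charts meeting $\Omega$, the passage between local (chartwise) and global Sobolev and $L^2$ norms, the bounds on the $\phi_i$ and on the framing data --- come with constants that do not blow up, which is exactly what bounded geometry of $M$ and $E$ delivers. A secondary point is that $f$ is merely Borel, so it cannot be left acting after $A$ (multiplication by a Borel function need not preserve $H^t$); it must be commuted to the right to act first on $L^2$, where it is a contraction, after which the smoothing coming from the negative order of $A$ (respectively $A^\ast$) carries the argument --- this is why the adjoint detour is unavoidable for the family $\{fA\}$. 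If a decay rate in $\delta$ were wanted one could replace the Rellich--Kondrachov/total-boundedness step by a uniform Weyl bound for the Dirichlet Laplacian on $B_{2\varepsilon_0}(0)$, but no rate is needed here.
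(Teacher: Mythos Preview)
Your proof is correct and follows essentially the same approach as the paper: factor $Af$ through the Sobolev-raising property of $A$ (negative order) and the finite-propagation localization to a set of controlled diameter, then invoke a uniform Rellich--Kondrachov argument (uniformity coming from bounded geometry), and handle $\{fA\}$ by passing to the adjoint $A^\ast$. The only difference is one of presentation: the paper records the factorization $L^2(E)\xrightarrow{\cdot f} L^2(E|_K)\xrightarrow{\chi A} H^{-k}(E|_{B_r(K)})\hookrightarrow L^2(E|_{B_r(K)})\to L^2(E)$ and simply asserts that the compact inclusion is \emph{uniformly} compact ``due to the bounded geometry of $M$ and of the bundles $E$ and $F$'', whereas you actually prove this uniform Rellich estimate by pulling back to a fixed Euclidean ball via the charts and partition of unity of Lemma~\ref{lem:nice_coverings_partitions_of_unity}.
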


\begin{proof}
Let $f \in B_R(M)$, $K := \supp f \subset M$ and $r$ be the propagation of $A$. The operator $\chi A f = A f$, where $\chi$ is the characteristic function of $B_r(K)$, factores as
\[L^2(E) \stackrel{\cdot f}\longrightarrow L^2(E|_K) \stackrel{\chi \cdot A}\longrightarrow H^{-k}(E|_{B_r(K)}) \hookrightarrow L^2(E|_{B_r(K)}) \to L^2(E).\]
The following properties hold:
\begin{itemize}
\item multiplication with $f$ has operator norm $\le 1$, since $\|f\|_\infty \le 1$, and analogously for the multiplication with $\chi$,
\item the norm of $\chi \cdot A\colon L^2(E|_K) \to H^{-k}(E|_{B_r(K)})$ can be bounded from above by the norm of $A\colon L^2(E) \to H^{-k}(E)$ (i.e., the upper bound does not depend on $K$ nor $r$),
\item the inclusion $H^{-k}(E|_{B_r(K)}) \hookrightarrow L^2(E|_{B_r(K)})$ is compact (due to  the Theorem of Rellich--Kondrachov) and this compactness is uniform, i.e., its approximability by finite rank operators\footnote{Here we mean the existence of an upper bound on the rank needed to approximate the operator by finite rank operators, given an $\varepsilon > 0$.} depends only on $R$ (the upper bound for the diameter of $\supp f$) and $r$, but not on $K$ (this uniformity is due to the bounded geometry of $M$ and of the bundles $E$ and $F$), and
\item the inclusion $L^2(E|_{B_r(K)}) \to L^2(E)$ is of norm $\le 1$.
\end{itemize}
From this we conclude that the operator $\chi A f = A f$ is compact and this compactness is uniform, i.e., its approximability by finite rank operators depends only on $R$ and $r$. So we can conclude that $\{Af \ | \ f \in B_R(M)\}$ is uniformly approximable.

Applying the same reasoning to the adjoint operator,\footnote{By assumption the adjoint operator also has finite propagation and is of negative order. So we conclude that $\{A^\ast f \ | \ f \in B_R(M)\}$ is uniformly approximable. But a collection $\mathcal{A}$ of compact operators is uniformly approximable if and only if the adjoint family $\mathcal{A}^\ast$ is uniformly approximable. So we get that $\{(A^\ast f)^\ast = \overline{f} A \ | \ f \in B_R(M)\}$ is uniformly approximable.} we conclude that $A$ is uniformly locally compact.
\end{proof}

Using an approximation argument\footnote{Note that we will not approximate the quasilocal operator $A$ itself by finite propagation operators in this argument. In fact, it is an open problem whether quasilocal operators may be approximated by finite propagation operators; see Section \ref{sec:quasilocs_approximable}.} we may also show the following corollary:

\begin{cor}\label{cor:quasilocal_neg_order_uniformly_locally_compact}
Let $A$ be a quasilocal operator of negative order and let the same hold true for its adjoint $A^\ast$. Then $A$ is uniformly locally compact; in fact, it even satisfies the stronger condition from the above Proposition \ref{prop:quasilocal_negative_order_uniformly_locally_compact}.
\end{cor}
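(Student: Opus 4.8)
The plan is to re-run the argument of Proposition~\ref{prop:quasilocal_negative_order_uniformly_locally_compact}, inserting one extra step at the beginning: since $A$ need not have finite propagation, I would first truncate the \emph{output} of $Af$ to a large metric ball around $\supp f$ and push the discarded part into a dominating function of $A$, without ever approximating $A$ itself by finite propagation operators. Concretely, write $A\in\IB(L^2(E))$, let $k<0$ be its order (so $A$ extends to a bounded operator $L^2(E)\to H^{-k}(E)$, and $-k>0$, so $H^{-k}$ is an honest positive-order Sobolev space), and fix a dominating function $\mu$ for $A$ as an operator $L^2(E)\to H^{-k}(E)$. Given $R,\varepsilon>0$, choose $R'$ with $\mu(R')<\varepsilon/2$; then for $f\in B_R(M)$, putting $K:=\supp f$ and letting $\chi$ be the characteristic function of the open set $B_{R'}(K)$, quasilocality (with the set $K$) gives, for every $u$, that $\|(1-\chi)A(fu)\|_{L^2}\le\|A(fu)\|_{H^{-k},\,M-B_{R'}(K)}\le\mu(R')\|fu\|_{L^2}\le\mu(R')\|u\|_{L^2}$ (the first inequality because $1-\chi$ is supported where every representative of $A(fu)$ in the seminorm already equals $A(fu)$). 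Hence $\|Af-\chi Af\|\le\mu(R')<\varepsilon/2$, and it remains to approximate $\chi Af$ by a finite rank operator of rank bounded independently of $f\in B_R(M)$.

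For that I would factor $\chi Af$ as
\[L^2(E)\ \stackrel{\,\cdot f\,}{\longrightarrow}\ L^2(E|_K)\ \stackrel{A}{\longrightarrow}\ H^{-k}(E|_{B_{R'}(K)})\ \hookrightarrow\ L^2(E|_{B_{R'}(K)})\ \stackrel{\iota}{\longrightarrow}\ L^2(E),\]
where the second arrow means ``apply $A$, then restrict the result to the open set $B_{R'}(K)$'' (bounded since $-k>0$) and $\iota$ is extension by zero; note ``restrict then extend by zero'' agrees with multiplication by $\chi$ at the $L^2$-level, which is why one truncates via restriction rather than by the ill-behaved operation of multiplying an $H^{-k}$-section by $\chi$. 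Then the four bullet points of the proof of Proposition~\ref{prop:quasilocal_negative_order_uniformly_locally_compact} apply verbatim: multiplication by $f$ and $\iota$ have norm $\le 1$; the second arrow has norm $\le\|A\colon L^2(E)\to H^{-k}(E)\|$, independent of $K$; and the Rellich--Kondrachov inclusion $H^{-k}(E|_{B_{R'}(K)})\hookrightarrow L^2(E|_{B_{R'}(K)})$ is compact, with the rank of an $\eta$-approximant depending only on $\eta$ and $\diam B_{R'}(K)\le R+2R'$, hence not on $K$ (this is exactly the uniformity supplied there by bounded geometry of $M$ and $E$). Taking an $\varepsilon/(2\|A\|)$-approximant of this inclusion yields a finite rank operator within $\varepsilon/2$ of $\chi Af$, of rank $N=N(R,R',\varepsilon)$; combined with the previous paragraph this is within $\varepsilon$ of $Af$, and since $R'$ depends only on $\varepsilon$ (and $A$), so does $N$ on $R$ and $\varepsilon$. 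Thus $\{Af\mid f\in B_R(M)\}$ is uniformly approximable.

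Finally, by hypothesis $A^\ast$ is again a quasilocal operator of negative order, so the same argument applied to $A^\ast$ shows $\{A^\ast f\mid f\in B_R(M)\}$ is uniformly approximable; since a collection of compact operators is uniformly approximable if and only if its adjoint collection is, and $B_R(M)$ is closed under complex conjugation, $\{fA\mid f\in B_R(M)\}=\{(A^\ast\overline{f})^\ast\mid f\in B_R(M)\}$ is uniformly approximable as well. This gives the stronger condition of Proposition~\ref{prop:quasilocal_negative_order_uniformly_locally_compact}, and since $\LLip_R(M)\subset B_R(M)$ for every $L$, in particular $A$ is uniformly locally compact.

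The only genuinely new point over Proposition~\ref{prop:quasilocal_negative_order_uniformly_locally_compact} is the truncation: it must be carried out on the output $Af$ and the error absorbed into $\mu(R')$, precisely because truncating $A$ itself (approximating it by finite propagation operators) is unavailable, and in fact open, for general quasilocal $A$. I expect the main things to be careful about are the order of the quantifiers --- that $R'$, hence the diameter $R+2R'$ of the truncation region, hence the rank $N$, is fixed from $\varepsilon$ before the Rellich estimate is used, so that $N$ does not secretly depend on $f$ --- together with the Sobolev bookkeeping that distinguishes ``restrict to an open set'' (bounded on $H^{-k}$) from ``multiply by $\chi$'' (not bounded).
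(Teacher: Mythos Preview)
Your proof is correct and follows essentially the same approach as the paper: truncate $Af$ on the output side by the characteristic function of $B_{R'}(\supp f)$, control the discarded tail by the dominating function of $A$, and then re-run the Rellich--Kondrachov factorization from Proposition~\ref{prop:quasilocal_negative_order_uniformly_locally_compact} on the truncated piece. Your write-up is more detailed than the paper's (in particular your remarks on restriction versus multiplication by $\chi$ at the $H^{-k}$-level, and on the order of quantifiers for $R'$), but the argument is the same.
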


\begin{proof}
We have to show that $\{Af \ | \ f \in B_R(M)\}$ is uniformly approximable. Let $\varepsilon > 0$ be given and let $r_\varepsilon$ be such that $\mu_A(r) < \varepsilon$ for all $r \ge r_\varepsilon$, where $\mu_A$ is the dominating function of $A$. Then $\chi_{B_{r_\varepsilon}(\supp f)} A f$ is $\varepsilon$-away from $Af$ and the same reasoning as in the proof of the above Proposition \ref{prop:quasilocal_negative_order_uniformly_locally_compact} shows that the approximability (up to an error of $\varepsilon$) of $\chi_{B_{r_\varepsilon}(\supp f)} A f$ does only depend on $R$ and $r_\varepsilon$. From this the claim that $\{Af \ | \ f \in B_R(M)\}$ is uniformly approximable follows.

Using the adjoint operator and the same arguments for it, we conclude that $A$ is uniformly locally compact.
\end{proof}

\begin{cor}\label{prop:PsiDOs_negative_order_uniformly_locally_compact}
Let $P \in \UPsiDO^k(E)$ be a uniform pseudodifferential operator of negative order $k < 0$. Then $P$ is uniformly locally compact.
\end{cor}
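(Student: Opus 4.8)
The plan is to reduce the statement to Corollary~\ref{cor:quasilocal_neg_order_uniformly_locally_compact}; for this it suffices to check that $P$ and its $L^2$-adjoint $P^\ast$ are both quasilocal operators of negative order. Write $P = P_{-\infty} + \sum_i P_i$ as in Definition~\ref{defn:pseudodiff_operator}, with $P_{-\infty} \in \IU(E)$ and each $P_i$ supported in the coordinate ball $B_{2\varepsilon}(x_i)$.

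First I would handle the sum $\sum_i P_i$. Since all the balls $B_{2\varepsilon}(x_i)$ have the same radius $2\varepsilon$, this operator has finite propagation, and by Proposition~\ref{prop:pseudodiff_extension_sobolev} it extends to a continuous operator $H^s(E) \to H^{s-k}(E)$ for every $s \in \IZ$; since $k < 0$ we have $s - k > s$, so this is an operator of negative order in the sense of Definition~\ref{defn:quasiloc_ops}, and any finite propagation operator is quasilocal (a dominating function vanishing for $R$ larger than the propagation works). On the other hand $P_{-\infty} \in \IU(E)$ is by definition a quasilocal smoothing operator, hence in particular quasilocal of negative order. The sum of two quasilocal operators is again quasilocal --- the dominating functions simply add --- so $P$ is a quasilocal operator of negative order.

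For the adjoint, Proposition~\ref{prop:PsiDOs_filtered_algebra} tells us that $\UPsiDO^\ast(E)$ is a $\ast$-algebra, so $P^\ast \in \UPsiDO^k(E)$ is again a pseudodifferential operator of negative order, and the argument of the previous paragraph applies verbatim to $P^\ast$ in place of $P$. Thus both $P$ and $P^\ast$ are quasilocal operators of negative order, and Corollary~\ref{cor:quasilocal_neg_order_uniformly_locally_compact} yields that $P$ is uniformly locally compact --- in fact with the sharper conclusion about the Borel functions $B_R(M)$.

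I do not expect a real obstacle here; every ingredient has been prepared and the proof amounts to bookkeeping. The only point deserving a moment's care is that Corollary~\ref{cor:quasilocal_neg_order_uniformly_locally_compact} constrains $P^\ast$ as well, which is exactly why $\UPsiDO$ was defined using $\IU(E)$ rather than Roe's larger ideal $\mathcal{U}_{-\infty}(E)$. An equally valid route is the additive one: $P_{-\infty}$ is uniformly locally compact by Corollary~\ref{cor:quasilocal_neg_order_uniformly_locally_compact}, $\sum_i P_i$ is uniformly locally compact by Proposition~\ref{prop:quasilocal_negative_order_uniformly_locally_compact} (finite propagation, negative order, same for its adjoint), and a finite sum of uniformly approximable collections is again uniformly approximable, so the uniformly locally compact operators form a linear subspace of $\IB(L^2(E))$.
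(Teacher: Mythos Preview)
Your proof is correct and follows exactly the approach the paper intends: the corollary is stated in the paper without proof, as an immediate consequence of Corollary~\ref{cor:quasilocal_neg_order_uniformly_locally_compact}, and you have correctly supplied the details --- decomposing $P$ into its finite-propagation part $\sum_i P_i$ and its quasilocal smoothing part $P_{-\infty}$, observing that both are quasilocal of negative order, and handling the adjoint via the $\ast$-algebra structure of Proposition~\ref{prop:PsiDOs_filtered_algebra}. Your alternative additive route is equally valid and amounts to the same observation.
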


Let us now get to the case of uniform pseudodifferential operators of order $0$, where we want to show that such operators are uniformly pseudolocal.

Recall the following fact for compact manifolds: $T$ is pseudolocal\footnote{That is to say, $[T, f]$ is a compact operator for all $f \in C_c(M)$.} if and only if $f T g$ is a compact operator for all $f, g \in C(M)$ with disjoint supports. This observation is due to Kasparov and a proof might be found in, e.g., \cite[Proposition 5.4.7]{higson_roe}. We can add another equivalent characterization which is basically also proved in the cited proposition: an operator $T$ is pseudolocal if and only if $f T g$ is a compact operator for all bounded Borel functions $f$ and $g$ on $M$ with disjoint supports.

We have analogous equivalent characterizations for uniformly pseudolocal operators, which we will state in the following lemma. The proof of it is similar to the compact case (and uses the fact that the subset of all uniformly pseudolocal operators is closed in operator norm, which is proved in \cite[Lemma 4.2]{spakula_uniform_k_homology}). Furthermore, in order to prove that the Points 4 and 5 in the statement of the next lemma are equivalent to the other points we need the bounded geometry of $M$. For the convenience of the reader we will give a full proof of the lemma.

Let us introduce the notions $B_b(M)$ for all bounded Borel functions on $M$ and $B_R(M)$ for its subset consisting of all function $h$ with $\diam(\supp h) \le R$ and $\|h\|_\infty \le 1$.

\begin{lem}\label{lem:kasparov_lemma_uniform_approx_manifold}
The following are equivalent for an operator $T \in \IB(L^2(E))$:
\begin{enumerate}
\item $T$ is uniformly pseudolocal,
\item for all $R, L > 0$ the following collection is uniformly approximable:
\[\{f T g, g T f \ | \ f \in B_b(M), \ \! \|f\|_\infty \le 1, \ \! g \in \LLip_R(M), \ \! \supp f \cap \supp g = \emptyset\},\]
\item for all $R, L > 0$ the following collection is uniformly approximable:
\[\{f T g, g T f \ | \ f \in B_b(M), \ \! \|f\|_\infty \le 1, \ \! g \in B_R(M), \ \! d(\supp f, \supp g) \ge L\},\]
\item for every $L > 0$ there is a sequence $(L_j)_{j \in \IN}$ of positive numbers (not depending on the operator $T$) such that
\begin{align*}
\{ f T g, g T f \ | \ & f \in B_b(M)\text{ with }\|f\|_\infty \le 1,\\
& g \in B_R(M) \cap C_b^\infty(M)\text{ with }\|\nabla^j g\|_\infty \le L_j,\text{ and}\\
& \supp f \cap \supp g = \emptyset\}
\end{align*}
is uniformly approximable for all $R, L > 0$.
\item for every $L > 0$ there is a sequence $(L_j)_{j \in \IN}$ of positive numbers (not depending on the operator $T$) such that
\[\{ [T,g] \ | \ g \in B_R(M) \cap C_b^\infty(M)\text{ with }\|\nabla^j g\|_\infty \le L_j\}\]
is uniformly approximable for all $R, L > 0$.
\end{enumerate}
\end{lem}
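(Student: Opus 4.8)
The plan is to run the cycle $(1)\Rightarrow(2)\Rightarrow(3)\Rightarrow(1)$ and then to sandwich $(4)$ and $(5)$ between these conditions, using throughout (as in the preceding part of Section~\ref{sec:uniformity_PDOs}) that uniform approximability of a collection is stable under passing to adjoints, under multiplying all members on either side by a fixed family of operators of norm $\le 1$ — in particular by multiplication operators with sup-norm $\le 1$ — and under taking sums of a bounded number of uniformly approximable collections. For $(1)\Rightarrow(2)$ note that disjoint supports force $fg=0$, hence $fTg=f(Tg-gT)=f[T,g]$ and, taking adjoints, $gTf=\bigl(\overline f\,[T^\ast,\overline g]\bigr)^\ast$; since $[T^\ast,h]=-[T,\overline h]^\ast$, condition $(1)$ makes $T^\ast$ uniformly pseudolocal as well, so in either case one recognizes a norm-$\le1$ multiple (resp.\ its adjoint) of an element of a collection of the form $\{[T^{(\ast)},g]\colon g\in\LLip_R(M)\}$. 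For $(2)\Rightarrow(3)$, given $f\in B_b(M)$ and $g\in B_R(M)$ with $d(\supp f,\supp g)\ge L$, set $h:=\max\{0,\,1-(2/L)\,d(\,\cdot\,,\supp g)\}$; it is $(2/L)$-Lipschitz, equals $1$ on $\supp g$, is supported off $\supp f$, and is compactly supported because bounded geometry forces completeness, so $h\in\LLip_{R+L}(M)$ for Lipschitz constant $2/L$; since $g=hg$ one has $fTg=(fTh)\,g$ and $gTf=g\,(hTf)$ with $fTh,hTf$ in the collection of $(2)$, and multiplication by $g$ has norm $\le 1$.

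For $(3)\Rightarrow(1)$ — the uniform version of Kasparov's lemma, and the heart of the argument — let $f\in\LLip_R(M)$; after splitting off real and imaginary parts we take $f$ real-valued, and we fix $\rho>0$. Choose a maximal $\rho$-separated set $\{y_i\}_{i\in I}$ in $\supp f$; bounded geometry (volume comparison) gives $|I|\le N_0(R,\rho)<\infty$ and bounds the number of $y_i$ in any ball $B_{5\rho}(x)$ by a constant $\nu$ that is \emph{independent of $\rho$}. Put $U_0:=M\setminus\supp f$, $c_0:=0$ and, for $i\in I$, $U_i:=(\supp f\cap B_\rho(y_i))\setminus\bigcup_{i'<i}U_{i'}$, $c_i:=f(y_i)$; then $\{U_a\}_{a\in\{0\}\cup I}$ is a Borel partition of $M$ with $\diam U_a\le 2\rho$ for $a\ne 0$, and since $f$ vanishes off $\supp f$ one gets $\|f-\sum_a c_a\chi_{U_a}\|_\infty\le L\rho$, hence $\|[T,f]-[T,\sum_a c_a\chi_{U_a}]\|\le 2\|T\|L\rho$.

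Using $\sum_a\chi_{U_a}=\id$ one expands
\[[T,\textstyle\sum_a c_a\chi_{U_a}]=\sum_{a,b}(c_b-c_a)\,\chi_{U_a}T\chi_{U_b}\]
and sorts the terms. (i) The terms with $a=0$ or $b=0$ sum to $\chi_{U_0}T\phi-\phi T\chi_{U_0}$ where $\phi:=\sum_{a\ne0}c_a\chi_{U_a}$; splitting $\phi$ according to whether the piece $U_a$ lies within $\rho$ of $\partial\supp f$ or not, the near-boundary part has sup-norm $O(L\rho)$ (because $f$ vanishes on $\partial\supp f$), contributing operators of norm $O(\|T\|L\rho)$, while the far-from-boundary part is $\rho$-separated from $\supp\chi_{U_0}$, so those two terms lie in a uniformly approximable collection by $(3)$. (ii) The ``far'' pairs with $a,b\ne0$ and $d(U_a,U_b)\ge\rho$ number at most $|I|^2$ and each lies in a uniformly approximable collection by $(3)$ (with parameters $2\rho$ and $\rho$), so their total does too. (iii) The ``near'' pairs with $a,b\ne0$ and $d(U_a,U_b)<\rho$ recombine as $\sum_a\chi_{U_a}T h_a$ with $h_a:=\sum_{b\ne0,\,d(U_a,U_b)<\rho}(c_b-c_a)\chi_{U_b}$, where the Lipschitz bound gives $\|h_a\|_\infty\le 3L\rho$ and $\supp h_a\subset B_{5\rho}(y_a)$; since the $\chi_{U_a}$ are mutually orthogonal projections and the $h_a$ overlap with multiplicity $\le\nu$, this operator has norm $\le 3\sqrt{\nu}\,\|T\|L\rho$. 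Thus every $[T,f]$ with $f\in\LLip_R(M)$ lies within $C\|T\|L\rho$ of an element of a collection that, for each fixed $\rho$, is uniformly approximable; given $\varepsilon>0$ one picks $\rho$ with $C\|T\|L\rho<\varepsilon/2$ and invokes this, which proves $(1)$. Bounded geometry enters exactly through the finiteness of $|I|$ and the $\rho$-independence of $\nu$.

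Finally, $(1)\Rightarrow(5)$ and $(2)\Rightarrow(4)$ are immediate: on a manifold of bounded geometry a $C_b^\infty$-function with $\|\nabla g\|_\infty\le L_1$ is $L_1$-Lipschitz, so for any chosen auxiliary sequence $(L_j)$ the collections of $(5)$ resp.\ $(4)$ are subcollections of those of $(1)$ resp.\ $(2)$. For the converse implications $(5)\Rightarrow(1)$ and $(4)\Rightarrow(3)$ one replaces the rough function (a Lipschitz function, or a Borel cutoff across a gap of width $L$) by a mollification $\tilde f$ in the uniform normal-coordinate charts of Lemma~\ref{lem:nice_coverings_partitions_of_unity}: then $\tilde f\in C^\infty_b(M)$ with $\|\nabla^j\tilde f\|_\infty$ and $\|f-\tilde f\|_\infty$ controlled by $L$, the mollification scale, and the (uniform, by bounded geometry) mollification constants, and one matches the scale to the sequence $(L_j)$ prescribed in $(4)$, $(5)$ so that $\tilde f$ lands in the required collection; this compatibility is the sense in which those sequences must be taken large enough, and the uniformity of the mollification constants is where bounded geometry is again essential. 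I expect the genuinely hard step to be $(3)\Rightarrow(1)$, where one must organize the decomposition so that the ``far'' part stays a uniformly approximable collection — forcing the number of pieces to depend only on $R$ and $\rho$ — and so that the ``near'' (and near-boundary) parts have operator norm $O(L\rho)$ uniformly, forcing the overlap multiplicity to be $\rho$-independent; these are precisely the covering and volume estimates of bounded geometry, and making the error genuinely $O(L\rho)\to 0$ is where the real work lies.
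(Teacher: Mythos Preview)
Your proposal is correct, and the overall routing — the cycle $(1)\Rightarrow(2)\Rightarrow(3)\Rightarrow(1)$ together with the side implications for $(4)$ and $(5)$ via mollification in normal coordinates — matches the paper's. The arguments for $(1)\Rightarrow(2)$, $(2)\Rightarrow(3)$, and the smoothing steps for $(4)$ and $(5)$ are essentially the same as those in the paper (the paper routes $(5)\Rightarrow(4)\Rightarrow(3)$ rather than $(5)\Rightarrow(1)$ directly, but that is cosmetic).

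The genuine difference is in the key step $(3)\Rightarrow(1)$. You partition the \emph{domain}: take a maximal $\rho$-separated net in $\supp f$, approximate $f$ by a step function on the resulting Borel pieces, and sort the double sum into boundary/far/near contributions. This works, but it leans on bounded geometry twice (to bound $|I|$ by a function of $R$ and $\rho$, and to make the overlap multiplicity $\nu$ independent of $\rho$), and it forces a separate treatment of the pieces near $\partial\supp f$.

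The paper instead partitions the \emph{range} of $f$: cut $[-1,1]$ into $n$ half-open intervals $U_1,\ldots,U_n$ of diameter $<\varepsilon$ and set $\chi_i:=\chi_{f^{-1}(U_i)}$. Then $f$ is $\varepsilon$-close to $f'=\sum f(x_i)\chi_i$, and $[T,f']=\sum_{i,j}(f(x_i)-f(x_j))\chi_j T\chi_i$. The Lipschitz bound forces $d(\supp\chi_i,\supp\chi_j)\ge\varepsilon/L$ for $|i-j|>1$, so those terms fall under $(3)$; the terms with $|i-j|=1$ assemble into two genuine direct sums (orthogonal domains, orthogonal ranges) of norm $\le 2\varepsilon\|T\|$. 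This is shorter, needs no separate boundary case (the interval containing $0$ automatically absorbs $M\setminus\supp f$), and — notably — uses \emph{no} bounded geometry: the number $n$ depends only on $\varepsilon$, not on $R$ or on $M$. The paper in fact remarks just before the lemma that bounded geometry is needed only to tie in $(4)$ and $(5)$; your domain-partition makes bounded geometry look essential already for $(3)\Rightarrow(1)$, which is an artifact of the method rather than a feature of the statement.
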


\begin{proof}
\bm{$1 \Rightarrow 2$}\textbf{:} Let $f \in B_b(M)$ with $\|f \|_\infty \le 1$ and $g \in \LLip_R(M)$ have disjoint supports, i.e., $\supp f \cap \supp g = \emptyset$. From the latter we conclude $f T g = f [T,g]$, from which the claim follows (because $T$ is uniformly pseudolocal and because the operator norm of multiplication with $f$ is $\le 1$). Of course such an argument also works with the roles of $f$ and $g$ changed.

\bm{$2 \Rightarrow 3$}\textbf{:} Let $f \in B_b(M)$ with $\|f \|_\infty \le 1$ and $g \in B_R(M)$ with $d(\supp f, \supp g) \ge L$. We define $g^\prime(x) := \max\big( 0, 1 - 1/L \cdot d(x, \supp g) \big) \in 1/L\text{-}\operatorname{Lip}_{R+2L}(M)$. Since $g^\prime g = g$, the claim follows from writing $f T g = f T g^\prime g$ and because multiplication with $g$ has operator norm $\le 1$, and we of course also may change the roles of $f$ and $g$.

\bm{$3 \Rightarrow 1$}\textbf{:} Let $f \in \LLip_R(M)$. For given $\varepsilon > 0$ we partition the range of $f$ into a sequence of non-overlapping half-open intervals $U_1, \ldots, U_n$, each having diameter less than $\varepsilon$, such that $\overline{U_i}$ intersects $\overline{U_j}$ if and only if $|i - j| \le 1$. Denoting by $\chi_i$ the characteristic function of $f^{-1}(U_i)$, we get that $\chi_i \in B_R(M)$ if $0 \notin U_i$, since the support of $f$ has diameter less than or equal to $R$, and furthermore $d(\supp \chi_i, \supp \chi_j) \ge \tfrac{\varepsilon}{L}$ if $|i-j| > 1$, since $f$ is $L$-Lipschitz.

By Point 3 we have that the collections $\{\chi_i T \chi_j, \chi_j T \chi_i\}$ are uniformly approximable for all $i,j$ with $|i-j| > 1$. Choosing points $x_1, \ldots, x_n$ from $f^{-1}(U_1), \ldots, f^{-1}(U_n)$ and defining $f^\prime := f(x_1) \chi_1 + \cdots + f(x_n) \chi_n$, we get $\|f - f^\prime\|_\infty < \varepsilon$. The operator $[T,f]$ is $2\varepsilon \|T\|$-away from $[T, f^\prime]$, and since $\chi_1 + \cdots + \chi_n = 1$ we have
\[Tf^\prime - f^\prime T = \sum_{i,j} \chi_j T f(x_i)\chi_i - f(x_j) \chi_j T \chi_i.\]
Since we already know that $\{\chi_i T \chi_j, \chi_j T \chi_i\}$ are uniformly approximable for all $i,j$ with $|i-j| > 1$, it remains to treat the sum (note that the summand for $i=j$ is zero)
\[\sum_{|i-j|=1} \chi_j T f(x_i)\chi_i - f(x_j) \chi_j T \chi_i = \sum_{|i-j|=1} \big( f(x_i) - f(x_j) \big) \chi_j T \chi_i.\]
We split the sum into two parts, one where $i = j+1$ and the other one where $i=j-1$. The first part takes the form
\[\sum_j \big( f(x_{j+1}) - f(x_j) \big) \chi_j T \chi_{j+1},\]
i.e., is a direct sum of operators from $\chi_{j+1} \cdot L^2(E)$ to $\chi_j \cdot L^2(E)$. Therefore its norm is the maximum of the norms of its summands. But the latter are $\le 2 \varepsilon \|T\|$ since $|f(x_{j+1}) - f(x_j)| \le 2\varepsilon$. We treat the second part of the sum in the above display the same way and conclude that the sum in the above display is in norm $\le 4\varepsilon T$. Putting it all together it follows that $T$ is the operator norm limit of uniformly pseudolocal operators, from which it follows that $T$ itself is uniformly pseudolocal (it is proved in \cite[Lemma 4.2]{spakula_uniform_k_homology} that the uniformly pseudolocal operators are closed in operator norm, as are also the uniformly locally compact ones).

\bm{$2 \Rightarrow 4$}\textbf{:} Clear. We have to set $L_1 := L$ and the other values $L_{j \ge 2}$ do not matter (i.e., may be set to something arbitrary).

\bm{$4 \Rightarrow 3$}\textbf{:} This is similar to the proof of $2 \Rightarrow 3$, but we have to smooth the function $g^\prime$ constructed there. Let us make this concrete, i.e., let $f \in B_b(M)$ with $\|f \|_\infty \le 1$ and $g \in B_R(M)$ with $d(\supp f, \supp g) \ge L$ be given. We define
\[g^\prime(x) := \max\big( 0, 1 - 2 / L \cdot d(x, B_{L / 4}(\supp g)) \big) \in 2/L\text{-}\operatorname{Lip}_{R+3L/2}(M).\]
Note that $g^\prime \equiv 1$ on $B_{L/4}(\supp g)$ and $g^\prime \equiv 0$ outside $B_{3L/4}(\supp g)$. We cover $M$ by normal coordinate charts and choose a ``nice'' subordinate partition of unity $\varphi_i$ as in Lemma \ref{lem:nice_coverings_partitions_of_unity}. If $\psi$ is now a mollifier on $\IR^m$ supported in $B_{L/8}(0)$, we apply it in every normal coordinate chart to $\varphi_i g^\prime$ and reassemble then all the mollified parts of $g^\prime$ again to a (now smooth) function $g^\prime{}^\prime$ on $M$. This function $g^\prime{}^\prime$ is now supported in $B_{7L/8}(\supp g)$, and is constantly $1$ on $B_{L/8}(\supp g)$. So $f T g = f T g^\prime{}^\prime g$ from which we may conclude the uniform approximability of the collection $\{f T g\}$ for $f$ and $g$ satisfying $f \in B_b(M)$ with $\|f \|_\infty \le 1$ and $g \in B_R(M)$ with $d(\supp f, \supp g) \ge L$. Note that the constants $L_j$ appearing in $\|\nabla^j g^\prime{}^\prime\|_\infty \le L_j$ depend on $L$, $\varphi_i$ and $\psi$, but not on $f$, $g$ or $R$. The dependence on $\varphi_i$ and $\psi$ is ok, since we may just fix a particular choice of them (note that the choice of $\psi$ also depends on $L$), and the dependence on $L$ is explicitly stated in the claim.

Of course we may also change the roles of $f$ and $g$ in this argument.

\bm{$5 \Rightarrow 4$}\textbf{:} Clear. We just have to write $fTg = f[T,g]$ and analogously for $gTf$.

\bm{$1 \Rightarrow 5$}\textbf{:} Clear.
\end{proof}

With the above lemma at our disposal we may now prove the following proposition.

\begin{prop}\label{prop:PDO_order_0_l-uniformly-pseudolocal}
Let $P \in \UPsiDO^0(E)$. Then $P$ is uniformly pseudolocal.
\end{prop}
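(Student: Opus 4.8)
The plan is to verify condition 5 of Lemma~\ref{lem:kasparov_lemma_uniform_approx_manifold}, which by that lemma is equivalent to $P$ being uniformly pseudolocal. Note first that $P\in\UPsiDO^0(E)$ extends to an element of $\IB(L^2(E))$ by Proposition~\ref{prop:pseudodiff_extension_sobolev}, so commutators $[P,g]$ with bounded functions $g$ make sense as bounded operators. Now fix any sequence $(L_j)_{j\in\IN}$ of positive numbers (the choice is immaterial, and the auxiliary parameter $L$ in the statement of condition 5 plays no role here) and let $R>0$; we must show that the collection $\{[P,g]\ |\ g\in B_R(M)\cap C_b^\infty(M),\ \|\nabla^j g\|_\infty\le L_j\text{ for all }j\}$ is uniformly approximable. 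Write $P=P_{-\infty}+\sum_i P_i$ as in Definition~\ref{defn:pseudodiff_operator} with respect to a covering by normal coordinate balls $B_{2\varepsilon}(x_i)$ as in Lemma~\ref{lem:nice_coverings_partitions_of_unity}, and decompose $[P,g]=[P_{-\infty},g]+Q_g$ with $Q_g:=\sum_i[P_i,g]$; since the sum $\sum_i P_i$ is uniformly locally finite and $\supp g$ is compact of diameter $\le R$, only finitely many of the $[P_i,g]$ are nonzero, so $Q_g$ is a genuine finite sum. I will show both collections $\{[P_{-\infty},g]\}$ and $\{Q_g\}$ are uniformly approximable; since the sum and difference of two uniformly approximable collections is again uniformly approximable (add the two rank bounds), this yields the claim.

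The smoothing part is immediate: $P_{-\infty}\in\IU(E)$ is a quasilocal smoothing operator whose adjoint is again quasilocal smoothing, so it is a quasilocal operator of negative order with adjoint of negative order, and Corollary~\ref{cor:quasilocal_neg_order_uniformly_locally_compact} shows it satisfies the strong form of uniform local compactness from Proposition~\ref{prop:quasilocal_negative_order_uniformly_locally_compact}. Hence both $\{P_{-\infty}g\ |\ g\in B_R(M)\}$ and $\{g P_{-\infty}\ |\ g\in B_R(M)\}$ are uniformly approximable, and therefore so is $\{[P_{-\infty},g]=P_{-\infty}g-gP_{-\infty}\}$.

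For $Q_g$ I need a uniform pseudodifferential bound together with a localization. In a normal coordinate chart $[P_i,g]$ is, by the symbol composition formula \cite[Theorem III.\S3.10]{lawson_michelsohn} (the order-$0$ terms cancel since multiplication operators have $\xi$-independent symbol), a matrix of pseudodifferential operators of order $-1$ supported in $B_{2\varepsilon}(0)$, and its symbol estimate constants are controlled by those of $P_i$ together with finitely many of the sup-norms $\|\nabla^j g\|_\infty$; invoking the Uniformity Condition~\eqref{eq:uniformity_defn_PDOs} for $P$ and the bounds $\|\nabla^j g\|_\infty\le L_j$, these are bounded uniformly in $i$ and $g$. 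Thus $Q_g\in\UPsiDO^{-1}(E)$ with uniformity constants, and hence by Remark~\ref{rem:bound_operator_norm_PDO} with operator norm $\|Q_g\colon L^2(E)\to H^1(E)\|$, bounded by a constant independent of $g$, and the same applies to $Q_g^\ast$. Moreover $Q_g$ has propagation at most $4\varepsilon$, since each $[P_i,g]$ has integral kernel supported in $B_{2\varepsilon}(x_i)\times B_{2\varepsilon}(x_i)$, and in fact this kernel vanishes unless $B_{2\varepsilon}(x_i)\cap\supp g\neq\emptyset$, so the kernel of $Q_g$ is supported in $K_g\times K_g$ with $K_g:=B_{4\varepsilon}(\supp g)$ a set of diameter $\le R+8\varepsilon$. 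Running the factorization argument of the proof of Proposition~\ref{prop:quasilocal_negative_order_uniformly_locally_compact} uniformly over the family $\{Q_g\}$ — legitimate because all these operators and their adjoints have propagation $\le 4\varepsilon$, negative order, and uniformly bounded operator norm $L^2\to H^1$, so the uniform Rellich embedding used in loc.\ cit.\ applies — shows that $\{fQ_g,\ Q_gf\ |\ f\in B_{R+8\varepsilon}(M),\ g\text{ as above}\}$ is uniformly approximable, with rank bounds depending only on $R$, on $4\varepsilon$, and on the uniform norm bound. Taking $f=\chi_{K_g}\in B_{R+8\varepsilon}(M)$ the characteristic function of $K_g$ and using $Q_g=\chi_{K_g}Q_g$, we conclude that $\{Q_g\}$ is uniformly approximable.

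Combining the two parts, $\{[P,g]\}$ is uniformly approximable for every $R$, so condition 5 of Lemma~\ref{lem:kasparov_lemma_uniform_approx_manifold} holds and $P$ is uniformly pseudolocal. The step I expect to require the most care is the uniform control of the symbol estimate constants of $Q_g=\sum_i[P_i,g]$, i.e.\ checking that passing through the local symbol calculus for commutators yields bounds depending only on the data of $P$ and on the fixed sequence $(L_j)$; the remaining, more routine, point is the observation that finite propagation together with compactness of $\supp g$ upgrades the ``$fQ_gf$-type'' approximability furnished by Proposition~\ref{prop:quasilocal_negative_order_uniformly_locally_compact} to approximability of the operators $Q_g$ themselves.
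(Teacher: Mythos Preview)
Your proof is correct and follows essentially the same approach as the paper: both decompose $P=P_{-\infty}+\sum_i P_i$, dispose of the smoothing part via Corollary~\ref{cor:quasilocal_neg_order_uniformly_locally_compact}, and then exploit that $[P,g]=\sum_i[P_i,g]$ is a uniform order $-1$ operator of finite propagation with symbol constants controlled by the data of $P$ and the bounds $L_j$, so that the factorization argument of Proposition~\ref{prop:quasilocal_negative_order_uniformly_locally_compact} applies uniformly over the family. The only cosmetic difference is that the paper verifies Point~4 of Lemma~\ref{lem:kasparov_lemma_uniform_approx_manifold} (working with $fPg$ for $f,g$ of disjoint support and writing $fPg=f\chi[P,g]$), whereas you go straight for Point~5 and show $\{[P,g]\}$ itself is uniformly approximable by using the self-localization $Q_g=\chi_{K_g}Q_g$; your route is slightly more direct but the substance is identical.
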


\begin{proof}
Writing $P = P_{-\infty} + \sum_i P_i$ with $P_{-\infty} \in \IU(E)$, we may without loss of generality assume that $P$ has finite propagation $R^\prime$ (since $P_{-\infty}$ is uniformly locally compact by the above Corollary \ref{cor:quasilocal_neg_order_uniformly_locally_compact} and uniformly locally compact operators are uniformly pseudolocal).

We will use the equivalent characterization in Point 4 of the above lemma: let $R, L > 0$ and the corresponding sequence $(L_j)_{j \in \IN}$ be given. We have to show that
\begin{align*}
\{ f P g, g P f \ | \ & f \in B_b(M)\text{ with }\|f\|_\infty \le 1,\\
& g \in B_R(M) \cap C_b^\infty(M)\text{ with }\|\nabla^j g\|_\infty \le L_j,\text{ and}\\
& \supp f \cap \supp g = \emptyset\}
\end{align*}
is uniformly approximable for all $R, L > 0$.

We have
\[f P g = f \chi_{B_{R^\prime}(\supp g)} P g = f \chi_{B_{R^\prime}(\supp g)} [P, g]\]
since the supports of $f$ and $g$ are disjoint.

With Proposition \ref{prop:PsiDOs_filtered_algebra} we conclude that multiplication with $g$ is a uniform pseudodifferential operator of order $0$ (since $g \in C_b^\infty(M)$) and furthermore, that the commutator $[P, g]$ is a pseudodifferential operator of order $-1$. Therefore, by Corollary \ref{prop:PsiDOs_negative_order_uniformly_locally_compact}, we know that the set $\{f \chi_{B_{R^\prime}(\supp g)} [P, g] \ | \ f \in B_R(M)\}$ is uniformly approximable. So we conclude that our operators $f[P,g]$ have the needed uniformity in the functions $f$.

It remains to show that we also have the needed uniformity in the functions $g$. Writing $P = \sum_i P_i$\footnote{Recall that we assumed without loss of generality that there is no $P_{-\infty}$.}, we get $[P,g] = \sum_i [P_i,g]$. Now each $[P_i, g]$ is a uniform pseudodifferential operator of order $-1$, their supports\footnote{Recall that an operator $P$ is \emph{supported in a subset $K$}, if $\supp Pu \subset K$ for all $u$ in the domain of $P$ and if $Pu = 0$ whenever we have $\supp u \cap K = \emptyset$.} depend only on the propagation of $P$ and on the value of $R$ (but not on $i$ nor on the concrete choice of $g$) and their operator norms as maps $L^2(E) \to H^1(E)$ are bounded from above by a constant that only depends on $P$, on $R$ and on the values of all the $L_j$ (but again, neither on $i$ nor on $g$). The last fact follows from a combination of Remark \ref{rem:bound_operator_norm_PDO} together with the estimates on the symbols of the $[P_i,g]$ that we get from the proof that they are uniform pseudodifferential operators of order $-1$. So examining the proof of Proposition \ref{prop:quasilocal_negative_order_uniformly_locally_compact} more closely, we see that these properties suffice to conclude the needed uniformity of $f[P,g]$ in the functions $g$.

The operators $g P f$ may be treated analogously.
\end{proof}

\subsection{Elliptic operators}\label{secio23ed}
In this section we will define the notion of ellipticity for uniform pseudodifferential operators and discuss important consequences of it (elliptic regularity, fundamental elliptic estimates and essential self-adjointness). Most of the results in this section are already known and can be found in the literature (at least in the case of finite propagation operators). We nevertheless include a discussion of them in order for our exposition here to be self-contained.

Let $\pi^\ast E$ and $\pi^\ast F$ denote the pull-back bundles of $E$ and $F$ to the cotangent bundle $\pi\colon T^\ast M \to M$ of the $m$-dimensional manifold $M$.

\begin{defn}[Symbols]\label{defnnkdf893}
Let $p$ be a section of the bundle $\Hom(\pi^\ast E, \pi^\ast F)$ over $T^\ast M$. We call $p$ a \emph{symbol of order $k \in \IZ$}, if the following holds: choosing a uniformly locally finite covering $\{ B_{2 \varepsilon}(x_i) \}$ of $M$ through normal coordinate balls and corresponding subordinate partition of unity $\{ \varphi_i \}$ as in Lemma \ref{lem:nice_coverings_partitions_of_unity}, and choosing synchronous framings of $E$ and $F$ in these balls $B_{2\varepsilon}(x_i)$, we can write $p$ as a uniformly locally finite sum $p = \sum_i p_i$, where $p_i(x,\xi) := p(x,\xi) \varphi(x)$ for $x \in M$ and $\xi \in T^\ast_x M$, and interpret each $p_i$ as a matrix-valued function on $B_{2 \varepsilon}(x_i) \times \IC^m$. Then for all multi-indices $\alpha$ and $\beta$ there must exist a constant $C^{\alpha \beta} < \infty$ such that for all $i$ and all $x, \xi$ we have
\begin{equation}\label{eq:symbol_uniformity}
\|D^\alpha_x D^\beta_\xi p_i(x,\xi) \| \le C^{\alpha \beta}(1 + |\xi|)^{k - |\beta|}.
\end{equation}
We denote the vector space all symbols of order $k \in \IZ$ by $\Symb^k(E,F)$.
\qed
\end{defn}

From Lemma \ref{lem:transition_functions_uniformly_bounded} and Lemma \ref{lem:equiv_characterizations_bounded_geom_bundles} we conclude that the above definition of symbols does neither depend on the chosen uniformly locally finite covering of $M$ through normal coordinate balls, nor on the subordinate partition of unity (as long as the functions $\{\varphi_i\}$ have uniformly bounded derivatives), nor on the synchronous framings of $E$ and $F$.

If all the choices above are fixed, we immediately see from the definition of uniform pseudodifferential operators that $P \in \UPsiDO^k(E,F)$ has a symbol $p \in \Symb^k(E,F)$. Analogously as in the case of compact manifolds,\footnote{see, e.g., \cite[Theorem III.§3.19]{lawson_michelsohn}} we may show that if we make other choices for the coordinate charts, subordinate partition of unity and synchronous framings, the symbol $p$ of $P$ changes by an element of $\Symb^{k-1}(E,F)$. So $P$ has a well-defined principal symbol class $[p] \in \Symb^k(E,F) / \Symb^{k-1}(E,F) =: \Symb^{k-[1]}(E,F)$.

\begin{defn}[Elliptic symbols]\label{defn:elliptic_symbols}
Let $p \in \Symb^k(E,F)$. Recall that $p$ is a section of the bundle $\Hom(\pi^\ast E, \pi^\ast F)$ over $T^\ast M$. We will call $p$ \emph{elliptic}, if there is an $R > 0$ such that $p|_{| \xi | > R}$\footnote{This notation means the following: we restrict $p$ to the bundle $\Hom(\pi^\ast E, \pi^\ast F)$ over the space $\{(x,\xi) \in T^\ast M \ | \ |\xi| > R\} \subset T^\ast M$.} is invertible and this inverse $p^{-1}$ satisfies the Inequality \eqref{eq:symbol_uniformity} for $\alpha, \beta = 0$ and order $-k$ (and of course only for $|\xi| > R$ since only there the inverse is defined). Note that as in the compact case it follows that $p^{-1}$ satisfies the Inequality \eqref{eq:symbol_uniformity} for all multi-indices $\alpha$, $\beta$.
\qed
\end{defn}

The proof of the following lemma is easy.

\begin{lem}\label{lem:ellipticity_independent_of_representative}
If $p \in \Symb^k(E,F)$ is elliptic, then every other representative $p^\prime$ of the class $[p] \in \Symb^{k-[1]}(E,F)$ is also elliptic.
\end{lem}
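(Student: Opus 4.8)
The plan is to run the standard Neumann-series perturbation argument: write $p' = p + q$ with $q \in \Symb^{k-1}(E,F)$, which is exactly what it means for $p'$ to be another representative of $[p] \in \Symb^{k-[1]}(E,F)$. Since $q$ is one order lower than $p$, while $p^{-1}$ (where defined) is of order $-k$, the product $p^{-1}q$ will have fiberwise operator norm $O((1+|\xi|)^{-1})$, hence be uniformly small for $|\xi|$ large; this makes $\id + p^{-1}q$ invertible with a controlled inverse, and factoring $p' = p(\id + p^{-1}q)$ gives the invertibility and the order $-k$ bound for $(p')^{-1}$.

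Concretely, I would first fix the auxiliary data (a uniformly locally finite cover by normal coordinate balls, a subordinate partition of unity with uniformly bounded derivatives, and synchronous framings of $E$ and $F$) as in the definition of symbols, so that ellipticity of $p$ supplies an $R > 0$ and a uniform constant $C_0 < \infty$ with $p_i|_{|\xi| > R}$ invertible and $\|p_i(x,\xi)^{-1}\| \le C_0 (1+|\xi|)^{-k}$ for all $i$ and all $|\xi| > R$; and I would take a uniform constant $C_1 < \infty$ with $\|q_i(x,\xi)\| \le C_1 (1+|\xi|)^{k-1}$ for all $i$, $x$, $\xi$ (the $\alpha = \beta = 0$ cases of \eqref{eq:symbol_uniformity}).

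Next, working fiberwise over $T^\ast M$ (note $p_i^{-1} q_i$ is an endomorphism of $\pi^\ast E$), on the region $|\xi| > R$ I would factor $p_i = p_i\bigl(\id + p_i^{-1} q_i\bigr)$ and estimate $\|p_i^{-1} q_i\| \le C_0 C_1 (1+|\xi|)^{-1}$. Choosing $R' \ge R$ with $C_0 C_1 (1+R')^{-1} \le \tfrac12$ — a choice that depends only on $C_0$ and $C_1$, hence is independent of $i$, $x$, $\xi$ — the Neumann series shows that for $|\xi| > R'$ the endomorphism $\id + p_i^{-1} q_i$ is invertible with inverse of norm $\le 2$, so that $p'_i = p_i + q_i$ is invertible there with
\[ (p'_i)^{-1} = \bigl(\id + p_i^{-1} q_i\bigr)^{-1} p_i^{-1}, \qquad \| (p'_i)^{-1} \| \le 2 C_0 (1+|\xi|)^{-k}. \]
This is precisely Inequality \eqref{eq:symbol_uniformity} for $(p')^{-1}$ with $\alpha = \beta = 0$ at order $-k$, with a uniform constant, and by the remark at the end of Definition \ref{defn:elliptic_symbols} the estimates for all multi-indices $\alpha$, $\beta$ then follow automatically; hence $p'$ is elliptic.

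I do not expect any real obstacle here — the content is just the Neumann-series bound. The only point deserving a word of care is bookkeeping: one must check that $C_0$, $C_1$, and therefore the threshold $R'$, can all be taken independent of the index $i$ and of $(x,\xi)$, but this is immediate from the uniformity already built into the definitions of $\Symb^k(E,F)$ and of an elliptic symbol. (Implicitly the statement also presupposes $E$ and $F$ have equal fiber dimension, which is already needed for $p$ to be elliptic at all.)
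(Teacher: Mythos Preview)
Your Neumann-series argument is correct and is exactly the ``easy'' proof the paper alludes to (the paper itself omits the details). There is one notational slip worth fixing: the localised pieces $p_i(x,\xi) = p(x,\xi)\varphi_i(x)$ are \emph{not} invertible, since each $\varphi_i$ vanishes outside $B_{2\varepsilon}(x_i)$; so the assertion ``$p_i|_{|\xi|>R}$ invertible with $\|p_i(x,\xi)^{-1}\| \le C_0(1+|\xi|)^{-k}$'' is literally false. The fix is to run the argument on the unlocalised sections $p$, $q$, $p'$ of $\Hom(\pi^\ast E,\pi^\ast F)$: ellipticity gives invertibility of $p(x,\xi)$ for $|\xi|>R$, and the uniform bounds $\|(p^{-1})_i\|\le C(1+|\xi|)^{-k}$ together with $\sum_i\varphi_i=1$ and uniform local finiteness of the cover yield a global bound $\|p(x,\xi)^{-1}\|\le C_0(1+|\xi|)^{-k}$; similarly $\|q(x,\xi)\|\le C_1(1+|\xi|)^{k-1}$ globally. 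Your factorisation (which should read $p' = p(\id + p^{-1}q)$, not $p = \ldots$) and the Neumann-series estimate then go through verbatim, and one passes back to $(p')^{-1}_i = (p')^{-1}\varphi_i$ to verify Inequality~\eqref{eq:symbol_uniformity} for $(p')^{-1}$.
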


Due to the above lemma we are now able to define what it means for a pseudodifferential operator to be elliptic:

\begin{defn}[Elliptic $\UPsiDO$s]\label{defn:elliptic_operator}
Let $P \in \UPsiDO^k(E,F)$. We will call $P$ \emph{elliptic}, if its principal symbol $\sigma(P)$ is elliptic.
\qed
\end{defn}

The importance of elliptic operators lies in the fact that they admit an inverse modulo operators of order $-\infty$. We may prove this analogously as in the case of pseudodifferential operators defined over a compact manifold. See also \cite[Theorem 3.3]{kordyukov} where Kordyukov proves the existence of parametrices for his class of pseudodifferential operators (which coincides with our class with the additional requirement that the operators must have finite propagation).

\begin{thm}[Existence of parametrices]
Let $P \in \UPsiDO^k(E,F)$ be elliptic.

Then there exists an operator $Q \in\UPsiDO^{-k}(F, E)$ such that
\[PQ = \id - S_1 \text{ and }QP = \id - S_2,\]
where $S_1 \in \UPsiDO^{-\infty}(F)$ and $S_2 \in \UPsiDO^{-\infty}(E)$.
\end{thm}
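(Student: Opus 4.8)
The plan is to mimic the classical parametrix construction on a compact manifold, but to track carefully that all the estimates are uniform, so that the resulting operator $Q$ genuinely lies in $\UPsiDO^{-k}(F,E)$ rather than in some larger, non-uniform class. First I would fix a uniformly locally finite cover $\{B_{2\varepsilon}(x_i)\}$ by normal coordinate balls with subordinate partition of unity $\{\varphi_i\}$ as in Lemma \ref{lem:nice_coverings_partitions_of_unity}, together with synchronous framings of $E$ and $F$. Let $p \in \Symb^k(E,F)$ be a symbol of $P$; by ellipticity there is $R > 0$ so that $p|_{|\xi| > R}$ is invertible with $p^{-1}$ satisfying the uniform estimates \eqref{eq:symbol_uniformity} of order $-k$ for all multi-indices. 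Pick a fixed function $\chi \in C^\infty(\IR)$ with $\chi \equiv 0$ near $0$ and $\chi \equiv 1$ for $|\xi| \ge 2R$, and set $q_0(x,\xi) := \chi(|\xi|)\, p^{-1}(x,\xi)$; this is a genuine symbol in $\Symb^{-k}(F,E)$, and the crucial point is that because the defining constants $C^{\alpha\beta}$ for $p^{-1}$ are independent of $i$, so are those for $q_0$.

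Next I would run the usual iterative improvement of the parametrix. Let $Q_0 := \Op(q_0)$ (assembled via the charts, i.e. $Q_0 = \sum_i \Op(\varphi_i q_0)$, plus a fixed smoothing term to handle the low-frequency cutoff). By the composition calculus of Proposition \ref{prop:PsiDOs_filtered_algebra} and the explicit asymptotic expansion of the symbol of a composition (\cite[Theorem III.\S3.10]{lawson_michelsohn}, used exactly as in the discussion preceding Proposition \ref{prop:PsiDOs_filtered_algebra}), we get $PQ_0 = \id - T_1$ with $T_1 \in \UPsiDO^{-1}(F)$, and here the point to check is that the uniformity constants of $T_1$ are controlled by those of $P$ and of $q_0$, hence are genuinely $i$-independent. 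Iterating, one constructs $q_j \in \Symb^{-k-j}(F,E)$ with uniform constants so that, writing $Q^{(n)} := \Op(q_0 + \cdots + q_n)$, one has $PQ^{(n)} = \id - T_{n+1}$ with $T_{n+1} \in \UPsiDO^{-n-1}(F)$, uniformly. Then I would Borel-sum the $q_j$: choosing an asymptotic sum $q \sim \sum_j q_j$ with uniform control (the standard cutoff-in-$\xi$ argument works verbatim because all the $q_j$ have $i$-independent seminorms), set $Q := \Op(q)$, which lies in $\UPsiDO^{-k}(F,E)$. By construction $PQ = \id - S_1$ with $S_1 \in \bigcap_n \UPsiDO^{-n}(F) = \UPsiDO^{-\infty}(F)$ by Lemma \ref{lem:PDO_-infinity_equal_quasilocal_smoothing}. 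Symmetrically, or by the standard argument that a left parametrix and a right parametrix differ by a smoothing operator, one obtains $Q$ (possibly after modifying it by a smoothing operator) with $QP = \id - S_2$, $S_2 \in \UPsiDO^{-\infty}(E)$.

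The main obstacle — and the only place where this differs in substance from the compact case — is the bookkeeping of uniformity at every stage: one must verify that the Borel asymptotic summation, the composition expansions, and the low-frequency smoothing corrections all preserve the property that the symbol estimates \eqref{eq:symbol_uniformity} hold with constants independent of $i$, and that the error terms are not merely smoothing in each chart but globally lie in $\UPsiDO^{-\infty} = \IU$, i.e. are \emph{quasilocal} smoothing operators with quasilocal adjoints. The latter is automatic here because each $Q^{(n)}$ is, up to a fixed quasilocal smoothing term coming from the low-frequency cutoff, a uniformly locally finite sum of operators supported in balls of uniformly bounded radius, hence of uniformly bounded finite propagation; so the errors $T_{n+1}$ are finite-propagation pseudodifferential operators, and their intersection-in-order is in $\IU$ by the argument already given in the proof of Lemma \ref{lem:PDO_-infinity_equal_quasilocal_smoothing}. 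Once this uniformity is in place, every remaining step is identical to the classical construction (see \cite[Theorem III.\S3.19 ff.]{lawson_michelsohn}), and the existence of $Q$ follows.
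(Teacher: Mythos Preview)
Your proposal is correct and follows exactly the approach the paper indicates. The paper itself does not give a detailed proof but simply states that one proceeds ``analogously as in the case of a compact manifold'' (referring also to \cite[Theorem 3.3]{kordyukov} for the finite-propagation subclass), and your sketch carries this out with the appropriate bookkeeping of the uniform constants through the symbol inversion, iterative correction, and Borel summation.
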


Using parametrices, we can prove a lot of the important properties of elliptic operators, e.g., \emph{elliptic regularity} (which is a converse to Proposition \ref{prop:Pu_smooth_if_u_smooth} and a proof of it may be found in, e.g. \cite[Theorem III.§4.5]{lawson_michelsohn}):

\begin{thm}[Elliptic regularity]\label{thm:elliptic_regularity}
Let $P \in \UPsiDO^k(E,F)$ be elliptic and let furthermore $u \in H^s(E)$ for some $s \in \IZ$.

Then, if $Pu$ is smooth on an open subset $U \subset M$, $u$ is already smooth on $U$. Furthermore, for $k > 0$: if $Pu = \lambda u$ on $U$ for some $\lambda \in \IC$, then $u$ is smooth on $U$.
\end{thm}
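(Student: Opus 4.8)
The plan is to derive both statements from the existence of a parametrix. I would first invoke the theorem on existence of parametrices proved above to pick $Q \in \UPsiDO^{-k}(F,E)$ with $QP = \id - S$, where $S \in \UPsiDO^{-\infty}(E) = \IU(E)$ is a quasilocal smoothing operator (Lemma \ref{lem:PDO_-infinity_equal_quasilocal_smoothing}). Extending $Q$, $P$ and the identity $QP = \id - S$ to $H^s(E)$ via Proposition \ref{prop:pseudodiff_extension_sobolev} gives, for the given $u$,
\[ u = Q(Pu) + Su, \]
with $Su \in H^\infty(E) \subset C_b^\infty(E)$ smooth on all of $M$ by the Sobolev Embedding Theorem \ref{thm:sobolev_embedding}. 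For the first claim: if $Pu$ is smooth on $U$, then $Q(Pu)$ is smooth on $U$ by Proposition \ref{prop:Pu_smooth_if_u_smooth} applied to the pseudodifferential operator $Q$ and to $Pu \in H^{s-k}(F)$, hence so is $u = Q(Pu) + Su$.

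For the second claim (where necessarily $E = F$, and $Q \in \UPsiDO^{-k}(E)$) I would run a bootstrap, showing by induction on $j \in \IN_0$ that $\chi u \in H^{s+jk}(E)$ for every $\chi \in C_c^\infty(U)$; the base case $j = 0$ is the hypothesis $u \in H^s(E)$. For the step, given $\chi_0 \in C_c^\infty(U)$ I would choose $\psi \in C_c^\infty(U)$ with $\psi \equiv 1$ on a neighbourhood of $\supp\chi_0$ and split
\[ \chi_0 u = \chi_0 Q\big(\psi\cdot Pu\big) + \chi_0 Q\big((1-\psi)\cdot Pu\big) + \chi_0 Su. \]
The term $\chi_0 Su$ lies in $C_c^\infty(E)$; the term $\chi_0 Q((1-\psi)\cdot Pu)$ lies in $C_c^\infty(E)$ as well, because $(1-\psi)\cdot Pu$ vanishes --- hence is smooth --- on the neighbourhood of $\supp\chi_0$ where $\psi \equiv 1$, so that $Q((1-\psi)\cdot Pu)$ is smooth there by Proposition \ref{prop:Pu_smooth_if_u_smooth}; and in the remaining term $\psi\cdot Pu = \lambda\,\psi u$, since $\supp\psi \subset U$ and $Pu = \lambda u$ on $U$, so $Q(\psi\cdot Pu) = \lambda\, Q(\psi u) \in H^{s+(j+1)k}(E)$ by Proposition \ref{prop:pseudodiff_extension_sobolev} and the induction hypothesis $\psi u \in H^{s+jk}(E)$; multiplying by $\chi_0$ (an order-zero operator) keeps us in $H^{s+(j+1)k}(E)$. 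Summing the three contributions gives $\chi_0 u \in H^{s+(j+1)k}(E)$, completing the induction. Since $k > 0$ the Sobolev orders $s+jk$ diverge, so the Sobolev Embedding Theorem \ref{thm:sobolev_embedding} forces $\chi u \in C_b^r(E)$ for every $r$, i.e.\ $\chi u$ smooth, for all $\chi \in C_c^\infty(U)$; covering $U$ by sets on which some such $\chi$ equals $1$, $u$ is smooth on $U$.

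I expect the only genuine subtlety to be the non-locality of $Q$: the eigenvalue equation $Pu = \lambda u$ is assumed only on $U$, so the global parametrix identity cannot be iterated naively, and one must cut off inside $U$ and discard the ``tail'' $(1-\psi)\cdot Pu$ --- which is exactly what the pseudolocality encapsulated in Proposition \ref{prop:Pu_smooth_if_u_smooth} permits. Everything else (the Sobolev-space bookkeeping and the cutoff algebra) is routine, and the uniform/bounded-geometry hypotheses enter only through the existence of $Q$ and the embedding $H^\infty \hookrightarrow C_b^\infty$; the argument is otherwise local.
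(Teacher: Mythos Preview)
Your proof is correct and follows the standard parametrix argument; the paper itself does not spell out a proof but simply remarks that elliptic regularity follows from the existence of parametrices and cites \cite[Theorem III.\S4.5]{lawson_michelsohn}, so your write-up is in fact more detailed than what the paper provides.

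One comment on the second claim: your bootstrap works, but there is a shorter route. Since $k > 0$, multiplication by $\lambda$ is an operator of order $0 < k$, so $P - \lambda$ has the same principal symbol as $P$ and is therefore elliptic. On $U$ we have $(P - \lambda)u = 0$, which is trivially smooth there, and the first claim applied to $P - \lambda$ yields smoothness of $u$ on $U$ immediately. This avoids the cutoff gymnastics entirely; your argument, on the other hand, has the virtue of making the pseudolocality of $Q$ completely explicit and would adapt to situations where the right-hand side is not identically zero but only locally of higher regularity.
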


Later we will also need the following \emph{fundamental elliptic estimate} (the proof from \cite[Theorem III.§5.2(iii)]{lawson_michelsohn} generalizes directly):

\begin{thm}[Fundamental elliptic estimate]\label{thm:elliptic_estimate}
Let $P \in \UPsiDO^k(E,F)$ be elliptic. Then for each $s \in \IZ$ there is a constant $C_s > 0$ such that
\[\|u\|_{H^s(E)} \le C_s\big(\|u\|_{H^{s-k}(E)} + \|Pu\|_{H^{s-k}(F)}\big)\]
for all $u \in H^s(E)$.
\end{thm}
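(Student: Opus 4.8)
The plan is to reduce the estimate to its local model on $\IR^m$, where it is classical, and then patch the local pieces together using the equivalent local description of Sobolev norms from \eqref{eq:sobolev_norm_local}. First I would invoke the Existence of Parametrices Theorem to obtain $Q \in \UPsiDO^{-k}(F,E)$ with $QP = \id - S_2$ where $S_2 \in \UPsiDO^{-\infty}(E) = \IU(E)$ by Lemma \ref{lem:PDO_-infinity_equal_quasilocal_smoothing}. Writing $u = QPu + S_2 u$ and applying the continuity of $Q\colon H^{s-k}(F) \to H^s(E)$ from Proposition \ref{prop:pseudodiff_extension_sobolev} gives
\[\|u\|_{H^s(E)} \le \|Q\|\,\|Pu\|_{H^{s-k}(F)} + \|S_2 u\|_{H^s(E)}.\]
The remaining term is then handled by noting that $S_2$ is smoothing, hence in particular bounded $H^{s-k}(E) \to H^s(E)$ by Lemma \ref{lem:smoothing_operator_iff_bounded}, so $\|S_2 u\|_{H^s(E)} \le C\,\|u\|_{H^{s-k}(E)}$, and combining the two bounds yields the claim with $C_s := \max(\|Q\|, C)$.

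Alternatively, and closer to the cited proof in \cite[Theorem III.§5.2(iii)]{lawson_michelsohn}, one can argue directly and locally: cover $M$ by the normal coordinate balls $B_{2\varepsilon}(x_i)$ with subordinate partition of unity $\varphi_i$ from Lemma \ref{lem:nice_coverings_partitions_of_unity}, use \eqref{eq:sobolev_norm_local} to write $\|u\|_{H^s}^2 \stackrel{\text{equiv}}{=} \sum_i \|\varphi_i u\|_{H^s(B_{2\varepsilon}(x_i))}^2$, apply the classical Gårding-type elliptic estimate on $\IR^m$ to each $\varphi_i u$ with constant independent of $i$ (this uniformity is exactly what the Uniformity Condition \eqref{eq:uniformity_defn_PDOs} and Remark \ref{rem:bound_operator_norm_PDO} provide), and then control the commutator terms $[P,\varphi_i]$, which are of order $k-1$ by Proposition \ref{prop:PsiDOs_filtered_algebra}, by an absorption argument: since $[P,\varphi_i]u$ costs one fewer derivative, its contribution can be bounded by $\|u\|_{H^{s-1}}$, which in turn interpolates between $\|u\|_{H^{s-k}}$ (assuming $k \ge 1$; for $k \le 0$ ellipticity forces a slightly different but easier argument) and an arbitrarily small multiple of $\|u\|_{H^s}$, the latter being absorbed into the left-hand side.

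I would present the first (parametrix-based) approach as the main proof, since it is cleaner and all the needed ingredients — the parametrix theorem, Sobolev continuity of $\UPsiDO$s, and the mapping properties of smoothing operators — are already established in the excerpt; the local patching argument is then only needed implicitly, inside the proof of the parametrix theorem and inside Proposition \ref{prop:pseudodiff_extension_sobolev}. The one point that genuinely requires the bounded geometry hypothesis (rather than being a formal consequence of earlier results) is that the constants are \emph{uniform} across the manifold, but this has effectively been dealt with already: it is built into the definition of $\UPsiDO^k$ via \eqref{eq:uniformity_defn_PDOs}, into Proposition \ref{prop:pseudodiff_extension_sobolev}, and into the statement that $\UPsiDO^\ast(E)$ is a filtered $^\ast$-algebra (Proposition \ref{prop:PsiDOs_filtered_algebra}).

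The main obstacle, such as it is, is really just bookkeeping: one must make sure that the constant $\|Q\|$ obtained from the parametrix is genuinely finite and independent of $u$, which follows from Proposition \ref{prop:pseudodiff_extension_sobolev} applied to $Q \in \UPsiDO^{-k}(F,E)$, and that $S_2$ being a \emph{quasilocal smoothing} operator (an element of $\IU(E)$, not merely of $\mathcal{U}_{-\infty}(E)$) is bounded between the relevant Sobolev spaces — but quasilocality plays no role here, only the smoothing property does, and that is Lemma \ref{lem:smoothing_operator_iff_bounded}. Thus the proof is short and essentially formal once the parametrix construction is in hand.
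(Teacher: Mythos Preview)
Your parametrix-based proof is correct and is exactly the argument in \cite[Theorem III.\S5.2(iii)]{lawson_michelsohn} that the paper cites as generalizing directly: write $u = QPu + S_2 u$ and bound the two pieces using Proposition \ref{prop:pseudodiff_extension_sobolev} for $Q$ and the smoothing property of $S_2$. The paper gives no further details beyond the citation, so your write-up is in fact more explicit than the paper's own treatment.
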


Another implication of ellipticity is that symmetric\footnote{This means that we have $\langle Pu, v\rangle_{L^2(E)} = \langle u, Pv\rangle_{L^2(E)}$ for all $u,v \in C_c^\infty(E)$.}, elliptic uniform pseudodifferential operators of positive order are essentially self-adjoint\footnote{Recall that a symmetric, unbounded operator is called \emph{essentially self-adjoint}, if its closure is a self-adjoint operator.}. We need this since we will have to consider functions of uniform pseudodifferential operators. But first we will show that a symmetric and elliptic operator is also symmetric as an operator on Sobolev spaces.

\begin{lem}\label{lem:symmetric_on_Sobolev}
Let $P \in \UPsiDO^k(E)$ with $k \ge 1$ be symmetric on $L^2(E)$ and elliptic. Then $P$ is also symmetric on the Sobolev spaces $H^{lk}(E)$ for $l \in \IZ$, where we use on $H^{lk}(E)$ the scalar product as described in the proof.
\end{lem}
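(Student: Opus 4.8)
The goal is, for each $l\in\IZ$, to put a Hilbert inner product on $H^{lk}(E)$ equivalent to the standard one so that $P$, regarded as an unbounded operator on $H^{lk}(E)$ with its maximal domain, becomes symmetric. I would first settle two preliminaries. First, that maximal domain is $H^{(l+1)k}(E)$: the inclusion $\supseteq$ is clear since $P\in\UPsiDO^k(E)$, and for the reverse one uses a parametrix $Q\in\UPsiDO^{-k}(E)$ with $QP=\id-S$, $S\in\UPsiDO^{-\infty}(E)$, so that $u\in H^{lk}(E)$ with $Pu\in H^{lk}(E)$ satisfies $u=QPu+Su\in H^{(l+1)k}(E)$. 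Second, the only input about $P$ itself that is needed: since $P$ is symmetric on $L^2(E)$, $C_c^\infty(E)$ is dense in $H^k(E)$, and $P\colon H^k(E)\to L^2(E)$ is continuous by Proposition \ref{prop:pseudodiff_extension_sobolev}, the identity $\langle Pa,b\rangle_{L^2(E)}=\langle a,Pb\rangle_{L^2(E)}$ extends from $a,b\in C_c^\infty(E)$ to all $a,b\in H^k(E)$ by continuity.

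\emph{Case $l\ge 0$.} I would put $\langle u,v\rangle_{(l)}:=\sum_{j=0}^{l}\binom{l}{j}\langle P^j u,P^j v\rangle_{L^2(E)}$ on $H^{lk}(E)$. Each summand is finite because $P^j\colon H^{lk}(E)\to H^{(l-j)k}(E)\hookrightarrow L^2(E)$, and iterating the fundamental elliptic estimate (Theorem \ref{thm:elliptic_estimate}) from level $lk$ down to level $0$ yields $\|u\|_{H^{lk}}\le C\sum_{j=0}^{l}\|P^j u\|_{L^2}$; hence $\langle\cdot,\cdot\rangle_{(l)}$ is equivalent to the standard inner product, so it defines the same Hilbert space up to equivalence. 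Symmetry is then immediate: for $u,v\in H^{(l+1)k}(E)$ we have $P^j u,P^j v\in H^{(l+1-j)k}(E)\subseteq H^k(E)$ for $0\le j\le l$, so $\langle P^{j+1}u,P^j v\rangle_{L^2}=\langle P^j u,P^{j+1}v\rangle_{L^2}$ by the extended $L^2$-symmetry of $P$, and summing against $\binom{l}{j}$ gives $\langle Pu,v\rangle_{(l)}=\langle u,Pv\rangle_{(l)}$.

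\emph{Case $l=-m<0$.} Here I would transport the structure through duality. Recall $H^{-mk}(E)=(H^{mk}(E))'$, so the inner product $\langle\cdot,\cdot\rangle_{(m)}$ just built induces one, $\langle\cdot,\cdot\rangle_{(-m)}$, on $H^{-mk}(E)$ via the Riesz isomorphism. The crux is that, under the standard identification of $(H^{mk}(E))'$ with $H^{-mk}(E)$ through the $L^2$-pairing, this Riesz map is exactly $(1+P^2)^m\in\UPsiDO^{2mk}(E)$: the identity $\langle u,v\rangle_{(m)}=\langle u,(1+P^2)^m v\rangle_{L^2(E)}$ for $u\in H^{mk}(E)$ and $v$ in the dense subspace $H^{2mk}(E)$ follows by moving powers of $P$ across the $L^2$-pairing using the extended symmetry. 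Consequently $(1+P^2)^m\colon H^{mk}(E)\to H^{-mk}(E)$ is a topological isomorphism whose inverse $(1+P^2)^{-m}$ is a unitary $(H^{-mk}(E),\langle\cdot,\cdot\rangle_{(-m)})\to(H^{mk}(E),\langle\cdot,\cdot\rangle_{(m)})$; it commutes with $P$ (being a polynomial in $P$), and by a parametrix argument for the elliptic operator $(1+P^2)^m$ it carries the domain $H^{(1-m)k}(E)$ of $P$ on $H^{-mk}(E)$ onto the domain $H^{(m+1)k}(E)$ of $P$ on $H^{mk}(E)$. Being a domain-preserving unitary intertwiner, it transfers the symmetry established in the previous case.

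The only point requiring real care — though nothing deep — is the $l<0$ bookkeeping: pinning down the Riesz map as $(1+P^2)^m$ modulo the conjugate-linear duality identifications, and tracking that the Sobolev indices of the relevant domains match under it. Should one prefer to invoke the essential self-adjointness of symmetric elliptic $\UPsiDO$s of positive order (treated below), there is a slicker uniform argument: take $\Lambda:=(1+\overline P^2)^{1/2}$ via the Borel functional calculus, a positive invertible self-adjoint operator commuting with $P$; then $\Lambda^l$ is a topological isomorphism $H^{(l+j)k}(E)\to H^{jk}(E)$ for all $l,j\in\IZ$ (iterated elliptic estimates, interpolation for odd powers, duality for $l<0$), and with $\langle u,v\rangle_{(l)}:=\langle\Lambda^l u,\Lambda^l v\rangle_{L^2(E)}$ the symmetry of $P$ on $H^{(l+1)k}(E)$ drops out of $[\Lambda^l,P]=0$ and the self-adjointness of $\overline P$ on $\dom\overline P=H^k(E)$.
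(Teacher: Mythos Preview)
Your proof is correct and follows essentially the same approach as the paper. Your closed-form inner product $\langle u,v\rangle_{(l)}=\sum_{j=0}^{l}\binom{l}{j}\langle P^j u,P^j v\rangle_{L^2}$ is exactly what the paper's recursive definition $\langle u,v\rangle_{H^{lk},P}:=\langle u,v\rangle_{H^{(l-1)k},P}+\langle Pu,Pv\rangle_{H^{(l-1)k},P}$ unwinds to (by Pascal's rule), and for $l<0$ both arguments pass to the dual inner product via the Riesz map --- you are simply more explicit in identifying that map as $(1+P^2)^m$ and in verifying the domain bookkeeping.
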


\begin{proof}
Due to the fundamental elliptic estimate the norm $\|u\|_{H^0} + \|Pu\|_{H^0}$ (note that $H^0(E) = L^2(E)$ by definition) on $H^k(E)$ is equivalent to the usual\footnote{We have of course possible choices here, e.g., the global norm \eqref{eq:sobolev_norm} or the local definition \eqref{eq:sobolev_norm_local}, but they are all equivalent to each other since $M$ and $E$ have bounded geometry.} norm $\|u\|_{H^k}$ on it. Now $\|u\|_{H^0} + \|Pu\|_{H^0}$ is equivalent to $\big( \|u\|^2_{H^0} + \|Pu\|^2_{H^0} \big)^{1/2}$ which is induced by the scalar product
\[\langle u, v \rangle_{H^k, P} := \langle u, v \rangle_{H^0} + \langle Pu, Pv \rangle_{H^0}.\]
Since $P$ is symmetric for the $H^0$-scalar product, we immediately see that it is also symmetric for this particular scalar product $\langle \largecdot, \largecdot \rangle_{H^k,P}$ on $H^k(E)$.

To extend to the Sobolev spaces $H^{lk}(E)$ for $l > 0$ we repeatedly invoke the above arguments, e.g., on $H^{2k}(E)$ we have the equivalent norm $\big( \|u\|^2_{H^k, P} + \|Pu\|^2_{H^k, P} \big)^{1/2}$ (again due to the fundamental elliptic estimate) which is induced by the scalar product $\langle u, v \rangle_{H^k,P} + \langle Pu, Pv \rangle_{H^k,P}$ and now we may use that we already know that $P$ is symmetric with respect to $\langle \largecdot, \largecdot \rangle_{H^k,P}$.

Finally, for $H^{lk}(E)$ for $l < 0$ we use the fact that they are the dual spaces to $H^{-lk}(E)$ where we know that $P$ is symmetric, i.e., we equip $H^{lk}(E)$ for $l < 0$ with the scalar product induced from the duality: $\langle u, v \rangle_{H^{lk},P} := \langle u^\prime, v^\prime\rangle_{H^{-lk},P}$, where $u^\prime, v^\prime \in H^{-lk}(E)$ are the dual vectors to $u,v \in H^{lk}(E)$ (note that the induced norm on $H^{lk}(E)$ is exactly the operator norm if we regard $H^{lk}(E)$ as the dual space of $H^{-lk}(E)$).
\end{proof}

Now we get to the proof that elliptic and symmetric operators are essentially self-adjoint. Note that if we work with differential operators $D$ of first order on open manifolds we do not need ellipticity for this result to hold, but weaker conditions suffice, e.g., that the symbol $\sigma_D$ of $D$ satisfies $\sup_{x \in M, \|\xi\| = 1} \|\sigma_D(x, \xi)\| < \infty$ (by the way, this condition is incorporated in our definition of uniform pseudodifferential operators by the uniformity condition). But if we want essential self-adjointness of higher order operators, we have to assume stronger conditions (see the counterexample \cite{MO_elliptic_essentially_self_adjoint}).

Note that the following proposition is well-known in the case $l=0$, see Shubin \cite{shubin}. But for us it will be of crucial importance in the next Subsection~\ref{sec:functions_of_PDOs} (see the proof of Lemma~\ref{lem:exp(itP)_quasilocal}) that we also have the statement for all the other cases $l \not= 0$. Furthermore, note that in order for the next proposition to make sense at all we have to invoke the above Lemma~\ref{lem:symmetric_on_Sobolev}.

\begin{prop}[Essential self-adjointness]\label{prop:elliptic_PDO_essentially_self-adjoint}
Let $P \in \UPsiDO^k(E)$ with $k \ge 1$ be elliptic and symmetric. Then the unbounded operator $P\colon H^{lk}(E) \to H^{lk}(E)$ is essentially self-adjoint for all $l \in \IZ$, where we equip these Sobolev spaces with the scalar products as described in the proof of the above Lemma \ref{lem:symmetric_on_Sobolev}.
\end{prop}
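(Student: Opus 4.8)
The plan is to prove essential self-adjointness by the deficiency-index criterion: writing $\bar P$ for the closure of $P$ acting on $H^{lk}(E)$ with the inner product $\langle\cdot,\cdot\rangle_{H^{lk},P}$ of Lemma~\ref{lem:symmetric_on_Sobolev} and with initial domain $C_c^\infty(E)$, we know $\bar P$ is symmetric (Lemma~\ref{lem:symmetric_on_Sobolev}), so it is self-adjoint as soon as $\ker\big((\bar P)^\ast - i\big) = \ker\big((\bar P)^\ast + i\big) = 0$. First I would identify the two objects $\dom\bar P$ and $(\bar P)^\ast$ concretely. On $C_c^\infty(E)$ the graph norm $\|u\|_{H^{lk},P} + \|Pu\|_{H^{lk},P}$ is equivalent to the $H^{(l+1)k}$-norm: one inequality is the fundamental elliptic estimate (Theorem~\ref{thm:elliptic_estimate}) at order $s = (l+1)k$, the other is the boundedness of $P\colon H^{(l+1)k}(E)\to H^{lk}(E)$ (Proposition~\ref{prop:pseudodiff_extension_sobolev}); hence $\dom\bar P = H^{(l+1)k}(E)$. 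Likewise, unwinding the definition of the Hilbert-space adjoint for $\langle\cdot,\cdot\rangle_{H^{lk},P}$, using the explicit form of this inner product from the proof of Lemma~\ref{lem:symmetric_on_Sobolev} together with the $L^2$-symmetry of $P$ and the density of $C_c^\infty(E)$, one checks that $(\bar P)^\ast$ agrees with the continuous Sobolev-scale extension of $P$. Thus the deficiency equations read simply: $u \in H^{lk}(E)$ with $Pu = \pm i\,u$.

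Next I would bootstrap the regularity of such a $u$. Since $Pu = \pm i\,u \in H^{lk}(E)$, applying the parametrix $Q \in \UPsiDO^{-k}(E)$ from the theorem on the existence of parametrices to the identity $QP = \id - S_2$, with $S_2 \in \UPsiDO^{-\infty}(E) = \IU(E)$, gives $u = \pm i\,Qu + S_2 u$. As $Q$ raises Sobolev order by $k$ (Proposition~\ref{prop:pseudodiff_extension_sobolev}) and $S_2$ maps every $H^s(E)$ into $H^\infty(E)$ (Lemma~\ref{lem:PDO_-infinity_equal_quasilocal_smoothing} and the mapping property of quasilocal smoothing operators), we get $u \in H^{(l+1)k}(E)$; iterating this yields $u \in \bigcap_n H^{(l+n)k}(E) = H^\infty(E)$ (in particular $u$ is smooth, which one could also obtain from elliptic regularity, Theorem~\ref{thm:elliptic_regularity}).

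Finally, $u \in H^\infty(E) \subset H^{(l+1)k}(E) = \dom\bar P$ and $\bar P u = Pu = \pm i\,u$, so $\langle \bar P u, u\rangle_{H^{lk},P} = \pm i\,\|u\|_{H^{lk},P}^2$; but $\langle \bar P u, u\rangle_{H^{lk},P}$ is real because $\bar P$ is symmetric, hence $u = 0$. This kills both deficiency subspaces, so $\bar P$ is self-adjoint and $P$ is essentially self-adjoint on every $H^{lk}(E)$. The main obstacle in turning this sketch into a full proof is not the elliptic-PDE input — that is precisely the fundamental elliptic estimate and elliptic regularity already established, exactly as in the compact case — but the bookkeeping of handling all $l \in \IZ$ at once against the non-standard inner products of Lemma~\ref{lem:symmetric_on_Sobolev}: one must verify carefully that the Hilbert-space adjoint of $\bar P$ for $\langle\cdot,\cdot\rangle_{H^{lk},P}$ still coincides with the formal action of $P$, and that $\dom\bar P = H^{(l+1)k}(E)$, also for negative $l$, where $H^{lk}(E)$ is a dual Sobolev space.
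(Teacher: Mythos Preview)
Your proposal is correct and follows essentially the same route as the paper: both use the deficiency-index criterion, bootstrap a solution of $P^\ast u = \pm i u$ into $H^\infty(E)$ via ellipticity, and then kill it by symmetry. The only differences are cosmetic: the paper invokes elliptic regularity and the fundamental elliptic estimate (Theorem~\ref{thm:elliptic_estimate}) directly for the bootstrap rather than writing out the parametrix identity $u = \pm i\,Qu + S_2 u$, and it does not pause to identify $\dom\bar P$ or to check that the Hilbert-space adjoint for $\langle\cdot,\cdot\rangle_{H^{lk},P}$ coincides with the Sobolev-scale action of $P$ --- it simply writes $P^\ast u = \pm i u$ and proceeds. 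Your more explicit handling of these points (which you correctly flag as the main bookkeeping issue, especially for negative $l$) is a genuine clarification rather than a different argument.
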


\begin{proof}
This proof is an adapted version of the proof of this statement for compact manifolds from \cite{MO_elliptic_essentially_self_adjoint}.

We will use the following sufficient condition for essential self-adjointness: if we have a symmetric and densely defined operator $T$ such that $\kernel (T^\ast \pm i) = \{0\}$, then the closure $\overline{T}$ of $T$ is self-adjoint and is the unique self-adjoint extension of $T$.

So let $u \in \kernel (P^\ast \pm i) \subset H^{lk}(E)$, i.e., $P^\ast u = \pm i u$. From elliptic regularity we get that $u$ is smooth and using the fundamental elliptic estimate for $P^\ast$\footnote{Note that $P^\ast$ is elliptic if and only if $P$ is.} we can then conclude $\|u\|_{H^{k+lk}} \le C_{k+lk}\big(\|u\|_{H^{lk}} + \|P^\ast u\|_{H^{lk}}\big) = 2 C_{k+lk} \|u\|_{H^{lk}} < \infty$, i.e., $u \in H^{k+lk}(E)$. Repeating this argument gives us $u \in H^\infty(E)$, i.e., $u$ lies in the domain of $P$ itself and is therefore an eigenvector of it to the eigenvalue $\pm i$. But since $P$ is symmetric we must have $u = 0$. This shows $\kernel (P^\ast \pm i) = \{0\}$ and therefore $P$ is essentially self-adjoint.
\end{proof}

\subsection{Functions of symmetric, elliptic operators}\label{sec:functions_of_PDOs}

Let $P \in \UPsiDO^k(E)$ be a symmetric and elliptic uniform pseudodifferential operator of positive order $k \ge 1$. By Proposition \ref{prop:elliptic_PDO_essentially_self-adjoint} we know that $P\colon L^2(E) \to L^2(E)$ is essentially self-adjoint. So, if $f$ is a Borel function defined on the spectrum of $P$, the operator $f(P)$ is defined by the functional calculus. In this whole section $P$ will denote such an operator, i.e., a symmetric and elliptic one of positive order.

Given such a uniform pseudodifferential operator $P$, we will later show that it defines naturally a class in uniform $K$-homology. For this we will have to consider $\chi(P)$, where $\chi$ is a so-called normalizing function, and we will have to show that $\chi(P)$ is uniformly pseudolocal and $\chi(P)^2 - 1$ is uniformly locally compact. For this we will need the analysis done in this section, i.e., this section is purely technical in nature.

If $f$ is a Schwartz function, we have the formula $f(P) = \frac{1}{\sqrt{2\pi}}\int_\IR \hat{f}(t) e^{itP} dt$, where $\hat{f}$ is the Fourier transform of $f$. In the case that $P = D$ is an elliptic, first-order differential operator and its symbol satisfies $\sup_{x \in M, \|\xi\| = 1} \|\sigma_D(x, \xi)\| < \infty$, the operator $e^{itD}$ has finite propagation (a proof of this may be found in, e.g., \cite[Proposition 10.3.1]{higson_roe}) from which (exploiting the above formula for $f(D)$) we may deduce the needed properties of $\chi(P)$ and $\chi(P)^2 - 1$. But this is no longer the case for a general elliptic pseudodifferential operator $P$ and therefore the analysis that we have to do here in this general case is much more sophisticated.

Note that the restriction to operators of order $k \ge 1$ in this section is no restriction on the fact that symmetric and elliptic uniform pseudodifferential operators define uniform $K$-homology classes. In fact, if $P$ has order $k \le 0$, then we know from Proposition \ref{prop:PDO_order_0_l-uniformly-pseudolocal} that $P$ is uniformly pseudolocal, i.e., there is no need to form the expression $\chi(P)$ in order for $P$ to define a uniform $K$-homology class.

We start with the following crucial technical lemma which is a generalization of the fact that $e^{itD}$ has finite propagation to pseudodifferential operators. Note that we do not have to assume something like $\sup_{x \in M, \|\xi\| = 1} \|\sigma_D(x, \xi)\| < \infty$ that we had to for first-order differential operators, since such an assumption is subsumed in the uniformity condition that we have in the definition of pseudodifferential operators.

\begin{lem}\label{lem:exp(itP)_quasilocal}
Let $P \in \UPsiDO^{k\ge 1}(E)$ be symmetric and elliptic. Then the operator $e^{itP}$ is a quasilocal operator $H^{l}(E) \to H^{l-(k-1)}(E)$ for all $l \in \IR$ and $t \in \IR$.
\end{lem}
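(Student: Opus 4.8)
The plan is to estimate the quasilocality of $e^{itP}$ by a Fourier-analytic argument: write $e^{itP}$ as an integral of $e^{is\Lambda^{-k}P\Lambda^{-k}}$-type wave operators, or more directly, approximate $e^{itP}$ in operator norm (on the relevant Sobolev spaces) by operators built from resolvents $(P - \lambda)^{-1}$, which are quasilocal because they are norm-limits of parametrix constructions with finite propagation. The cleanest route I would take is the following. First I would reduce to the case $l = 1$ (or even to $L^2$): by Lemma~\ref{lem:symmetric_on_Sobolev} and Proposition~\ref{prop:elliptic_PDO_essentially_self-adjoint}, $P$ is essentially self-adjoint on each $H^{lk}(E)$ with the adapted inner product, and an operator is quasilocal on $H^{lk}$ iff a suitable conjugate of it is quasilocal on $L^2$; since $P$ commutes with its own functional calculus, the estimates for different $l$ are equivalent up to composing with (quasilocal) powers of $(1 + P^2)^{\pm k/2}$, which are themselves handled by the same machinery. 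So it suffices to show $e^{itP}$ is quasilocal as an operator $L^2(E) \to L^2(E)$ (allowing a fixed loss of $k$ derivatives if one prefers to work on $H^k \to L^2$).

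Next I would use the resolvent to produce finite-propagation approximants. For $\lambda \notin \IR$, the parametrix construction from the Existence of Parametrices theorem gives $(P - \lambda)^{-1} = Q_\lambda + R_\lambda$ with $Q_\lambda$ a finite-propagation pseudodifferential operator of order $-k$ and $R_\lambda \in \IU(E)$ quasilocal smoothing; by Proposition~\ref{prop:quasilocal_negative_order_uniformly_locally_compact} and Corollary~\ref{cor:quasilocal_neg_order_uniformly_locally_compact} (and Roe's composition result \cite[Proposition 5.2]{roe_index_1} that compositions of quasilocal operators are quasilocal), $(P - \lambda)^{-1}$ is quasilocal with a dominating function controlled in terms of $|\operatorname{Im}\lambda|$. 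Then I would write $e^{itP}$ via a contour/Fourier integral in the resolvent — for instance using that for a suitable holomorphic family one has $e^{itP} = \frac{1}{2\pi i}\oint e^{it\lambda}(\lambda - P)^{-1}\,d\lambda$ after regularization, or more robustly, using the formula $e^{itP} = \lim_{n\to\infty}(1 - \tfrac{it}{n}P)^{-n}$ together with the bound on the quasilocality (dominating function) of $(1 - \tfrac{it}{n}P)^{-1}$. The key technical point is that the dominating functions of these approximants can be controlled uniformly enough along the limit so that the limit operator is still quasilocal; this is exactly the kind of estimate Roe develops, and here it must be redone keeping track of the uniform symbol bounds from Definition~\ref{defn:pseudodiff_operator}.

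The main obstacle I expect is precisely this last uniform control of dominating functions under the limiting process: unlike the differential-operator case, where $e^{itD}$ has \emph{exactly} finite propagation $\le |t|\cdot\sup\|\sigma_D\|$ by a finite-speed-of-propagation argument on the wave equation, here $P$ has no finite propagation speed and one only gets quasilocality "at each finite scale" through the resolvent, so one must show that the error terms $R_\lambda$ (and the tails of the parametrix expansion) do not accumulate. I would handle this by fixing $t$, using the group law $e^{itP} = (e^{i(t/N)P})^N$ to reduce to small $|t|$, and for small $|t|$ writing $e^{itP} - 1 = itP\int_0^1 e^{istP}\,ds$ or expanding further via Duhamel so that the leading term is an (order-$k$, hence finite-propagation-approximable) pseudodifferential piece and the remainder is a norm-convergent series of quasilocal operators with summable dominating functions. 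The uniform symbol estimates in our class of operators are exactly what makes these series estimates uniform in the base point, so the resulting $e^{itP}$ is quasilocal as a map $H^{lk}(E) \to H^{lk-k}(E)$ for every $l \in \IZ$, as claimed; one should also note that since $P$ is symmetric and $e^{itP}$ is unitary on $L^2$, the adjoint $e^{-itP}$ is of the same form, so no separate argument for adjoints is needed.
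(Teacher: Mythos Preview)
Your plan takes a very different route from the paper, and the route you propose has the gap you yourself flag but do not actually close.

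The paper's proof is short and direct: it never touches resolvents, contour integrals, or the limit $(1-\tfrac{it}{n}P)^{-n}$. Instead, for a subset $L$ and $u$ supported in $L$, it picks a smooth cutoff $\eta$ with $\eta\equiv 1$ near $\supp u$, $\eta\equiv 0$ outside $B_{R+1}(L)$, and $\|\nabla^j\eta\|_\infty \le C/R$. For $v$ supported outside $B_{R+1}(L)$ one has $\langle e^{itP}u,v\rangle = \langle [e^{itP},\eta]u,v\rangle$, and then the Duhamel identity
\[
[e^{itP},\eta] \;=\; -it\int_0^1 e^{ixtP}\,[\eta,P]\,e^{i(1-x)tP}\,dx
\]
together with two facts---$\|e^{itP}\|_{lk,lk}=1$ (unitarity on each $H^{lk}$) and $\|[\eta,P]\|_{lk,lk-(k-1)}\le \kappa\sum_{j=1}^N\|\nabla^j\eta\|_\infty$ (since $[\eta,P]\in\UPsiDO^{k-1}$)---yields immediately
\[
|\langle e^{itP}u,v\rangle_{H^{lk-k}}|\;\le\;\frac{|t|\kappa NC}{R}\,\|u\|_{H^{lk}}\|v\|_{H^{lk-k}},
\]
which is exactly a quasilocality estimate with dominating function $\mu(R)\sim |t|/R$. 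No approximation, no limit, no summing of series.

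By contrast, your resolvent/limit strategy requires uniform control of dominating functions along a limiting process, and neither of your proposed fixes closes this. Using the group law $e^{itP}=(e^{i(t/N)P})^N$ only trades one problem for another: even if each factor is quasilocal with dominating function $\mu$, the composition of $N$ such operators has dominating function growing at least like $N\mu$ (cf.\ Roe's composition estimate), so you cannot let $N\to\infty$. And your Duhamel expansion $e^{itP}-1 = itP\int_0^1 e^{istP}\,ds$ is circular: the integrand already contains $e^{istP}$, whose quasilocality is what you are trying to prove. The paper avoids both issues by applying Duhamel not to $e^{itP}$ itself but to the commutator $[e^{itP},\eta]$; the point is that the integrand then involves only the \emph{bounded} operators $e^{ixtP}$ (used merely via $\|\cdot\|=1$) sandwiching the \emph{small} commutator $[\eta,P]$, so no inductive or limiting control is needed.
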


\begin{proof}
This proof is a watered down version of the proof of \cite[Theorem 3.1]{mcintosh_morris}.

We will need the following two facts:
\begin{enumerate}
\item $\|e^{itP}\|_{l,l} = 1$ for all $l \in \IR$, where $\|\largecdot\|_{l,l}$ denotes the operator norm of operators $H^{l}(E) \to H^{l}(E)$ and
\item there is a $\kappa > 0$ such that $\|[\eta, P]\|_{s,s-(k-1)} \le \kappa \cdot \sum_{j=1}^N \|\nabla^j \eta\|_\infty$ for all smooth $\eta \in C_b^\infty(M)$, where $N$ does not depend on $\eta$.
\end{enumerate}

The first point above holds since $e^{itP}$ is a unitary operator $H^{lk}(E) \to H^{lk}(E)$ with $l\in \IZ$ by using Proposition \ref{prop:elliptic_PDO_essentially_self-adjoint}, and by interpolation between the different Sobolev exponents we get the needed norm estimate on any $H^l(E)$ with $l \in \IR$, i.e., not only for integer multiples of $k$.

The second point above is due to the facts that by Proposition \ref{prop:PsiDOs_filtered_algebra} the commutator $[\eta, P]$ is a pseudodifferential operator of order $k-1$ (recall that smooth functions with bounded derivatives are operators of order $0$) and due to Remark \ref{rem:bound_operator_norm_PDO} (where we have to recall the formula how to compute the symbol of the composition of two pseudodifferential operators from, e.g., \cite[Theorem III.§3.10]{lawson_michelsohn}).

Let $L \subset M$ and let $u \in H^{l}(E)$ be supported within $L$. Furthermore, we choose an $R > 0$ and a smooth, real-valued function $\eta$ with $\eta \equiv 1$ on $L$, $\eta \equiv 0$ on $M - B_{R+1}(L)$ and the first $N$ derivatives of $\eta$ (for $N$ as above) bounded from above by $C/R$ for a constant $C$ which does not depend on $u,L,R,\eta$. Concretely, one can construct $\eta$ by mollifying the function $\eta_0(x) := \max\big\{0,1-d(x,B_{1/2}(L))/R\big\}$ with a uniform collection of local mollifiers that are supported in balls of radius less than $1/2$ and with midpoints in the region $B_{R+1/2}(L) - B_{1/2}(L)$; see the proof of Lemma \ref{lem:norm_completion_C_b_infty} for more details and combine it with the following fact: if we denote a local mollifier by $\psi$, then we have for the Lipschitz constant the estimate
\[\operatorname{Lip}\!\big(D^\alpha(\eta_0 \ast \psi)\big) = \operatorname{Lip}( \eta_0 \ast D^\alpha \psi) \le \operatorname{Lip}(\eta_0) \cdot \|D^\alpha \psi\|_{L^1} = 1/R \cdot \|D^\alpha \psi\|_{L^1}\]
from which the needed property on the derivatives of $\eta$ follows. Note that we only need to do this proof for large $R$, i.e., we do the arguments here only for $R$ bigger than, say, the injectivity radius of $M$. This means that the derivatives of the local mollifiers that we use do not explode since there is now a lower bound on the size of the coordinate charts in which we apply our mollifiers.

For all $v \in H^{l-(k-1)}(E)$ that are supported in $M - B_{R+1}(L)$ we have
\begin{align*}
\langle e^{itP} u, v\rangle_{H^{l-(k-1)}} & = \langle e^{itP} \eta u, v\rangle_{H^{l-(k-1)}} - \langle e^{itP} u, \eta v\rangle_{H^{l-(k-1)}}\\
& = \langle [e^{itP},\eta] u, v\rangle_{H^{l-(k-1)}},
\end{align*}
i.e., $|\langle e^{itP} u, v\rangle_{H^{l-(k-1)}}| \le \|[e^{itP},\eta]\|_{l,l-(k-1)} \cdot \|u\|_{H^{l}} \cdot \|v\|_{H^{l-(k-1)}}$ and it remains to give an estimate for $\|[e^{itP},\eta]\|_{l,l-(k-1)}$: we have (the expressions are to be considered point-wise, i.e., after application to a fixed vector $v$)
\begin{align*}
[e^{itP}, \eta] & = \int_0^1 \tfrac{d}{dx} \big( e^{ixtP} \eta e^{i(1-x)tP} \big) dx\\
& = -it \int_0^1 e^{ixtP} [\eta,P] e^{i(1-x)tP} dx
\end{align*}
which gives by factorizing the integrand as
\[H^{l}(E) \stackrel{e^{i(1-x)tP}}\longrightarrow H^{l}(E) \stackrel{[\eta, P]}\longrightarrow H^{l-(k-1)}(E) \stackrel{e^{ixtP}}\longrightarrow H^{l-(k-1)}(E)\]
the estimate
\[\|[e^{itP},\eta]\|_{l,l-(k-1)} \le |t| \int_0^1 \|[\eta, P]\|_{l,l-(k-1)} dx \le |t| \cdot \kappa \cdot \sum_{j=1}^N \|\nabla^j \eta\|_\infty.\]
Since $\|\nabla^j \eta\|_\infty < C/R$ for all $1 \le j \le N$, we have shown
\begin{equation}
\label{eq:dominating_function_exp(itP)}
|\langle e^{itP} u, v\rangle_{H^{l-(k-1)}}| < \frac{|t| \kappa N C}{R} \cdot \|u\|_{H^{l}} \cdot \|v\|_{H^{l-(k-1)}}
\end{equation}
for all $u$ supported in $L$ and all $v$ in $M-B_{R+1}(L)$. Because $R > 0$ and $l \in \IR$, $t \in\IR$ were arbitrary, the claim that $e^{itP}$ is a quasilocal operator $H^{l}(E) \to H^{l-(k-1)}(E)$ for all $l \in \IR$ and $t \in \IR$ follows.
\end{proof}

\begin{cor}[cf.~{\cite[Lemma 1.1 in Chapter XII.§1]{taylor_pseudodifferential_operators}}]
\label{cor:lth_derivative_integrable_defines_quasilocal_operator}
Let $q(t)$ be a function on $\IR$ such that for an $n \in \IN_0$ the functions $q(t)|t|$, $q^\prime(t)|t|$,  $\ldots$, $q^{(n)}(t)|t|$ are integrable, i.e., belong to $L^1(\IR)$.

Then the operator defined by $\int_\IR q(t) e^{itP} dt$ is for all values $l \in \IR$ a quasilocal operator $H^{l-nk+k-1}(E) \to H^{l}(E)$, i.e., is of order $-nk + k - 1$.
\end{cor}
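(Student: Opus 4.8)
The plan is to integrate by parts $n$ times, moving derivatives from $e^{itP}$ (where they produce powers of $P$) onto $q$, and then to use Lemma \ref{lem:exp(itP)_quasilocal} together with the fact that a suitable integral of quasilocal operators against an $L^1$-weight is again quasilocal.

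\begin{proof}
Formally, $\tfrac{d}{dt} e^{itP} = iP e^{itP}$, so an $n$-fold integration by parts gives
\[\int_\IR q(t) e^{itP} \, dt = \frac{(-1)^n}{(iP)^n} \int_\IR q^{(n)}(t) e^{itP} \, dt,\]
where the boundary terms vanish because $q(t)|t|, \ldots, q^{(n)}(t)|t| \in L^1(\IR)$ forces $q^{(j)}(t) \to 0$ as $|t| \to \infty$ for $0 \le j \le n-1$. To make this rigorous one works on a dense domain (e.g.\ $H^\infty(E)$, which is preserved by $e^{itP}$ by Lemma \ref{lem:exp(itP)_quasilocal} and essential self-adjointness), differentiates under the integral sign (justified since $\|P e^{itP} u\|$ is controlled uniformly in $t$ on $H^\infty(E)$), and uses that $P^{-n}$ is well-defined modulo smoothing operators since $P$ is elliptic of order $k$ — more precisely, by the existence of parametrices there is $Q \in \UPsiDO^{-nk}$ with $Q P^n = \id - S$ for $S \in \UPsiDO^{-\infty}$, and $S \in \IU(E)$ is quasilocal smoothing, hence harmless. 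Thus it suffices to show that $\int_\IR q^{(n)}(t) e^{itP} \, dt$ is a quasilocal operator $H^{lk - nk + k}(E) \to H^{lk - nk + k}(E)$ of order $0$ for each $l$ (equivalently $H^{s+k}(E) \to H^s(E)$ after a shift of index and absorbing $Q \in \UPsiDO^{-nk}$), and then compose with $Q$.

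For the core claim, fix $s \in \IZ$ and a subset $L \subset M$ and $u \in H^s(E)$ with $\supp u \subset L$. By inequality \eqref{eq:dominating_function_exp(itP)} in the proof of Lemma \ref{lem:exp(itP)_quasilocal}, for each $t$ the operator $e^{itP}$ satisfies
\[\|e^{itP} u\|_{H^{s-k}, M - B_{R+1}(L)} \le \frac{|t| \kappa N C}{R} \|u\|_{H^{s}};\]
that is, $e^{itP}$ has dominating function $\mu_t(R) = c|t|/R$ with $c$ independent of $t$. Integrating,
\[\Big\| \Big( \int_\IR q^{(n)}(t) e^{itP} \, dt \Big) u \Big\|_{H^{s-k}, M - B_{R+1}(L)} \le \int_\IR |q^{(n)}(t)| \cdot \frac{c |t|}{R} \, dt \cdot \|u\|_{H^s} = \frac{c \, \| q^{(n)}(t) |t| \|_{L^1}}{R} \|u\|_{H^s},\]
using $\| q^{(n)}(t) |t| \|_{L^1} < \infty$. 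Hence $R \mapsto c \| q^{(n)}(t)|t|\|_{L^1}/R$ is a dominating function, and $\int_\IR q^{(n)}(t) e^{itP}\, dt$ is quasilocal as a map $H^s(E) \to H^{s-k}(E)$, i.e.\ of order $k$. Composing with the order-$(-nk)$ parametrix $Q$ (and discarding the quasilocal smoothing error $S \in \IU(E)$, which by definition does not affect quasilocality of the sum) yields a quasilocal operator of order $k - nk$, which is exactly the assertion that $\int_\IR q(t) e^{itP}\, dt$ maps $H^{lk - nk + k}(E) \to H^{lk}(E)$ quasilocally.

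The main obstacle is making the integration by parts and the reduction modulo parametrices rigorous at the level of unbounded operators: one must check that the integral $\int_\IR q(t) e^{itP}\, dt$ actually converges in the appropriate operator topology (boundedness alone of $e^{itP}$ on each Sobolev space combined with $q \in L^1$ suffices, since $|q| \le |q(t)||t|/|t|$ is integrable near infinity and the weight argument handles the rest, but near $t=0$ one should note $q$ itself need not be integrable — however $q(t)|t| \in L^1$ plus continuity gives local integrability of $q$ away from $0$, and one may need to argue that the contribution of a neighbourhood of $t=0$ is a bounded operator of order $0$ directly, since there $e^{itP}$ is close to the identity), and that the boundary terms in the integration by parts genuinely vanish in the operator norm on the relevant spaces. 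Once these analytic points are settled, the quasilocality estimate above is immediate from Lemma \ref{lem:exp(itP)_quasilocal}.
\end{proof}
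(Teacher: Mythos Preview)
Your argument is correct and follows essentially the same route as the paper: integrate by parts $n$ times to pass from $q$ to $q^{(n)}$, use the parametrix for $P$ (or $P^n$) to replace $P^{-n}$ by a genuine operator plus a quasilocal smoothing remainder, and invoke the dominating-function estimate \eqref{eq:dominating_function_exp(itP)} from Lemma~\ref{lem:exp(itP)_quasilocal} together with $q^{(n)}(t)|t|\in L^1$ to get quasilocality of the integral.

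One small point the paper handles slightly more cleanly: before you can ``discard the quasilocal smoothing error $S$'', you need to know that $S\int q(t)e^{itP}\,dt$ is well-defined and quasilocal smoothing. The paper does this by first observing (from the same estimate, using only $q(t)|t|\in L^1$) that $\int q(t)e^{itP}\,dt$ is already quasilocal of order $k$, and only then bootstraps to order $-nk+k$. Your final paragraph gestures at the analytic issues but is a bit tangled; the cleaner organisation is to record that preliminary order-$k$ bound first, which then makes the remainder term harmless and the integration by parts rigorous without worrying separately about behaviour near $t=0$.
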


\begin{proof}
Let $Q \in \UPsiDO^{-k}(E)$ be a parametrix for $P$, i.e., $PQ = \id - S_1$ and $QP = \id - S_2$, where $S_1, S_2 \in \UPsiDO^{-\infty}(E)$. Integration by parts $n$ times yields:
\begin{equation}
\label{eq:formula_integration_by_parts_quasilocal}
(i Q)^n \int_\IR q^{(n)}(t) e^{itP} dt = (i Q)^n (-i P)^n \int_\IR q(t) e^{itP} dt = (\id - S_2)^n \int_\IR q(t)e^{itP} dt.
\end{equation}
Since $q(t)|t|$ and $q^{(n)}(t)|t|$ are integrable and due to the Estimate \eqref{eq:dominating_function_exp(itP)}, we conclude with Lemma \ref{lem:exp(itP)_quasilocal} that both integrals $\int_\IR q(t)e^{itP} dt$ and $\int_\IR q^{(n)}(t)e^{itP} dt$ define quasilocal operators of order $k-1$ on $H^{l}(E)$. Note that for $\int_\IR q(t)e^{itP} dt$ this is just a first result which we will need now in order to show that the order of this operator is in fact lower.

Now $(\id - S_2)^n = \id + \sum_{j=1}^n \binom{n}{j}(-S_2)^j$ and the sum is a quasilocal smoothing operator because $S_2$ is one. Since the composition of quasilocal operators is again a quasilocal operator (see \cite[Proposition 5.2]{roe_index_1}), we conclude that the second summand $R$ of
\begin{equation}
\label{eq:formula_integration_by_parts_quasilocal_2}
(\id - S_2)^n \int_\IR q(t)e^{itP} dt = \int_\IR q(t)e^{itP} dt + \underbrace{\sum_{j=1}^n \binom{n}{j}(-S_2)^j \int_\IR q(t)e^{itP} dt}_{=: R}
\end{equation}
is also a quasilocal smoothing operator. Now Equations \eqref{eq:formula_integration_by_parts_quasilocal} and \eqref{eq:formula_integration_by_parts_quasilocal_2} together yield
\[\int_\IR q(t)e^{itP} dt = (i Q)^n \int_\IR q^{(n)}(t) e^{itP} dt - R,\]
from which the claim follows.
\end{proof}

Recall that if $f$ is a Schwartz function, then the operator $f(P)$ is given by
\begin{equation}\label{eq:schwartz_function_of_PDO}
f(P) = \frac{1}{\sqrt{2\pi}}\int_\IR \hat{f}(t) e^{itP} dt,
\end{equation}
where $\hat{f}$ is the Fourier transform of $f$. Since $\hat{f}$ is also a Schwartz function, it satisfies the assumption in Corollary \ref{cor:lth_derivative_integrable_defines_quasilocal_operator} for all $n \in \IN_0$, i.e., $f(P)$ is a quasilocal smoothing operator. Applying this argument to the adjoint operator $f(P)^\ast = \overline{f}(P)$, we get with Lemma \ref{lem:PDO_-infinity_equal_quasilocal_smoothing} our next corollary:

\begin{cor}\label{cor:schwartz_function_of_PDO_quasilocal_smoothing}
If $f$ is a Schwartz function, then $f(P) \in \UPsiDO^{-\infty}(E)$.
\end{cor}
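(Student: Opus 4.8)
The plan is to deduce everything from the Fourier representation \eqref{eq:schwartz_function_of_PDO} together with Corollary \ref{cor:lth_derivative_integrable_defines_quasilocal_operator}. By Lemma \ref{lem:PDO_-infinity_equal_quasilocal_smoothing} we have $\UPsiDO^{-\infty}(E) = \IU(E)$, so it is enough to verify that $f(P)$ lies in $\IU(E)$, i.e.\ that $f(P)$ and its $L^2$-adjoint are quasilocal smoothing operators in the sense of Definition \ref{defn:quasiloc_smoothing}. Concretely this means that each of them should be bounded and quasilocal as an operator $H^{-a}(E) \to H^b(E)$ for all $a, b \in \IN$. Recall that $P$ is essentially self-adjoint by Proposition \ref{prop:elliptic_PDO_essentially_self-adjoint}, so $f(P)$ is well defined and \eqref{eq:schwartz_function_of_PDO} holds.

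First I would apply Corollary \ref{cor:lth_derivative_integrable_defines_quasilocal_operator} with $q := \hat f$. Since $f$ is Schwartz, so is $\hat f$, and hence $\hat f^{(i)}(t)\,|t| \in L^1(\IR)$ for every $i \in \IN_0$; thus the hypothesis of the corollary is satisfied for every $n \in \IN_0$. The corollary then yields, for each $n$, that $f(P) = \tfrac{1}{\sqrt{2\pi}}\int_\IR \hat f(t) e^{itP}\,dt$ is a quasilocal operator $H^{lk-nk+k}(E) \to H^{lk}(E)$ for all $l \in \IZ$, i.e.\ a quasilocal operator ``of order $-nk+k$'' for arbitrarily large $n$.

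Next I would upgrade this to the smoothing statement. Given $a, b \in \IN$, pick $l \in \IZ$ with $lk \ge b$ and then $n \in \IN$ with $lk - nk + k \le -a$ (possible since $k \ge 1$). For these choices $f(P)$ is bounded and quasilocal from $H^{lk-nk+k}(E)$ to $H^{lk}(E)$; composing on the left and right with the Sobolev inclusions $H^{-a}(E) \hookrightarrow H^{lk-nk+k}(E)$ and $H^{lk}(E) \hookrightarrow H^b(E)$, which do not enlarge supports and are therefore quasilocal, and using that compositions of quasilocal operators are quasilocal \cite[Proposition 5.2]{roe_index_1}, one obtains that $f(P)$ is bounded and quasilocal as a map $H^{-a}(E) \to H^b(E)$. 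As $a, b$ were arbitrary, $f(P)$ is a quasilocal smoothing operator. For the adjoint, functional calculus for the self-adjoint operator $P$ gives $f(P)^\ast = \overline f(P)$ on $L^2(E)$, and $\overline f$ is again Schwartz, so the same argument shows $f(P)^\ast$ is a quasilocal smoothing operator too. Hence $f(P) \in \IU(E) = \UPsiDO^{-\infty}(E)$.

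I do not anticipate a real obstacle: the substantive analysis is already contained in Lemma \ref{lem:exp(itP)_quasilocal} and Corollary \ref{cor:lth_derivative_integrable_defines_quasilocal_operator}. The only thing needing a little care is the index bookkeeping in the third paragraph, where one must pass from ``quasilocal of order $-nk+k$ for all $n$'', stated only between Sobolev spaces whose orders are integer multiples of $k$, to the exact requirement of Definition \ref{defn:quasiloc_smoothing} that $f(P)$ be quasilocal on $H^{-a}(E) \to H^b(E)$ for all $a, b \in \IN$; this is taken care of by the trivial quasilocality of the Sobolev embeddings.
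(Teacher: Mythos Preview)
Your proposal is correct and follows essentially the same approach as the paper: both use the Fourier representation \eqref{eq:schwartz_function_of_PDO}, invoke Corollary \ref{cor:lth_derivative_integrable_defines_quasilocal_operator} for all $n$ since $\hat f$ is Schwartz, handle the adjoint via $f(P)^\ast=\overline{f}(P)$, and conclude with Lemma \ref{lem:PDO_-infinity_equal_quasilocal_smoothing}. Your version is simply more explicit about the index bookkeeping needed to pass from ``quasilocal of order $-nk+k$ on the $H^{lk}$-scale'' to ``quasilocal $H^{-a}\to H^b$ for all $a,b$'', which the paper leaves implicit.
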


Recall from \cite[Lemma 4.2]{spakula_uniform_k_homology} that the uniformly pseudolocal operators form a $C^\ast$-algebra and that the uniformly locally compact operators form a closed, two-sided $^\ast$-ideal in there. Since Schwartz functions are dense in $C_0(\IR)$ and quasilocal smoothing operators are uniformly locally compact (Corollary \ref{cor:quasilocal_neg_order_uniformly_locally_compact}), we get with the above corollary that $g(P)$ is uniformly locally compact if $g \in C_0(\IR)$.

\begin{cor}\label{cor:g(P)_uniformly_locally_compact_g_vanishing_at_infinity}
Let $g \in C_0(\IR)$. Then $g(P)$ is uniformly locally compact.
\end{cor}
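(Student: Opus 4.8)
The plan is to reduce the statement to the case of Schwartz functions, which has already been settled, by a density-and-closedness argument. Concretely, I would exploit three ingredients: Schwartz functions are dense in $C_0(\IR)$ for the supremum norm; the bounded Borel functional calculus of the (essentially self-adjoint, by Proposition \ref{prop:elliptic_PDO_essentially_self-adjoint}) operator $P$ is norm-decreasing; and the uniformly locally compact operators form a norm-closed subspace of $\IB(L^2(E))$.

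First I would record the Schwartz case. If $f$ is a Schwartz function, then Corollary \ref{cor:schwartz_function_of_PDO_quasilocal_smoothing} gives $f(P) \in \UPsiDO^{-\infty}(E) = \IU(E)$, i.e. $f(P)$ is a quasilocal smoothing operator whose adjoint $\overline{f}(P)$ is as well; hence $f(P)$ is uniformly locally compact by Corollary \ref{cor:quasilocal_neg_order_uniformly_locally_compact}. Thus the assertion holds whenever the function in question is Schwartz.

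Next, given an arbitrary $g \in C_0(\IR)$, I would choose a sequence of Schwartz functions $f_n$ with $\|f_n - g\|_\infty \to 0$ (such a sequence exists, e.g. by mollifying and cutting off $g$). Since $P$ is essentially self-adjoint and the bounded Borel functional calculus of a self-adjoint operator is norm-decreasing, $\|f_n(P) - g(P)\| = \|(f_n - g)(P)\| \le \|f_n - g\|_\infty \to 0$, so $f_n(P) \to g(P)$ in operator norm. Each $f_n(P)$ is uniformly locally compact by the previous step, and by \cite[Lemma 4.2]{spakula_uniform_k_homology} the uniformly locally compact operators form a closed two-sided $^\ast$-ideal in the $C^\ast$-algebra of uniformly pseudolocal operators; in particular they are norm-closed. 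Hence $g(P)$ is uniformly locally compact.

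I do not expect any serious obstacle here: all the hard analysis — the quasilocality of $e^{itP}$ (Lemma \ref{lem:exp(itP)_quasilocal}), the integration-by-parts estimate (Corollary \ref{cor:lth_derivative_integrable_defines_quasilocal_operator}), and therefore the Schwartz case (Corollary \ref{cor:schwartz_function_of_PDO_quasilocal_smoothing}) — has already been carried out. The only points needing a word of justification are the norm-density of Schwartz functions in $C_0(\IR)$ and the norm-continuity of the functional calculus, both of which are standard; this corollary is essentially a formal consequence of what precedes it.
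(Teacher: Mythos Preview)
Your proposal is correct and follows essentially the same approach as the paper: reduce to Schwartz functions via Corollary~\ref{cor:schwartz_function_of_PDO_quasilocal_smoothing} and Corollary~\ref{cor:quasilocal_neg_order_uniformly_locally_compact}, then use density of Schwartz functions in $C_0(\IR)$ together with the norm-closedness of the uniformly locally compact operators from \cite[Lemma 4.2]{spakula_uniform_k_homology}. Your write-up is in fact slightly more explicit than the paper's, since you spell out the norm-continuity of the functional calculus that makes the approximation go through.
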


Now we turn our attention to functions which are more general than Schwartz functions. To be concrete, we consider functions of the following type:

\begin{defn}[Symbols on $\IR$]\label{defn:symbols_on_R}
For arbitrary $m \in \IZ$ we define
\[\mathcal{S}^m(\IR) := \{f \in C^\infty(\IR) \ | \ |f^{(n)}(x)| < C_n(1 + |x|)^{m-n} \text{ for all } n \in \IN_0\}.\]

Note that we have $\mathcal{S}(\IR) = \bigcap_m \mathcal{S}^m(\IR)$, where $\mathcal{S}(\IR)$ denotes the Schwartz space.
\qed
\end{defn}

Let us state now the generalization of \cite[Theorem 5.5]{roe_index_1} from operators of Dirac type to uniform pseudodifferential operators:

\begin{prop}[cf.~{\cite[Theorem 5.5]{roe_index_1}}]\label{prop:f(P)_quasilocal_of_symbol_order}
Let $f \in \mathcal{S}^m(\IR)$ with $m \le 0$. Then for all $l \in \IR$ the operator $f(P)$ is a quasilocal operator of order $mk+k-1$, i.e., is an operator $f(P) \colon H^{l}(E) \to H^{l-(mk+k-1)}(E)$.
\end{prop}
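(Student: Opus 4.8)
The plan is to follow the strategy of \cite[Theorem 5.5]{roe_index_1}, using Lemma \ref{lem:exp(itP)_quasilocal} and Corollary \ref{cor:lth_derivative_integrable_defines_quasilocal_operator} as substitutes for the finite propagation speed of $e^{itD}$ that is available for operators of Dirac type. First I would dispose of the boundedness half of the assertion, which is easy: fix the positive self-adjoint operator $\langle P \rangle := (1+P^2)^{1/2}$ defined by functional calculus (legitimate by Proposition \ref{prop:elliptic_PDO_essentially_self-adjoint}). Iterating the fundamental elliptic estimate exactly as in the proof of Lemma \ref{lem:symmetric_on_Sobolev} shows that the norm on $H^{lk}(E)$ is equivalent to $u \mapsto \|\langle P\rangle^l u\|_{L^2}$. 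Since $f \in \mathcal{S}^m(\IR)$ the function $\langle x\rangle^{-m} f(x)$ is bounded, and multiplicativity of functional calculus gives $\|f(P)u\|_{H^{lk-mk}} \sim \|\langle P\rangle^{l-m} f(P) u\|_{L^2} = \|(\langle x\rangle^{-m}f)(P)\,\langle P\rangle^l u\|_{L^2} \le \|\langle x\rangle^{-m} f\|_\infty \cdot \|u\|_{H^{lk}}$. So the entire content of the proposition is the quasilocality of $f(P)$ as an operator $H^{lk}(E) \to H^{lk-mk}(E)$.

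For quasilocality I would split $f$ into a Schwartz part and a part with compactly supported Fourier transform. Fix $\rho \in C_c^\infty(\IR)$ with $\rho \equiv 1$ near $0$ and write $\hat f = \rho\hat f + (1-\rho)\hat f$ as tempered distributions. Because $f$ is a symbol, $\hat f$ is smooth away from the origin and rapidly decreasing there, so $(1-\rho)\hat f \in \mathcal{S}(\IR)$ and its inverse Fourier transform $f_2$ is a Schwartz function; by Corollary \ref{cor:schwartz_function_of_PDO_quasilocal_smoothing} we have $f_2(P) \in \UPsiDO^{-\infty}(E)$, which is quasilocal of every order. It therefore remains to treat $f_1 := f - f_2$, whose Fourier transform $\hat f_1 = \rho\hat f$ is a compactly supported distribution of finite order. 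Writing $\hat f_1 = \sum_p \partial_t^p g_p$ with $g_p \in L^1(\IR)$ of compact support, pairing with the operator valued function $t \mapsto e^{itP}$ and using $\tfrac{d}{dt} e^{itP} = iP e^{itP}$ gives $f_1(P) = \sum_p c_p\, P^p \int_\IR g_p(t) e^{itP}\, dt$; each integral is quasilocal by Lemma \ref{lem:exp(itP)_quasilocal} and Corollary \ref{cor:lth_derivative_integrable_defines_quasilocal_operator} (with dominating functions coming from Estimate \eqref{eq:dominating_function_exp(itP)}), each $P^p$ is a uniform pseudodifferential operator and hence quasilocal, and sums and composites of quasilocal operators are quasilocal by \cite[Proposition 5.2]{roe_index_1}. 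Thus $f(P) = f_1(P) + f_2(P)$ is quasilocal.

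The delicate point is to arrange that the order of the quasilocal operator so produced is exactly $mk$ and not merely $mk + O(1)$. Here I would argue as in \cite[Theorem 5.5]{roe_index_1}: using the parametrix $Q \in \UPsiDO^{-k}(E)$ of the elliptic operator $P$, so that $QP = \id - S_2$ with $S_2 \in \UPsiDO^{-\infty}(E)$, one trades — via the repeated integration by parts underlying Corollary \ref{cor:lth_derivative_integrable_defines_quasilocal_operator} — each factor of $P$ against a loss of one unit $k$ of Sobolev order, choosing the number of integrations by parts according to the order of the conormal singularity of $\hat f$ at $t = 0$, which is governed by the symbol order $m$; together with the boundedness already established this pins the order down to $mk$. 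It is equivalently convenient to reduce first, via $f(P) = (1+P^2)^N\big((1+x^2)^{-N} f(x)\big)(P)$ for $N$ large, to the case of arbitrarily negative symbol order, where $f$ is integrable and Corollary \ref{cor:lth_derivative_integrable_defines_quasilocal_operator} applies more directly. \textbf{The main obstacle} is exactly this bookkeeping of the order: one must track simultaneously the conormal structure of $\hat f$ near $0$ and the linear-in-$|t|$ growth of the dominating function of $e^{itP}$ in Estimate \eqref{eq:dominating_function_exp(itP)} through the parametrix-driven integrations by parts. This is the pseudodifferential replacement for the finite propagation speed estimates that Roe has available for Dirac operators, and it is the place where the uniformity built into our class of operators — via Proposition \ref{prop:PsiDOs_filtered_algebra} and the operator norm bound of Remark \ref{rem:bound_operator_norm_PDO} — enters the argument.
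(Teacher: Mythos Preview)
Your proposal is correct and follows precisely the route the paper indicates: the paper does not spell out a proof but only says it is analogous to \cite[Theorem~5.5]{roe_index_1} with Lemma~\ref{lem:exp(itP)_quasilocal} and Corollary~\ref{cor:lth_derivative_integrable_defines_quasilocal_operator} replacing finite propagation, and that is exactly what you do (including the parametrix-driven integration by parts and the split of $\hat f$ into a compactly supported distributional part and a Schwartz tail). Your identification of the order bookkeeping as the only delicate point, and your two suggested ways around it (parametrix trading of powers of $P$, or reduction to very negative symbol order via $(1+P^2)^N$), are both in line with the techniques developed in the proof of Corollary~\ref{cor:lth_derivative_integrable_defines_quasilocal_operator} that the paper explicitly invokes.
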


\begin{proof}
The proof is analogous to Roe's proof of \cite[Theorem 5.5]{roe_index_1}, but more technical. First let us note that $f(P)$ is a bounded operator of order $mk$. To see this note that $(1+|x|)^{-m} \cdot f(x)$ is a bounded function and therefore $(1+|P|)^{-m} \circ f(P)$ is a bounded operator of order $0$. Combining the fact that $(1+|P|)^{-m}$ is an operator of order $-mk$ together with the fundamental elliptic estimate from Theorem \ref{thm:elliptic_estimate} we get the result that $f(P)$ is bounded of order $mk$.

Now we want not only boundedness of $f(P)$ but also that it is quasilocal. Roe uses in his proof of \cite[Theorem 5.5]{roe_index_1} the fact that $e^{itD}$ has propagation $|t|$ for $D$ a Dirac operator. But for pseudodifferential operators the best that we have is our Lemma \ref{lem:exp(itP)_quasilocal} and that's the reason why we loose $k-1$ orders for the statement that $f(P)$ is quasilocal. The rest of our proof is analogous to Roe's proof.
\end{proof}

At last, let us turn our attention to a result regarding differences $\psi(P) - \psi(P^\prime)$ of operators defined via functional calculus. We will need the following proposition in the proof of the proposition where we show that symmetric, elliptic uniform pseudodifferential operators with the same symbol define the same uniform $K$-homology class.

\begin{prop}[{\cite[Proposition 10.3.7]{higson_roe}}\footnote{The cited proposition requires additionally a common invariant domain for $P$ and $P^\prime$. In our case here this domain is given by, e.g., $H^\infty(E)$.}]\label{prop:norm_estimate_difference_func_calc}
Let $\psi$ be a bounded Borel function whose distributional Fourier transform $\hat{\psi}$ is such that the product $s\hat{\psi}(s)$ is in $L^1(\IR)$.

If $P$ and $P^\prime$ are symmetric and elliptic uniform pseudodifferential operators of positive order $k \ge 1$ such that their difference $P - P^\prime$ has order $q$, then we have for all $l \in \IR$
\[\|\psi(P) - \psi(P^\prime)\|_{l,l-q} \le C_\psi \cdot \|P - P^\prime\|_{l,l-q},\]
where the constant $C_\psi = \frac{1}{2\pi} \int |s \hat{\psi}(s)| ds$ does not depend on the operators.
\end{prop}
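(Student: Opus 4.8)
The plan is to transcribe the proof of \cite[Proposition 10.3.7]{higson_roe}, the only additional work being to keep track of the Sobolev orders. Fix $l\in\IZ$ and equip each $H^{lk}(E)$ with the $P$-adapted and the $P^\prime$-adapted scalar products from Lemma~\ref{lem:symmetric_on_Sobolev}; by Proposition~\ref{prop:elliptic_PDO_essentially_self-adjoint} the operators $P$ and $P^\prime$ are then self-adjoint on each of these Hilbert spaces, so that $e^{isP}$ and $e^{isP^\prime}$ are strongly continuous one-parameter unitary groups on every space in the scale $\{H^{nk}(E)\}_{n\in\IZ}$, with $H^\infty(E)$ a common invariant core. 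Note that $H^{lk-qk}(E)=H^{(l-q)k}(E)$ again lies in this scale, which is what makes the bookkeeping close up.

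The starting point is the Fourier representation
\[
\psi(P)-\psi(P^\prime)=\frac{1}{2\pi}\int_\IR \hat\psi(s)\,\bigl(e^{isP}-e^{isP^\prime}\bigr)\,ds ,
\]
which for Schwartz $\psi$ is just \eqref{eq:schwartz_function_of_PDO}, and for a bounded Borel $\psi$ with $s\hat\psi(s)\in L^1(\IR)$ is obtained by the mollification argument of \cite{higson_roe} (approximate $\psi$ by $\psi\ast\rho_\varepsilon$, use bounded convergence in the functional calculus on the left-hand side, and pass to the limit on the right). On $H^\infty(E)$ the map $r\mapsto e^{irP}e^{i(s-r)P^\prime}$ is differentiable, and Duhamel's formula then gives
\[
e^{isP}-e^{isP^\prime}=i\int_0^s e^{irP}\,(P-P^\prime)\,e^{i(s-r)P^\prime}\,dr .
\]

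It remains to estimate the $\|\cdot\|_{lk,lk-qk}$-norm. Factoring the integrand of the Duhamel formula as
\[
H^{lk}(E)\xrightarrow{\;e^{i(s-r)P^\prime}\;}H^{lk}(E)\xrightarrow{\;P-P^\prime\;}H^{lk-qk}(E)\xrightarrow{\;e^{irP}\;}H^{lk-qk}(E)
\]
and using that the two exponential factors are unitary (operator norm $1$) on the respective adapted Hilbert spaces while $P-P^\prime$ has order $qk$, one obtains $\|e^{isP}-e^{isP^\prime}\|_{lk,lk-qk}\le|s|\cdot\|P-P^\prime\|_{lk,lk-qk}$. Consequently the operator-valued integral $\tfrac{1}{2\pi}\int_\IR\hat\psi(s)\,(e^{isP}-e^{isP^\prime})\,ds$ converges in $\IB\bigl(H^{lk}(E),H^{lk-qk}(E)\bigr)$ because $s\hat\psi(s)\in L^1(\IR)$, it agrees with $\psi(P)-\psi(P^\prime)$ on the dense subspace $H^\infty(E)$ and hence on all of $H^{lk}(E)$, and its norm is bounded by $\tfrac{1}{2\pi}\int_\IR|s\hat\psi(s)|\,ds\cdot\|P-P^\prime\|_{lk,lk-qk}=C_\psi\cdot\|P-P^\prime\|_{lk,lk-qk}$, which is the claim.

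I expect the main technical obstacle to be the very first step: for a merely bounded Borel $\psi$ the Fourier transform $\hat\psi$ is only a distribution, and the individual integrals $\int_\IR\hat\psi(s)e^{isP}\,ds$ and $\int_\IR\hat\psi(s)e^{isP^\prime}\,ds$ need not make sense --- only their difference does, which is exactly why the hypothesis is stated for $s\hat\psi(s)$ rather than for $\hat\psi$ alone. This is handled precisely as in \cite{higson_roe}; everything after it is the soft Duhamel-plus-unitarity computation above, and the resulting constant $C_\psi$ visibly does not involve $P$ or $P^\prime$.
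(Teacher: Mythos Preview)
Your proof is correct and follows essentially the same approach as the paper: Fourier representation of $\psi(P)-\psi(P')$, Duhamel's formula for the difference of unitary groups, the unitarity bound on the Sobolev scale, and an approximation argument to pass from nice $\psi$ to the general case. The only cosmetic difference is that the paper carries out the Duhamel/unitarity estimate in weak form (pairing against $u,v\in C_c^\infty(E)$) and first assumes $\hat\psi$ compactly supported with $s\hat\psi(s)$ smooth before approximating, whereas you work directly with operator norms; the substance is identical.
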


\begin{proof}
We first assume that $\hat{\psi}$ is compactly supported and that $s\hat{\psi}(s)$ is a smooth function. Then we use the result \cite[Proposition 10.3.5]{higson_roe}\footnote{Though stated there only for differential operators, its proof also works word-for-word for pseudodifferential ones.}, which is a generalization of Equation \ref{eq:schwartz_function_of_PDO} to more general functions than Schwartz functions, and get
\[\Big\langle \big( \psi(P) - \psi(P^\prime) \big) u, v \Big\rangle_{H^{l-q}} = \frac{1}{2 \pi} \int \left\langle \big( e^{isP} - e^{isP^\prime} \big) u, v \right\rangle_{H^{l-q}} \cdot \hat{\psi}(s) ds,\]
for all $u,v \in C_c^\infty(E)$. From the Fundamental Theorem of Calculus we get
\[\left\langle \big( e^{isP} - e^{isP^\prime} \big) u, v \right\rangle_{H^{l-q}} = i \cdot \int_0^s \left\langle \big( e^{itP} (P - P^\prime) e^{i(s-t)P^\prime} \big) u, v \right\rangle_{H^{l-q}} dt\]
and therefore
\[\left| \left\langle \big( e^{isP} - e^{isP^\prime} \big) u, v \right\rangle_{H^{l-q}}\right| \le s \cdot \|P - P^\prime\|_{l,l-q} \cdot \|u\|_{l} \cdot \|v\|_{l-q}.\]
Putting it all together, we get
\[\left| \Big\langle \big( \psi(P) - \psi(P^\prime) \big) u, v \Big\rangle_{H^{l-q}} \right| \le C_\psi \cdot \|P - P^\prime\|_{l,l-q} \cdot \|u\|_{l} \cdot \|v\|_{l-q}.\]

Now the general claim follows from an approximation argument analogous to the one at the end of the proof of \cite[Proposition 10.3.5]{higson_roe}.
\end{proof}

\section{Uniform \texorpdfstring{$K$}{K}-homology}

Since we are considering uniform pseudodifferential operators, we need a $K$-homology theory that incorporates into its definition this uniformity. Such a theory was introduced by \Spakula and the goal of this section if to revisit it and to prove certain properties (existence of the Kasparov product and deducing from it homotopy invariance of uniform $K$-homology) that we will crucially need later and which were not proved by \v{S}pakula. Furthermore, we will use in Section \ref{sec:rough_BC} homotopy invariace to deduce useful facts about the rough Baum--Connes assembly map.

\subsection{Definition and basic properties of uniform \texorpdfstring{$K$}{K}-homology}

Let us first recall briefly the notion of multigraded Hilbert spaces.

A \emph{graded Hilbert space} is a Hilbert space $H$ with a decomposition $H = H^+ \oplus H^-$ into closed, orthogonal subspaces. This is equivalent to the existence of a \emph{grading operator} $\epsilon$ such that its $\pm 1$-eigenspaces are exactly $H^\pm$ and such that $\epsilon$ is a selfadjoint unitary.

If $H$ is a graded space, then its \emph{opposite} is the graded space $H^\op$ whose underlying vector space is $H$, but with the reversed grading, i.e., $(H^\op)^+ = H^-$ and $(H^\op)^- = H^+$. This is equivalent to $\epsilon_{H^\op} = -\epsilon_H$.

An operator on a graded space $H$ is called \emph{even} if it maps $H^\pm$ again to $H^\pm$, and it is called \emph{odd} if it maps $H^\pm$ to $H^\mp$. Equivalently, an operator is even if it commutes with the grading operator $\epsilon$ of $H$, and it is odd if it anti-commutes with it.

\begin{defn}[Multigraded Hilbert spaces]
Let $p \in \IN_0$. A \emph{$p$-multigraded Hilbert space} is a graded Hilbert space which is equipped with $p$ odd unitary operators $\epsilon_1, \ldots, \epsilon_p$ such that $\epsilon_i \epsilon_j + \epsilon_j \epsilon_i = 0$ for $i \not= j$, and $\epsilon_j^2 = -1$ for all $j$.
\qed
\end{defn}

Note that a $0$-multigraded Hilbert space is just a graded Hilbert space. We make the convention that a $(-1)$-multigraded Hilbert space is an ungraded one.

\begin{defn}[Multigraded operators]
Let $H$ be a $p$-multigraded Hilbert space. Then an operator on $H$ will be called \emph{multigraded}, if it commutes with the multigrading operators $\epsilon_1, \ldots, \epsilon_p$ of $H$.
\qed
\end{defn}

Let us now recall the usual definition of multigraded Fredholm modules, where $X$ is a locally compact, separable metric space:

\begin{defn}[Multigraded Fredholm modules]
Let $p \in \IZ_{\ge -1}$.

A triple $(H,\rho,T)$ consisting of
\begin{itemize}
\item a separable $p$-multigraded Hilbert space $H$,
\item a representation $\rho\colon C_0(X) \to \IB(H)$ by even, multigraded operators, and
\item an odd multigraded operator $T \in \IB(H)$ such that
\begin{itemize}
\item the operators $T^2 - 1$ and $T - T^\ast$ are locally compact and
\item the operator $T$ itself is pseudolocal
\end{itemize}
\end{itemize}
is called a \emph{$p$-multigraded Fredholm module over $X$}.

Here an operator $S$ is called \emph{locally compact}, if for all $f \in C_0(X)$ the operators $\rho(f) S$ and $S \rho(f)$ are compact, and $S$ is called \emph{pseudolocal}, if for all $f \in C_0(X)$ the operator $[S, \rho(f)]$ is compact.
\qed
\end{defn}

Let us define
\begin{equation*}
\LLip_R(X) := \{ f \in C_c(X) \ | \ f \text{ is }L\text{-Lipschitz}, \diam(\supp f) \le R \text{ and } \|f\|_\infty \le 1\}.
\end{equation*}

\begin{defn}[{\cite[Definition 2.3]{spakula_uniform_k_homology}}]\label{defn:uniform_operators}
Let $T \in \IB(H)$ be an operator on a Hilbert space $H$ and $\rho\colon C_0(X) \to \IB(H)$ a representation.

We say that $T$ is \emph{uniformly locally compact}, if for every $R, L > 0$ the collection
\[\{\rho(f)T, T\rho(f) \ | \ f \in \LLip_R(X)\}\]
is uniformly approximable (Definition \ref{defn:uniformly_approximable_collection}).

We say that $T$ is \emph{uniformly pseudolocal}, if for every $R, L > 0$ the collection
\[\{[T, \rho(f)] \ | \ f \in \LLip_R(X)\}\]
is uniformly approximable.
\qed
\end{defn}

Note that by an approximation argument we get that the above defined collections are still uniformly approximable if we enlargen the definition of $\LLip_R(X)$ from $f \in C_c(X)$ to $f \in C_0(X)$.

The following lemma states that on proper spaces we may drop the $L$-dependence for uniformly locally compact operators.

\begin{lem}[{\cite[Remark 2.5]{spakula_uniform_k_homology}}]\label{lem:l_uniformly_loc_compact_without_l}
Let $X$ be a proper space. If $T$ is uniformly locally compact, then for every $R > 0$ the collection
\[\{\rho(f) T, T \rho(f) \ | \ f \in C_c(X), \diam(\supp f) \le R \text{ and } \|f\|_\infty \le 1\}\]
is also uniformly approximable (i.e., we can drop the $L$-dependence).
\end{lem}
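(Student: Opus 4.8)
The plan is to deduce the $L$-free statement from the definition of uniform local compactness by pre- and post-composing with a single Lipschitz cutoff whose Lipschitz constant and support size are controlled purely in terms of $R$. Fix $R > 0$ and take $f \in C_c(X)$ with $\diam(\supp f) \le R$ and $\|f\|_\infty \le 1$; we may assume $\supp f \ne \emptyset$, since otherwise $\rho(f) = 0$ and there is nothing to prove. Choosing any point $x_0 \in \supp f$ we have $\supp f \subseteq \overline{B_R(x_0)}$, and I would set
\[
g(x) := \max\!\big(0,\ \min(1,\ 2 - \tfrac1R\, d(x, x_0))\big).
\]
Then $\|g\|_\infty \le 1$, $g$ is $\tfrac1R$-Lipschitz, $g \equiv 1$ on $\overline{B_R(x_0)} \supseteq \supp f$, and $\supp g \subseteq \overline{B_{2R}(x_0)}$, which is compact because $X$ is proper; hence $g \in \tfrac1R\text{-}\operatorname{Lip}_{4R}(X)$. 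In particular $fg = f = gf$ in $C_0(X)$, so $\rho(f) = \rho(f)\rho(g) = \rho(g)\rho(f)$.

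Next I would invoke the hypothesis. Since $T$ is uniformly locally compact, the collection $\{\rho(h) T,\ T\rho(h) \ | \ h \in \tfrac1R\text{-}\operatorname{Lip}_{4R}(X)\}$ is uniformly approximable, so for a given $\varepsilon > 0$ there is an $N = N(R,\varepsilon)$, not depending on the individual function $h$, such that every member of this collection lies within $\varepsilon$ of an operator of rank $\le N$. Pick a rank-$\le N$ operator $k$ with $\|\rho(g) T - k\| < \varepsilon$. Then $\rho(f) k$ has rank $\le N$ and
\[
\|\rho(f) T - \rho(f) k\| = \|\rho(f)\big(\rho(g) T - k\big)\| \le \|\rho(f)\| \cdot \varepsilon \le \varepsilon,
\]
using $\|\rho(f)\| \le \|f\|_\infty \le 1$. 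Running the same argument with $T\rho(g)$ in place of $\rho(g) T$ and with right multiplication by $\rho(f)$ handles $T\rho(f)$. Since $N$ depends only on $R$ and $\varepsilon$, and the same ambient class $\tfrac1R\text{-}\operatorname{Lip}_{4R}(X)$ works for every admissible $f$, this shows that the collection in the statement is uniformly approximable.

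I do not expect a genuine obstacle here; the one point to watch is where properness is actually used. It enters precisely to guarantee that the cutoff $g$ has compact support, so that it is a legitimate test function to which the hypothesis (which is phrased for functions in $C_c(X)$) may be applied; without properness the ball $\overline{B_{2R}(x_0)}$ need not be compact and the reduction fails. Everything else — the Lipschitz estimate for $g$, the fact that composing with $\rho(f)$ does not increase rank, and the final norm estimate — is routine.
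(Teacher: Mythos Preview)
Your proof is correct. The paper does not supply its own proof of this lemma (it is only cited from \v{S}pakula), but your argument is exactly the standard one: factor $\rho(f)T = \rho(f)\,\rho(g)T$ through a Lipschitz cutoff $g$ whose Lipschitz constant and support diameter depend only on $R$, then use that multiplication by $\rho(f)$ preserves the rank bound. This is the same trick the paper itself uses in the implication $2\Rightarrow 3$ of Lemma~\ref{lem:kasparov_lemma_uniform_approx_manifold}, and your identification of where properness enters (to ensure $g\in C_c(X)$) is spot on.
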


Note that an analogous lemma for uniformly pseudolocal operators does not hold. We may see this via the following example: if we have an operator $D$ of Dirac type on a manifold $M$ and if $g$ is a smooth function on $M$, then we have the equation $([D,g]u)(x) = \sigma_D(x, dg) u(x)$, where $u$ is a section into the Dirac bundle $S$ on which $D$ acts, $\sigma_D(x, \xi)$ is the symbol of $D$ regarded as an endomorphism of $S_x$ and $\xi \in T^\ast_x M$. So we see that the norm of $[D,g]$ does depend on the first derivative of the function $g$.

\begin{defn}[Uniform Fredholm modules, cf. {\cite[Definition 2.6]{spakula_uniform_k_homology}}]\label{defn:uniform_fredholm_modules}
A Fredholm module $(H, \rho, T)$ is called \emph{uniform}, if $T$ is uniformly pseudolocal and the operators $T^2-1$ and $T - T^\ast$ are uniformly locally compact.
\qed
\end{defn}

\begin{example}[{\cite[Theorem 3.1]{spakula_uniform_k_homology}}]
\Spakula showed that the usual Fredholm module arising from a generalized Dirac operator is uniform if we assume bounded geometry: if $D$ is a generalized Dirac operator acting on Dirac bundle $S$ of bounded geometry over a manifold $M$ of bounded geometry, then the triple $(L^2(S), \rho, \chi(D))$, where $\rho$ is the representation of $C_0(M)$ on $L^2(S)$ by multiplication operators and $\chi$ is a normalizing function (see Definition \ref{defn:normalizing_function}), is a uniform Fredholm module.

In Section \ref{sec:homology_classes_of_PDOs} we will generalize this statement to symmetric and elliptic uniform pseudodifferential operators.
\qed
\end{example}

For a totally bounded metric space uniform Fredholm modules are the same as usual Fredholm modules. Since \v{S}pakula does not give a proof of it, we will do it now:

\begin{prop}\label{prop:compact_space_every_module_uniform}
Let $X$ be a totally bounded metric space. Then every Fredholm module over $X$ is uniform.
\end{prop}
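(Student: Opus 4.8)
The plan is to reduce the statement to one soft fact: for fixed $R, L > 0$ the set $\LLip_R(X)$ is a totally bounded subset of $(C_0(X), \|\largecdot\|_\infty)$. Granting this, postcomposing with suitable norm-Lipschitz, compact-operator-valued maps turns $\LLip_R(X)$ into totally bounded families of compact operators, and every totally bounded subset of $\IK(H)$ is uniformly approximable by Examples \ref{ex:uniformly_approximable_collections}.

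To see that $\LLip_R(X)$ is totally bounded I would use that $X$, being totally bounded, has compact metric completion $\overline{X}$, and that every $f \in \LLip_R(X)$ --- being $L$-Lipschitz of compact support --- extends to an $L$-Lipschitz function on $\overline{X}$ of sup-norm $\le 1$. Thus $\LLip_R(X)$ embeds isometrically (for $\|\largecdot\|_\infty$, since $X$ is dense in $\overline{X}$) into a bounded, uniformly equicontinuous family in $C(\overline{X})$, which is relatively compact there by the Arzel\`{a}--Ascoli theorem; hence $\LLip_R(X)$ is totally bounded. Alternatively one argues directly, choosing a finite $\tfrac{\varepsilon}{4L}$-net in $X$ together with a finite net in the cube $[-1,1]^{(\text{net size})}$ of possible value vectors, and using the Lipschitz estimate to deduce sup-norm closeness.

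Next I would observe that a $^\ast$-representation of a $C^\ast$-algebra is contractive, so $f \mapsto \rho(f)$ is $1$-Lipschitz from $(C_0(X), \|\largecdot\|_\infty)$ into $\IB(H)$; consequently each of the five maps $f \mapsto [T,\rho(f)]$, $f \mapsto \rho(f)(T^2-1)$, $f \mapsto (T^2-1)\rho(f)$, $f \mapsto \rho(f)(T - T^\ast)$, $f \mapsto (T - T^\ast)\rho(f)$ is Lipschitz continuous into $\IB(H)$, with constant $2\|T\|$ in the first case and $\|T^2-1\|$ resp.\ $\|T - T^\ast\|$ in the others. By the axioms of a Fredholm module --- $T$ is pseudolocal and $T^2-1$, $T-T^\ast$ are locally compact --- all of these maps in fact take values in $\IK(H)$. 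Applying them to the totally bounded set $\LLip_R(X)$ produces totally bounded, hence uniformly approximable, subsets of $\IK(H)$. In view of Definitions \ref{defn:uniform_operators} and \ref{defn:uniform_fredholm_modules} this says precisely that $T$ is uniformly pseudolocal and that $T^2-1$ and $T-T^\ast$ are uniformly locally compact, i.e.\ that $(H, \rho, T)$ is uniform.

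The only step that is not purely formal is the total boundedness of $\LLip_R(X)$, and within it the point that $X$ need not be complete; this is handled by passing to the compact completion and using that compactly supported Lipschitz functions extend with unchanged Lipschitz constant. I expect no real obstacle beyond this.
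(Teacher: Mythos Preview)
Your proof is correct and takes essentially the same approach as the paper: both hinge on the Arzel\`{a}--Ascoli fact that $\LLip_R(X)$ is relatively compact in sup-norm (via extension to the compact completion $\overline{X}$), after which the paper argues by contradiction via an accumulation point while you push forward total boundedness directly along the norm-Lipschitz maps $f \mapsto [T,\rho(f)]$ etc.\ and invoke Examples~\ref{ex:uniformly_approximable_collections}. The two arguments are equivalent reformulations of the same idea.
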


\begin{proof}
Let $(H, \rho, T)$ be a Fredholm module.

First we will show that $T$ is uniformly pseudolocal. We will use the fact that the set $\LLip_R(X) \subset C(X)$ is relatively compact (i.e., its closure is compact) by the Theorem of Arzel\`{a}--Ascoli.\footnote{Since Lipschitz functions are uniformly continuous they have a unique extension to the completion $\overline{X}$ of $X$. Since $\overline{X}$ is compact, Arzel\`{a}--Ascoli applies.} Assume that $T$ is not uniformly pseudolocal. Then there would be $R, L > 0$ and $\varepsilon > 0$, so that for all $N > 0$ we would have an $f_N \in \LLip_R(X)$ such that for all rank-$N$ operators $k$ we have $\|[T, \rho(f_N)] - k\| \ge \varepsilon$. Since $\LLip_R(X)$ is relatively compact, the sequence $f_N$ has an accumulation point $f_\infty \in \LLip_R(X)$. Then we have $\|[T, \rho(f_\infty)] - k\| \ge \varepsilon / 2$ for all finite rank operators $k$, which is a contradiction.

The proofs that $T^2 - 1$ and $T - T^\ast$ are uniformly locally compact are analogous.
\end{proof}

A collection $(H, \rho, T_t)$ of uniform Fredholm modules is called an \emph{operator homotopy} if $t \mapsto T_t \in \IB(H)$ is norm continuous. As in the non-uniform case, we have an analogous lemma about compact perturbations:

\begin{lem}[Compact perturbations, {\cite[Lemma 2.16]{spakula_uniform_k_homology}}]
\label{lem:compact_perturbations}
Let $(H, \rho, T)$ be a uniform Fredholm module and $K \in \IB(H)$ a uniformly locally compact operator.

Then $(H, \rho, T)$ and $(H, \rho, T + K)$ are operator homotopic.
\end{lem}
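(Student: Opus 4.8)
The plan is to use the straight-line homotopy $T_t := T + tK$ for $t \in [0,1]$. Since $\|T_t - T_{t'}\| = |t - t'|\,\|K\|$, the assignment $t \mapsto T_t \in \IB(H)$ is norm-continuous, so by the definition of operator homotopy recalled above it suffices to check that $(H, \rho, T_t)$ is a uniform Fredholm module for every $t \in [0,1]$; specializing to $t = 0$ and $t = 1$ then yields the claim. Here, as is implicit in the statement, $K$ is odd and multigraded -- equivalently, one may assume from the outset that $T + K$ is itself a uniform Fredholm module and set $K := (T + K) - T$ -- so that every $T_t$ is again an odd multigraded operator; the Hilbert space $H$ and the representation $\rho$ do not change along the homotopy, and $\rho$ is still a representation by even multigraded operators.

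\textbf{Verification of the three conditions.} All three defining properties of $T_t$ would be read off from the algebraic structure recalled from \cite[Lemma 4.2]{spakula_uniform_k_homology}: the uniformly pseudolocal operators form a $C^\ast$-algebra $A \subset \IB(H)$, in which the uniformly locally compact operators form a closed two-sided $\ast$-ideal $J$. Since $T \in A$ and $K \in J \subseteq A$, we get $T_t = T + tK \in A$, i.e. $T_t$ is uniformly pseudolocal. Expanding
\[T_t^2 - 1 = (T^2 - 1) + t\,(TK + KT) + t^2\,K^2,\]
the term $T^2 - 1$ lies in $J$ by hypothesis, and $TK$, $KT$, $K^2$ all lie in $J$ because $J$ is an ideal of $A$; hence $T_t^2 - 1 \in J$ is uniformly locally compact. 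Similarly
\[T_t - T_t^\ast = (T - T^\ast) + t\,(K - K^\ast),\]
where $T - T^\ast \in J$ by hypothesis and $K - K^\ast \in J$ since $J$ is a $\ast$-ideal; hence $T_t - T_t^\ast$ is uniformly locally compact. Thus $(H, \rho, T_t)$ is a uniform Fredholm module for each $t$, so $(H, \rho, T)$ and $(H, \rho, T + K)$ are operator homotopic.

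\textbf{Where the difficulty lies.} This is the uniform analogue of the classical compact-perturbation lemma (\cite[Chapter 8]{higson_roe}), and once the ideal/algebra property of \cite[Lemma 4.2]{spakula_uniform_k_homology} is available the argument is essentially formal, so I do not expect a serious obstacle. The only points I would treat with a little care are that adding $tK$ preserves oddness and the multigrading -- handled by the parity hypothesis on $K$ noted above -- and that the path genuinely consists of \emph{uniform} Fredholm modules and not merely of ordinary ones; but the latter is automatic, since each of $T_t$, $T_t^2 - 1$ and $T_t - T_t^\ast$ has been exhibited as an element of the fixed algebra $A$, respectively the fixed ideal $J$, whose membership conditions are precisely the required uniform estimates (with dominating constants independent of $t \in [0,1]$).
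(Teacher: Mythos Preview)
Your argument is correct and is precisely the standard one: the linear path $T_t = T + tK$ is norm-continuous, and the algebra/ideal structure of uniformly pseudolocal and uniformly locally compact operators from \cite[Lemma 4.2]{spakula_uniform_k_homology} immediately gives that each $(H,\rho,T_t)$ is a uniform Fredholm module. The paper does not supply its own proof of this lemma but simply cites \cite[Lemma 2.16]{spakula_uniform_k_homology}, where exactly this straight-line argument is carried out; so your approach coincides with the intended one.
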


\begin{defn}[Uniform $K$-homology, {\cite[Definition 2.13]{spakula_uniform_k_homology}}]
We define the \emph{uniform $K$-homology group $K_{p}^u(X)$} of a locally compact and separable metric space $X$ to be the abelian group generated by unitary equivalence classes of $p$-multigraded uniform Fredholm modules with the relations:
\begin{itemize}
\item if $x$ and $y$ are operator homotopic, then $[x] = [y]$, and
\item $[x] + [y] = [x \oplus y]$,
\end{itemize}
where $x$ and $y$ are $p$-multigraded uniform Fredholm modules.
\qed
\end{defn}

All the basic properties of usual $K$-homology do also hold for uniform $K$-homology (e.g., that degenerate uniform Fredholm modules represent the zero class, that we have formal $2$-periodicity $K_{p}^u(X) \cong K_{p+2}^u(X)$ for all $p \ge -1$, etc.).

For discussing functoriality of uniform $K$-homology we need the following definition:

\begin{defn}[Uniformly cobounded maps, {\cite[Definition 2.15]{spakula_uniform_k_homology}}]
Let us call a map $g\colon X \to Y$ with the property
\[\sup_{y \in Y} \diam (g^{-1}(B_r(y))) < \infty \text{ for all }r > 0\]
\emph{uniformly cobounded}\footnote{Block and Weinberger call this property \emph{effectively proper} in \cite{block_weinberger_1}. The author called it \emph{uniformly proper} in his thesis \cite{engel_phd}.}.

Note that if $X$ is proper, then every uniformly cobounded map is proper (i.e., preimages of compact subsets are compact).
\qed
\end{defn}

The following lemma about functoriality of uniform $K$-homology was proved by \Spakula (see the paragraph directly after \cite[Definition 2.15]{spakula_uniform_k_homology}).

\begin{lem}
Uniform $K$-homology is functorial with respect to uniformly cobounded, proper Lipschitz maps, i.e., if $g\colon X \to Y$ is uniformly cobounded, proper and Lipschitz, then it induces maps $g_\ast\colon K_\ast^u(X) \to K_\ast^u(Y)$ on uniform $K$-homology via
\[g_\ast [(H, \rho, T)] := [(H, \rho \circ g^\ast, T)],\]
where $g^\ast\colon C_0(Y) \to C_0(X)$, $f \mapsto f \circ g$ is the by $g$ induced map on functions.
\end{lem}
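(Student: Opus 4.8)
The plan is to verify, in turn, the three facts that together make this a well-defined functor: (i) that $g^\ast$ carries $C_0(Y)$ into $C_0(X)$, so that $\rho \circ g^\ast$ makes sense; (ii) that pulling a uniform Fredholm module over $X$ back along $g^\ast$ again yields a uniform Fredholm module, but now over $Y$; and (iii) that the assignment $(H,\rho,T) \mapsto (H, \rho\circ g^\ast, T)$ is compatible with unitary equivalence, operator homotopy and direct sums, so that it descends to a group homomorphism. Functoriality $(g_1 \circ g_2)_\ast = (g_1)_\ast \circ (g_2)_\ast$ and $(\id)_\ast = \id$ will then be immediate from the identities $(g_1 \circ g_2)^\ast = g_2^\ast \circ g_1^\ast$ and $\id^\ast = \id$ on function algebras.

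First, for (i): since $g$ is proper, for every $f \in C_0(Y)$ and every $\varepsilon > 0$ the set $\{|f \circ g| \ge \varepsilon\} = g^{-1}(\{|f| \ge \varepsilon\})$ is compact, so $f \circ g \in C_0(X)$. Hence $g^\ast\colon C_0(Y) \to C_0(X)$, $f \mapsto f \circ g$, is a well-defined $\ast$-homomorphism and $\rho \circ g^\ast\colon C_0(Y) \to \IB(H)$ is a representation by even multigraded operators. Since $T$ is left unchanged it remains an odd multigraded operator, and $T^2 - 1$, $T - T^\ast$ remain locally compact and $T$ pseudolocal in the plain sense; only the \emph{uniform} versions of these three conditions need the extra hypotheses on $g$.

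The heart of the matter is (ii). Here I would fix a Lipschitz constant $C \ge 1$ for $g$ and, for each $R > 0$, set $R' := \sup_{y \in Y} \diam\big( g^{-1}(B_R(y)) \big)$, which is finite precisely because $g$ is uniformly cobounded. If $f \in C_c(Y)$ is $L$-Lipschitz with $\|f\|_\infty \le 1$ and $\diam(\supp f) \le R$, then choosing $y_0$ with $\supp f \subseteq B_R(y_0)$ gives $\supp(f \circ g) \subseteq g^{-1}(B_R(y_0))$, of diameter $\le R'$, while $f \circ g$ is $(LC)$-Lipschitz with $\|f \circ g\|_\infty \le 1$. Therefore, for fixed $R$ and $L$, the collection $\{\rho(g^\ast f)\,T,\ T\,\rho(g^\ast f)\}$ with $f$ ranging over all such functions is a subcollection of $\{\rho(h)\,T,\ T\,\rho(h)\}$ with $h$ ranging over the $(LC)$-Lipschitz functions on $X$ of support-diameter $\le R'$ and sup-norm $\le 1$; the latter is uniformly approximable because $(H,\rho,T)$ is a uniform Fredholm module over $X$ (apply the uniform local compactness condition of Definition \ref{defn:uniform_operators} with the parameters $R'$ and $LC$), and a subcollection of a uniformly approximable collection is uniformly approximable. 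Running the identical argument with $[T, \rho(g^\ast f)]$ in place of $\rho(g^\ast f)\,T,\ T\,\rho(g^\ast f)$ shows that $T$ is uniformly pseudolocal for $\rho \circ g^\ast$, and running it with $T$ replaced by $T^2 - 1$ and by $T - T^\ast$ shows these are uniformly locally compact for $\rho \circ g^\ast$; hence $(H, \rho\circ g^\ast, T)$ is a $p$-multigraded uniform Fredholm module over $Y$ in the sense of Definition \ref{defn:uniform_fredholm_modules}. This support-diameter and Lipschitz-constant bookkeeping is the only real content of the lemma, and thus the ``main obstacle'': it is exactly the uniform coboundedness of $g$ that bounds $\diam(\supp(f \circ g))$ in terms of $\diam(\supp f)$, and exactly the Lipschitz property of $g$ that keeps the Lipschitz constants under control, so that all the pulled-back test functions fit inside one uniformly approximable family. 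Dropping either hypothesis breaks this.

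Finally, for (iii): a unitary equivalence $(H_1, \rho_1, T_1) \cong (H_2, \rho_2, T_2)$ implemented by a unitary $U$ remains a unitary equivalence after replacing each $\rho_i$ by $\rho_i \circ g^\ast$, via the same $U$; an operator homotopy $(H, \rho, T_t)$ is sent to $(H, \rho \circ g^\ast, T_t)$, which is a norm-continuous path of uniform Fredholm modules over $Y$ by (ii); and the pushforward of $(H_1, \rho_1, T_1) \oplus (H_2, \rho_2, T_2)$ is literally $(H_1, \rho_1\circ g^\ast, T_1) \oplus (H_2, \rho_2\circ g^\ast, T_2)$. Thus $g_\ast$ is a well-defined homomorphism $K_\ast^u(X) \to K_\ast^u(Y)$. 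Since composites of uniformly cobounded, proper, Lipschitz maps are again uniformly cobounded (estimate the diameters of iterated preimages as in (ii)), proper and Lipschitz, and $\rho \circ (g_1 \circ g_2)^\ast = (\rho \circ g_2^\ast) \circ g_1^\ast$ while $\rho \circ \id^\ast = \rho$, functoriality follows.
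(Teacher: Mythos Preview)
Your proof is correct and complete. The paper itself does not give a proof of this lemma but only cites \v{S}pakula's original paper; your argument is precisely the expected one (and presumably the one \v{S}pakula gives): properness makes $g^\ast$ land in $C_0(X)$, the Lipschitz property controls the Lipschitz constant of $f\circ g$, and uniform coboundedness controls $\diam(\supp(f\circ g))$, so that $g^\ast\big(\LLip_R(Y)\big)\subset (LC)\text{-}\operatorname{Lip}_{R'}(X)$ and the uniform approximability conditions transfer.
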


Recall that $K$-homology may be normalized in various ways, i.e., we may assume that the Fredholm modules have a certain form or a certain property and that this holds also for all homotopies.

Combining Lemmas 4.5 and 4.6 and Proposition 4.9 from \cite{spakula_uniform_k_homology}, we get the following:

\begin{lem}\label{lem:normalization_involutive}
We can normalize uniform $K$-homology $K_\ast^u(X)$ to involutive modules.
\end{lem}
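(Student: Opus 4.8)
The plan is to run Spakula's normalization machinery for uniform Fredholm modules---this is exactly what \cite[Lemmas~4.5,~4.6 and Proposition~4.9]{spakula_uniform_k_homology} provide---so the proof amounts to assembling those three results; below I indicate the argument one would give directly. Recall that a uniform Fredholm module $(H,\rho,T)$ is \emph{involutive} if $T=T^\ast$ and $T^2=1$ (in the multigraded case one also asks $T$ to commute with the multigrading operators, which holds throughout the constructions below). Since $K^u_p(X)$ is generated by unitary equivalence classes of $p$-multigraded uniform Fredholm modules modulo direct sums and operator homotopies, it suffices to show (a) that every such module is, after adding degenerate modules, operator-homotopic to an involutive one, and (b) that any operator homotopy between involutive modules can be replaced by one passing only through involutive modules.

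For~(a) I would make three successive modifications, using repeatedly Lemma~\ref{lem:compact_perturbations} and the fact (\cite[Lemma~4.2]{spakula_uniform_k_homology}, already recalled above) that the uniformly pseudolocal operators form a $C^\ast$-algebra $\mathfrak{A}$ in which the uniformly locally compact operators form a norm-closed two-sided $^\ast$-ideal $\mathfrak{I}$. First, $T-\tfrac12(T+T^\ast)=\tfrac12(T-T^\ast)\in\mathfrak{I}$, so by Lemma~\ref{lem:compact_perturbations} we may assume $T=T^\ast$. Second, for a real continuous $\phi$ that equals the identity on $[-1,1]$ and is bounded, $\phi-\mathrm{id}$ vanishes on $[-1,1]$ and hence factors as $(t^2-1)h(t)$ with $h$ bounded continuous, so $\phi(T)-T=(T^2-1)\,h(T)\in\mathfrak{I}$; another application of Lemma~\ref{lem:compact_perturbations} lets us assume $\|T\|\le1$. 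Third, put $Q:=(1-T^2)^{1/2}\ge0$; since $g_\varepsilon(t):=(1-t^2)\big((1-t^2)+\varepsilon\big)^{-1/2}=-(t^2-1)\,h_\varepsilon(t)$ with $h_\varepsilon$ bounded, each $g_\varepsilon(T)\in\mathfrak{I}$, and $g_\varepsilon(T)\to Q$ in operator norm, whence $Q\in\mathfrak{I}$ because $\mathfrak{I}$ is norm-closed. Now pass to $H\oplus H^{\op}$ with the zero representation on the second summand (a degenerate module, hence representing $0$) and set
\[
F:=\begin{pmatrix} T & Q \\ Q & -T \end{pmatrix}.
\]
As $T$ and $Q=g(T)$ commute one checks $F=F^\ast$ and $F^2=1$, and as $Q\in\mathfrak{I}$ the module $(H\oplus H^{\op},\rho\oplus0,F)$ is uniform; moreover $F_\theta:=\big(\begin{smallmatrix} T & \theta Q\\ \theta Q & -T\end{smallmatrix}\big)$, with $F_\theta^2-1=(1-\theta^2)(T^2-1)$ on each summand (hence in $\mathfrak{I}$), gives an operator homotopy from $T\oplus(-T)$ to $F$. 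Therefore $[(H\oplus H^{\op},\rho\oplus0,F)]=[(H,\rho,T)]$, with an involutive representative; the gradings and multigradings are taken diagonally, the second copy with the opposite $\IZ/2$-grading so that $F$ is odd, which is routine.

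For~(b) one argues similarly, performing the three modifications parametrically along a given homotopy $(H,\rho,T_t)$ of involutive modules---this is the content of \cite[Proposition~4.9]{spakula_uniform_k_homology}, whose proof I would follow. The step I expect to need the most care, and where the uniform theory genuinely diverges from the classical compact case, is the third modification: there $1-T^2$ is compact and its square root automatically so, whereas here the membership $Q=(1-T^2)^{1/2}\in\mathfrak{I}$ rests on the norm-closedness of $\mathfrak{I}$ together with the explicit bounded factors $h_\varepsilon$, and in the parametrized version one must verify that the finite-rank approximations of $\{\rho(f)Q,\,Q\rho(f)\}$ can be chosen of bounded rank uniformly not only over $f\in\LLip_R(X)$ but also over the homotopy parameter.
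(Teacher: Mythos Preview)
Your proposal is correct and follows exactly the route the paper takes: the paper's proof consists solely of the sentence ``Combining Lemmas 4.5 and 4.6 and Proposition 4.9 from \cite{spakula_uniform_k_homology}, we get the following,'' and you cite the same three results and sketch their content. Your elaboration of the three modifications (self-adjoint, contraction, $2\times 2$ involutive lift) and the handling of $Q=(1-T^2)^{1/2}$ via the closed ideal $\mathfrak{I}$ is precisely what those results contain, so there is nothing to add.
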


The proof of the following Lemma \ref{lem:normalization_non-degenerate} in the non-uniform case may be found in, e.g., \cite[Lemma 8.3.8]{higson_roe}. The proof in the uniform case is analogous and the arguments similar to the ones in the proofs of \cite[Lemmas 4.5 \& 4.6]{spakula_uniform_k_homology}.

\begin{lem}\label{lem:normalization_non-degenerate}
Uniform $K$-homology $K_\ast^u(X)$ may be normalized to \emph{non-degenerate} Fredholm modules, i.e., such that all occuring representations $\rho$ are non-degenerate\footnote{This means that $\rho(C_0(X)) H$ is dense in $H$.}.
\end{lem}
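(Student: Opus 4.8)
The plan is to follow the classical splitting-off-the-inessential-part argument (as in \cite[Lemma 8.3.8]{higson_roe}), taking care that the uniform estimates survive it. So let $(H,\rho,T)$ be a $p$-multigraded uniform Fredholm module over $X$; by Lemma \ref{lem:normalization_involutive} we may, if convenient, also assume $T=T^\ast$ and $T^2=1$. Put $H':=\overline{\rho(C_0(X))H}$ for the essential subspace of $\rho$, let $P'\in\IB(H)$ be the orthogonal projection onto it, and set $P'':=1-P'$ and $H'':=P''H$. Since every $\rho(f)$ is even and multigraded, $H'$ is invariant under the grading and the multigrading operators, so $P'$ and $P''$ are even and multigraded; moreover $\rho(f)P''=0$ and hence $P'\rho(f)=\rho(f)=\rho(f)P'$ for all $f\in C_0(X)$, and $\rho|_{H'}$ is non-degenerate.

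The decisive point is that the off-diagonal corners of $T$ relative to $H=H'\oplus H''$ are uniformly locally compact in a very strong sense: for $f\in\LLip_R(X)$ one has
\[\rho(f)\,P'TP''=\rho(f)\,TP''=[\rho(f),T]\,P'',\qquad P''TP'\,\rho(f)=P''\,[T,\rho(f)],\]
and likewise with $T$ replaced by $T^\ast$. Since $T$ is uniformly pseudolocal, the families $\{[\rho(f),T]\}$ and $\{[\rho(f),T^\ast]\}$ with $f$ ranging over $\LLip_R(X)$ are uniformly approximable, and uniform approximability is preserved — with no increase of the approximating rank — under left or right multiplication by a fixed bounded operator such as $P'$ or $P''$. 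Hence all of $\{\rho(f)P'TP''\}$, $\{P''TP'\rho(f)\}$ and their $T^\ast$-analogues are uniformly approximable.

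Next I would interpolate: for $s\in[0,1]$ set
\[T_s:=P'TP'+s\bigl(P'TP''+P''TP'\bigr)+P''TP''\in\IB(H),\]
so that $T_1=T$ and $T_0=P'TP'\oplus P''TP''$ is block-diagonal, $s\mapsto T_s$ is norm-continuous, and every $T_s$ is odd and multigraded. The main work — and the step I expect to be the only real obstacle — is to verify that $(H,\rho,T_s)$ is a uniform Fredholm module for every $s$, \emph{with the approximation bounds uniform in $s$}. Expanding $T_s^2-1$, $T_s-T_s^\ast$ and $[T_s,\rho(f)]$ in the block decomposition and using repeatedly $\rho(f)=\rho(f)P'=P'\rho(f)$, the identities of the previous paragraph, and (if one has normalised to the involutive case) $T^2=1$ and $T=T^\ast$, one finds that after multiplication by $\rho(f)$ on either side these operators differ from the corresponding expressions for $T$ only by sums of the uniformly approximable corner operators above, multiplied by fixed bounded operators and by scalars of modulus $\le 1$. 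Therefore $\rho(f)(T_s^2-1)$, $(T_s^2-1)\rho(f)$, $\rho(f)(T_s-T_s^\ast)$, $(T_s-T_s^\ast)\rho(f)$ and $[T_s,\rho(f)]$ form uniformly approximable families as $f$ ranges over $\LLip_R(X)$, with bounds independent of $s$; this is exactly the kind of bookkeeping already present in \cite[Lemmas 4.5 \& 4.6]{spakula_uniform_k_homology}.

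Granting this, $[(H,\rho,T)]=[(H,\rho,T_0)]=[(H',\rho|_{H'},P'TP')]+[(H'',0,P''TP'')]$ in $K_p^u(X)$. The first summand is a $p$-multigraded uniform Fredholm module whose representation is non-degenerate; the second has vanishing representation, so all defining conditions (including the uniform ones) hold trivially and it is a degenerate uniform Fredholm module representing $0$. Thus every class of $K_p^u(X)$ is represented by a non-degenerate module. To obtain the full normalisation statement — that non-degenerate modules which agree in $K_p^u(X)$ are already equivalent through non-degenerate modules and their direct sums — one runs the same construction on the operator homotopies and on the direct-sum relations witnessing such an equality, splitting off in each case the $\rho=0$ part; this is entirely parallel to the non-uniform argument of \cite[Lemma 8.3.8]{higson_roe}, with the uniformity handled exactly as above.
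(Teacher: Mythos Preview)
Your proposal is correct and follows precisely the route the paper indicates: the paper does not give a detailed argument but simply says the proof is the one of \cite[Lemma~8.3.8]{higson_roe} with the uniform bookkeeping handled as in \cite[Lemmas~4.5 \& 4.6]{spakula_uniform_k_homology}, and that is exactly what you carry out. One small remark: you invoke the involutive normalisation of Lemma~\ref{lem:normalization_involutive} for convenience, which is fine for the argument, but note (as the paper points out right after this lemma) that the resulting non-degenerate module need not remain involutive, so the two normalisations are not being claimed simultaneously.
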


Note that in general we can not normalize uniform $K$-homology to be simultaneously involutive and non-degenerate, just as usual $K$-homology.

Later we will also have to normalize Fredholm modules to finite propagation. But this is not always possible if the underlying metric space $X$ is badly behaved. Therefore we get now to the definition of bounded geometry for metric spaces.

\begin{defn}[Coarsely bounded geometry]
\label{defn:coarsely_bounded_geometry}
Let $X$ be a metric space. We call a subset $\Gamma \subset X$ a \emph{quasi-lattice} if
\begin{itemize}
\item there is a $c > 0$ such that $B_c(\Gamma) = X$ (i.e., $\Gamma$ is \emph{coarsely dense}) and
\item for all $r > 0$ there is a $K_r > 0$ such that $\card(\Gamma \cap B_r(y)) \le K_r$ for all $y \in X$.
\end{itemize}
A metric space is said to have \emph{coarsely bounded geometry}\footnote{Note that most authors call this property just ``bounded geometry''. But since later we will also have the notion of locally bounded geometry, we use for this one the term ``coarsely'' to distinguish them.} if it admits a quasi-lattice.
\qed
\end{defn}

Note that if we have a quasi-lattice $\Gamma \subset X$, then there also exists a uniformly discrete quasi-lattice $\Gamma^\prime \subset X$. The proof of this is an easy application of the Lemma of Zorn: given an arbitrary $\delta > 0$ we look at the family $\mathcal{A}$ of all subsets $A \subset \Gamma$ with $d(x,y) > \delta$ for all $x,y \in A$. These subsets are partially ordered under inclusion of sets and every totally ordered chain $A_1 \subset A_2 \subset \ldots \subset \Gamma$ has an upper bound given by the union $\bigcup_i A_i \in \mathcal{A}$. So the Lemma of Zorn provides us with a maximal element $\Gamma^\prime \in \mathcal{A}$. That $\Gamma^\prime$ is a quasi-lattice follows from its maximality.

\begin{examples}\label{ex:coarsely_bounded_geometry}
Every Riemannian manifold $M$ of bounded geometry\footnote{That is to say, the injectivity radius of $M$ is uniformly positive and the curvature tensor and all its derivatives are bounded in sup-norm.} is a metric space of coarsely bounded geometry: any maximal set $\Gamma \subset M$ of points which are at least a fixed distance apart (i.e., there is an $\varepsilon > 0$ such that $d(x, y) \ge \varepsilon$ for all $x \not= y \in \Gamma$) will do the job. We can get such a maximal set by invoking Zorn's lemma. Note that a manifold of bounded geometry will also have locally bounded geometry, so no confusion can arise by not distinguishing between ``coarsely'' and ``locally'' bounded geometry in the terminology for manifolds.

If $(X,d)$ is an arbitrary metric space that is bounded, i.e., $d(x,x^\prime) < D$ for all $x, x^\prime \in X$ and some $D$, then \emph{any} finite subset of $X$ will constitute a quasi-lattice.

Let $K$ be a simplicial complex of bounded geometry\footnote{That is, the number of simplices in the link of each vertex is uniformly bounded.}. Equipping $K$ with the metric derived from barycentric coordinates the subset of all vertices of the complex $K$ becomes a quasi-lattice in $K$.
\qed
\end{examples}

If $X$ has coarsely bounded geometry it will be crucial for us that we can normalize uniform $K$-homology to uniform finite propagation, i.e., such there is an $R > 0$ depending only on $X$ such that every uniform Fredholm module has propagation at most $R$\footnote{This means $\rho(f) T \rho(g) = 0$ if $d(\supp f, \supp g) > R$.}. This was proved by \Spakula in \cite[Proposition 7.4]{spakula_uniform_k_homology}. Note that it is in general not possible to make this common propagation $R$ arbitrarily small. Furthermore, we can combine the normalization to finite propagation with the other normalizations.

\begin{prop}[{\cite[Section 7]{spakula_uniform_k_homology}}]\label{prop:normalization_finite_prop_speed}
If $X$ has coarsely bounded geometry, then there is an $R > 0$ depending only on $X$ such that uniform $K$-homology may be normalized to uniform Fredholm modules that have propagation at most $R$.

Furthermore, we can additionally normalize them to either involutive modules or to non-degenerate ones.
\end{prop}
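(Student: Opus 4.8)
The plan is to carry out \v{S}pakula's averaging construction explicitly. First I would use the coarsely bounded geometry of $X$ (Definition \ref{defn:coarsely_bounded_geometry}) to fix a uniformly discrete quasi-lattice $\Gamma = \{x_i\}_i \subset X$ and a constant $c > 0$ with $\bigcup_i B_c(x_i) = X$; the multiplicity bound $\card(\Gamma \cap B_r(y)) \le K_r$ then makes this cover uniformly locally finite. On it I would build a partition of unity $\{\psi_i\}_i$ with $\sum_i \psi_i^2 = 1$, $\supp \psi_i \subset B_c(x_i)$, $\|\psi_i\|_\infty \le 1$, and each $\psi_i$ Lipschitz with a constant depending only on $c$ and $K_c$ — for instance $\psi_i := \tilde\psi_i/(\sum_j \tilde\psi_j^2)^{1/2}$ with $\tilde\psi_i(x) := \max(0, c - d(x,x_i))$, where the denominator is a sum of at most $K_c$ terms and is bounded below by coarse density. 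Given a uniform Fredholm module $(H,\rho,T)$ (which I may take non-degenerate, so that $\sum_i \rho(\psi_i)^2 = 1$ strongly) I then set
\[ T' := \sum_i \rho(\psi_i)\, T\, \rho(\psi_i). \]
Since each $\supp \psi_i$ has diameter at most $2c$, we have $\rho(f) T' \rho(g) = 0$ whenever $d(\supp f, \supp g) > 2c$, so $T'$ has propagation at most $R_0 := 2c$, a number depending only on $X$. Moreover $T'$ is odd and multigraded because the $\rho(\psi_i)$ are even and multigraded, and $\|T'\| \le 1$.

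Next I would verify that $(H,\rho,T')$ is again a uniform Fredholm module and is operator homotopic to $(H,\rho,T)$. The key identity is the uniformly locally finite sum
\[ T - T' = \sum_i \rho(\psi_i)\,[\rho(\psi_i), T]. \]
The point is that such a sum is uniformly locally compact: given $f \in \LLip_R(X)$, only at most $K_{R+2c}$ of the summands survive after multiplying by $\rho(f)$, and for each surviving $i$ the collection $\{\rho(f\psi_i)[T,\rho(\psi_i)]\}$ is uniformly approximable, because $T$ is uniformly pseudolocal and both $f\psi_i$ and $\psi_i$ lie in $L'\text{-}\operatorname{Lip}_{R+2c}(X)$ with $L'$ independent of $i$; a sum of at most $K_{R+2c}$ uniformly approximable families is again uniformly approximable. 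Hence $T - T'$ is uniformly locally compact, and Lemma \ref{lem:compact_perturbations} gives an operator homotopy $(H,\rho,T) \simeq (H,\rho,T')$. That $T'$ inherits the axioms of Definition \ref{defn:uniform_fredholm_modules} then follows formally: $T' - (T')^\ast = \sum_i \rho(\psi_i)(T - T^\ast)\rho(\psi_i)$ is uniformly locally compact by the same estimate applied to the uniformly locally compact operator $T - T^\ast$, and $(T')^2 - 1 = (T^2 - 1) + T(T'-T) + (T'-T)T' + (T'-T)^2$ is uniformly locally compact because the uniformly locally compact operators form a two-sided $^\ast$-ideal in the uniformly pseudolocal ones by \cite[Lemma 4.2]{spakula_uniform_k_homology} and $T, T'$ are uniformly pseudolocal.

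Finally I would address compatibility with homotopies and with the further normalizations. Applying the construction pointwise to an operator homotopy $(H,\rho,T_t)$ whose endpoints already have propagation $\le R_0$ produces a norm-continuous path $T'_t$ of propagation $\le R_0$, while the compact-perturbation homotopies from Lemma \ref{lem:compact_perturbations} joining $T_i$ to $T'_i$ are implemented by rotations commuting with $\rho$, hence preserve the propagation bound up to a fixed additive constant; enlarging $R_0$ to the resulting $R$ gives a single $R$ depending only on $X$ that normalizes both cycles and homotopies. Since $\rho$ is untouched by the averaging, combining with the non-degenerate normalization (Lemma \ref{lem:normalization_non-degenerate}) is immediate — normalize to non-degenerate first, then average. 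The delicate case is the involutive normalization (Lemma \ref{lem:normalization_involutive}): after averaging, $T'$ is self-adjoint but satisfies $(T')^2 = 1$ only modulo uniformly locally compact operators, and it must be readjusted to an exact self-adjoint unitary without spoiling finite propagation; for this last point I would follow \cite[Section 7]{spakula_uniform_k_homology}. This readjustment, together with the verification that the averaged operator stays inside the class of uniform Fredholm modules, is the main obstacle — and it is precisely here that the \emph{uniform} (not merely pointwise) pseudolocality of $T$ and the multiplicity bound from coarsely bounded geometry are both indispensable.
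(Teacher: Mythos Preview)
The paper does not supply its own proof of this proposition; it simply cites \v{S}pakula \cite[Section 7]{spakula_uniform_k_homology}. Your proposal correctly reconstructs the essential content of \v{S}pakula's argument: the averaging $T' = \sum_i \rho(\psi_i) T \rho(\psi_i)$ against a uniformly Lipschitz partition of unity subordinate to a cover by $c$-balls around a quasi-lattice, the identity $T - T' = \sum_i \rho(\psi_i)[\rho(\psi_i),T]$, and the observation that uniform pseudolocality of $T$ together with the multiplicity bound $K_r$ makes this difference uniformly locally compact.

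Two small remarks. First, to get a uniform positive lower bound on $\sum_j \tilde\psi_j^2$ you need every point of $X$ to be at distance $\le c - \delta$ from some $x_i$ for a fixed $\delta > 0$; coarse density alone only gives distance $< c$, so enlarge $c$ slightly. Second, your phrase ``rotations commuting with $\rho$'' for the endpoint homotopies is misleading: what you actually use is that if $T_0$ and $T_0'$ both have propagation $\le R_0$, then so does the straight-line path $(1-t)T_0 + t T_0'$, and this path is an operator homotopy through uniform Fredholm modules by the compact-perturbation Lemma \ref{lem:compact_perturbations}. With these adjustments the sketch is sound and matches the cited source.
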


Having discussed the normalization to finite propagation modules, we can now compute an easy but important example:

\begin{lem}\label{lem:uniform_k_hom_discrete_space}
Let $Y$ be a uniformly discrete, proper metric space of coarsely bounded geometry. Then $K_0^u(Y)$ is isomorphic to the group $\ell_\IZ^\infty(Y)$ of all bounded, integer-valued sequences indexed by $Y$, and $K_1^u(Y) = 0$.
\end{lem}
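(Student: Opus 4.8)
The plan is to exhibit mutually inverse homomorphisms $\Phi\colon K_0^u(Y)\to\ell_\IZ^\infty(Y)$ and $\Psi\colon\ell_\IZ^\infty(Y)\to K_0^u(Y)$, and separately to show that every odd (ungraded) uniform Fredholm module over $Y$ is operator homotopic to a degenerate one. First I would unwind the data. Since $Y$ is discrete, $C_0(Y)=c_0(Y)$, so each $\delta$-function $\delta_y$ lies in $C_0(Y)$; for a non-degenerate module $(H,\rho,T)$ the operators $p_y:=\rho(\delta_y)$ are pairwise orthogonal projections summing strongly to $\id$, hence $H=\bigoplus_{y}H_y$ with $H_y=p_yH$ and $\rho$ is nothing but this decomposition. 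By Proposition~\ref{prop:normalization_finite_prop_speed} and Lemma~\ref{lem:normalization_non-degenerate} I may assume the module is non-degenerate with propagation at most a universal $R=R(Y)$, so that $p_yTp_z=0$ whenever $d(y,z)>R$: $T$ is a band operator of width $R$, and by coarsely bounded geometry each row and column has at most $K_R$ nonzero blocks. Replacing $T$ by $\tfrac12(T+T^\ast)$ (a propagation-preserving uniformly locally compact perturbation, hence an operator homotopy by Lemma~\ref{lem:compact_perturbations}) I may also take $T=T^\ast$; then $T^2-\id$ is uniformly locally compact, which, tested on functions concentrated near a single point (using Lemma~\ref{lem:l_uniformly_loc_compact_without_l} to drop the $L$-dependence), says exactly that the blocks $p_y(T^2-\id)$ are approximable by operators of rank $\le N(\varepsilon)$ with $N(\varepsilon)$ independent of $y$.

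Next I would define the two maps. Let $\Psi$ send a bounded sequence $n=(n_y)\in\ell_\IZ^\infty(Y)$ to the class of the module $\bigl(\bigoplus_y(\IC^{n_y^+}\oplus\IC^{n_y^-}),\,\rho_n,\,0\bigr)$ with $n_y^\pm=\max(\pm n_y,0)$, the summand at $y$ graded by its two factors and $\rho_n$ the tautological decomposition; this is a genuine uniform Fredholm module because the operator is $0$ and the blocks $-p_y$ have rank $\le\|n\|_\infty$, so uniformly bounded rank (Examples~\ref{ex:uniformly_approximable_collections}), and it is visibly additive. To define $\Phi$, start from a normalized module as above and pass, through a chain of unitary equivalences and operator homotopies of uniform Fredholm modules, to a \emph{block-diagonal} representative $\bigoplus_y(H_y,\rho_y,T_y)$ — a uniform direct sum of Fredholm modules over the one-point spaces $\{y\}$. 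Each $T_y$ is then odd and self-adjoint with $T_y^2-\id$ compact, so in the matrix form $T_y=\begin{pmatrix}0&V_y^\ast\\ V_y&0\end{pmatrix}$ the operator $V_y\colon H_y^+\to H_y^-$ is Fredholm, and I set $\Phi([H,\rho,T]):=(\ind V_y)_y$. The uniform rank bound above gives $\dim\kernel V_y,\ \dim\kernel V_y^\ast\le N(1/2)$, so $\sup_y|\ind V_y|\le N(1/2)<\infty$ and $\Phi$ lands in $\ell_\IZ^\infty(Y)$; well-definedness and additivity follow from homotopy invariance of the Fredholm index over each point together with Proposition~\ref{prop:compact_space_every_module_uniform}, which identifies uniform and ordinary Fredholm modules over the compact space $\{y\}$ and so identifies $\ind V_y$ with the image of $[H_y,\rho_y,T_y]$ under the classical isomorphism $K_0^u(\{y\})=K_0(\{y\})\xrightarrow{\cong}\IZ$.

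It is immediate that $\Phi\circ\Psi=\id$. For $\Psi\circ\Phi=\id$ I would show that a block-diagonal module is equivalent in $K_0^u(Y)$ to $\Psi\bigl((\ind V_y)_y\bigr)$: over each point $(H_y,\rho_y,T_y)$ is operator homotopic to $(\IC^{n_y^+}\oplus\IC^{n_y^-},\rho,0)$ with $n_y=\ind V_y$, and the content is to run these pointwise homotopies \emph{simultaneously with uniform control} — stripping the flasque parts by a uniform Eilenberg-swindle summand (uniform precisely because of the rank bound $N(1/2)$) and using Kuiper's theorem for the unitary paths on the infinite-dimensional complements, where there is no parameter to control. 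The statement $K_1^u(Y)=0$ is proved by the same scheme: an ungraded uniform Fredholm module is normalized and brought to block-diagonal form $\bigoplus_y(H_y,\rho_y,T_y)$; over the point $\{y\}$ one has $K_1(\{y\})=0$ (again via Proposition~\ref{prop:compact_space_every_module_uniform} and the classical computation), so each summand is operator homotopic to a degenerate module, and assembling these homotopies uniformly shows the whole module is operator homotopic to a degenerate one, hence zero.

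The main obstacle is the \emph{block-diagonalization} step — replacing a finite-propagation band operator by a genuinely block-diagonal one without losing the Fredholm conditions or the uniform estimates. One cannot simply delete the off-diagonal part (it need not be uniformly locally compact, and scaling it to zero wrecks $T^2-\id\in\IK$-type conditions) nor conjugate it away (block-diagonal unitaries preserve the block structure). The expected remedy is to finitely colour the Rips graph on $Y$ at scale $R$ — bounded geometry makes its degree $\le K_R$, hence it is $(K_R+1)$-colourable — which splits the off-diagonal part of $T$ into finitely many pieces, each supported on a partial matching of $Y$, and then, after stabilizing by a degenerate module, to rotate these pieces away one colour class at a time by explicit cosine–sine homotopies, checking at each stage that uniform pseudolocality and uniform local compactness are preserved with constants depending only on $R$, $K_R$ and the original module. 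Carrying out this untangling, uniformly in $Y$, is where essentially all the work lies; everything else reduces to the classical $K_0(\mathrm{pt})\cong\IZ$, $K_1(\mathrm{pt})=0$ packaged through Proposition~\ref{prop:compact_space_every_module_uniform}.
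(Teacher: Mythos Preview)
Your overall architecture (decompose into one-point modules, use $K_\ast(\{y\})$ and uniformity to land in $\ell_\IZ^\infty$) matches the paper's, but you have missed the key simplification and as a result you manufacture a hard ``main obstacle'' that in fact does not exist.

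The point you overlook is that for a \emph{uniformly discrete} space $Y$ one can choose the propagation bound in Proposition~\ref{prop:normalization_finite_prop_speed} to satisfy
\[
0<R<\inf_{x\neq y\in Y} d(x,y),
\]
the infimum on the right being strictly positive precisely by uniform discreteness. The paper obtains this by going into \v{S}pakula's proof of that proposition. With this choice of $R$, any operator of propagation $\le R$ satisfies $p_yTp_z=0$ for \emph{all} $y\neq z$, so $T=\bigoplus_y T_y$ is already block-diagonal; the same applies to every operator occurring in a homotopy. Thus the decomposition $(H,\rho,T)=\bigoplus_y(H_y,\rho_y,T_y)$ is automatic and descends to a well-defined map $K_\ast^u(Y)\to\prod_y K_\ast^u(\{y\})$. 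Your entire ``block-diagonalization'' step --- the graph colouring, the cosine--sine rotations, the uniform control of those homotopies --- is unnecessary, and you yourself note that you have not actually carried it out.

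Once block-diagonality is free, the rest of your argument (pointwise Fredholm index, boundedness from the uniform rank estimate, the explicit inverse $\Psi$, and $K_1(\mathrm{pt})=0$ for the odd case) is correct and essentially what the paper does. So the gap is not in the endgame but in the opening move: reread the normalization proposition with the hypothesis ``uniformly discrete'' in mind and note that $R$ can be taken below the separation constant.
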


\begin{proof}
We use Proposition \ref{prop:normalization_finite_prop_speed} to normalize uniform $K$-homology to operators of finite propagation, i.e., there is an $R > 0$ such that every uniform Fredholm module over $Y$ may be represented by a module $(H, \rho, T)$ where $T$ has propagation no more than $R$ and all homotopies may be also represented by homotopies where the operators have propagation at most $R$.

Going into the proof of Proposition \ref{prop:normalization_finite_prop_speed}, we see that in our case of a uniformly discrete metric space $Y$ we may choose $R$ less than the least distance between two points of $Y$, i.e., $0 < R < \inf_{x \not= y \in Y} d(x,y)$. So given a module $(H, \rho, T)$ where $T$ has propagation at most $R$, the operator $T$ decomposes as a direct sum $T = \bigoplus_{y \in Y} T_y$ with $T_y \colon H_y \to H_y$. The Hilbert space $H_y$ is defined as $H_y := \rho(\chi_y) H$, where $\chi_y$ is the characteristic function of the single point $y \in Y$. Note that $\chi_y$ is a continuous function since the space $Y$ is discrete. Hence $(H, \rho, T) = \bigoplus (H_y, \rho_y, T_y)$ with $\rho_y\colon C_0(Y) \to \IB(H_y)$, $f \mapsto \rho(\chi_y) \rho(f) \rho(\chi_y)$. Now each $(H_y, \rho_y, T_y)$ is a Fredholm module over the point $y$ and so we get a map
\[K_\ast^u(Y) \to \prod_{y \in Y} K_\ast^u(y).\]
Note that we need that the homotopies also all have propagation at most $R$ so that the above defined decomposition of a uniform Fredholm module descends to the level of uniform $K$-homology.

Since a point $y$ is for itself a compact space, we have $K_\ast^u(y) = K_\ast(y)$, and the latter group is isomorphic to $\IZ$ for $\ast = 0$ and it is $0$ for $\ast = 1$. Since the above map $K_\ast^u(Y) \to \prod_{y \in Y} K_\ast^u(y)$ is injective, we immediately conclude $K_1^u(Y) = 0$.

So it remains to show that the image of this map in the case $\ast = 0$ consists of the \emph{bounded} integer-valued sequences indexed by $Y$. But this follows from the uniformity condition in the definition of uniform $K$-homology: the isomorphism $K_0(y) \cong \IZ$ is given by assigning a module $(H_y, \rho_y, T_y)$ the Fredholm index of $T$ (note that $T_y$ is a Fredholm operator since $(H_y, \rho_y, T_y)$ is a module over a single point). Now since $(H, \rho, T) = \bigoplus (H_y, \rho_y, T_y)$ is a \emph{uniform} Fredholm module, we may conclude that the Fredholm indices of the single operators $T_y$ are bounded with respect to $y$.
\end{proof}

\subsection{Differences to \v{S}pakula's version}
We will discuss now the differences between our version of uniform $K$-homology and \v{S}pakula's version from his Ph.D.\ thesis \cite{spakula_thesis}, resp., his publication \cite{spakula_uniform_k_homology}.

Firstly, our definition of uniform $K$-homology is based on multigraded Fredholm modules and we therefore have groups $K_p^\ast(X)$ for all $p \ge -1$, but \Spakula only defined $K_0^u$ and $K_1^u$. This is not a real restriction since uniform $K$-homology has, analogously as usual $K$-homology, a formal $2$-periodicity. We mention this since if the reader wants to look up the original reference \cite{spakula_thesis} and \cite{spakula_uniform_k_homology}, he has to keep in mind that we work with multigraded modules, but \Spakula does not.

Secondly, \Spakula gives the definition of uniform $K$-homology only for proper\footnote{That means that all closed balls are compact.} metric spaces since certain results of him (Sections 8-9 in \cite{spakula_uniform_k_homology}) only work for such spaces. These results are all connected to the rough assembly map $\mu_u\colon K_\ast^u(X) \to K_\ast(C_u^\ast(Y))$, where $Y \subset X$ is a uniformly discrete quasi-lattice, and this is nor surprising: the (uniform) Roe algebra only has on proper spaces nice properties (like its $K$-theory being a coarse invariant) and therefore we expect that results of uniform $K$-homology that connect to the uniform Roe algebra also should need the properness assumption. But we can see by looking into the proofs of \Spakula in all the other sections of \cite{spakula_uniform_k_homology} that all results except the ones in Sections 8-9 also hold for locally compact, separable metric spaces (without assumptions on completeness or properness). Note that this is a very crucial fact for us that uniform $K$-homology does also make sense for non-proper spaces since in the proof of \Poincare duality we will have to consider the uniform $K$-homology of open balls in $\IR^n$.

Thirdly, \v{S}pakula uses the notion ``$L$-continuous'' instead of ``$L$-Lipschitz'' for the definition of $\LLip_R(X)$ (which he also denotes by $C_{R,L}(X)$, i.e., we have also changed the notation), so that he gets slightly differently defined uniform Fredholm modules. But the author was not able to deduce Proposition \ref{prop:compact_space_every_module_uniform} with \v{S}pakula's definition, which is why we have changed it to ``$L$-Lipschitz'' (since the statement of Proposition \ref{prop:compact_space_every_module_uniform} is a very desirable one and, in fact, we will need it crucially in the proof of \Poincare duality). \Spakula noted that for a geodesic metric space both notions ($L$-continuous and $L$-Lipschitz) coincide, i.e., for probably all spaces which one wants to consider ours and \v{S}pakula's definition coincide. But note that all the results of \Spakula do also hold with our definition of uniform Fredholm modules.

And last, let us get to the most crucial difference: to define uniform $K$-homology \Spakula does not use operator homotopy as a relation but a weaker form of homotopy (\cite[Definition 2.11]{spakula_uniform_k_homology}). The reasons why we changed this are the following: firstly, the definition of usual $K$-homology uses operator homotopy and it seems desirable to have uniform $K$-homology to be similarly defined. Secondly, \v{S}pakula's proof of \cite[Proposition 4.9]{spakula_uniform_k_homology} is not correct under his notion of homotopy, but it becomes correct if we use operator homotopy as a relation. So by changing the definition we ensure that \cite[Proposition 4.9]{spakula_uniform_k_homology} holds. And thirdly, we will prove in Section \ref{sec:homotopy_invariance} that we get the same uniform $K$-homology groups if we impose weak homotopy (Definition \ref{defn:weak_homotopy}) as a relation instead of operator homotopy. Though our notion of weak homotopies is different from \v{S}pakula's notion of homotopies, all the homotopies that he constructs in his paper \cite{spakula_uniform_k_homology} are weak homotopies, i.e., all the results of him that rely on his notion of homotopy are also true with our definition.

To put it into a nutshell, we changed the definition of uniform $K$-homology in order to make the definition similar to one of usual $K$-homology and to correct \v{S}pakula's proof of \cite[Proposition 4.9]{spakula_uniform_k_homology}. It also seems to be easier to work with our version. Furthermore, all of his results do also hold in our definition. And last, we remark that his results, besides the ones in Sections 8-9 in \cite{spakula_uniform_k_homology}, also hold for non-proper, non-complete spaces.

\subsection{External product}

Now we get to one of the most important technical parts in this article: the construction of the external product for uniform $K$-homology. Its main application will be to deduce homotopy invariance of uniform $K$-homology.

Note that we can construct the product only if the involved metric spaces have jointly bounded geometry (which we will define in a moment). Note that both major classes of spaces on which we want to apply our theory, namely manifolds and simplicial complexes of bounded geometry, do have jointly bounded geometry.

\begin{defn}[Locally bounded geometry, {\cite[Definition 3.1]{spakula_universal_rep}}]\label{defn:locally_bounded_geometry}
A metric space $X$ has \emph{locally bounded geometry}, if it admits a countable Borel decomposition $X = \cup X_i$ such that
\begin{itemize}
\item each $X_i$ has non-empty interior,
\item each $X_i$ is totally bounded, and
\item for all $\varepsilon > 0$ there is an $N > 0$ such that for every $X_i$ there exists an $\varepsilon$-net in $X_i$ of cardinality at most $N$.
\end{itemize}

Note that \Spakula demands in his definition of ``locally bounded geometry'' that the closure of each $X_i$ is compact instead of the total boundedness of them. The reason for this is that he considers only proper spaces, whereas we need a more general notion to encompass also non-complete spaces.
\qed
\end{defn}

\begin{defn}[Jointly bounded geometry]\label{defn:jointly_bounded_geometry}
A metric space $X$ has \emph{jointly coarsely and locally bounded geometry}, if
\begin{itemize}
\item it admits a countable Borel decomposition $X = \cup X_i$ satisfying all the properties of the above Definition \ref{defn:locally_bounded_geometry} of locally bounded geometry,
\item it admits a quasi-lattice $\Gamma \subset X$ (i.e., $X$ has coarsely bounded geometry), and
\item for all $r > 0$ we have $\sup_{y \in \Gamma} \card \{i \ | \ B_r(y) \cap X_i \not= \emptyset\} < \infty$.
\end{itemize}
The last property ensures that there is an upper bound on the number of subsets $X_i$ that intersect any ball of radius $r > 0$ in $X$.
\qed
\end{defn}

\begin{examples}
Recall from Examples \ref{ex:coarsely_bounded_geometry} that manifolds of bounded geometry and simplicial complexes of bounded geometry (i.e., the number of simplices in the link of each vertex is uniformly bounded) equipped with the metric derived from barycentric coordinates have coarsely bounded geometry. Now a moment of reflection reveals that they even have jointly bounded geometry.

In the next Figure \ref{fig:not_jointly_but_others} we give an example of a space $X$ having coarsely and locally bounded geometry, but where the quasi-lattice $\Gamma$ and the Borel decomposition $X = \cup X_i$ are not compatible with each other.
\qed
\end{examples}

\begin{figure}[htbp]
\centering
\includegraphics[scale=0.5]{./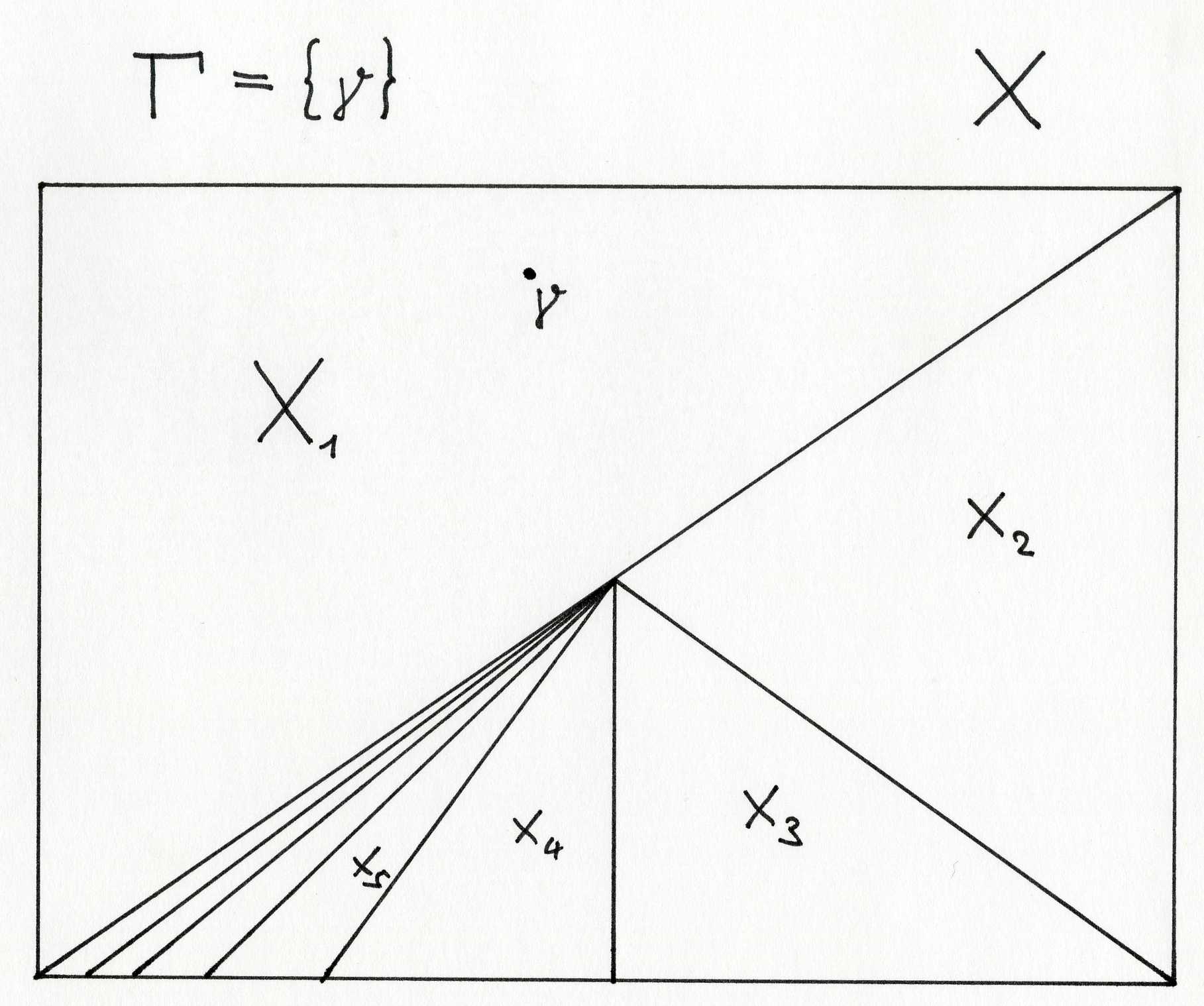}
\caption{Coarsely and locally bounded geometry, but they are not compatible.}
\label{fig:not_jointly_but_others}
\end{figure}

In our construction of the product for uniform $K$-homology we follow the presentation in \cite[Section 9.2]{higson_roe}, where the product is constructed for usual $K$-homology.

Let $X_1$ and $X_2$ be locally compact and separable metric spaces and both having jointly bounded geometry, $(H_1, \rho_1, T_1)$ a $p_1$-multigraded uniform Fredholm module over the space $X_1$ and $(H_2, \rho_2, T_2)$ a $p_2$-multigraded module over $X_2$, and both modules will be assumed to have finite propagation (see Proposition \ref{prop:normalization_finite_prop_speed}).

\begin{defn}[cf. {\cite[Definition 9.2.2]{higson_roe}}]
We define $\rho$ to be the tensor product representation of $C_0(X_1 \times X_2) \cong C_0(X_1) \otimes C_0(X_2)$ on $H := H_1 \hatotimes H_2$, i.e.,
\[\rho(f_1 \otimes f_2) = \rho_1(f_1) \hatotimes \rho_2(f_2) \in \IB(H_1) \hatotimes \IB(H_2)\]
and equip $H_1 \hatotimes H_2$ with the induced $(p_1 + p_2)$-multigrading\footnote{The graded tensor product $H_1 \hatotimes H_2$ is $(p_1 + p_2)$-multigraded if we let the multigrading operators $\epsilon_j$ of $H_1$ act on the tensor product as
\[\epsilon_j(v_1 \otimes v_2) := (-1)^{\deg(v_2)}\epsilon_j(v_1) \otimes v_2\]
for $1 \le j \le p_1$, and for $1 \le j \le p_2$ we let the multigrading operators $\epsilon_{p_1 + j}$ of $H_2$ act as
\[\epsilon_{p_1 + j}(v_1 \otimes v_2) := v_1 \otimes \epsilon_{p_1 + j}(v_2).\]}.

We say that a $(p_1 + p_2)$-multigraded uniform Fredholm module $(H, \rho, T)$ is \emph{aligned} with the modules $(H_1, \rho_1, T_1)$ and $(H_2, \rho_2, T_2)$, if
\begin{itemize}
\item $T$ has finite propagation,
\item for all $f \in C_0(X_1 \times X_2)$ the operators
\[\rho(f) \big( T (T_1 \hatotimes 1) + (T_1 \hatotimes 1) T \big) \rho(\bar f) \text{ and } \rho(f) \big( T (1 \hatotimes T_2) + (1 \hatotimes T_2) T \big) \rho(\bar f)\]
are positive modulo compact operators,\footnote{That is to say, they are positive in the Calkin algebra $\IB(H) / \IK(H)$.} and
\item for all $f\in C_0(X_1 \times X_2)$ the operator $\rho(f) T$ derives $\IK(H_1) \hatotimes \IB(H_2)$, i.e.,
\[[\rho(f) T, \IK(H_1) \hatotimes \IB(H_2)] \subset \IK(H_1) \hatotimes \IB(H_2).\]
\end{itemize}
Since both $H$ and $\rho$ are uniquely determined from $H_1$, $\rho_1$, $H_2$ and $\rho_2$, we will often just say that \emph{$T$ is aligned with $T_1$ and $T_2$}.
\qed
\end{defn}

Our major technical lemma is the following one. It is a uniform version of Kasparov's Technical Lemma, which is suitable for our needs.

\begin{lem}\label{lem:construction_partition_unity}
Let $X_1$ and $X_2$ be locally compact and separable metric spaces that have jointly coarsely and locally bounded geometry.

Then there exist commuting, even, multigraded, positive operators $N_1$, $N_2$ of finite propagation on $H := H_1 \hatotimes H_2$ with $N_1^2 + N_2^2 = 1$ and the following properties:

\begin{enumerate}
\item $N_1 \cdot \big\{ (T_1^2 - 1) \rho_1(f) \hatotimes 1 \ | \ f \in \LLip_{R^\prime}(X_1) \big\} \subset \IK(H_1 \hatotimes H_2)$ is uniformly approximable for all $R^\prime, L > 0$ and analogously for $(T^\ast_1 - T_1)\rho_1(f)$ and for $[T_1, \rho_1(f)]$ instead of $(T_1^2 - 1) \rho_1(f)$,
\item $N_2 \cdot \big\{ 1 \hatotimes (T_2^2 - 1) \rho_2(f) \ | \ f \in \LLip_{R^\prime}(X_2) \big\} \subset \IK(H_1 \hatotimes H_2)$ is uniformly approximable for all $R^\prime, L > 0$ and analogously for $(T_2^\ast - T_2) \rho_2(f)$ and for $[T_2, \rho_2(f)]$ instead of $(T_2^2 - 1) \rho_2(f)$,
\item $\{[N_i, T_1 \hatotimes 1]\rho(f), [N_i, 1 \hatotimes T_2]\rho(f) \ | \ f \in \LLip_{R^\prime}(X_1 \times X_2)\}$ is uniformly approximable for all $R^\prime, L > 0$ and both $i=1,2$,
\item $\big\{ [N_i, \rho(f \otimes 1)], [N_i, \rho(1 \otimes g)] \ | \ f \in \LLip_{R^\prime}(X_1), g \in \LLip_{R^\prime}(X_2) \big\}$ is uniformly approximable for all $R^\prime, L > 0$ and both $i = 1,2$, and
\item both $N_1$ and $N_2$ derive $\IK(H_1) \hatotimes \IB(H_2)$.
\end{enumerate}
\end{lem}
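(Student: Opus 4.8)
\emph{Proof proposal.} The plan is to produce a single even, multigraded, positive contraction $M$ on $H = H_1 \hatotimes H_2$ of finite propagation and then set $N_1 := M^{1/2}$ and $N_2 := (1-M)^{1/2}$. These are automatically even, multigraded, positive, commuting (both are continuous functions of the single self-adjoint operator $M$), and satisfy $N_1^2 + N_2^2 = 1$. To keep finite propagation under the square root I would build $M$ block-diagonally with respect to an orthogonal, grading-compatible decomposition of $H$ into pieces of uniformly bounded diameter; then $M^{1/2}$ and $(1-M)^{1/2}$ split along the same decomposition and have finite (indeed uniformly bounded) propagation. The five absorption properties together constitute a uniform version of Kasparov's technical theorem adapted to the external product, and the strategy is to reduce them to the classical statement on ``boxes'' and then bookkeep the uniformity using the joint bounded geometry.

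Concretely, I would first normalize both Fredholm modules to finite propagation by Proposition \ref{prop:normalization_finite_prop_speed}, with propagation bounds $R_1, R_2$. Using the jointly coarsely and locally bounded geometry (Definition \ref{defn:jointly_bounded_geometry}) I would fix countable Borel partitions $X_1 = \bigsqcup_i A_i$ and $X_2 = \bigsqcup_j B_j$ with all $A_i, B_j$ totally bounded of uniformly bounded diameter and with a uniform bound, for every $r > 0$, on the number of pieces meeting any $r$-ball (this is where the coarse and the local part of the hypothesis are combined). Setting $H_{ij} := \big(\rho_1(\chi_{A_i})H_1\big) \hatotimes \big(\rho_2(\chi_{B_j})H_2\big)$ gives an orthogonal, grading-compatible decomposition $H = \bigoplus_{i,j} H_{ij}$ into pieces of uniformly bounded diameter. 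On each box $A_i \times B_j$, which is totally bounded, the restricted data form an ordinary (uniform $=$ ordinary, by Proposition \ref{prop:compact_space_every_module_uniform}) Fredholm module, so the classical Kasparov technical theorem in the form used for the external product in $K$-homology (\cite[Section 9.2]{higson_roe}) supplies an even, multigraded, positive contraction $M_{ij}$ on $H_{ij}$ satisfying the box-local analogues of (1)--(5), in which all relevant operators are genuinely compact; the choices will have to be coordinated across neighbouring boxes, see below. I would then put $M := \bigoplus_{i,j} M_{ij}$.

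The remaining work is to upgrade the box-local compactness statements to the global \emph{uniform} approximability in (1)--(5). Each operator occurring in (1)--(5) is a composition of $N_1$ or $N_2$ (block-diagonal, a contraction) with a finite-propagation operator built from $T_1^2-1$, $T_1-T_1^\ast$, $[T_1, \rho_1(f)]$, $T_1 \hatotimes 1$, $1 \hatotimes T_2$, or $\rho(f)$; by finite propagation and the bounded multiplicity of the decomposition the result is supported on a number of boxes depending only on $R_1, R_2$ and the radius $R^\prime$, and on each box it is, up to an error $\varepsilon$, a finite-rank operator whose rank is controlled by the uniform rank bound $N(\varepsilon, R^\prime)$ coming from the uniform (pseudo)locality of $T_1$, resp. $T_2$, combined with the box-local Kasparov property, which is precisely what forces the surviving infinite-dimensional tensor factor to be cut down to a compact of bounded rank. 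Summing over the boundedly many boxes yields a global $\varepsilon$-approximation of rank at most (number of boxes)$\,\cdot\, N(\varepsilon, R^\prime)$, a bound depending only on $\varepsilon, R^\prime, L$: this is uniform approximability. For the commutator statements (3) and (4) I would additionally use that $[\,g(M), a\,]$ inherits (uniform) compactness from $[M,a]$ by approximating $g = \sqrt{\,\cdot\,}$ uniformly on $[0,1]$ by polynomials together with the Leibniz expansion $[M^n, a] = \sum_k M^k [M,a] M^{n-1-k}$, and for (5) one uses the derivation property of the $M_{ij}$.

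The main obstacle, and the point requiring genuine care, is the interaction of the block-diagonal construction with the \emph{off-diagonal} parts of $T_1 \hatotimes 1$ and $1 \hatotimes T_2$, which are unavoidable since a pseudolocal operator of positive propagation cannot be block-diagonal for a partition into bounded pieces. In (3) the off-diagonal block of $[N_1, T_1 \hatotimes 1]$ between neighbouring boxes has the shape $M_{ij}^{1/2}(T_1 \hatotimes 1)_{(i^\prime j)\to(ij)} - (T_1 \hatotimes 1)_{(i^\prime j)\to(ij)} M_{i^\prime j}^{1/2}$, and with the $M_{ij}$ chosen independently box by box there is no reason for this to be compact. Resolving this forces the $M_{ij}$ to be chosen \emph{coherently} — e.g. by running a single partition-of-unity construction subordinate to a slight thickening of the boxes, built from finite-propagation quasi-central approximate units for $\IK(H_1)$ and $\IK(H_2)$ that respect the bounded-geometry decomposition, so that on overlapping boxes the local choices agree modulo compacts of uniformly bounded rank (here one exploits that $\chi_{A_i} T_1 \chi_{A_{i^\prime}}$ is itself uniformly compact for $i \neq i^\prime$, via the Borel form of Kasparov's lemma, Lemma \ref{lem:kasparov_lemma_uniform_approx_manifold}). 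Balancing this coherence requirement against the finite-propagation requirement — which pushes towards block-diagonality — is the technical heart of the argument; everything else is bookkeeping with the uniform bounds $N(\varepsilon, R^\prime)$.
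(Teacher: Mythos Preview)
Your block-diagonal strategy has a genuine gap, and it lies earlier than the coherence obstacle you flag for (3).

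Consider property (2): you need $N_2 \cdot \big(1 \hatotimes (T_2^2-1)\rho_2(f)\big)$ to be uniformly approximable. The operator $(T_2^2-1)\rho_2(f)$ is uniformly compact on $H_2$ with a rank bound $N(\varepsilon,R')$ independent of $f$; but $1 \hatotimes (T_2^2-1)\rho_2(f)$ restricted to the box $H_{ij}$ still has infinite rank whenever $\rho_1(\chi_{A_i})H_1$ is infinite-dimensional. Classical Kasparov on the box hands you \emph{some} $M_{ij}$ such that $(1-M_{ij})\big(1 \hatotimes (T_2^2-1)\rho_2(f)\big)$ is compact --- but this is a bare existence statement with no quantitative content. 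The approximation rank of that compact operator depends on the particular $M_{ij}$ the construction produced, not on the rank of the $H_2$-factor alone, and running the construction independently on each box gives you no relation whatsoever between the $M_{ij}$. The sentence ``the rank is controlled by the uniform rank bound $N(\varepsilon,R')$ \ldots\ combined with the box-local Kasparov property'' is where the argument breaks: the uniform bound controls the \emph{inputs}, the Kasparov property is only qualitative on the \emph{outputs}, and the two do not combine. (Concretely: a family of rank-$10$ operators $K_j$ on $H_2$ gives $1\hatotimes K_j$ of infinite rank; Kasparov makes each $(1-M_{ij})(1\hatotimes K_j)$ compact, but with ranks that can grow without bound in $i$.)

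The coherence issue you identify for (3) is real but secondary; your suggested fix --- a single quasi-central approximate unit respecting the bounded-geometry decomposition --- is precisely what the paper does, and it is not a block-diagonal construction. The paper never localises to boxes. It builds one global increasing family of projections $u_n$ on $H_1$ whose defining feature is that $\rho_1(\chi_{1,i})\cdot U_n$ has dimension bounded \emph{independently of $i$}; this is exactly where the uniformity of the Fredholm module is fed into the construction rather than recovered afterwards. A quasi-central averaging gives $\nu_n$, an analogous construction on $H_2$ gives the $p_n$, and the standard telescoping $X=\sum\delta_n\nu_n\delta_n$ (with $\delta_n$ built from $u_{m_n}\hatotimes p_n$) yields $N_2=X^{1/2}$, $N_1=(1-X)^{1/2}$. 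Every estimate \eqref{eq:Kasparov_estimate_383}--\eqref{eq:Kasparov_estimate_385} is uniform over the bounded-geometry pieces by design. Your block-diagonality buys a clean route to finite propagation of the square root, but at the cost of destroying the only mechanism that makes (1)--(2) uniform.
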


\begin{proof}
Due to the jointly bounded geometry there is a countable Borel decomposition $\{X_{1,i}\}$ of $X_1$ such that each $X_{1,i}$ has non-empty interior, the completions $\{\overline{X_{1,i}}\}$ form an admissible class\footnote{This means that for every $\varepsilon > 0$ there is an $N > 0$ such that in every $\overline{X_{1,i}}$ exists an $\varepsilon$-net of cardinality at most $N$.} of compact metric spaces and for each $R > 0$ we have
\begin{equation}
\label{eq:bound_jointly_bounded_geom}
\sup_i \card \{j \ | \ B_R(X_{1,i}) \cap X_{1,j} \not= \emptyset\} < \infty.
\end{equation}

The completions of the $1$-balls $B_1(X_{1,i})$ are also an admissible class of compact metric spaces and the collection of these open balls forms a uniformly locally finite open cover of $X_1$. We may find a partition of unity $\varphi_{1,i}$ subordinate to the cover $\{B_1(X_{1,i})\}$ such that every function $\varphi_{1,i}$ is $L_0$-Lipschitz for a fixed $L_0 > 0$ (but we will probably have to enlarge the value of $L_0$ a bit in a moment). The same holds also for a countable Borel decomposition $\{X_{2,i}\}$ of $X_2$ and we choose a partition of unity $\varphi_{2,i}$ subordinate to the cover $\{B_1(X_{2,i})\}$ such that every function $\varphi_{2,i}$ is also $L_0$-Lipschitz (by possibly enlargening $L_0$ so that we have the same Lipschitz constant for both partitions of unity).

Since $\{\overline{B_1(X_{1,i})}\}$ is an admissible class of compact metric spaces, we have for each $\varepsilon > 0$ and $L > 0$ a bound independent of $i$ on the number of functions from
\[\varphi_{1,i} \cdot \LLip_c(X_1) := \{ \varphi_{1,i} \cdot f \ | \ f\text{ is }L\text{-Lipschitz, compactly supported and }\|f\|_\infty \le 1\}\]
to form an $\varepsilon$-net in $\varphi_{1,i} \cdot \LLip_c(X_1)$, and analogously for $X_2$ (this can be proved by a similar construction as the one from \cite[Lemma 2.4]{spakula_universal_rep}). We denote this upper bound by $C_{\varepsilon, L}$.

Now for each $N \in \IN$ and $i \in \IN$ we choose $C_{1/N,N}$ functions $\{f_k^{i,N}\}_{k=1, \ldots, C_{1/N, N}}$ from $\varphi_{1,i} \cdot N\text{-}\operatorname{Lip}_c(X_{1,i})$ constituting an $1/N$-net.\footnote{If we need less functions to get an $1/N$-net, we still choose $C_{1/N,N}$ of them. This makes things easier for us to write down.} Analogously we choose $C_{1/N, N}$ functions $\{g_k^{i,N}\}_{k=1, \ldots, C_{1/N, N}}$ from $\varphi_{2,i} \cdot N\text{-}\operatorname{Lip}_c(X_{2,i})$ that are $1/N$-nets.

We choose a sequence $\{u_n \hatotimes 1\} \subset \IB(H_1) \hatotimes \IB(H_2)$ of operators in the following way: $u_n$ will be a projection operator onto a subspace $U_n$ of $H_1$. To define this subspace, we first consider the operators
\begin{equation}\label{eq:operators_X_1_to_approximate}
(T_1^2 - 1)\rho_1(f), \ (T_1 - T_1^\ast)\rho_1(f), \text{ and } [T_1, \rho_1(f)]
\end{equation}
for suitable functions $f \in C_0(X_1)$ that we will choose in a moment. These operators are elements of $\IK(H_1)$ since $(H_1, \rho_1, T_1)$ is a Fredholm module. So up to an error of $2^{-n}$ they are of finite rank and the span $V_n$ of the images of these finite rank operators will be the building block for the subspace $U_n$ on which the operator $u_n$ projects\footnote{This finite rank operators are of course not unique. Recall that every compact operator on a Hilbert space $H$ may be represented in the form $\sum_{n \ge 1} \lambda_n \langle f_n, \largecdot \rangle g_n$, where the values $\lambda_n$ are the singular values of the operator and $\{f_n\}$, $\{g_n\}$ are orthonormal (though not necessarily complete) families in $H$ (but contrary to the $\lambda_n$ they are not unique). Now we choose our finite rank operator to be the operator given by the same sum, but only with the $\lambda_n$ satisfying $\lambda_n \ge 2^{-n}$.} (i.e., we will say in a moment how to enlarge $V_n$ in order to get $U_n$). We choose the functions $f \in C_0(X_1)$ as all the functions from the set $\bigcup \{f_k^{i,N}\}_{k=1, \ldots, C_{1/N,N}}$, where the union ranges over all $i \in \IN$ and $1 \le N \le n$. Note that since the Fredholm module $(H_1, \rho_1, T_1)$ is uniform, the rank of the finite rank operators approximating \eqref{eq:operators_X_1_to_approximate} up to an error of $2^{-n}$ is bounded from above with a bound that depends only on $N$ and $n$, but not on $i$ nor $k$. Since we will have $V_n \subset U_n$, we can already give the first estimate that we will need later:
\begin{equation}\label{eq:Kasparov_estimate_383}
\|(u_n \hatotimes 1)(x \hatotimes 1) - (x \hatotimes 1)\| < 2^{-n},
\end{equation}
where $x$ is one of the operators from \eqref{eq:operators_X_1_to_approximate} for all $f_k^{i,N}$ with $1 \le N \le n$.\footnote{Actually, to have this estimate we would need that $x$ is self-adjoint. We can pass from $x$ to $\tfrac{1}{2}(x + x^\ast)$ and $\tfrac{1}{2i}(x - x^\ast)$, do all the constructions with these self-adjoint operators and get the needed estimates for them, and then we get the same estimates for $x$ but with an additional factor of $2$.} Moreover, denoting by $\chi_{1,i}$ the characteristic function of $B_1(X_{1,i})$, then $\rho_1(\chi_{1,i}) \cdot V_n$ is a subspace of $H_1$ of finite dimension that is bounded independently of $i$.\footnote{We have used here the fact that we may uniquely extend any representation of $C_0(Z)$ to one of the bounded Borel functions $B_b(Z)$ on a space $Z$.} The reason for this is because $T_1$ has finite propagation and the number of functions $f_k^{i,N}$ for fixed $N$ is bounded independently of $i$. For all $n$ we also have $V_n \subset V_{n+1}$ and that the projection operator onto $V_n$ has finite propagation which is bounded independently of $n$.

For each $n \in \IN$ we partition $\chi_{1,i}$ for all $i \in \IN$ into disjoint characteristic functions $\chi_{1,i} = \sum_{j=1}^{J_n} \chi_{1,i}^{j,n}$ such that we may write each function $f_k^{i,N}$ for all $i \in \IN$, $1 \le N \le n$ and $k = 1, \ldots, C_{1/N, N}$ up to an error of $2^{-n-1}$ as a sum $f_k^{i,N} = \sum_{j=1}^{J_n} \alpha_k^{i,N}(j,n) \cdot \chi_{1,i}^{j,n}$ for suitable constants $\alpha_k^{i,N}(j,n)$. Note that since $X_1$ has jointly coarsely and locally bounded geometry, we can choose the upper bounds $J_n$ such that they do not depend on $i$. Now we can finally set $U_n$ as the linear span of $V_n$ and $\rho_1(\chi_{1,i}^{j,n}) \cdot V_n$ for all $i \in \IN$ and $1 \le j \le J_n$. Note that $\rho_1(\chi_{1,i}) \cdot U_n$ is a subspace of $H_1$ of finite dimension that is bounded independently of $i$, that we may choose the characteristic functions $\chi_{1,i}^{j,n}$ such that we have $U_n \subset U_{n+1}$ (by possibly enlargening each $J_n$), and that the projection operator $u_n$ onto $U_n$ has finite propagation which is bounded independently of $n$. Since we have $[u_n, \rho_1(\chi_{1,i}^{j,n})] = 0$ for all $i \in \IN$, $1 \le j \le J_n$ and all $n \in \IN$, we get our second crucial estimate:
\begin{equation}\label{eq:Kasparov_estimate_384}
\|[u_n \hatotimes 1, \rho_1(f_k^{i,N}) \hatotimes 1]\| < 2^{-n}
\end{equation}
for all $i \in \IN$, $k = 1, \ldots, C_{1/N, N}$, $1 \le N \le n$ and all $n \in \IN$.

By an argument similar to the proof of the existence of quasicentral approximate units, we may conclude that for each $n \in \IN$ there exists a finite convex combination $\nu_n$ of the elements $\{u_n, u_{n+1}, \ldots\}$ such that
\begin{equation}\label{eq:Kasparov_estimate_384_2}
\|[\nu_n \hatotimes 1, T_1 \hatotimes 1]\| < 2^{-n} \text{, } \|[\nu_n \hatotimes 1, \epsilon_1 \hatotimes \epsilon_2]\| < 2^{-n} \text{ and } \|[\nu_n \hatotimes 1, \epsilon^j]\| < 2^{-n}
\end{equation}
for all $n \in \IN$, where $\epsilon_1 \hatotimes \epsilon_2$ is the grading operator of $H_1 \hatotimes H_2$ and $\epsilon^j$, $1 \le j \le p_1 + p_2$, are the multigrading operators of $H_1 \hatotimes H_2$. Note that the Estimates \eqref{eq:Kasparov_estimate_383} and \eqref{eq:Kasparov_estimate_384} also hold for $\nu_n$. Note furthermore that we can arrange that the maximal index occuring in the finite convex combination for $\nu_n$ is increasing in $n$.

Now we will construct a sequence $w_n \in \IB(H_1) \hatotimes \IB(H_2)$ with suitable properties. We have that $\nu_n$ is a finite convex combination of the elements $\{u_n, u_{n+1}, \ldots\}$. So for $n \in \IN$ we let $m_n$ denote the maximal occuring index in that combination. Furthermore, we let the projections $p_n \in \IB(H_2)$ be analogously defined as $u_n$, where we consider now the operators
\begin{equation}\label{eq:operators_X_2_to_approximate}
(T_2^2 - 1)\rho_2(g), \ (T_2 - T_2^\ast)\rho_2(g), \text{ and } [T_2, \rho_2(g)]
\end{equation}
for the analogous sets of functions $\bigcup \{g_k^{i,N}\}_{k=1, \ldots, C_{1/N,N}}$ depending on $n \in \IN$. Then we define $w_{n-1} := u_{m_n} \hatotimes  p_n$\footnote{The index is shifted by one so that we get the Estimates \eqref{eq:Kasparov_estimate_386}--\eqref{eq:Kasparov_estimate_388} with $2^{-n}$ and not with $2^{-n+1}$; though this is not necessary for the argument.} and get for all $n \in \IN$ the following:
\begin{equation}
\label{eq:Kasparov_estimate_385_-1}
w_n (\nu_n \hatotimes 1) (1 \hatotimes p_n) = (\nu_n \hatotimes 1) (1 \hatotimes p_n)
\end{equation}
and
\begin{align}
\label{eq:Kasparov_estimate_386}
\| [ w_n, x \hatotimes 1 ] \| & < 2^{-n}\\
\label{eq:Kasparov_estimate_387}
\| [ w_n, 1 \hatotimes y ] \| & < 2^{-n}\\
\label{eq:Kasparov_estimate_388}
\| [ w_n, \rho(f_k^{i,N} \otimes g_k^{i,N}) ] \| & < 2^{-n}
\end{align}
for all $i \in \IN$, $1 \le N \le n$ and $k = 1, \ldots, C_{1/N,N}$, where $x$ is one of the operators from \eqref{eq:operators_X_1_to_approximate} for all $f_k^{i,N}$ and $y$ is one of the operators from \eqref{eq:operators_X_2_to_approximate} for all $g_k^{i,N}$.

Let now $d_n := (w_n - w_{n-1})^{1/2}$. With a suitable index shift we can arrange that firstly, the Estimates \eqref{eq:Kasparov_estimate_386}--\eqref{eq:Kasparov_estimate_388} also hold for $d_n$ instead of $w_n$,\footnote{see \cite[Exercise 3.9.6]{higson_roe}} and that secondly, using Equation \eqref{eq:Kasparov_estimate_385_-1},
\begin{equation}
\label{eq:Kasparov_estimate_385}
\| d_n (\nu_n \hatotimes 1) y \| < 2^{-n},
\end{equation}
where $y$ is again one of the operators from \eqref{eq:operators_X_2_to_approximate} for all $g_k^{i,N}$ and $1 \le N \le n$.

Now as in the same way as we constructed $\nu_n$ out of the $u_n$s, we construct $\delta_n$ as a finite convex combination of the elements $\{d_n, d_{n+1}, \ldots\}$ such that
\begin{equation*}
\|[\delta_n, T_1 \hatotimes 1]\| < 2^{-n} \text{, } \|[\delta_n, 1 \hatotimes T_2]\| < 2^{-n} \text{, } \|[\delta_n, \epsilon_1 \hatotimes \epsilon_2]\| < 2^{-n} \text{ and } \|[\delta_n, \epsilon^j]\| < 2^{-n},\notag
\end{equation*}
where $\epsilon_1 \hatotimes \epsilon_2$ is the grading operator of $H_1 \hatotimes H_2$ and $\epsilon^j$ for $1 \le j \le p_1 + p_2$ are the multigrading operators of $H_1 \hatotimes H_2$. Clearly, all the Estimates \eqref{eq:Kasparov_estimate_386}--\eqref{eq:Kasparov_estimate_385} also hold 
for the operators $\delta_n$.

Define $X := \sum \delta_n \nu_n \delta_n$. It is a positive operator of finite propagation and fulfills the Points 2--4 that $N_2$ should have. The arguments for this are analogous to the ones given at the end of the proof of \cite[Kasparov's Technical Theorem 3.8.1]{higson_roe}, but we have to use all the uniform approximations that we additionally have (to use them, we have to cut functions $f \in \LLip_{R^\prime}(X_1)$ down to the single ``parts'' $X_{1,i}$ of $X_1$ by using the partition of unity $\{\varphi_{1,i}\}$ that we have chosen at the beginning of this proof, and analogously for $X_2$). Furthermore, the operator $1-X$ fulfills the desired Points 1, 3 and 4 that $N_1$ should fulfill. That both $X$ and $1-X$ derive $\IK(H_1) \hatotimes \IB(H_2)$ is clear via construction. Since $X$ commutes modulo compact operators with the grading and multigrading operators, we can average it over them so that it becomes an even and multigraded operator and $X$ and $1-X$ still have all the above mentioned properties.

Finally, we set $N_1 := (1-X)^{1/2}$ and $N_2 := X^{1/2}$.
\end{proof}

Now we will use this technical lemma to construct the external product and to show that it is well-defined on the level of uniform $K$-homology.

\begin{prop}\label{prop:external_prod_exists}
Let $X_1$ and $X_2$ be locally compact and separable metric spaces that have jointly coarsely and locally bounded geometry.

Then there exists a $(p_1 + p_2)$-multigraded uniform Fredholm module $(H, \rho, T)$ which is aligned with the modules $(H_1, \rho_1, T_1)$ and $(H_2, \rho_2, T_2)$.

Furthermore, any two such aligned Fredholm modules are operator homotopic and this operator homotopy class is uniquely determined by the operator homotopy classes of $(H_1, \rho_1, T_1)$ and $(H_2, \rho_2, T_2)$.
\end{prop}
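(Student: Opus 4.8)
The plan is to take $N_1, N_2$ to be the operators furnished by Lemma \ref{lem:construction_partition_unity} and to set
\[T := N_1 (T_1 \hatotimes 1) + N_2 (1 \hatotimes T_2) \in \IB(H_1 \hatotimes H_2).\]
I would first check that $(H, \rho, T)$ is aligned with $(H_1, \rho_1, T_1)$ and $(H_2, \rho_2, T_2)$ and is a $(p_1 + p_2)$-multigraded uniform Fredholm module, then prove uniqueness up to operator homotopy, and finally the dependence only on the operator homotopy classes of the two factors. The whole argument runs parallel to \cite[Section 9.2]{higson_roe}, with Lemma \ref{lem:construction_partition_unity} playing the role of Kasparov's technical theorem and ``uniformly approximable after localization'' replacing ``compact''.

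For the first part: $T$ is odd, multigraded and of finite propagation because $N_1, N_2$ are even, multigraded and of finite propagation (Lemma \ref{lem:construction_partition_unity}), while $T_1 \hatotimes 1$ and $1 \hatotimes T_2$ are odd, multigraded and of finite propagation. Since $T_1$ and $T_2$ are odd, $T_1 \hatotimes 1$ and $1 \hatotimes T_2$ anticommute in the graded tensor product; hence, on expanding $T^2$, shuffling factors using the commutator estimates of Lemma \ref{lem:construction_partition_unity}(3)--(4), and using $N_1 N_2 = N_2 N_1$, the cross term $N_1 N_2\big((T_1 \hatotimes 1)(1 \hatotimes T_2) + (1 \hatotimes T_2)(T_1 \hatotimes 1)\big)$ drops out and one is left with $T^2 \equiv N_1^2 (T_1^2 \hatotimes 1) + N_2^2 (1 \hatotimes T_2^2)$ modulo operators that become uniformly approximable after multiplication on either side by $\rho(f)$ with $f \in \LLip_R(X_1 \times X_2)$. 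Replacing $T_1^2 \hatotimes 1$ and $1 \hatotimes T_2^2$ by $1$ by means of Lemma \ref{lem:construction_partition_unity}(1)--(2), and using $N_1^2 + N_2^2 = 1$, shows that $T^2 - 1$ is uniformly locally compact. Running the same computation with $(T_i^\ast - T_i)\rho_i(f)$ in place of $(T_i^2 - 1)\rho_i(f)$ shows that $T - T^\ast$ is uniformly locally compact, and running it with $[T_i, \rho_i(f)]$ there, together with Lemma \ref{lem:construction_partition_unity}(4) for the commutators $[N_i, \rho(\largecdot)]$, shows that $T$ is uniformly pseudolocal. The three alignment conditions are then immediate: $T$ has finite propagation; modulo compacts $\rho(f)\big(T(T_1 \hatotimes 1) + (T_1 \hatotimes 1)T\big)\rho(\bar f) \equiv 2\,\rho(f) N_1 \rho(\bar f) \ge 0$ (the $N_2$-contribution again killed by the graded anticommutation, modulo the corrections from Lemma \ref{lem:construction_partition_unity}(3)), and symmetrically for the $T_2$-condition; and $\rho(f) T$ derives $\IK(H_1) \hatotimes \IB(H_2)$ because $N_1, N_2$ do by Lemma \ref{lem:construction_partition_unity}(5) while $T_1 \hatotimes 1$ and $1 \hatotimes T_2$ manifestly do.

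For uniqueness, given a second aligned module $(H, \rho, T')$, I would re-run the proof of Lemma \ref{lem:construction_partition_unity} imposing the additional requirement that the operators $N_1, N_2$ it produces also almost-commute with the finitely many further operators $T'$, $T'(T_1 \hatotimes 1) + (T_1 \hatotimes 1) T'$ and $T'(1 \hatotimes T_2) + (1 \hatotimes T_2) T'$ --- a harmless strengthening of the construction --- and then, by the argument of \cite[Section 9.2]{higson_roe}, both $T$ and $T'$ are connected to $N_1(T_1 \hatotimes 1) + N_2(1 \hatotimes T_2)$ by operator homotopies through aligned uniform Fredholm modules, whence $T \simeq T'$. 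For the last assertion, if $(T_1^s)_{s \in [0,1]}$ is an operator homotopy of the first factor then $X_1 \times [0,1]$ still has jointly coarsely and locally bounded geometry, so carrying out the construction over $X_1 \times [0,1]$ yields an operator homotopy between the corresponding products; the second factor is handled symmetrically.

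I expect the main obstacle to be not any single algebraic identity but the uniform bookkeeping: every ``modulo compact'' step of the Higson--Roe argument has to be promoted to ``modulo uniformly approximable,'' which forces one, at each stage, to cut a function $f \in \LLip_R(X_1 \times X_2)$ down to the local pieces $X_{1,i} \times X_{2,j}$ via the partitions of unity $\{\varphi_{1,i} \otimes \varphi_{2,j}\}$ from the proof of Lemma \ref{lem:construction_partition_unity}, exploiting the jointly bounded geometry to keep the number of relevant pieces and the Lipschitz constants of the pieces under uniform control. Checking that enlarging the data in the uniqueness step does not spoil this uniformity is the other point requiring care; the genuinely analytic difficulty has, however, already been absorbed into Lemma \ref{lem:construction_partition_unity}.
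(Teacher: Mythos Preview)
Your proposal follows the paper's proof closely for existence, alignment, and uniqueness: construct $T = N_1(T_1 \hatotimes 1) + N_2(1 \hatotimes T_2)$ from Lemma~\ref{lem:construction_partition_unity}, verify the uniform Fredholm and alignment conditions using the anticommutation of the odd operators $T_1 \hatotimes 1$ and $1 \hatotimes T_2$ together with properties (1)--(5) of $N_1, N_2$, and for uniqueness re-run the lemma with extra commutator constraints involving $T'$ so that $\rho(f)(TT' + T'T)\rho(\bar f) \ge 0$ modulo compacts, whence a uniform version of \cite[Proposition~8.3.16]{higson_roe} gives the homotopy. (The paper only needs the $N_i$ to almost-commute with the localizations $\rho(f)T'$, not with the three operators you list, but this is a harmless over-specification.) The paper also records explicitly that any two choices of $(N_1, N_2)$ are connected by convexity of the set of admissible $Y = N_2^2$; you should make this step visible.

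The one genuine divergence is your treatment of operator homotopies of the factors. You propose to ``carry out the construction over $X_1 \times [0,1]$''; this is not what the paper does and is problematic as stated. An operator homotopy $(H_1, \rho_1, T_1^s)$ is a norm-continuous family on a \emph{fixed} $(H_1, \rho_1)$, not a uniform Fredholm module over $X_1 \times [0,1]$, and it is unclear how passing to the product space would produce an operator homotopy of modules over $X_1 \times X_2$ (restriction to a time-slice is not a natural operation on Fredholm modules). The paper's argument is both simpler and stays on $X_1$: in the construction of $N_1, N_2$ one additionally enforces $\|[\nu_n \hatotimes 1, T_1(j/n) \hatotimes 1]\| < 2^{-n}$ for $0 \le j \le n$, i.e., one builds a \emph{single} pair $(N_1, N_2)$ that works for a dense set of parameter values, hence by norm continuity of $s \mapsto T_1^s$ for all $s$. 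Then $T(s) := N_1(T_1^s \hatotimes 1) + N_2(1 \hatotimes T_2)$ is manifestly a norm-continuous path of aligned uniform Fredholm modules. This is also how the corresponding step is handled in \cite[Section~9.2]{higson_roe}, so your appeal to that reference does not support the $X_1 \times [0,1]$ route.
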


\begin{proof}
We invoke the above Lemma \ref{lem:construction_partition_unity} to get operators $N_1$ and $N_2$ and then set
\[T := N_1(T_1 \hatotimes 1) + N_2(1 \hatotimes T_2).\]

To deduce that $(H, \rho, T)$ is a uniform Fredholm module, we have to use the following facts (additionally to the ones that $N_1$ and $N_2$ have): that $T_1$ and $T_2$ have finite propagation and are odd (we need that $(T_1 \hatotimes 1)(1 \hatotimes T_2) + (1 \hatotimes T_2)(T_1 \hatotimes 1) = 0$). To deduce that it is a multigraded module, we need that we constructed $N_1$ and $N_2$ as even and multigraded operators on $H$.

It is easily seen that for all $f \in C_0(X_1 \times X_2)$
\[\rho(f) \big( T (T_1 \hatotimes 1) + (T_1 \hatotimes 1) T \big) \rho(\bar f) \text{ and } \rho(f) \big( T (1 \hatotimes T_2) + (1 \hatotimes T_2) T \big) \rho(\bar f)\]
are positive modulo compact operators and that $\rho(f)T$ derives $\IK(H_1) \hatotimes \IB(H_2)$, i.e., we conclude that $T$ is aligned with $T_1$ and $T_2$.

Since all four operators $T_1$, $T_2$, $N_1$ and $N_2$ have finite propagation, $T$ has also finite propagation.

Suppose that $T^\prime$ is another operator aligned with $T_1$ and $T_2$. We construct again operators $N_1$ and $N_2$ using the above Lemma \ref{lem:construction_partition_unity}, but we additionally enforce
\[\|[w_n, \rho(f^{i,N}_k \otimes g_k^{i,N}) T^\prime]\| < 2^{-n}\]
analogously as we did it there to get Equation \eqref{eq:Kasparov_estimate_388}. So $N_1$ and $N_2$ will commute modulo compact operators with $\rho(f)T^\prime$ for all functions $f \in C_0(X_1 \times X_2)$. Again, we set $T := N_1(T_1 \hatotimes 1) + N_2(1 \hatotimes T_2)$. Since $N_1$ and $N_2$ commute modulo compacts with $\rho(f)T^\prime$ for all $f \in C_0(X_1 \times X_2)$ and since $T^\prime$ is aligned with $T_1$ and $T_2$, we conclude
\[\rho(f)(T T^\prime + T^\prime T) \rho(\bar f) \ge 0\]
modulo compact operators for all functions $f \in C_0(X_1 \times X_2)$. Using a uniform version of \cite[Proposition 8.3.16]{higson_roe} we conclude that $T$ and $T^\prime$ are operator homotopic via multigraded, uniform Fredholm modules. We conclude that every aligned module is operator homotopic to one of the form that we constructed above, i.e., to one of the form $N_1(T_1 \hatotimes 1) + N_2(1 \hatotimes T_2)$. But all such operators are homotopic to one another: they are determined by the operator $Y = N_2^2$ used in the proof of the above lemma and the set of all operators with the same properties as $Y$ is convex.

At last, suppose that one of the operators is varied by an operator homotopy, e.g., $T_1$ by $T_1(t)$. Then, in order to construct $N_1$ and $N_2$, we enforce in Equation \eqref{eq:Kasparov_estimate_384_2} instead of $\|[\nu_n \hatotimes 1, T_1 \hatotimes 1]\| < 2^{-n}$ the following one:
\[\|[\nu_n \hatotimes 1, T_1(j/n) \hatotimes 1]\| < 2^{-n}\]
for $0 \le j \le n$. Now we may define
\[T(t) := N_1(T_1(t) \hatotimes 1) + N_2(1 \hatotimes T_2),\]
i.e., we got operators $N_1$ and $N_2$ which are independent of $t$ but still have all the needed properties. This gives us the desired operator homotopy.
\end{proof}

\begin{defn}[External product]
The \emph{external product} of the multigraded uniform Fredholm modules $(H_1, \rho_1, T_1)$ and $(H_2, \rho_2, T_2)$ is a multigraded uniform Fredholm module $(H, \rho, T)$ which is aligned with $T_1$ and $T_2$. We will use the notation $T := T_1 \times T_2$.

By the above Proposition \ref{prop:external_prod_exists} we know that if the locally compact and separable metric spaces $X_1$ and $X_2$ both have jointly coarsely and locally bounded geometry, then the external product always exists, that it is well-defined up to operator homotopy and that it descends to a well-defined product on the level of uniform $K$-homology:
\[K_{p_1}^u(X_1) \times K_{p_2}^u(X_2) \to K_{p_1+p_2}^u(X_1 \times X_2)\]
for $p_1, p_2 \ge 0$. Furthermore, this product is bilinear.\footnote{To see this, suppose that, e.g., $T_1 = T_1^\prime \oplus T_1^{\prime \prime}$. Then it suffices to show that $T_1^\prime \times T_2 \oplus T_1^{\prime \prime} \times T_2$ is aligned with $T_1$ and $T_2$, which is not hard to do.}
\qed
\end{defn}

For the remaining products (i.e., the product of an ungraded and a multigraded module, resp., the product of two ungraded modules) we can appeal to the formal $2$-periodicity.

Associativity of the external product and the other important properties of it may be shown as in the non-uniform case. Let us summarize them in the following theorem:

\begin{thm}[External product for uniform $K$-homology]\label{thm:external_prod_homology}
Let $X_1$ and $X_2$ be locally compact and separable metric spaces of jointly bounded geometry\footnote{see Definition \ref{defn:jointly_bounded_geometry}}.

Then there exists an associative product
\[\times \colon K_{p_1}^u(X_1) \otimes K_{p_2}^u(X_2) \to K^u_{p_1 + p_2}(X_1 \times X_2)\]
for $p_1 , p_2 \ge -1$ with the following properties:
\begin{itemize}
\item for the flip map $\tau\colon X_1 \times X_2 \to X_2 \times X_1$ and all elements $[T_1] \in K_{p_1}^u(X_1)$ and $[T_2] \in K_{p_2}^u(X_2)$ we have
\[\tau_{\ast}[T_1 \times T_2] = (-1)^{p_1 p_2} [T_2 \times T_1],\]
\item we have for $g\colon Y \to Z$ a uniformly cobounded, proper Lipschitz map and elements $[T] \in K_{p_1}^u(X)$ and $[S] \in K_{p_2}^u(Y)$
\[(\id_X \operatorname{\times} g)_\ast [T \times S] = [T] \times g_\ast[S] \in K^u_{p_1 + p_2}(X \times Z),\]
and
\item denoting the generator of $K_0^u(\pt) \cong \IZ$ by $[1]$, we have
\[[T] \times [1] = [T] = [1] \times [T] \in K_\ast^u(X)\]
for all $[T] \in K_\ast^u(X)$.
\end{itemize}
\end{thm}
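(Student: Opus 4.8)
The plan is to follow the construction of the external product for usual $K$-homology from \cite[Section 9.2]{higson_roe} step by step, exploiting the fact that the two genuinely new ingredients — the uniform Kasparov technical Lemma \ref{lem:construction_partition_unity} and the existence-and-uniqueness statement of Proposition \ref{prop:external_prod_exists} — have already absorbed all of the uniform bookkeeping. For $p_1, p_2 \ge 0$ the product is the one already defined just after Proposition \ref{prop:external_prod_exists}, and the cases with $p_1 = -1$ or $p_2 = -1$ are handled, exactly as in the non-uniform theory, through the formal $2$-periodicity $K^u_p(X) \cong K^u_{p+2}(X)$; bilinearity has already been recorded. It remains to prove associativity, the flip formula, naturality in the second variable, and the unit property, and in each case the recipe is the same: recognise the relevant module as being aligned — in a suitably extended sense — with the data at hand, and then invoke the uniqueness-up-to-operator-homotopy clause of Proposition \ref{prop:external_prod_exists} (or a routine variant of it) to identify $K$-homology classes.

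For associativity, first note that a product of spaces of jointly coarsely and locally bounded geometry is again such a space: the Borel decompositions, the quasi-lattices and the $\varepsilon$-net cardinality bounds all combine multiplicatively, so $K^u_*(X_1 \times X_2 \times X_3)$ and both iterated products make sense. One introduces the notion of a module over $X_1 \times X_2 \times X_3$ aligned with a triple $(T_1, T_2, T_3)$ — finite propagation, the evident positivity-modulo-compacts conditions against each $T_j \hatotimes 1$, and derivation of both ideals $\IK(H_1) \hatotimes \IB(H_2 \hatotimes H_3)$ and $\IK(H_1 \hatotimes H_2) \hatotimes \IB(H_3)$. Applying Lemma \ref{lem:construction_partition_unity} twice, while enforcing the additional commutation estimates against the third operator (just as the extra estimates against $T^\prime$ were enforced in the proof of Proposition \ref{prop:external_prod_exists}), shows that both $(T_1 \times T_2) \times T_3$ and $T_1 \times (T_2 \times T_3)$ are aligned with $(T_1, T_2, T_3)$; the argument of Proposition \ref{prop:external_prod_exists} then shows that any two modules aligned with $(T_1, T_2, T_3)$ are operator homotopic through uniform Fredholm modules. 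Hence the two iterated products coincide in $K^u_{p_1 + p_2 + p_3}(X_1 \times X_2 \times X_3)$.

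The flip formula and naturality are lighter. The flip $\tau\colon X_1 \times X_2 \to X_2 \times X_1$ is an isometry, hence uniformly cobounded, proper and Lipschitz, so $\tau_\ast$ is defined on uniform $K$-homology; the graded-flip unitary $H_1 \hatotimes H_2 \cong H_2 \hatotimes H_1$ intertwines the two tensor representations and carries $T_1 \times T_2$ to an operator which, after permuting the first block of multigrading operators past the second (the source of the sign $(-1)^{p_1 p_2}$), is aligned with $(T_2, T_1)$; uniqueness then gives $\tau_\ast[T_1 \times T_2] = (-1)^{p_1 p_2}[T_2 \times T_1]$. For naturality, $\id_X \operatorname{\times} g$ is again uniformly cobounded, proper and Lipschitz, the pullback $(\id_X \operatorname{\times} g)^\ast$ is a unital $^\ast$-homomorphism on coefficient algebras, and propagation grows only by the Lipschitz constant of $g$; since every alignment condition is phrased purely through $C_0$-functions, the pushforward of a module aligned with $(T, S)$ is aligned with $(T, g_\ast S)$, and uniqueness finishes the argument.

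The unit property is verified by realising $[1] \in K_0^u(\pt)$ as the class of a concrete finite-propagation Fredholm module over a point — which is automatically uniform by Proposition \ref{prop:compact_space_every_module_uniform}, since a point is totally bounded — identifying $X \times \pt$ with $X$, and then deforming the aligned operator $N_1(T \hatotimes 1) + N_2(1 \hatotimes T_{\pt})$, exactly as in \cite[Section 9]{higson_roe}, to the module directly representing $[T]$; no uniformity is lost along the way because over a point the obstruction operators are genuinely compact, so all collections occurring are finite and hence trivially uniformly approximable. I expect the main obstacle to be the associativity step — not conceptually, but in the verification that the iterated use of Lemma \ref{lem:construction_partition_unity} still produces operators satisfying all the uniform-approximability estimates relative to the third factor, i.e.\ that the jointly-bounded-geometry data of the threefold product, and in particular the combined $\varepsilon$-net bounds controlling the number of test functions $f_k^{i,N}$, interact correctly with the partitions of unity on all three spaces.
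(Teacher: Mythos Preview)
Your proposal is correct and takes essentially the same approach as the paper, which does not give a detailed proof but simply states that ``associativity of the external product and the other important properties of it may be shown as in the non-uniform case'' and then summarizes the result. You have in fact fleshed out considerably more detail than the paper provides, correctly identifying that all the uniform bookkeeping has been absorbed into Lemma~\ref{lem:construction_partition_unity} and Proposition~\ref{prop:external_prod_exists}, so that the remaining arguments from \cite[Section~9.2]{higson_roe} carry over; your handling of the $p_i = -1$ cases via formal $2$-periodicity also matches the paper's remark immediately preceding the theorem.
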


\subsection{Homotopy invariance}\label{sec:homotopy_invariance}

Let $X$ and $Y$ be locally compact, separable metric spaces with jointly bounded geometry and let $g_0, g_1 \colon X \to Y$ be uniformly cobounded, proper and Lipschitz maps which are homotopic in the following sense: there is a uniformly cobounded, proper and Lipschitz map $G\colon X \times [0,1] \to Y$ with $G(x,0) = g_0(x)$ and $G(x,1) = g_1(x)$ for all $x \in X$.

\begin{thm}\label{thm:homotopy_equivalence_k_hom}
If $g_0, g_1\colon X \to Y$ are homotopic in the above sense, then they induce the same maps $(g_0)_\ast = (g_1)_\ast \colon K_\ast^u(X) \to K_\ast^u(Y)$ on uniform $K$-homology.
\end{thm}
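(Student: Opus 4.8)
The plan is to mimic the standard proof of homotopy invariance of ordinary $K$-homology (cf.\ \cite[Section 9.4]{higson_roe}), using the external product of Theorem \ref{thm:external_prod_homology} as the only real tool; all the genuinely hard work has already been done in Lemma \ref{lem:construction_partition_unity} through Theorem \ref{thm:external_prod_homology}. Write $\iota_0, \iota_1 \colon X \to X \times [0,1]$ for the inclusions $x \mapsto (x,0)$ and $x \mapsto (x,1)$. These are isometric embeddings, hence uniformly cobounded, proper and Lipschitz, and $X \times [0,1]$ is again locally compact and separable, so the $\iota_j$ induce maps on uniform $K$-homology. Since $G \circ \iota_j = g_j$, functoriality gives $(g_j)_\ast = G_\ast \circ (\iota_j)_\ast$, so it suffices to prove $(\iota_0)_\ast = (\iota_1)_\ast \colon K_\ast^u(X) \to K_\ast^u(X \times [0,1])$.

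Next I would record that $[0,1]$ has jointly coarsely and locally bounded geometry: being bounded, any finite subset is a quasi-lattice, and being compact it satisfies the remaining conditions of Definition \ref{defn:jointly_bounded_geometry} trivially (take the single Borel piece $X_1 = [0,1]$, which has non-empty interior and is totally bounded, and the compatibility bound is $\le 1$). Hence the external product $K_\ast^u(X) \otimes K_\ast^u([0,1]) \to K_\ast^u(X \times [0,1])$ of Theorem \ref{thm:external_prod_homology} is available. Let $c_j \colon \pt \to [0,1]$ be the map onto the point $j$; it is uniformly cobounded, proper and Lipschitz. Under the canonical identification $X \cong X \times \pt$ we have $\iota_j = \id_X \times c_j$, so by the third and then the second listed property of the external product in Theorem \ref{thm:external_prod_homology}, for every $[T] \in K_\ast^u(X)$,
\[(\iota_j)_\ast [T] \;=\; (\id_X \times c_j)_\ast\bigl([T] \times [1]\bigr) \;=\; [T] \times (c_j)_\ast[1] \;\in\; K_\ast^u(X \times [0,1]),\]
where $[1]$ denotes the generator of $K_0^u(\pt) \cong \IZ$. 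Thus everything reduces to the identity $(c_0)_\ast[1] = (c_1)_\ast[1]$ in $K_0^u([0,1])$.

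This last identity is where Proposition \ref{prop:compact_space_every_module_uniform} enters: since $[0,1]$ is totally bounded, every Fredholm module over it is uniform, so $K_0^u([0,1])$ coincides with ordinary $K$-homology $K_0([0,1])$ and $(c_j)_\ast$ is the ordinary pushforward. Now $[0,1]$ is contractible, so the projection $\pi \colon [0,1] \to \pt$ induces an isomorphism $\pi_\ast \colon K_0([0,1]) \xrightarrow{\cong} K_0(\pt)$; since $\pi \circ c_j = \id_{\pt}$, it follows that $(c_j)_\ast = (\pi_\ast)^{-1}$ for both $j \in \{0,1\}$, in particular $(c_0)_\ast[1] = (c_1)_\ast[1]$. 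Plugging this into the displayed formula gives $(\iota_0)_\ast = (\iota_1)_\ast$, hence $(g_0)_\ast = (g_1)_\ast$, as desired.

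I expect the only subtle point to be bookkeeping rather than substance: one must carefully check that each auxiliary map — the inclusions $\iota_j$, the point maps $c_j$, and the identification $X \cong X \times \pt$ — really satisfies the "uniformly cobounded, proper, Lipschitz" hypotheses needed to invoke functoriality and the external-product relations, and that $[0,1]$ (hence $X\times[0,1]$) genuinely fits the jointly-bounded-geometry framework so that Theorem \ref{thm:external_prod_homology} applies. Once these hypotheses are verified, the argument is a short formal manipulation.
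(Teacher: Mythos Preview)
Your proof is correct and follows exactly the approach the paper indicates: the paper simply states that the proof is ``completely analogous to the non-uniform case and uses the external product'' (i.e., the argument in \cite[Section 9.4]{higson_roe}), and you have carefully spelled out precisely that argument, checking the auxiliary hypotheses (jointly bounded geometry of $[0,1]$, uniformly cobounded/proper/Lipschitz for $\iota_j$ and $c_j$, and the identification $K_\ast^u([0,1]) = K_\ast([0,1])$ via Proposition~\ref{prop:compact_space_every_module_uniform}). The paper additionally remarks that the result is a special case of the subsequent weak-homotopy invariance theorem, but your direct external-product argument is the intended primary proof.
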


The proof of the above theorem is completely analogous to the non-uniform case and uses the external product. Furthermore, the above theorem is a special case of the following invariance of uniform $K$-homology under weak homotopies: given a uniform Fredholm module $(H, \rho, T)$ over $X$, the push-forward of it under $g_i$ is defined as $(H, \rho \circ g_i^\ast, T)$ and it is easily seen that these modules are weakly homotopic via the map $G$.

\begin{defn}[Weak homotopies]\label{defn:weak_homotopy}
Let a time-parametrized family of uniform Fredholm modules $(H, \rho_t, T_t)$ for $t \in [0,1]$ satisfy the following properties:
\begin{itemize}
\item the family $\rho_t$ is pointwise strong-$^\ast$ operator continuous, i.e., for all $f \in C_0(X)$ we get a path $\rho_t(f)$ in $\IB(H)$ that is continuous in the strong-$^\ast$ operator topology\footnote{Recall that if $H$ is a Hilbert space, then the \emph{strong-$^\ast$ operator topology} on $\IB(H)$ is generated by the family of seminorms $p_v(T) := \|Tv\| + \|T^\ast v\|$ for all $v \in H$, where $T \in \IB(H)$.},
\item the family $T_t$ is continuous in the strong-$^\ast$ operator topology on $\IB(H)$, i.e., for all $v \in H$ we get norm continuous paths $T_t(v)$ and $T_t^\ast(v)$ in $H$, and
\item for all $f \in C_0(X)$ the families of compact operators $[T_t, \rho_t(f)]$, $(T_t^2 - 1)\rho_t(f)$ and $(T_t-T_t^\ast)\rho_t(f)$ are norm continuous.
\end{itemize}
Then we call it a \emph{weak homotopy} between $(H, \rho_0, T_0)$ and $(H, \rho_1, T_1)$.
\qed
\end{defn}

\begin{rem}
If $\rho_t$ is pointwise norm continuous and $T_t$ is norm continuous, then the modules are weakly homotopic. So weak homotopy generalizes operator homotopy.
\qed
\end{rem}

\begin{thm}\label{thm:weak_homotopy_equivalence_K_hom}
Let $(H, \rho_0, T_0)$ and $(H, \rho_1, T_1)$ be weakly homotopic uniform Fredholm modules over a locally compact and separable metric space $X$ of jointly bounded geometry.

Then they define the same uniform $K$-homology class.
\end{thm}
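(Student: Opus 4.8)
The plan is to integrate the weak homotopy into a single uniform Fredholm module over the product $X\times[0,1]$ and then to exploit functoriality of uniform $K$-homology. The maps $j_0,j_1\colon X\hookrightarrow X\times[0,1]$, $j_i\colon x\mapsto(x,i)$, and the projection $p\colon X\times[0,1]\to X$ are all uniformly cobounded, proper and $1$-Lipschitz, hence induce maps on uniform $K$-homology, and since $p\circ j_i=\mathrm{id}_X$ the map $(j_i)_\ast$ is split injective with left inverse $p_\ast$. So it suffices to construct a $p$-multigraded uniform Fredholm module $\mathbf M$ over $X\times[0,1]$ whose class satisfies $(j_0)_\ast[(H,\rho_0,T_0)]=[\mathbf M]=(j_1)_\ast[(H,\rho_1,T_1)]$; applying $p_\ast$ then yields $[(H,\rho_0,T_0)]=[(H,\rho_1,T_1)]$ in $K^u_\ast(X)$.

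To build $\mathbf M$ I would take $\mathcal H:=H\hatotimes L^2[0,1]$ with the fibrewise representation $\Phi$ of $C_0(X\times[0,1])$ determined by $(\Phi(h)\xi)(t)=\rho_t(h(\cdot,t))\xi(t)$; that this is a representation by bounded operators follows from the pointwise strong-$^\ast$ continuity of $t\mapsto\rho_t$ together with the uniform bound $\sup_t\|T_t\|<\infty$ noted in the Remark preceding the theorem. The subtle point is the choice of the odd operator $\mathcal T$: the naive fibrewise operator $\xi(t)\mapsto T_t\xi(t)$ commutes with all functions on $[0,1]$ and therefore \emph{fails} to be pseudolocal in the interval direction (a fixed nonzero finite-rank operator tensored with the identity of $L^2[0,1]$ is not compact). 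Instead one must combine the fibrewise family with a Fredholm operator $F_{[0,1]}$ for the compact contractible interval, \emph{aligning} the two in the sense of the external-product construction and gluing them by means of the uniform Kasparov technical lemma (Lemma \ref{lem:construction_partition_unity}), just as in Proposition \ref{prop:external_prod_exists}. Here the third, delicate clause of Definition \ref{defn:weak_homotopy} — forcing the finite-rank approximations of $[T_t,\rho_t(f)]$, $(T_t^2-1)\rho_t(f)$ and $(T_t-T_t^\ast)\rho_t(f)$ to stay uniformly close on strong-$^\ast$ neighbourhoods, so that $\|k_t-k_s\|_{op}<2\varepsilon$ nearby — combined with compactness of $[0,1]$, is exactly what guarantees that, after cutting down by the $X$-variable, these operators form uniformly approximable families over $t\in[0,1]$, which is the input the technical lemma consumes. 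Checking that the resulting $(\mathcal H,\Phi,\mathcal T)$ is genuinely a \emph{uniform} Fredholm module over $X\times[0,1]$ — i.e.\ that $\mathcal T$ is uniformly pseudolocal and $\mathcal T^2-1$, $\mathcal T-\mathcal T^\ast$ uniformly locally compact with respect to $\LLip_R(X\times[0,1])$ — is the main obstacle, and it is where all the continuity hypotheses built into a weak homotopy are used.

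Finally I would identify the endpoint data of $\mathbf M$ with the given modules. Pulling $\mathbf M$ back along $j_i$ concentrates the $[0,1]$-variable at $i$, and since $[0,1]$ is compact, contractible and connected this concentration can be realized, after adjoining a degenerate module, by an operator homotopy of uniform Fredholm modules over $X\times[0,1]$ (using the compact-perturbation Lemma \ref{lem:compact_perturbations}, the fact that on the compact space $[0,1]$ uniform $K$-homology agrees with ordinary $K$-homology so that $K_0^u([0,1])\cong\IZ$ with the two point-evaluations at $0$ and $1$ representing the same generator by Proposition \ref{prop:compact_space_every_module_uniform}, and the external product of Theorem \ref{thm:external_prod_homology}). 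This produces $(j_0)_\ast[(H,\rho_0,T_0)]=[\mathbf M]=(j_1)_\ast[(H,\rho_1,T_1)]$ and, by the first paragraph, the theorem. As noted after the statement, Theorem \ref{thm:homotopy_equivalence_k_hom} then follows immediately by applying this to the weak homotopy obtained from a homotopy $G$ of the maps, namely $T_t:=T$ constant and $\rho_t$ the pull-back of $\rho$ along $G(\cdot,t)$.
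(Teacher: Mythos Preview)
Your construction of $\mathbf M$ via the aligned-product machinery is essentially the device the paper uses, and you correctly isolate that the third clause of Definition~\ref{defn:weak_homotopy} is precisely what lets one build a uniform Kasparov partition $N_1,N_2$ for the \emph{family} $(T_t,\rho_t)$ rather than for a fixed module. The paper, however, never passes to $X\times[0,1]$: it keeps $\rho_t$ as a representation of $C_0(X)$ on $\mathcal H=H\hatotimes(L^2\oplus L^2)$ and works entirely in $K^u_\ast(X)$. Applying your $p_\ast$ to $[\mathbf M]$ lands exactly on the paper's module over $X$, so the extra trip through $X\times[0,1]$ buys nothing unless the endpoint identification there is genuinely easier---and this is where your argument has a gap.

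The step $(j_i)_\ast[(H,\rho_i,T_i)]=[\mathbf M]$ in $K^u_\ast(X\times[0,1])$ is the whole content of the theorem, and the tools you invoke do not establish it. The identity $(j_0)_\ast[M_0]=[M_0]\times[\text{gen of }K^u_0([0,1])]$ is fine, but $\mathbf M$ is \emph{not} an external product of any fixed module over $X$ with one over $[0,1]$: its representation is $\Phi$, not $\rho_0\hatotimes\mathrm{mult}$, so the uniqueness clause of Proposition~\ref{prop:external_prod_exists} does not apply. Producing an operator homotopy from $\mathbf M$ to such a product amounts to deforming the weak homotopy $(\rho_t,T_t)$ to the constant one $(\rho_0,T_0)$ inside uniform Fredholm modules---exactly what is to be proved. (Also, ``pulling $\mathbf M$ back along $j_i$'' is a direction slip; $j_i$ gives only a pushforward on $K^u_\ast$.) The paper fills this gap by an explicit Kasparov-style localisation: reparametrise so the homotopy is constant on $[0,2\pi/3]$ and $[4\pi/3,2\pi]$; write down a concrete operator $T(f)=\left(\begin{smallmatrix}0&T_1(f)^\ast\\T_1(f)&0\end{smallmatrix}\right)$ on $L^2[0,2\pi]\oplus L^2[0,2\pi]$ with $T_1(f)=f-\sqrt{1-f^2}\cdot d$; choose $f$ so that $T_1(f)$ commutes with the projection $P$ onto $L^2[0,2\pi/3]$, has index one there, and equals $-1$ on the complement; then replace $N_1,N_2$ by cut-off versions $\widetilde{N_1},\widetilde{N_2}$ supported where the homotopy is already constant. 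The module then literally splits as $\big((H,\rho_0,T_0)\times[1]\big)\oplus(\text{degenerate})$, and a symmetric choice of $f$ does the same for $(\rho_1,T_1)$; since any two $T(f)$ are norm-homotopic through operators of the same form, one obtains the required operator homotopy directly. That hands-on decomposition is the missing idea in your sketch; the abstract fact $K^u_0([0,1])\cong\IZ$ does not supply it.
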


\begin{proof}
Let our weakly homotopic family $(H, \rho_t, T_t)$ be parametrized by $t \in [0, 2\pi]$ so that our notation here will coincide with the one in the proof of \cite[Theorem 1 in §6]{kasparov_KK} that we mimic. Furthermore, we assume that $\rho_t$ and $T_t$ are constant in the intervals $[0, 2\pi/3]$ and $[4\pi / 3, 2\pi]$

We consider the graded Hilbert space $\IH := H \hatotimes (L^2[0,2\pi] \oplus L^2[0, 2\pi])$ (where the space $L^2[0,2\pi] \oplus L^2[0, 2\pi]$ is graded by interchanging the summands).

The family $T_t$ maps continuous paths $v_t$ in $H$ again to continuous paths $T_t(v_t)$: since the family $T_t$ is continuous in the strong-$^\ast$ operator topology and since it is defined on the compact interval $[0,1]$, we conclude with the uniform boundedness principle $\sup_t \|T_t\|_{op} < \infty$. Now if $t_n \to t$ is a convergent sequence, we get
\begin{align*}
\|T_{t_n} (v_{t_n}) - T_t (v_t)\| & \le \|T_{t_n} (v_{t_n}) - T_{t_n} (v_t)\| + \|T_{t_n} (v_t) - T_t (v_t)\|\\
& \le \underbrace{\|T_{t_n}\|_{op}}_{< \infty} \cdot \underbrace{\|v_{t_n} - v_t\|}_{\to 0} + \underbrace{\|(T_{t_n} - T_t)(v_t)\|}_{\to 0},
\end{align*}
where the second limit to $0$ holds due to the continuity of $T_t$ in the strong-$^\ast$ operator topology. So the family $T_t$ maps the dense subspace $H \otimes C[0,2\pi]$ of $H \otimes L^2[0,2\pi]$ into itself, and since it is norm bounded from above by $\sup_t \|T_t\|_{op} < \infty$, it defines a bounded operator on $H \otimes L^2[0,2\pi]$. We define an odd operator $\begin{pmatrix} 0 & T_t^\ast \\ T_t & 0\end{pmatrix}$ on $\IH$, which we also denote by $T_t$ (there should arise no confusion by using the same notation here).

Since $\rho_t(f)$ is strong-$^\ast$ continuous in $t$, we can analogously show that it maps continuous paths $v_t$ in $H$ again to continuous paths $\rho_t(f) (v_t)$, and it is norm bounded from above by $\|f\|_\infty$. because we have $\|\rho_t(f)\|_{op} \le \|f\|_\infty$ for all $t \in [0,1]$ since $\rho_t$ are representations of $C^\ast$-algebras. So $\rho_t(f)$ defines a bounded operator on $H \otimes L^2[0,2\pi]$ and we can get a representation $\rho_t \oplus \rho_t$ of $C_0(X)$ on $\IH$ by even operators, that we denote by the symbol $\rho_t$ (again, no confusion should arise by using the same notation).

We consider now the uniform Fredholm module
\[(\IH, \rho_t, N_1(T_t) + N_2(1 \hatotimes T(f)),\]
where $T(f)$ is defined as in the proof of \cite[Theorem 1 in §6]{kasparov_KK} (unfortunately, the overloading of the symbol ``$T$'' is unavoidable here). For the convenience of the reader, we will recall the definition of the operator $T(f)$ in a moment. That we may find a suitable partition of unity $N_1, N_2$ is due to the last bullet point in the definition of weak homotopies, and the construction of $N_1, N_2$ proceeds as in the end of the proof of our Proposition \ref{prop:external_prod_exists}.

To define $T(f)$, we first define an operator $d\colon L^2[0, 2\pi] \to L^2[0, 2\pi]$ using the basis $1, \ldots, \cos nx, \ldots, \sin nx, \ldots$ by the formulas
\[d(1) := 0 \text{, } d(\sin nx) := \cos nx \text{ and } d(\cos nx) := - \sin nx.\]
This operator $d$ is anti-selfadjoint, $d^2 + 1 \in \IK(L^2[0, 2\pi])$, and $d$ commutes modulo compact operators with multiplication by functions from $C[0, 2\pi]$. Let $f \in C[0, 2\pi]$ be a continuous, real-valued function with $|f(x)| \le 1$ for all $x \in [0, 2\pi]$, $f(0) = 1$ and $f(2\pi) = -1$. Then we set $T_1(f) := f - \sqrt{1 - f^2}\cdot d \in \IB(L^2[0, 2\pi])$. This operator $T_1(f)$ is Fredholm with Fredholm index $1$, both $1 - T_1(f) \cdot T_1(f)^\ast$ and $1 - T_1(f)^\ast \cdot T_1(f)$ are compact, and $T_1(f)$ commutes modulo compacts with multiplication by functions from $C[0, 2\pi]$. Furthermore, any two operators of the form $T_1(f)$ (for different $f$) are connected by a norm continuous homotopy consisting of operators having the same form. Finally, we define $T(f) := \begin{pmatrix}0 & T_1(f)^\ast \\ T_1(f) & 0\end{pmatrix} \in \IB(L^2[0,2\pi] \oplus L^2[0, 2\pi])$.

We assume the our homotopies $\rho_t$ and $T_t$ are constant in the intervals $[0, 2\pi/3]$ and $[4\pi / 3, 2\pi]$. Furthermore, we set
\[f(t) := \begin{cases} \cos 3t, & 0 \le t \le \pi / 3,\\ -1, & \pi/3 \le t \le 2 \pi.\end{cases}\]
Then $T_1(f)$ commutes with the projection $P$ onto $L^2[0, 2\pi / 3]$, $P \cdot T_1(f)$ is an operator of index $1$ on $L^2[0, 2\pi / 3]$, and $(1-P) T_1(f) \equiv -1$ on $L^2[2\pi/3, 2\pi]$. We choose $\alpha(t) \in C[0, 2\pi]$ with $0 \le \alpha(t) \le 1$, $\alpha(t) = 0$ for $t \le \pi / 3$, and $\alpha(t) = 1$ for $t \ge 2\pi / 3$. Using a norm continuous homotopy, we replace $N_1$ and $N_2$ by
\[\widetilde{N_1} := \sqrt{1 \hatotimes (1 - \alpha)} \cdot N_1 \cdot \sqrt{1 \hatotimes (1 - \alpha)}\]
and
\[\widetilde{N_2} := 1 \hatotimes \alpha + \sqrt{1 \hatotimes (1 - \alpha)} \cdot N_2 \cdot \sqrt{1 \hatotimes (1 - \alpha)}.\]
The operator $\widetilde{N_1}(T_t) + \widetilde{N_2}(1 \hatotimes T(f))$ commutes with $1 \hatotimes (P \oplus P)$ and we obtain for the decomposition $L^2[0,2\pi] \oplus L^2[0, 2\pi] = \image(P \oplus P) \oplus \image(1 - P \oplus P)$
\[\big( \IH, \rho_t, \widetilde{N_1}(T_t) + \widetilde{N_2}(1 \hatotimes T(f) \big) = \big( (H, \rho_0, T_0) \times [1] \big) \oplus \big(\text{degenerate}\big),\]
where $[1] \in K_0^u(\pt)$ is the multiplicative identity (see the third point of Theorem \ref{thm:external_prod_homology}) and recall that we assumed that $\rho_t$ and $T_t$ are constant in the intervals $[0, 2\pi/3]$ and $[4\pi / 3, 2\pi]$.

Setting
\[f(t) := \begin{cases} 1, & 0 \le t \le 5\pi / 3,\\ -\cos 3t, & 5\pi/3 \le t \le 2 \pi,\end{cases}\]
we get analogously
\[\big( \IH, \rho_t, \overline{N_1}(T_t) + \overline{N_2}(1 \hatotimes T(f) \big) = \big(\text{degenerate}\big) \oplus \big( (H, \rho_1, T_1) \times [1] \big),\]
for suitably defined operators $\overline{N_1}$ and $\overline{N_2}$ (their definition is similar to the one of $\widetilde{N_1}$ and $\widetilde{N_2}$). Putting all the homotopies of this proof together, we get that the modules $\big( (H, \rho_0, T_0) \times [1] \big) \oplus \big(\text{degenerate}\big)$ and $\big( (H, \rho_1, T_1) \times [1] \big) \oplus \big(\text{degenerate}\big)$ are operator homotopic, from which the claim follows.
\end{proof}

\subsection{Rough Baum--Connes conjecture}\label{sec:rough_BC}

\Spakula constructed in \cite[Section 9]{spakula_uniform_k_homology} the \emph{rough\footnote{We could have also called it the \emph{uniform coarse} assembly map, but the uniform coarse category is also called the rough category and therefore we stick to this shorter name.} assembly map}
\[\mu_u \colon K_\ast^u(X) \to K_\ast(C_u^\ast(Y)),\]
where $Y \subset X$ is a uniformly discrete quasi-lattice, $X$ a proper metric space, and $C_u^\ast(Y)$ the uniform Roe algebra of $Y$.\footnote{Recall that one possible model for the uniform Roe algebra $C_u^\ast(Y)$ is the norm closure of the $^\ast$-algebra of all finite propagation operators in $\IB(\ell^2(Y))$ with uniformly bounded coefficients. Another version is the norm closure of the $^\ast$-algebra of all finite propagation, uniformly locally compact operators in $\IB(\ell^2(Y)\otimes H)$ with uniformly bounded coefficients. \v{S}pakula--Willett \cite[Proposition 4.7]{spakula_willett_2} proved that these two versions are strongly Morita equivalent.} In this section we will discuss implications on the rough assembly map following from the properties of uniform $K$-homology that we have proved in the last sections.

Using homotopy invariance of uniform $K$-homology we will strengthen \v{S}pakula's results from \cite[Section 10]{spakula_uniform_k_homology}.

\begin{defn}[Rips complexes]
Let $Y$ be a discrete metric space and let $d \ge 0$. The \emph{Rips complex $P_d(Y)$ of $Y$} is a simplicial complex, where
\begin{itemize}
\item the vertex set of $P_d(Y)$ is $Y$, and
\item vertices $y_0, \ldots, y_q$ span a $q$-simplex if and only if we have $d(y_i, y_j) \le d$ for all $0 \le i, j \le q$.
\end{itemize}
Note that if $Y$ has coarsely bounded geometry, then the Rips complex $P_d(Y)$ is uniformly locally finite and finite dimensional and therefore also, especially, a simplicial complex of bounded geometry (i.e., the number of simplices in the link of each vertex is uniformly bounded). So if we equip $P_d(Y)$ with the metric derived from barycentric coordinates, $Y \subset P_d(Y)$ becomes a quasi-lattice (cf. Examples \ref{ex:coarsely_bounded_geometry}).
\qed
\end{defn}

Now we may state the \emph{rough Baum--Connes conjecture}:

\begin{conj}
Let $Y$ be a proper and uniformly discrete metric space with coarsely bounded geometry.

Then
\[\mu_u \colon \lim_{d \to \infty} K_\ast^u(P_d(Y)) \to K_\ast(C_u^\ast(Y))\]
is an isomorphism.
\end{conj}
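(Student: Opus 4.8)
The plan is to run, in the uniform (rough) setting, the comparison machinery that establishes the coarse Baum--Connes conjecture in its known cases (Higson--Roe, Yu), using the structural results developed above --- principally the external product (Theorem \ref{thm:external_prod_homology}) and the (weak) homotopy invariance of uniform $K$-homology (Theorems \ref{thm:homotopy_equivalence_k_hom} and \ref{thm:weak_homotopy_equivalence_K_hom}). First I would reinterpret the left-hand side. Since each Rips complex $P_d(Y)$ has bounded geometry and contains $Y$ as a quasi-lattice, the inverse system $d \mapsto P_d(Y)$ is a cofinal model for ``coarsening'' $Y$; I would identify $\lim_{d\to\infty} K^u_\ast(P_d(Y))$ with a uniform coarse $K$-homology $KX^u_\ast(Y)$ depending only on the coarse type of $Y$, and observe that $\mu_u$ factors through this identification. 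Given the functoriality and homotopy results already in hand this step is essentially routine.

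Next I would verify that both functors $Y \mapsto KX^u_\ast(Y)$ and $Y \mapsto K_\ast(C^\ast_u(Y))$, on uniformly discrete spaces of coarsely bounded geometry and coarse maps, carry the three structures needed for a comparison: (i) coarse homotopy invariance --- for the domain this is Theorem \ref{thm:homotopy_equivalence_k_hom}/\ref{thm:weak_homotopy_equivalence_K_hom}, for the target the usual coarse invariance of the $K$-theory of the uniform Roe algebra; (ii) a coarse Mayer--Vietoris exact sequence for $\omega$-excisive decompositions $Y = A \cup B$, obtained on the $K$-homology side from excision of uniform Fredholm modules together with a relative group $K^u_\ast(A, A \cap B)$, and on the Roe side from the ideal sequence $0 \to C^\ast_u(A\cap B) \to C^\ast_u(A)\oplus C^\ast_u(B) \to C^\ast_u(A\cup B) \to 0$; and (iii) compatibility with countable ``uniformly separated'' disjoint unions, i.e.\ $KX^u_\ast(\bigsqcup_n Y_n) \cong \prod_n KX^u_\ast(Y_n)$ when the bounded-geometry constants of the $Y_n$ are uniform, and likewise for $K_\ast(C^\ast_u(-))$ --- exactly the phenomenon already visible in Lemma \ref{lem:uniform_k_hom_discrete_space}. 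I would then check that $\mu_u$ is a natural transformation intertwining all three, the only non-formal point being that it commutes with the Mayer--Vietoris boundary maps, which is a diagram chase parallel to the non-uniform one.

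With that in place the argument reduces to a local input plus an induction. The base case is the one-point space (every bounded space being coarsely equivalent to it): there $C^\ast_u$ is $\IC$ up to Morita equivalence, every Fredholm module is uniform by Proposition \ref{prop:compact_space_every_module_uniform}, and $\mu_u$ is the classical index map, hence an isomorphism; an Eilenberg-swindle argument shows both sides vanish on flasque spaces, so $\mu_u$ is an isomorphism there too. The inductive step is: if $Y$ admits an $\omega$-excisive decomposition $Y = A \cup B$ with $\mu_u$ an isomorphism for $A$, $B$ and $A\cap B$, then the five lemma applied to the Mayer--Vietoris ladder yields it for $Y$; iterating over a tower of increasingly coarse excisive covers that eventually reaches flasque pieces would complete the proof.

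The main obstacle --- and the reason the statement is posed as a conjecture rather than proved --- is precisely the inductive step: for an arbitrary space of coarsely bounded geometry there is no procedure that cuts $Y$ along finitely many coarsely excisive hypersurfaces in a way that terminates. In the classical coarse Baum--Connes conjecture this input is supplied either by finite asymptotic dimension (a finite tower of coarse covers to which the Mayer--Vietoris induction applies) or by a coarse embedding into Hilbert space (a Dirac/dual-Dirac element and a $KK$-theoretic argument), and a proof in the present generality would require a uniform analogue --- e.g.\ a uniform dual-Dirac class in an appropriate $KK$-group with uniform-Roe coefficients. No such input is available for general bounded geometry, and expander-type spaces are expected to obstruct the conjecture exactly as in the non-uniform setting, so a complete proof is beyond current methods; the strategy above should be read as a blueprint for what a proof must accomplish and as a precise localization of where it breaks down.
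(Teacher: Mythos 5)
You have correctly recognized that this statement is posed in the paper as a \emph{conjecture}: the paper gives no proof of it, and neither do you claim one. Your writeup is an accurate blueprint of the standard comparison strategy (identification of the left-hand side with a coarse-invariant uniform $K$-homology, naturality of $\mu_u$ with respect to coarse homotopy, Mayer--Vietoris, and uniformly separated disjoint unions, followed by an induction that must terminate in flasque or point-like pieces), and your localization of the obstruction --- the absence, for a general space of coarsely bounded geometry, of a terminating excisive decomposition or of a uniform dual-Dirac element, together with the expected expander counterexamples --- is consistent with what is known for the non-uniform coarse Baum--Connes conjecture and with the partial results the paper does establish (namely the equivalence with the Baum--Connes conjecture with $\ell^\infty$ coefficients in Theorem \ref{thm:BC_equiv_uniform_coarse}). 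In short: there is nothing to verify against a proof in the paper, and your assessment of the status of the statement is correct.
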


Let us relate the conjecture quickly to manifolds of bounded geometry. First we need the following notion:

\begin{defn}[Equicontinuously contractible spaces]
A metric space $X$ is called \emph{equicontinuously contractible}, if for all $r > 0$ the collection of balls $\{B_r(x)\}_{x \in X}$ is equicontinuously contractible (this means that the collection of the contracting homotopies is equicontinuous).
\qed
\end{defn}

\begin{example}
Universal covers of aspherical Riemannian manifolds equipped with the pull-back metric are equicontinuously contractible.
\qed
\end{example}

\begin{rem}
Note that equicontinuous contractibility is a slight strengthening of uniform contractibility: a metric space $X$ is called \emph{uniformly contractible}, if for every $r > 0$ there is an $s > 0$ such that every ball $B_r(x)$ can be contracted to a point in the ball $B_s(x)$.
\qed
\end{rem}

\begin{thm}
Let $M$ be an equicontinuously contractible manifold of bounded geometry and without boundary and let $Y \subset M$ be a uniformly discrete quasi-lattice in $M$.

Then we have a natural isomorphism
\[\lim_{d \to \infty} K^u_\ast(P_d(Y)) \cong K^u_\ast(M).\]
\end{thm}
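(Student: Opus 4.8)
The plan is to follow the strategy used for the coarse Baum--Connes conjecture (as in \cite{higson_roe}), with coarse invariance replaced by the homotopy invariance of uniform $K$-homology established in Theorem \ref{thm:homotopy_equivalence_k_hom}. First I would construct, for each $d$ large enough, a map $f_d \colon P_d(Y) \to M$: send each vertex $y \in Y$ to itself in $M$ and extend over the skeleta inductively using uniform contractibility of $M$ --- a simplex of $P_d(Y)$ has all its vertices in some ball $B_d(x)$, which contracts inside $B_{s(d)}(x)$, so the simplex can be filled within $B_{s(d)}(x)$. Using the bounded geometry of $M$ (to bound the distortion of all these fillings uniformly) and, if necessary, passing to a uniform smooth triangulation of $M$ or a barycentric subdivision of $P_d(Y)$, one arranges $f_d$ to be Lipschitz; it is automatically uniformly cobounded and proper because $Y$ is a quasi-lattice. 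Conversely, fixing $c > 0$ with $B_c(Y) = M$ and a uniformly bounded partition of unity $\{\varphi_y\}_{y \in Y}$ subordinate to $\{B_c(y)\}$, the formula $g(x) := \sum_y \varphi_y(x)\,y$ (barycentric coordinates) defines a Lipschitz, uniformly cobounded, proper map $g \colon M \to P_{d_0}(Y)$ for $d_0 \ge 2c$; composing with the structure inclusions gives $g_d \colon M \to P_d(Y)$ for all $d \ge d_0$. Both $M$ and the $P_d(Y)$ have jointly bounded geometry (Definition \ref{defn:jointly_bounded_geometry}), so Theorem \ref{thm:homotopy_equivalence_k_hom} will be applicable to these maps.

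Next I would verify the two homotopy relations. The composite $f_d \circ g_d \colon M \to M$ satisfies $d_M(f_d(g_d(x)), x) \le C$ uniformly in $x$, so by uniform contractibility of $M$ it is homotopic to $\id_M$ through a uniformly cobounded, proper, Lipschitz homotopy $M \times [0,1] \to M$; the bounded geometry of $M$ is what lets one produce such a homotopy with uniform control. The composite $g_{d'} \circ f_d \colon P_d(Y) \to P_{d'}(Y)$, for $d'$ large enough relative to $d$ and $C$, lies at bounded distance from the structural inclusion $\iota_{d,d'} \colon P_d(Y) \hookrightarrow P_{d'}(Y)$; since for $d'$ large the vertices carrying $g_{d'}(f_d(z))$ and $\iota_{d,d'}(z)$ jointly span a simplex of $P_{d'}(Y)$, the straight-line homotopy in barycentric coordinates stays inside $P_{d'}(Y)$ and is the required uniformly cobounded, proper, Lipschitz homotopy. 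One also checks that $f_{d'} \circ \iota_{d,d'}$ is homotopic to $f_d$ in the same sense, and $\iota_{d,d'} \circ g_d$ to $g_{d'}$, so that the families $(f_d)$ and $(g_d)$ are compatible with the direct system $\{K^u_\ast(P_d(Y))\}_d$.

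Applying Theorem \ref{thm:homotopy_equivalence_k_hom} then turns all these homotopies into identities on uniform $K$-homology. The maps $(f_d)_\ast$ assemble to a homomorphism $\Phi \colon \varinjlim_d K^u_\ast(P_d(Y)) \to K^u_\ast(M)$, the maps $(g_d)_\ast$ to $\Psi \colon K^u_\ast(M) \to \varinjlim_d K^u_\ast(P_d(Y))$, and the relations above give $\Phi \circ \Psi = \id$ and $\Psi \circ \Phi = \id$, which is the asserted isomorphism; naturality (compatibility with push-forward along uniformly cobounded proper Lipschitz maps, and with the rough assembly map $\mu_u$) is routine from the construction. I expect the main obstacle to be the very first step: uniform contractibility only furnishes continuous contractions, whereas Theorem \ref{thm:homotopy_equivalence_k_hom} requires \emph{Lipschitz}, uniformly cobounded, proper maps and homotopies, so the real work lies in upgrading the contractions and simplex-fillings to Lipschitz maps with distortion bounds independent of the simplex. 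This is exactly where the bounded geometry of $M$ (e.g.\ via a uniform smooth triangulation) is indispensable, and it is also the point that breaks down for a general uniformly contractible metric space.
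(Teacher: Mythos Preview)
Your proposal is correct and follows essentially the same approach the paper indicates: the paper does not spell out a proof but states that it is ``analogous to the corresponding non-uniform statement \ldots\ from \cite[Theorem 3.2]{yu_coarse_baum_connes_conj} and uses crucially the homotopy invariance of uniform $K$-homology'' (Theorem \ref{thm:homotopy_equivalence_k_hom}), which is exactly the strategy you have sketched. Your identification of the Lipschitz-upgrade of the contractions as the only nontrivial point is also on target.
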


The proof of this theorem is analogous to the corresponding non-uniform statement $\lim_{d \to \infty} K_\ast(P_d(Y)) \cong K_\ast(M)$ from \cite[Theorem 3.2]{yu_coarse_baum_connes_conj} and uses crucially the homotopy invariance of uniform $K$-homology.

Let us now relate the rough Baum--Connes conjecture to the usual Baum--Connes conjecture: let $\Gamma$ be a countable, discrete group and denote by $|\Gamma|$ the metric space obtained by endowing $\Gamma$ with a proper, left-invariant metric. Then $|\Gamma|$ becomes a proper, uniformly discrete metric space with coarsely bounded geometry. Note that we can always find such a metric and that any two of such metrics are quasi-isometric. If $\Gamma$ is finitely generated, an example is the word metric.

\Spakula proved in \cite[Corollary 10.3]{spakula_uniform_k_homology} the following equivalence of the rough Baum--Connes conjecture with the usual one: let $\Gamma$ be a torsion-free, countable, discrete group. Then the rough assembly map
\[\mu_u \colon \lim_{d \to \infty} K_\ast^u(P_d|\Gamma|) \to K_\ast(C_u^\ast|\Gamma|)\]
is an isomorphism if and only if the Baum--Connes assembly map
\[\mu \colon K_\ast^\Gamma(\underline{E}\Gamma; \ell^\infty(\Gamma)) \to K_\ast(C_r^\ast(\Gamma, \ell^\infty(\Gamma)))\]
for $\Gamma$ with coefficients in $\ell^\infty(\Gamma)$ is an isomorphism. For the definition of the Baum--Connes assembly map with coefficients the unfamiliar reader may consult the original paper \cite[Section 9]{baum_connes_higson}. Furthermore, the equivalence of the usual (i.e., non-uniform) coarse Baum--Connes conjecture with the Baum--Connes conjecture with coefficients in $\ell^\infty(\Gamma, \IK)$ was proved by Yu in \cite[Theorem 2.7]{yu_baum_connes_conj_coarse_geom}.

\Spakula mentioned in \cite[Remark 10.4]{spakula_uniform_k_homology} that the above equivalence does probably also hold without any assumptions on the torsion of $\Gamma$, but the proof of this would require some degree of homotopy invariance of uniform $K$-homology. So again we may utilize our proof of the homotopy invariance of uniform $K$-homology and therefore drop the assumption about the torsion of $\Gamma$.

\begin{thm}\label{thm:BC_equiv_uniform_coarse}
Let $\Gamma$ be a countable, discrete group.

Then the rough assembly map
\[\mu_u \colon \lim_{d \to \infty} K_\ast^u(P_d|\Gamma|) \to K_\ast(C_u^\ast|\Gamma|)\]
is an isomorphism if and only if the Baum--Connes assembly map
\[\mu \colon K_\ast^\Gamma(\underline{E}\Gamma; \ell^\infty(\Gamma)) \to K_\ast(C_r^\ast(\Gamma, \ell^\infty(\Gamma)))\]
for $\Gamma$ with coefficients in $\ell^\infty(\Gamma)$ is an isomorphism.
\end{thm}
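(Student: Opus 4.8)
The plan is to deduce the equivalence from a commutative square
\[
\begin{array}{ccc}
\displaystyle\lim_{d\to\infty} K_\ast^u(P_d|\Gamma|) & \xrightarrow{\ \mu_u\ } & K_\ast(C_u^\ast|\Gamma|)\\[4pt]
\cong\,\big\downarrow & & \big\downarrow\,\cong\\[4pt]
K_\ast^\Gamma(\underline{E}\Gamma;\ell^\infty(\Gamma)) & \xrightarrow{\ \mu\ } & K_\ast(C_r^\ast(\Gamma,\ell^\infty(\Gamma)))
\end{array}
\]
whose two vertical maps are isomorphisms; once this is in place, $\mu_u$ is an isomorphism precisely when $\mu$ is. The right-hand vertical isomorphism requires no hypothesis on torsion: a standard model of the uniform Roe algebra of $|\Gamma|$ is the reduced crossed product $\ell^\infty(\Gamma)\rtimes_r\Gamma=C_r^\ast(\Gamma,\ell^\infty(\Gamma))$, and this identification is canonical and compatible with the rest of the diagram.

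All the content therefore sits in the left-hand vertical isomorphism. First I would fix $d$ and identify the uniform $K$-homology of the Rips complex with equivariant $K$-homology with coefficients in $\ell^\infty(\Gamma)$, namely
\[
K_\ast^u(P_d|\Gamma|)\ \cong\ KK_\ast^\Gamma\!\big(C_0(P_d|\Gamma|),\,\ell^\infty(\Gamma)\big)=K_\ast^\Gamma(P_d|\Gamma|;\ell^\infty(\Gamma)),
\]
naturally in $d$. The intuition is the fundamental-domain picture: restricting a uniform Fredholm module over $P_d|\Gamma|$ along a Borel fundamental domain for the proper, cocompact $\Gamma$-action turns it into a $\Gamma$-equivariant Fredholm module over $P_d|\Gamma|$ valued in $\ell^\infty(\Gamma)$, and conversely. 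For torsion-free $\Gamma$ this comparison rests on freeness of the $\Gamma$-action, essentially as in \cite[Section~10]{spakula_uniform_k_homology}; with torsion present one cannot pass to a quotient that is a manifold or a finite complex, so instead I would argue equivariantly, covering $P_d|\Gamma|$ by a $\Gamma$-invariant, uniformly locally finite family of open sets adapted to its finitely many orbit types and building the comparison isomorphism by an induction over this cover, contracting the pieces near the fixed-point sets of the finite subgroups of $\Gamma$. It is precisely here that homotopy invariance of uniform $K$-homology (Theorem~\ref{thm:homotopy_equivalence_k_hom} and its strengthening to weak homotopies, Theorem~\ref{thm:weak_homotopy_equivalence_K_hom}), together with the external product of Theorem~\ref{thm:external_prod_homology}, replaces the use of freeness --- this is the homotopy invariance that \v{S}pakula anticipated would be needed in \cite[Remark~10.4]{spakula_uniform_k_homology}. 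Granting the comparison for each $d$, I would then pass to the colimit: $\lim_{d\to\infty}P_d|\Gamma|$ is a model for the classifying space $\underline{E}\Gamma$ for proper actions (a standard fact about Rips complexes of a metric space of coarsely bounded geometry, valid whether or not $\Gamma$ has torsion), whence $\lim_{d\to\infty}K_\ast^\Gamma(P_d|\Gamma|;\ell^\infty(\Gamma))=K_\ast^\Gamma(\underline{E}\Gamma;\ell^\infty(\Gamma))$ and the left vertical map is obtained.

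It then remains to verify that the square commutes, i.e.\ that under these identifications \v{S}pakula's rough assembly map from \cite[Section~9]{spakula_uniform_k_homology} becomes the Baum--Connes assembly map with coefficients in $\ell^\infty(\Gamma)$. This is a naturality check: $\mu_u$ is assembled from the same data --- an index/Kasparov-product construction over $P_d|\Gamma|$ together with the identification of the uniform Roe algebra with the crossed product --- as the Kasparov descent map underlying $\mu$, and one compares them exactly as Yu does in the coarse case \cite[Theorem~2.7]{yu_baum_connes_conj_coarse_geom} and as in \cite[Section~10]{spakula_uniform_k_homology}, now carried out equivariantly in the presence of isotropy.

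I expect the main obstacle to be the bookkeeping forced by the torsion. In the torsion-free case the quotient $\Gamma\backslash P_d|\Gamma|$ is a finite complex and the comparison isomorphism is essentially immediate; with torsion one must work equivariantly near the fixed-point sets of the finite subgroups and keep \emph{uniform} control of all the Fredholm-module data --- the ranks appearing in the uniform-approximability estimates, the propagation, the multigrading --- through every contraction and every step of the induction over the cover. Arranging the homotopies supplied by Theorems~\ref{thm:homotopy_equivalence_k_hom} and~\ref{thm:weak_homotopy_equivalence_K_hom} so that they are simultaneously $\Gamma$-equivariant and uniform, and so that they glue over a $\Gamma$-invariant cover, is where the real work lies; the rest is a diagram chase patterned on \cite{spakula_uniform_k_homology} and \cite{yu_baum_connes_conj_coarse_geom}.
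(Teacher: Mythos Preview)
Your proposal is correct and follows the same route the paper takes: the paper does not give a detailed proof but simply observes that \v{S}pakula already established the torsion-free case in \cite[Corollary~10.3]{spakula_uniform_k_homology} via exactly the commutative square you describe, and noted in \cite[Remark~10.4]{spakula_uniform_k_homology} that homotopy invariance of uniform $K$-homology would remove the torsion-free hypothesis --- which the paper supplies in Theorems~\ref{thm:homotopy_equivalence_k_hom} and~\ref{thm:weak_homotopy_equivalence_K_hom}. Your sketch is in fact more detailed than the paper's own treatment, which defers the mechanics entirely to \v{S}pakula's argument.
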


\subsection{Homology classes of uniform elliptic operators}\label{sec:homology_classes_of_PDOs}

We will show that symmetric, elliptic uniform pseudodifferential operators of positive order naturally define classes in uniform $K$-homology. This result is a crucial generalization of \cite[Theorem 3.1]{spakula_uniform_k_homology}, where this statement is proved for generalized Dirac operators.

First we need a definition and then we will plunge right into the main result:

\begin{defn}[Normalizing functions]\label{defn:normalizing_function}
A smooth function $\chi\colon \IR \to [-1, 1]$ with
\begin{itemize}
\item $\chi$ is odd, i.e., $\chi(x) = -\chi(-x)$ for all $x \in \IR$,
\item $\chi(x) > 0$ for all $x > 0$, and
\item $\chi(x) \to \pm 1$ for $x \to \pm \infty$
\end{itemize}
is called a \emph{normalizing function}.
\qed
\end{defn}

\begin{figure}[ht]
\centering
\includegraphics[scale=0.6]{./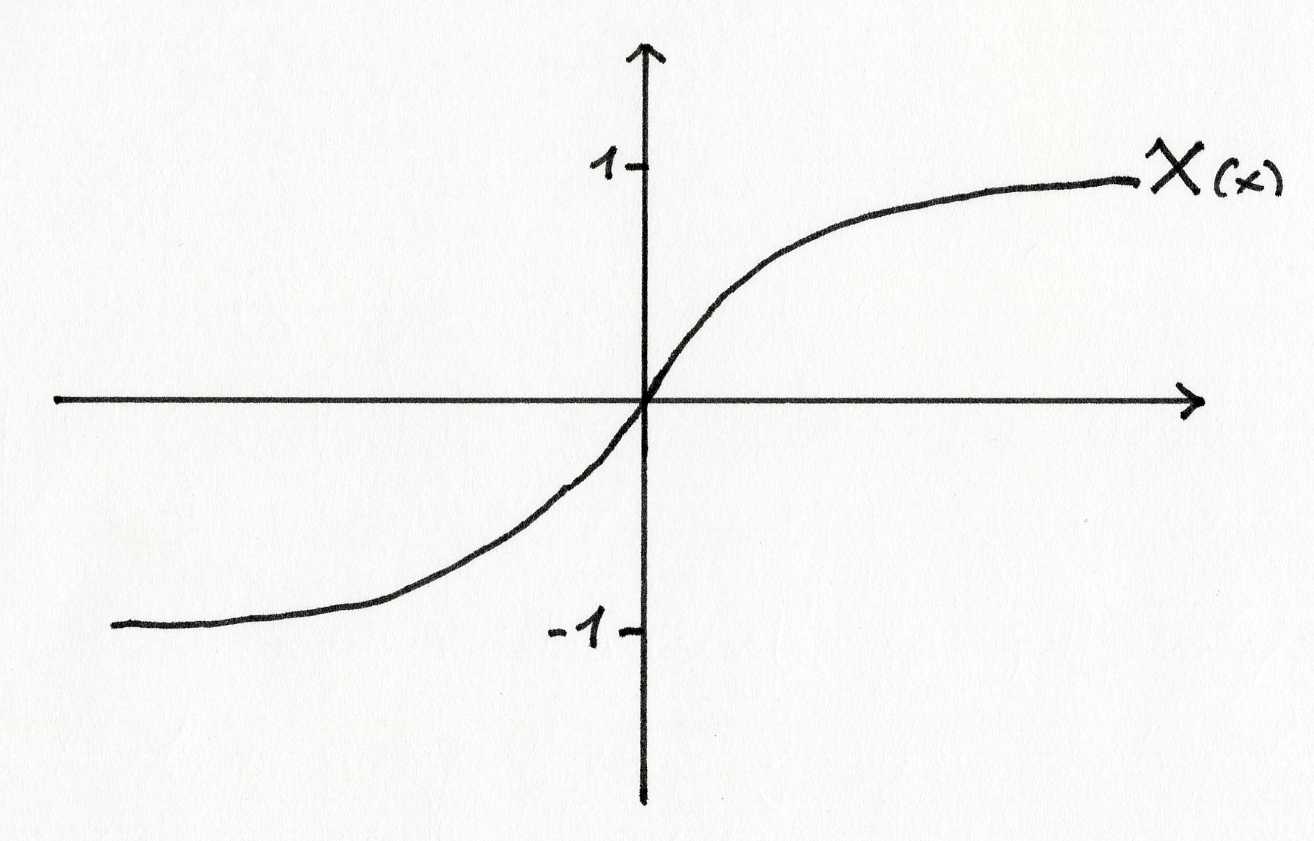}
\caption{A normalizing function.}
\end{figure}

\begin{thm}\label{thm:elliptic_symmetric_PDO_defines_uniform_Fredholm_module}
Let $M$ be a manifold of bounded geometry and without boundary, $E \to M$ be a $p$-multigraded vector bundle of bounded geometry, $P \in \UPsiDO^k(E)$ be a symmetric and elliptic uniform pseudodifferential operator on $E$ of positive order $k \ge 1$, and let $P$ be odd and multigraded.

Then $(H, \rho, \chi(P))$ is a $p$-multigraded uniform Fredholm module over $M$, where the Hilbert space is $H := L^2(E)$, the representation $\rho\colon C_0(M) \to \IB(H)$ is the one via multiplication operators and $\chi$ is a normalizing function. Furthermore, the uniform $K$-homology class $[(H, \rho, \chi(P))] \in K_{p}^u(M)$ does not depend on the choice of $\chi$.
\end{thm}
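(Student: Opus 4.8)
The plan is to verify the three defining conditions of a $p$-multigraded uniform Fredholm module for $(H, \rho, \chi(P))$ and then prove independence of the normalizing function. First I would record the easy structural facts: $H = L^2(E)$ is separable and $p$-multigraded since $E$ is $p$-multigraded of bounded geometry; $\rho$ acts by even multigraded multiplication operators since the multigrading on $E$ is fiberwise and $\rho(f)$ is scalar on fibers; and $\chi(P)$ is odd and multigraded because $P$ is odd and multigraded, $\chi$ is an odd function, and functional calculus preserves (anti)commutation with the grading and multigrading operators. The operator $\chi(P)$ is bounded since $\chi$ is bounded, and it is self-adjoint since $P$ is (essentially) self-adjoint by Proposition~\ref{prop:elliptic_PDO_essentially_self-adjoint} and $\chi$ is real-valued, so in particular $\chi(P) - \chi(P)^\ast = 0$ is trivially uniformly locally compact.

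Next I would handle the two substantive analytic conditions. For $\chi(P)^2 - 1$: since $\chi$ is a normalizing function, the function $g := \chi^2 - 1$ lies in $C_0(\IR)$, so by Corollary~\ref{cor:g(P)_uniformly_locally_compact_g_vanishing_at_infinity} the operator $g(P) = \chi(P)^2 - 1$ is uniformly locally compact. For uniform pseudolocality of $\chi(P)$: this is the heart of the matter. I would write $\chi = \chi_0 + \psi$ where $\chi_0$ is a fixed reference normalizing function chosen so that $\chi - \chi_0 \in \mathcal{S}(\IR)$ — actually, more usefully, I would argue directly. By Proposition~\ref{prop:f(P)_quasilocal_of_symbol_order}, $\chi(P) \in \mathcal{S}^0(\IR)$ applied to $P$ yields a quasilocal operator of order $0$ on $L^2(E)$ (take $m = 0$, $l = 0$, so $\chi(P)\colon H^0(E) \to H^0(E)$ is quasilocal). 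So $\chi(P)$ is a bounded quasilocal operator of order $0$. I then need: a bounded quasilocal operator of order $0$ is uniformly pseudolocal. To see this I would use the equivalent characterization in Point~3 (or Point~4) of Lemma~\ref{lem:kasparov_lemma_uniform_approx_manifold}: for $f, g$ Borel functions with $\|f\|_\infty, \|g\|_\infty \le 1$, $\diam(\supp g) \le R$ and $d(\supp f, \supp g) \ge L$, write $f\chi(P)g$ and use quasilocality together with the Rellich–Kondrachov argument exactly as in the proof of Proposition~\ref{prop:quasilocal_negative_order_uniformly_locally_compact} / Corollary~\ref{cor:quasilocal_neg_order_uniformly_locally_compact}. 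The subtlety is that $\chi(P)$ has order $0$, not negative order, so $f\chi(P)g$ need not be compact on its own; but with the support separation $L > 0$, quasilocality gives that $\chi_{B_{r_\varepsilon}(\supp g)}\chi(P)g$ is $\varepsilon$-close to $\chi(P)g$, and then one can commute past and use that the commutator $[\chi(P), g]$ picks up one order of smoothing (heuristically) — more carefully, I would instead mimic the strategy of Proposition~\ref{prop:PDO_order_0_l-uniformly-pseudolocal}: approximate $\chi(P)$ in operator norm by Schwartz functions $f_j(P)$ (which are uniformly locally compact by Corollary~\ref{cor:schwartz_function_of_PDO_quasilocal_smoothing}, hence uniformly pseudolocal) plus a quasilocal remainder, using that $\chi - f_j \to 0$ uniformly and exploiting density; since the uniformly pseudolocal operators are norm-closed (\cite[Lemma~4.2]{spakula_uniform_k_homology}), it suffices to show a dense-enough set of approximants are uniformly pseudolocal, which they are. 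I expect \emph{this} step — showing a bounded quasilocal order-$0$ operator is uniformly pseudolocal — to be the main obstacle, since order $0$ is borderline and the argument must carefully separate the "local compactness from support separation" part (Point 3 of the Lemma) from the genuine pseudolocality.

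Finally, independence of $\chi$: given two normalizing functions $\chi_0, \chi_1$, the path $\chi_t := (1-t)\chi_0 + t\chi_1$ consists of normalizing functions, and $\chi_t(P)$ is norm-continuous in $t$ (since $\chi_1 - \chi_0$ vanishes at $\pm\infty$, so $(\chi_1 - \chi_0)(P)$ is bounded and $\|(\chi_t - \chi_s)(P)\| = |t-s|\,\|(\chi_1-\chi_0)(P)\|$, using that $\chi_1 - \chi_0 \in C_0(\IR)$ and the sup-norm functional calculus estimate). Hence $(H, \rho, \chi_t(P))$ is an operator homotopy of uniform Fredholm modules, so $[(H,\rho,\chi_0(P))] = [(H,\rho,\chi_1(P))]$ in $K_p^u(M)$. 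Alternatively, $\chi_0(P) - \chi_1(P) = (\chi_0 - \chi_1)(P)$ with $\chi_0 - \chi_1 \in C_0(\IR)$ is uniformly locally compact by Corollary~\ref{cor:g(P)_uniformly_locally_compact_g_vanishing_at_infinity}, so the two modules differ by a uniformly locally compact perturbation and Lemma~\ref{lem:compact_perturbations} applies directly.
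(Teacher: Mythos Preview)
Your treatment of the structural conditions (odd, multigraded), of $\chi(P)-\chi(P)^\ast=0$, of $\chi(P)^2-1$ being uniformly locally compact, and of independence of $\chi$ via Lemma~\ref{lem:compact_perturbations} matches the paper exactly.

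The genuine gap is in the uniform pseudolocality of $\chi(P)$, and neither of your two strategies works. Your second strategy, approximating $\chi$ in sup-norm by Schwartz functions $f_j$, is impossible: any normalizing function has $|\chi(x)|\to 1$ as $|x|\to\infty$, while Schwartz functions vanish at infinity, so $\|\chi-f_j\|_\infty\ge 1$ always. Your first strategy, deducing uniform pseudolocality from the fact that $\chi(P)$ is quasilocal of order~$0$, fails because that implication is simply false: the unit translation $(Tu)(x)=u(x-1)$ on $L^2(\IR)$ has finite propagation and is an isometry on every Sobolev space (hence quasilocal of order~$0$), yet $[T,\rho(f)]=\rho(g)T$ with $g(x)=f(x)-f(x-1)$, which is not compact for generic Lipschitz $f$. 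Quasilocality of order~$0$ gives you only that $fTg$ has \emph{small norm} when supports are well separated, not that it is compact; for $\varepsilon$ below the dominating function's value $\mu(L)$ you get no finite-rank approximation at all, so Point~3 of Lemma~\ref{lem:kasparov_lemma_uniform_approx_manifold} is out of reach by this route.

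The paper's argument uses the idea you briefly raised and then abandoned: since any two normalizing functions differ by a $C_0$-function, it suffices to prove pseudolocality for \emph{one} well-chosen $\chi$. The paper takes $\chi(x)=x/\sqrt{1+x^2}$ and exploits the integral representation
\[
\chi(P)=\frac{2}{\pi}\int_0^\infty \frac{P}{1+\lambda^2+P^2}\,d\lambda,
\]
which yields an explicit integral formula for $[\rho(f),\chi(P)]$ whose integrand is built from operators like $\tfrac{1}{1+\lambda^2+P^2}$ and $\tfrac{P}{1+\lambda^2+P^2}$ applied to $[\rho(f),P]$. These resolvent-type factors lie in $\mathcal{S}^{-2}(\IR)$, resp.\ $\mathcal{S}^{-1}(\IR)$, so by Proposition~\ref{prop:f(P)_quasilocal_of_symbol_order} and Corollary~\ref{cor:quasilocal_neg_order_uniformly_locally_compact} they are uniformly locally compact of \emph{negative} order; the integrand is therefore compact with uniformity controlled by $R$, $L$ and the derivatives of $f$, and one finishes via Point~5 of Lemma~\ref{lem:kasparov_lemma_uniform_approx_manifold}. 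The key point you are missing is that one must manufacture negative order somewhere, and the resolvent formula for this particular $\chi$ is what provides it.
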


\begin{proof}
Recall from Definition \ref{defn:uniform_fredholm_modules} that for the first statement that $(H, \rho, \chi(P))$ defines an ungraded uniform Fredholm module over $M$ we have to show that $\chi(P)$ is uniformly pseudolocal and that $\chi(P)^2 - 1$ and $\chi(P) - \chi(P)^\ast$ are uniformly locally compact.

Since $\chi$ is real-valued and $P$ essentially self-adjoint by Proposition \ref{prop:elliptic_PDO_essentially_self-adjoint}, we have $\chi(P) - \chi(P)^\ast = 0$, i.e., the operator $\chi(P) - \chi(P)^\ast$ is trivially uniformly locally compact. Moreover, since we have $\chi(P)^2 - 1 = (\chi^2 - 1)(P)$ and $\chi^2-1 \in C_0(\IR)$, we conclude with Corollary \ref{cor:g(P)_uniformly_locally_compact_g_vanishing_at_infinity} that $\chi(P)^2 - 1$ is uniformly locally compact.

Because the difference of two normalizing functions is a function from $C_0(\IR)$, we conclude from the same corollary that in order to show that $\chi(P)$ is uniformly pseudolocal, it suffices to show this for one particular normalizing function (and secondly, we get that the class $[(H, \rho, \chi(P))]$ is independent of the concrete choice of $\chi$ due to Lemma \ref{lem:compact_perturbations}).

From now on we proceed as in the proof of \cite[Theorem 3.1]{spakula_uniform_k_homology} using the same formulas: we choose the particular normalizing function $\chi(x) := \frac{x}{\sqrt{1+x^2}}$ to prove that $\chi(P)$ is uniformly pseudolocal. We have $\chi(P) = \frac{2}{\pi} \int_0^\infty \frac{P}{1 + \lambda^2 + P^2} d\lambda$ with convergence of the integral in the strong operator topology\footnote{This follows from the equality $\frac{x}{\sqrt{1 + x^2}} = \frac{2}{\pi} \int_0^\infty \frac{x}{1 + \lambda^2 + x^2} d\lambda$ for all $x \in \IR$.} and get then for $f \in \LLip_R(M)$
\begin{equation*}
[\rho(f), \chi(P)] = \frac{2}{\pi} \int_0^\infty \frac{1}{1 + \lambda^2 + P^2} \big( (1+\lambda^2)[\rho(f), P] + P[\rho(f),P]P \big) \frac{1}{1 + \lambda^2 + P^2} d\lambda.
\end{equation*}

Suppose $f \in \LLip_R(M) \cap C_b^\infty(M)$. Then the integral converges in operator norm. To see this, we have to find upper bounds for the operator norms of $\frac{1+\lambda^2}{1+\lambda^2 + P^2} [\rho(f), P] \frac{1}{1+\lambda^2 + P^2}$ and $\frac{P}{1+\lambda^2 + P^2} [\rho(f), P] \frac{P}{1+\lambda^2 + P^2}$, that are integrable with respect to $\lambda$. Recall Definition~\ref{defn:symbols_on_R} of the symbol classes on $\IR$:
\[\mathcal{S}^m(\IR) := \{g \in C^\infty(\IR) \ | \ |g^{(n)}(x)| < C_l(1 + |x|)^{m-n} \text{ for all } n \in \IN_0\}.\]
Since both $\frac{1}{1+\lambda^2 + x^2} \in \mathcal{S}^{-2}(\IR)$ and $\frac{1 + \lambda^2}{1+\lambda^2 + x^2} \in \mathcal{S}^{-2}(\IR)$ (with respect to the variable $x$, i.e., for fixed $\lambda$), the operators $\frac{1}{1+\lambda^2 + P^2}$ and $\frac{1 + \lambda^2}{1+\lambda^2 + P^2}$ are operators of order $-2k$ by the first paragraph of the proof of Proposition \ref{prop:f(P)_quasilocal_of_symbol_order}. So $\frac{1+\lambda^2}{1+\lambda^2 + P^2} [\rho(f), P] \frac{1}{1+\lambda^2 + P^2}$ is an operator of order $-3k-1$ since $[\rho(f),P]$ is of order $k-1$ by Proposition~\ref{prop:PsiDOs_filtered_algebra}. So especially it is a bounded operator, and one can show that there is an integrable upper bound on the operator norm with respect to $\lambda$. The latter can be done by, e.g., using the estimates that Roe derived in his proof of his version of Proposition~\ref{prop:f(P)_quasilocal_of_symbol_order}. Analogously we can treat $\frac{P}{1+\lambda^2 + P^2} [\rho(f), P] \frac{P}{1+\lambda^2 + P^2}$ which is an operator of order $-k-1$.

Furthermore, there exists an $N > 0$ which depends only on an $\varepsilon > 0$, $R = \diam (\supp f)$ and the norms of the derivatives of $f$,\footnote{The dependence on $R$ and on the derivatives of $f$ comes from the operator norm estimate of $[\rho(f), P]$.} such that there are $\lambda_1, \ldots, \lambda_N$ and the above integral is at most $\varepsilon$ away from the sum of the integrands for $\lambda_1, \ldots, \lambda_N$.

Since both $\frac{1}{1+\lambda^2 + x^2} \in \mathcal{S}^{-2}(\IR)$ and $\frac{1 + \lambda^2}{1+\lambda^2 + x^2} \in \mathcal{S}^{-2}(\IR)$ (with respect to the variable $x$, i.e., for fixed $\lambda$), the operators $\frac{1}{1+\lambda^2 + P^2}$ and $\frac{1 + \lambda^2}{1+\lambda^2 + P^2}$ are quasilocal operators of order $-k-1$ by Proposition~\ref{prop:f(P)_quasilocal_of_symbol_order}. This also holds for their adjoints and so, by Corollary~\ref{cor:quasilocal_neg_order_uniformly_locally_compact}, they are uniformly locally compact. The same conclusion applies to the operators $\frac{P}{1+\lambda^2+P^2}$ and $\frac{(1+\lambda^2)P}{1+\lambda^2+P^2}$ which are quasilocal of order $-1$ and hence also uniformly locally compact.

So the first summand
\[\frac{1+\lambda^2}{1+\lambda^2 + P^2} [\rho(f), P] \frac{1}{1+\lambda^2 + P^2}\]
of the integrand is the difference of two compact operators and their approximability by finite rank operators depends only on $R = \diam (\supp f)$ and the Lipschitz constant $L$ of~$f$. An analogous argument applies to the second summand
\[\frac{1}{1+\lambda^2 + P^2}P [\rho(f), P] P \frac{1}{1+\lambda^2 + P^2}\]
of the integrand (note that $\frac{P^2}{1+\lambda^2+P^2}$ is a bounded operator).

So the operator $[\rho(f), \chi(P)]$ is for $f \in \LLip_R(M) \cap C_b^\infty(M)$ compact and its approximability by finite rank operators depends only on $R$, $L$ and the norms of the derivatives of $f$. That this suffices to conclude that the operator is uniformly pseudolocal is exactly Point~5 in Lemma \ref{lem:kasparov_lemma_uniform_approx_manifold}.

To conclude the proof we have to show that $\chi(P)$ is odd and multigraded. But this was already shown in full generality in \cite[Lemma 10.6.2]{higson_roe}.
\end{proof}

We have shown in the above theorem that a symmetric, elliptic uniform pseudodifferential operator naturally defines a class in uniform $K$-homology. Now we will show that this class does only depend on the principal symbol of the pseudodifferential operator. Note that ellipticity of an operator does only depend on its symbol (since it is actually defined that way, see Definition \ref{defn:elliptic_operator}, which is possible due to Lemma \ref{lem:ellipticity_independent_of_representative}), i.e., another pseudodifferential operator with the same symbol is automatically also elliptic.

\begin{prop}\label{prop:same_symbol_same_k_hom_class}
The uniform $K$-homology class of a symmetric and elliptic uniform pseudodifferential operator $P \in \UPsiDO^{k \ge 1}(E)$ does only depend on its principal symbol $\sigma(P)$, i.e., any other such operator $P^\prime$ with the same principal symbol defines the same uniform $K$-homology class.
\end{prop}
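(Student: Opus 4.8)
The plan is to connect $P$ and $P'$ by a linear path of symmetric elliptic operators of order $k$ and to invoke weak homotopy invariance of uniform $K$-homology. Since $\sigma(P) = \sigma(P')$, the difference $P - P'$ has order $\le k-1$, so for every $t \in [0,1]$ the operator $P_t := (1-t)P + tP'$ is a symmetric pseudodifferential operator of order $k \ge 1$ with principal symbol $\sigma(P)$, hence elliptic (Lemma \ref{lem:ellipticity_independent_of_representative}) and odd and multigraded whenever $P$ and $P'$ are. By Theorem \ref{thm:elliptic_symmetric_PDO_defines_uniform_Fredholm_module} each triple $(H, \rho, \chi(P_t))$ with $H = L^2(E)$ and $\chi$ a fixed normalizing function is then a $p$-multigraded uniform Fredholm module whose class does not depend on $\chi$; so it will suffice to show that $\{(H, \rho, \chi(P_t))\}_{t \in [0,1]}$ is a weak homotopy in the sense of Definition \ref{defn:weak_homotopy}, and then Theorem \ref{thm:weak_homotopy_equivalence_K_hom} gives $[(H,\rho,\chi(P))] = [(H,\rho,\chi(P'))]$.

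The key estimate will come from Proposition \ref{prop:norm_estimate_difference_func_calc}: choosing $\chi$ so that $\operatorname{sign} - \chi$ is Schwartz, one has $s\,\hat{\chi}(s) \in L^1(\IR)$, and since $P_t - P_s = (t-s)(P' - P)$ has order $k-1$ the proposition yields
\[
\|\chi(P_t) - \chi(P_s)\|_{lk,\, lk-(k-1)} \le C_\chi \cdot |t-s| \cdot \|P' - P\|_{lk,\, lk-(k-1)}
\]
for all $l \in \IZ$. Taking $l = 0$, dualizing (the operator $\chi(P_t) - \chi(P_s)$ is self-adjoint, so its extension $L^2(E) \to H^{-(k-1)}(E)$ transposes to one $H^{k-1}(E) \to L^2(E)$ of the same norm), and combining with the uniform bound $\|\chi(P_t)\|_{0,0} \le 1$ and a density argument, I would conclude that $t \mapsto \chi(P_t)$ is continuous for the strong-$^\ast$ operator topology on $\IB(L^2(E))$; the representation $\rho$ is constant along the path. (When $k = 1$ this already gives an honest operator homotopy, since then the case $l = 0$ of the estimate controls $\|\chi(P_t) - \chi(P_s)\|_{0,0}$ directly, so one does not even need weak homotopies.) The operators $\chi(P_t)^2 - 1 = (\chi^2 - 1)(P_t)$ with $\chi^2 - 1 \in C_0(\IR)$ and the commutators $[\chi(P_t), \rho(f)]$ likewise vary strong-$^\ast$ continuously in $t$ and are uniformly locally compact by Corollaries \ref{cor:g(P)_uniformly_locally_compact_g_vanishing_at_infinity} and \ref{cor:quasilocal_neg_order_uniformly_locally_compact}, so the ranks needed to approximate them are uniformly bounded in $t$; from this I would extract the last bullet of Definition \ref{defn:weak_homotopy}.

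The hard part will be precisely that last step: verifying the technical continuity condition on the finite-rank approximations of $[\chi(P_t), \rho(f)]$ and $(\chi(P_t)^2 - 1)\rho(f)$ (the condition for $(\chi(P_t) - \chi(P_t)^\ast)\rho(f) = 0$ being vacuous). For $k \ge 2$ these families are only strong-$^\ast$ continuous and not norm continuous, so one cannot simply perturb a single approximant; instead the condition has to be teased out of strong-$^\ast$ continuity together with the uniform bound on the approximating ranks coming from uniform local compactness and the Sobolev-scale estimate above. This is the uniform analogue of the corresponding verification for abstract elliptic operators with the same symbol in the non-uniform theory, and while it is conceptually routine given the analysis of Section \ref{sec:functions_of_PDOs}, it carries most of the technical weight of the proof.
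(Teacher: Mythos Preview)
Your approach is essentially the paper's: the linear path $P_t = (1-t)P + tP'$, the case split ($k=1$ gives an honest operator homotopy, $k>1$ only strong-$^\ast$ continuity via Proposition \ref{prop:norm_estimate_difference_func_calc}), and then Theorem \ref{thm:weak_homotopy_equivalence_K_hom}. For the third bullet of Definition \ref{defn:weak_homotopy} the paper sharpens your sketch in one respect: uniform rank bounds plus strong-$^\ast$ continuity are not by themselves enough, and the actual mechanism is that the finite-rank approximants of $[\chi(P_t),\rho(f)]$ (traced back through the proof of Theorem \ref{thm:elliptic_symmetric_PDO_defines_uniform_Fredholm_module} to Proposition \ref{prop:quasilocal_negative_order_uniformly_locally_compact}) arise from approximating the compact Sobolev inclusions, so their image vectors carry Sobolev-norm bounds that are uniform in $t$ --- and it is on precisely such vectors that the estimate $\|\chi(P_t)-\chi(P_s)\|_{lk,\,lk-(k-1)}\le C_\chi|t-s|\,\|P'-P\|_{lk,\,lk-(k-1)}$ gives the required $L^2$-control.
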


\begin{proof}
Consider in $\UPsiDO^k(E)$ the linear path $P_t := (1-t)P + t P^\prime$ of operators. They are all symmetric and, since $\sigma(P) = \sigma(P^\prime)$, they all have the same principal symbol. So they are all elliptic and therefore we get a family of uniform Fredholm modules $(H, \rho, \chi(P_t))$, where we use a fixed normalizing function $\chi$.

Now if the family $\chi(P_t)$ of bounded operators would be norm-continuous, the claim that we get the same uniform $K$-homology classes would follow directly from the relations defining uniform $K$-homology. But it seems that in general it is only possible to conclude the norm continuity of $\chi(P_t)$ if the difference $P - P^\prime$ is a bounded operator,\footnote{see, e.g., \cite[Proposition 10.3.7]{higson_roe}} i.e., if the order $k$ of $P$ is $1$ (since then the order of the difference $P - P^\prime$ would be $0$, i.e., it would define a bounded operator on $L^2(E)$); see Proposition~\ref{prop:norm_estimate_difference_func_calc}.

In the case $k > 1$ we get continuity of $\chi(P_t)$ only in the strong-$^\ast$ operator topology on $\IB(L^2(E))$. This is seen with Proposition \ref{prop:norm_estimate_difference_func_calc},\footnote{An example of a normalizing function $\chi$ fulfilling the prerequisites of Proposition \ref{prop:norm_estimate_difference_func_calc} may be found in, e.g., \cite[Exercise 10.9.3]{higson_roe}.} which implies that the family $t \mapsto \chi(P_t)$ is continuous in the norm topology of operators of degree $k-1$. Therefore, if $v \in L^2(E)$ is an element of the Sobolev space $H^{k-1}(E) \subset L^2(E)$, then $t \mapsto \chi(P_t)(v)$ is norm continuous for the $L^2$-norm. For general $v \in L^2(E)$ we do an approximation argument.

To show that $(H, \rho, \chi(P_0))$ and $(H, \rho, \chi(P_1))$ define the same uniform $K$-homology class we will use Theorem \ref{thm:weak_homotopy_equivalence_K_hom}, i.e., we will show now that the family $(H, \rho, \chi(P_t))$ is a weak homotopy.

The first bullet point of the definition of a weak homotopy is clearly satisfied since our representation $\rho$ is fixed, i.e., does not depend on the time $t$. Moreover, we have already incidentally discussed the second bullet point in the paragraph above, so it remains to varify that the third point is satisfied. We will treat here only the case $[\rho(f), \chi(P_t)]$ since the arguments for $\rho(f)(\chi(P_t)^2-1)$ are similar and the case of $\rho(f)(\chi(P_t) - \chi(P_t)^\ast)$ is clear since $\chi(P_t) - \chi(P_t)^\ast = 0$, because $P_t$ is essentially self-adjoint.

So let $\chi$ be the normalizing function $\chi(x) = \frac{x}{\sqrt{1+x^2}}$. This is the one used in the proof of the above Theorem~\ref{thm:elliptic_symmetric_PDO_defines_uniform_Fredholm_module} and we use the integral representation of $[\rho(f), \chi(P_t)]$ derived in that proof. We will only treat the second summand $\frac{1}{1+\lambda^2+P_t^2} P_t[\rho(f),P_t]P_t \frac{1}{1+\lambda^2+P_t^2}$ since it contains two more $P_t$ than the first summand (i.e., it is harder to deal with the second summand than with the first one). We have $\frac{1}{1+\lambda^2 + x^2} \in \mathcal{S}^{-2}(\IR)$ (with respect to $x$) and therefore $\psi(x) := x^{2+\varepsilon} \frac{1}{1+\lambda^2 + x^2} \in \mathcal{S}^{\varepsilon}(\IR)$. So $\psi^\prime(x) \in \mathcal{S}^{\varepsilon-1}(\IR)$ and ${\psi^\prime}{}^\prime(x) \in \mathcal{S}^{\varepsilon-2}(\IR)$, which means that both are $L^2$-integrable if $\varepsilon < 1/2$, i.e., $\psi^\prime(x) \in H^1(\IR)$. Therefore the Fourier transform of $\psi^\prime(x)$ is $L^1$-integrable. But the Fourier transform of $\psi^\prime(x)$ is $s \cdot \widehat{\psi}(s)$, i.e., $\psi$ qualifies for Proposition~\ref{prop:norm_estimate_difference_func_calc} (that $\psi^\prime(x)$ is not bounded is ok, the proposition still works in this case). So $t \mapsto \psi(P_t)$ will be continuous in $\|\largecdot\|_{0,-k+1}$-norm. By elliptic regularity this means that $t \mapsto \frac{1}{1+\lambda^2+P_t^2}$ is continuous in $\|\largecdot\|_{0,(1+\varepsilon)(k-1)}$-norm. Since $t \mapsto [\rho(f),P_t]$ is continuous in $\|\largecdot\|_{0,-k+2}$-norm, we conclude that the whole second summand is continuous in $\|\largecdot\|_{0,2\varepsilon(k-1)-k+2}$-norm. If $\varepsilon = 1/2$, then $2\varepsilon(k-1)-k+2 = 1$. Since we have to choose $\varepsilon < 1/2$, we choose it just beneath $1/2$, i.e., so that $2\varepsilon(k-1)-k+2 > 0$. It then follows that $[\rho(f), \chi(P_t)]$ is continuous in operator norm, which concludes this proof.
\end{proof}

\section{Uniform \texorpdfstring{$K$}{K}-theory}
\label{sec:uniform_k_th}

In this section we will define uniform $K$-theory and show that for \spinc manifolds it is \Poincare dual to uniform $K$-homology. The definition of uniform $K$-theory is based on the following observation: we want that it consists of vector bundles such that Dirac operators over manifolds of bounded geometry may be twisted with them (since we want a cap product between uniform $K$-homology and uniform $K$-theory). Hence we have to consider vector bundles of bounded geometry, because otherwise the twisted operator will not be uniform.

The first guess is to use the algebra $C_b^\infty(M)$ of smooth functions on $M$ whose derivatives are all uniformly bounded, and then to consider its operator $K$-theory. This guess is based on the speculation that the boundedness of the derivatives translates into the boundedness of the Christoffel symbols if one equips the vector bundle with the induced metric and connection coming from the given embedding of the bundle into $\IC^k$ (this embedding is given to us because a projection matrix with entries in $C_b^\infty(M)$ defines a subbundle of $\IC^k$ by considering the image of the projection matrix). To our luck this first guess works out.

Note that other authors have, of course, investigated similar versions of $K$-theory: Kaad investigated in \cite{kaad} Hilbert bundles of bounded geometry over manifolds of bounded geometry (the author thanks Magnus Goffeng for pointing to that publication). Dropping the condition that the bundles must have bounded geometry, there is a general result by Morye contained in \cite{morye} having as a corollary the Serre--Swan theorem for smooth vector bundles over (possibly non-compact) smooth manifolds. If one is only interested in the last mentioned result, there is also the short note \cite{sardanashvily} by Sardanashvily.

\subsection{Definition and basic properties of uniform \texorpdfstring{$K$}{K}-theory}

As we have written above, we will define uniform $K$-theory of a manifold of bounded geometry as the operator $K$-theory of $C_b^\infty(M)$. But since $C_b^\infty(M)$ turns out to be a local $C^\ast$-algebra (see Lemma \ref{lem:C_b_infty_local}), its operator $K$-theory will coincide with the $K$-theory of its closure which is the $C^\ast$-algebra $C_u(M)$ of all bounded, uniformly continuous functions on $M$ (see Lemma \ref{lem:norm_completion_C_b_infty}). Therefore we may define uniform $K$-theory for any metric space $X$ as the operator $K$-theory of $C_u(X)$.

\begin{defn}[Uniform $K$-theory]
Let $X$ be a metric space. The \emph{uniform $K$-theory groups of $X$} are defined as
\[K^p_u(X) := K_{-p}(C_u(X)),\]
where $C_u(X)$ is the $C^\ast$-algebra of bounded, uniformly continuous functions on $X$.
\qed
\end{defn}

The introduction of the minus sign in the index $-p$ in the above definition is just a convention which ensures that the indices in formulas, like the one for the cap product between uniform $K$-theory and uniform $K$-homology, coincide with the indices from the corresponding formulas for (co-)homology. Since complex $K$-theory is $2$-periodic, the minus sign does not change anything in the formulas.

Denoting by $\overline{X}$ the completion of the metric space $X$, we have $K^\ast_u(\overline{X}) = K^\ast_u(X)$ because every uniformly continuous function on $X$ has a unique extension to $\overline{X}$, i.e., $C_u(\overline{X}) = C_u(X)$. This means that, e.g., the uniform $K$-theories of the spaces $[0,1]$, $[0,1)$ and $(0,1)$ are all equal. Furthermore, since on a compact space $X$ we have $C_u(X) = C(X)$, uniform $K$-theory coincides for compact spaces with usual $K$-theory. Let us state this as a small lemma:

\begin{lem}
If $X$ is totally bounded, then $K^\ast_u(X) = K_u^\ast(\overline{X}) = K^\ast(\overline{X})$.
\end{lem}

\begin{rem}
Note the following difference between uniform $K$-theory and uniform $K$-homology: whereas uniform $K$-theory of $X$ coincides with the uniform $K$-theory of the completion $\overline{X}$, this is in general not true for uniform $K$-homology.

Recall that in Proposition \ref{prop:compact_space_every_module_uniform} we have shown that if $X$ is totally bounded, then the uniform $K$-homology of $X$ coincides with locally finite $K$-homology of $X$. So for, e.g., an open ball $B$ in $\IR^n$ uniform and locally finite $K$-homology coincide and hence $K_m^u(B) \cong \IZ$ for $m=n$ and it vanishes for all other values of $m$. But due to homotopy invariance we have $K_m^u(\overline{B}) \cong K_m^u(*) \cong \IZ$ for $m=0$ and it vanishes for other values of $m$.

In the case of uniform $K$-theory we have $K^m_u(B) \cong K^m_u(\overline{B}) \cong K^m_u(*) \cong \IZ$ for $m=0$ and it vanishes otherwise.
\qed
\end{rem}

Recall that in Lemma \ref{lem:uniform_k_hom_discrete_space} we have shown that the uniform $K$-homology group $K_0^u(Y)$ of a uniformly discrete, proper metric space $Y$ of coarsely bounded geometry is isomorphic to the group $\ell^\infty_\IZ(Y)$ of all bounded, integer-valued sequences indexed by $Y$, and that $K_1^u(Y) = 0$. Since we want uniform $K$-theory to be \Poincare dual to uniform $K$-homology, we need the corresponding result for uniform $K$-theory.

\begin{lem}\label{lem:uniform_k_th_discrete_space}
Let $Y$ be a uniformly discrete metric space. Then $K^0_u(Y)$ is isomorphic to $\ell^\infty_\IZ(Y)$ and $K^1_u(Y) = 0$.
\end{lem}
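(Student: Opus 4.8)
The plan is to first reduce the statement to a computation of the operator $K$-theory of $\ell^\infty(Y)$, and then carry out that computation by hand.

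First I would observe that since $Y$ is uniformly discrete, say $d(x,y) \ge \delta > 0$ for all $x \ne y$, the metric topology on $Y$ is discrete and \emph{every} bounded function $Y \to \IC$ is automatically uniformly continuous. Hence $C_u(Y) = \ell^\infty(Y)$, and by the definition of uniform $K$-theory together with Bott periodicity we have $K^0_u(Y) = K_0(\ell^\infty(Y))$ and $K^1_u(Y) = K_1(\ell^\infty(Y))$. (The case of finite $Y$, where $\ell^\infty(Y) \cong \IC^{|Y|}$ and the claim is immediate, is subsumed in what follows.)

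Next I would identify $K_0(\ell^\infty(Y))$ with $\ell^\infty_\IZ(Y)$ via the pointwise rank. A projection $p \in M_n(\ell^\infty(Y))$ is the same as a bounded family $(p(y))_{y \in Y}$ of projections $p(y) \in M_n(\IC)$, and its rank function $r_p \colon y \mapsto \operatorname{rank} p(y)$ satisfies $0 \le r_p \le n$, hence lies in $\ell^\infty_\IZ(Y)$. Since $K_0$ of a unital $C^\ast$-algebra is generated by differences $[p] - [q]$ of such projection classes, $[p] - [q] \mapsto r_p - r_q$ defines a homomorphism $K_0(\ell^\infty(Y)) \to \ell^\infty_\IZ(Y)$. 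Surjectivity is immediate: writing $f \in \ell^\infty_\IZ(Y)$ as $f = f_+ - f_-$ with $f_\pm \in \ell^\infty(Y)$ integer-valued, nonnegative and bounded by some $N$, the families of diagonal projections having $f_\pm(y)$ ones realise $f_\pm$ as rank functions in $M_N(\ell^\infty(Y))$. For injectivity one uses that two projections in matrix algebras over $\IC$ are Murray--von Neumann equivalent precisely when they have equal rank; performing this pointwise and using that the ranks involved are \emph{uniformly} bounded, the resulting partial isometries assemble into a single bounded family, i.e.\ an element of $M_N(\ell^\infty(Y))$, so $r_p = r_q$ forces $[p] = [q]$ already in $K_0$. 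Then I would show $K_1(\ell^\infty(Y)) = 0$ by proving $U_n(\ell^\infty(Y))$ connected for every $n$: a unitary $u \in M_n(\ell^\infty(Y))$ is a bounded family $(u(y))$ with $u(y) \in U(n)$; for each $y$ choose a self-adjoint logarithm $h(y)$ with spectrum in $[-\pi,\pi]$, so $\|h(y)\| \le \pi$ and $h := (h(y))_y$ is a self-adjoint element of $M_n(\ell^\infty(Y))$; then $t \mapsto \exp(ith)$ is a norm-continuous path of unitaries from $1$ to $u$, whence $K_1 = 0$.

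The computation is entirely elementary; the only point requiring care — and the one I would regard as the main obstacle — is the injectivity argument for the rank map, where one must exploit the uniform bound on the ranks to realise the Murray--von Neumann equivalences inside a \emph{fixed} matrix size $M_N(\ell^\infty(Y))$ (and with uniformly bounded partial isometries, so that they genuinely define elements of $\ell^\infty$) rather than only pointwise. An alternative, slicker route is to note that $\ell^\infty(Y) \cong C(\beta Y)$, where $\beta Y$ is the Stone--Čech compactification of the discrete space $Y$, a compact Hausdorff totally disconnected space, and to invoke the standard computation that for such a Stone space $K^0 \cong C(-,\IZ)$ (by continuity of topological $K$-theory under the inverse limit over finite discrete quotients) and $K^1 = 0$, together with the identification $C(\beta Y, \IZ) \cong \ell^\infty_\IZ(Y)$.
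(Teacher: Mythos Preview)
Your proof is correct and follows essentially the same approach as the paper: both reduce to the identification $C_u(Y) \cong \ell^\infty(Y) = \prod_{y \in Y} \IC$ for uniformly discrete $Y$ and then compute the operator $K$-theory of this product, with the paper simply citing \cite[Exercise 7.7.3]{higson_roe} for the latter step while you spell it out explicitly via the pointwise rank map and the logarithm argument. Your alternative route via $C(\beta Y)$ is also valid but needs a bit more justification than you give (e.g., that $K$-theory of a Stone space is computed by locally constant $\IZ$-valued functions).
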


The proof is an easy consequence of the fact that $C_u(Y) \cong \prod_{y \in Y} C(y) \cong \prod_{y \in Y} \IC$ for a uniformly discrete space $Y$, where the direct product of $C^\ast$-algebras is equipped with the pointwise algebraic operations and the sup-norm. The computation of the operator $K$-theory of $\prod_{y \in Y} \IC$ is now easily done (cf. \cite[Exercise 7.7.3]{higson_roe}).

And last, we will give a relation of uniform $K$-theory with amenability. Note that an analogous relation for bounded de Rham cohomology is already well-known, and also for other, similar (co-)homology theories (see, e.g., \cite[Section 8]{block_weinberger_large_scale}).

\begin{lem}
Let $M$ be a metric space with amenable fundamental group.

We let $X$ be the universal cover of $M$ and we denote the covering projection by $\pi\colon X \to M$. Then the pull-back map $K^\ast_u(M) \to K^\ast_u(X)$ is injective.
\end{lem}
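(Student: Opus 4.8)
The plan is to transplant to the $C^\ast$-algebra $C_u(-)$ the classical argument showing that pull-back in bounded cohomology along the universal covering of a space with amenable fundamental group is injective. Write $\Gamma := \pi_1(M)$ for the deck transformation group of $\pi$; since $X$ carries the pulled-back metric this group acts on $X$ by isometries, freely and properly discontinuously, with quotient $M$. The first step is to identify $\pi^\ast\colon C_u(M) \to C_u(X)$, $f \mapsto f \circ \pi$, with the inclusion of the fixed-point subalgebra $C_u(X)^\Gamma \hookrightarrow C_u(X)$: every pull-back is $\Gamma$-invariant, and conversely a $\Gamma$-invariant bounded function on $X$ descends along $\pi$ to a function on $M$ which is uniformly continuous because $\pi$ is a local isometry. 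It therefore suffices to prove that the inclusion $C_u(X)^\Gamma \hookrightarrow C_u(X)$ is injective on $K_0$ and on $K_1$.

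Amenability enters through an averaging map. Fixing a two-sided invariant mean $m$ on $\ell^\infty(\Gamma)$, define $E\colon C_u(X) \to C_u(X)$ by $(Eg)(x) := m\big(\gamma \mapsto g(\gamma x)\big)$. Because $\Gamma$ acts isometrically, one has $|g(\gamma x) - g(\gamma x')| \le \omega_g\big(d(x,x')\big)$ uniformly in $\gamma$, where $\omega_g$ is a modulus of uniform continuity of $g$, so $Eg$ is again uniformly continuous, and invariance of $m$ makes $Eg$ a $\Gamma$-invariant function. Thus $E$ is a unital, completely positive, $C_u(X)^\Gamma$-bimodule projection onto $C_u(X)^\Gamma$, i.e.\ a conditional expectation splitting $\pi^\ast$ at the level of operator systems. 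This is precisely the analogue of the averaging operator $\omega \mapsto m_\gamma(\gamma^\ast\omega)$ on bounded differential forms that underlies the corresponding statement for bounded de Rham cohomology.

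The main difficulty is that $E$, being an average, is not multiplicative, so it does not by itself induce a map on $K$-theory; moreover the mere existence of a completely positive retract is genuinely insufficient, since for instance $\mathbb{C}$ is a unital completely positive retract of $\mathcal{O}_2$ while the induced map $K_0(\mathbb{C}) \to K_0(\mathcal{O}_2)$ is not injective, its target being zero. What must be exploited is the full structure: $E$ is the average for an action of an \emph{amenable} group having $C_u(X)^\Gamma$ as its fixed-point algebra. Concretely, I would fix a \Folner sequence $(F_n)$ for $\Gamma$, amplify the inclusion by $\mathcal{K}(\ell^2\Gamma)$ carrying the regular representation of $\Gamma$, and use the rank-$|F_n|$ projections onto $\ell^2(F_n)$, which the \Folner condition renders asymptotically $\Gamma$-invariant in normalised trace. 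Cutting a $K$-theory cycle over $C_u(X)^\Gamma$ down by these projections, averaging its $\Gamma$-translates, and repairing the resulting approximate projection (respectively approximate unitary) to an honest one by functional calculus yields, in the limit, a left inverse of $\pi^\ast$ on $K_0$ and $K_1$. This construction is the $C^\ast$-algebraic incarnation of a renormalised transfer along the fibres of $\pi$; it is the technical heart of the argument and runs entirely parallel to the Block--Weinberger treatment \cite{block_weinberger_large_scale} of the analogous bounded de Rham statement. The estimates involved are uniform in the relevant parameters because $\Gamma$ acts by isometries and because $m$ and the sets $F_n$ are fixed once and for all.
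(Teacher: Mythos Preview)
Your identification of the multiplicativity obstruction is sharp, and the $\mathcal{O}_2$ counterexample is well chosen. But the proposal has a genuine gap: the decisive final step---the ``renormalised transfer'' via F\o lner projections in $\mathcal{K}(\ell^2\Gamma)$, averaging $\Gamma$-translates, and ``repairing the resulting approximate projection \ldots\ to an honest one by functional calculus''---is only announced, not carried out. Phrases like ``I would fix\ldots'' describe a programme rather than a proof. The analytic content (why the averaged cycle is close to an honest projection or unitary, why the repaired class is independent of the choice of F\o lner sequence and of representatives, why the resulting map is a two-sided or even one-sided inverse on $K$-theory) is substantial and entirely absent. As written, this is a plausible outline, not an argument.

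The paper proceeds quite differently and far more directly: it fixes a fundamental domain $F\subset X$ and a F\o lner sequence $(E_i)$ in $\pi_1(M)$, sets
\[
f_i(y)=\frac{1}{\#E_i}\sum_{x\in\pi^{-1}(y)\cap E_i\cdot F} f(x),
\]
and defines $p(f)(y):=\tau\bigl(f_i(y)\bigr)$ for $\tau$ an ultrafilter limit, checking that $p\circ\pi^\ast=\id$ on $C_u(M)$. That is the whole proof. Your worry about multiplicativity applies here too: the paper presents $p$ as a linear left inverse and does not verify that it is a $\ast$-homomorphism (and indeed the averaging over the fibre visibly destroys products). So the paper elides exactly the point you raise, while you attempt to confront it and then stop short. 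If you want a complete argument along your lines you must actually execute the F\o lner-projection construction with estimates; alternatively, you could try to argue that in this commutative setting a positive unital $C_u(M)$-bimodule retract already forces injectivity on $K$-theory, but that too would need a proof, not an appeal to analogy with \cite{block_weinberger_large_scale}.
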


\begin{proof}
The projection $\pi$ induces a map $\pi^\ast \colon C_u(M) \to C_u(X)$ which then induces the pull-back map $K^\ast_u(M) \to K^\ast_u(X)$. We will prove the lemma by constructing a left inverse to the above map $\pi^\ast$, i.e., we will construct a map $p \colon C_u(X) \to C_u(M)$ with $p \circ \pi^\ast = \id \colon C_u(M) \to C_u(M)$.

Let $F \subset X$ be a fundamental domain for the action of the deck transformation group on $X$. Since $\pi_1(M)$ is amenable, we choose a \Folner sequence $(E_i)_i \subset \pi_1(M)$ in it. Now given a function $f \in C_u(X)$, we set
\[f_i(y) := \frac{1}{\card E_i} \sum_{x \in \pi^{-1}(y) \cap E_i \cdot F} f(x)\]
for $y \in M$. This gives us a sequence of functions $f_i$ on $M$, but they are in general not even continuous.

Now choosing a functional $\tau \in (\ell^\infty)^\ast$ associated to a free ultrafilter on $\IN$, we define $p(f)(y) := \tau(f_i(y))$. Due to the \Folner condition on $(E_i)_i$ all discontinuities that the functions $f_i$ may have vanish in the limit under $\tau$, and we get a bounded, uniformly continuous function $p(f)$ on $M$.

It is clear that $p$ is a left inverse to $\pi^\ast$.
\end{proof}

\subsection{Interpretation via vector bundles}\label{sec:interpretation_uniform_k_theory}

We will show now that if $M$ is a manifold of bounded geometry then we have a description of the uniform $K$-theory of $M$ via vector bundles of bounded geometry.

To show this, we first need to show that the operator $K$-theory of $C_u(M)$ coincides with the operator $K$-theory of $C_b^\infty(M)$. This is established via the following two lemmas.

\begin{lem}\label{lem:C_b_infty_local}
Let $M$ be a manifold of bounded geometry.

Then $C_b^\infty(M)$ is a local $C^\ast$-algebra\footnote{That is to say, it and all matrix algebras over it are closed under holomorphic functional calculus and its completion is a $C^\ast$-algebra.}.
\end{lem}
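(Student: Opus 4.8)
**Proof plan for Lemma (C_b^∞(M) is a local C*-algebra).**

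The plan is to verify the two requirements in the definition directly: first that $C_b^\infty(M)$ (and all matrix algebras over it) is closed under holomorphic functional calculus, and second that its norm completion is a $C^\ast$-algebra. For the second point, the natural candidate is $C_u(M)$, the $C^\ast$-algebra of bounded uniformly continuous functions on $M$; since $C_b^\infty(M) \subset C_u(M)$ (a $C^1$-bounded function is Lipschitz, hence uniformly continuous) and the inclusion is norm-dense (one mollifies using the uniform cover by normal coordinate charts and subordinate partition of unity from Lemma \ref{lem:nice_coverings_partitions_of_unity}, exactly as in the argument sketched before Lemma \ref{lem:PDO_-infinity_equal_quasilocal_smoothing}), the completion is indeed $C_u(M)$, which is a commutative $C^\ast$-algebra. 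This takes care of the analytic-completion half; the content is the first half.

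For closure under holomorphic functional calculus, first observe it suffices to treat matrix algebras $M_n(C_b^\infty(M)) = C_b^\infty(M; M_n(\mathbb{C}))$, since $C_b^\infty(M)$ itself is the case $n=1$. Let $a \in M_n(C_b^\infty(M))$, let $h$ be holomorphic on an open neighbourhood $U$ of the spectrum of $a$ inside the completion $M_n(C_u(M))$, and set $b := h(a)$ computed in $M_n(C_u(M))$ via the Riesz functional calculus $b = \frac{1}{2\pi i}\oint_\gamma h(z)(z - a)^{-1}\,dz$. I must show $b \in M_n(C_b^\infty(M))$, i.e. that every derivative of $b$ (in normal coordinates) is uniformly bounded. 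The key point is the pointwise description: for each $x \in M$, the spectrum of the matrix $a(x) \in M_n(\mathbb{C})$ is contained in the spectrum of $a$ in $M_n(C_u(M))$, hence in $U$, so $b(x) = h(a(x))$ computed by the matrix holomorphic functional calculus; and one can choose a single contour $\gamma \subset U$ enclosing $\operatorname{spec}(a)$ that works simultaneously for all $x$. Then $b(x) = \frac{1}{2\pi i}\oint_\gamma h(z)(z - a(x))^{-1}\,dz$, and differentiating under the integral sign reduces everything to estimating derivatives of the resolvent $x \mapsto (z - a(x))^{-1}$ for $z \in \gamma$.

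The main obstacle — and the only genuinely technical step — is this resolvent estimate: one must show that all normal-coordinate derivatives $\partial^\alpha \big[(z - a(\cdot))^{-1}\big]$ are bounded uniformly in $x \in M$ and $z \in \gamma$. The formula for differentiating an inverse, iterated, expresses $\partial^\alpha (z-a)^{-1}$ as a universal polynomial (with coefficients depending only on $|\alpha|$ and $n$) in the entries of $(z-a)^{-1}$ and the entries of $\partial^\beta a$ for $|\beta| \le |\alpha|$; for instance $\partial_i (z-a)^{-1} = (z-a)^{-1}(\partial_i a)(z-a)^{-1}$. The derivatives $\partial^\beta a$ are uniformly bounded since $a \in C_b^\infty$, and $\|(z-a(x))^{-1}\| \le \operatorname{dist}(z, \operatorname{spec} a)^{-1} \le C$ uniformly over $x \in M$ and $z \in \gamma$ because $\gamma$ stays a fixed positive distance from $\operatorname{spec}(a)$ and the matrices $a(x)$ are uniformly bounded in norm (again by $C_b^\infty$-boundedness, in fact $C^0$-boundedness). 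Combining these with the arc length of $\gamma$ and the sup of $|h|$ on $\gamma$ yields a uniform bound on $\partial^\alpha b$ for every multi-index $\alpha$, so $b \in M_n(C_b^\infty(M))$. One small caveat to address is the independence of all these bounds from the choice of normal coordinate chart: this is exactly guaranteed by the bounded geometry of $M$ via Lemma \ref{lem:transition_functions_uniformly_bounded}, so the notion of $C_b^\infty$ is chart-independent and the estimates above are genuinely uniform. This completes the plan.
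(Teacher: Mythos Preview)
Your proof is correct in substance but takes a genuinely different route from the paper's. The paper does not compute derivatives of the Cauchy integral at all; instead it invokes two results of Schweitzer. First, since $C_b^\infty(M)$ is a \Frechet algebra whose topology is finer than the sup-norm, \cite[Corollary 2.3]{schweitzer} reduces closure under holomorphic functional calculus for \emph{all} matrix algebras to the scalar case. Second, \cite[Lemma 1.2]{schweitzer} reduces the scalar case to checking that whenever $a \in C_b^\infty(M)$ is invertible in the completion, its inverse $a^{-1}$ already lies in $C_b^\infty(M)$ --- which is immediate from the chain rule applied to $1/a$. For the completion, the paper says only that $C_b^\infty(M)$ sits inside the $C^\ast$-algebra $C_b(M)$, so its closure is automatically a $C^\ast$-algebra; identifying that closure as $C_u(M)$ is the content of the \emph{next} lemma (Lemma~\ref{lem:norm_completion_C_b_infty}), which you have folded in.

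Your direct approach via the Riesz integral is more elementary and self-contained (no external citations), and it handles matrices and scalars uniformly; the paper's approach is shorter and highlights the general \Frechet-algebra mechanism. One small correction: the inequality $\|(z-a(x))^{-1}\| \le \operatorname{dist}(z,\operatorname{spec} a)^{-1}$ you wrote is not valid for non-normal elements. What you actually need (and what your argument really uses) is just that $(z-a)^{-1}$ exists in $M_n(C_u(M))$ for each $z \in \gamma$, hence $\sup_x \|(z-a(x))^{-1}\| < \infty$, and then compactness of $\gamma$ together with norm-continuity of the resolvent gives the uniform bound over $z \in \gamma$. With that fix the argument goes through as you describe.
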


\begin{proof}
Since $C_b^\infty(M)$ is a $^\ast$-subalgebra of the $C^\ast$-algebra $C_b(M)$ of bounded continuous functions on $M$, then norm completion of $C_b^\infty(M)$, i.e., its closure in $C_b(M)$, is surely a $C^\ast$-algebra.

So we have to show that $C_b^\infty(M)$ and all matrix algebras over it are closed under holomorphic functional calculus. Since $C_b^\infty(M)$ is naturally a \Frechet algebra with a \Frechet topology which is finer than the sup-norm topology, by \cite[Corollary 2.3]{schweitzer}\footnote{The corollary states that under the condition that the topology of a \Frechet algebra $A$ is finer than the sup-norm topology we may conclude that if $A$ is closed under holomorphic functional calculus, then this holds also for all matrix algebras over $A$.} it remains to show that $C_b^\infty(M)$ itself is closed under holomorphic functional calculus.

But that $C_b^\infty(M)$ is closed under holomorphic functional calculus is easily seen using \cite[Lemma 1.2]{schweitzer}, which states that a unital \Frechet algebra $A$ with a topology finer than the sup-norm topology is closed under functional calculus if and only if the inverse $a^{-1} \in \overline{A}$ of any invertible element $a \in A$ actually lies in $A$.
\end{proof}

\begin{lem}\label{lem:norm_completion_C_b_infty}
Let $M$ be a manifold of bounded geometry.

Then the sup-norm completion of $C_b^\infty(M)$ is the $C^\ast$-algebra $C_u(M)$ of bounded, uniformly continuous functions on $M$.
\end{lem}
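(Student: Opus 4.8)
The plan is to prove the two inclusions $\overline{C_b^\infty(M)} \subseteq C_u(M)$ and $C_u(M) \subseteq \overline{C_b^\infty(M)}$, where the closure is taken inside $C_b(M)$ with respect to the sup-norm.

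For the first inclusion I would begin by noting that $C_b^\infty(M) \subseteq C_u(M)$: if $f \in C_b^1(M)$ then $\|\nabla f\|_\infty < \infty$, and integrating $f$ along near-minimizing paths within a connected component shows that $f$ is $\|\nabla f\|_\infty$-Lipschitz, hence uniformly continuous. Since $C_u(M)$ is a norm-closed $^\ast$-subalgebra of $C_b(M)$, passing to closures yields $\overline{C_b^\infty(M)} \subseteq C_u(M)$.

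The substance of the lemma is the reverse inclusion, i.e.\ that every $f \in C_u(M)$ is a sup-norm limit of functions in $C_b^\infty(M)$. Fix $\varepsilon > 0$ and choose $\delta > 0$ with $|f(x) - f(y)| < \varepsilon$ whenever $d(x,y) < \delta$. Pick $0 < \varepsilon_0 < \min(\delta/2, \tfrac{\injrad_M}{3})$ and a uniform cover of $M$ by normal coordinate balls of radius $\varepsilon_0$ together with a subordinate partition of unity $1 = \sum_i \varphi_i$ whose derivatives are uniformly bounded in normal coordinates, as supplied by Lemma \ref{lem:nice_coverings_partitions_of_unity}. Fixing a mollifier $\psi$ on $\IR^m$ supported in a small ball, I would then mollify each $\varphi_i f$ in its normal chart and reassemble the pieces to a function $g := \sum_i g_i$ on $M$, exactly as in the smoothing step in the proof of the implication $4 \Rightarrow 3$ of Lemma \ref{lem:kasparov_lemma_uniform_approx_manifold}.

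Two verifications remain. First, $\|f - g\|_\infty \le C\varepsilon$ with $C$ independent of $\varepsilon$: because $\sum_i \varphi_i = 1$ and the mollification is carried out at a scale below $\delta$, at each point $g$ is a weighted average of values $f(y)$ with $d(x,y) < \delta$, so $|g(x) - f(x)| < \varepsilon$ up to the uniformly bounded overlap multiplicity of the cover. Second, $g \in C_b^\infty(M)$: each $g_i$ is smooth, and its covariant derivatives are estimated in normal coordinates in terms of the derivatives of $\psi$, the uniform bounds on the derivatives of the $\varphi_i$, and — when expressing a given $g_i$ in a neighbouring chart — the uniform bounds on the transition functions and all their derivatives provided by Lemma \ref{lem:transition_functions_uniformly_bounded}; combined with the uniform local finiteness of the cover this gives bounds on $\|\nabla^k g\|_\infty$ depending only on $k$, $\varepsilon_0$ and $\psi$, not on the point. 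I expect this last bookkeeping — tracking the chart transitions so that the derivative bounds for $g$ are genuinely uniform over all of $M$ — to be the main obstacle, though all the needed uniformity statements are already in place. Letting $\varepsilon \to 0$ then displays $f$ as a sup-norm limit of elements of $C_b^\infty(M)$, which finishes the proof.
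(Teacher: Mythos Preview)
Your proposal is correct and follows essentially the same approach as the paper: cut $f$ with a uniformly bounded partition of unity subordinate to a normal-coordinate cover (Lemma~\ref{lem:nice_coverings_partitions_of_unity}), mollify each piece in Euclidean coordinates, and reassemble. The paper organizes the approximation slightly differently---it fixes the cover once and lets the mollifier scale $\varepsilon \to 0$, using uniform equicontinuity of the family $\{\varphi_i f\}$ for the sup-norm convergence and the Young-type bound $\|\varphi_i f \ast D^\alpha \psi_\varepsilon\|_\infty \le \|\varphi_i f\|_\infty \cdot \|D^\alpha \psi_\varepsilon\|_1$ for the derivative control---whereas you adapt the cover to the given $\varepsilon$; but the underlying argument is the same.
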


\begin{proof}
We surely have $\overline{C_b^\infty(M)} \subset C_u(M)$. To show the converse inclusion, we have to approximate a bounded, uniformly continuous function by a smooth one with bounded derivatives. This can be done by choosing a nice cover of $M$ with subordinate partitions of unity via Lemma \ref{lem:nice_coverings_partitions_of_unity} and then apply in every coordinate chart the same mollifier to the uniformly continous function.

Let us elaborate a bit more on the last sentence of the above paragraph: after choosing the nice cover and cutting a function $f \in C_u(M)$ with the subordinate partition of unity $\{\varphi_i\}$, we have transported the problem to Euclidean space $\IR^n$ and our family of functions $\varphi_i f$ is uniformly equicontinuous (this is due to the uniform continuity of $f$ and will be crucially important at the end of this proof). Now let $\psi$ be a mollifier on $\IR^n$, i.e., a smooth function with $\psi \ge 0$, $\supp \psi \subset B_1(0)$, $\int_{\IR^n} \psi d\lambda = 1$ and $\psi_\varepsilon := \varepsilon^{-n} \psi(\largecdot / \varepsilon) \stackrel{\varepsilon \to 0}\longrightarrow \delta_0$. Since convolution satisfies $D^\alpha (\varphi_i f \ast \psi_\varepsilon) = \varphi_i f \ast D^\alpha \psi_\varepsilon$, where $D^\alpha$ is a directional derivative on $\IR^n$ in the directions of the multi-index $\alpha$ and of order $|\alpha|$, we conclude that every mollified function $\varphi_i f \ast \psi_\varepsilon$ is smooth with bounded derivatives. Furthermore, we know $\| \varphi_i f \ast D^\alpha \psi_\epsilon \|_\infty \le \| \varphi_i f \|_\infty \cdot \| D^\alpha \psi_\varepsilon \|_1$ from which we conclude that the bounds on the derivatives of $\varphi_i f \ast \psi_\varepsilon$ are uniform in $i$, i.e., if we glue the functions $\varphi_i f \ast \psi_\epsilon$ together to a function on the manifold $M$ (note that the functions $\varphi_i f \ast \psi_\epsilon$ are supported in our chosen nice cover since convolution with $\psi_\varepsilon$ enlarges the support at most by $\varepsilon$), we get a function $f_\varepsilon \in C_b^\infty(M)$. It remains to show that $f_\varepsilon$ converges to $f$ in sup-norm, which is equivalent to the statement that $\varphi_i f \ast \psi_\varepsilon$ converges to $\varphi_i f$ in sup-norm and uniformly in the index $i$. But we know that
\[ \big| (\varphi_i f \ast \psi_\epsilon) (x) - (\varphi_i f) (x) \big| \le \sup_{\substack{x \in \supp \varphi_i f\\y \in B_\varepsilon(0)}} \big| (\varphi_i f) (x - y) - (\varphi_i f) (x) \big| \]
from which the claim follows since the family of functions $\varphi_i f$ is uniformly equicontinuous (recall that this followed from the uniform continuity of $f$ and this here is actually the only point in this proof where we need that property of $f$).
\end{proof}

Since $C_b^\infty(M)$ is an $m$-convex \Frechet algebra\footnote{That is to say, a \Frechet algebra such that its topology is given by a countable family of submultiplicative seminorms.}, we can also use the $K$-theory for $m$-convex \Frechet algebras as developed by Phillips in \cite{phillips} to define the $K$-theory groups of $C_b^\infty(M)$. But this produces the same groups as operator $K$-theory, since $C_b^\infty(M)$ is an $m$-convex \Frechet algebra with a finer topology than the norm topology and therefore its $K$-theory for $m$-convex \Frechet algebras coincides with its operator $K$-theory by \cite[Corollary 7.9]{phillips}.

We summarize this observations in the following lemma:

\begin{lem}\label{lem:equivalent_defns_uniform_k_theory}
Let $M$ be a manifold of bounded geometry.

Then the operator $K$-theory of $C_u(M)$, the operator $K$-theory of $C_b^\infty(M)$ and Phillips $K$-theory for $m$-convex \Frechet algebras of $C_b^\infty(M)$ are all pairwise naturally isomorphic.
\end{lem}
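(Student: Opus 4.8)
The plan is simply to assemble the three preceding lemmas together with two standard comparison theorems; no essentially new argument is required. First I would observe that by Lemma~\ref{lem:C_b_infty_local} the algebra $C_b^\infty(M)$ is a local $C^\ast$-algebra, hence a dense $^\ast$-subalgebra of its sup-norm completion which, together with all its matrix algebras, is closed under holomorphic functional calculus. By the density theorem for spectrally invariant dense subalgebras, the inclusion $C_b^\infty(M) \hookrightarrow \overline{C_b^\infty(M)}$ then induces an isomorphism on operator $K$-theory in all degrees. Combining this with Lemma~\ref{lem:norm_completion_C_b_infty}, which identifies $\overline{C_b^\infty(M)}$ with $C_u(M)$, yields a natural isomorphism between the operator $K$-theory of $C_b^\infty(M)$ and the operator $K$-theory of $C_u(M)$, the latter being by definition the uniform $K$-theory of $M$.

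Second, for the comparison with Phillips's $K$-theory I would use that $C_b^\infty(M)$ is an $m$-convex \Frechet algebra whose \Frechet topology (generated by the $C_b^r$-seminorms, or by submultiplicative variants thereof) is strictly finer than the sup-norm topology it inherits from $C_u(M)$. Under precisely these hypotheses Phillips proves in \cite[Corollary~7.9]{phillips} that his $K$-theory for $m$-convex \Frechet algebras agrees with operator $K$-theory; applying this to $C_b^\infty(M)$ supplies the third group together with the third isomorphism, closing the triangle.

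Finally I would remark that every comparison map occurring here is a canonical one --- the inclusion $C_b^\infty(M) \hookrightarrow C_u(M)$, the completion map, and Phillips's comparison transformation --- so the resulting isomorphisms are natural with respect to the (co)functoriality these algebras carry, in particular with respect to the maps for which uniform $K$-theory was shown to be functorial. The only point demanding a little care is that the hypotheses of \cite[Corollary~7.9]{phillips} and of the results of Schweitzer invoked in Lemma~\ref{lem:C_b_infty_local} are phrased for \Frechet topologies given by \emph{submultiplicative} seminorms; on a manifold of bounded geometry this is harmless, since the standard $C_b^r$-seminorms may be replaced by equivalent submultiplicative ones, leaving the \Frechet algebra structure unchanged. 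I do not expect any genuine obstacle here, as the substance of the statement was already dispatched in Lemmas~\ref{lem:C_b_infty_local} and~\ref{lem:norm_completion_C_b_infty}.
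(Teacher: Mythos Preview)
Your proposal is correct and follows essentially the same route as the paper: the paper also derives the first isomorphism from Lemmas~\ref{lem:C_b_infty_local} and~\ref{lem:norm_completion_C_b_infty} via the standard density theorem for local $C^\ast$-algebras, and the second from \cite[Corollary~7.9]{phillips} using that the \Frechet topology on $C_b^\infty(M)$ is finer than the sup-norm topology. Your added remarks on naturality and on the submultiplicativity of the seminorms are more explicit than what the paper records, but they do not change the argument.
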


So we have shown $K^\ast_u(M) \cong K_{-\ast}(C_b^\infty(M))$. In order to conclude the description via vector bundles of bounded geometry, we will need to establish the correspondence between vector bundles of bounded geometry and idempotent matrices with entries in $C_b^\infty(M)$. This will be done in the next subsections.

\subsubsection*{Isomorphism classes and complements}

Let $M$ be a manifold of bounded geometry and $E$ and $F$ two complex vector bundles equipped with Hermitian metrics and compatible connections.

\begin{defn}[$C^\infty$-boundedness / $C_b^\infty$-isomorphy of vector bundle homomorphisms]\label{defn:C_infty_bounded}
We will call a vector bundle homomorphism $\varphi\colon E \to F$ \emph{$C^\infty$-bounded}, if with respect to synchronous framings of $E$ and $F$ the matrix entries of $\varphi$ are bounded, as are all their derivatives, and these bounds do not depend on the chosen base points for the framings or the synchronous framings themself.

$E$ and $F$ will be called \emph{$C_b^\infty$-isomorphic}, if there exists an isomorphism $\varphi\colon E \to F$ such that both $\varphi$ and $\varphi^{-1}$ are $C^\infty$-bounded. In that case we will call the map $\varphi$ a $C_b^\infty$-isomorphism. Often we will write $E \cong F$ when no confusion can arise with mistaking it with algebraic isomorphy.
\qed
\end{defn}

Using the characterization of bounded geometry via the matrix transition functions from Lemma \ref{lem:equiv_characterizations_bounded_geom_bundles}, we immediately see that if $E$ and $F$ are $C_b^\infty$-isomorphic, than $E$ is of bounded geometry if and only if $F$ is.

It is clear that $C_b^\infty$-isomorphy is compatible with direct sums and tensor products, i.e., if $E \cong E^\prime$ and $F \cong F^\prime$ then $E \oplus F \cong E^\prime \oplus F^\prime$ and $E \otimes F \cong E^\prime \otimes F^\prime$.

We will now give a useful global characterization of $C_b^\infty$-isomorphisms if the vector bundles have bounded geometry:

\begin{lem}\label{lem:C_b_infty_Iso_equivalent}
Let $E$ and $F$ have bounded geometry and let $\varphi\colon E \to F$ be an isomorphism. Then $\varphi$ is a $C_b^\infty$-isomorphism if and only if
\begin{itemize}
\item $\varphi$ and $\varphi^{-1}$ are bounded, i.e., $\|\varphi(v)\| \le C \cdot \|v\|$ for all $v \in E$ and a fixed $C > 0$ and analogously for $\varphi^{-1}$, and
\item $\nabla^E - \varphi^\ast \nabla^F$ is bounded and also all its covariant derivatives.
\end{itemize}
\end{lem}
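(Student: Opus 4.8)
Everything in the statement is local and framing-independent, so I would fix a uniformly locally finite cover of $M$ by normal coordinate balls of some radius $r<\injrad_M$ and, over each ball centred at a point $x$, synchronous (radial parallel) framings $e$ of $E$ and $f$ of $F$. Because $\nabla^E,\nabla^F$ are metric-compatible, radial parallel transport is unitary, so these framings are orthonormal throughout the ball; hence the Hermitian metrics become the standard ones on $\IC^p$ and $\varphi$ is represented by an invertible matrix-valued function $\Phi$ with $\|\varphi_y(v)\|=|\Phi(y)v|$. With this set-up, ``$\varphi$ is a $C_b^\infty$-isomorphism'' means exactly that $\Phi$ and $\Phi^{-1}$ are bounded together with all their partial derivatives, uniformly in $x$ (Definition \ref{defn:C_infty_bounded}), while the first bullet of the lemma says precisely that $\Phi$ and $\Phi^{-1}$ are bounded at order zero, uniformly in $x$.

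\textbf{The key identity.} Let $\Gamma^E$ and $\Gamma^F$ be the matrix-valued connection one-forms of $\nabla^E$ and $\nabla^F$ in the synchronous framings. A direct computation shows that the connection one-form of the pulled-back connection $\varphi^\ast\nabla^F$ in the framing $e$ is $\Phi^{-1}d\Phi+\Phi^{-1}\Gamma^F\Phi$, so the endomorphism-valued one-form $A:=\nabla^E-\varphi^\ast\nabla^F$ has, in the orthonormal synchronous framings, the matrix
\[\mathcal A \;=\; \Gamma^E-\Phi^{-1}d\Phi-\Phi^{-1}\Gamma^F\Phi, \qquad\text{equivalently}\qquad d\Phi \;=\; \Phi\,\Gamma^E-\Gamma^F\,\Phi-\Phi\,\mathcal A.\]
By Lemma \ref{lem:equiv_characterizations_bounded_geom_bundles} together with Lemma \ref{lem:transition_functions_uniformly_bounded}, the bounded geometry of $M$, $E$ and $F$ means precisely that $\Gamma^E$, $\Gamma^F$ and the Christoffel symbols of the induced connection on $\End(E)\otimes T^\ast M$ are bounded with all partial derivatives, uniformly in $x$; hence, by the standard equivalence between boundedness of covariant derivatives of a bundle-valued tensor and boundedness of the partial derivatives of its components in normal coordinates and synchronous framings (as already used throughout, cf.\ the remark after the definition of $C^r$-bounded sections), the second bullet of the lemma is equivalent to: $\mathcal A$ is bounded with all partial derivatives, uniformly in $x$.

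\textbf{Conclusion.} After these translations the lemma becomes the algebraic assertion: assuming $\Gamma^E,\Gamma^F$ bounded with all derivatives, $\Phi$ and $\Phi^{-1}$ are bounded with all derivatives if and only if $\Phi,\Phi^{-1}$ are bounded at order zero and $\mathcal A$ is bounded with all derivatives. The forward direction is immediate, since then $\mathcal A$ is a sum of products of matrices bounded with all derivatives. For the converse I would bootstrap on $d\Phi=\Phi\Gamma^E-\Gamma^F\Phi-\Phi\mathcal A$: the right-hand side involves only $\Phi$ (bounded at order zero, by hypothesis) and coefficients bounded with all derivatives, so $d\Phi$ is bounded; differentiating this identity repeatedly and applying the Leibniz rule expresses $d^{k}\Phi$ through the $d^{j}\Phi$ with $j<k$ and derivatives of $\Gamma^E,\Gamma^F,\mathcal A$, so by induction on $k$ the matrix $\Phi$ is bounded with all derivatives. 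Then, starting from $\Phi^{-1}$ bounded at order zero and $d(\Phi^{-1})=-\Phi^{-1}(d\Phi)\Phi^{-1}$, the same inductive bootstrap gives that $\Phi^{-1}$ is bounded with all derivatives. Since every constant produced this way depends only on the constants in the hypotheses and on the bounded-geometry data of $M,E,F$, the resulting bounds are uniform in $x$, as required.

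\textbf{Main obstacle.} There is no conceptual difficulty; the content is bookkeeping. The two points needing genuine care are (i) keeping all constants uniform in the base point $x$ throughout the bootstrap, so that they depend only on the finitely many $x$-independent bounds in the hypotheses and on the bounded-geometry constants, and (ii) cleanly invoking the equivalence between boundedness of the covariant derivatives of the tensor $A$ and boundedness of the coordinate derivatives of its components $\mathcal A$ on a manifold of bounded geometry.
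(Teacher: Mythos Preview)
Your proposal is correct and follows essentially the same route as the paper: both derive the identity $d\Phi=\Phi\,\Gamma^E-\Gamma^F\,\Phi-\Phi\,\mathcal A$ in synchronous framings, use it directly for the forward direction, and bootstrap inductively on the order of differentiation for the converse. The only cosmetic difference is that the paper handles $\Phi^{-1}$ via Cramer's rule rather than your formula $d(\Phi^{-1})=-\Phi^{-1}(d\Phi)\Phi^{-1}$; both work equally well.
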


\begin{proof}
For a point $p \in M$ let $B \subset M$ be a geodesic ball centered at $p$, $\{ x_i \}$ the corresponding  normal coordinates of $B$, and let $\{ E_\alpha(y) \}$, $y \in B$, be a framing for $E$. Then we may write every vector field $X$ on $B$ as $X = X^i \frac{\partial}{\partial x_i} = (X^1 , \ldots, X^n)^T$ and every section $e$ of $E$ as $e = e^\alpha E_\alpha = (e^1, \ldots, e^k)^T$, where we assume the Einstein summation convention and where $\largecdot^T$ stands for the transpose of the vector (i.e., the vectors are actually column vectors). Furthermore, after also choosing a framing for $F$, $\varphi$ becomes a matrix for every $y \in B$ and $\varphi(e)$ is then just the matrix multiplication $\varphi(e) = \varphi \cdot e$. Finally, $\nabla^E_X e$ is locally given by
\[\nabla^E_X e = X(e) + \Gamma^E(X)\cdot e,\]
where $X(e)$ is the column vector that we get after taking the derivative of every entry $e^j$ of $e$ in the direction of $X$ and $\Gamma^E$ is a matrix of $1$-forms (i.e., $\Gamma^E(X)$ is then a usual matrix that we multiply with the vector $e$). The entries of $\Gamma^E$ are called the connection $1$-forms.

Since $\varphi$ is an isomorphism, the pull-back connection $\varphi^\ast \nabla^F$ is given by\footnote{Note that $\varphi$ is a morphism of vector bundles, i.e., the following diagram commutes:
\[\xymatrix{E \ar[rr]^\varphi \ar[dr] & & F \ar[dl]\\ & M &}\]
This means that $\varphi$ descends to the identity on $M$, i.e., in Equation \eqref{eq:vect_iso} the vector field $X$ occurs on both the left and the right hand side (since actually we have $(\varphi^{-1})^\ast X$ on the right hand side).}
\begin{equation}
\label{eq:vect_iso}
(\varphi^\ast \nabla^F)_X e = \varphi^\ast (\nabla^F_X (\varphi^{-1})^\ast e),
\end{equation}
so that locally we get
\[(\varphi^\ast \nabla^F)_X e = \varphi^{-1}\cdot \big( X(\varphi \cdot e) + \Gamma^F(X) \cdot \varphi \cdot e\big).\]
Using the product rule we may rewrite $X(\varphi \cdot e) = X(\varphi) \cdot e + \varphi \cdot X(e)$, where $X(\varphi)$ is the application of $X$ to every entry of $\varphi$. So at the end we get for the difference $\nabla^E - \varphi^\ast \nabla^F$ in local coordinates and with respect to framings of $E$ and $F$
\begin{equation}\label{eq:difference_connections_local}
(\nabla^E - \varphi^\ast \nabla^F)_X e = \Gamma^E(X) \cdot e - \varphi^{-1} \cdot X(\varphi) \cdot e - \varphi^{-1} \cdot \Gamma^F(X) \cdot \varphi \cdot e.
\end{equation}

Since $E$ and $F$ have bounded geometry, by Lemma \ref{lem:equiv_characterizations_bounded_geom_bundles} the Christoffel symbols of them with respect to synchronous framings are bounded and also all their derivatives, and these bounds are independent of the point $p \in M$ around that we choose the normal coordinates and the framings. Assuming that $\varphi$ is a $C_b^\infty$-isomorphism, the same holds for the matrix entries of $\varphi$ and $\varphi^{-1}$ and we conclude with the above Equation \eqref{eq:difference_connections_local} that the difference $\nabla^E - \varphi^\ast \nabla^F$ is bounded and also all its covariant derivatives (here we also need to consult the local formula for covariant derivatives of tensor fields).

Conversely, assume that $\varphi$ and $\varphi^{-1}$ are bounded and that the difference $\nabla^E - \varphi^\ast \nabla^F$ is bounded and also all its covariant derivatives. If we denote by $\Gamma^{\text{diff}}$ the matrix of $1$-forms given by
\[\Gamma^{\text{diff}}(X) = \Gamma^E(X) - \varphi^{-1} \cdot X(\varphi) - \varphi^{-1} \cdot \Gamma^F(X) \cdot \varphi,\]
we get from Equation \eqref{eq:difference_connections_local}
\[X(\varphi) = \varphi \cdot (\Gamma^E(X) - \Gamma^{\text{diff}}(X)) - \Gamma^F(X) \cdot \varphi.\]
Since we assumed that $\varphi$ is bounded, its matrix entries must be bounded. From the above equation we then conclude that also the first derivatives of these matrix entries are bounded. But now that we know that the entries and also their first derivatives are bounded, we can differentiate the above equation once more to conclude that also the second derivatives of the matrix entries of $\varphi$ are bounded, on so on. This shows that $\varphi$ is $C^\infty$-bounded. At last, it remains to see that the matrix entries of $\varphi^{-1}$ and also all their derivatives are bounded. But since locally $\varphi^{-1}$ is the inverse matrix of $\varphi$, we just have to use Cramer's rule.
\end{proof}

An important property of vector bundles over compact spaces is that they are always complemented, i.e., for every bundle $E$ there is a bundle $F$ such that $E \oplus F$ is isomorphic to the trivial bundle. Note that this fails in general for non-compact spaces. So our important task is now to show that we have an analogous proposition for vector bundles of bounded geometry, i.e., that they are always complemented (in a suitable way).

\begin{defn}[$C_b^\infty$-complemented vector bundles]
A vector bundle $E$ will be called \emph{$C_b^\infty$-complemented}, if there is some vector bundle $E^\perp$ such that $E \oplus E^\perp$ is $C_b^\infty$-isomorphic to a trivial bundle with the flat connection.
\qed
\end{defn}

Since a bundle with a flat connection is trivially of bounded geometry, we get that $E \oplus E^\perp$ is of bounded geometry. And since a direct sum $E \oplus E^\perp$ of vector bundles is of bounded geometry if and only if both vector bundles $E$ and $E^\perp$ are of bounded geometry, we conclude that if $E$ is $C_b^\infty$-complemented, then both $E$ and its complement $E^\perp$ are of bounded geometry. It is also clear that if $E$ is $C_b^\infty$-complemented and $F \cong E$, then $F$ is also $C_b^\infty$-complemented.

We will now prove the crucial fact that every vector bundle of bounded geometry is $C_b^\infty$-complemented. The proof is just the usual one for vector bundles over compact Hausdorff spaces, but we additionally have to take care of the needed uniform estimates. As a source for this usual proof the author used \cite[Proposition 1.4]{hatcher_VB}. But first we will need a technical lemma.

\begin{lem}\label{lem:coloring_graph}
Let a covering $\{U_\alpha\}$ of $M$ with finite multiplicity be given. Then there exists a coloring of the subsets $U_\alpha$ with finitely many colors such that no two intersecting subsets have the same color.
\end{lem}

\begin{proof}
Construct a graph whose vertices are the subsets $U_\alpha$ and two vertices are connected by an edge if the corresponding subsets intersect. We have to find a coloring of this graph with only finitely many colors where connected vertices do have different colors.

To do this, we firstly use the theorem of de Bruijin--Erd\"{o}s stating that an infinite graph may be colored by $k$ colors if and only if every of its finite subgraphs may be colored by $k$ colors (one can use the Lemma of Zorn to prove this).

Secondly, since the covering has finite multiplicity it follows that the number of edges attached to each vertex in our graph is uniformly bounded from above, i.e., the maximum vertex degree of our graph is finite. But this also holds for every subgraph of our graph, with the maximum vertex degree possibly only decreasing by passing to a subgraph. Now a simple greedy algorithm shows that every finite graph may be colored with one more color than its maximum vertex degree: just start by coloring a vertex with some color, go to the next vertex and use an admissible color for it, and so on.
\end{proof}

\begin{prop}\label{prop:every_bundle_complemented}
Let $M$ be a manifold of bounded geometry and let $E \to M$ be a vector bundle of bounded geometry.

Then $E$ is $C_b^\infty$-complemented.
\end{prop}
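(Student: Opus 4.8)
The plan is to follow the classical argument (as in Hatcher's Proposition 1.4) that a vector bundle over a compact base is complemented, but to carry it out with explicit uniform control on all the data so that the gluing is done through a partition of unity subordinate to a \emph{uniform} cover, with uniformly bounded Christoffel symbols. First I would invoke Lemma \ref{lem:nice_coverings_partitions_of_unity} to fix, for a suitable $0 < \varepsilon < \tfrac{\injrad_M}{3}$, a cover of $M$ by normal coordinate balls $B_{2\varepsilon}(x_i)$ with midpoints forming a uniformly discrete set and with the $\{B_{2\varepsilon}(x_i)\}$ uniformly locally finite, together with a subordinate partition of unity $1 = \sum_i \varphi_i$ whose derivatives are uniformly bounded in normal coordinates. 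Since $E$ has bounded geometry, by Lemma \ref{lem:equiv_characterizations_bounded_geom_bundles} we may also fix synchronous framings of $E$ over each $B_{2\varepsilon}(x_i)$ whose transition functions (and all derivatives) are uniformly bounded. Over $B_{2\varepsilon}(x_i)$ the framing gives a trivialization $h_i \colon E|_{B_{2\varepsilon}(x_i)} \to B_{2\varepsilon}(x_i) \times \IC^k$, and the point is that this $h_i$ together with $h_i^{-1}$ is $C^\infty$-bounded with bounds independent of $i$.

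Next I would assemble the global map. Writing $N$ for a uniform bound on the number of indices $j$ with $B_{2\varepsilon}(x_i) \cap B_{2\varepsilon}(x_j) \neq \emptyset$, color the index set with finitely many colors $\{1,\dots,N\}$ (a standard consequence of uniform local finiteness) so that balls of the same color are disjoint. For each color $c$ form $\Phi_c := \bigoplus_{i \text{ of color } c}\varphi_i\cdot h_i$, which is defined on all of $M$ (the summands have disjoint supports) and maps $E$ to the trivial bundle $M \times \IC^k$; because the $\varphi_i$ and the $h_i$ are uniformly $C^\infty$-bounded, $\Phi_c$ is $C^\infty$-bounded. Then set $\Phi := (\Phi_1,\dots,\Phi_N)\colon E \to M \times \IC^{Nk}$. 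On each point $x \in M$ at least one $\varphi_i(x) > 0$ with $h_i$ injective, so $\Phi$ is fiberwise injective; hence $E^\perp := \Phi(E)^\perp$, the fiberwise orthogonal complement of the image subbundle inside $M \times \IC^{Nk}$ with the flat Hermitian metric, is a smooth subbundle with $E \oplus E^\perp \cong M \times \IC^{Nk}$ algebraically. It remains to check that this algebraic isomorphism (and its inverse) is a $C_b^\infty$-isomorphism, for which I would use the criterion of Lemma \ref{lem:C_b_infty_Iso_equivalent}: boundedness of the isomorphism and its inverse follows from a uniform lower bound on $\sum_i \varphi_i(x)\|h_i\|$-type quantities (the $\varphi_i$ sum to $1$ and the $h_i$ are uniformly bi-Lipschitz on fibers), and boundedness of the connection difference together with all covariant derivatives follows because the orthogonal projection $M \times \IC^{Nk} \to \Phi(E)$ is given by $P = \Phi(\Phi^\ast\Phi)^{-1}\Phi^\ast$, a smooth expression in the uniformly $C^\infty$-bounded entries of $\Phi$ whose fiberwise invertible factor $\Phi^\ast\Phi$ is bounded below, so $P$ and all its derivatives are uniformly bounded; the induced connection on $E$ and on $E^\perp$ is then $P\circ d$ resp. $(1-P)\circ d$, differing from the flat connection by the uniformly bounded $1$-form $dP$.

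The main obstacle I expect is precisely the last uniformity bookkeeping: one must verify that $\Phi^\ast \Phi \in \End(E)$ is bounded \emph{below} by a positive constant uniformly over $M$ (so that $(\Phi^\ast\Phi)^{-1}$ and hence the projection $P$ is $C^\infty$-bounded via Cramer's rule), and that all the higher covariant derivatives of $P$ — which involve products of derivatives of the $\varphi_i$ and of the framing maps $h_i$ — stay bounded independently of the point and of the index $i$. This is where the hypotheses of bounded geometry of $M$ (uniform cover and partition of unity from Lemma \ref{lem:nice_coverings_partitions_of_unity}) and of $E$ (uniformly bounded synchronous framings and Christoffel symbols from Lemma \ref{lem:equiv_characterizations_bounded_geom_bundles}) enter decisively; everything else is the textbook compact-case argument.
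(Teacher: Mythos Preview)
Your proposal is correct and follows essentially the same route as the paper: choose a uniform cover with partition of unity and synchronous framings via Lemma~\ref{lem:nice_coverings_partitions_of_unity} and Lemma~\ref{lem:equiv_characterizations_bounded_geom_bundles}, color the balls with finitely many colors so that same-colored balls are disjoint, and assemble the local trivializations into a fiberwise-injective map $E \to M \times \IC^{Nk}$. The only packaging difference is in the endgame: the paper observes that the resulting orthogonal projection $e$ onto the image has entries in $C_b^\infty(M)$ and then defers the remaining work to Proposition~\ref{prop:image_proj_matrix_complemented} (Point~1), whereas you carry out that step directly via the formula $P = \Phi(\Phi^\ast\Phi)^{-1}\Phi^\ast$ and argue that the uniform lower bound on $\Phi^\ast\Phi$ plus Cramer's rule gives $C^\infty$-boundedness of $P$ and hence of the induced connections. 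One caution: your invocation of Lemma~\ref{lem:C_b_infty_Iso_equivalent} presupposes that $E^\perp$ (with its induced connection) already has bounded geometry, which is part of what you are proving; it is cleaner to argue, as you in fact do, directly that $P \in \Idem_{Nk\times Nk}(C_b^\infty(M))$ and read off the $C_b^\infty$-isomorphism from that---this is exactly the content the paper isolates as Proposition~\ref{prop:image_proj_matrix_complemented}.
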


\begin{proof}
Since $M$ and $E$ have bounded geometry, we can find a uniformly locally finite cover of $M$ by normal coordinate balls of a fixed radius together with a subordinate partition of unity whose derivatives are all uniformly bounded and such that over each coordinate ball $E$ is trivialized via a synchronous framing. This follows basically from Lemma \ref{lem:nice_coverings_partitions_of_unity}.

Now we the above Lemma \ref{lem:coloring_graph} to color the coordinate balls with finitely many colors so that no two balls with the same color do intersect. This gives a partition of the coordinate balls into $N$ families $U_1, \ldots, U_N$ such that every $U_i$ is a collection of disjoint balls, and we get a corresponding subordinate partition of unity $1 = \varphi_1 + \ldots + \varphi_N$ with uniformly bounded derivatives (each $\varphi_i$ is the sum of all the partition of unity functions of the coordinate balls of $U_i$). Furthermore, $E$ is trivial over each $U_i$ and we denote these trivializations coming from the synchronous framings by $h_i \colon p^{-1}(U_i) \to U_i \times \IC^k$, where $p\colon E \to M$ is the projection.

Now we set
\[g_i\colon E \to \IC^k, \ g_i(v) := \varphi_i(p(v)) \cdot \pi_i (h_i (v)),\]
where $\pi_i \colon U_i \times \IC^k \to \IC^k$ is the projection. Each $g_i$ is a linear injection on each fiber over $\varphi_i^{-1}(0,1]$ and so, if we define
\[g \colon E \to \IC^{Nk}, \ g(v) := (g_1(v), \ldots, g_N(v)),\]
we get a map $g$ that is a linear injection on each fiber of $E$. Finally, we define a map
\[G\colon E \to M \times \IC^{Nk}, \ G(v) := (p(v), g(v)).\]
This establishes $E$ as a subbundle of a trivial bundle.

If we equip $M \times \IC^{Nk}$ with a constant metric and the flat connection, we get that the induced metric and connection on $E$ is $C_b^\infty$-isomorphic to the original metric and connection on $E$ (this is due to our choice of $G$). Now let us denote by $e$ the projection matrix of the trivial bundle $\IC^{Nk}$ onto the subbundle $G(E)$ of it, i.e., $e$ is an $Nk \times Nk$-matrix with functions on $M$ as entries and $\image e = E$. Now, again due to our choice of $G$, we can conclude that these entries of $e$ are bounded functions with all derivatives of them also bounded, i.e., $e \in \Idem_{Nk \times Nk}(C_b^\infty(M))$. Now the claim follows with the Proposition \ref{prop:image_proj_matrix_complemented} which establishes the orthogonal complement $E^\perp$ of $E$ in $\IC^{Nk}$ with the induced metric and connection as a $C_b^\infty$-complement to $E$.
\end{proof}

We have seen in the above proposition that every vector bundle of bounded geometry is $C_b^\infty$-complemented. Now if we have a manifold of bounded geometry $M$, then its tangent bundle $TM$ is of bounded geometry and so we know that it is $C_b^\infty$-complemented (although $TM$ is real and not a complex bundle, the above proof of course also holds for real vector bundles). But in this case we usually want the complement bundle to be given by the normal bundle $NM$ coming from an embedding $M \hookrightarrow \IR^N$. We will prove this now under the assumption that the embedding of $M$ into $\IR^N$ is ``nice'':\footnote{See \cite{MO_iso_embedding_bounded_second_form} for a discussion of existence of ``nice'' embeddings.}

\begin{cor}\label{cor:tangent_bundle_complemented}
Let $M$ be a manifold of bounded geometry and let it be isometrically embedded into $\IR^N$ such that the second fundamental form is $C^\infty$-bounded.

Then its tangent bundle $TM$ is $C_b^\infty$-complemented by the normal bundle $NM$ corresponding to this embedding $M \hookrightarrow \IR^N$, equipped with the induced metric and connection.
\end{cor}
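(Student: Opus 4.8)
The plan is to deduce this corollary from Proposition~\ref{prop:every_bundle_complemented} together with an identification of the abstract $C_b^\infty$-complement with the concrete normal bundle $NM$. First I would observe that, since $M$ is isometrically embedded in $\IR^N$, we have a splitting $\IR^N|_M = TM \oplus NM$ (orthogonal direct sum of vector bundles over $M$), where $\IR^N|_M$ denotes the trivial bundle $M \times \IR^N$ equipped with the flat connection and the constant metric. So the essential content is that this is a $C_b^\infty$-splitting, i.e.\ that the orthogonal projection $e \colon M \times \IR^N \to M \times \IR^N$ onto $TM$ has entries in $C_b^\infty(M)$ (equivalently, by Lemma~\ref{lem:C_b_infty_Iso_equivalent}, that the inclusions $TM \hookrightarrow \IR^N|_M$ and $NM \hookrightarrow \IR^N|_M$ are bounded with all covariant derivatives bounded, which is automatic once the projection is $C^\infty$-bounded).

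The key step is therefore to control the projection $e$ in normal coordinate charts. Fix a point $p \in M$ and a normal coordinate chart of fixed radius around $p$; then $TM$ is framed synchronously by the coordinate vector fields $\partial_{x_i}$, which in $\IR^N$ are the vectors $\partial_i \iota$ where $\iota \colon M \hookrightarrow \IR^N$ is the embedding. The matrix of $e$ with respect to the standard framing of $\IR^N|_M$ is then a smooth function of the first derivatives of $\iota$ (via a Gram--Schmidt / Gram-matrix formula, using that the first fundamental form $g_{ij} = \langle \partial_i\iota, \partial_j\iota\rangle$ is $C^\infty$-bounded since $\iota$ is isometric and $M$ has bounded geometry). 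The derivatives of $e$ in the chart then involve the higher derivatives of $\iota$ in normal coordinates; by the Gauss formula these are expressible through the Christoffel symbols of $M$ in normal coordinates (bounded with all derivatives, by Lemma~\ref{lem:equiv_characterizations_bounded_geom_bundles} applied to $TM$, which has bounded geometry) together with the second fundamental form and its derivatives (bounded with all derivatives by hypothesis). Inductively differentiating the Gauss--Weingarten equations shows every derivative of $e$ is bounded uniformly in the chart, i.e.\ $e \in \Idem_{N\times N}(C_b^\infty(M))$ in the sense of the preceding subsection. Hence $G(v) = (p(v), \iota_\ast v)$ realizes $TM$ as a $C_b^\infty$-subbundle of the trivial flat bundle $\IR^N|_M$, with $C_b^\infty$-orthogonal complement $NM$ by Proposition~\ref{prop:image_proj_matrix_complemented}.

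The main obstacle I expect is the bookkeeping in the inductive estimate: one has to set up the Gauss--Weingarten relations in a normal coordinate chart and argue carefully that each additional derivative of $e$ introduces only already-controlled quantities (derivatives of the metric in normal coordinates, of the Christoffel symbols, and of the second fundamental form), with bounds independent of the base point $p$. Everything else is either a direct application of the already-proven Proposition~\ref{prop:every_bundle_complemented} and Proposition~\ref{prop:image_proj_matrix_complemented}, or the local characterization of $C_b^\infty$-isomorphisms from Lemma~\ref{lem:C_b_infty_Iso_equivalent}. A cleaner variant, which I would mention as an alternative, is to bypass the explicit computation entirely: Proposition~\ref{prop:every_bundle_complemented} already gives \emph{some} $C_b^\infty$-complement $E^\perp$ to $TM$; then it suffices to check that the bundle isomorphism $E^\perp \cong NM$ induced by the two embeddings into (possibly different) trivial bundles is a $C_b^\infty$-isomorphism, which again reduces via Lemma~\ref{lem:C_b_infty_Iso_equivalent} to boundedness of $\nabla^{NM}$ relative to the flat connection on $\IR^N|_M$ and all its covariant derivatives --- and this is precisely the Weingarten identity $\nabla^{\IR^N} = \nabla^{NM} \oplus(\text{second fundamental form terms})$ together with the boundedness hypothesis on the second fundamental form.
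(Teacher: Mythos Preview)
Your proposal is correct and follows essentially the same route as the paper: reduce to showing that the orthogonal projection $\pi$ onto $TM$ inside the trivial bundle $\IR^N|_M$ has $C_b^\infty$-entries, and then invoke Proposition~\ref{prop:image_proj_matrix_complemented}. The paper carries this out by writing $\pi_{ij}(y)=\sum_\alpha\langle E_\alpha(y),v_j\rangle\langle E_\alpha(y),v_i\rangle$ for an orthonormal frame $\{E_\alpha(y)\}$ obtained from the normal-coordinate fields by Gram--Schmidt, and then differentiating using the Gauss decomposition $\widetilde\nabla_{\partial_k}E_\alpha=\nabla^M_{\partial_k}E_\alpha+\operatorname{I\!\!\;I}(\partial_k,E_\alpha)$; this is exactly your ``inductively differentiate the Gauss--Weingarten relations'' step, just packaged with an explicit frame rather than with $\partial_i\iota$ and the Gram matrix. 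Your alternative via Lemma~\ref{lem:C_b_infty_Iso_equivalent} is a legitimate shortcut but not the one the paper takes.
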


\begin{proof}
Let $M$ be isometrically embedded in $\IR^N$. Then its tangent bundle $TM$ is a subbundle of $T\IR^N$ and we denote the projection onto it by $\pi\colon T\IR^N \to TM$. Because of Point 1 of the following Proposition \ref{prop:image_proj_matrix_complemented} it suffices to show that the entries of $\pi$ are $C^\infty$-bounded functions.

Let $\{v_i\}$ be the standard basis of $\IR^N$ and let $\{E_\alpha(y)\}$ be the orthonormal frame of $TM$ arising out of normal coordinates $\{\partial_k\}$ of $M$ via the Gram-Schmidt process. Then the entries of the projection matrix $\pi$ with respect to the basis $\{v_i\}$ are given by
\[\pi_{ij}(y) = \sum_\alpha \langle E_\alpha(y), v_j\rangle \langle E_\alpha(y), v_i\rangle.\]

Let $\widetilde{\nabla}$ denote the flat connection on $\IR^N$. Since $\widetilde{\nabla}_{\partial_k} v_i = 0$ we get
\[\partial_k \pi_{ij} (y) = \sum_\alpha \langle \widetilde{\nabla}_{\partial_k} E_\alpha (y), v_j\rangle \langle E_\alpha(y), v_i\rangle + \langle E_\alpha(y), v_j\rangle \langle \widetilde{\nabla}_{\partial_k} E_\alpha (y), v_i\rangle.\]
Now if we denote by $\nabla^M$ the connection on $M$, we get
\[\widetilde{\nabla}_{\partial_k} E_\alpha(y) = \nabla^{M}_{\partial_k} E_\alpha(y) + \operatorname{I\!\!\;I}(\partial_k, E_\alpha),\]
where $\operatorname{I\!\!\;I}$ is the second fundamental form. So to show that $\pi_{ij}$ is $C^\infty$-bounded, we must show that $E_\alpha(y)$ are $C^\infty$-bounded sections of $TM$ (since by assumption the second fundamental form is a $C^\infty$-bounded tensor field). But that these $E_\alpha(y)$ are $C^\infty$-bounded sections of $TM$ follows from their construction (i.e., applying Gram-Schmidt to the normal coordinate fields $\partial_k$) and because $M$ has bounded geometry.
\end{proof}

\subsubsection*{Interpretation of \texorpdfstring{$K^0_u(M)$}{even uniform K(M)}}

Recall for the understanding of the following proposition that if a vector bundle is $C_b^\infty$-complemented, then it is of bounded geometry. Furthermore, this proposition is the crucial one that gives us the description of uniform $K$-theory via vector bundles of bounded geometry.

\begin{prop}\label{prop:image_proj_matrix_complemented}
Let $M$ be a manifold of bounded geometry.
\begin{enumerate}
\item Let $e \in \Idem_{N \times N}(C_b^\infty(M))$ be an idempotent matrix.

Then the vector bundle $E := \image e$, equipped with the induced metric and connection, is $C_b^\infty$-complemented.

\item Let $E$ be a $C_b^\infty$-complemented vector bundle, i.e., there is a vector bundle $E^\perp$ such that $E \oplus E^\perp$ is $C_b^\infty$-isomorphic to the trivial $N$-dimensional bundle $\IC^N \to M$.

Then all entries of the projection matrix $e$ onto the subspace $E \oplus 0 \subset \IC^N$ with respect to a global synchronous framing of $\IC^N$ are $C^\infty$-bounded, i.e., we have $e \in \Idem_{N \times N}(C_b^\infty(M))$.
\end{enumerate}
\end{prop}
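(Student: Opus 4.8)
The plan is to transplant the classical correspondence between idempotent matrices and complemented vector bundles to the bounded-geometry setting, the only new ingredient being the uniform control of all estimates, for which the key tool is that $C_b^\infty(M)$ is closed under inversion of pointwise-invertible matrices (Lemma~\ref{lem:C_b_infty_local}). For Part~1, I would first replace the given idempotent $e$ by the self-adjoint idempotent
\[p := e e^\ast \big( e e^\ast + (1-e)^\ast (1-e) \big)^{-1},\]
which satisfies $\image p = \image e$ and still has entries in $C_b^\infty(M)$, since $e e^\ast + (1-e)^\ast(1-e)$ is pointwise invertible and hence invertible in $\Mat_{N\times N}(C_b^\infty(M))$ by Lemma~\ref{lem:C_b_infty_local}. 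Set $E := \image p \subset \IC^N$ with the metric and connection induced from the standard Hermitian metric and flat connection $d$ on the trivial bundle $\IC^N$, and $E^\perp := \image(1-p) = \ker p$, so that $E \oplus E^\perp = \IC^N$ fiberwise and orthogonally. The induced connections are the Grassmann connections $\nabla^E = p\,d\,p$ and $\nabla^{E^\perp} = (1-p)\,d\,(1-p)$, and for the tautological identification $\Phi \colon E \oplus E^\perp \to \IC^N$ a short computation gives
\[\big(\nabla^E \oplus \nabla^{E^\perp}\big) - \Phi^\ast d = (2p-1)\,dp,\]
an endomorphism-valued $1$-form which, in the constant framing of $\IC^N$, is a polynomial in $p$ and $dp$; hence it and all its covariant derivatives (again polynomials in $p$ and its derivatives) are $C^\infty$-bounded because $p \in \Mat_{N\times N}(C_b^\infty(M))$. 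Since $\Phi$ is an isometry and $\IC^N$ with the flat connection trivially has bounded geometry, and since the curvature of $\nabla^E \oplus \nabla^{E^\perp}$ and its covariant derivatives are likewise polynomial in $p$, $dp$ and their derivatives — so that $E \oplus E^\perp$ has bounded geometry by Lemma~\ref{lem:equiv_characterizations_bounded_geom_bundles} and Lemma~\ref{lem:C_b_infty_Iso_equivalent} becomes applicable — I conclude that $\Phi$ is a $C_b^\infty$-isomorphism, i.e., $E$ is $C_b^\infty$-complemented by $E^\perp$.

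For Part~2, fix a global synchronous (equivalently, constant) framing of $\IC^N$ and, near any point, synchronous framings of $E$ and of $E^\perp$, whose direct sum is a synchronous framing of $E \oplus E^\perp$. Let $\psi \colon E \oplus E^\perp \xrightarrow{\ \cong\ } \IC^N$ be the given $C_b^\infty$-isomorphism, with $C^\infty$-bounded matrix $\Psi$ (and $\Psi^{-1}$) relative to these framings. In the direct-sum framing the projection of $E \oplus E^\perp$ onto $E \oplus 0$ along $0 \oplus E^\perp$ is the constant block matrix $\operatorname{diag}(I,0)$, so the idempotent $q := \psi\, \operatorname{diag}(I,0)\, \psi^{-1}$ has matrix $\Psi\, \operatorname{diag}(I,0)\, \Psi^{-1}$ in the constant framing of $\IC^N$; being a product of $C^\infty$-bounded matrices this is $C^\infty$-bounded, and $\image q = \psi(E \oplus 0)$. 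The orthogonal projection onto $\psi(E \oplus 0)$ is then $e := q q^\ast\big( q q^\ast + (1-q)^\ast (1-q) \big)^{-1}$; here $q q^\ast + (1-q)^\ast(1-q)$ is $C^\infty$-bounded, pointwise positive definite, and in fact bounded below by a positive constant depending only on $\sup_x \|q(x)\|$ (a standard estimate for idempotents), so Cramer's rule shows its inverse is again $C^\infty$-bounded. Hence $e \in \Idem_{N\times N}(C_b^\infty(M))$ is the required projection matrix onto $E \oplus 0 \subset \IC^N$.

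The main obstacle, and the only place where bounded geometry is genuinely used, is controlling matrix \emph{inverses}: one has to verify that inverting a $C^\infty$-bounded, pointwise-invertible matrix again produces a $C^\infty$-bounded matrix, which requires a \emph{uniform} lower bound on its smallest eigenvalue so that Cramer's rule applies (equivalently, one may appeal to closure of $C_b^\infty(M)$ under inversion, Lemma~\ref{lem:C_b_infty_local}). A secondary point requiring care in Part~1 is the mild circularity in invoking Lemma~\ref{lem:C_b_infty_Iso_equivalent}, which presupposes bounded geometry of $E \oplus E^\perp$; this is handled in advance by exhibiting the Grassmann curvature and all its covariant derivatives as $C^\infty$-bounded polynomial expressions in $p$ and $dp$. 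Everything else is routine verification.
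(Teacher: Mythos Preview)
Your argument is correct and takes a genuinely different route from the paper for Part~1. The paper works directly with the given idempotent $e$, chooses synchronous framings $\{E_\alpha(y)\}$ and $\{E^\perp_\beta(y)\}$ of $E=\image e$ and $E^\perp=\image(1-e)$, and then shows that the change-of-basis matrix $\varphi(y)$ from $\{E_\alpha(y),E^\perp_\beta(y)\}$ to a constant frame of $\IC^N$ is $C^\infty$-bounded by analysing the parallel-transport ODE
\[\tfrac{d}{dt} E_\alpha(\gamma(t))^\mu = -\sum_{i,\nu} E_\alpha(\gamma(t))^\nu\,\gamma_i(0)\,\Gamma^\mu_{i\nu}(\gamma(t))\]
along radial geodesics, after first verifying that the Christoffel symbols $\Gamma^\mu_{i\nu}$ (computed in the frame $\{e(y)E_\alpha\}$) are $C^\infty$-bounded because they are built from $e$ and its derivatives. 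Cramer's rule then handles $\varphi^{-1}$. You instead first pass to the orthogonal projection $p$ with the same range, compute the connection defect globally as the single tensor $(2p-1)\,dp$, and invoke Lemma~\ref{lem:C_b_infty_Iso_equivalent}. Your approach is shorter and coordinate-free, at the price of the mild bootstrap you already identified (establishing bounded geometry of $E\oplus E^\perp$ beforehand so that Lemma~\ref{lem:C_b_infty_Iso_equivalent} applies); the paper's ODE argument avoids this circularity but is more hands-on. One small point worth a sentence: the paper takes the ``induced connection'' on $E$ to be $e\circ d$ rather than the metric-compatible $p\circ d$; these differ by the $C^\infty$-bounded tensor $(e-p)\,d|_E$, so the conclusion is unaffected, but you should note it.

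For Part~2 your argument is essentially the paper's: both conjugate the constant block projection $\operatorname{diag}(I,0)$ by the $C_b^\infty$-isomorphism. Your extra passage to the orthogonal projection via $qq^\ast\big(qq^\ast+(1-q)^\ast(1-q)\big)^{-1}$ is a harmless addition (and your uniform lower bound comes for free from $qq^\ast+(1-q)^\ast(1-q)=1-(q-q^\ast)^2\ge 1$).
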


\begin{proof}[Proof of point 1]
We denote by $E$ the vector bundle $E := \image e$ and by $E^\perp$ its complement $E^\perp := \image (1-e)$ and equip them with the induced metric and connection. So we have to show that $E \oplus E^\perp$ is $C_b^\infty$-isomorphic to the trivial bundle $\IC^N \to M$.

Let $\varphi\colon E \oplus E^\perp \to \IC^N$ be the canonical algebraic isomorphism $\varphi(v,w) := v + w$. We have to show that both $\varphi$ and $\varphi^{-1}$ are $C^\infty$-bounded. 

Let $p \in M$. Let $\{ E_\alpha \}$ be an orthonormal basis of the vector space $E_p$ and $\{ E^\perp_\beta \}$ an orthonormal basis of $E^\perp_p$. Then the set $\{ E_\alpha, E^\perp_\beta \}$ is an orthonormal basis for $\IC_p^N$. We extend $\{E_\alpha\}$ to a synchronous framing $\{E_\alpha(y)\}$ of $E$ and $\{E^\perp_\beta\}$ to a synchronous framing $\{E^\perp_\beta(y)\}$ of $E^\perp$. Since $\IC^N$ is equipped with the flat connection, the set $\{ E_\alpha, E^\perp_\beta \}$ forms a synchronous framing for $\IC^N$ at all points of the normal coordinate chart. Then $\varphi(y)$ is the change-of-basis matrix from the basis $\{E_\alpha(y), E_\beta^\perp(y)\}$ to the basis $\{ E_\alpha, E^\perp_\beta \}$ and vice versa for $\varphi^{-1}(y)$; see Figure \ref{fig:frames}:

\begin{figure}[htbp]
\centering
\includegraphics[scale=0.7]{./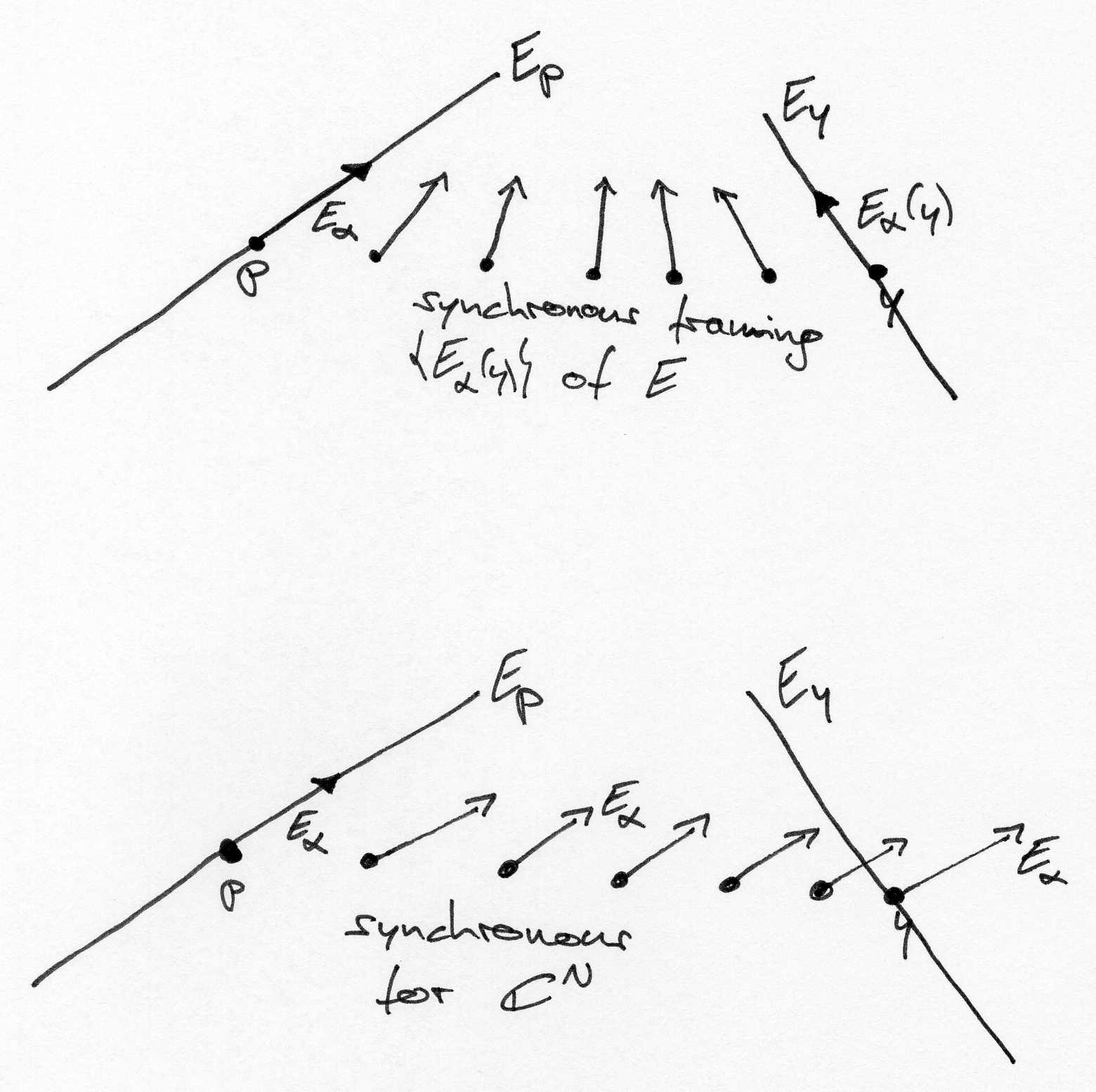}
\caption{The frames $\{E_\alpha(y)\}$ and $\{E_\alpha\}$.}
\label{fig:frames}
\end{figure}

We have $e(p)(E_\alpha) = E_\alpha$. Since the entries of $e$ are $C^\infty$-bounded and the rank of a matrix is a lower semi-continuous function of the entries, there is some geodesic ball $B$ around $p$ such that $\{ e(y)(E_\alpha) \}$ forms a basis of $E_y$ for all $y \in B$ and the diameter of the ball $B$ is bounded from below independently of $p \in M$. We denote by $\Gamma_{i \nu}^\mu(y)$ the Christoffel symbols of $E$ with respect to the frame $\{ e(y)(E_\alpha) \}$. Let $\gamma(t)$ be a radial geodesic in $M$ with $\gamma(0) = p$. If we now let $E_\alpha(\gamma(t))^\mu$ denote the $\mu$th entry of the vector $E_\alpha(\gamma(t))$ represented in the basis $\{ e(\gamma(t))(E_\alpha) \}$, then (since it is a synchronous frame) it satisfies the ODE
\[ \tfrac{d}{dt} E_\alpha(\gamma(t))^\mu = -\sum_{i,\nu} E_\alpha(\gamma(t))^\nu \cdot \tfrac{d}{dt}{\gamma_i}(t) \cdot \Gamma_{i \nu}^\mu(\gamma(t)),\]
where $\{\gamma_i\}$ is the coordinate representation of $\gamma$ in normal coordinates $\{x_i\}$. Since $\gamma$ is a radial geodesic, its representation in normal coordinates is $\gamma_i(t) = t \cdot \gamma_i(0)$ and so the above formula simplifies to
\begin{equation}\label{eq:ODE_synchronous_frame}
\tfrac{d}{dt} E_\alpha(\gamma(t))^\mu = -\sum_{i,\nu} E_\alpha(\gamma(t))^\nu \cdot \gamma_i(0) \cdot \Gamma_{i \nu}^\mu(\gamma(t)).
\end{equation}

Since $\Gamma_{i \nu}^\mu(y)$ are the Christoffel symbols with respect to the frame $\{ e(y)(E_\alpha) \}$, we get the equation
\begin{equation}\label{eq:christoffel_symbols_representation}
\sum_{\mu} \Gamma_{i \nu}^\mu(y) \cdot e(y)(E_\mu) = \nabla^E_{\partial_i} e(y)(E_\nu).
\end{equation}
Now using that $\nabla^E$ is induced by the flat connection, we get
\[\nabla^E_{\partial_i} e(y)(E_\nu) = e (\partial_i(e(y)(E_\nu))) = e((\partial_i e)(y)(E_\nu)),\]
i.e., $e((\partial_i e)(y)(E_\nu))$ is the representation of $\nabla^E_{\partial_i} e(y)(E_\nu)$ with respect to the frame $\{E_\alpha, E_\beta^\perp\}$. Since the entries of $e$ are $C^\infty$-bounded, the entries of this representation $e((\partial_i e)(y)(E_\nu))$ are also $C^\infty$-bounded. From Equation \eqref{eq:christoffel_symbols_representation} we see that $\Gamma_{i \nu}^\mu(y)$ is the representation of $\nabla^E_{\partial_i} e(y)(E_\nu)$ in the frame $\{e(y)(E_\mu)\}$. So we conclude that the Christoffel symbols $\Gamma_{i \nu}^\mu(y)$ are $C^\infty$-bounded functions.

Equation \eqref{eq:ODE_synchronous_frame} and the theory of ODEs now tell us that the functions $E_\alpha(y)^\mu$ are $C^\infty$-bounded. Since these are the representations of the vectors $E_\alpha(y)$ in the basis $\{ e(y)(E_\alpha) \}$, we can conclude that the entries of the representations of the vectors $E_\alpha(y)$ in the basis $\{E_\alpha, E_\beta^\perp\}$ are $C^\infty$-bounded. But now these entries are exactly the first $(\dim E)$ columns of the change-of-basis matrix $\varphi(y)$.

Arguing analogously for the complement $E^\perp$, we get that the other columns of $\varphi(y)$ are also $C^\infty$-bounded, i.e., $\varphi$ itself is $C^\infty$-bounded.

It remains to show that the inverse homomorphism $\varphi^{-1}$ is $C^\infty$-bounded. But since pointwise it is given by the inverse matrix, i.e., $\varphi^{-1}(y) = \varphi(y)^{-1}$, this claim follows immediately from Cramer's rule, because we already know that $\varphi$ is $C^\infty$-bounded.
\end{proof}

\begin{proof}[Proof of point 2]
Let $\{E_\alpha(y)\}$ be a synchronous framing for $E$ and $\{E_\beta^\perp(y)\}$ one for $E^\perp$. Then $\{E_\alpha(y), E_\beta^\perp(y)\}$ is one for $E \oplus E^\perp$. Furthermore, let $\{v_i(y)\}$ be a synchronous framing for the trivial bundle $\IC^N$ and let $\varphi\colon E \oplus E^\perp \to \IC^N$ be the $C_b^\infty$-isomorphism.

Then projection matrix $e \in \Idem_{N \times N}(C^\infty(M))$ onto the subspace $E \oplus 0$ is given with respect to the basis $\{E_\alpha(y), E_\beta^\perp(y)\}$ of $E \oplus E^\perp$ and of $\IC^N$ by the usual projection matrix onto the first $(\dim E)$ vectors, i.e., its entries are clearly $C^\infty$-bounded since they are constant. Now changing the basis to $\{v_i(y)\}$, the representation of $e(y)$ with respect to this new basis is given by $\varphi^{-1}(y) \cdot e \cdot \varphi(y)$, i.e., $e \in \Idem_{N \times N}(C_b^\infty(M))$.
\end{proof}

If we have a $C_b^\infty$-complemented vector bundle $E$, then different choices of complements and different choices of isomorphisms with the trivial bundle lead to similar projection matrices. The proof of this is analogous to the corresponding proof in the usual case of vector bundles over compact Hausdorff spaces. We also get that $C_b^\infty$-isomorphic vector bundles produce similar projection matrices. Of course this also works the other way round, i.e., similar idempotent matrices give us $C_b^\infty$-isomorphic vector bundles. Again, the proof of this is the same as the one in the topological category.

\begin{defn}
Let $M$ be a manifold of bounded geometry. We define
\begin{itemize}
\item $\Vect_u(M)/_\sim$ as the abelian monoid of all complex vector bundles of bounded geometry over $M$ modulo $C_b^\infty$-isomorphism (the addition is given by the direct sum $[E] + [F] := [E \oplus F]$) and
\item $\Idem(C_b^\infty(M))/_\sim$ as the abelian monoid of idempotent matrizes of arbitrary size over the \Frechet algebra $C_b^\infty(M)$ modulo similarity (with addition defined as $[e] + [f] := \left[\begin{pmatrix}e&0\\0&f\end{pmatrix}\right]$).
\end{itemize}
They will be identified with each other in the following corollary.
\qed
\end{defn}

Let $f\colon M \to N$ be a $C^\infty$-bounded map\footnote{We use covers of $M$ and $N$ via normal coordinate charts of a fixed radius and demand that locally in this charts the derivatives of $f$ are all bounded and these bounds are independent of the chart used.} and $E$ a vector bundle of bounded geometry over $N$. Then it is clear that the pullback bundle $f^\ast E$ equipped with the pullback metric and connection is a vector bundle of bounded geometry over $M$.

The above discussion together with Proposition \ref{prop:image_proj_matrix_complemented} prove the following corollary:

\begin{cor}\label{cor:two_monoids_isomorphic}
The monoids $\Vect_u(M)/_\sim$ and $\Idem(C_b^\infty(M))/_\sim$ are isomorphic and this isomorphism is natural with respect to $C^\infty$-bounded maps between manifolds.
\end{cor}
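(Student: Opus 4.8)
The plan is to exhibit mutually inverse monoid homomorphisms between $\Vect_u(M)/_\sim$ and $\Idem(C_b^\infty(M))/_\sim$ and then to check naturality. In one direction, I would send the class $[e]$ of an idempotent $e \in \Idem_{N\times N}(C_b^\infty(M))$ to the class $[\image e]$ of the vector bundle $\image e$ equipped with the metric and connection induced from the trivial bundle $\IC^N$ with its flat connection; by Point 1 of Proposition \ref{prop:image_proj_matrix_complemented} this bundle is $C_b^\infty$-complemented, hence of bounded geometry, so it indeed represents a class in $\Vect_u(M)/_\sim$. In the other direction, given a vector bundle $E$ of bounded geometry, Proposition \ref{prop:every_bundle_complemented} provides a $C_b^\infty$-complement $E^\perp$ together with a $C_b^\infty$-isomorphism $E\oplus E^\perp \cong \IC^N$; I would send $[E]$ to the class of the projection matrix onto $E \oplus 0$ with respect to a global synchronous framing of $\IC^N$, which by Point 2 of Proposition \ref{prop:image_proj_matrix_complemented} lies in $\Idem_{N\times N}(C_b^\infty(M))$.

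First I would check that both assignments descend to equivalence classes. For the first map this is the statement — proved exactly as for vector bundles over compact Hausdorff spaces, now keeping track of the uniform bounds as in the discussion preceding Corollary \ref{cor:two_monoids_isomorphic} — that similar idempotents have $C_b^\infty$-isomorphic images. For the second map one has to see that the similarity class of the resulting projection matrix is independent of the chosen complement $E^\perp$ and of the chosen $C_b^\infty$-isomorphism with the trivial bundle, and that $C_b^\infty$-isomorphic bundles yield similar projections; again this is the compact-case argument combined with the bookkeeping of uniform estimates, for which Lemma \ref{lem:C_b_infty_Iso_equivalent} and Proposition \ref{prop:image_proj_matrix_complemented} are the right tools.

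Next I would verify that the two maps are mutually inverse monoid homomorphisms. Starting from $[e]$, forming $E := \image e$ with complement $\image(1-e)$ recovers $e$ up to similarity as the projection onto $E\oplus 0$, so one composite is the identity; starting from $[E]$, the image of the constructed projection matrix is $C_b^\infty$-isomorphic to $E$ by construction, so the other composite is the identity as well. The homomorphism property follows from $\image(e\oplus f) = \image e \oplus \image f$ together with the observation that a complement and trivialization for $E$ and one for $F$ assemble into a complement and trivialization for $E\oplus F$ whose associated projection is the block sum $\begin{pmatrix} e & 0 \\ 0 & f\end{pmatrix}$. Finally, for a $C^\infty$-bounded map $f\colon M \to N$ and $e \in \Idem_{N\times N}(C_b^\infty(N))$, composition of the entries of $e$ with $f$ again yields $C^\infty$-bounded functions, so $f^\ast e \in \Idem_{N\times N}(C_b^\infty(M))$, and $\image(f^\ast e) = f^\ast(\image e)$ with the pulled-back metric and connection; this gives the asserted naturality.

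I do not expect a genuine obstacle here: all the analytic content — producing the $C_b^\infty$-complement with uniform control, and seeing that image bundles of $C^\infty$-bounded idempotents have $C^\infty$-bounded Christoffel symbols — is already contained in Propositions \ref{prop:every_bundle_complemented} and \ref{prop:image_proj_matrix_complemented}. The only point demanding care is the purely bookkeeping verification that the standard compact-case equivalences (similar idempotents versus $C_b^\infty$-isomorphic bundles, and independence of the choices) survive the $C^\infty$-bounded refinement, which is exactly what Lemma \ref{lem:C_b_infty_Iso_equivalent} is designed to handle.
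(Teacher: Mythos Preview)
Your proposal is correct and follows essentially the same route as the paper: the paper also derives the corollary directly from Proposition \ref{prop:image_proj_matrix_complemented} together with Proposition \ref{prop:every_bundle_complemented}, noting that the well-definedness on equivalence classes and the naturality under $C^\infty$-bounded pullbacks are the standard compact-case arguments carried through with uniform control. You are in fact more explicit than the paper about checking that the two maps are mutually inverse and additive, but there is no difference in strategy.
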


From this Corollary \ref{cor:two_monoids_isomorphic}, Lemma \ref{lem:C_b_infty_local} and Proposition \ref{prop:every_bundle_complemented} we immediately get the following interpretation of the $0$th uniform $K$-theory group $K^0_u(M)$ of a manifold of bounded geometry:

\begin{thm}[Interpretation of $K^0_u(M)$]\label{thm:interpretation_K0u}
Let $M$ be a Riemannian manifold of bounded geometry and without boundary.

Then every element of $K^0_u(M)$ is of the form $[E] - [F]$, where both $[E]$ and $[F]$ are $C_b^\infty$-isomorphism classes of complex vector bundles of bounded geometry over $M$.

Moreover, every complex vector bundle of bounded geometry defines a class in $K^0_u(M)$.
\end{thm}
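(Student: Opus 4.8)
The plan is to assemble the theorem from the structural results already established, the point being that $K^0_u(M)$ has been arranged to be the operator $K$-theory of a unital local $C^\ast$-algebra whose idempotents classify vector bundles of bounded geometry.

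First I would recall that $K^0_u(M) = K_0(C_u(M))$ by definition, and that by Lemma \ref{lem:equivalent_defns_uniform_k_theory} this group is naturally isomorphic to the operator $K$-theory group $K_0(C_b^\infty(M))$ of the \Frechet algebra $C_b^\infty(M)$; this identification is legitimate because $C_b^\infty(M)$ is a local $C^\ast$-algebra (Lemma \ref{lem:C_b_infty_local}) with completion $C_u(M)$ (Lemma \ref{lem:norm_completion_C_b_infty}). For a unital local $C^\ast$-algebra $A$ the group $K_0(A)$ is, by the standard picture of operator $K$-theory, the Grothendieck group of the abelian monoid $\Idem(A)/_\sim$ of idempotent matrices of arbitrary size over $A$ modulo similarity; in particular every class in $K_0(A)$ is a formal difference $[e] - [f]$ of such idempotents.

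Next I would transport this along the monoid isomorphism of Corollary \ref{cor:two_monoids_isomorphic}, which identifies $\Idem(C_b^\infty(M))/_\sim$ with $\Vect_u(M)/_\sim$, the monoid of complex vector bundles of bounded geometry over $M$ modulo $C_b^\infty$-isomorphism. Since this is an isomorphism of monoids (compatible with the two additions), it induces an isomorphism of Grothendieck groups $K^0_u(M) \cong \mathrm{Groth}(\Vect_u(M)/_\sim)$. Every element of the Grothendieck group of an abelian monoid is a difference of two elements of the monoid, so every class in $K^0_u(M)$ is of the form $[E] - [F]$ with $E$ and $F$ complex vector bundles of bounded geometry over $M$, which is the first assertion. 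For the \textquotedblleft moreover\textquotedblright\ part I would invoke Proposition \ref{prop:every_bundle_complemented}: an arbitrary complex vector bundle $E$ of bounded geometry is $C_b^\infty$-complemented, hence by Proposition \ref{prop:image_proj_matrix_complemented}(2) corresponds to an honest idempotent $e \in \Idem_{N \times N}(C_b^\infty(M))$, and the class of $e$ in $K_0(C_b^\infty(M)) = K^0_u(M)$ is the desired class of $E$ --- equivalently, $[E]$ is the image of $E$ under the canonical monoid homomorphism $\Vect_u(M)/_\sim \to \mathrm{Groth}(\Vect_u(M)/_\sim) = K^0_u(M)$.

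Since all the genuinely technical work --- the local $C^\ast$-algebra property of $C_b^\infty(M)$, the complementability of bundles of bounded geometry, and the monoid isomorphism $\Vect_u(M)/_\sim \cong \Idem(C_b^\infty(M))/_\sim$ --- has already been carried out, no real obstacle remains. The only points requiring a little care are to apply Corollary \ref{cor:two_monoids_isomorphic} as an isomorphism of \emph{monoids}, so that it passes to Grothendieck groups, and to use the standard identification of $K_0$ of a unital local $C^\ast$-algebra with the Grothendieck group of its idempotents modulo similarity (rather than with Murray--von Neumann equivalence classes of projections --- these coincide for local $C^\ast$-algebras).
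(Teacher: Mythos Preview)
Your proposal is correct and follows essentially the same approach as the paper, which simply states that the theorem follows immediately from Corollary \ref{cor:two_monoids_isomorphic}, Lemma \ref{lem:C_b_infty_local} and Proposition \ref{prop:every_bundle_complemented}. You have spelled out the standard Grothendieck-group argument in somewhat more detail than the paper does, but the logical content is the same.
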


Note that the last statement in the above theorem is not trivial since it relies on the Proposition \ref{prop:every_bundle_complemented}.

\subsubsection*{Interpretation of \texorpdfstring{$K^1_u(M)$}{odd uniform K(M)}}

For the interpretation of $K^1_u(M)$ we will make use of suspensions of algebras. The suspension isomorphism theorem for operator $K$-theory states that we have an isomorphism $K_1(C_u(M)) \cong K_0(S C_u(M))$, where $S C_u(M)$ is the suspension of $C_u(M)$:
\begin{align*}
S C_u(M) & := \{ f \colon S^1 \to C_u(M) \ | \ f \text{ continuous and } f(1) = 0\}\\
& \cong \{ f \in C_u(S^1 \times M) \ | \ f(1, x) = 0 \text{ for all }x \in M\}.
\end{align*}
Equipped with the sup-norm this is again a $C^\ast$-algebra. Since functions $f \in S C_u(M)$ are uniformly continuous, the condition $f(1, x) = 0$ for all $x \in M$ is equivalent to $\lim_{t \to 1} f(t, x) = 0 \text{ uniformly in } x$.

Now in order to interpret $K_0(S C_u(M))$ via vector bundles of bounded geometry over $S^1 \times M$, we will need to find a suitable \Frechet subalgebra of $S C_u(M)$ so that we can again use Proposition \ref{prop:image_proj_matrix_complemented}. Luckily, this was already done by Phillips in \cite{phillips}:

\begin{defn}[Smooth suspension of a \Frechet algebras, {\cite[Definition 4.7]{phillips}}]\label{defn:smooth_suspension_algebra}
Let $A$ be a \Frechet algebra. Then the \emph{smooth suspension $S_\infty A$ of $A$} is defined as the \Frechet algebra
\[ S_\infty A := \{ f\colon S^1 \to A \ | \ f \text{ smooth and } f(1) = 0\}\]
equipped with the topology of uniform convergence of every derivative in every seminorm of $A$.
\qed
\end{defn}

For a manifold $M$ we have
\begin{align*}
S_\infty C_b^\infty(M) & \cong \{ f \in C_b^\infty(S^1 \times M) \ | \ f(1, x) = 0 \text{ for all }x \in M\} \\
& = \{ f \in C_b^\infty(S^1 \times M) \ | \ \forall k \in \IN_0 \colon \lim_{t \to 1} \nabla^k_x f(t, x) = 0 \text{ uniformly in } x\}.
\end{align*}

The proof of the following lemma is analogous to the proof of the Lemma \ref{lem:C_b_infty_local}:

\begin{lem}
Let $M$ be a manifold of bounded geometry.

Then the sup-norm completion of $S_\infty C_b^\infty(M)$ is $S C_u(M)$ and $S_\infty C_b^\infty(M)$ is a local $C^\ast$-algebra.
\end{lem}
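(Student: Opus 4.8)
The plan is to establish the two assertions in turn, following closely the proofs of Lemmas \ref{lem:norm_completion_C_b_infty} and \ref{lem:C_b_infty_local}.

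First I would identify the completion. Note that $S^1 \times M$ is again a manifold of bounded geometry (a Riemannian product of manifolds of bounded geometry) and that $S C_u(M)$ is the closed subspace $\{ f \in C_u(S^1 \times M) \mid f(1,x) = 0 \text{ for all } x \in M\}$, hence itself complete; in particular $\overline{S_\infty C_b^\infty(M)} \subseteq S C_u(M)$ since $S_\infty C_b^\infty(M) \subseteq S C_u(M)$. For the reverse inclusion, let $f \in S C_u(M)$ and $\eta > 0$. By uniform continuity of $f$ there is a $\delta > 0$ with $|f(t,x)| = |f(t,x) - f(1,x)| < \eta$ whenever the distance from $t$ to $1$ in $S^1$ is less than $\delta$. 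Applying Lemma \ref{lem:norm_completion_C_b_infty} to $S^1 \times M$, pick $g \in C_b^\infty(S^1 \times M)$ with $\|g - f\|_\infty < \eta$, and let $\chi\colon S^1 \to [0,1]$ be smooth with $\chi \equiv 0$ on $\{ d(t,1) \le \delta/2\}$ and $\chi \equiv 1$ on $\{ d(t,1) \ge \delta\}$. Then $g\chi$ is smooth with uniformly bounded derivatives (a product of such functions, $\chi$ depending only on the compact factor $S^1$) and vanishes on a neighbourhood of $\{1\}\times M$, so $g\chi \in S_\infty C_b^\infty(M)$; moreover $g\chi - f = (g-f)\chi + f(\chi - 1)$, whence $\|g\chi - f\|_\infty \le \|g-f\|_\infty + \sup\{|f(t,x)| : \chi(t) \neq 1\} \le 2\eta$. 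Letting $\eta \to 0$ gives $f \in \overline{S_\infty C_b^\infty(M)}$, so $\overline{S_\infty C_b^\infty(M)} = S C_u(M)$.

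For the second assertion I would argue exactly as in Lemma \ref{lem:C_b_infty_local}. The completion just computed is a $C^\ast$-algebra, and $S_\infty C_b^\infty(M)$ is an $m$-convex \Frechet algebra (Phillips, \cite[Definition 4.7]{phillips}) whose \Frechet topology is finer than the sup-norm topology (one of its defining seminorms is the sup-norm). By \cite[Corollary 2.3]{schweitzer} it then suffices to check that $S_\infty C_b^\infty(M)$ itself is closed under holomorphic functional calculus, and by \cite[Lemma 1.2]{schweitzer} applied to the unitisation $(S_\infty C_b^\infty(M))^+$ — a unital \Frechet algebra, still with topology finer than the sup-norm, whose completion is $(S C_u(M))^+$ — this reduces to showing that the inverse in $(S C_u(M))^+$ of any invertible element $g = \lambda\cdot 1 + f$ (with $\lambda \in \IC$, $f \in S_\infty C_b^\infty(M)$) lies in $(S_\infty C_b^\infty(M))^+$. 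Invertibility forces $\lambda \neq 0$ (since $g(1,x) = \lambda$) and $c := \inf_{S^1\times M}|g| > 0$; since $g$ is smooth with uniformly bounded derivatives and bounded below by $c$, the quotient rule shows that $g^{-1}$ is smooth with uniformly bounded derivatives, and $g^{-1} - \lambda^{-1} = -\lambda^{-1} f g^{-1}$ vanishes at $t = 1$ (where $f$ does), hence lies in $S_\infty C_b^\infty(M)$. Thus $g^{-1} \in (S_\infty C_b^\infty(M))^+$, which completes the argument.

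The only points requiring care beyond the two cited lemmas concern the boundary condition at $t = 1$: in the approximation step it is handled by the cut-off $\chi$, whose derivatives are harmless because $S^1$ is compact and $f$ is already small near $t=1$; in the functional-calculus step it is handled by passing to the unitisation and observing that $g^{-1} - \lambda^{-1}$ inherits the vanishing at $t=1$ from $f$. Neither is a genuine obstacle, so I expect the proof to be essentially routine bookkeeping.
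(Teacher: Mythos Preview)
Your proposal is correct and follows exactly the route the paper indicates (``analogous to the proof of Lemma~\ref{lem:C_b_infty_local}'', together with Lemma~\ref{lem:norm_completion_C_b_infty} for the completion statement); you have simply supplied the extra bookkeeping for the vanishing condition at $t=1$ via the cut-off $\chi$ in the approximation step and via the unitisation in the functional-calculus step, which is precisely what the paper leaves implicit.
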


Putting it all together, we get $K^1_u(M) = K_0(S_\infty C_b^\infty(M))$, and Proposition \ref{prop:image_proj_matrix_complemented}, adapted to our case here, gives us the following interpretation of the $1$st uniform $K$-theory group $K^1_u(M)$ of a manifold of bounded geometry:

\begin{thm}[Interpretation of $K^1_u(M)$]\label{thm:interpretation_K1u}
Let $M$ be a Riemannian manifold of bounded geometry and without boundary.

Then every elements of $K^1_u(M)$ is of the form $[E] - [F]$, where both $[E]$ and $[F]$ are $C_b^\infty$-isomorphism classes of complex vector bundles of bounded geometry over $S^1 \times M$ with the following property: there is some neighbourhood $U \subset S^1$ of $1$ such that $[E|_{U \times M}]$ and $[F|_{U \times M}]$ are $C_b^\infty$-isomorphic to a trivial vector bundle with the flat connection (the dimension of the trivial bundle is the same for both $[E|_{U \times M}]$ and $[F|_{U \times M}]$).

Moreover, every pair of complex vector bundles $E$ and $F$ of bounded geometry and with the above properties define a class $[E] - [F]$ in $K_u^1(M)$.
\end{thm}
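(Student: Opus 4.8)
The plan is to carry over the proof of Theorem~\ref{thm:interpretation_K0u} to the suspension, using that $S^1 \times M$ is again a manifold of bounded geometry, so that Proposition~\ref{prop:every_bundle_complemented}, Proposition~\ref{prop:image_proj_matrix_complemented} and Corollary~\ref{cor:two_monoids_isomorphic} all apply to it. First I would use the identification $K^1_u(M) = K_0(S_\infty C_b^\infty(M))$ (suspension isomorphism together with the fact, established above, that $S_\infty C_b^\infty(M)$ is a local $C^\ast$-algebra with sup-norm completion $SC_u(M)$). Under the description of Definition~\ref{defn:smooth_suspension_algebra}, $S_\infty C_b^\infty(M)$ is the ideal $\{f \in C_b^\infty(S^1 \times M) \mid f(1,\cdot) = 0\}$, and its unitization embeds into $C_b^\infty(S^1 \times M)$ as the subalgebra $\mathcal{A} := \{g \in C_b^\infty(S^1 \times M) \mid g(1,\cdot)\text{ is constant on }M\}$, the canonical character $\mathcal{A} \to \IC$ being $g \mapsto g(1,\cdot)$. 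Hence $K^1_u(M)$ is generated by differences $[e] - [f]$ of classes of idempotents $e, f \in \Idem_N(\mathcal{A})$ (of a common size $N$ after stabilizing) whose scalar parts $e_0 := e(1,\cdot)$ and $f_0 := f(1,\cdot)$ in $\Idem_N(\IC)$ have the same rank.

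Given such an $e$, the next step is to replace it, within its $K_0$-class, by an idempotent that is literally constant near $\{1\} \times M$. Since $e$ and all its derivatives are bounded, $\|e(t,x) - e(1,x)\| \le |t-1| \cdot \sup \|\partial_t e\|$, so $e(t,\cdot) \to e_0$ uniformly on $M$ as $t \to 1$; choosing a smooth degree-one map $\phi \colon S^1 \to S^1$ that collapses a neighbourhood $U$ of $1$ to the point $1$ and admits a homotopy $\phi_s$ to $\id_{S^1}$ fixing $1$ throughout, the idempotent $e' := (\phi \times \id_M)^\ast e$ lies in $\Idem_N(\mathcal{A})$, is $C^\infty$-bounded (composition with a fixed smooth self-map of the compact $S^1$ respects all uniform bounds), coincides with the constant $e_0$ over $U \times M$, and is connected to $e$ through the path $(\phi_s \times \id_M)^\ast e$ in $\Idem_N(\mathcal{A})$; thus $[e'] = [e]$. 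Now Proposition~\ref{prop:image_proj_matrix_complemented}(1), applied on $S^1 \times M$, shows that $E := \image e'$ is a $C_b^\infty$-complemented --- hence bounded geometry --- vector bundle over $S^1 \times M$, and over $U \times M$ it is the image of the constant idempotent $e_0$, i.e.\ (with the flat connection) $C_b^\infty$-isomorphic to a trivial bundle of dimension $d := \operatorname{rank} e_0$. Letting $F := \IC^d$ be the trivial flat bundle over all of $S^1 \times M$, the class $[e] - [e_0] \in K_0(S_\infty C_b^\infty(M))$ is sent to $[E] - [F]$; doing this simultaneously with $f$ in a general difference $[e] - [f]$ produces the asserted form $[E] - [F]$, and since similar idempotents correspond to $C_b^\infty$-isomorphic bundles (Corollary~\ref{cor:two_monoids_isomorphic} and the discussion preceding it) the assignment is well defined on $K^1_u(M)$.

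For the converse, let $E$ and $F$ be vector bundles of bounded geometry over $S^1 \times M$ that are $C_b^\infty$-isomorphic to trivial flat bundles over some $U \times M$, of the same fibre dimension $d$. By Proposition~\ref{prop:every_bundle_complemented} both are $C_b^\infty$-complemented, and, exactly as in the topological category but with the uniform control furnished by Proposition~\ref{prop:image_proj_matrix_complemented}(2), one can choose the complements and the trivializations (after shrinking $U$) so that the resulting projection matrices $e_E, e_F \in \Idem_N(C_b^\infty(S^1 \times M))$ are the constant projection over $U \times M$; in particular $e_E, e_F \in \Idem_N(\mathcal{A})$ with scalar parts of equal rank $d$. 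Then $[e_E] - [e_F]$ lies in $\kernel\big(K_0(\mathcal{A}) \to K_0(\IC)\big) = K_0(S_\infty C_b^\infty(M)) = K^1_u(M)$ and is the desired class; independence of all auxiliary choices is again Corollary~\ref{cor:two_monoids_isomorphic}.

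The two places requiring real work, rather than citation, are: (i) the reparametrization step promoting $e(1,\cdot) = \mathrm{const}$ to constancy on a collar $U \times M$ while staying in $C_b^\infty$ and in the same $K$-class, which rests on the uniform-in-$x$ convergence $e(t,\cdot)\to e_0$ noted above; and (ii) the dual bookkeeping that a bundle trivialized near $\{1\}\times M$ is representable by an idempotent actually lying in $\mathcal{A}$ and not merely in $C_b^\infty(S^1\times M)$. Both are ``uniform'' analogues of standard facts about relative $K$-theory of a pair, and neither introduces difficulties beyond the bounded-geometry estimates already established in this section; I expect (ii), the careful choice of complement near the basepoint, to be the more delicate of the two.
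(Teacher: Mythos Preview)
Your proof is correct and follows the same route as the paper: reduce $K_u^1(M)$ to $K_0(S_\infty C_b^\infty(M))$ via the suspension isomorphism and the fact that $S_\infty C_b^\infty(M)$ is a local $C^\ast$-algebra with completion $SC_u(M)$, then transport Proposition~\ref{prop:image_proj_matrix_complemented} (and Proposition~\ref{prop:every_bundle_complemented} for the converse) to $S^1 \times M$. The paper itself gives only a one-line sketch (``Proposition~\ref{prop:image_proj_matrix_complemented}, adapted to our case here''), so your argument is in fact considerably more detailed than the paper's.

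In particular, the paper does not spell out where the neighbourhood condition on $U \subset S^1$ comes from; your reparametrization step via a degree-one collapse map $\phi\colon S^1 \to S^1$ homotopic to the identity rel~$1$ is exactly the missing ingredient, and it is cleanly justified by the uniform bound $\|e(t,\cdot) - e_0\| \le |t-1|\cdot \sup\|\partial_t e\|$ available for $e \in C_b^\infty(S^1 \times M)$. Your point (ii), arranging the projection for a given bundle to be constant over $U \times M$, is indeed the more delicate half, but it is a standard relative modification of the complementing construction in Proposition~\ref{prop:every_bundle_complemented} and causes no real difficulty.
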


Note that the last statement in the above theorem is not trivial since it relies on the Proposition \ref{prop:every_bundle_complemented}.

\subsection{Cap product}\label{sec:cap_product}

In this section we will define the cap product $\cap \colon K_u^p(X) \otimes K_q^u(X) \to K_{q-p}^u(X)$ for a locally compact and separable metric space $X$ of jointly bounded geometry\footnote{see Definition \ref{defn:jointly_bounded_geometry}}.

Recall that we have
\begin{equation*}
\LLip_R(X) := \{ f \in C_c(X) \ | \ f \text{ is }L\text{-Lipschitz}, \diam(\supp f) \le R \text{ and } \|f\|_\infty \le 1\}.
\end{equation*}

Let us first describe the cap product of $K_u^0(X)$ with $K^u_\ast(X)$ on the level of uniform Fredholm modules. The general definition of it will be given via dual algebras.

\begin{lem}\label{lem:proj_again_uniform_fredholm_module}
Let $P$ be a projection in $\Mat_{n \times n}(C_u(X))$ and let $(H, \rho, T)$ be a uniform Fredholm module.

We set $H_n := H \otimes \IC^n$, $\rho_n(\largecdot) := \rho(\largecdot) \otimes \id_{\IC^n}$, $T_n := T \otimes \id_{\IC^n}$ and by $\pi$ we denote the matrix $\pi_{ij} := \rho(P_{ij}) \in \Mat_{n \times n}(\IB(H)) = \IB(H_n)$.

Then $(\pi H_n, \pi \rho_n \pi, \pi T_n \pi)$ is a uniform Fredholm module, with an induced (multi-)grading if $(H, \rho, T)$ was (multi-)graded.
\end{lem}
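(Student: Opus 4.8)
The plan is to verify, one by one, the three defining properties of a uniform Fredholm module for the compressed triple $(\pi H_n, \pi\rho_n\pi, \pi T_n\pi)$, using as the main tool the fact (from Definition~\ref{defn:uniformly_approximable_collection} and its basic properties, cf.\ Examples~\ref{ex:uniformly_approximable_collections}) that uniform approximability is stable under left and right multiplication by operators of norm at most $1$, under sums of a \emph{fixed} finite number of summands, and under norm limits (this last is \cite[Lemma 4.2]{spakula_uniform_k_homology}, stating the uniformly pseudolocal operators form a $C^\ast$-algebra with the uniformly locally compact ones as a closed ideal). First I would record the trivial observations: passing from $(H,\rho,T)$ to $(H_n,\rho_n,T_n)$ by tensoring with the finite-dimensional space $\IC^n$ preserves all the relevant uniformity properties, since an $n\times n$ matrix of uniformly approximable collections is again uniformly approximable (the ranks just get multiplied by $n$, a fixed constant). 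Also $\pi$ is a projection in $\IB(H_n)$ that commutes with $\rho_n(f)$ for every $f\in C_0(X)$, because the entries $P_{ij}\in C_u(X)$ multiply pointwise against $f$ and multiplication operators on $L^2$ (or on $H$ via $\rho$) commute; here one uses that $\rho$ extends to bounded Borel functions, or simply that $P_{ij}f = fP_{ij}$ as functions. Hence $\pi\rho_n\pi = \rho_n\pi = \pi\rho_n$ on the range $\pi H_n$, and $\pi\rho_n\pi$ is genuinely a representation of $C_0(X)$ by even (multi-)graded operators on $\pi H_n$ provided $\pi$ is even and multigraded — which it is, since the entries of $\pi$ are scalars times the identity on the Clifford/grading factors, so $\pi$ commutes with $\epsilon$ and all $\epsilon_j$.

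Next I would check the three operator conditions. For pseudolocality: for $f\in\LLip_R(X)$ we compute $[\pi T_n\pi,\pi\rho_n(f)\pi] = \pi[T_n\pi,\rho_n(f)]\pi = \pi\big([T_n,\rho_n(f)]\pi + T_n[\pi,\rho_n(f)]\big)\pi = \pi[T_n,\rho_n(f)]\pi$ using $[\pi,\rho_n(f)]=0$. Since $\pi$ has norm $1$ and $\{[T_n,\rho_n(f)] : f\in\LLip_R(X)\}$ is uniformly approximable (because $T$, hence $T_n$, is uniformly pseudolocal), sandwiching by $\pi$ on both sides keeps it uniformly approximable. For $\pi T_n\pi - (\pi T_n\pi)^\ast$: since $\pi$ is self-adjoint this equals $\pi(T_n - T_n^\ast)\pi$, and $T_n - T_n^\ast = (T-T^\ast)\otimes\id$ is uniformly locally compact, so $\pi\rho_n(f)\cdot\pi(T_n-T_n^\ast)\pi$ and $\pi(T_n-T_n^\ast)\pi\cdot\pi\rho_n(f)$ are uniformly approximable for $f\in\LLip_R(X)$, again by the norm-$\le 1$ sandwiching. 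The one genuinely computational point is $(\pi T_n\pi)^2 - \pi$: expanding, $(\pi T_n\pi)^2 - \pi = \pi T_n\pi T_n\pi - \pi = \pi T_n(\pi - 1)T_n\pi + \pi(T_n^2-1)\pi$. The second term is handled as before since $T_n^2-1$ is uniformly locally compact. The first term, $-\pi T_n(1-\pi)T_n\pi$, does not vanish in general, and this is the step I expect to be the main obstacle: I would rewrite $(1-\pi)T_n\pi = (1-\pi)[T_n,\pi] = -(1-\pi)[\pi,T_n]$ and likewise $\pi T_n(1-\pi) = [\pi,T_n](1-\pi) = \pi[\pi,T_n]$; since $[\pi,T_n]$ is a finite matrix with entries of the form $[\rho(P_{ij}),T]$ and $P_{ij}\in C_u(X)\subset C_b(X)$, one must argue that $[\pi,T_n]$ multiplied against $\rho_n(f)$ for $f\in\LLip_R(X)$ is uniformly approximable. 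This follows because uniform pseudolocality of $T$ combined with Point~2 or Point~3 of Lemma~\ref{lem:kasparov_lemma_uniform_approx_manifold} (extended to arbitrary bounded Borel, in fact to arbitrary $C_u(X)$, functions in the "$f$" slot) gives uniform approximability of $\{\,g T \rho(P_{ij}) \,\}$ type collections; alternatively, since $P_{ij}$ is a fixed bounded uniformly continuous function one may directly invoke that $\{T\rho(f) : \dots\}$-type commutators are controlled. I would then conclude that $\pi T_n(1-\pi)T_n\pi \cdot \rho_n(f)$ is, up to norm-$\le1$ sandwiching, a product of two uniformly-approximable-type pieces, hence uniformly locally compact after multiplying by $\rho_n(f)$.

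Finally I would assemble the pieces: having shown $\pi T_n\pi$ is uniformly pseudolocal and $(\pi T_n\pi)^2-\pi$, $\pi T_n\pi-(\pi T_n\pi)^\ast$ are uniformly locally compact on $\pi H_n$, and having noted $\pi\rho_n\pi$ is a representation by even multigraded operators while $\pi T_n\pi$ is odd and multigraded (inherited from $T_n$ and the fact that $\pi$ is even and multigraded, so $\pi T_n\pi$ anticommutes with $\epsilon$ and commutes with $\epsilon_1,\dots,\epsilon_p$), the triple $(\pi H_n,\pi\rho_n\pi,\pi T_n\pi)$ satisfies Definition~\ref{defn:uniform_fredholm_modules}. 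If $(H,\rho,T)$ carried a $p$-multigrading the whole construction is compatible with it since at every stage we only tensored by $\IC^n$ (trivially multigraded) and compressed by a multigraded projection. The only subtlety worth flagging explicitly in the write-up is the remark that $\rho$ — and hence $\rho_n$ — must be extended from $C_0(X)$ to all bounded Borel functions (equivalently, to $C_u(X)$) so that $\rho(P_{ij})$ makes sense and so that the commutator estimates of Lemma~\ref{lem:kasparov_lemma_uniform_approx_manifold} apply with $P_{ij}$ in the bounded-Borel slot; this extension exists uniquely for any representation of a commutative $C^\ast$-algebra and was already used in the proof of Lemma~\ref{lem:construction_partition_unity}.
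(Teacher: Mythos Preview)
Your argument is correct, and in one place it is actually cleaner than the paper's. For the pseudolocality step you simplify the commutator all the way to $\pi[T_n,\rho_n(f)]\pi$, which is diagonal with entries $[T,\rho(f)]$, so uniform approximability is immediate from that of $T$; the paper instead stops at $\pi[T_n,\pi\rho_n(f)]\pi$, whose entries are $[T,\rho(P_{ij}f)]$, and then has to approximate each $P_{ij}\in C_u(X)$ in sup-norm by Lipschitz functions $P_{ij}^\varepsilon$ so that $P_{ij}^\varepsilon f\in L'\text{-}\operatorname{Lip}_R(X)$ and $\|[T,\rho((P_{ij}-P_{ij}^\varepsilon)f)]\|\le 2\|T\|\cdot\varepsilon/(4n^2\|T\|)$. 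Your route avoids this entirely at that stage.

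The trade-off appears in the square term. The paper dispatches $(\pi T_n\pi)^2-1$ with the sentence ``analogously, noting that by uniform pseudolocality we may interchange $T_n$ and $\rho(P_{ij}^\varepsilon f)$''; implicitly it is reusing the same Lipschitz-approximation trick already set up. You expand explicitly and isolate the cross term $-\pi T_n(1-\pi)T_n\pi$, which is the right move, but your justification via Points 2--3 of Lemma~\ref{lem:kasparov_lemma_uniform_approx_manifold} does not quite fit: that lemma is stated for manifolds and concerns functions with disjoint supports or smooth functions with controlled derivatives, neither of which applies directly to the pair $(P_{ij},f)$. What you actually need is precisely the paper's device: approximate $P_{ij}$ by a Lipschitz $P_{ij}^\varepsilon$, then write $\rho(f)[T,\rho(P_{ij})] = [T,\rho(fP_{ij})]-[T,\rho(f)]\rho(P_{ij})$, observe $fP_{ij}^\varepsilon\in L'\text{-}\operatorname{Lip}_R(X)$, and conclude uniform approximability of $\{\rho_n(f)[T_n,\pi]:f\in\LLip_R(X)\}$; the remaining bounded factor $(1-\pi)[T_n,\pi]$ then does no harm. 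So the two proofs use the same key ingredient (Lipschitz approximation of the entries $P_{ij}$), just at different points: the paper in the commutator step, you in the square step. If you replace your appeal to Lemma~\ref{lem:kasparov_lemma_uniform_approx_manifold} by this approximation argument, the write-up is complete.
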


\begin{proof}
Let us first show that the operator $\pi T_n \pi$ is a uniformly pseudolocal one. Let $R, L > 0$ be given and we have to show that $\{[\pi T_n \pi, \pi \rho_n(f) \pi] \ | \ f \in \LLip_R(X)\}$ is uniformly approximable. This means that we must show that for every $\varepsilon > 0$ there exists an $N > 0$ such that for every $[\pi T_n \pi, \pi \rho_n(f) \pi]$ with $f \in \LLip_R(X)$ there is a rank-$N$ operator $k$ with $\|[\pi T_n \pi, \pi \rho_n(f) \pi] - k\| < \varepsilon$.

We have
\[[\pi T_n \pi, \pi \rho_n(f) \pi] = \pi [T_n, \pi \rho_n(f)] \pi,\]
because $\pi^2 = \pi$ and $\pi$ commutes with $\rho_n(f)$. So since $(\pi \rho_n(f))_{ij} = \rho(P_{ij} f) \in \IB(H)$, we get for the matrix entries of the commutator
\[([T_n, \pi \rho_n(f)])_{ij} = [T, \rho(P_{ij} f)].\]

Since the $P_{ij}$ are bounded and uniformly continuous, they can be uniformly approximated by Lipschitz functions, i.e., there are $P_{ij}^\varepsilon$ with
\[\|P_{ij} - P_{ij}^\varepsilon\|_\infty < \varepsilon / (4n^2 \|T\|).\]
Note that we have $P_{ij}^\varepsilon f \in {L_{ij}\text{-}\operatorname{Lip}}_{R}(X)$, where $L_{ij}$ depends only on $L$ and $P_{ij}^\varepsilon$. We define $L^\prime := \max\{L_{ij}\}$.

Now we apply the uniform pseudolocality of $T$, i.e., we get a maximum rank $N^\prime$ corresponding to $R, L^\prime$ and $\varepsilon / 2n^2$. So let $k_{ij}^\varepsilon$ be the rank-$N^\prime$ operators corresponding to the functions $P_{ij}^\varepsilon f$, i.e.,
\[\|[T, \rho(P_{ij}^\varepsilon f)] - k_{ij}^\varepsilon\| < \varepsilon / 2n^2.\]

We set $k := \pi (k_{ij}^\varepsilon) \pi$, where $(k_{ij}^\varepsilon)$ is viewed as a matrix of operators. Then $k$ has rank at most $N := n^2 N^\prime$. Then we compute
\begin{align*}
\|[\pi T_n & \pi, \pi \rho_n(f) \pi] - k\|\\
& = \|\pi[T_n, \pi \rho_n(f)]\pi - \pi (k_{ij}^\varepsilon) \pi\|\\
& \le \|\pi\|^2 \cdot n^2 \cdot \max_{i,j}\{\|[T, \rho(P_{ij} f)] - k_{ij}^\varepsilon\|\}\\
& \le \|\pi\|^2 \cdot n^2 \cdot \max_{i,j}\{\underbrace{\|[T, \rho(P_{ij} f)] - [T, \rho(P_{ij}^\varepsilon f)]\|}_{=\|[T, \rho(P_{ij} - P_{ij}^\varepsilon)\rho(f)]\|} + \underbrace{\|[T, \rho(P_{ij}^\varepsilon f)] - k_{ij}^\varepsilon\|\}}_{\le \varepsilon / 2n^2}\\
& \le \|\pi\|^2 \cdot n^2 \cdot \max_{i,j}\{2 \|T\| \cdot \underbrace{\|\rho(P_{ij} - P_{ij}^\varepsilon)\| \cdot \|\rho(f)\|}_{\le \varepsilon/(4n^2 \|T\|)} + \varepsilon / 2n^2\}\\
& \le \|\pi\|^2 \cdot \varepsilon,
\end{align*}
which concludes the proof of the uniform pseudolocality of $\pi T_n \pi$.

That $(\pi T_n \pi)^2 - 1$ and $\pi T_n \pi - (\pi T_n \pi)^\ast$ are uniformly locally compact can be shown analogously. Note that because $T$ is uniformly pseudolocal we may interchange the order of the operators $T_n$ and $\rho(P_{ij}^\varepsilon f)$ in formulas (since for fixed $R$ and $L$ the subset $\{[T_n, \rho(P_{ij}^\varepsilon f)] \ | \ f \in \LLip_R(X) \} \subset \IB(H_n)$ is uniformly approximable).

We have shown that $(\pi H_n, \pi \rho_n \pi, \pi T_n \pi)$ is a uniform Fredholm module. That it inherits a (multi-)grading from $(H, \rho, T)$ is clear and this completes the proof.
\end{proof}

That the construction from the above lemma is compatible with the relations defining $K$-theory and uniform $K$-homology and that it is bilinear is quickly deduced and completely analogous to the non-uniform case. So we get a well-defined pairing
\[\cap\colon K_u^0(X) \otimes K_\ast^u(X) \to K_\ast^u(X)\]
which exhibits $K_\ast^u(X)$ as a module over the ring $K_u^0(X)$.\footnote{Compatibility with the internal product on $K_u^0(X)$, i.e., $(P \otimes Q) \cap T = P \cap (Q \cap T)$, is easily deduced. It mainly uses the fact that the isomorphism $\Mat_{n \times n}(\IC) \otimes \Mat_{m \times m}(\IC) \cong \Mat_{nm \times nm}(\IC)$ is canonical up to the ordering of basis elements. But different choices of orderings result in isomorphisms that differ by inner automorphisms, which makes no difference at the level of $K$-theory.}

To define the cap product in its general form, we will use the dual algebra picture of uniform $K$-homology, i.e., Paschke duality:

\begin{defn}[{\cite[Definition 4.1]{spakula_uniform_k_homology}}]\label{defn:frakD_frakC}
Let $H$ be a separable Hilbert space and $\rho \colon C_0(X) \to \IB(H)$ a representation.

We denote by $\frakD^u_{\rho \oplus 0}(X) \subset \IB(H \oplus H)$ the $C^\ast$-algebra of all uniformly pseudolocal operators with respect to the representation $\rho \oplus 0$ of $C_0(X)$ on the space $H \oplus H$ and by $\frakC^u_{\rho \oplus 0}(X) \subset \IB(H \oplus H)$ the $C^\ast$-algebra of all uniformly locally compact operators.
\qed
\end{defn}

That the algebras $\frakD^u_{\rho \oplus 0}(X)$ and $\frakC^u_{\rho \oplus 0}(X)$ are indeed $C^\ast$-algebras was shown by \Spakula in \cite[Lemma 4.2]{spakula_uniform_k_homology}. There it was also shown that $\frakC^u_{\rho \oplus 0}(X) \subset \frakD^u_{\rho \oplus 0}(X)$ is a closed, two-sided $^\ast$-ideal.

\begin{defn}
The groups $K_{-1}^u(X; {\rho \oplus 0})$ are analogously defined as $K_{-1}^u(X)$, except that we consider only uniform Fredholm modules whose Hilbert spaces and representations are (finite or countably infinite) direct sums of $H \oplus H$ and $\rho \oplus 0$.

For $K_0^u(X; {\rho \oplus 0})$ we consider only uniform Fredholm modules modeled on $H^\prime \oplus H^\prime$ with the representation $\rho^\prime \oplus \rho^\prime$, where $H^\prime$ is a finite or countably infinite direct sum of $H \oplus H$ and $\rho^\prime$ analogously a direct sum of finitely or infinitely many $\rho \oplus 0$, and the grading is given by interchanging the two summands in $H^\prime \oplus H^\prime$. Such Fredholm modules are called \emph{balanced} in \cite[Definition 8.3.10]{higson_roe}.
\qed
\end{defn}

\begin{prop}[{\cite[Proposition 4.3]{spakula_uniform_k_homology}}]\label{prop:paschke_duality}
The maps
\[\varphi_\ast \colon K_{1+\ast}(\frakD^u_{\rho \oplus 0}(X)) \to K_\ast^u(X; {\rho \oplus 0})\]
for $\ast = -1, 0$ are isomorphisms.
\end{prop}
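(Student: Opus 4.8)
The plan is to follow the proof of classical Paschke duality as presented in \cite[Chapter 5]{higson_roe}, substituting throughout ``compact'' by ``uniformly locally compact'' and ``pseudolocal'' by ``uniformly pseudolocal'', and to exhibit $\varphi_\ast$ together with an explicit inverse built out of the normalisation results for uniform $K$-homology that are already at our disposal.

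First I would spell out the maps $\varphi_\ast$ on the level of cycles. For $\ast = -1$, a class in $K_0(\frakD^u_{\rho \oplus 0}(X))$ is, after stabilising, a formal difference of projections $p$ in a matrix algebra over the unitisation of $\frakD^u_{\rho \oplus 0}(X)$; the operator $T := 2p - 1$ is a self-adjoint involution and is uniformly pseudolocal because $p$ is, so that $T^2 - 1 = 0$ and $T - T^\ast = 0$ are trivially uniformly locally compact and $(H \oplus H, \rho \oplus 0, T)$ is an (ungraded) uniform Fredholm module; its class is $\varphi_{-1}[p]$. For $\ast = 0$, a class in $K_1(\frakD^u_{\rho \oplus 0}(X))$ is represented by a unitary $u$ over the unitisation; on $(H \oplus H) \oplus (H \oplus H)$ with representation $(\rho \oplus 0) \oplus (\rho \oplus 0)$, grading interchanging the two balanced summands, the odd operator $T := \bigl(\begin{smallmatrix} 0 & u^\ast \\ u & 0 \end{smallmatrix}\bigr)$ is again a uniformly pseudolocal self-adjoint involution, giving a graded balanced uniform Fredholm module whose class is $\varphi_0[u]$. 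Compatibility with homotopies of projections and unitaries and with direct sums is checked exactly as in the non-uniform case, so each $\varphi_\ast$ is a well-defined group homomorphism.

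Next I would construct the inverse. By Lemma \ref{lem:normalization_involutive} every class in $K_\ast^u(X; \rho \oplus 0)$ is represented by an \emph{involutive} balanced module, i.e.\ one with $T = T^\ast$ and $T^2 = 1$, and operator homotopies may likewise be normalised to involutive ones, using Lemma \ref{lem:compact_perturbations} to absorb the uniformly locally compact perturbations that arise. For $\ast = -1$ such a $T$ yields the projection $p := (T+1)/2$, which lies in a matrix algebra over $\frakD^u_{\rho \oplus 0}(X)$ since it is uniformly pseudolocal; for $\ast = 0$ the odd involution $T$ has the block form above for a unitary $u \in \frakD^u_{\rho \oplus 0}(X)$. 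One checks that this assignment respects the relations and hence defines a homomorphism $\psi_\ast$ in the opposite direction. That $\varphi_\ast \circ \psi_\ast = \id$ is essentially immediate: the module built from $2p - 1$ is already involutive, so applying $\psi_\ast$ returns $p$ up to stabilisation by degenerate modules, which correspond to the trivial projections (resp.\ unitaries) and hence to $0$ in $K_{1+\ast}$. The reverse composite $\psi_\ast \circ \varphi_\ast = \id$ follows by the same bookkeeping run backwards, together with the observation that the countably-infinite balanced direct sums allowed in the definition of $K_\ast^u(X; \rho \oplus 0)$ match the stabilisation implicit in $K_{1+\ast}(\frakD^u_{\rho \oplus 0}(X))$ and that degenerate uniform Fredholm modules represent the zero class.

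The part that is not purely formal is checking that all of the normalisation steps survive the uniform estimates: that the operator homotopies and uniformly locally compact perturbations witnessing ``every class is represented by an involutive module'' and ``homotopic involutive modules have homotopic operators'' can be carried out while keeping the relevant collections $\{\rho(f) S, S \rho(f)\}$ and $\{[S, \rho(f)]\}$ uniformly approximable for all $R, L > 0$. This is exactly the content of Lemma \ref{lem:normalization_involutive} and Lemma \ref{lem:compact_perturbations}, so once those are granted the proposition follows; this is also the route taken in the proof of \cite[Proposition 4.3]{spakula_uniform_k_homology}, which carries out precisely this verification.
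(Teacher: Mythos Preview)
The paper does not supply its own proof of this proposition; it simply records the statement and cites \cite[Proposition 4.3]{spakula_uniform_k_homology}. Your sketch is the correct approach and is indeed the one taken in the cited reference: reproduce the classical Paschke-duality argument from \cite[Chapter 5]{higson_roe} while systematically replacing local compactness and pseudolocality by their uniform analogues, and use the normalisation to involutive modules (Lemma \ref{lem:normalization_involutive}) together with Lemma \ref{lem:compact_perturbations} to build the inverse.

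Two small points worth tidying. First, you have swapped the labels on the two composites: the argument ``the module built from $2p-1$ is already involutive, so applying $\psi_\ast$ returns $p$'' verifies $\psi_\ast \circ \varphi_\ast = \id$, not $\varphi_\ast \circ \psi_\ast = \id$. Second, the well-definedness of $\psi_\ast$ requires not only that every class has an involutive representative but that two involutive representatives of the same class are connected by an \emph{involutive} operator homotopy (so that the associated projections, resp.\ unitaries, are homotopic in $\frakD^u_{\rho \oplus 0}(X)$). You correctly attribute this to Lemma \ref{lem:normalization_involutive}, but it is worth saying explicitly that ``normalisation'' there includes the homotopies, since that is the non-formal ingredient.
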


Combining the above proposition with the following uniform version of Voiculescu's Theorem, we get the needed uniform version of Paschke duality.

\begin{thm}[{\cite[Corollary 3.6]{spakula_universal_rep}}]\label{thm:paschke_universal}
Let $X$ be a locally compact and separable metric space of jointly bounded geometry and $\rho\colon C_0(X) \to \IB(H)$ an ample representation, i.e., $\rho$ is non-degenerate and $\rho(f) \in \IK(H)$ implies $f \equiv 0$.

Then we have
\[K_\ast^u(X; \rho \oplus 0) \cong K_\ast^u(X)\]
for both $\ast = -1,0$.
\end{thm}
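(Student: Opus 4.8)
The plan is to deduce this from a \emph{uniform version of Voiculescu's theorem}: any two ample representations of $C_0(X)$ are approximately unitarily equivalent through unitaries that are compatible with the uniform structure, in the sense that the relevant differences are not merely compact but uniformly locally compact with a common modulus of approximability, and the implementing unitaries can be taken to have controlled propagation. This is exactly where the jointly bounded geometry hypothesis enters: it supplies a countable Borel decomposition $X = \cup X_i$ whose pieces form an admissible class of totally bounded spaces together with a compatible quasi-lattice, which is the combinatorial input needed to run a quantitative version of the usual block-diagonal, quasicentral-approximate-unit argument — the mechanics parallel the proof of our Lemma \ref{lem:construction_partition_unity}. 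So the first step would be to record this uniform Voiculescu statement (this is the content of \cite[Section 3]{spakula_universal_rep}): if $\rho_1,\rho_2\colon C_0(X)\to\IB(H)$ are ample and $X$ has jointly bounded geometry, then there are unitaries $V_n$ on $H$ with $V_n\rho_1(f)V_n^\ast-\rho_2(f)\to 0$ for all $f$ and such that for every $R,L>0$ the family $\{V_n\rho_1(f)V_n^\ast-\rho_2(f) \mid f\in\LLip_R(X),\, n\in\IN\}$ is uniformly approximable with a modulus depending only on $R$ and $L$.

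Accepting this, the argument is the classical Paschke-duality proof of representation-independence of $K$-homology, with ``compact'' systematically replaced by ``uniformly locally compact'' and ``norm-small'' by ``uniformly approximable''. There is an evident forgetful map $K_\ast^u(X;\rho\oplus 0)\to K_\ast^u(X)$. For the inverse, take an arbitrary $p$-multigraded uniform Fredholm module $(H',\rho',T')$; adding a degenerate module built from sufficiently many copies of $(H\oplus H,\rho\oplus 0)$ changes neither its class nor its uniformity and makes the ambient representation ample and balanced. Apply the uniform Voiculescu theorem to conjugate the amplified $\rho'$ to the standard balanced representation by a unitary $V$; conjugating $T'$ by $V$ yields a module of the form required in the definition of $K_\ast^u(X;\rho\oplus 0)$, because uniform pseudolocality and uniform local compactness of $(T')^2-1$ and $T'-(T')^\ast$ are preserved by conjugation up to error terms that, by the uniform Voiculescu estimates, are uniformly locally compact, hence absorbed by Lemma \ref{lem:compact_perturbations} and the operator-homotopy relation. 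One then checks that this assignment respects the relations (well-definedness on homotopies can be handled by a mapping-cylinder argument, or by choosing the Voiculescu unitaries compatibly) and is inverse to the forgetful map. Formal $2$-periodicity reduces $\ast=-1$ to $\ast=0$, or one runs both cases in parallel. Combining with Proposition \ref{prop:paschke_duality} then gives the dual-algebra reformulation used later.

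The hard part is the uniform Voiculescu theorem itself — producing the unitaries $V_n$ and controlling all error terms simultaneously in the coarse-geometric sense. The usual proof builds $V_n$ from finite-rank partial isometries intertwining finite-dimensional subrepresentations of $\rho_1$ and $\rho_2$ and patches them with a quasicentral approximate unit; in the uniform setting every such piece must be chosen with rank bounded independently of where in $X$ it is supported, and it is precisely the admissible decomposition $X=\cup X_i$ together with the quasi-lattice (i.e.\ jointly bounded geometry) that makes this possible. Everything downstream is bookkeeping: the errors at each stage are uniformly locally compact, so by Lemma \ref{lem:compact_perturbations} they are invisible to uniform $K$-homology, and the only genuinely new ingredient beyond the classical proof is this location-uniform control on ranks and propagations.
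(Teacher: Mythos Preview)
The paper does not give its own proof of this statement; it is quoted verbatim as \cite[Corollary 3.6]{spakula_universal_rep} and used as a black box. Your sketch correctly identifies the mechanism behind \v{S}pakula's result---a uniform Voiculescu theorem leveraging the jointly bounded geometry decomposition to control ranks and propagation uniformly---so there is nothing to compare against within this paper, and your outline is consistent with the cited source.
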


The following lemma is a uniform analog of the fact \cite[Lemma 5.4.1]{higson_roe} and is essentially proven in \cite[Lemma 5.3]{spakula_uniform_k_homology} (by ``setting $Z := \emptyset$'' in that lemma).

\begin{lem}\label{lem:K_theory_frakC_zero}
We have
\[K_\ast(\frakC^u_{\rho \oplus 0}(X)) = 0\]
and so the quotient map $\frakD^u_{\rho \oplus 0}(X) \to \frakD^u_{\rho \oplus 0}(X) / \frakC^u_{\rho \oplus 0}(X)$ induces an isomorphism
\begin{equation}\label{eq:K_theory_frakC_zero}
K_\ast(\frakD^u_{\rho \oplus 0}(X)) \cong K_\ast(\frakD^u_{\rho \oplus 0}(X) / \frakC^u_{\rho \oplus 0}(X))
\end{equation}
due to the $6$-term exact sequence for $K$-theory.
\end{lem}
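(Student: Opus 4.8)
The plan is to prove both statements of the lemma by importing the corresponding non-uniform facts from \cite[Lemma 5.4.1]{higson_roe} and adapting \v{S}pakula's argument from \cite[Lemma 5.3]{spakula_uniform_k_homology} to the special case at hand. First I would recall that in the non-uniform situation the key point is that the algebra of locally compact operators (with respect to an ample representation on $H\oplus H$, where the second copy carries the zero representation) is a stable algebra of ``controlled'' compact-like operators whose $K$-theory vanishes by an Eilenberg swindle: one writes $H\oplus H$ as a countably infinite direct sum of copies of itself, observes that the flabbiness/infinite-repeat construction stays inside $\frakC^u_{\rho\oplus 0}(X)$, and concludes $K_\ast(\frakC^u_{\rho\oplus 0}(X))=0$. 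The uniform refinement is exactly what \v{S}pakula does in \cite[Lemma 5.3]{spakula_uniform_k_homology}: his lemma is stated relative to a subspace $Z\subset X$, and taking $Z:=\emptyset$ specializes it to the statement $K_\ast(\frakC^u_{\rho\oplus 0}(X))=0$ that we need. So the first and main step is simply to invoke that specialization and check that the hypotheses there (ample representation on $H\oplus H$ with the second summand zero, $X$ locally compact separable of jointly bounded geometry so that the uniform structures behave well) match our situation; this is where the representation $\rho\oplus 0$ rather than $\rho$ is essential, since it guarantees enough room to perform the swindle while staying uniformly locally compact.

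Next I would deduce the displayed isomorphism \eqref{eq:K_theory_frakC_zero} formally. Since $\frakC^u_{\rho\oplus 0}(X)$ is a closed two-sided $\ast$-ideal in $\frakD^u_{\rho\oplus 0}(X)$ (this is \cite[Lemma 4.2]{spakula_uniform_k_homology}, already cited just before the lemma), we have a short exact sequence of $C^\ast$-algebras
\[
0 \to \frakC^u_{\rho\oplus 0}(X) \to \frakD^u_{\rho\oplus 0}(X) \to \frakD^u_{\rho\oplus 0}(X)/\frakC^u_{\rho\oplus 0}(X) \to 0,
\]
and the associated six-term exact sequence in $K$-theory reads, in each degree,
\[
K_\ast(\frakC^u_{\rho\oplus 0}(X)) \to K_\ast(\frakD^u_{\rho\oplus 0}(X)) \to K_\ast(\frakD^u_{\rho\oplus 0}(X)/\frakC^u_{\rho\oplus 0}(X)) \to K_{\ast-1}(\frakC^u_{\rho\oplus 0}(X)).
\]
With the outer terms vanishing by the first step, exactness forces the middle map $K_\ast(\frakD^u_{\rho\oplus 0}(X)) \to K_\ast(\frakD^u_{\rho\oplus 0}(X)/\frakC^u_{\rho\oplus 0}(X))$, which is precisely the map induced by the quotient homomorphism, to be an isomorphism. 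This half is purely homological and essentially automatic once the vanishing is in hand.

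The main obstacle, therefore, is entirely concentrated in the first step: verifying that \v{S}pakula's swindle argument for $\frakC^u_{\rho\oplus 0}(X)$ genuinely goes through and that the $Z=\emptyset$ specialization is legitimate. Concretely, one must make sure that the infinite-amplification isometry $V\colon H\oplus H \to \bigoplus_{n\in\IN}(H\oplus H)$ and the conjugation $T\mapsto \bigoplus_n T$ preserve uniform local compactness, i.e.\ that the uniform-approximability bounds are not destroyed by passing to the countable direct sum — this is where joint bounded geometry of $X$ and the structure of the zero summand in $\rho\oplus 0$ enter, exactly as in \cite[Lemma 5.3]{spakula_uniform_k_homology}. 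Since the cited lemma is proved there in greater generality, I would present the proof as a short reduction: state the short exact sequence, cite \cite[Lemma 4.2]{spakula_uniform_k_homology} for the ideal property, cite the $Z:=\emptyset$ case of \cite[Lemma 5.3]{spakula_uniform_k_homology} for $K_\ast(\frakC^u_{\rho\oplus 0}(X))=0$, and conclude \eqref{eq:K_theory_frakC_zero} from the six-term sequence. No genuinely new computation is needed beyond checking that the hypotheses align.
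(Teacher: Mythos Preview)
Your proposal is correct and matches the paper's approach exactly: the paper itself does not give an independent proof but simply remarks that the lemma is the uniform analog of \cite[Lemma 5.4.1]{higson_roe} and is essentially proven in \cite[Lemma 5.3]{spakula_uniform_k_homology} by setting $Z := \emptyset$, with the isomorphism \eqref{eq:K_theory_frakC_zero} then following from the six-term exact sequence. Your write-up expands on precisely these citations with the expected Eilenberg-swindle intuition and the formal homological deduction, so there is nothing to correct.
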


The last ingredient to construct the cap product is the inclusion
\begin{equation}\label{eq:commutator_C_u_with_frakD}
[C_u(X), \frakD^u_{\rho \oplus 0}(X)] \subset \frakC^u_{\rho \oplus 0}(X).
\end{equation}
It is proven in the following way: let $\varphi \in C_u(X)$ and $T \in \frakD^u_{\rho \oplus 0}(X)$. We have to show that $[\varphi, T] \in \frakC^u_{\rho \oplus 0}(X)$. By approximating $\varphi$ uniformly by Lipschitz functions we may without loss of generality assume that $\varphi$ itself is already Lipschitz. Now the claim follows immediately from $f[\varphi, T] = [f \varphi, T] - [f,T]\varphi$ since $T$ is uniformly pseudolocal.

Now we are able to define the cap product. Consider the map
\[ \sigma \colon C_u(X) \otimes \frakD^u_{\rho \oplus 0}(X) \to \frakD^u_{\rho \oplus 0}(X) / \frakC^u_{\rho \oplus 0}(X), \ f \otimes T \mapsto [fT].\]
It is a multiplicative $^\ast$-homomorphism due to the above Equation \eqref{eq:commutator_C_u_with_frakD} and hence induces a map on $K$-theory
\begin{equation*}
\sigma_\ast \colon K_\ast(C_u(X) \otimes \frakD^u_{\rho \oplus 0}(X)) \to K_\ast(\frakD^u_{\rho \oplus 0}(X) / \frakC^u_{\rho \oplus 0}(X)).
\end{equation*}
Using Paschke duality we may define the cap product as the composition
\begin{align*}
K_u^p(X) \otimes K_q^u(X; \rho \oplus 0) & \ = \ K_{-p}(C_u(X)) \otimes K_{1+q}(\frakD^u_{\rho \oplus 0}(X))\\
& \ \to \ \! K_{-p+1+q}(C_u(X) \otimes \frakD^u_{\rho \oplus 0}(X))\\
& \ \stackrel{\sigma_\ast}\to \ \! K_{-p+1+q}(\frakD^u_{\rho \oplus 0}(X) / \frakC^u_{\rho \oplus 0}(X))\\
& \stackrel{\eqref{eq:K_theory_frakC_zero}}\cong K_{-p+1+q}(\frakD^u_{\rho \oplus 0}(X))\\
& \ = \ K_{q-p}^u(X; \rho \oplus 0),
\end{align*}
where the first arrow is the external product on $K$-theory. So we get the cap product
\[\cap \colon K_u^p(X) \otimes K_q^u(X) \to K_{q-p}^u(X).\]

Let us state in a proposition some properties of it that we will need. The proofs of these properties are analogous to the non-uniform case.

\begin{prop}\label{prop:properties_general_cap_product}
The cap product has the following properties:
\begin{itemize}
\item the pairing of $K_u^0(X)$ with $K_\ast^u(X)$ coincides with the one in Lemma \ref{lem:proj_again_uniform_fredholm_module},
\item the fact that $K_\ast^u(X)$ is a module over $K_u^0(X)$ generalizes to
\begin{equation}\label{eq:general_cap_compatibility_module}
(P \otimes Q) \cap T = P \cap (Q \cap T)
\end{equation}
for all elements $P, Q \in K_u^\ast(X)$ and $T \in K_\ast^u(X)$, where $\otimes$ is the internal product on uniform $K$-theory,
\item if $X$ and $Y$ have jointly bounded geometry, then we have the following compatibility with the external products:
\begin{equation}\label{eq:compatibility_cap_external}
(P \times Q) \cap (S \times T) = (-1)^{qs} (P \cap S) \times (Q \cap T),
\end{equation}
where $P \in K_u^p(X)$, $Q \in K_u^q(Y)$ and $S \in K^u_s(X)$, $T \in K^u_t(Y)$, and
\item if we have a manifold of bounded geometry $M$, a vector bundle of bounded geometry $E \to M$ and an operator $D$ of Dirac type, then
\begin{equation}\label{eq:cap_twisted_Dirac}
[E] \cap [D] = [D_E] \in K_\ast^u(M),
\end{equation}
where $D_E$ is the twisted operator.
\end{itemize}
\end{prop}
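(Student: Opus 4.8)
The first three properties are formal: their proofs are the straightforward translations of the corresponding arguments in the non-uniform setting (see \cite{higson_roe}), and I will describe them only briefly. The genuine geometric content is in the last property, and the plan is to deduce it from the first one.

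For the first bullet, the plan is to unravel the two definitions of the cap product restricted to $K_u^0(X)$. Represent a class in $K_u^0(X) = K_0(C_u(X))$ by a projection $P \in \Mat_{n\times n}(C_u(X))$ (passing to the unitalization if necessary) and a class in $K_q^u(X;\rho\oplus 0) \cong K_{1+q}(\frakD^u_{\rho\oplus 0}(X))$ by an operator $T$. The external product in operator $K$-theory sends $[P]\otimes[T]$ to the class of $P\otimes T \in C_u(X)\otimes\frakD^u_{\rho\oplus 0}(X)$, and $\sigma_\ast$ then sends this to the class of the matrix $(\rho(P_{ij})\,T)$ in $\frakD^u_{\rho\oplus 0}(X)/\frakC^u_{\rho\oplus 0}(X)$; lifting back through the isomorphism \eqref{eq:K_theory_frakC_zero} and applying Paschke duality yields exactly the compressed module $(\pi H_n, \pi\rho_n\pi, \pi T_n\pi)$ of Lemma \ref{lem:proj_again_uniform_fredholm_module}. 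For the second bullet one uses associativity of the external product in operator $K$-theory together with the fact that $\sigma$ is a multiplicative $^\ast$-homomorphism. For the third bullet one additionally invokes the external product on uniform $K$-homology from Theorem \ref{thm:external_prod_homology} (here the jointly bounded geometry of $X$ and $Y$ enters), together with the compatibility of the map $\sigma$ for $X_1 \times X_2$ with those for $X_1$ and $X_2$; the sign $(-1)^{qs}$ comes out of the graded commutativity of the external products exactly as in the non-uniform computation.

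The heart of the proposition is the identity $[E]\cap[D] = [D_E]$. Using Corollary \ref{cor:two_monoids_isomorphic} choose an idempotent $e\in\Idem_{n\times n}(C_b^\infty(M))$ with $\image e$ $C_b^\infty$-isomorphic to $E$, and equip $E$ with the Grassmann connection $\nabla^E = e\circ d\circ e$; since $e\in C_b^\infty(M)$, the criterion of Lemma \ref{lem:equiv_characterizations_bounded_geom_bundles} shows that $E$ then has bounded geometry, and by Proposition \ref{prop:same_symbol_same_k_hom_class} the class $[D_E]$ is independent of the choice of bounded-geometry connection. Write $\tilde D := D\otimes\id$ for the Dirac-type operator on the bounded-geometry bundle $S\otimes\underline{\IC^n}$, where $S$ is the Dirac bundle underlying $D$, and let $\pi := \rho(e)$ act by multiplication on $H_n = L^2(S)\otimes\IC^n = L^2(S\otimes\underline{\IC^n})$. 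By the first bullet, $[E]\cap[D]$ is represented by the uniform Fredholm module $(\pi H_n, \pi\rho_n\pi, \pi\,\chi(\tilde D)\,\pi)$, and $\pi H_n$ together with the compressed representation is canonically $L^2(S\otimes E)$ with the multiplication representation. It therefore suffices to show that $\pi\,\chi(\tilde D)\,\pi$ and $\chi(D_E)$ define the same class on this module, which I would do in two steps. First, choosing the normalizing function $\chi(x) = x/\sqrt{1+x^2}$ and using the integral formula $\chi(\tilde D) = \tfrac{2}{\pi}\int_0^\infty \tilde D(1+\lambda^2+\tilde D^2)^{-1}\,d\lambda$ (and the analogous formula for $\chi(e\tilde De)$ on $\pi H_n$), a computation with the resolvent identity---parallel to the proof of Theorem \ref{thm:elliptic_symmetric_PDO_defines_uniform_Fredholm_module}---shows that $\pi\,\chi(\tilde D)\,\pi - \chi(e\tilde De)$ is, for each $\lambda$, a quasilocal operator of negative order whose adjoint is too, with bounds uniform in $\lambda$ and integrable against $d\lambda$; hence by Corollary \ref{cor:quasilocal_neg_order_uniformly_locally_compact} the difference is uniformly locally compact, so by the compact perturbation Lemma \ref{lem:compact_perturbations} the modules $(\pi H_n,\ldots,\pi\chi(\tilde D)\pi)$ and $(\pi H_n,\ldots,\chi(e\tilde De))$ are operator homotopic. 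Second, $e\tilde De \in \UPsiDO^1(S\otimes E)$ is symmetric, elliptic, and has principal symbol $\sigma_D\otimes\id_E$, which is precisely the principal symbol of $D_E$; hence by Proposition \ref{prop:same_symbol_same_k_hom_class} the modules built from $\chi(e\tilde De)$ and from $\chi(D_E)$ represent the same class. Combining the two steps gives $[E]\cap[D] = [D_E]$.

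The main obstacle is the resolvent estimate in the first step of the last part: one must observe that $[\tilde D, e]$ is a $C^\infty$-bounded bundle endomorphism (an operator of order $0$, since $D$ has order $1$) and then control, uniformly in $\lambda$ and with a $\lambda$-summable bound, the quasilocality order of the integrand of $\pi\chi(\tilde D)\pi - \chi(e\tilde De)$ so that Corollary \ref{cor:quasilocal_neg_order_uniformly_locally_compact} applies after integration---this is the only point where genuine analysis rather than bookkeeping enters, and it follows the pattern already developed in Section \ref{sec:functions_of_PDOs} and in the proof of Theorem \ref{thm:elliptic_symmetric_PDO_defines_uniform_Fredholm_module}. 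A secondary, purely formal obstacle is the careful verification in the first bullet that the chain of maps defining the general cap product reproduces the explicit compression of Lemma \ref{lem:proj_again_uniform_fredholm_module} on the nose.
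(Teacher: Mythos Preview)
Your proposal is correct and matches the paper's approach, which is simply the one-line remark that ``the proofs of these properties are analogous to the non-uniform case'' with no further detail. You go well beyond this by actually sketching the arguments; in particular your two-step treatment of the fourth bullet---reducing $\pi\chi(\tilde D)\pi$ to $\chi(e\tilde De)$ via a uniformly locally compact perturbation using the resolvent formula, then invoking Proposition~\ref{prop:same_symbol_same_k_hom_class} since $e\tilde De$ and $D_E$ share a principal symbol---is precisely the correct uniform analogue of the standard non-uniform argument (one small point: take $e$ to be a self-adjoint projection rather than merely an idempotent, so that $e\tilde De$ is symmetric).
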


\subsection{Uniform \texorpdfstring{$K$-\Poincare}{K-Poincare }duality}\label{sec:poincare_duality}

We will prove in this section that uniform $K$-theory is indeed the dual theory to uniform $K$-homology. To this end we will show that they are \Poincare dual to each other. This will be accomplished by a suitable Mayer--Vietoris induction of which the idea will also be used later in this paper to prove similar results like the uniform Chern character isomorphism theorems in Section \ref{sec:chern_isos}.

\begin{thm}[Uniform $K$-\Poincare duality]
\label{thm:Poincare_duality_K}
Let $M$ be an $m$-dimensional spin$^c$ manifold of bounded geometry and without boundary.

Then the cap product $\largecdot \cap [M] \colon K_u^\ast(M) \to K^u_{m-\ast}(M)$ with its uniform $K$-fundamental class $[M] \in K_m^u(M)$ is an isomorphism.
\end{thm}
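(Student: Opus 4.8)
The plan is to run a Mayer--Vietoris induction on an appropriate class of open subsets of $M$, following the scheme that is by now standard for Poincar\'e duality statements (as in \cite[Chapter 11]{higson_roe}) but carried out in the uniform category. The cap product $\largecdot \cap [M]$ should be interpreted as a natural transformation between two (co)homology-type theories defined on open subsets $U \subseteq M$: on the one hand $U \mapsto K_u^\ast(U)$, which by the results of Section \ref{sec:uniform_k_th} is the operator $K$-theory of $C_u(U)$ and hence behaves contravariantly and has a Mayer--Vietoris sequence for suitable open covers, and on the other hand $U \mapsto K^u_{m-\ast}(U)$, which by the discussion following Definition \ref{defn:uniform_fredholm_modules} makes sense for non-proper, locally compact, separable metric spaces (crucially including open balls in $\IR^m$) and also admits a Mayer--Vietoris sequence. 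First I would set up both Mayer--Vietoris sequences in the uniform setting and check that the cap product with $[M]$ (restricted appropriately, using that the \spinc structure and fundamental class restrict to open subsets) commutes with the connecting homomorphisms, so that one gets a ladder of six-term exact sequences to which the five lemma applies.

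The induction then proceeds over the following hierarchy of open subsets $U \subseteq M$, each of which inherits bounded geometry (and hence jointly bounded geometry, as noted in the examples after Definition \ref{defn:jointly_bounded_geometry}). \emph{Base case:} $U$ a disjoint union, with uniformly bounded geometry, of normal coordinate balls; by excision and the multiplicativity of the cap product under such disjoint unions this reduces to a single (relatively compact) ball, where the statement is just ordinary $K$-Poincar\'e duality for a \spinc ball — here one uses Theorem \ref{thm:interpretation_K0u}, the interpretation of $K^0_u$, together with Proposition \ref{prop:compact_space_every_module_uniform} and the remark that uniform $K$-homology of a totally bounded space is the usual one, so that the classical duality of \cite[Chapter 11]{higson_roe} applies. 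The essential point is that all the estimates are \emph{uniform} across the balls of the disjoint union, which is exactly what the bounded-geometry hypothesis and the construction of the external product in Section \ref{sec:homotopy_invariance} (via Lemma \ref{lem:construction_partition_unity}) deliver. \emph{Inductive step:} if $U = U_1 \cup U_2$ where the duality is known for $U_1$, $U_2$ and $U_1 \cap U_2$, then the five lemma applied to the Mayer--Vietoris ladder gives it for $U$; a standard ``swindle'' using a uniformly locally finite cover of $M$ by such balls, grouped by a finite colouring (exactly the colouring argument in footnote~\ref{footnote:coloring} in the proof of Proposition \ref{prop:every_bundle_complemented}), then upgrades this from finite unions to all of $M$, the infinite step being handled by a $\lim^1$/telescope argument together with the fact that both theories are compatible with the relevant countable direct sums.

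The main obstacle, as the author already flags in the introduction to Section \ref{sec:uniform_k_th}, is establishing the \emph{uniform} Mayer--Vietoris sequence for uniform $K$-homology and, more importantly, checking that the resulting exact sequences are compatible with the cap product and that all the bounds involved (in the base case, in the boundary maps, and in the countable sums) are genuinely uniform in the cover — i.e.\ that nothing in the induction smuggles in a constant depending on the particular ball. This is where one needs jointly bounded geometry in an essential way (to bound the number of balls meeting any given one, to control the partitions of unity, and to make the external product and Kasparov-type constructions of Section \ref{sec:homotopy_invariance} go through with uniform estimates), and it is also the reason the ambient section is described as long and technical. A secondary technical point is the uniform Thom isomorphism needed to relate the symbol class of the Dirac operator to the fundamental class $[M]$ under cap product, which feeds into the base case via \eqref{eq:cap_twisted_Dirac}; once that and the uniform Mayer--Vietoris machinery are in place, the five-lemma induction itself is formal.
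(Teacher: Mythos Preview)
Your overall strategy---Mayer--Vietoris induction with a finite colouring of a cover by normal coordinate balls, the five lemma, and a direct computation on disjoint unions of balls---is exactly the paper's approach. However, there are two places where your sketch diverges from what actually works.

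First, the induction is \emph{finite}: the colouring yields finitely many subsets $U_1,\ldots,U_N$, each already an infinite disjoint union of balls, and you induct over $k=1,\ldots,N$ on the partial unions $U_K=U_1\cup\cdots\cup U_k$. There is no ``$\lim^1$/telescope'' step, and indeed such an argument would fail: neither uniform $K$-theory nor uniform $K$-homology is compactly supported, so passing to a direct limit over compact exhaustions does not recover the theory on $M$. The whole point of the finite colouring is to avoid any limit. What you do need---and what is easy to overlook---is the additional structural requirement of Lemma~\ref{lem:suitable_coloring_cover_M}: not only must each $U_j$ be a uniformly separated disjoint union of balls, but the connected components of each partial union $U_K$ must also be a uniform distance apart. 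This is what makes the pullback square $C_u(U_K\cup U_{k+1})\cong C_u(U_K)\times_{C_u(U_K\cap U_{k+1})}C_u(U_{k+1})$ hold and gives the Mayer--Vietoris sequence (Lemma~\ref{lem:MV_uniform_K}); obtaining such a cover uses the triangulation theorem~\ref{thm:triangulation_bounded_geometry}, not just bounded geometry of balls. Relatedly, one must work with the \emph{subspace} metric on $O\subset M$ (Definition~\ref{defn:uniform_k_th_subset}), not the induced Riemannian metric, and use Lemma~\ref{lem:extension_samuel_compactification} to get surjectivity of the restriction maps.

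Second, two smaller points. The base case on an infinite disjoint union of balls does not literally reduce to one ball by ``multiplicativity'': you must compute both sides directly as $\ell^\infty_\IZ(Y)$ (Lemmas~\ref{lem:uniform_k_th_discrete_space}, \ref{lem:uniform_k_hom_discrete_space}, \ref{lem:uniform_k_hom_balls}) and check that cap product realises the identity map there; the uniformity of the Fredholm-module estimates across the balls is what makes the $K$-homology side land in the \emph{bounded} sequences. And the Thom isomorphism is not used in this proof at all---it enters only later, in the local index theorem for general pseudodifferential operators. Here the fundamental class is simply the Dirac class, and \eqref{eq:cap_twisted_Dirac} is what identifies the cap product on each ball with the classical one.
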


The proof of this theorem will occupy the whole subsection. We will first have to prove some auxiliary results before we will start on Page \pageref{page:proof_Poincare_duality} to assemble them into a proof of uniform $K$-\Poincare duality.

We will need the following Theorem \ref{thm:triangulation_bounded_geometry} about manifolds of bounded geometry. To state it, we have to recall some notions:

\begin{defn}[Bounded geometry simplicial complexes]\label{defn:simplicial_complex_bounded_geometry}
A simplicial complex has \emph{bounded geometry} if there is a uniform bound on the number of simplices in the link of each vertex.

A subdivision of a simplicial complex of bounded geometry with the properties that
\begin{itemize}
\item each simplex is subdivided a uniformly bounded number of times on its $n$-skeleton, where the $n$-skeleton is the union of the $n$-dimensional sub-simplices of the simplex, and that
\item the distortion $\operatorname{length}(e) + \operatorname{length}(e)^{-1}$ of each edge $e$ of the subdivided complex is uniformly bounded in the metric given by barycentric coordinates of the original complex,
\end{itemize}
is called a \emph{uniform subdivision}.
\qed
\end{defn}

\begin{defn}[Bi-Lipschitz equivalences]
Two metric spaces $X$ and $Y$ are said to be \emph{bi-Lipschitz equivalent} if there is a homeomorphism $f\colon X \to Y$ with
\[\tfrac{1}{C} d_X(x,x^\prime) \le d_Y(f(x), f(x^\prime)) \le C d_X(x,x^\prime)\]
for all $x,x^\prime \in X$ and some constant $C > 0$.
\qed
\end{defn}

\begin{thm}[{\cite[Theorem 1.14]{attie_classification}}]\label{thm:triangulation_bounded_geometry}
Let $M$ be a manifold of bounded geometry and without boundary.

Then $M$ admits a triangulation as a simplicial complex of bounded geometry whose metric given by barycentric coordinates is bi-Lipschitz equivalent to the metric on $M$ induced by the Riemannian structure. This triangulation is unique up to uniform subdivision.

Conversely, if $M$ is a simplicial complex of bounded geometry which is a triangulation of a smooth manifold, then this smooth manifold admits a metric of bounded geometry with respect to which it is bi-Lipschitz equivalent to $M$.
\end{thm}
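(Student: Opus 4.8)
The plan is to prove the two directions separately. In each case the strategy is to take a classical construction — Whitehead's smooth triangulation theorem for the forward direction, geometric realisation of a simplicial complex for the converse — and upgrade it to a \emph{uniform} statement using the uniform structures available on manifolds of bounded geometry, namely the uniformly locally finite normal-coordinate covers of Lemma \ref{lem:nice_coverings_partitions_of_unity} and the uniform bounds on transition functions of Lemma \ref{lem:transition_functions_uniformly_bounded}.

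First I would treat existence. Fix $0 < \varepsilon < \injrad_M / 3$ and use Lemma \ref{lem:nice_coverings_partitions_of_unity} to obtain a uniformly locally finite cover $\{B_{2\varepsilon}(x_i)\}$ by normal coordinate balls whose centers $\{x_i\}$ form a uniformly discrete quasi-lattice, together with a subordinate partition of unity of uniformly bounded derivatives. In each such chart the metric tensor and all its derivatives are uniformly bounded, and by Jacobi-field comparison (using the curvature and injectivity-radius bounds) it is also uniformly bounded below, so the Riemannian metric is uniformly bilipschitz to the Euclidean one on each chart; by Lemma \ref{lem:transition_functions_uniformly_bounded} the transition functions and all their derivatives between overlapping charts are uniformly bounded. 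I would then run the classical inductive construction of a smooth triangulation (Munkres/Whitehead): triangulate the first chart by a rescaled copy of the standard simplicial decomposition of $\IR^m$ of mesh $\sim \varepsilon$, and inductively extend over the remaining charts by perturbing the already-built triangulation into general position with respect to the new chart's triangulation and passing to the secant simplicial approximation. Because the cover is uniformly locally finite the induction has uniformly bounded depth at each point, and because the transition functions are uniformly $C^\infty$-bounded each perturbation can be made with a uniform lower bound on the fullness (thickness) of the simplices produced. The outcome is a triangulation whose vertices have uniformly bounded star-size — a simplicial complex of bounded geometry — and whose simplices have edge lengths comparable to $\varepsilon$ up to uniform constants, so the barycentric metric is bilipschitz to the Riemannian metric on each simplex, hence quasi-isometric globally. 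For uniqueness up to uniform subdivision I would invoke Whitehead's theorem that any two smooth triangulations have a common subdivision, produced by making one transverse to the other and subdividing, and observe that the same uniform bookkeeping bounds the number of subdivisions each simplex undergoes and the edge distortion of the common subdivision in each barycentric metric — which is precisely the definition of a uniform subdivision.

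For the converse, let $K$ be a simplicial complex of bounded geometry triangulating a smooth manifold $M$. Equip $K$ with the piecewise-flat path metric coming from barycentric coordinates (each simplex a standard Euclidean simplex) and then smooth it: bounded geometry of $K$ means the open stars of vertices form a uniformly locally finite cover with uniformly bounded combinatorial type, so choosing a subordinate partition of unity with uniformly bounded derivatives in the standard coordinates on stars and averaging the piecewise-flat metric tensor against it yields a smooth Riemannian metric $g$ on $M$. Since the smoothing is, near each point, a uniformly small perturbation of a fixed finite model glued from a uniformly bounded number of standard simplices, $g$ and all its covariant derivatives are uniformly bounded and the injectivity radius is uniformly positive, so $(M,g)$ has bounded geometry; and since $g$ differs from the piecewise-flat metric by factors bounded above and below by uniform constants, the identity $(M,g) \to K$ is a quasi-isometry.

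The main obstacle is the forward direction: making the chart-by-chart inductive triangulation argument uniform, in particular guaranteeing a \emph{uniform} lower bound on the fullness of the simplices produced by the iterated secant/transversality perturbations, so that the final complex has bounded geometry rather than merely being some triangulation. This needs quantitative versions of the general-position lemmas underlying the smooth triangulation theorem, with all constants depending only on $m$ and the uniform bounds on the metric and transition functions — routine in principle but technically heavy bookkeeping.
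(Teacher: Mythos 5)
The first thing to say is that the paper does not prove this statement at all: it is imported verbatim from Attie (\cite[Theorem~1.14]{attie_classification}), and the only original content the paper adds is Remark~\ref{rem:attie_regularity}, observing that Attie's construction in the converse direction actually produces a metric of bounded geometry in the strong sense (with bounds on all derivatives of curvature) even though Attie's hypotheses are weaker. So there is no in-paper proof to compare yours against; you are reconstructing an external argument.

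As a strategy your sketch is the standard and correct one: uniformize a classical smooth-triangulation construction over a uniformly locally finite normal-coordinate atlas for existence and uniqueness, and smooth the piecewise-flat barycentric metric with a uniformly controlled partition of unity for the converse. The difficulty is that in the forward direction you have deferred exactly the step that constitutes the entire content of the theorem. The classical Whitehead/Munkres induction produces, at each stage, simplices whose fullness can degenerate, and the general-position perturbations are chosen by a Baire-category or measure-theoretic argument that gives no quantitative lower bound; turning ``a perturbation exists'' into ``a perturbation of size at least $\delta(m,\varepsilon)$ exists, with $\delta$ independent of the chart'' is not bookkeeping but requires quantitative transversality (e.g.\ a controlled version of the fullness lemmas, or switching to a net-based construction where the lower bound on simplex thickness comes from the uniform discreteness of the net rather than from general position). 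Labelling this ``routine in principle'' is where the proposal stops being a proof. A second, smaller gap is in the converse: the piecewise-flat metric tensor is discontinuous across codimension-one faces and is defined relative to the PL structure, so before averaging you must fix charts in which the given smooth structure and the simplices are simultaneously and uniformly controlled (in effect, first replacing the triangulation by a uniformly smooth one via the uniqueness clause); without that, ``averaging the metric tensor against a partition of unity'' is not yet a well-defined operation producing uniform $C^k$-bounds.
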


\begin{rem}\label{rem:attie_regularity}
Attie uses in \cite{attie_classification} a weaker notion of bounded geometry as we do: additionally to a uniformly positive injectivity radius he only requires the sectional curvatures to be bounded in absolute value (i.e., the curvature tensor is bounded in norm), but he assumes nothing about the derivatives (see \cite[Definition 1.4]{attie_classification}). But going into his proof of \cite[Theorem 1.14]{attie_classification}, we see that the Riemannian metric constructed for the second statement of the theorem is actually of bounded geometry in our strong sense (i.e., also with bounds on the derivatives of the curvature tensor).

As a corollary we get that for any manifold of bounded geometry in Attie's weak sense there is another Riemannian metric of bounded geometry in our strong sense that is bi-Lipschitz equivalent the original one (in fact, this bi-Lipschitz equivalence is just the identity map of the manifold, as can be seen from the proof).
\qed
\end{rem}

\begin{lem}\label{lem:suitable_coloring_cover_M}
Let $M$ be a manifold of bounded geometry and without boundary.

Then there is an $\varepsilon > 0$ and a countable collection of uniformly discretely distributed points $\{x_i\}_{i \in I} \subset M$ such that $\{B_{\varepsilon}(x_i)\}_{i \in I}$ is a uniformly locally finite cover of $M$. We can additionally arrange such that it has the following two properties:
\begin{enumerate}
\item \label{cover_i} It is possible to partition $I$ into a finite amount of subsets $I_1, \ldots, I_N$ such that for each $1 \le j \le N$ the subset $U_j := \bigcup_{i \in I_j} B_{\varepsilon}(x_i)$ is a disjoint union of balls that are a uniform distance apart from each other, and such that for each $1 \le K \le N$ the connected components of $U_K := U_1 \cup \ldots \cup U_k$ are also a uniform distance apart from each other (see Figure~\ref{fig:not_allowed_cover}).
\item \label{cover_ii} Instead of choosing balls $B_\varepsilon(x_i)$ to get our cover of $M$ it is possible to choose other open subsets such that additionally to the property from Point~\ref{cover_i} for any distinct $1 \le m,n \le N$ the symmetric difference $U_m \Delta U_n$ consists of open subsets of $M$ which are a uniform distance apart from each other.\footnote{To see a non-example, in the lower part of Figure \ref{fig:not_allowed_cover} this is actually \emph{not} the case.}
\end{enumerate}
\end{lem}

\begin{figure}[h!]
\centering
\includegraphics[scale=0.5]{./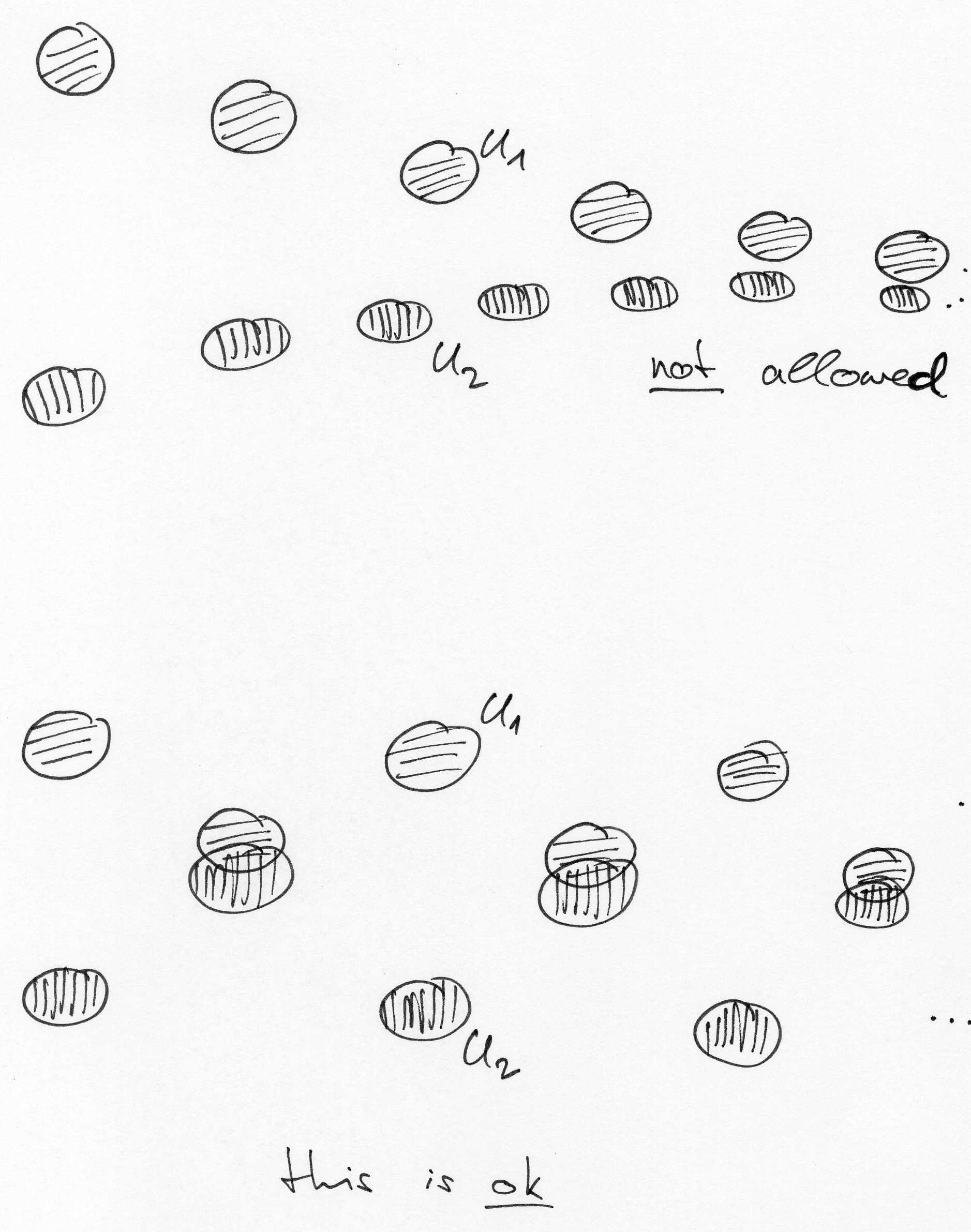}
\caption{Illustration for Lemma \ref{lem:suitable_coloring_cover_M}.\ref{cover_i}.}
\label{fig:not_allowed_cover}
\end{figure}

\begin{proof}
Let us first show how to get a cover of $M$ satisfying Point~\ref{cover_i} from the lemma.

We triangulate $M$ via the above Theorem \ref{thm:triangulation_bounded_geometry}. Then we may take the vertices of this triangulation as our collection of points $\{x_i\}_{i \in I}$ and set $\varepsilon$ to $2/3$ of the length of an edge multiplied with the constant $C$ which we get since the metric derived from barycentric coordinates is bi-Lipschitz equivalent to the metric derived from the Riemannian structure.

Two balls $B_\varepsilon(x_i)$ and $B_\varepsilon(x_j)$ for $x_i \not= x_j$ intersect if and only if $x_i$ and $x_j$ are adjacent vertices, and in the case that they are not adjacent, these balls are a uniform distance apart from each other. Hence it is possible to find a coloring of all these balls $\{B_\varepsilon(x_i)\}_{i \in I}$ with only finitely many colors having the claimed Property~\ref{cover_i}: apply Lemma \ref{lem:coloring_graph} to the covering $\{B_\varepsilon(x_i)\}_{i \in I}$ which has finite multiplicity due to bounded geometry.

To prove Point~\ref{cover_ii}, we replace in our cover of $M$ the balls $B_\varepsilon(x_i)$ with slightly differently chosen open subsets, as shown in the $2$-dimensional case in Figure~\ref{fig_fancy_coloring} (we are working in a triangulation of $M$ as above in the proof of Point~\ref{cover_i}).
\end{proof}

\begin{figure}[h!]
\centering
\includegraphics[scale=0.3]{./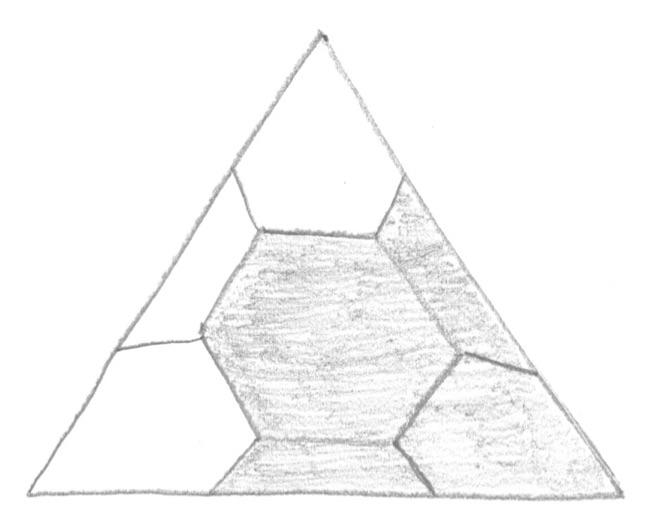}
\caption{Illustration for Lemma \ref{lem:suitable_coloring_cover_M}.\ref{cover_ii}.}
\label{fig_fancy_coloring}
\end{figure}

Our proof of \Poincare duality is a Mayer--Vietoris induction which will have only finitely many steps. So we first have to discuss the corresponding Mayer--Vietoris sequences.

We will start with the Mayer--Vietoris sequence for uniform $K$-theory. Let $O \subset M$ be an open subset, not necessarily connected. We denote by $(M, d)$ the metric space $M$ endowed with the metric induced from the Riemannian metric $g$ on $M$, and by $C_u(O, d)$ we denote the $C^\ast$-algebra of all bounded, uniformly continuous functions on $O$, where we regard $O$ as a metric space equipped with the subset metric induced from $d$ (i.e., we do not equip $O$ with the induced Riemannian metric and consider then the corresponding induced metric structure).

\begin{defn}\label{defn:uniform_k_th_subset}
Let $O \subset M$ be an open subset, not necessarily connected. We define $K^p_{u}(O \subset M) := K_{-p}(C_u(O,d))$.
\qed
\end{defn}

We will also need the following technical theorem:

\begin{lem}\label{lem:extension_samuel_compactification}
Let $O \subset M$ be open, not necessarily connected. Then every function $f \in C_u(O, d)$ has an extension to an $F \in C_u(M, d)$.
\end{lem}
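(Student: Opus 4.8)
The plan is to treat this as what it really is — a general fact about uniformly continuous functions on a subset of a metric space (Katětov's extension theorem) — and give an explicit extension; no manifold or bounded-geometry structure of $M$ is used. Writing $f = f_1 + i f_2$ it suffices to extend a bounded real-valued $f \in C_u(O,d)$, and we may assume $O \neq \emptyset$.

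First I would produce a good modulus of continuity for $f$. Set $\omega_0(t) := \sup\{|f(a)-f(a')| : a,a' \in O,\ d(a,a') \le t\}$; uniform continuity of $f$ on $(O, d|_O)$ gives $\omega_0(0+) = 0$, and boundedness of $f$ gives $0 \le \omega_0 \le 2\|f\|_\infty$. I would then replace $\omega_0$ by its least concave majorant $\omega$ on $[0,\infty)$. One checks that $\omega$ is concave — hence subadditive — with $\omega(0) = 0$; that $\omega$ stays bounded (by $2\|f\|_\infty$, since a constant is a concave majorant); and — the one point that needs a short argument — that $\omega(0+) = 0$ as well: given $\varepsilon > 0$ choose $\delta$ with $\omega_0 < \varepsilon$ on $[0,\delta)$; then, with $S := \sup \omega_0$, the concave function $t \mapsto \min\!\bigl(\varepsilon + \tfrac{S-\varepsilon}{\delta}\,t,\ S\bigr)$ is a concave majorant of $\omega_0$ taking value $\varepsilon$ at $0$, so $\omega(0+) \le \varepsilon$.

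Then I would define the extension by infimal convolution against the \emph{ambient} metric,
\[ F(x) := \inf_{a \in O}\bigl(f(a) + \omega(d(x,a))\bigr), \qquad x \in M, \]
and verify three properties. (i) $F$ is finite and bounded: $F \ge \inf_O f$ since $\omega \ge 0$, and $F(x) \le f(a_0) + \sup\omega < \infty$ for any fixed $a_0 \in O$. (ii) $F|_O = f$: for $x \in O$ the choice $a = x$ gives $F(x) \le f(x) + \omega(0) = f(x)$, while for every $a \in O$ we have $f(a) + \omega(d(x,a)) \ge f(x) - |f(a)-f(x)| + \omega(d(x,a)) \ge f(x)$ because $|f(a)-f(x)| \le \omega_0(d(x,a)) \le \omega(d(x,a))$. (iii) $F \in C_u(M,d)$: for $x,y \in M$ and $a \in O$, subadditivity of $\omega$ gives $f(a) + \omega(d(x,a)) \le f(a) + \omega(d(y,a)) + \omega(d(x,y))$, hence $F(x) \le F(y) + \omega(d(x,y))$, and by symmetry $|F(x)-F(y)| \le \omega(d(x,y))$; since $\omega(0+) = 0$ this is uniform continuity. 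Recombining the extensions of $f_1$ and $f_2$ yields the desired $F$.

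There is no deep obstacle here; the points that require care are bookkeeping ones. The most important is the one the paper already emphasizes (cf.\ the figure on the two metrics on $O$): in the definition of $C_u(O,d)$ the relevant metric on $O$ is the restriction of the ambient metric $d$ of $M$, \emph{not} the intrinsic length metric of the open submanifold $O$ — and it is exactly because we use the subset metric that the ambient distance $d(x,a)$ appearing in the formula for $F$ is controlled by the modulus $\omega$ of $f$, so that the construction closes up (with the intrinsic metric the statement would in general be false). The secondary point is the existence of a concave, bounded modulus of $f$ vanishing at $0$, sketched above. Equivalently — and this is the source of the lemma's name — the statement asserts that $(O, d|_O)$ is $C_u$-embedded in $(M,d)$, i.e.\ the closure of $O$ in the Samuel compactification $\sigma M$ is the Samuel compactification of $O$; one may also argue along these lines, extending $f$ to $\overline{O}^{\,\sigma M}$ by continuity (possible since $\IR$ is complete and $O$ inherits its own uniformity from $\sigma M$) and then invoking the Tietze extension theorem on the compact Hausdorff space $\sigma M$, under which $C(\sigma M) = C_u(M,d)$.
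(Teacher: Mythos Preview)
Your argument is correct. The primary route you take --- an explicit Kat\v{e}tov/McShane-type infimal convolution $F(x) = \inf_{a \in O}\bigl(f(a)+\omega(d(x,a))\bigr)$ against a concave modulus --- is genuinely different from the paper's proof, which goes through the Samuel compactification: it uses that for $S \subset X$ one has $\closure_{uX}(S) = uS$, extends $f$ to a continuous function on $uO \subset uM$, applies Tietze on the compact Hausdorff space $uM$, and restricts back to $M$. Your construction is more elementary and self-contained (no black-box fact about Samuel compactifications, no Tietze), and it even yields a quantitative modulus for the extension; the paper's argument, by contrast, is shorter once one is willing to quote the compactification machinery, and it is exactly the alternative you sketch in your final paragraph. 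Your emphasis on using the ambient metric $d$ rather than the intrinsic one is spot on and is the same point the paper stresses. One small gap worth making explicit: in the step $\omega(d(x,a)) \le \omega(d(y,a)) + \omega(d(x,y))$ you are tacitly using that $\omega$ is nondecreasing (so that $\omega(d(x,a)) \le \omega(d(x,y)+d(y,a))$ before applying subadditivity); this follows since a nonnegative concave function on $[0,\infty)$ with $\omega(0)=0$ is automatically nondecreasing, but it deserves a word.
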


\begin{proof}
For a metric space $X$ let $uX$ denote the Gelfand space of $C_u(X)$, i.e., this is a compactification of $X$ (the \emph{Samuel compactification}) with the following universal property: a bounded, continuous function $f$ on $X$ has an extension to a continuous function on $uX$ if and only if $f$ is uniformly continuous. We will use the following property of Samuel compactifications (see \cite[Theorem 2.9]{woods}): if $S \subset X \subset uX$, then the closure $\closure_{uX}(S)$ of $S$ in $uX$ is the Samuel compactification $uS$ of $S$.

So given $f \in C_u(O, d)$, we can extend it to a continuous function $\tilde{f} \in C(uO)$. Since $uO = \closure_{uM}(O)$, i.e., a closed subset of a compact Hausdorff space, we can extend $\tilde{f}$ by the Tietze extension theorem to a bounded, continuous function $\tilde{F}$ on $uM$. Its restriction $F := \tilde{F}|_M$ to $M$ is then a bounded, uniformly continuous function of $M$ extending $f$.
\end{proof}

\begin{lem}\label{lem:MV_uniform_K}
Let the subsets $U_j$, $U_K$ of $M$ for $1 \le j,K \le N$ be as in Lemma \ref{lem:suitable_coloring_cover_M}. Then we have Mayer--Vietoris sequences
\[\xymatrix{
K^0_{u}(U_K \cup U_{k+1} \subset M) \ar[r] & K^0_{u}(U_K \subset M) \oplus K^0_{u}(U_{k+1} \subset M) \ar[r] & K^0_{u}(U_K \cap U_{k+1} \subset M) \ar[d]\\
K^1_{u}(U_K \cap U_{k+1} \subset M) \ar[u] & K^1_{u}(U_K \subset M) \oplus K^1_{u}(U_{k+1} \subset M) \ar[l] & K^1_{u}(U_K \cup U_{k+1} \subset M) \ar[l]}\]
where the horizontal arrows are induced from the corresponding restriction maps.
\end{lem}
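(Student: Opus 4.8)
The plan is to realize each step of the sequence as the six–term Mayer--Vietoris exact sequence in operator $K$-theory attached to a pullback square of $C^\ast$-algebras. For a fixed $k$ consider the commutative square of restriction maps
\[
\begin{array}{ccc}
C_u(U_K \cup U_{k+1}, d) & \longrightarrow & C_u(U_{k+1}, d)\\
\downarrow & & \downarrow\\
C_u(U_K, d) & \longrightarrow & C_u(U_K \cap U_{k+1}, d).
\end{array}
\]
If this square is a pullback square in which the two restrictions onto $C_u(U_K \cap U_{k+1}, d)$ are surjective, then the standard Mayer--Vietoris hexagon for such a square produces a six–term exact sequence in $K_\ast$; reindexing by $K^0_u(\,\cdot\,) = K_0(C_u(\,\cdot\,, d))$ and $K^1_u(\,\cdot\,) = K_{-1}(C_u(\,\cdot\,, d)) \cong K_1(C_u(\,\cdot\,, d))$ (complex $K$-theory is $2$-periodic) turns it into exactly the hexagon in the statement, and since the three maps of the square other than the connecting homomorphism are restrictions, the horizontal arrows of the hexagon are induced by restriction.

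First I would check surjectivity of the restrictions onto $C_u(U_K \cap U_{k+1}, d)$: as $U_K \cap U_{k+1}$ is open in $M$, Lemma \ref{lem:extension_samuel_compactification} extends any $f \in C_u(U_K \cap U_{k+1}, d)$ to $F \in C_u(M, d)$, whose restriction to $U_K$ (resp. to $U_{k+1}$) is the desired preimage. Then I would verify the pullback property, i.e.\ that restriction identifies $C_u(U_K \cup U_{k+1}, d)$ with $\{(g,h) \in C_u(U_K, d) \oplus C_u(U_{k+1}, d) \mid g|_{U_K \cap U_{k+1}} = h|_{U_K \cap U_{k+1}}\}$. Injectivity is immediate since $U_K \cup U_{k+1}$ is covered by $U_K$ and $U_{k+1}$. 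For surjectivity, a compatible pair $(g,h)$ glues to a well-defined bounded continuous function $f$ on $U_K \cup U_{k+1}$; after subtracting global extensions of $g$ and $h$ (again Lemma \ref{lem:extension_samuel_compactification}) the only remaining point is: for $\psi \in C_u(M, d)$ with $\psi|_{U_K \cap U_{k+1}} = 0$, the function on $U_K \cup U_{k+1}$ equal to $\psi$ on $U_{k+1}$ and to $0$ on $U_K$ is uniformly continuous for the restricted metric $d$.

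This last point is the crux, and it is where the precise geometry of the cover from Lemma \ref{lem:suitable_coloring_cover_M} enters. The estimate to establish is that there is a modulus $\omega$ with $\omega(\delta) \to 0$ as $\delta \to 0$ such that every $x \in U_K$ with $d(x, U_{k+1}) < \delta$ also has $d(x, U_K \cap U_{k+1}) < \omega(\delta)$: such an $x$ lies in a cover ball $B_\varepsilon(x_p)$ with $x_p$ of colour $\le k$, and lies $\delta$-close to a cover ball $B_\varepsilon(x_i)$ of $U_{k+1}$; once $\delta$ is below the uniform distance separating non-adjacent cover balls the two balls must overlap, and the uniform separation of the components of the $U_K$ and of the balls of $U_{k+1}$ provided by Lemma \ref{lem:suitable_coloring_cover_M} forces the overlap $B_\varepsilon(x_p) \cap B_\varepsilon(x_i) \subset U_K \cap U_{k+1}$ to contain a point $\omega(\delta)$-close to $x$. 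Feeding this into the modulus of continuity of $\psi$ (which vanishes on $U_K \cap U_{k+1}$) shows that $\psi$ is already small near the ``seam'', so no jump appears upon extending by $0$. I expect this uniform-continuity-of-the-glued-function step to be the main obstacle: unlike in the compact setting, two uniformly continuous functions agreeing on an open overlap need not glue to a uniformly continuous function, and only the quantitative control built into Lemma \ref{lem:suitable_coloring_cover_M} rules out oscillation along the boundary of $U_K \cap U_{k+1}$. Everything else — the abstract Mayer--Vietoris six–term sequence for a pullback square of $C^\ast$-algebras, the reindexing, and the identification of the hexagon maps with restrictions — is routine.
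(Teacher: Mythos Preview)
Your proposal is correct and follows essentially the same route as the paper: set up the pullback square of $C^\ast$-algebras $C_u(\,\cdot\,,d)$ with restriction maps, invoke the abstract Mayer--Vietoris six-term sequence for such squares, use Lemma~\ref{lem:extension_samuel_compactification} to get surjectivity of the restrictions onto $C_u(U_K\cap U_{k+1},d)$, and identify the pullback with $C_u(U_K\cup U_{k+1},d)$ via the geometric properties of the cover in Lemma~\ref{lem:suitable_coloring_cover_M}. The paper's proof dispatches the pullback identification in a single sentence (``Due to the property of the sets $U_K$ as stated in Lemma~\ref{lem:suitable_coloring_cover_M} we get $P = C_u(U_K \cup U_{k+1}, d)$''), whereas you correctly isolate the gluing-of-uniformly-continuous-functions step as the genuine content and sketch how the uniform separation of non-overlapping cover balls forces points near the seam to be close to the overlap; this is exactly the mechanism the paper is tacitly invoking.
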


\begin{proof}
Recall the Mayer--Vietoris sequence for operator $K$-theory of $C^\ast$-algebras (see, e.g., \cite[Theorem 21.2.2]{blackadar}): given a commutative diagram of $C^\ast$-algebras
\[\xymatrix{P \ar[r]^{\sigma_1} \ar[d]_{\sigma_2} & A_1 \ar[d]^{\varphi_1}\\ A_2 \ar[r]^{\varphi_2} & B}\]
with $P = \{(a_1, a_2) \ | \ \varphi_1(a_1) = \varphi_2(a_2)\} \subset A_1 \oplus A_2$ and $\varphi_1$ and $\varphi_2$ surjective, then there is a long exact sequence (via Bott periodicity we get the $6$-term exact sequence)
\[\ldots \to K_n(P) \stackrel{({\sigma_1}_\ast,{\sigma_2}_\ast)}\longrightarrow K_n(A_1) \oplus K_n(A_2) \stackrel{{\varphi_2}_\ast - {\varphi_1}_\ast}\longrightarrow K_n(B) \to K_{n-1}(P) \to \ldots\]

We set $A_1 := C_u(U_K, d)$, $A_2 := C_u(U_{k+1}, d)$, $B := C_u(U_K \cap U_{k+1}, d)$ and $\varphi_1$, $\varphi_2$ the corresponding restriction maps. Due to the property of the sets $U_K$ as stated in the Lemma \ref{lem:suitable_coloring_cover_M} we get $P = C_u(U_K \cup U_{k+1}, d)$ and $\sigma_1$, $\sigma_2$ again just the restriction maps. To show that the maps $\varphi_1$ and $\varphi_2$ are surjective we  have to use the above Lemma \ref{lem:extension_samuel_compactification}.
\end{proof}

We will also need corresponding Mayer--Vietoris sequences for uniform $K$-homology. As for uniform $K$-theory we use here also the induced subspace metric (and not the metric derived from the induced Riemannian metric): let a not necessarily connected subset $O \subset M$ be given. We define $K_\ast^{u}(O \subset M)$ to be the uniform $K$-homology of $O$, where $O$ is equipped with the subspace metric from $M$, where we view $M$ as a metric space. The inclusion $O \hookrightarrow M$ is in general not a proper map (e.g., if $O$ is an open ball in a manifold) but this is no problem to us since we will have to use the wrong-way maps that exist for open subsets $O \subset M$: they are given by the inclusions $\LLip_R(O) \subset \LLip_R(M)$ for all $R, L > 0$. So we get a map $K_\ast^{u}(M) \to K_\ast^{u}(O \subset M)$ for every open subset $O \subset M$.

Existence of Mayer--Vietoris sequences for uniform $K$-homology of the subsets in the cover $\{U_K, U_{k+1}\}$ of $U_K \cup U_{k+1}$ (recall that we used Lemma \ref{lem:suitable_coloring_cover_M} to get these subsets) incorporating the wrong-way maps may be similarly shown as \cite[Section 8.5]{higson_roe}. The crucial excision isomorphism from that section may be constructed analogously as described in \cite[Footnote 73]{higson_roe}: for that construction Kasparov's Technical Theorem is used, and we have to use here in our uniform case the corresponding uniform construction which is as used in our construction of the external product for uniform $K$-homology.

Note that \Spakula constructed a Mayer--Vietoris sequence for uniform $K$-homology in \cite[Section 5]{spakula_uniform_k_homology}, but for closed subsets of a proper metric space. His arrows also go in the other direction as ours (since his arrows are induced by the usual functoriality of uniform $K$-homology).

We denote by $[M]|_O \in K_m^{u}(O \subset M)$ the class of the Dirac operator associated to the restriction to a neighbourhood of $O$ of the complex spinor bundle of bounded geometry defining the spin$^c$-structure of $M$ (i.e., we equip the neighbourhood with the induced spin$^c$-structure).

The cap product of $K^\ast_{u}(O \subset M)$ with $[M]|_O$ is analogously defined as the usual one, i.e., we get maps $\largecdot \cap [M]|_O \colon K^\ast_{u}(O \subset M) \to K_{m - \ast}^{u}(O \subset M)$. Now we have to argue why we get commutative squares between the Mayer--Vietoris sequences of uniform $K$-theory and uniform $K$-homology using the cap product. This is known for usual $K$-theory and $K$-homology; see, e.g., \cite[Exercise 11.8.11(c)]{higson_roe}. Since the cap product is in our uniform case completely analogously defined (see the second-to-last display before Proposition \ref{prop:properties_general_cap_product}), we may analogously conclude that we get commutative squares between our uniform Mayer--Vietoris sequences.

Let us summarize the above results:

\begin{lem}\label{lem:cap_prod_commutes_MV}
Let the subsets $U_j$, $U_K$ of $M$ for $1 \le j,K \le N$ be as in Lemma \ref{lem:suitable_coloring_cover_M}. Then we have corresponding Mayer--Vietoris sequences
\[\xymatrix{
K_0^{u}(U_K \cup U_{k+1} \subset M) \ar[r] & K_0^{u}(U_K \subset M) \oplus K_0^{u}(U_{k+1} \subset M) \ar[r] & K_0^{u}(U_K \cap U_{k+1} \subset M) \ar[d]\\
K_1^{u}(U_K \cap U_{k+1} \subset M) \ar[u] & K_1^{u}(U_K \subset M) \oplus K_1^{u}(U_{k+1} \subset M) \ar[l] & K_1^{u}(U_K \cup U_{k+1} \subset M) \ar[l]}\]
and the cap product gives the following commutative diagram:\vspace{\baselineskip}
\[\mathclap{\xymatrix{
K_{u}^\ast(U_{K} \cap U_{k+1} \subset M) \ar[r] \ar[d] & K_{u}^\ast(U_{K} \cup U_{k+1} \subset M) \ar[r] \ar[d] & K_{u}^\ast(U_K \subset M) \oplus K_{u}^\ast(U_{k+1} \subset M) \ar[d] \ar@/_1.5pc/[ll] \\
K^{u}_{m-\ast}(U_{K} \cap U_{k+1} \subset M) \ar[r] & K^{u}_{m-\ast}(U_{K} \cup U_{k+1} \subset M) \ar[r] & K^{u}_{m-\ast}(U_K \subset M) \oplus K^{u}_{m-\ast}(U_{k+1} \subset M) \ar@/^1.5pc/[ll]
}}\vspace{\baselineskip}\]
(We have suppressed the index shift due to the boundary maps in the latter diagram.)
\end{lem}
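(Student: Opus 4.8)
The Mayer--Vietoris sequence in uniform $K$-theory for these subsets is already Lemma \ref{lem:MV_uniform_K}, so the plan is to carry out two further things: (i) construct the Mayer--Vietoris sequence in uniform $K$-homology for the controlled decomposition $U_K \cup U_{k+1}$, and (ii) show that the cap products with $[M]|_O$ assemble the two sequences into a commutative ladder.

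For (i) I would imitate \cite[Section 8.5]{higson_roe} in the dual-algebra picture. Fixing an ample representation and, for an open $O \subset M$ equipped with the subspace metric, writing $\frakD^u_{O}$ and $\frakC^u_{O}$ for the relative dual algebras of Definition \ref{defn:frakD_frakC}, Lemma \ref{lem:K_theory_frakC_zero} together with Paschke duality (Proposition \ref{prop:paschke_duality} and the uniform Voiculescu theorem, Theorem \ref{thm:paschke_universal}) identifies $K_{1+\ast}(\frakD^u_{O})$ with $K_\ast^{u}(O \subset M)$. The wrong-way maps are induced by the inclusions $\LLip_R(O') \subset \LLip_R(O)$ for $O' \subset O$, and the separation property of $U_K$ and $U_{k+1}$ recorded in Lemma \ref{lem:suitable_coloring_cover_M}, combined with the extension statement Lemma \ref{lem:extension_samuel_compactification}, makes the relevant restriction $\ast$-homomorphisms surjective. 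One then feeds these into the six-term exact sequence of the extension $\frakC^u \hookrightarrow \frakD^u \twoheadrightarrow \frakD^u / \frakC^u$ over the appropriate pullback algebra, together with an excision isomorphism. The excision step is where Kasparov's Technical Theorem is invoked in \cite[Footnote 73]{higson_roe}; here I would substitute our uniform version, Lemma \ref{lem:construction_partition_unity}, which supplies finite-propagation partition-of-unity operators carrying all the uniform approximability estimates, exactly as in the proof of Proposition \ref{prop:external_prod_exists}.

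For (ii), recall that the cap product with $[M]|_O$ was built, in the displays preceding Proposition \ref{prop:properties_general_cap_product}, from the multiplicative $\ast$-homomorphism $\sigma\colon C_u(O,d) \otimes \frakD^u_{O} \to \frakD^u_{O} / \frakC^u_{O}$, $f \otimes T \mapsto [fT]$, followed by the external product on operator $K$-theory and the identifications of Lemma \ref{lem:K_theory_frakC_zero} and Paschke duality. Since $\sigma$, the restriction maps, the external product, and the Paschke--Voiculescu isomorphisms are all natural with respect to passing to smaller open subsets, the squares of the ladder not involving a connecting homomorphism commute by inspection. For the square containing the boundary map I would use that both boundaries are connecting homomorphisms of six-term sequences of $C^\ast$-algebra extensions and that $\sigma$ tensored with $\operatorname{id}_{C_u(\largecdot,d)}$ is a morphism of the relevant extensions; naturality of the connecting homomorphism then yields commutativity, up to the index shift that is suppressed in the statement. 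This is the argument behind \cite[Exercise 11.8.11(c)]{higson_roe}, and it transfers essentially verbatim because our cap product is defined in the same manner.

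I expect the main obstacle to be the uniform excision isomorphism in step (i): one must check that the operators produced by Kasparov's Technical Theorem can be taken of uniformly bounded propagation and compatible with all the uniform approximability conditions in the \emph{relative}, subspace-metric setting. This is precisely what Lemma \ref{lem:construction_partition_unity} is designed to provide, but it has to be applied with $U_K$, $U_{k+1}$ and $U_K \cap U_{k+1}$ in place of a global space, using the uniform separation of connected components from Lemma \ref{lem:suitable_coloring_cover_M} to keep the construction controlled. Once this is established, the six-term sequences, the identification of uniform $K$-homology with $K$-theory of the dual algebras, and the naturality of all the maps involved are routine.
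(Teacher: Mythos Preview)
Your proposal is correct and follows essentially the same route as the paper: the paper's argument (given in the discussion immediately preceding the lemma) also imitates \cite[Section 8.5]{higson_roe} with wrong-way maps, replaces Kasparov's Technical Theorem by the uniform construction of Lemma \ref{lem:construction_partition_unity} for excision, and then invokes \cite[Exercise 11.8.11(c)]{higson_roe} together with the fact that the uniform cap product is defined analogously to the classical one to obtain the commuting ladder. Your write-up is in fact more explicit than the paper's sketch; one small point is that Lemma \ref{lem:extension_samuel_compactification} concerns extension of functions in $C_u$, so it is really input on the $K$-theory side (Lemma \ref{lem:MV_uniform_K}) rather than a surjectivity statement for the dual-algebra restriction maps, but this does not affect the argument.
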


The last lemma that we will need before we will start to assemble everything together into a proof of uniform $K$-\Poincare duality is the following:

\begin{lem}\label{lem:uniform_k_hom_balls}
Let $M$ be an $m$-dimensional manifold of bounded geometry and let $U \subset M$ be a subset consisting of uniformly discretely distributed geodesic balls in $M$ having radius less than the injectivity radius of $M$ (i.e., each geodesic ball is diffeomorphic to the standard ball in Euclidean space $\IR^m$). Let the balls be indexed by a set $Y$.

Then we have $K_m^u(U \subset M) \cong \ell^\infty_\IZ(Y)$, the group of all bounded, integer-valued sequences indexed by $Y$, and $K_p^u(U \subset M) = 0$ for $p \not= m$.
\end{lem}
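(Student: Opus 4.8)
The plan is to split into the uniform $K$-theory and the uniform $K$-homology parts, the decisive structural input being that each ball $B_i$, equipped with the subspace metric $d$ of $M$, is \emph{totally bounded} (a bounded subset of a manifold of bounded geometry is precompact), whereas $U=\bigsqcup_{i\in Y}B_i$ is not. For the $K$-theory part there is in fact little to do: radial contraction of each ball onto its centre $x_i$ is, by bounded geometry, a uniformly controlled Lipschitz homotopy, so $U$ is Lipschitz-homotopy-equivalent to the uniformly discrete set $Z=\{x_i\}_{i\in Y}$; by the homotopy invariance of uniform $K$-theory under Lipschitz homotopies (recalled just before the lemma) and by Lemma \ref{lem:uniform_k_th_discrete_space} this gives $K^0_u(U\subset M)\cong\ell^\infty_\IZ(Y)$ and $K^1_u(U\subset M)=0$. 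Note that, in contrast to what happens for $K$-homology, there is no dimension shift here, because uniform $K$-theory of an open ball only sees its compact closure.

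For the $K$-homology part I would first record the single-ball case. Since $B_i$ is totally bounded, Proposition \ref{prop:compact_space_every_module_uniform} gives $K^u_p(B_i)=K_p(B_i)$, and $B_i$ is diffeomorphic to $\IR^m$, so $K_p(B_i)\cong\IZ$ for $p\equiv m\pmod 2$ and $0$ otherwise (Bott periodicity; a generator in degree $m$ is the fundamental class of the standard spin$^c$-structure on $\IR^m$ transported to $B_i$, which by bounded geometry is uniformly controlled over $i$). Next I would build the ``fibrewise'' comparison map. Put $\delta_0:=\inf_{i\neq j}d(B_i,B_j)>0$. Since $U$ inherits coarsely and jointly bounded geometry from $M$ and decomposes into pieces that are pairwise at least $\delta_0$ apart, Proposition \ref{prop:normalization_finite_prop_speed}, applied as in the proof of Lemma \ref{lem:uniform_k_hom_discrete_space}, lets me normalise $K^u_p(U\subset M)$ to uniform Fredholm modules $(H,\rho,T)$ of propagation $<\delta_0$, with their homotopies normalised the same way. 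Extending $\rho$ to bounded Borel functions, $\rho(\chi_{B_i})T\rho(\chi_{B_j})=0$ for $i\neq j$, so $T=\bigoplus_iT_i$ with $T_i:=\rho(\chi_{B_i})T\rho(\chi_{B_i})$ on $H_i:=\rho(\chi_{B_i})H$, and $(H,\rho,T)=\bigoplus_i(H_i,\rho_i,T_i)$ with $\rho_i$ the compression of $\rho$ to $C_0(B_i)$; each $(H_i,\rho_i,T_i)$ is a Fredholm, hence uniform Fredholm, module over $B_i$. As this decomposition is compatible with the (propagation-$<\delta_0$) homotopies, it descends to a homomorphism
\[\Phi\colon K^u_p(U\subset M)\longrightarrow\prod_{i\in Y}K^u_p(B_i)=\prod_{i\in Y}K_p(B_i).\]

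It then remains to identify the image of $\Phi$ with the bounded sequences, that is, with $\ell^\infty_\IZ(Y)$ when $p\equiv m\pmod 2$ and with $0$ otherwise; together with formal $2$-periodicity this yields the lemma. Injectivity of $\Phi$ runs exactly as in Lemma \ref{lem:uniform_k_hom_discrete_space}: a global homotopy of propagation $<\delta_0$ decomposes into homotopies over the $B_i$, and since $(H,\rho,T)$ is \emph{uniform} over $U$ the constants witnessing the uniform approximability of the pieces $(H_i,\rho_i,T_i)$ do not depend on $i$, so the individual (null-)homotopies can be reassembled, with $i$-independent control, into a uniform homotopy over $U$. Surjectivity onto the bounded sequences is the easy half: given $(n_i)\in\ell^\infty_\IZ(Y)$, the orthogonal direct sum over $i$ of $|n_i|$ copies of the chosen generator of $K^u_m(B_i)$ (orientation-reversed when $n_i<0$) is a uniform Fredholm module over $U$ — the generators being uniformly controlled and $\sup_i|n_i|<\infty$ — and $\Phi$ sends it to $(n_i)$. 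The remaining point, and the main obstacle, is the inclusion $\image\Phi\subseteq\ell^\infty_\IZ(Y)$: one must show that the integer invariant of each piece $(H_i,\rho_i,T_i)\in K^u_m(B_i)\cong\IZ$ is bounded uniformly in $i$. This is the analog of the uniform bound on Fredholm indices at the end of the proof of Lemma \ref{lem:uniform_k_hom_discrete_space}, but genuinely subtler because the balls are open: the relevant invariant is an index pairing of $T_i$ against a fixed class in compactly supported $K$-theory of $B_i$ rather than a Fredholm index of $T_i$ itself, so the uniform bound has to be extracted from the uniform local compactness of $(H,\rho,T)$ over $U$ applied to uniformly controlled, compactly supported Lipschitz cut-offs of $\chi_{B_i}$ supported inside $B_i$, using that $T$ has propagation $<\delta_0$. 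Once this uniform bound is secured, $\Phi$ is an isomorphism onto $\ell^\infty_\IZ(Y)$, which completes the proof.
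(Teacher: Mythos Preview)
Your approach is the same as the paper's, which says only that the argument is analogous to Lemma~\ref{lem:uniform_k_hom_discrete_space} together with the single-ball computation $K_m(O\subset\IR^m)\cong\IZ$ and $K_p(O\subset\IR^m)=0$ for $p\neq m$. You have filled in considerably more detail than the paper does: in particular you correctly isolate the one point where the balls case is genuinely harder than the points case, namely that the integer attached to each $(H_i,\rho_i,T_i)$ is an index pairing rather than a plain Fredholm index, so the uniform bound must be extracted from the uniform local compactness via uniformly controlled cut-offs rather than read off directly. The paper's proof does not spell this out either, so your sketch is at least as complete as the original; the remaining step you flag is exactly the work hidden behind the word ``analogous''.
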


\begin{proof}
The proof is analogous to the proof of Lemma \ref{lem:uniform_k_hom_discrete_space}. It uses the fact that for an open ball $O \subset \IR^m$ we have $K_m(O \subset \IR^m) \cong \IZ$, and $K_p(O \subset \IR^m) = 0$ for $p \not= m$.
\end{proof}

\begin{proof}[Proof of uniform $K$-\Poincare duality]
\label{page:proof_Poincare_duality}
First we invoke Lemma \ref{lem:suitable_coloring_cover_M} to get subsets $U_j$ for $1 \le j \le N$.

The induction starts with the subsets $U_1$, $U_2$ and $U_1 \cap U_2$, which are collections of uniformly discretely distributed open balls, resp., in the case of $U_1 \cap U_2$ it is a collection of intersections of open balls, which is homotopy equivalent to a collection of uniformly discretely distributed open balls by a uniformly cobounded, proper and Lipschitz homotopy. Now uniform $K$-theory of a space coincides with the uniform $K$-theory of its completion, and furthermore, uniform $K$-theory is homotopy invariant with respect to Lipschitz homotopies. So the uniform $K$-theory of a collection of open balls is the same as the uniform $K$-theory of a collection of points. This groups we have already computed in Lemma \ref{lem:uniform_k_th_discrete_space}.

Uniform $K$-homology is homotopy invariant with respect to uniformly cobounded, proper and Lipschitz homotopies (see Theorem \ref{thm:homotopy_equivalence_k_hom}), and for totally bounded spaces it coincides with usual $K$-homology (see Proposition \ref{prop:compact_space_every_module_uniform}). So we have to compute uniform $K$-homology of a collection of uniformly discretely distributed open balls. This we have done in the above Lemma \ref{lem:uniform_k_hom_balls}.

Now we can argue that cap product is an isomorphism $K_u^\ast(U \subset M) \cong K_{m-\ast}^u(U \subset M)$, where $U$ is as in the above lemma. For this we have to note that if $M$ is a \spinc manifold, then the restriction of its complex spinor bundle to any ball of $U$ is isomorphic to the complex spinor bundle on the open ball $O \subset \IR^m$. This means that the cap product on $U$ coincides on each open ball of $U$ with the usual cap product on the open ball $O \subset \IR^m$. This all shows that we have \Poincare duality for the subsets $U_1$, $U_2$ and $U_1 \cap U_2$ (note that $U_1 \cap U_2$ is homotopic to a collection of open balls).

With the above Lemma \ref{lem:cap_prod_commutes_MV} we therefore get with the five lemma that the cap product is also an isomorphism for $U_1 \cup U_2$. The rest of the proof proceeds by induction over $k$ (there are only finitely many steps since we only go up to $k = N-1$), invoking every time the above Lemma \ref{lem:cap_prod_commutes_MV} and the five lemma. Note that in order to see that the cap product is an isomorphism on $U_K \cap U_{k+1}$, we have to write $U_K \cap U_{k+1} = (U_1 \cap U_{k+1}) \cup \ldots \cup (U_k \cap U_{k+1})$. This is a union of $k$ geodesically convex open sets and we have to do a separate induction on this one.
\end{proof}

\section{Index theorems for uniform operators}

In this section we will assemble everything that we had up to now into our uniform index theorems. For this we will first have to define the uniform de Rham (co-)homology theories that will serve as receptacles for the index classes of our operators (the uniform homological Chern character of a uniform, abstractly elliptic operator will give us its analytic index class and uniform de Rham cohomology will receive the topological index classes of symmetric, elliptic uniform pseudodifferential operators). After a small detour into the world of the Chern character isomorphism theorem we will then finally prove the uniform index theorems.

\subsection{Cyclic cocycles of uniformly finitely summable modules}

The goal of this section is to construct the uniform homological Chern character maps from uniform $K$-homology $K_\ast^u(M)$ of $M$ to continuous periodic cyclic cohomology $\HPucont^\ast(\Winftyone(M))$ of the Sobolev space $\Winftyone(M)$.

First we will recall the definition of Hochschild, cyclic and periodic cyclic cohomology of a (possibly non-unital) complete locally convex algebra $A$\footnote{We consider here only algebras over the field $\IC$. Furthermore, we assume that multiplication in $A$ is jointly continuous.}. The classical reference for this is, of course, Connes' seminal paper \cite{connes_noncomm_diff_geo}. The author also found Khalkhali's book \cite{khalkhali_basic} a useful introduction to these matters.

\begin{defn}
The \emph{continuous Hochschild cohomology} $\HHucont^\ast(A)$ of $A$ is the homology of the complex
\[\Cucont^0(A) \stackrel{b}\longrightarrow \Cucont^1(A) \stackrel{b}\longrightarrow \ldots,\]
where $\Cucont^n(A) = \Hom(A^{\widehat{\otimes}(n+1)}, \IC)$ and the boundary map $b$ is given by
\begin{align*}
(b\varphi)(a_0, \ldots, a_{n+1}) = & \sum_{i=0}^n (-1)^i \varphi(a_0, \ldots, a_i a_{i+1}, \ldots, a_{n+1}) +\\
& + (-1)^{n+1} \varphi(a_{n+1} a_0, a_1, \ldots, a_n).
\end{align*}
We use the completed projective tensor product $\widehat{\otimes}$ and the linear functionals are assumed to be continuous. But we still factor our only the image of the boundary operator to define the homology, and \emph{not} the closure of the image of $b$.
\qed
\end{defn}

\begin{defn}
The \emph{continuous cyclic cohomology} $\HCucont^\ast(A)$ of $A$ is the homology of the following subcomplex of the Hochschild cochain complex:
\[\Clucont^0(A) \stackrel{b}\longrightarrow \Clucont^1(A) \stackrel{b}\longrightarrow \ldots,\]
where $\Clucont^n(A) = \{\varphi \in \Cucont^n(A)\colon \varphi(a_n, a_0, \ldots, a_{n-1}) = (-1)^n \varphi(a_0, a_1, \ldots, a_n)\}$.
\qed
\end{defn}

There is a certain \emph{periodicity operator} $S\colon \HCucont^n(A) \to \HCucont^{n+2}(A)$. For the tedious definition of this operator on the level of cyclic cochains we refer the reader to Connes' original paper \cite[Lemma 11 on p.~322]{connes_noncomm_diff_geo} or to his book \cite[Lemma 14 on p.~198]{connes_book}.

\begin{defn}
The \emph{continuous periodic cyclic cohomology} $\HPucont^\ast(A)$ of $A$ is defined as the direct limit
\[\HPucont^\ast(A) = \underrightarrow{\lim} \ \HCucont^{\ast+2n}(A)\]
with respect to the maps $S$.
\qed
\end{defn}

Let $(H, \rho, T)$ be a graded uniform Fredholm module over $M$ and denote by $\epsilon$ the grading automorphism of the graded Hilbert space $H$. Furthermore, assume that $(H, \rho, T)$ is involutive and uniformly $p$-summable, where the latter means $\sup_{f \in \LLip_R(M)} \|[T, \rho(f)]\|_p < \infty$ for the Schatten $p$-norm $\|\largecdot\|_p$.

Having such a involutive, uniformly $p$-summable Fredholm module at hand we define for all $m$ with $2m+1 \ge p$ a cyclic $2m$-cocycle on $\Winftyone(M)$, i.e., on the Sobolev space of infinite order and $L^1$-integrability, by
\[\ch^{0,2m}(H, \rho, T)(f_0, \ldots, f_{2m}) := \tfrac{1}{2} (2\pi i)^m m! \trace\big( \epsilon T [T, f_0] \cdots [T, f_{2m}] \big).\]
We have the compatibility $S \circ \ch^{0,2m} = \ch^{0, 2m+2}$ and therefore we get a map
\[\ch^0\colon K^u_0(M) \dashrightarrow \HPucont^0(\Winftyone(M)).\]

The dashed arrow indicates that we do not know that every uniform, even $K$-homology class is represented by a uniformly finitely summable module, and we also do not know if the map is well-defined, i.e., if two such modules representing the same $K$-homology class will be mapped to the same cyclic cocycle class. For \spinc manifolds the first mentioned problem is solved by \Poincare duality which states that every uniform $K$-homology class may be represented by the difference of two twisted Dirac operators (which are uniformly finitely summable). But the second mentioned problem about the well-definedness is much more serious and will only be solved by the local index theorem. We will state the resolution of this problem in Corollary \ref{cor:ch_well_defined}.

Given an ungraded, involutive, uniformly $p$-summable Fredholm module $(H, \rho, T)$, we define for all $m$ with $2m \ge p$ a cyclic $(2m-1)$-cocycle on $\Winftyone(M)$ by
\begin{align*}
\ch^{1, 2m-1}(H, \rho, T & )(f_0, \ldots, f_{2m-1}) =\\
& = (2\pi i)^m \tfrac{1}{2} (2m-1) (2m-3) \cdots 3 \cdot 1 \trace\big( T [T, f_0] \cdots [T, f_{2m-1}]\big).
\end{align*}
Again, this definition is compatible with the periodicity operator $S$ and so defines a map
\[\ch^1\colon K^u_1(M) \dashrightarrow \HPucont^1(\Winftyone(M)).\]

\subsection{Uniform de Rham (co-)homology}\label{sec:u_de_Rham_currents}

\begin{defn}\label{defn:de_rham_hom}
The space of \emph{uniform de Rham $p$-currents} $\Omega_p^u(M)$ is defined as the topological dual space of the \Frechet space $W^{\infty, 1}(\Omega^p(M))$, i.e.,
\[\Omega_p^u(M) := \Hom(W^{\infty, 1}(\Omega^p(M)), \IC).\]
Recall from Definition \ref{defn:sobolev_spaces} and Equation \eqref{eq:defn_W_infty} that $W^{\infty, 1}(\Omega^p(M))$ denotes the Sobolev space of $p$-forms whose derivatives are all $L^1$-integrable.

Since the exterior derivative $d\colon W^{\infty, 1}(\Omega^p(M)) \to W^{\infty, 1}(\Omega^{p+1}(M))$ is continuous we get a corresponding dual differential (also denoted by $d$)
\begin{equation}
\label{eqjnker4}
d\colon \Omega_p^u(M) \to \Omega_{p-1}^u(M).
\end{equation}
The \emph{uniform de Rham homology} $\HudR_\ast(M)$ with coefficients in $\IC$ is defined as the homology of the complex
\[\ldots \stackrel{d}\longrightarrow \Omega_p^u(M) \stackrel{d}\longrightarrow \Omega^u_{p-1}(M) \stackrel{d}\longrightarrow \ldots \stackrel{d}\longrightarrow \Omega_0(M) \to 0,\]
where $d$ is the dual differential \eqref{eqjnker4}.
\qed
\end{defn}

\begin{defn}
We define a map $\alpha\colon \Cucont^p(\Winftyone(M)) \to \Omega_p^u(M)$ by
\[\alpha(\varphi)(f_0 d f_1 \wedge \ldots \wedge d f_p) := \frac{1}{p!} \sum_{\sigma \in \frakS_p} (-1)^\sigma \varphi(f_0, f_{\sigma(1)}, \ldots, f_{\sigma(p)}),\]
where $\frakS_p$ denotes the symmetric group on $1, \ldots, p$.
\qed
\end{defn}

The antisymmetrization that we have done in the above definition of $\alpha$ maps Hochschild cocycles to Hochschild cocycles and vanishes on Hochschild coboundaries. This means that $\alpha$ descends to a map
\[\alpha\colon \HHucont^\ast(\Winftyone(M)) \to \Omega_\ast^u(M)\]
on Hochschild cohomology.

Before we prove that $\alpha$ is an isomorphism we first prove a technical lemma:

\begin{lem}\label{lem:tensor_prod_sobolev}
Let $M$ and $N$ be manifolds of bounded geometry and without boundary. Then we have
\[\Winftyone(M) \hatotimes \Winftyone(N) \cong \Winftyone(M \times N),\]
where $\hatotimes$ denotes the projective tensor product.
\end{lem}

\begin{proof}
This proof is an elaboration of P.~Michor's answer \cite{MO_michor} on MathOverflow. The reference that he gives is \cite{michor_book}: combining the Theorem on p.~78 in it with Point (c) on top of the same page we get the isomorphism $L^1(M) \hatotimes L^1(N) \cong L^1(M \times N)$. This result was first proved by Chevet \cite{chevet}.

Now let us generalize this to incorporate derivatives. In \cite[End of Section 6]{kriegl_michor_rainer} it is proved\footnote{To be concrete, they proved it only for Euclidean space, but the argument is the same for manifolds of bounded geometry.} that we have a continuous inclusion $\Winftyone(M) \hatotimes \Winftyone(N) \to \Winftyone(M \times N)$. Note that we have to use \cite[Th\'{e}or\`{e}me 1 on p.~124]{chevet} to conclude that the family of seminorms used in \cite{kriegl_michor_rainer} for $\Winftyone(M) \hatotimes \Winftyone(N)$ generates indeed the projective tensor product topology.

It remains to show that $\Winftyone(M \times N) \to \Winftyone(M) \hatotimes \Winftyone(N)$ is continuous. For this we will use the fact that we may represent the projective tensor product norm on the algebraic tensor product $E \otimes_{\mathrm{alg}} F$ of two Banach spaces by
\begin{equation*}
\|u\|_{E \hatotimes F} = \inf \Big\{ \sum \|x_i\|_E \|y_i\|_F \Big\},
\end{equation*}
where the infimum ranges over all representations $u=\sum_i x_i \otimes y_i$. In our case now note that we have for $w := \sum_i (\nabla_X p_i) \otimes q_i$, where $X$ is a vector field on $M$ with $\|X\|_\infty \le 1$, the chain of inequalities
\begin{align}
\|w\|_{L^1(M) \hatotimes L^1(N)} & = \left\| \sum (\nabla_X p_i) \otimes q_i \right\|_{L^1(M) \hatotimes L^1(N)}\notag\\
& \le C \left\| \sum (\nabla_X p_i) \cdot q_i \right\|_{L^1(M\times N)}\notag\\
& \le C \|\sum p_i \cdot q_i\|_{W^{1,1}(M\times N)},\label{eq:proj_tensor_prod}
\end{align}
where the first inequality comes from the fact $L^1(M) \hatotimes L^1(N) \cong L^1(M \times N)$ which we already know. Now for $v := \sum_i s_i \otimes t_i$ we have
\begin{align}
\|v\|_{W^{1,1}(M) \hatotimes L^1(N)} & = \Big\| \sum s_i \otimes t_i \Big\|_{W^{1,1}(M) \hatotimes L^1(N)}\label{eq:proj_tensor_2}\\
& = \inf \Big\{ \sum \big( \|x_i\|_{L^1(M)} + \|\nabla x_i\|_{L^1(M)} \big) \|y_i\|_{L^1(N)} \Big\}\notag\\
& = \underbrace{\inf \Big\{ \sum \|x_i\|_{L^1(M)} \|y_i\|_{L^1(N)} \Big\}}_{= \|v\|_{L^1(M) \hatotimes L^1(N)} \le C\|v\|_{L^1(M \times N)}} + \inf \Big\{ \sum \|\nabla x_i\|_{L^1(M)} \|y_i\|_{L^1(N)} \Big\},\notag
\end{align}
where the infima run over all representations $\sum_i x_i \otimes y_i$ of $v$. Furthermore, for a fixed compactly supported vector field $X$ with $\|X\|_\infty \le 1$ we have
\begin{equation}
\label{eq:infima_equal}
\inf_{\mathcal{A}} \Big\{ \sum \|\nabla_X x_i\|_{L^1(M)} \|y_i\|_{L^1(N)} \Big\} = \inf_{\mathcal{B}} \Big\{ \sum \|e_i\|_{L^1(M)} \|f_i\|_{L^1(N)} \Big\},
\end{equation}
where $\mathcal{A}$ is the set of all representations $\sum_i x_i \otimes y_i$ of $v = \sum_i s_i \otimes t_i$ and $\mathcal{B}$ the set of all representations $\sum_i e_i \otimes f_i$ of $\sum_i (\nabla_X s_i) \otimes t_i$. This equality holds because every element of $\mathcal{A}$ gives rise to an element of $\mathcal{B}$ by deriving the first component and also vice versa by integrating it. By Inequality \eqref{eq:proj_tensor_prod} we now get that the infima in Equation \eqref{eq:infima_equal} are less than or equal to $C\|v\|_{W^{1,1}(M \times N)}$. Since this holds for any vector field $X$ with $\|X\|_\infty \le 1$ we can combine it now with Estimate \eqref{eq:proj_tensor_2} to get
\[\|v\|_{W^{1,1}(M) \hatotimes L^1(N)} \le 2C\|v\|_{W^{1,1}(M \times N)}.\]

We iterate the argument to get estimates for all higher derivatives and also for the second component. This proves the claim that the map $\Winftyone(M \times N) \to \Winftyone(M) \hatotimes \Winftyone(N)$ is continuous and therefore completes the whole proof.
\end{proof}

\begin{thm}
For any Riemannian manifold $M$ of bounded geometry and without boundary the map $\alpha\colon \HHucont^p(\Winftyone(M)) \to \Omega_p^u(M)$ is an isomorphism for all $p$.
\end{thm}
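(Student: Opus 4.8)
The plan is to mimic the classical Connes--Teleman computation of the Hochschild cohomology of an algebra of smooth functions, adapted to the uniform setting. Recall that Connes proved $HH^p(C^\infty(M)) \cong \Omega_p(M)$ (de Rham currents) for compact $M$, and the key point is that the Hochschild cochain complex of $C^\infty(M)$ has a homotopy retraction onto the complex of de Rham currents. I would carry the same argument through, being careful everywhere that the estimates defining the $W^{\infty,1}$-topology (and its continuous dual) are respected, so that all maps are continuous and all homotopies converge in the appropriate \Frechet / dual-\Frechet topologies.

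First I would show that $\alpha$ is well-defined and a chain map at the level of cochains: antisymmetrization sends the Hochschild differential $b$ to zero on the nose when composed with the de Rham differential placement, so $\alpha \circ b = 0$, and $\alpha$ descends to Hochschild cohomology as already noted in the excerpt. The substance is surjectivity and injectivity. For surjectivity, given a uniform de Rham current $C \in \Omega_p^u(M)$, I would produce an explicit Hochschild cocycle $\varphi_C \in \Cucont^p(\Winftyone(M))$ with $\alpha(\varphi_C) = C$, by the standard formula $\varphi_C(f_0, \ldots, f_p) := \langle C, f_0\, df_1 \wedge \cdots \wedge df_p\rangle$; one checks $b\varphi_C = 0$ using the Leibniz rule and $d^2 = 0$, and that this is continuous because $C$ is continuous on $W^{\infty,1}(\Omega^p(M))$ and the multilinear map $(f_0, \ldots, f_p) \mapsto f_0\, df_1 \wedge \cdots \wedge df_p$ is continuous from $(\Winftyone(M))^{\widehat\otimes(p+1)}$ into $W^{\infty,1}(\Omega^p(M))$ (here the product estimates in uniform Sobolev spaces of bounded geometry — Leibniz plus the algebra property of $W^{\infty,1}$ against $C^\infty_b$ multipliers — are needed). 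Then $\alpha(\varphi_C) = C$ follows from the antisymmetry of $df_{\sigma(1)} \wedge \cdots \wedge df_{\sigma(p)}$.

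For injectivity, the plan is to build a contracting homotopy on the "reduced" part of the Hochschild complex, i.e. to show that any Hochschild cocycle $\varphi$ with $\alpha(\varphi) = 0$ is a coboundary $\varphi = b\psi$ with $\psi \in \Cucont^{p-1}(\Winftyone(M))$. This is exactly the content of the Hochschild--Kostant--Rosenberg / Connes argument: one decomposes an arbitrary cochain into its antisymmetric part plus a coboundary term, using the explicit homotopy operators $h_i$ that express a degenerate simplex as a boundary. Concretely, one uses that $\Winftyone(M)$ — being a nice function algebra on a manifold of bounded geometry — admits enough structure (locally, coordinate functions; globally, a uniformly locally finite partition of unity subordinate to a bounded-geometry cover as in Lemma~\ref{lem:nice_coverings_partitions_of_unity}) to run the local HKR computation chart-by-chart and patch. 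I would localize via the partition of unity $\{\varphi_i\}$, invoke the classical local statement on each normal coordinate ball $B_{2\varepsilon}(x_i) \subset \IR^m$, and then reassemble, using the uniform bounds on the $\varphi_i$ and their derivatives to guarantee the assembled homotopy operator is continuous in the $W^{\infty,1}$-seminorms with bounds independent of $i$.

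The main obstacle, as I see it, is precisely this patching step together with the continuity bookkeeping: the classical HKR homotopy is constructed locally and is not manifestly global, so one must check that summing the local homotopies against a bounded-geometry partition of unity produces an operator that is bounded on each $W^{\infty,1}$-seminorm (equivalently, that the resulting $\psi$ is a continuous functional on the relevant completed projective tensor power). This requires the algebra-multiplier estimates for $W^{\infty,1}$ under multiplication by $C_b^\infty$ functions and their derivatives, uniformly across charts — which is available from bounded geometry (Lemmas~\ref{lem:transition_functions_uniformly_bounded} and \ref{lem:equiv_characterizations_bounded_geom_bundles} and the local characterization of uniform $C^\infty$-spaces) but needs to be assembled carefully. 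A secondary subtlety is that we must work with the honest image of $b$, not its closure, when forming cohomology (as flagged in the definition), so the produced $\psi$ must genuinely satisfy $b\psi = \varphi$ on the nose rather than merely approximately; the explicit nature of the HKR homotopy is what makes this possible. Once these continuity estimates are in place, the five-term bookkeeping of the HKR argument goes through verbatim and yields that $\alpha$ is an isomorphism in every degree $p$.
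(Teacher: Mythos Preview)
Your surjectivity argument is fine and is exactly the map $\beta$ that the paper writes down as the inverse.  The gap is in the injectivity half, specifically in the plan to ``localize via the partition of unity $\{\varphi_i\}$, invoke the classical local statement on each normal coordinate ball, and then reassemble.''  Hochschild cochains are continuous multilinear functionals on the whole algebra $\Winftyone(M)$; they are not sections of a sheaf over $M$, and there is no restriction map to $\Winftyone(U)$ for an open $U\subset M$.  Inserting cutoffs in the arguments, i.e.\ passing from $\varphi$ to $(f_0,\ldots,f_p)\mapsto\varphi(\psi_i f_0,\ldots,\psi_i f_p)$, does not produce a cochain for the local algebra, nor does it preserve the cocycle condition in any useful way (the Hochschild boundary mixes neighbouring slots multiplicatively).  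Consequently there is no meaningful ``local HKR homotopy on $B_{2\varepsilon}(x_i)$'' that acts on your global $\varphi$, and nothing to sum.  You correctly flag the patching step as the main obstacle, but the obstacle is not merely one of uniform estimates: the operation you want to patch does not exist in the first place.

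The paper takes a genuinely different route, following Connes' original argument for compact $M$.  The essential move is to identify the completed projective tensor powers with function spaces on products, $\Winftyone(M)^{\hatotimes(p+1)}\cong\Winftyone(M^{p+1})$, so that the bar complex becomes a complex of $\Winftyone(M)$-bimodules $\mathcal{M}_k=\Winftyone(M\times M,E_k)$ for certain bounded-geometry bundles $E_k$ on $M\times M$.  One then shows these modules are \emph{topologically projective} (direct summands of free ones), which in the bounded-geometry setting requires the result that every bounded-geometry vector bundle is $C_b^\infty$-complemented.  The Koszul-type resolution is contracted using a nowhere-vanishing vector field chosen in $C_b^\infty$, which is where bounded geometry enters again.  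This homological-algebra approach replaces your attempted chart-by-chart localization on $M$ by an analysis near the diagonal of $M\times M$, and that is precisely what makes a global contracting homotopy available.  If you want to salvage your strategy, the first step would be to prove the tensor-power identification $\Winftyone(M)^{\hatotimes(p+1)}\cong\Winftyone(M^{p+1})$ and then work with the diagonal in $M^{p+1}$ rather than with charts of $M$; at that point you will essentially have rebuilt the paper's proof.
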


\begin{proof}
The proof is analogous to the one given in \cite[Lemma 45a on page 128]{connes_noncomm_diff_geo} for the case of compact manifolds. We describe here only the places where we have to adjust it for non-compact manifolds.

The proof in \cite{connes_noncomm_diff_geo} relies heavily on Lemma 44 there. First note that direct sums, tensor products and duals of vector bundles of bounded geometry are again of bounded geometry. Since the tangent and cotangent bundle of a manifold of bounded geometry have, of course, bounded geometry, the bundles $E_k$ occuring in Lemma 44 of \cite{connes_noncomm_diff_geo} have bounded geometry.

Furthermore, \cite[Lemma 44]{connes_noncomm_diff_geo} needs a nowhere vanishing vector field on $M$, and since we are working here in the bounded geometry setting we need for our proof a nowhere vanishing vector field of norm one at every point and with bounded derivatives. Since we can without loss of generality assume that our manifold is non-compact (otherwise we are in the usual setting where the result that we want to prove is already known), we can always contruct a nowhere vanishing vector field on $M$: we just pick a generic vector field with isolated zeros and then move the vanishing points to infinity. But if we normalize this vector field to norm one at every point, then it will usually have unbounded derivatives (since we moved the vanishing points infinitely far, i.e., we disturbed the derivatives arbitrarily large). Fortunately, Weinberger proved in \cite[Theorem 1]{weinberger_euler} that on a manifold $M$ of bounded geometry a nowhere vanishing vector field of norm one and with bounded derivatives exists if and only if the Euler class $e(M) \in \HbdR^m(M)$ vanishes (the latter group denotes the top-dimensional bounded de Rham cohomology of $M$; see Definition \ref{defn:bounded_de_rham_coho}). So if the euler class of $M$ vanishes, we are ok and can move on with our proof. If the euler class does not vanish, then we have to use the same trick that already Connes used to prove Lemma 45a in \cite{{connes_noncomm_diff_geo}}: we take the product with $S^1$.

Moreover, we need the isomorphism $\Winftyone(M) \hatotimes \Winftyone(M) \cong \Winftyone(M \times M)$. This is exactly the content of the above Lemma \ref{lem:tensor_prod_sobolev}.

The fact that the modules $\mathcal{M}_k =\Winftyone(M \times M, E_k)$ are topologically projective, i.e., are direct summands of topological modules of the form $\mathcal{M}^\prime_k = \Winftyone(M \times M) \hatotimes \mathcal{E}_k$, where $\mathcal{E}_k$ are complete locally convex vector spaces, follows from the fact that every vector bundle $F$ of bounded geometry is $C_b^\infty$-complemented, i.e., there is a vector bundle $G$ of bounded geometry such that $F \oplus G$ is $C_b^\infty$-isomorphic to a trivial bundle with the flat connection. This is our Proposition \ref{prop:every_bundle_complemented}.

With the above notes in mind, the proof of \cite[Lemma 45a on page 128]{connes_noncomm_diff_geo} for the case of compact manifolds works also for non-compact manifolds in our setting here. If there are constructions to be done in the proof we have to do them uniformly (e.g., controlling derivatives uniformly in the points of the manifold) by using the bounded geometry of $M$.
\end{proof}

The inverse map $\beta\colon \Omega^u_p(M) \to \HHucont^p(\Winftyone(M))$ of $\alpha$ is given by
\[\beta(C)(f_0, f_1, \ldots, f_p) = C(f_0 d f_1 \wedge \ldots \wedge d f_p).\]

Now the proofs of Lemma 45b and Theorem 46 in \cite{connes_noncomm_diff_geo} translate without change to our setting here so that we finally get:

\begin{thm}\label{thm:computation_cyclic_cohomology}
Let $M$ be a Riemannian manifold of bounded geometry and without boundary.

For each $n \in \IN_0$ the continuous cyclic cohomology $\HCucont^n(\Winftyone(M))$ is canonically isomorphic to
\[Z_n^u(M) \oplus \HudR_{n-2}(M) \oplus \HudR_{n-4}(M) \oplus \ldots,\]
where $Z_n^u(M) \subset \Omega_n^u(M)$ is the subspace of closed currents.

The periodicity operator $S\colon \HCucont^n(\Winftyone(M)) \to \HCucont^{n+2}(\Winftyone(M))$ is given under the above isomorphism as the map that sends cycles of $Z_n^u(M)$ to their homology classes.

And last, since periodic cyclic cohomology is the direct limit of cyclic cohomology, we finally get
\[\alpha_\ast\colon \HPucont^{\mathrm{ev/odd}}(\Winftyone(M)) \stackrel{\cong}\longrightarrow \HudR_{\mathrm{ev/odd}}(M).\]
We denote this isomorphism by $\alpha_\ast$ since it is induced from the map $\alpha$ defined above.
\end{thm}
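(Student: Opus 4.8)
The theorem as stated is the assembly of three pieces: the Hochschild cohomology computation (already established above via the map $\alpha$), the identification of $\HCucont^n(\Winftyone(M))$ with a direct sum involving closed currents and lower de Rham homology groups, and finally passage to the direct limit to obtain the periodic statement $\alpha_\ast\colon \HPucont^{\mathrm{ev/odd}}(\Winftyone(M)) \stackrel{\cong}\longrightarrow \HudR_{\mathrm{ev/odd}}(M)$. My plan is to follow Connes' argument from \cite[Lemma 45b and Theorem 46]{connes_noncomm_diff_geo} essentially verbatim, noting that once the Hochschild theorem has been secured in the bounded geometry setting (which the preceding theorem in the excerpt does), all the subsequent homological algebra is formal and transfers without change.

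First I would recall Connes' $SBI$-exact sequence (the long exact sequence connecting Hochschild, cyclic, and periodic cyclic cohomology): for any complete locally convex algebra $A$ one has
\[\cdots \to \HHucont^n(A) \stackrel{B}\to \HCucont^{n-1}(A) \stackrel{S}\to \HCucont^{n+1}(A) \stackrel{I}\to \HHucont^{n+1}(A) \to \cdots.\]
Using the identification $\HHucont^p(\Winftyone(M)) \cong \Omega_p^u(M)$ from the previous theorem, and identifying the operator $B$ with the dual de Rham differential $d\colon \Omega_p^u(M) \to \Omega_{p-1}^u(M)$ (this requires checking the compatibility $\alpha \circ B = d \circ \alpha$ on the level of cochains, a routine but necessary computation on antisymmetrized cochains), the exact sequence lets one compute $\HCucont^n$ inductively in $n$. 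One obtains that $\HCucont^n(\Winftyone(M))$ splits as $Z_n^u(M) \oplus \HudR_{n-2}(M) \oplus \HudR_{n-4}(M) \oplus \cdots$, with $Z_n^u(M) \subset \Omega_n^u(M)$ the closed currents, and that the periodicity operator $S$ acts on $Z_n^u(M)$ by sending a closed current to its homology class in $\HudR_n(M) \subset \HCucont^{n+2}$. This is exactly the content of Theorem \ref{thm:computation_cyclic_cohomology} as stated.

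Given that, the periodic statement is immediate: $\HPucont^\ast(A) = \underrightarrow{\lim}\, \HCucont^{\ast+2n}(A)$ along the maps $S$, and under the above description the transition map $S$ kills the $Z_n^u$-summand (replacing a closed current by its class, which lands in the $\HudR$-summands) while acting as the identity on the de Rham homology summands. Hence in the colimit the contribution of the closed-current summand disappears and all the de Rham homology summands are identified with each other via the identity; the colimit is simply $\HudR_{\mathrm{ev}}(M)$ (resp. $\HudR_{\mathrm{odd}}(M)$). Tracking the maps shows this isomorphism is precisely the one induced by $\alpha$, which I denote $\alpha_\ast$.

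The main obstacle is not conceptual but bookkeeping: one must verify that $\alpha$ (and its inverse $\beta$) intertwines the cyclic-cohomology operators $B$, $S$, $I$ with the corresponding operations on currents (the dual differential $d$ and the quotient map $Z_n^u \to \HudR_n$), and that it does so with the normalization constants chosen earlier in the excerpt so that the Chern--Connes character remains compatible. This is where the "slightly different normalization constants for $S$'' mentioned after Definition of periodic cyclic cohomology must be pinned down. All the functional-analytic input specific to noncompact manifolds of bounded geometry — topological projectivity of $\Winftyone(M\times M, E_k)$, the flatness/complementability of bounded-geometry bundles, and $\Winftyone(M)\hatotimes\Winftyone(M) \cong \Winftyone(M\times M)$ — was already needed and established for the Hochschild theorem, so no new analysis is required here; the remaining argument is purely the homological algebra of Connes' paper, which I would cite as transferring \emph{without change} once the Hochschild computation is in hand.
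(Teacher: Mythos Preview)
Your proposal is correct and takes essentially the same approach as the paper: the paper simply states that the proofs of Lemma 45b and Theorem 46 in \cite{connes_noncomm_diff_geo} translate without change to the bounded geometry setting once the Hochschild computation is in hand, which is precisely what you argue (with rather more detail than the paper itself supplies). Your identification of the $SBI$-sequence as the engine and your remark that all the analytic input was already absorbed in the preceding Hochschild theorem match the paper's logic exactly.
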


Let us now get to the dual cohomology theory to uniform de Rham homology.

\begin{defn}[Bounded de Rham cohomology]
\label{defn:bounded_de_rham_coho}
Let $\Omega_b^p(M)$ denote the vector space of $p$-forms on $M$, which are bounded in the norm
\[\|\gamma\| := \sup_{x \in M} \{ \| \gamma(x) \| + \| d \gamma(x) \| \}.\]
The \emph{bounded de Rham cohomology} $\HbdR^\ast(M)$ is defined as the homology of the corresponding complex.
\qed
\end{defn}

For an oriented manifold the \Poincare duality map between bounded de Rham cohomology and uniform de Rham homology is defined as the map induced by the following map on forms:
\begin{equation}
\label{eq_PD_coho}
\Omega^p_b(M) \to \Omega^u_{m-p}(M), \ \gamma \mapsto \big( \omega \mapsto \int_M \omega \wedge \gamma \big).
\end{equation}

\begin{thm}\label{thm:Poincare_de_Rham}
Let $M^m$ be an oriented Riemannian manifold of bounded geometry and without boundary.

Then the \Poincare duality map \eqref{eq_PD_coho} induces an isomorphism
\[\HbdR^\ast(M) \stackrel{\cong}\longrightarrow \HudR_{m-\ast}(M)\]
between bounded de Rham cohomology of $M$ and uniform de Rham homology of $M$.
\end{thm}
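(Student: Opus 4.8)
The plan is to prove de Rham \Poincare duality by the same Mayer--Vietoris induction that was set up for uniform $K$-\Poincare duality in Section \ref{sec:poincare_duality}, since the bounded/uniform de Rham theories fit into that machinery. First I would record that bounded de Rham cohomology and uniform de Rham homology admit Mayer--Vietoris sequences for the covers $\{U_K, U_{k+1}\}$ produced by Lemma \ref{lem:suitable_coloring_cover_M}: for uniform de Rham homology this is dual to the obvious short exact sequence of the relevant Sobolev complexes of forms on $U_K \cup U_{k+1}$, $U_K \sqcup U_{k+1}$ and $U_K \cap U_{k+1}$ (using that these subsets are uniformly separated, so a uniform partition of unity adapted to the cover exists, giving the surjectivity needed for the Mayer--Vietoris boundary map); for bounded de Rham cohomology one dualizes back, or argues directly with bounded forms and the same partition of unity. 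One then checks that the \Poincare duality map $\gamma \mapsto (\omega \mapsto \int_M \omega \wedge \gamma)$, restricted to the subsets $O \subset M$ in the cover, commutes with these Mayer--Vietoris sequences up to the usual sign and index shift — this is a formal consequence of Stokes' theorem exactly as in the compact case.

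Second, I would establish the base case of the induction. The sets $U_1$, $U_2$, $U_1 \cap U_2$ are (up to uniformly cobounded, proper, Lipschitz homotopy) disjoint unions, indexed by some set $Y$, of geodesic balls of radius below the injectivity radius, hence each piece is diffeomorphic to a Euclidean ball $O \subset \IR^m$. For such a disjoint union one computes, analogously to Lemma \ref{lem:uniform_k_hom_balls}, that $\HbdR^\ast(U \subset M) \cong \ell^\infty_\IR(Y)$ concentrated in degree $0$ and $\HudR_\ast(U \subset M) \cong \ell^\infty_\IR(Y)$ concentrated in degree $m$ (the point being that $H^0_{b,\mathrm{dR}}$ of a ball is $\IR$ and $H^{u,\mathrm{dR}}_m$ of a ball is $\IR$, and the $\ell^\infty$-boundedness is automatic because all the local data live over mutually isometric balls). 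The \Poincare duality map pairs these two copies of $\ell^\infty_\IR(Y)$ by integration over each ball, which is visibly an isomorphism on each factor and hence on the $\ell^\infty$-product. This gives duality for $U_1$, $U_2$ and $U_1 \cap U_2$.

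Third, I would run the finite induction on $k$: given duality for $U_K := U_1 \cup \dots \cup U_k$, for $U_{k+1}$, and for $U_K \cap U_{k+1}$, the commuting Mayer--Vietoris ladder together with the five lemma yields duality for $U_{K} \cup U_{k+1} = U_1 \cup \dots \cup U_{k+1}$. As in Section \ref{sec:poincare_duality}, the hypothesis for $U_K \cap U_{k+1} = (U_1 \cap U_{k+1}) \cup \dots \cup (U_k \cap U_{k+1})$ itself requires a nested induction over the $k$ geodesically convex (hence ball-like) pieces. After $N-1$ steps one reaches $U_1 \cup \dots \cup U_N = M$, proving the theorem.

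The main obstacle is the same as in the $K$-theory case: verifying carefully that bounded de Rham cohomology and uniform de Rham homology of the subsets $O \subset M$ — taken with the \emph{subspace} metric inherited from $(M,d)$, not the induced Riemannian metric — genuinely satisfy exact Mayer--Vietoris sequences, and that the uniform partitions of unity from Lemma \ref{lem:suitable_coloring_cover_M} produce the required surjectivity with uniformly bounded derivatives so that the maps stay within the $W^{\infty,1}$ (resp.\ bounded-form) categories. Once the ladder is in place the argument is purely formal; the bookkeeping of signs, the degree shift $m - \ast$ versus the homological shift in the boundary maps, and the compatibility with Stokes' theorem are routine but must be handled with the care already flagged for Theorem \ref{thm:Poincare_duality_K}.
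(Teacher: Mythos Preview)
Your proposal is correct and follows exactly the approach the paper takes: the paper's proof is a brief sketch stating that one uses the same Mayer--Vietoris induction as in Section~\ref{sec:poincare_duality}, referring to the author's thesis for the existence of the Mayer--Vietoris sequences for bounded de Rham cohomology (with uniform de Rham homology handled similarly) and to the standard compact-case argument for naturality of the duality map. Your write-up is in fact more detailed than the paper's own proof, spelling out the base case computation and the nested induction that the paper leaves implicit.
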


\begin{proof}
We will do a Mayer--Vietoris induction, similar as in the proof of \Poincare duality between uniform $K$-theory and uniform $K$-homology in Section~\ref{sec:poincare_duality}.

We invoke Lemma~\ref{lem:suitable_coloring_cover_M} to get a cover of $M$ by open subsets having Properties~\ref{cover_i} and~\ref{cover_ii} from that lemma.\footnote{With the additional property that the boundaries of the open subsets are smooth, but it is clear that we can arrange this.} We use the notation $U_j$ and $U_K$ from it, and the induction will be over the index $j$ (and hence the proof will only consist of finitely many induction steps).

We have to show that we have the Mayer--Vietoris sequences. The arguments are the same as in the case of compact manifolds, and we will only mention where we have to be cautios because we are working in the setting of uniform theories. We will only discuss the case of bounded de Rham cohomology, since the additional arguments (because of the uniform situation) in the case of uniform de Rham homology are similar.

For bounded de Rham cohomology we have to show that the following sequence is exact in order to get a Mayer--Vietoris sequence:
\begin{equation}
\label{eq_MV_de_Rham}
0 \to \Omega_{b}^*(U_K \cup U_{k+1}) \to \Omega_{b}^*(U_K) \oplus \Omega_{b}^*(U_{k+1}) \to \Omega_{b}^*(U_K \cap U_{k+1}) \to 0.
\end{equation}
The crucial step is to show that the map $\Omega_{b}^*(U_K) \oplus \Omega_{b}^*(U_{k+1}) \to \Omega_{b}^*(U_K \cap U_{k+1})$ is surjectice. The usual argument in the case of compact manifolds uses a partition of unity, and here we have to make sure now that the partition of unity has uniformly bounded derivatives of all orders. The reason that we can construct such a partition of unity here is because of Property~\ref{cover_ii} of Lemma~\ref{lem:suitable_coloring_cover_M}.

That the above defined \Poincare duality map \eqref{eq_PD_coho} is a natural transformation from one Mayer--Vietoris sequence to the other may be proved analogously as in the case of compact manifolds; see, e.g., \cite[Exercise 16-6]{lee_smooth}.

And finally, let us discuss the first step of the induction. We have collections $U_1$, $U_2$ and $U_1 \cap U_2$ which are each a uniformly disjoint union of open subsets of $M$ which have a uniform bound on their diameters. So all three sets are boundedly homotopy equivalent\footnote{\label{footnote:boundedly_homotopic}Let $f, g\colon M \to N$ be two maps of bounded dilatation. We say that they are \emph{boundedly homotopic}, if there is a homotopy $H\colon M \times [0,1] \to N$ from $f$ to $g$, which itself is of bounded dilatation. Recall that a map $h$ has \emph{bounded dilatation}, if $\|h_\ast V\| \le C \|V\|$ for all tangent vectors $V$. Bounded homotopy invariance of bounded de Rham cohomology was shown by the author in \cite[Corollary 5.26]{engel_phd}.} to an infinite collection of open balls, for which we already know from the case of compact manifolds that the \Poincare duality map is an isomorphism.
\end{proof}

Bounded de Rham cohomology does not perfectly fit the setting in this paper since the condition that the exterior derivative of a form is bounded does not imply that in local coordinates the coefficient functions have a uniformly bounded first derivative, and it also does not say anything about the higher derivatives. Hence the following definition and proposition.

\begin{defn}
The \emph{uniform de Rham cohomology $\HudRco^\ast(M)$} of a Riemannian manifold $M$ of bounded geometry is defined by using the complex of uniform $C^\infty$-spaces\footnote{see Definition \ref{defn:uniform_frechet_spaces}} $C^\infty_b(\Omega^\ast(M))$, i.e., differential forms on $M$ which have in normal coordinates bounded coefficient functions and all derivatives of them are also bounded.
\end{defn}

\begin{prop}\label{prop:uniform_bounded_dRco}
Let $M$ be a manifold of bounded geometry and without boundary. Then we have
\[\HudRco^\ast(M) \cong \HbdR^\ast(M).\]
\end{prop}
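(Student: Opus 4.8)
The plan is to show that the inclusion of complexes $C_b^\infty(\Omega^\bullet(M)) \hookrightarrow \Omega_b^\bullet(M)$ induces an isomorphism on cohomology. Here the left-hand complex consists of forms all of whose covariant derivatives are bounded in sup-norm (equivalently, in normal coordinates all coefficient functions and all their partial derivatives are bounded uniformly across charts), while the right-hand complex only requires $\|\gamma\|_\infty + \|d\gamma\|_\infty < \infty$. The inclusion is a chain map, so it suffices to produce, for each bounded-de-Rham cohomology class, a uniform-$C^\infty$ representative, and to show that a uniform-$C^\infty$ form which is $d$ of a merely bounded form is already $d$ of a uniform-$C^\infty$ form.

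The main tool I would use is a smoothing operator on forms adapted to the bounded geometry of $M$. Concretely, one fixes a uniform cover by normal coordinate balls with subordinate partition of unity $\{\varphi_i\}$ as in Lemma \ref{lem:nice_coverings_partitions_of_unity}, and defines an operator $P$ that mollifies a form in each chart (applying a fixed Euclidean mollifier $\psi_\varepsilon$ as in the proof of Lemma \ref{lem:norm_completion_C_b_infty}) and reassembles the pieces. The key points to verify are: (i) $P$ maps $\Omega_b^\bullet(M)$ into $C_b^\infty(\Omega^\bullet(M))$ — this uses the Young-type estimate $\|\varphi_i\gamma \ast D^\alpha\psi_\varepsilon\|_\infty \le \|\varphi_i\gamma\|_\infty \|D^\alpha\psi_\varepsilon\|_1$ together with the uniform bounds on the transition functions and on the $\varphi_i$ (Lemmas \ref{lem:transition_functions_uniformly_bounded} and \ref{lem:nice_coverings_partitions_of_unity}) to get bounds on the derivatives that are independent of $i$; (ii) $P$ commutes with $d$ up to a chain homotopy $K$, i.e. $dK + Kd = \mathrm{id} - P$, where $K$ is built from the standard Poincaré-lemma homotopy on each ball, again with all estimates uniform in the chart index; and (iii) $K$ preserves boundedness, so that $K\colon \Omega_b^\bullet(M) \to \Omega_b^{\bullet-1}(M)$ and $K\colon C_b^\infty \to C_b^\infty$ both make sense. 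Granting these, a diagram chase gives: given a bounded closed form $\gamma$, the form $P\gamma$ is uniform-$C^\infty$, closed, and cohomologous to $\gamma$ in $\Omega_b^\bullet$; and if $\gamma = d\eta$ with $\eta$ bounded and $\gamma$ uniform-$C^\infty$, then $\gamma = P\gamma + (dK+Kd)\gamma = d(P\eta + K\gamma)$ with $P\eta + K\gamma$ uniform-$C^\infty$. Hence injectivity and surjectivity of the induced map.

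An alternative, and perhaps cleaner, route is to mimic the proof of Theorem \ref{thm:Poincare_de_Rham}: run the finite Mayer--Vietoris induction of Lemma \ref{lem:suitable_coloring_cover_M} simultaneously for $\HudRco^\ast$ and $\HbdR^\ast$, using the natural comparison map between the two theories. On a disjoint union of uniformly separated geodesic balls both groups reduce to the corresponding $\ell^\infty$-groups of Euclidean-ball cohomology and the comparison map is visibly an isomorphism; the five lemma then propagates this through the finitely many induction steps. This requires checking that $C_b^\infty(\Omega^\bullet(\cdot))$ admits the same kind of Mayer--Vietoris sequence as $\Omega_b^\bullet(\cdot)$ — which it does, by the same bump-function argument used for bounded de Rham cohomology in \cite[Section 5.6]{engel_phd}, since the cutoff functions separating the pieces can be taken in $C_b^\infty(M)$.

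I expect the main obstacle to be item (i) above: controlling \emph{all} derivatives of the reassembled mollified form uniformly in the chart index. The subtlety is that gluing the locally mollified pieces reintroduces the transition functions, so one must differentiate products of (mollified coefficient)$\times$(transition function)$\times$(partition-of-unity function) and check that the bounds so obtained depend only on the bounded-geometry constants of $M$ and on the fixed mollifier, not on $i$. This is exactly the kind of bookkeeping already carried out in the proof of Lemma \ref{lem:norm_completion_C_b_infty}, extended from functions to forms of all degrees, and from a single derivative to all orders; it is routine but somewhat tedious, which is presumably why the paper states the proposition without spelling it out.
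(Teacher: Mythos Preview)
Your second route---running the finite Mayer--Vietoris induction of Lemma~\ref{lem:suitable_coloring_cover_M} simultaneously for $\HudRco^\ast$ and $\HbdR^\ast$ and comparing via the inclusion---is exactly what the paper does. The paper's proof consists entirely of noting that both theories admit Mayer--Vietoris sequences for the subsets $U_K$, $U_{k+1}$: for $\Omega_b^\bullet$ one uses forms admitting a bounded extension to $M$ and checks surjectivity of the restriction map, while for $C_b^\infty(\Omega^\bullet)$ the extension is automatic since all derivatives are bounded. So that half of your proposal is correct and matches the paper.

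Your first route (global smoothing plus chain homotopy) is a genuinely different strategy and is in principle viable, but the sketch undersells the difficulty of step~(ii). The issue is not merely bookkeeping of derivatives as in step~(i): once you localize with the partition of unity, the reassembled operator $P$ no longer commutes with $d$---you pick up terms $\sum_i d\varphi_i \wedge (\text{mollified piece})$---so the identity $dK + Kd = \mathrm{id} - P$ does not hold on the nose for the ``standard Poincar\'e-lemma homotopy on each ball''. What one actually gets is $\mathrm{id} - P = dK + Kd + R$ with a remainder $R$ of size $O(\varepsilon)$ in operator norm, and one then has to iterate or absorb $R$ somehow. The natural local homotopy here is not the radial Poincar\'e homotopy (which contracts to a point) but the translation homotopy witnessing that Euclidean convolution is chain-homotopic to the identity; even with that correct ingredient, the gluing step introduces the same remainder. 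None of this is fatal, and such smoothing arguments do exist in the literature, but it is more than ``routine but tedious''. The paper's Mayer--Vietoris argument sidesteps all of this by never needing a global chain homotopy.
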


\begin{proof}
The proof is analogous to the one of Theorem~\ref{thm:Poincare_de_Rham} --- the important thing is that we have Mayer--Vietoris sequences and the argument given in the proof of Theorem~\ref{thm:Poincare_de_Rham} for bounded de Rham cohomology also applies to uniform de Rham cohomology.
\end{proof}

\begin{thm}[Existence of the uniform Chern character]
Let $M$ be a Riemannian manifold of bounded geometry and without boundary.

Then we have a ring homomorphism $\ch \colon K^\ast_u(M) \to \HudRco^\ast(M)$ with
\[\ch(K^0_u(M)) \subset \HudRco^\ev(M) \text{ and } \ch(K^1_u(M)) \subset \HudRco^\odd(M).\]
\end{thm}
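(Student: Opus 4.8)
The plan is to construct the uniform Chern character by the classical Chern--Weil recipe applied to vector bundles of bounded geometry, and then check that all the curvature-form estimates are uniform so that the resulting forms actually lie in $C^\infty_b(\Omega^\bullet(M))$. By Theorem \ref{thm:interpretation_K0u} and Theorem \ref{thm:interpretation_K1u} every class in $K^0_u(M)$ (resp. $K^1_u(M)$) is a formal difference $[E]-[F]$ of $C_b^\infty$-isomorphism classes of complex vector bundles of bounded geometry over $M$ (resp. over $S^1\times M$ with the trivialization-near-$1$ condition). For a vector bundle $E$ of bounded geometry, equipped with its metric and compatible connection $\nabla^E$ of bounded geometry, one sets $\ch(E):=\trace\exp(-\tfrac{1}{2\pi i}R^E)\in\Omega^\ev(M)$, where $R^E$ is the curvature of $\nabla^E$. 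Since $E$ has bounded geometry, $R^E$ and all its covariant derivatives are bounded in sup-norm, hence each homogeneous component $\tfrac{1}{k!}\trace\big((-\tfrac{1}{2\pi i}R^E)^k\big)$ is a uniform differential form, i.e.\ lies in $C^\infty_b(\Omega^{2k}(M))$; it is closed by the usual Bianchi-identity computation, so it defines a class in $\HudRco^\ev(M)$. For the odd case, one uses the standard transgression/suspension form on $S^1\times M$ and restricts: by the trivialization condition near $1\in S^1$ from Theorem \ref{thm:interpretation_K1u} one obtains a well-defined class in $\HudRco^\odd(M)$, exactly as in the compact case.

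Next I would verify independence of the chosen connection. Given two connections $\nabla_0,\nabla_1$ of bounded geometry on $E$, the straight-line path $\nabla_t=(1-t)\nabla_0+t\nabla_1$ consists of connections whose curvatures and all derivatives are uniformly bounded (because $\nabla_1-\nabla_0$ is a bounded $\End(E)$-valued $1$-form with bounded derivatives — this is precisely the content of Lemma \ref{lem:C_b_infty_Iso_equivalent} applied to the identity), so the Chern--Simons transgression form $\int_0^1\trace(\dot\nabla_t\wedge\ldots)\,dt$ is itself a uniform form, giving $\ch(E,\nabla_0)-\ch(E,\nabla_1)=d(\text{uniform form})$. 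The same estimate shows $C_b^\infty$-isomorphic bundles have equal $\ch$, so $\ch$ descends to a well-defined map on $\Vect_u(M)/_\sim$, and by Corollary \ref{cor:two_monoids_isomorphic} it extends to the Grothendieck group, i.e.\ to $K^0_u(M)=K_0(C_b^\infty(M))$ (and analogously $K^1_u$). Additivity $\ch(E\oplus F)=\ch(E)+\ch(F)$ and multiplicativity $\ch(E\otimes F)=\ch(E)\wedge\ch(F)$ follow from the corresponding identities for $\trace\exp$ of block-diagonal and tensor-product curvatures, together with the fact that $C_b^\infty$-isomorphy is compatible with $\oplus$ and $\otimes$; this makes $\ch$ a ring homomorphism, and it is grading-preserving by construction.

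A slightly cleaner alternative, which I would mention, is to work directly with idempotents: for $e\in\Idem_{N\times N}(C_b^\infty(M))$ put $\ch(e):=\trace\exp\!\big(-\tfrac{1}{2\pi i}(e\,de\,de)\big)$, using the Grassmann connection $e\circ d$ on $\image e\subset\IC^N$. Because the entries of $e$ are $C^\infty$-bounded, so is $e\,de\,de$, and one gets uniform forms directly; homotopy invariance in $\Idem(C_b^\infty(M))/_\sim$ is the standard computation. This route has the advantage of matching the $m$-convex \Frechet algebra picture of Lemma \ref{lem:equivalent_defns_uniform_k_theory} and is the one I would actually write down, deducing the vector-bundle description a posteriori via Proposition \ref{prop:image_proj_matrix_complemented}.

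The main obstacle is genuinely the uniformity bookkeeping: one must confirm that \emph{every} curvature quantity appearing — the curvature of the Grassmann connection built from $e$, its powers, the transgression forms witnessing connection-independence and isomorphism-invariance — has coefficient functions that are bounded together with all their derivatives in normal coordinates, uniformly in the base point. This is where bounded geometry of $M$ (Lemma \ref{lem:transition_functions_uniformly_bounded}) and of the bundles (Lemma \ref{lem:equiv_characterizations_bounded_geom_bundles}), together with the explicit dependence of Christoffel symbols on the entries of $e$ worked out in the proof of Proposition \ref{prop:image_proj_matrix_complemented}, all get used; the reasoning is the same as in the compact case but every estimate has to be tracked through Leibniz rules and chain rules to see that no constant blows up. Everything else — closedness, additivity, multiplicativity, compatibility with the grading, the $S^1$-suspension argument for the odd part — is formally identical to the classical Chern--Weil story and I would only sketch it.
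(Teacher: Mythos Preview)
Your proposal is correct and follows essentially the same route as the paper: Chern--Weil theory on vector bundles of bounded geometry for $K^0_u$, with the uniformity of the resulting forms coming from the bounded-geometry hypotheses (the paper cites \cite[Theorem 3.8]{roe_index_1} for this step rather than writing out the estimates), and the suspension description via $S^1\times M$ from Theorem \ref{thm:interpretation_K1u} for $K^1_u$. Your sketch is in fact considerably more detailed than the paper's own proof, which defers the bookkeeping to Roe's paper and the author's thesis; the idempotent/Grassmann-connection alternative you mention is a pleasant variant but not needed.
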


\begin{proof}
The Chern character is defined via Chern--Weil theory. That we get uniform forms if we use vector bundles of bounded geometry is proved in \cite[Theorem 3.8]{roe_index_1} and so we get a map $\ch \colon K^0_u(M) \to \HudRco^\ev(M)$. That we also have a map $\ch \colon K^1_u(M) \to \HudRco^\odd(M)$ uses the description of $K^1_u(M)$ as consisting of vector bundles over $S^1 \times M$ and a corresponding suspension isomorphism for uniform de Rham cohomology. Details (for bounded cohomology, but for uniform cohomology it is analogous) may be found in the author's Ph.D.\ thesis \cite[Sections 5.4 \& 5.5]{engel_phd}.
\end{proof}

\subsection{Uniform Chern character isomorphism theorems}\label{sec:chern_isos}

The results of the last two sections tell us that we have constructed Chern characters $K^\ast_u(M) \to \HudRco^\ast(M)$ and $K_\ast^u(M) \to \HudR_\ast(M)$. Here we already use Corollary~\ref{cor:ch_well_defined} further below which states that the uniform homological Chern character is well-defined. In the compact case the Chern characters are isomorphisms modulo torsion and it is natural to ask the same question here in the uniform setting. It is the goal of this section to answer this question positively.

The proofs use the same Mayer--Vietoris induction as the proof of \Poincare duality in Section \ref{sec:poincare_duality}. Therefore we will discuss in this section only the parts of the proofs which need additional arguments.

The most crucial detail to discuss here is the statement of the theorem itself since we cannot just take the tensor product of the $K$-groups with the complex numbers to get isomorphisms. In turns out that we additionally have to form a certain completion of the algebraic tensor product of the $K$-groups with $\IC$. We will discuss this completion directly after the statement of the theorem.

\begin{thm}\label{thm23w2}
Let $M$ be a manifold of bounded geometry and without boundary. Then the Chern characters induce linear, continuous isomorphisms\footnote{The inverse maps are in general not continuous since $\HudRco^\ast(M)$, resp., $\HudR_\ast(M)$, are in general (e.g., if $M$ is not compact) not Hausdorff, whereas $K_u^\ast(M) \barotimes \IC$, resp. $K^u_\ast(M) \barotimes \IC$, are. The topology on the latter spaces is defined by equipping the $K$-groups with the discrete topology and then forming the completed tensor product with $\IC$ which will be discussed after the statement of the theorem.}
\[K_u^\ast(M) \barotimes \IC \cong \HudRco^\ast(M) \text{ and } K^u_\ast(M) \barotimes \IC \cong \HudR_\ast(M).\]
\end{thm}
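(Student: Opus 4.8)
The plan is to run exactly the same finite Mayer--Vietoris induction that proved uniform $K$-\Poincar\'e duality in Section \ref{sec:poincare_duality}, but now with the four functors $K_u^\ast(\largecdot \subset M) \barotimes \IC$, $\HudRco^\ast(\largecdot \subset M)$, $K_\ast^u(\largecdot \subset M) \barotimes \IC$ and $\HudR_\ast(\largecdot \subset M)$, comparing the $K$-theoretic Chern character to the cohomological one and the homological Chern character to the de Rham one. First I would pin down the completed tensor product $\barotimes$: give the discrete $K$-groups the finest locally convex topology, form the completed projective tensor product with $\IC$, and check that this operation is exact on the relevant short exact sequences and sends $\ell^\infty_\IZ(Y)$ to $\ell^\infty_\IC(Y)$ (this is the only genuinely new bookkeeping compared to the compact case, where one just tensors with $\IC$ over $\IZ$ without any completion). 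I would record that $\barotimes \IC$ turns the six-term Mayer--Vietoris sequences of Lemmas \ref{lem:MV_uniform_K} and \ref{lem:cap_prod_commutes_MV} into long exact sequences, and that the Chern characters are natural transformations between them — naturality of $\ch$ with respect to the restriction maps $C_u(M,d) \to C_u(O,d)$ and the wrong-way maps $K_\ast^u(M) \to K_\ast^u(O \subset M)$ is inherited from the compact-case naturality and the explicit formulas of Sections \ref{sec:u_de_Rham_currents} and the Chern--Weil construction.

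Next I would establish the base case: for a collection $U$ of uniformly discretely distributed geodesic balls indexed by $Y$, combine Lemma \ref{lem:uniform_k_th_discrete_space}, Lemma \ref{lem:uniform_k_hom_balls} and Lipschitz-homotopy invariance to get $K_u^\ast(U \subset M) \barotimes \IC \cong \ell^\infty_\IC(Y)$ concentrated in even degree (respectively $K^u_\ast(U \subset M)\barotimes \IC \cong \ell^\infty_\IC(Y)$ concentrated in degree $m \bmod 2$), and likewise compute $\HudRco^\ast(U\subset M)$ and $\HudR_\ast(U\subset M)$ ball-by-ball as $\ell^\infty_\IC(Y)$ in the matching degrees, using that on each ball everything reduces to the standard ball $O \subset \IR^m$ where the Chern characters are the classical isomorphisms. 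Since on each ball the Chern character is the ordinary one — an isomorphism onto $H^\ast_{\mathrm{dR}}(O) \otimes \IC$ — the induced map on the $\ell^\infty$-products is a bijection, and one checks it is a homeomorphism for the topologies described in the footnote. One also needs the intersection sets $U_K \cap U_{k+1}$, which are finite unions of collections of geodesically convex sets; as in Section \ref{sec:poincare_duality} this is handled by a nested induction writing $U_K \cap U_{k+1} = \bigcup_{j\le k}(U_j \cap U_{k+1})$.

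With the base case and the naturality squares in hand, the induction on $k$ (finitely many steps, $k = 1, \dots, N-1$, with $U_1, \dots, U_N$ as in Lemma \ref{lem:suitable_coloring_cover_M}) proceeds by the five lemma exactly as for \Poincar\'e duality: if $\ch$ is an isomorphism on $U_K$, on $U_{k+1}$ and on $U_K \cap U_{k+1}$, then the five lemma applied to the map of Mayer--Vietoris long exact sequences forces it to be an isomorphism on $U_K \cup U_{k+1}$. For the homological statement one must also know the homological Chern character $\ch^\ast$ composed with $\alpha_\ast$ is well-defined, which is exactly Corollary \ref{cor:ch_well_defined}, and that the de Rham \Poincar\'e duality identification of Theorem \ref{thm:Poincare_de_Rham} intertwines the two Chern characters (so that the cohomological and homological inductions are two avatars of one argument). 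Continuity of the forward maps is transported through each Mayer--Vietoris step, and one notes that the inverse need not be continuous precisely because the de Rham groups may fail to be Hausdorff, as stated in the footnote.

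The main obstacle I expect is the treatment of the completed tensor product $\barotimes \IC$ and, relatedly, keeping exactness of the Mayer--Vietoris sequences after applying it: one must verify that $\barotimes \IC$ commutes with the (possibly infinite) products $\ell^\infty_\IZ(Y)$ appearing in the base case in the correct way — i.e.\ that $\ell^\infty_\IZ(Y) \barotimes \IC$ is genuinely $\ell^\infty_\IC(Y)$ and not some larger or smaller completion — and that it preserves exactness of the six-term sequences (which for a field coefficient is automatic algebraically but must be checked at the level of topological vector spaces, using that the connecting maps and the differentials have closed range in the relevant degrees, or else absorbing non-Hausdorffness into the statement). Once this functional-analytic point is settled, the geometric heart of the proof is entirely parallel to Section \ref{sec:poincare_duality} and uses nothing beyond the five lemma, Lipschitz-homotopy invariance, and the classical compact-manifold Chern character isomorphism applied on balls.
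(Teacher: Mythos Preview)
Your proposal is correct and follows essentially the same route as the paper: a finite Mayer--Vietoris induction over the cover of Lemma~\ref{lem:suitable_coloring_cover_M}, with the base case reducing to $\ell^\infty_\IZ(Y)\barotimes\IC\cong\ell^\infty(Y)$, exactness of $\barotimes\IC$ on topologically exact sequences, and the five lemma; the homological statement is then obtained either by the parallel induction or (for spin$^c$ $M$) by combining the cohomological result with the two \Poincare dualities and the local index theorem. One small point: your description of $\barotimes\IC$ via ``finest locally convex topology and completed projective tensor product'' is not quite the paper's construction---the $K$-groups are abelian groups, not vector spaces, so the paper instead builds $G\barotimes\IC$ from the free abelian topological group $A(G\times\IC)$ and characterizes it by a universal property; the key facts you need (Example~\ref{ex:completed_tensor_prod} and Lemma~\ref{lem:functor_exact}, the latter requiring \emph{topological} exactness and hence continuous splits in the Mayer--Vietoris short exact sequences) are exactly the ones you flag as the main obstacle.
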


Let us discuss why we have to take a completion at all. Consider the beginning of the Mayer--Vietoris induction where we have to show that the Chern characters induce isomorphisms on a countably infinite collection of uniformly discretely distributed points. Let these points be indexed by a set $Y$. Then the $K$-groups of $Y$ are given by $\ell^\infty_\IZ(Y)$, the group of all bounded, integer-valued sequences indexed by $Y$, and the de Rham groups are given by $\ell^\infty(Y)$, the group of all bounded, complex valued sequences on $Y$. But since $Y$ is countably infinite we have $\ell^\infty_\IZ(Y) \otimes \IC \not\cong \ell^\infty(Y)$. Instead we have $\overline{\ell^\infty_\IZ(Y) \otimes \IC} \cong \ell^\infty(Y)$.

To define the \emph{completed topological tensor product of an abelian group with $\IC$} we will need the notion of the \emph{free (abelian) topological group}: if $X$ is any completely regular\footnote{That is to say, every closed set $K$ can be separated with a continuous function from every point $x \notin K$. Note that this does not necessarily imply that $X$ is Hausdorff.} topological space, then the free topological group $F(X)$ on $X$ is a topological group such that we have
\begin{itemize}
\item a topological embedding $X \hookrightarrow F(X)$ of $X$ as a closed subset, so that $X$ generates $F(X)$ algebraically as a free group (i.e., the algebraic group underlying the free topological group on $X$ is the free group on $X$), and we have
\item the following universal property: for every continuous map $\phi\colon X \to G$, where $G$ is any topological group, we have a unique extension $\Phi\colon F(X) \to G$ of $\phi$ to a continuous group homomorphism on $F(X)$:
\[\xymatrix{X \ar@{^{(}->}[r] \ar[d]_{\phi} & F(X) \ar@{-->}[dl]^{\exists ! \Phi}\\ G}\]
\end{itemize}
The free abelian topological group $A(X)$ has the corresponding analogous properties. Furthermore, the commutator subgroup $[F(X), F(X)]$ of $F(X)$ is closed and the quotient $F(X) / [F(X), F(X)]$ is both algebraically and topologically $A(X)$.

As an easy example consider $X$ equipped with the discrete topology. Then $F(X)$ and $A(X)$ also have the discrete topology.

It seems that free (abelian) topological groups were apparently introduced by Markov in \cite{markoff_original}. But unfortunately, the author could not obtain any (neither russian nor english) copy of this article. A complete proof of the existence of such groups was given by Markov in \cite{markoff}. Since his proof was long and complicated, several other authors gave other proofs, e.g., Nakayama in \cite{nakayama}, Kakutani in \cite{kakutani} and Graev in \cite{graev}.

Now let us construct for any abelian topological group $G$ the complete topological vector space $G \barotimes \IC$. We form the topological tensor product $G \otimes \IC$ of abelian topological groups in the usual way: we start with the free abelian topological group $A(G \times \IC)$ over the topological space $G \times \IC$ equipped with the product topology\footnote{Note that every topological group is automatically completely regular and therefore the product $G \times \IC$ is also completely regular.} and then take the quotient $A(G \times \IC) / \mathcal{N}$ of it,\footnote{Since $A(X)$ is both algebraically and topologically the quotient of $F(X)$ by its commutator subgroup, we could also have started with $F(G \times \IC)$ and additionally put the commutator relations into $\mathcal{N}$.} where $\mathcal{N}$ is the closure of the normal subgroup generated by the usual relations for the tensor product.\footnote{That is to say, $\mathcal{N}$ contains $(g_1 + g_2) \times r - g_1 \times r - g_2 \times r$, $g \times (r_1 + r_2) - g \times r_1 - g \times r_2$ and $zg \times r - z(g \times r)$, $g \times zr - z(g \times r)$, where $g, g_1, g_2 \in G$, $r, r_1, r_2 \in \IC$ and $z \in \IZ$.} Now we may put on $G \otimes \IC$ the structure of a topological vector space by defining the scalar multiplication to be $\lambda (g \otimes r) := g \otimes \lambda r$.

What we now got is a topological vector space $G \otimes \IC$ together with a continuous map $G \times \IC \to G \otimes \IC$ with the following universal property: for every continuous map $\phi\colon G \times \IC \to V$ into any topological vector space $V$ and such that $\phi$ is bilinear\footnote{That is to say, $\phi(\largecdot, r)$ is a group homomorphism for all $r \in \IC$ and $\phi(g, \largecdot)$ is a linear map for all $g \in G$. Note that we then also have $\phi(zg,r) = z\phi(g,r) = \phi(g,zr)$ for all $z \in \IZ$, $g \in G$ and $r \in \IC$.\label{footnote:univ_prop_TVS}}, there exists a unique, continuous linear map $\Phi\colon G \otimes \IC \to V$ such that the following diagram commutes:
\[\xymatrix{G \times \IC \ar[r] \ar[d]_{\phi} & G \otimes \IC \ar@{-->}[dl]^{\exists ! \Phi} \\ V}\]

Since every topological vector space may be completed we do this with $G \otimes \IC$ to finally arrive at $G \barotimes \IC$. Since every continuous linear map of topological vector spaces is automatically uniformly continuous, i.e., may be extended to the completion of the topological vector space, $G \barotimes \IC$ enjoys the following universal property which we will raise to a definition:

\begin{defn}[Completed topological tensor product with $\IC$]
Let $G$ be an abelian topological group. Then $G \barotimes \IC$ is a complete topological vector space over $\IC$ together with a continuous map $G \times \IC \to G \barotimes \IC$ that enjoy the following universal property: for every continuous map $\phi\colon G \times \IC \to V$ into any complete topological vector space $V$ and such that $\phi$ is bilinear\footnote{see Footnote \ref{footnote:univ_prop_TVS}}, there exists a unique, continuous linear map $\Phi\colon G \barotimes \IC \to V$ such that
\[\xymatrix{G \times \IC \ar[r] \ar[d]_{\phi} & G \barotimes \IC \ar@{-->}[dl]^{\exists ! \Phi} \\ V}\]
is a commutative diagram.
\qed
\end{defn}

We will give now two examples for the computation of $G \barotimes \IC$. The first one is easy and just a warm-up for the second which we already mentioned. Both examples are proved by checking the universal property.

\begin{examples}\label{ex:completed_tensor_prod}
The first one is $\IZ \barotimes \IC \cong \IC$.

For the second example consider the group $\ell^\infty_\IZ$ consisting of bounded, integer-valued sequences. Then $\ell^\infty_\IZ \barotimes \IC \cong \ell^\infty$.
\qed
\end{examples}

Since we want to use the completed topological tensor product with $\IC$ in a Mayer--Vietoris argument, we have to show that it transforms exact sequences to exact sequences.

So we have to show that the functor $G \mapsto G \barotimes \IC$ is exact. But we have to be careful here: though taking the tensor product with $\IC$ is exact, passing to completions is usually not---at least if the exact sequence we started with was only algebraically exact. Let us explain this a bit more thoroughly: if we have a sequence of topological vector spaces
\[\ldots \longrightarrow V_i \stackrel{\varphi_i}\longrightarrow V_{i+1} \stackrel{\varphi_{i+1}}\longrightarrow V_{i+2} \longrightarrow \ldots\]
which is exact in the algebraic sense (i.e., $\image \varphi_i = \kernel \varphi_{i+1}$), and if the maps $\varphi_i$ are continuous such that they extend to maps on the completions $\overline{V_i}$, we do not necessarily get that
\[\ldots \longrightarrow \overline{V_i} \stackrel{\overline{\varphi_i}}\longrightarrow \overline{V_{i+1}} \stackrel{\overline{\varphi_{i+1}}}\longrightarrow \overline{V_{i+2}} \longrightarrow \ldots\]
is again algebraically exact. The problem is that though we always have $\overline{\kernel \varphi_i} = \kernel \overline{\varphi_i}$, we generally only get $\overline{\image \varphi_i} \supset \image \overline{\varphi_i}$. To correct this problem we have to start with an exact sequence which is also topologically exact, i.e., we need that not only $\image \varphi_i = \kernel \varphi_{i+1}$, but we also need that $\varphi_i$ induces a topological isomorphism $V_i / \kernel \varphi_i \cong \image \varphi_i$.

To prove that in this case we get $\overline{\image \varphi_i} = \image \overline{\varphi_i}$ we consider the inverse map
\[\psi_i := \varphi_i^{-1}\colon \image \varphi_i \to V_i / \kernel \varphi_i.\]
Since $\psi_i$ is continuous (this is the point which breaks down without the additional assumption that $\varphi_i$ induces a topological isomorphism $V_i / \kernel \varphi_i \cong \image \varphi_i$), we may extend it to a map
\[\overline{\psi_i} \colon \overline{\image \varphi_i} \to \overline{V_i / \kernel \varphi_i} = \overline{V_i} / \overline{\kernel \varphi_i},\]
which obviously is the inverse to $\overline{\varphi_i} \colon \overline{V_i} / \overline{\kernel \varphi_i} \to \overline{\image \varphi_i}$ showing the desired equality $\overline{\image \varphi_i} = \image \overline{\varphi_i}$.

Coming back to our functor $G \mapsto G \barotimes \IC$, we may now prove the following lemma:

\begin{lem}\label{lem:functor_exact}
Let
\[\ldots \longrightarrow G_i \stackrel{\varphi_i}\longrightarrow G_{i+1} \stackrel{\varphi_{i+1}}\longrightarrow G_{i+2} \longrightarrow \ldots\]
be an exact sequence of topological groups and continuous maps, which is in addition topologically exact, i.e., for all $i \in \IZ$ the from $\varphi_i$ induced map $G_i / \kernel \varphi_i \to \image \varphi_i$ is an isomorphism of topological groups.

Then
\[\ldots \longrightarrow G_i \barotimes \IC \longrightarrow G_{i+1} \barotimes \IC \longrightarrow G_{i+2} \barotimes \IC \longrightarrow \ldots\]
with the induced maps is an exact sequence of complete topological vector spaces, which is also topologically exact.
\end{lem}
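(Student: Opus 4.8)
The plan is to reduce the statement to two facts that are already on hand: that the \emph{algebraic} tensor product $-\otimes_{\IZ}\IC$ is exact (because $\IC$, being a torsion-free abelian group — indeed a $\IQ$-vector space — is flat over $\IZ$), and that completion of topological vector spaces preserves \emph{topologically} exact sequences, which is exactly the content of the paragraph immediately preceding the lemma, together with the observation that $G\barotimes\IC$ is by construction the completion of $G\otimes\IC$. All the real work then lies in showing that the uncompleted functor $G\mapsto G\otimes\IC$ carries a topologically exact sequence to a topologically exact sequence.

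First I would cut the long exact sequence into short ones. Writing $Z_i:=\kernel\varphi_i$ with the subspace topology inherited from $G_i$, topological exactness says precisely that $\varphi_{i-1}$ induces a topological isomorphism $G_{i-1}/\kernel\varphi_{i-1}\cong\image\varphi_{i-1}=Z_i$; hence for each $i$ there is a topologically exact short sequence
\[
0\longrightarrow Z_i\longrightarrow G_i\stackrel{\varphi_i}{\longrightarrow}Z_{i+1}\longrightarrow 0,
\]
whose first map is a subgroup inclusion and whose second map is an open continuous surjection (a topological quotient map), and splicing these recovers the original sequence. It therefore suffices to prove that $-\otimes\IC$ sends such a short topologically exact sequence to a short topologically exact sequence of topological vector spaces; splicing and completing is then harmless by the preceding paragraph. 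For the algebraic content, flatness of $\IC$ gives exactness of $0\to Z_i\otimes_{\IZ}\IC\to G_i\otimes_{\IZ}\IC\to Z_{i+1}\otimes_{\IZ}\IC\to 0$ at every spot. The topological content splits in two: that $\varphi_i\otimes\id\colon G_i\otimes\IC\to Z_{i+1}\otimes\IC$ is again a topological quotient map I would extract from the universal property of $G\otimes\IC$ (an open surjection of abelian topological groups propagates through the free-abelian-topological-group construction and the tensor relations to an open surjection, one checking that the quotient topology on $(G_i\otimes\IC)/\kernel$ satisfies the universal property defining $Z_{i+1}\otimes\IC$); and that $Z_i\otimes\IC\to G_i\otimes\IC$ is a topological embedding onto its image I would prove by showing that this image, with its subspace topology, satisfies the universal property of the tensor product of the subgroup $Z_i\subset G_i$ (carrying its subspace topology) with $\IC$, i.e.\ that the topology $\otimes\IC$ puts on a subgroup coincides with the subspace topology from the ambient group. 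One also has to account for the fact that the construction in the excerpt quotients $A(G\times\IC)$ by the \emph{closure} of the tensor relations, i.e.\ passes to the Hausdorff quotient; this is handled by checking that Hausdorffification is itself another instance of the same quotient-map bookkeeping and so loses no exactness.

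The step I expect to be the main obstacle is the embedding half: showing that $-\otimes\IC$ preserves topological embeddings of subgroups. Quotient/openness statements survive the free-topological-group construction cleanly, but embeddings do not in general, so this is precisely where one must use that the input sequence is topologically and not merely algebraically exact, in tandem with the flatness that already fixes the underlying vector spaces. Once this is established, splicing the short sequences back together yields algebraic exactness of the long sequence $\cdots\to G_i\otimes\IC\to G_{i+1}\otimes\IC\to\cdots$ and, at each spot, a topological isomorphism $(G_{i+1}\otimes\IC)/\image\cong Z_{i+2}\otimes\IC$; applying the completion argument of the preceding paragraph at each spot then gives the asserted exact, and topologically exact, sequence of complete topological vector spaces.
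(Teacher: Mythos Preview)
Your plan is sound and would work, but it is considerably more elaborate than what the paper does, and the extra machinery you set up is exactly what the paper's argument is designed to avoid.

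The paper does \emph{not} cut the long sequence into short ones and does not try to show separately that $-\otimes\IC$ preserves topological embeddings and topological quotients. Instead it works directly with each $\varphi_i$ and exploits the hypothesis in the most naive possible way. First, from the standard description of the kernel of a tensor product of surjections it reads off $\kernel(\varphi_i\otimes\id)=(\kernel\varphi_i)\otimes\IC$ and $\image(\varphi_i\otimes\id)=(\image\varphi_i)\otimes\IC$, so the induced map on quotients is the algebraic isomorphism $(G_i/\kernel\varphi_i)\otimes\IC\to(\image\varphi_i)\otimes\IC$. Then comes the one-line observation that does all the work: by topological exactness the map $G_i/\kernel\varphi_i\to\image\varphi_i$ has a \emph{continuous} inverse, and tensoring that inverse with $\id_\IC$ produces a continuous inverse at the tensored level. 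Both directions are now visibly of the form (continuous map)$\,\otimes\,\id$, hence continuous, so the tensored induced map is a topological isomorphism. The completion step is then exactly the paragraph preceding the lemma, as you say.

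The point is that the step you single out as ``the main obstacle'' --- showing that $-\otimes\IC$ preserves topological embeddings of subgroups --- never arises in the paper's argument, because the paper never isolates the inclusion $Z_i\hookrightarrow G_i$ as a separate map to be tensored. It only ever tensors the topological \emph{isomorphism} $G_i/\kernel\varphi_i\cong\image\varphi_i$, and functoriality of $-\otimes\IC$ trivially preserves isomorphisms (apply it to the map and to its inverse). Your approach buys a cleaner conceptual decomposition into short exact pieces, at the price of having to prove a genuinely delicate embedding-preservation statement about free abelian topological groups; the paper's approach trades that structural clarity for a much shorter argument that uses nothing beyond functoriality. Your remark about the closure of the tensor relations (Hausdorffification) is a fair point that the paper does not explicitly address either.
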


\begin{proof}
We first tensor with $\IC$ (without the completion afterwards). This is known to be an exact functor and our sequence also stays topologically exact. To see this last claim, we need the following fact about tensor products: if $\varphi\colon M \to M^\prime$ and $\psi\colon N \to N^\prime$ are surjective, then the kernel of $\varphi \otimes \psi \colon M \otimes M^\prime \to N \otimes N^\prime$ is the submodule given by
\[\kernel (\varphi \otimes \psi) = (\iota_M \otimes 1) \big( (\kernel \varphi) \otimes N \big) + (1 \otimes \iota_N) \big( M \otimes (\kernel \psi) \big),\]
where $\iota_M \colon \kernel \varphi \to M$ and $\iota_N \colon \kernel \psi \to N$ are the inclusion maps. We will suppress the inclusion maps from now on to shorten the notation.

We apply this with the map $\varphi \colon M \to M^\prime$ being the quotient map $G_i \to G_i / \kernel \varphi_i$ and $\psi \colon N \to N^\prime$ being the identity $\id \colon \IC \to \IC$ to get
\[\kernel (\varphi_i \otimes \id) = (\kernel \varphi_i) \otimes \IC.\]
Since we have $(\image \varphi_i) \otimes \IC = \image (\varphi_i \otimes \id)$, we get that $\varphi \otimes \id \colon G_i \otimes \IC \to G_i \otimes \IC$ induces an algebraic isomorphism $(G_i / \kernel \varphi_i) \otimes \IC \to \image \varphi_i \otimes \IC$. But this has now an inverse map given by tensoring the inverse of $G_i / \kernel \varphi_i \to \image \varphi_i$ with $\id \colon \IC \to \IC$. So the isomorphism $(G_i / \kernel \varphi_i) \otimes \IC \cong \image \varphi_i \otimes \IC$ is also topological.

Now we apply the discussion before the lemma to show that the completion of this new sequence is still exact and also topologically exact.
\end{proof}

To show $K_u^\ast(M) \barotimes \IC \cong \HudRco^\ast(M)$ it remains to construct Mayer--Vietoris sequences 
with continuous maps in them (we need this since in constructing the completed tensor product with $\IC$ we have to pass to the completion and without continuity of the maps in both the Mayer--Vietoris sequences for uniform $K$-theory and for uniform de Rham cohomology we would not be able to conclude that the squares are still commutative). If we recall from the proof of Proposition~\ref{prop:uniform_bounded_dRco} how we get the boundary maps in the Mayer--Vietoris sequence for uniform de Rham cohomology, we see that we must construct a continuous split to the last non-trivial map in the sequence \eqref{eq_MV_de_Rham}.\footnote{The referenced sequence is for bounded de Rham cohomology. In this proof here we, of course, have to use the analogous sequence for uniform de Rham cohomology.} But we proved surjectivity of this map in the usual way by using partitions of unity (with uniformly bounded derivatives). Hence we have already constructed the continuous split.

So we get a Mayer--Vietoris sequence with continuous maps for uniform de Rham cohomology as needed. Now we have to discuss the existence of the Chern character $K^\ast_u(O \subset M) \to \HudRco^\ast(O)$. Recall from Definition \ref{defn:uniform_k_th_subset} that we defined $K^\ast_u(O \subset M)$ as $K_{-\ast}(C_u(O,d))$, where $(O,d)$ is the metric space $O$ equipped with the subspace metric derived from the metric space $M$. But for the definition of the Chern character we have to pass to a smooth subalgebra of $C_u(O,d)$. This will be of course $C_b^\infty(O) \subset C_u(O,d)$, which is a local $C^\ast$-algebra. It remains to argue why it is a dense subalgebra, because the argument from the proof of Lemma \ref{lem:norm_completion_C_b_infty} does not work for $O$. So let $f \in C_u(O,d)$ be given. Then we know from Lemma \ref{lem:extension_samuel_compactification} that there is a bounded, uniformly continuous extension $F$ of $f$ to all of $M$. And now we use Lemma \ref{lem:norm_completion_C_b_infty} to approximate $F$ by functions from $C_b^\infty(M)$, which will give us by restriction to $O$ an approximation of $f$ by functions from $C_b^\infty(O)$. So we get an interpretation of $K^\ast_u(O \subset M)$ by vector bundles of bounded geometry over $O$ and may define the Chern character $K^\ast_u(O \subset M) \to \HudRco^\ast(O)$.

The last thing that we have to discuss is the small ambiguity in extending the maps $K^\ast_u(O \subset M) \otimes \IC \to \HudRco^\ast(O)$ to $K^\ast_u(O \subset M) \barotimes \IC$. It occurs because the target $\HudRco^\ast(O)$ is not necessarily Hausdorff. What we have to make sure is that the extensions we choose in the Mayer--Vietoris argument for the subsets $U_k$, resp. $U_K$, do match up, i.e., produce at the end commuting squares in the comparison of the two Mayer--Vietoris sequences via the Chern characters.

So we have finally discussed everything that we need in order to prove
\[K_u^\ast(M) \barotimes \IC \cong \HudRco^\ast(M).\]

Proving the homological version $K^u_\ast(M) \barotimes \IC \cong \HudR_\ast(M)$ is also such a Mayer--Vietoris argument. But for \spinc manifolds there is an easier argument by combining the cohomological result $K_u^\ast(M) \barotimes \IC \cong \HudRco^\ast(M)$ with Theorem \ref{thm:local_thm_twisted} by noting that taking the wedge product with $\ind(D)$ is an isomorphism on bounded de Rham cohomology, and furthermore using \Poincare duality between uniform $K$-theory and uniform $K$-homology (Theorem \ref{thm:Poincare_duality_K}), resp., between bounded de Rham cohomology and uniform de Rham homology (Theorem \ref{thm:Poincare_de_Rham}).

\subsection{Local index formulas}\label{sec:local_index_thm}

In this section we assemble everything that we had up to now into local index theorems.

Let $M$ be a Riemannian manifold without boundary. We denote by $DM$ the disk bundle $\{\xi \in T^\ast M\colon \|\xi\|\le 1\}$ of its cotangent bundle and by $SM = \partial DM$ its boundary, i.e., $SM = \{\xi \in T^\ast M \colon \|\xi\| = 1\}$. If $M$ has bounded geometry, we may equip $DM$ with a Riemannian metric such that it also becomes of bounded geometry\footnote{Though we do not have defined bounded geometry for manifolds with boundary, there is an obvious one (demanding bounds not only for the curvature tensor of $M$ but also for the second fundamental form of the boundary of $M$, and demanding the injectivity radius being uniformly positive not only for $M$ but also for $\partial M$ with the induced metric). See \cite{schick_bounded_geometry_boundary} for a further discussion.} and $DM \to M$ becomes a Riemannian submersion. It follows that $SM$ will also have bounded geometry. What follows will be independent of the concrete choice of metric on $DM$. Though we have discussed in Section \ref{sec:uniform_k_th} only uniform $K$-theory for manifolds without boundary, one can of course define more generally relative uniform $K$-theory and discuss it for manifolds with boundary and of bounded geometry.

Let $P \in \UPsiDO^k(E)$ be a symmetric, elliptic and graded uniform pseudodifferential operator. Recall from Definition \ref{defn:elliptic_symbols} of ellipticity that the principal symbol $\sigma(P^+)$, viewed as a section of $\Hom(\pi^\ast E^+, \pi^\ast E^-) \to T^\ast M$, where $\pi\colon T^\ast M \to M$ is the cotangent bundle, is invertible outside a uniform neighbourhood of the zero section $M \subset T^\ast M$ and satisfies a certain uniformity condition. Then the well-known clutching construction gives us the following symbol class of $P$:
\[ \sigma_P := [\pi^\ast E^+, \pi^\ast E^-; \sigma(P) ] \in K_u^0(DM, SM).\]

If $P$ is ungraded, then its symbol $\sigma(P)\colon \pi^\ast E \to \pi^\ast E$, where $\pi\colon SM \to M$ denotes now the unit sphere bundle of $M$, is a uniform, self-adjoint automorphism. So it gives a direct sum decomposition $\pi^\ast E = E^+ \oplus E^-$, where $E^+$ and $E^-$ are spanned fiberwise by the eigenvectors belonging to the positive, resp. negative, eigenvalues of $\sigma(P)$, and we get an element
\[[E^+] \in K_u^0(SM).\]
Now we define in the ungraded case the symbol class of $P$ as
\[\sigma_P := \delta[E^+] \in K_u^1(DM,SM),\]
where $\delta\colon K_u^0(SM) \to K_u^1(DM,SM)$ is the boundary homomorphism of the $6$-term exact sequence associated to $(DM,SM)$. References for this construction in the compact case are, e.g., \cite[Section 24]{baum_douglas} and \cite[Proposition 3.1]{atiyah_patodi_singer_3}.

Applying the Chern character and integrating over the fibers we get in both the graded and ungraded case $\pi_! \ch \sigma_P \in \HbdR^\ast(M)$ and then the index class of $P$ is defined as
\[\ind(P) := (-1)^{\frac{n(n+1)}{2}} \pi_! \ch \sigma_P \wedge \operatorname{Td}(M) \in \HbdR^\ast(M),\]
where $n = \dim M$.

Let $M$ be a \spinc manifold of bounded geometry and let us denote by $D$ the Dirac operator associated to the \spinc structure of $M$. Note that it is $m$-multigraded, where $m$ is the dimension of the manifold $M$, and so defines an element in $K_m^u(M)$. Therefore cap product with $D$ is a map $K_u^\ast(M) \to K^u_{m-\ast}(M)$, which is an isomorphism (as we have shown in Section \ref{sec:poincare_duality}). We also have \Poincare duality $\HbdR^\ast(M) \to \HudR_{m-\ast}(M)$, and the content of our local index theorem for uniform twisted Dirac operators is to put these duality maps into a commutative diagram using the homological Chern character on the right hand side and on the cohomology side the index class of the twisted operator.

\begin{thm}[Local index theorem for twisted uniform Dirac operators]\label{thm:local_thm_twisted}
Let $M$ be an $m$-dimensional \spinc manifold of bounded geometry and without boundary. Denote the associated Dirac operator by $D$.

Then we have the following commutative diagram:
\[\xymatrix{
K^\ast_u(M) \ar[r]^-{\largecdot \cap [D]}_-{\cong} \ar[d]_-{\ch(\largecdot) \wedge \ind(D)} & K_{m-\ast}^u(M) \ar[d]^-{\alpha_\ast \circ \ch^\ast}\\
\HbdR^\ast(M) \ar[r]_-{\cong} & \HudR_{m-\ast}(M)}\]
where in the top row $\ast$ is either $0$ or $1$ and in the bottom row $\ast$ is either $\ev$ or $\odd$.
\end{thm}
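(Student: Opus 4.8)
The plan is to prove the theorem by the same finite-step Mayer--Vietoris induction used for the uniform $K$-\Poincare duality (Theorem \ref{thm:Poincare_duality_K}), now applied to the \emph{square} of duality maps rather than to a single one. First I would invoke Lemma \ref{lem:suitable_coloring_cover_M} to obtain the colored cover $\{U_j\}_{1\le j\le N}$ of $M$ together with the nested sets $U_K=U_1\cup\dots\cup U_k$, and I would work throughout with the ``$\subset M$'' versions of all four functors $K_u^\ast(\largecdot\subset M)$, $K^u_{m-\ast}(\largecdot\subset M)$, $\HbdR^\ast(\largecdot\subset M)$, $\HudR_{m-\ast}(\largecdot\subset M)$, each equipped with its Mayer--Vietoris sequence (Lemmas \ref{lem:MV_uniform_K}, \ref{lem:cap_prod_commutes_MV} and the de Rham versions from the proofs of Theorems \ref{thm:Poincare_de_Rham}, \ref{prop:uniform_bounded_dRco}). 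The restriction of the \spinc structure to each $U_j$ is the standard one on a disjoint union of Euclidean balls, so the relevant index class $\ind(D)|_{U_j}$ is the standard Euclidean one. The top face of the cube is the cap-product square from Lemma \ref{lem:cap_prod_commutes_MV}; the bottom face is the de Rham \Poincare square from Theorem \ref{thm:Poincare_de_Rham}; the left face commutes because $\ch$ is a ring homomorphism and $\cap[D]$ is built from the Dirac class, and the right face is the homological Chern character naturality. So the whole statement reduces, by the five lemma applied $N-1$ times, to the base case of a uniformly discrete collection of geodesic balls.

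Next I would establish the base case. By homotopy invariance of uniform $K$-theory along Lipschitz homotopies and of uniform $K$-homology along uniformly cobounded proper Lipschitz homotopies (Theorem \ref{thm:homotopy_equivalence_k_hom}), together with Proposition \ref{prop:compact_space_every_module_uniform} and Lemmas \ref{lem:uniform_k_th_discrete_space}, \ref{lem:uniform_k_hom_balls}, all four groups over such a $U$ are computed: $K_u^\ast(U\subset M)\cong\ell^\infty_\IZ(Y)$ in degree $0$ (or $1$), $K^u_{m-\ast}(U\subset M)\cong\ell^\infty_\IZ(Y)$ concentrated in degree $m$, and the de Rham groups are the corresponding $\ell^\infty(Y)$. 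On each ball the diagram is literally the classical local index theorem for the (twisted) Dirac operator on $\IR^m$ — which is the Atiyah--Singer statement, cf.\ \cite{connes_moscovici} — and commutes there; since all the sequences indexed by $Y$ are assembled factorwise and the uniformity conditions are precisely what keep us inside $\ell^\infty_\IZ(Y)$ rather than all of $\prod_Y\IZ$, commutativity of the square on $U$ follows from commutativity on a single ball. One has to check that $\largecdot\cap[M]|_U$ on $U$ agrees, ball by ball, with the classical Euclidean cap product, which is exactly the observation already made in the proof of Theorem \ref{thm:Poincare_duality_K}: the restriction of the complex spinor bundle of $M$ to a ball is isomorphic to the spinor bundle of $O\subset\IR^m$.

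I would then run the induction. At step $k$, having the square commute on $U_K$, on $U_{k+1}$, and on $U_K\cap U_{k+1}=(U_1\cap U_{k+1})\cup\dots\cup(U_k\cap U_{k+1})$ — the latter requiring a nested sub-induction exactly as in the proof of Theorem \ref{thm:Poincare_duality_K}, using that each $U_i\cap U_{k+1}$ is homotopy equivalent through a uniformly cobounded proper Lipschitz homotopy to a uniformly discrete collection of balls — I would compare the two Mayer--Vietoris $6$-term sequences via the four vertical maps. Commutativity of the three ``easy'' faces of the resulting prism (top, bottom, and the two side faces involving only restriction maps) is inherited from Lemma \ref{lem:cap_prod_commutes_MV}, Theorem \ref{thm:Poincare_de_Rham} and naturality of $\ch$ and $\alpha_\ast$; the one genuinely new point is commutativity up to the Mayer--Vietoris boundary maps, i.e.\ that $\ch(\largecdot)\wedge\ind(D)$ and $\alpha_\ast\circ\ch^\ast\circ(\largecdot\cap[D])$ commute with the connecting homomorphisms. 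This I expect to be the main obstacle: it amounts to a compatibility of the boundary map in uniform $K$-theory/de Rham cohomology with the boundary map in uniform $K$-homology/de Rham homology under cap product with the fundamental class, and must be argued as in the compact case (cf.\ \cite[Exercise 11.8.11(c)]{higson_roe}), taking care that the uniform Kasparov-type construction used to build the excision isomorphism for uniform $K$-homology (as in the proof of Proposition \ref{prop:external_prod_exists}) is the one entering the boundary map. Once that compatibility is in place, the five lemma promotes commutativity from $U_K$, $U_{k+1}$, $U_K\cap U_{k+1}$ to $U_{k+1}=U_K\cup U_{k+1}$ (note the slight abuse: $U_K\cup U_{k+1}$ is $U_{k+1}$ in my indexing), and after $N-1$ steps we reach $U_N=M$, which gives the theorem.

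Finally I would remark on two bookkeeping points. First, the index class $\ind(D)$ on the left-hand vertical map has to be restricted compatibly to each $U_K\subset M$ (as $\ind(D)|_{U_K}\in\HbdR^\ast(U_K\subset M)$), and one uses that $\ind(D_E)=\ch[E]\wedge\ind(D)$ — which on the $K$-homology side is $[E]\cap[D]=[D_E]$ from \eqref{eq:cap_twisted_Dirac} — so that on each ball the diagram is exactly the Atiyah--Singer index formula for $D_E$; this is the input that makes the base case true rather than merely formal. Second, since $\HudR_{m-\ast}(M)$ need not be Hausdorff, no convergence or completion subtleties arise here (unlike in the Chern character isomorphism theorem): the diagram is a diagram of honest abelian groups and continuous maps, and commutativity is a statement about elements, so the five lemma applies verbatim. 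The only place regularity of the manifold enters beyond what Theorem \ref{thm:Poincare_duality_K} already uses is that $\ind(D)$ and $\ch$ of a bounded-geometry bundle land in $\HbdR^\ast$, which is \cite[Theorem 3.8]{roe_index_1}, already invoked for the existence of the uniform Chern character.
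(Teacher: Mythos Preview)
Your approach is genuinely different from the paper's, and it has a real gap.

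The paper does \emph{not} prove this theorem by Mayer--Vietoris induction. It proves it by a direct analytic computation: for a twisted Dirac operator $D_E$ one evaluates the Chern--Connes character via the heat-kernel/JLO formulas of Connes--Moscovici \cite[Section~3]{connes_moscovici}, observes that the relevant estimates go through verbatim under bounded geometry and the uniformity conditions, and reads off that the resulting de Rham current is exactly the \Poincare dual of $\ch[E]\wedge\ind(D)$. In other words, the right vertical arrow is computed explicitly on the specific finitely summable modules $D_E$, not treated as a pre-existing natural transformation.

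Your argument, by contrast, treats $\alpha_\ast\circ\ch^\ast\colon K^u_{m-\ast}(O\subset M)\to \HudR_{m-\ast}(O\subset M)$ as a well-defined natural transformation between functors equipped with Mayer--Vietoris sequences. But at this point in the paper that map is only defined on uniformly finitely summable Fredholm modules, \emph{not} on uniform $K$-homology classes: see the dashed arrows in Section~5.1 and Remark~\ref{rem:ch_well_defined_spinc}. Its well-definedness on classes is established only \emph{after} the present theorem, in Corollary~\ref{cor:ch_well_defined}, precisely by using the theorem together with \Poincare duality. So your induction hypothesis already presupposes the conclusion: you need $\alpha_\ast\circ\ch^\ast$ to be defined on all of $K^u_{m-\ast}(U_K\subset M)$ and to commute with the Mayer--Vietoris connecting maps, and neither is available.

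There is a second, independent problem. Even granting a well-defined natural $\alpha_\ast\circ\ch^\ast$, the step ``the five lemma promotes commutativity from $U_K$, $U_{k+1}$, $U_K\cap U_{k+1}$ to $U_K\cup U_{k+1}$'' is not a valid inference. The five lemma shows that a map is an isomorphism when flanked by isomorphisms; it does not show that two natural transformations agree. Concretely, if $\Phi$ denotes the difference of the two composites $K_u^\ast\to\HudR_{m-\ast}$, then for $x\in K_u^\ast(U_K\cup U_{k+1})$ the vanishing of $\Phi$ on $U_K$ and $U_{k+1}$ only gives that $\Phi(x)$ lies in the image of the boundary map $\HudR_{m-\ast-1}(U_K\cap U_{k+1})\to\HudR_{m-\ast}(U_K\cup U_{k+1})$; there is no mechanism forcing that image element to be zero, since $\Phi$ need not be surjective (indeed it is the zero map) on $U_K\cap U_{k+1}$. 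A diagram chase of this kind would require additional input, e.g.\ that one of the vertical maps is already known to make the square commute up to something controllable, which is exactly what you are trying to prove.

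The upshot is that this theorem genuinely requires the local analytic input from \cite{connes_moscovici}; the formal Mayer--Vietoris machinery that suffices for the \Poincare duality theorems is not enough here, because the homological Chern character is not yet a functorial map on $K$-homology.
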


\begin{proof}
This follows from the calculations carried out by Connes and Moscovici in their paper \cite[Section 3]{connes_moscovici} by noting that the computations also apply in our case where we have bounded geometry and the uniformity conditions. Note that there the cyclic cocycles are defined using expressions in the operators $e^{-tD^2}$. To translate to the definition of the homological Chern character that we use, see, e.g., \cite[Section 10.2]{GBVF}.
\end{proof}

\begin{rem}\label{rem:ch_well_defined_spinc}
The uniform homological Chern character $\alpha_\ast \circ \ch^\ast\colon K_\ast^u(M) \dashrightarrow \HudR_\ast(M)$ is a priori not well-defined (to be more precise, it is defined on uniformly finitely summable Fredholm modules and it is a priori not clear whether it descends to classes and even whether every class may be represented by a uniformly finitely summable module). But using \Poincare duality between uniform $K$-homology and uniform $K$-theory and the above local index theorem, we see that it is a posteriori well-defined for \spinc manifolds. Note that since $D$ is a Dirac operator, it defines a uniformly finitely summable Fredholm module, and therefore also all its twists given by taking the cap product with uniform $K$-theory classes are uniformly finitely summable.

That the uniform homological Chern character is well-defined for every manifold $M$ of bounded geometry is content of Corollary \ref{cor:ch_well_defined}.
\qed
\end{rem}

Let $P \in \UPsiDO^k(E)$ be a symmetric and elliptic uniform pseudodifferential operator over an oriented manifold $M$ of bounded geometry. By Theorem \ref{thm:elliptic_symmetric_PDO_defines_uniform_Fredholm_module} the operator $P$ defines a uniform $K$-homology class $[P] \in K_\ast^u(M)$ and therefore, if $P$ is in addition uniformly finitely summable, we may compare the class $(\alpha_\ast \circ \ch^\ast)(P) \in \HudR_\ast(M)$ with $\ind(P) \in \HbdR^\ast(M)$ using \Poincare duality. That they are equal is the content of the next theorem.

\begin{thm}[Local index formula for uniform pseudodifferential operators]\label{thm:local_thm_pseudos}
Let $M$ be an oriented Riemannian manifold of bounded geometry and without boundary.

Let $P \in \UPsiDO^k(E)$ be a symmetric and elliptic uniform pseudodifferential operator of positive order acting on a vector bundle $E \to M$ of bounded geometry, and let $P$ be uniformly finitely summable\footnote{This means that $P$ defines a uniformly finitely summable Fredholm module, i.e., $\chi(P)$ is uniformly finitely summable for some normalizing function $\chi$.}.

Then $\ind(P) \in \HbdR^\ast(M)$ is the \Poincare dual of $(\alpha_\ast \circ \ch^\ast)(P) \in \HudR_\ast(M)$.
\end{thm}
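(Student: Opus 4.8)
The plan is to reduce the statement for a general symmetric, elliptic, uniformly finitely summable pseudodifferential operator $P$ to the already established case of twisted Dirac operators (Theorem \ref{thm:local_thm_twisted}), following the sequence of steps in the proof of \cite[Theorem 3.9]{connes_moscovici}, which was mentioned in the introduction. The first step is to observe that, by Theorem \ref{thm:elliptic_symmetric_PDO_defines_uniform_Fredholm_module}, $P$ defines a class $[P] \in K_\ast^u(M)$, and by Proposition \ref{prop:same_symbol_same_k_hom_class} this class depends only on the principal symbol $\sigma(P)$. Hence both sides of the asserted equality depend only on $\sigma(P)$: the right-hand side $(\alpha_\ast \circ \ch^\ast)(P)$ because it is computed from $[P]$, and the left-hand side $\ind(P)$ because it is by construction $(-1)^{n(n+1)/2}\pi_!\ch\sigma_P \wedge \operatorname{Td}(M)$, a purely symbolic quantity. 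So it suffices to prove the formula for one convenient operator in each symbol class.

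The second step is to realize an arbitrary symbol class by a (difference of) twisted Dirac operator(s). Here I would first reduce to the \spinc case: the statement is local in the sense that it concerns characteristic-class computations over $M$, and by embedding considerations (or by a uniform Thom-isomorphism argument, as flagged in the introduction) every symbol class of a uniform pseudodifferential operator over a manifold of bounded geometry can be represented, after stabilization, by a symbol class pulled back from a \spinc situation; concretely, one uses that $\sigma_P \in K_u^\ast(DM,SM)$ and that, modulo torsion and after passing to a suitable sphere/disk bundle which again has bounded geometry, the uniform Thom isomorphism identifies this with a twist of the \spinc symbol. In the \spinc case, \Poincare duality (Theorem \ref{thm:Poincare_duality_K}) says $\largecdot \cap [D]\colon K_u^\ast(M)\to K_{m-\ast}^u(M)$ is an isomorphism, so $[P] = [E]\cap[D] - [F]\cap[D] = [D_E] - [D_F]$ for vector bundles $E,F$ of bounded geometry, using the compatibility \eqref{eq:cap_twisted_Dirac}. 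Both $D_E$ and $D_F$ are uniformly finitely summable (being Dirac-type), so the homological Chern character is defined on them and additive, giving $(\alpha_\ast\circ\ch^\ast)(P) = (\alpha_\ast\circ\ch^\ast)(D_E) - (\alpha_\ast\circ\ch^\ast)(D_F)$.

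The third step applies Theorem \ref{thm:local_thm_twisted} to $D_E$ and $D_F$: under the duality map $\HbdR^\ast(M)\to\HudR_{m-\ast}(M)$, the class $(\alpha_\ast\circ\ch^\ast)(D_E)$ is the image of $\ch([E])\wedge\ind(D)$, and similarly for $F$. Thus the image of $(\alpha_\ast\circ\ch^\ast)(P)$ under the duality map is the image of $\big(\ch([E]) - \ch([F])\big)\wedge\ind(D)$. It then remains to match this against $\ind(P)\in\HbdR^\ast(M)$, i.e. to verify the purely cohomological identity $\ind(P) = (\ch[E]-\ch[F])\wedge\ind(D)$ in bounded de Rham cohomology. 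This is the cohomological shadow of the $K$-theoretic equality $\sigma_P \leftrightarrow [E]-[F]$ under the (uniform) Thom isomorphism together with multiplicativity of the Chern character and the Todd genus, exactly as in the compact Atiyah--Singer computation; the bounded-geometry hypotheses guarantee that all the Chern--Weil forms involved are uniform (by \cite[Theorem 3.8]{roe_index_1}), so the identity holds at the level of bounded de Rham cohomology.

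\textbf{Main obstacle.} The technically delicate point is the reduction from a general uniform elliptic symbol to a twist of the \spinc Dirac symbol, i.e. the uniform version of the step ``every abstractly elliptic class is a difference of twisted Dirac operators'' in the non-\spinc case. This requires a uniform Thom isomorphism in uniform $K$-theory compatible with the uniform $K$-\Poincare duality of Theorem \ref{thm:Poincare_duality_K}, controlling bounded geometry of the sphere/disk bundles and of the clutching bundles throughout; the same Mayer--Vietoris-induction technology used in Section \ref{sec:poincare_duality} and Section \ref{sec:chern_isos} should carry this through, but bookkeeping the uniform estimates across the embedding and Thom construction is where the real work lies. By contrast, once the operator is identified with $D_E - D_F$, the remaining steps are formal consequences of Theorem \ref{thm:local_thm_twisted}, additivity of the homological Chern character, and standard characteristic-class manipulations.
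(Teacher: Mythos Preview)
Your overall strategy---reduce to Theorem~\ref{thm:local_thm_twisted} via the Connes--Moscovici sequence of reductions---is the same as the paper's. But the specific reduction path you propose is not the one the paper (or Connes--Moscovici) actually uses, and your version leaves the hardest step vague.

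The paper's reduction is: (i) pass to the orientation cover of $M$ to make it oriented; (ii) take a product with $S^1$ if $M$ is odd-dimensional to make it even-dimensional; (iii) on an oriented, even-dimensional manifold, use that uniform $K$-homology is spanned modulo $2$-torsion by generalized signature operators, which follows from the second clause of Theorem~\ref{thm:thom}. Since the statement concerns classes in de Rham homology, $2$-torsion is irrelevant, and the Connes--Moscovici heat-kernel computation underlying Theorem~\ref{thm:local_thm_twisted} applies to signature operators (they are Dirac-type) just as well as to spin$^c$ Dirac operators. No spin$^c$ hypothesis on $M$ is ever needed.

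By contrast, you try to ``reduce to the spin$^c$ case'' by an unspecified embedding or Thom argument, and then invoke the spin$^c$ Poincar\'e duality of Theorem~\ref{thm:Poincare_duality_K}. You correctly flag this reduction as the main obstacle, but you do not actually carry it out, and it is not clear how one would: $M$ is not assumed spin$^c$, and producing a spin$^c$ situation from which the general symbol pulls back, while preserving bounded geometry and the uniform estimates, is substantially more work than the paper's route. The paper sidesteps this entirely by working with signature operators on oriented manifolds (which always exist) rather than spin$^c$ Dirac operators. So your proposal is not wrong in spirit, but the concrete reduction steps you describe are not the Connes--Moscovici steps, and the path you outline is harder and incomplete precisely where it matters.
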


\begin{proof}
This follows from the above Theorem \ref{thm:local_thm_twisted} by the same arguments as in the proof of \cite[Theorem 3.9]{connes_moscovici}: if $M$ is odd-dimensional we take the product with $S^1$, and then we use the fact that for oriented, even-dimensional manifolds uniform $K$-homology is spanned modulo $2$-torsion by generalized signature operators. This last fact will follow from Theorem \ref{thm:thom} below.
\end{proof}

\begin{cor}\label{cor:ch_well_defined}
The uniform homological Chern character
\[\alpha_\ast \circ \ch^\ast \colon K_\ast^u(M) \to \HudR_\ast(M)\]
is well-defined for every manifold $M$ of bounded geometry and without boundary.
\end{cor}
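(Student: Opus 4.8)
The plan is to deduce the corollary from the local index formula of Theorem~\ref{thm:local_thm_pseudos} together with a purely algebraic extension argument, mirroring the route already used for \spinc manifolds in Remark~\ref{rem:ch_well_defined_spinc}. Recall that $\alpha_\ast \circ \ch^\ast$ is a priori defined only on uniformly finitely summable, involutive Fredholm modules via the cyclic cocycles $\ch^{0,2m}$ and $\ch^{1,2m-1}$, so two issues must be settled: that the construction descends to uniform $K$-homology classes, and that it is defined on \emph{all} of $K_\ast^u(M)$. I would handle both by first pinning down the map on the subgroup of ``nice'' classes and then extending.

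First I would introduce $G_\ast \subseteq K_\ast^u(M)$, the subgroup of those classes admitting a uniformly finitely summable representative; this is genuinely a subgroup, since uniform finite summability is preserved under the direct sums and compact perturbations occurring in the defining relations of uniform $K$-homology (Schatten norms behave additively under $\oplus$, so uniform $p$-summability is stable). On $G_\ast$ the map is forced: if $P$ is a symmetric, elliptic, uniformly finitely summable pseudodifferential operator of positive order, then Theorem~\ref{thm:local_thm_pseudos} identifies $(\alpha_\ast \circ \ch^\ast)([P])$ with the image of $\ind(P) \in \HbdR^\ast(M)$ under the de Rham \Poincare duality map of Theorem~\ref{thm:Poincare_de_Rham}. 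Since $\ind(P)$ is built from the principal symbol of $P$ through the clutching construction of $\sigma_P$ and Chern--Weil theory, and the uniform $K$-homology class of an elliptic operator depends only on its principal symbol (Proposition~\ref{prop:same_symbol_same_k_hom_class}, together with homotopy invariance), the right-hand side depends only on $[P] \in K_\ast^u(M)$. Moreover, after passing to the orientation double cover to reduce to the orientable case and taking a product with $S^1$ to reduce to even dimension, uniform $K$-homology of an orientable even-dimensional manifold of bounded geometry is spanned modulo $2$-torsion by classes of twisted generalized signature operators (via the uniform Thom isomorphism, Theorem~\ref{thm:thom}), all of which are uniformly finitely summable; hence every class of $G_\ast$ is an integral combination of such classes, and $\alpha_\ast \circ \ch^\ast$ is a well-defined group homomorphism $G_\ast \to \HudR_\ast(M)$ agreeing with the cocycle formulas.

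Next I would extend this homomorphism to all of $K_\ast^u(M)$. The same reduction steps show that $K_\ast^u(M)/G_\ast$ is a torsion group ($2$-torsion on the orientable even-dimensional pieces). As $\HudR_\ast(M)$ is a complex vector space, hence a divisible torsion-free abelian group and therefore an injective $\IZ$-module, the restriction map $\Hom_{\IZ}(K_\ast^u(M), \HudR_\ast(M)) \to \Hom_{\IZ}(G_\ast, \HudR_\ast(M))$ is surjective, and since $\Hom_{\IZ}(K_\ast^u(M)/G_\ast, \HudR_\ast(M)) = 0$ it is also injective. Thus there is a unique extension, which I would take as the definition of the uniform homological Chern character on $K_\ast^u(M)$; by construction it coincides with $\ch^{0,2m}$ and $\ch^{1,2m-1}$ on every uniformly finitely summable representative, which is exactly the claimed well-definedness (and subsumes the \spinc statement of Remark~\ref{rem:ch_well_defined_spinc}, where $G_\ast = K_\ast^u(M)$ already by \Poincare duality, Theorem~\ref{thm:Poincare_duality_K}).

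The hard part will not be the algebra but justifying the reduction steps uniformly: one must check that the orientation cover and the factor $S^1$ preserve bounded geometry and carry uniformly finitely summable operators to uniformly finitely summable operators with controlled bounds, and — most importantly — that Theorem~\ref{thm:thom} really produces generalized signature operators spanning $K_\ast^u$ of an orientable even-dimensional manifold of bounded geometry modulo $2$-torsion, with all twistings kept within the class of vector bundles of bounded geometry. Granting the analysis packaged in Theorems~\ref{thm:local_thm_twisted}, \ref{thm:local_thm_pseudos} and \ref{thm:thom}, the remaining steps are formal.
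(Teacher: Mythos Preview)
Your overall strategy matches the paper's: both use the local index theorem together with the reduction steps (orientation cover, product with $S^1$, uniform Thom isomorphism). The paper's proof just says ``in the \spinc case use \Poincare duality and Theorem~\ref{thm:local_thm_pseudos}; otherwise use the same reduction steps as in the proof of Theorem~\ref{thm:local_thm_pseudos}''. Your explicit extension via injectivity of $\HudR_\ast(M)$ as a $\IZ$-module is a clean way to absorb the $2$-torsion from Theorem~\ref{thm:thom} that the paper leaves implicit.

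There is one genuine slip. You argue that $\ind(P)$ is built from the principal symbol and that $[P]$ also depends only on the principal symbol (Proposition~\ref{prop:same_symbol_same_k_hom_class}), and conclude that ``the right-hand side depends only on $[P]$''. That implication runs the wrong way: Proposition~\ref{prop:same_symbol_same_k_hom_class} says equal symbols give equal classes, not that equal $K$-homology classes force equal symbol classes, so nothing yet excludes two PDOs $P, P'$ with $[P] = [P']$ but $\ind(P) \neq \ind(P')$ in $\HbdR^\ast(M)$. The paper avoids this in the \spinc case by reading the map directly off the commutative square of Theorem~\ref{thm:local_thm_twisted}: since $\largecdot \cap [D]$ is an isomorphism, one \emph{defines} the value on $[x]$ as the \Poincare dual of $\ch(E) \wedge \ind(D)$ where $[x] = [E] \cap [D]$, and well-definedness then comes for free from that of the cohomological Chern character. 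After the reduction one does the same with twisted signature operators, which pins the map down on a subgroup with $2$-torsion quotient, and your injectivity argument finishes. With this fix the intermediate subgroup $G_\ast$ (classes with \emph{some} uniformly finitely summable representative) is unnecessary and should be dropped --- your argument via Theorem~\ref{thm:local_thm_pseudos} never actually established well-definedness of the cocycle recipe on a general element of $G_\ast$ anyway.
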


\begin{proof}
If $M$ is \spinc we know by \Poincare duality that every class $[x] \in K_\ast^u(M)$ may be represented by a uniformly finitely summable Fredholm module and by the above Theorem \ref{thm:local_thm_pseudos} we conclude that $(\alpha_\ast \circ \ch^\ast)([x])$ is independent of the concrete choice of such a representative. (This was already mentioned in Remark \ref{rem:ch_well_defined_spinc}.)

In the general case we first pass to the orientation cover $X$ if $M$ is not orientable. Note that if we know the statement that we want to prove for a finite covering of $M$, then we know it also for $M$ itself since $\HudR_\ast(M)$ is a vector space over $\IC$ (i.e., multiplication by some non-zero number is an isomorphism). Now we can go on as in the proof of Theorem \ref{thm:local_thm_pseudos}: we take the product with $S^1$ if necessary and then use the fact that on oriented, even-dimensional manifolds we can represent every uniform $K$-homology class by a multiple (concretely, $2^{\dim(M)/2}$) of a generalized signature operator. For the latter statement see Theorem \ref{thm:thom}, resp., its proof.
\end{proof}

\begin{rem}
The condition in the above Theorem \ref{thm:local_thm_pseudos} that $P$ has to be uniformly finitely summable may actually be dropped. The statement then is that $(\alpha_\ast \circ \ch^\ast)([P])$ is the dual of the class $\ind(P) \in \HbdR^\ast(M)$. This makes sense since we now know that the uniform homological Chern character $K_\ast^u(M) \to \HudR_\ast(M)$ is well-defined. But the problem now is that in order to compute $(\alpha_\ast \circ \ch^\ast)([P])$ we would have to replace $P$ by some other operator $P^\prime$ which defines the same uniform $K$-homology class as $P$ but which is uniformly finitely summable (so that we may compute the Chern--Connes character). This seems to be a task which is not easily carried out in practice.

Connes and Moscovici work in \cite{connes_moscovici} with so-called \emph{$\theta$-summable Fredholm modules} which are more general than finitely summable modules. So defining an appropriate version of \emph{uniformly $\theta$-summable Fredholm modules} we could certainly prove the above Theorem \ref{thm:local_thm_pseudos} for them and therefore weakening the condition on $P$ that it has to be uniformly finitely summable.
\qed
\end{rem}

Let us state now the Thom isomorphism theorem in the form that we need for the proof of the above Theorem \ref{thm:local_thm_pseudos}.

\begin{thm}[Thom isomorphism]\label{thm:thom}
Let $M$ be a Riemannian \spinc manifold of bounded geometry and without boundary.

Then the principal symbol of the Dirac operator associated to the \spinc structure of $M$ constitutes an orientation class in $K_u^\ast(DM, SM)$, i.e., it implements the isomorphism $K_u^\ast(M) \cong K_u^\ast(DM, SM)$.

If $M$ is only oriented (i.e., not necessarily spin$^c$) and even-dimensional, the principal symbol of the signature operator of $M$ constitutes an orientation class in $K_u^\ast(DM, SM)[\tfrac{1}{2}]$.
\end{thm}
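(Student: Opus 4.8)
The plan is to mimic the finite-step Mayer--Vietoris induction of Section~\ref{sec:poincare_duality}. First I would set up relative uniform $K$-theory $K_u^\ast(DM, SM)$ — and more generally $K_u^\ast(DM|_O, SM|_O)$ for open $O \subset M$ equipped with the subspace metrics, as in Definition~\ref{defn:uniform_k_th_subset} — together with its six-term exact sequence and homotopy invariance with respect to uniformly cobounded, proper Lipschitz homotopies. Since $DM$, $SM$ and the zero section $M \hookrightarrow DM$ all have bounded geometry and the fiberwise scaling retraction $DM \to M$ is a bounded-geometry homotopy, this yields $K_u^\ast(DM|_O) \cong K_u^\ast(O \subset M)$. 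The clutching construction of Section~\ref{sec:local_index_thm} produces the symbol class $\sigma_D \in K_u^\ast(DM,SM)$ in the \spinc case (and $\sigma_{\mathrm{sign}} \in K_u^\ast(DM,SM)$ in the oriented even-dimensional case; degree conventions as in Section~\ref{sec:local_index_thm}), and, using the module structure of $K_u^\ast(DM,SM)$ over $K_u^\ast(DM) \cong K_u^\ast(M)$, one defines
\[
\largecdot \cup \sigma_D \colon K_u^\ast(M) \longrightarrow K_u^\ast(DM,SM), \qquad x \longmapsto \pi^\ast x \cup \sigma_D .
\]
The theorem asserts that this map is an isomorphism.

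The base case of the induction is a uniformly discrete union $O = \bigsqcup_{y \in Y} O_y$ of geodesic balls of radius below the injectivity radius, as in Lemma~\ref{lem:uniform_k_hom_balls}. Over each $O_y$ the cotangent disk bundle is trivial and the complex spinor bundle restricts to the standard spinor bundle on a ball $O_y \subset \IR^m$, so $\sigma_D|_{O_y}$ is the classical Bott--Thom generator and $\largecdot \cup \sigma_D|_{O_y}$ is the classical Thom isomorphism. Because these identifications are uniform in $y$ and the relevant uniform $K$-groups are the corresponding groups of bounded $Y$-indexed sequences (Lemmas~\ref{lem:uniform_k_th_discrete_space} and~\ref{lem:uniform_k_hom_balls}), the map is an isomorphism over $O$. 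I would then run the finite induction over the colored cover $U_1, \dots, U_N$ of Lemma~\ref{lem:suitable_coloring_cover_M}: at each step one has relative Mayer--Vietoris sequences for $K_u^\ast(DM|_O, SM|_O)$ over $O \in \{U_K, U_{k+1}, U_K \cup U_{k+1}, U_K \cap U_{k+1}\}$ — obtained by the same uniform excision argument used in Section~\ref{sec:poincare_duality} — and cup product with $\sigma_D$ gives a map of sequences. The five lemma then propagates the isomorphism from $U_1$, $U_2$, $U_1 \cap U_2$ up to $U_1 \cup \cdots \cup U_N = M$, with the usual nested induction handling the intersections $U_K \cap U_{k+1}$.

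For the oriented, even-dimensional case one repeats this argument verbatim with $\sigma_{\mathrm{sign}}$ in place of $\sigma_D$, but tensored with $\IZ[\tfrac{1}{2}]$. The only new input is the classical local fact that on $\IR^{2n}$ the principal symbol of the signature operator equals $\pm 2^{n}$ times the Bott generator of $K^0(D^{2n}, S^{2n-1}) \cong \IZ$, so that $\largecdot \cup \sigma_{\mathrm{sign}}$ becomes an isomorphism after inverting $2$; since localization at $2$ is exact, the five-lemma induction goes through unchanged with $\IZ[\tfrac{1}{2}]$-coefficients, giving that $\sigma_{\mathrm{sign}}$ is an orientation class modulo $2$-torsion.

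The hard part will not be the local computations, which are classical and routine, but the two pieces of bookkeeping around them: (i) establishing relative uniform $K$-theory $K_u^\ast(DM|_O, SM|_O)$ for the subsets $O \subset M$ of Lemma~\ref{lem:suitable_coloring_cover_M} together with a Mayer--Vietoris sequence of \emph{continuous} maps — this needs the uniform excision isomorphism, proved exactly as the one underlying Lemma~\ref{lem:cap_prod_commutes_MV} via the uniform analogue of Kasparov's technical theorem from the proof of Proposition~\ref{prop:external_prod_exists}; and (ii) checking that cup product with the symbol class is natural with respect to all restriction and boundary maps in these sequences, uniformly in the cover. Step (ii) is the genuine analogue of Lemma~\ref{lem:cap_prod_commutes_MV}, and it is there that the uniform estimates (the bounded geometry of $DM$ and $SM$, and the uniformity built into $\sigma_D$) must be tracked carefully.
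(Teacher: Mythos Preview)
Your proposal is correct and matches the paper's approach exactly: the paper says to run the Mayer--Vietoris induction of \cite[Theorem~C.7]{lawson_michelsohn} over the cover of Lemma~\ref{lem:suitable_coloring_cover_M}, which is precisely the finite-step argument you outline. One minor point: the Mayer--Vietoris sequence you need in step~(i) is for relative uniform $K$-\emph{theory}, so it comes from a $C^\ast$-algebraic pullback square as in Lemma~\ref{lem:MV_uniform_K} rather than from the uniform Kasparov technical lemma (that is the $K$-\emph{homology} tool behind Lemma~\ref{lem:cap_prod_commutes_MV}), which makes that piece of bookkeeping easier than you suggest.
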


\begin{proof}
The usual proof as found in, e.g., \cite[Appendix C]{lawson_michelsohn}, works in our case analogously. Note that for the proof of \cite[Theorem C.7]{lawson_michelsohn} we have to cover $M$ by such subsets as we used in our proof of \Poincare duality (see Lemma \ref{lem:suitable_coloring_cover_M}) since only in this case we have shown that we have a Mayer--Vietoris sequence for uniform $K$-theory. For the statement for only oriented $M$ see, e.g., the proof of \cite[Theorem C.12]{lawson_michelsohn}.
\end{proof}

In \cite[Theorem 3.9]{connes_moscovici} the local index theorem was written using an index pairing with compactly supported cohomology classes. We can of course do the same also here in our uniform setting and the statement is at first glance the same.\footnote{Remember that we have another choice of universal constants than Connes and Moscovici, i.e., in our statement they are not written since they are incorporated in the definition of the homological Chern character.} But the difference is that due to the uniformness we have an additional continuity statement.

\begin{cor}\label{cor:pairing_compactly_supported}
Let $[\varphi] \in H_{c, \mathrm{dR}}^k(M)$ be a compactly supported cohomology class and define the analytic index $\ind_{[\varphi]}(P)$ as in \cite{connes_moscovici}.\footnote{Note that $\ind_{[\varphi]}(P)$ is analytically defined and may be computed (up to the universal constant that we have incorporated into the definition of $\alpha_\ast \circ \ch^\ast$) as $\langle (\alpha_\ast \circ \ch^\ast)(P), [\varphi] \rangle$, where $\langle \largecdot, \largecdot \rangle$ is the pairing between uniform de Rham homology and compact supported cohomology.} Then we have
\[\ind_{[\varphi]}(P) = \int_M \ind(P) \wedge [\varphi]\]
and this pairing is continuous, i.e., $\int_M \ind(P) \wedge [\varphi] \le \|\ind(P) \|_\infty \cdot \| [\varphi] \|_1$, where $\| \largecdot \|_\infty$ denotes the sup-seminorm on $\HbdR^{m-k}(M)$ and $\| \largecdot \|_1$ the $L^1$-seminorm on $H_{c, \mathrm{dR}}^k(M)$.
\end{cor}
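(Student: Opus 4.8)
The plan is to obtain the corollary as a direct consequence of Theorem \ref{thm:local_thm_pseudos}, the only genuinely new ingredient being the elementary norm estimate. First I would recall, from \cite{connes_moscovici} (cf.\ the footnote to the statement), that the analytically defined index $\ind_{[\varphi]}(P)$ may be computed as the pairing $\langle (\alpha_\ast \circ \ch^\ast)([P]), [\varphi] \rangle$ between uniform de Rham homology and compactly supported de Rham cohomology; this makes sense because the uniform homological Chern character is well-defined by Corollary \ref{cor:ch_well_defined}. By Theorem \ref{thm:local_thm_pseudos} (together with the extension to not-necessarily-uniformly-finitely-summable $P$ noted in the remark after Corollary \ref{cor:ch_well_defined}) the class $(\alpha_\ast \circ \ch^\ast)([P]) \in \HudR_\ast(M)$ is the image of $\ind(P) \in \HbdR^\ast(M)$ under the \Poincare duality map of Section \ref{sec:u_de_Rham_currents}, i.e.\ it is represented by the current $\omega \mapsto \int_M \omega \wedge \ind(P)$. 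Evaluating this current on a closed compactly supported representative of $[\varphi]$, and matching the sign and normalization conventions of \cite{connes_moscovici}, gives
\[
\ind_{[\varphi]}(P) = \int_M \ind(P) \wedge [\varphi],
\]
where, for degree reasons, only the component of $\ind(P)$ lying in $\HbdR^{m-k}(M)$ contributes to the integral.

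For the right-hand side to be well-defined on cohomology one checks, as usual via Stokes' theorem, that $\int_M \gamma \wedge \varphi'$ depends only on $[\gamma] \in \HbdR^{m-k}(M)$ and $[\varphi'] \in H_{c, \mathrm{dR}}^k(M)$: replacing $\gamma$ by $\gamma + d\eta$ with $\eta$ bounded changes the integral by $\pm\int_M d(\eta \wedge \varphi') = 0$ since $\varphi'$ has compact support, and replacing $\varphi'$ by $\varphi' + d\psi$ with $\psi$ compactly supported changes it by $\pm\int_M d(\gamma \wedge \psi) = 0$ for the same reason. Granting this, I would then prove the estimate: fix a closed bounded representative $\gamma$ of $\ind(P)$ in degree $m-k$ and a closed, compactly supported smooth representative $\varphi'$ of $[\varphi]$. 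Using the pointwise inequality $\|\gamma(x) \wedge \varphi'(x)\| \le \|\gamma(x)\| \cdot \|\varphi'(x)\|$ for exterior forms in the metrics induced by the Riemannian structure, one gets
\[
\Big| \int_M \gamma \wedge \varphi' \Big| \;\le\; \int_M \|\gamma(x)\| \cdot \|\varphi'(x)\| \, dx \;\le\; \Big( \sup_{x \in M} \|\gamma(x)\| \Big) \cdot \int_M \|\varphi'(x)\| \, dx .
\]
Since $\gamma$ and $\varphi'$ are closed, the right-hand factors are exactly the seminorms $\|\gamma\|$ and $\|\varphi'\|_1$ appearing in the definitions of $\HbdR^\ast(M)$ and $H_{c, \mathrm{dR}}^\ast(M)$ (the terms involving $d\gamma$ and $d\varphi'$ vanish). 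As the left-hand side is independent of the representatives, taking the infimum over all such $\gamma$ and $\varphi'$ yields $\big| \int_M \ind(P) \wedge [\varphi] \big| \le \|\ind(P)\|_\infty \cdot \|[\varphi]\|_1$; both seminorms are finite because $\ind(P)$ admits bounded representatives and $[\varphi]$ admits compactly supported ones.

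I do not expect a genuinely hard step: the entire analytic content is packaged into Theorem \ref{thm:local_thm_pseudos}. The only points that require care are bookkeeping --- matching the universal constants and signs of the Connes--Moscovici definition of $\ind_{[\varphi]}(P)$ with the \Poincare duality map of Section \ref{sec:u_de_Rham_currents} --- and checking that the pairing $\HbdR^{m-k}(M) \times H_{c, \mathrm{dR}}^k(M) \to \IC$ descends to cohomology, which is exactly what legitimizes passing to infima over representatives in the estimate. Both are routine applications of Stokes' theorem.
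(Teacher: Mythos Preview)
Your proposal is correct and takes essentially the same approach as the paper, which does not give a detailed proof but merely indicates (in the sentence preceding the corollary) that the equality follows from Theorem \ref{thm:local_thm_pseudos} exactly as in \cite[Theorem 3.9]{connes_moscovici}, with the continuity statement being the only new feature arising from uniformity. Your write-up is in fact more detailed than the paper's treatment, spelling out the Stokes argument for well-definedness and the pointwise estimate for the continuity bound that the paper leaves implicit.
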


\begin{proof}
The corollary follows from Theorem \ref{thm:local_thm_pseudos} (if $M$ is not orientable then we first have to pass to the orientation cover of it). The continuity statement follows from the definition of the seminorms. The only thing we have to know is that $\ind(P)$ is given by a bounded de Rham form.
\end{proof}

\begin{rem}\label{rem:local_pairing_cont}
Though it may seem that the above corollary is in some sense equivalent to Theorem \ref{thm:local_thm_pseudos}, it is in fact not. It is weaker in the following way: in the case of a non-compact manifold $M$ the bounded de Rham cohomology $\HbdR^\ast(M)$ usually contains elements of seminorm $=0$ and due to the boundedness of the above pairing we see that we can not detect these elements by it.
\qed
\end{rem}

\subsection{Index pairings on amenable manifolds}

In the last section we proved the local index theorems for uniform operators. The goal of this section is to use these local formulas to compute certain global indices of such operators over amenable manifolds.

So in this section we assume that our manifold $M$ is \emph{amenable}, i.e., that it admits a \Folner sequence. We will need such a sequence in order to construct the index pairings.

\begin{defn}[\Folner sequences]
Let $M$ be a manifold of bounded geometry. A sequence of compact subsets $(M_i)_i$ of $M$ will be called a \emph{F{\o}lner sequence}\footnote{In \cite[Definition 6.1]{roe_index_1} such sequences were called \emph{regular}.} if for each $r > 0$ we have
\[\frac{\vol B_r(\partial M_i)}{\vol M_i} \stackrel{i \to \infty}\longrightarrow 0.\]

A F{\o}lner sequence $(M_i)_i$ will be called a \emph{F{\o}lner exhaustion}, if $(M_i)_i$ is an exhaustion, i.e., $M_1 \subset M_2 \subset \ldots$ and $\bigcup_i M_i = M$.
\qed
\end{defn}

Note that if $M$ admits a \Folner sequence, then it is always possible to construct a \Folner exhaustion for $M$ (the author did this construction in its full glory in his thesis \cite[Lemma 2.38]{engel_phd}).

For example, Euclidean space $\IR^m$ is amenable, but hyperbolic space $\mathbb{H}^{m \ge 2}$ not. Furthermore, if $M$ has subexponential volume growth at $x_0 \in M$,\footnote{This means that for all $p > 0$ we have $e^{-pr} \vol(B_r(x_0)) \xrightarrow{r \to \infty} 0$.} then $M$ is amenable (this is proved in \cite[Proposition 6.2]{roe_index_1}; in this case a \Folner exhaustion for $M$ is given by $\big(B_{r_j}(x_0)\big)_{j \in \IN}$ for suitable $r_j \to \infty$). Note that the converse to this last statement is wrong, i.e., there are examples of amenable spaces with exponential volume growth. Further examples of amenable manifolds arise from the theorem that the universal covering $\widetilde{M}$ of a compact manifold $M$ is amenable (if equipped with the pull-back metric) if and only if the fundamental group $\pi_1(M)$ is amenable (this is proved in \cite{brooks}).

Let $M^m$ be a connected and oriented manifold of bounded geometry. Then there is a duality isomorphism $H_{b, \mathrm{dR}}^m(M) \cong H_0^{\mathrm{uf}}(M; \IR)$, where the latter denotes the uniformly finite homology of Block and Weinberger. This isomorphism is mentioned in the remark at the end of Section 3 in \cite{block_weinberger_1} and proved explicitely in \cite[Lemma 2.2]{whyte}.\footnote{Alternatively, we could use the \Poincare duality isomorphism $H_{b, \mathrm{dR}}^i(M) \cong H_{m-i}^\infty(M; \IR)$ which is proved in \cite[Theorem 4]{attie_block_1}, where $H_{m-i}^\infty(M; \IR)$ denotes simplicial $L^\infty$-homology and $M$ is triangulated according to Theorem \ref{thm:triangulation_bounded_geometry}, and then use the fact that $H_0^\infty(M; \IR) \cong H_0^{\mathrm{uf}}(M; \IR)$ under this triangulation (for this we need the assumption that $M$ is connected).} Since we have the characterization \cite[Theorem 3.1]{block_weinberger_1} of amenability stating that $M$ is amenable if and only if $H_0^{\mathrm{uf}}(M) \not= 0$, we therefore also have a characterization of it via bounded de Rham cohomology. We are going to discuss this now a bit more closely.

First we introduce the following notions:

\begin{defn}[Closed at infinity, {\cite[Definition II.5]{sullivan}}]
A Riemannian manifold $M$ is called \emph{closed at infinity} if for every function $f$ on $M$ with $0 < C^{-1} < f < C$ for some $C > 0$, we have $[f \cdot dM] \not= 0 \in H_{b, \mathrm{dR}}^m(M)$ (where $dM$ denotes the volume form of $M$ and $m = \dim M$).
\qed
\end{defn}

\begin{defn}[Fundamental classes, {\cite[Definition 3.3]{roe_index_1}}]\label{defn:fundamental_class}
A \emph{fundamental class} for the manifold $M$ is a positive linear functional $\theta\colon \Omega^m_b(M) \to \IR$ such that $\theta(dM) \not = 0$ and $\theta \circ d = 0$.
\qed
\end{defn}

If we are given a F{\o}lner sequence for $M$, we can construct a fundamental class for $M$ out of it; this is done in \cite[Propositions 6.4 \& 6.5]{roe_index_1}.\footnote{If $(M_i)_i$ is a \Folner sequence, then the linear functionals $\theta_i(\alpha) := \frac{1}{\vol M_i} \int_{M_i} \alpha$ are elements of the dual of $\Omega_b^m(M)$ and have operator norm $= 1$. Now take $\theta$ as a weak-$^\ast$ limit point of $(\theta_i)_i$. The \Folner condition for $(M_i)_i$ is needed to show that $\theta$ vanishes on boundaries.} But admitting a fundamental class implies that $M$ is closed at infinity.\footnote{Just use the positivity of the fundamental class $\theta$: $\theta(f \cdot dM) \ge \theta(C^{-1} \cdot dM) = C^{-1} \cdot \theta(dM) \not= 0$.} This means especially $H_{b, \mathrm{dR}}^m(M) \not= 0$. But since this is isomorphic to $H_0^{\mathrm{uf}}(M; \IR)$, we conclude that the latter does also not vanish. So $M$ is amenable, i.e., admits a \Folner sequence, and so we are back at the beginning of our chain. Let us summarize this:

\begin{prop}
Let $M$ be a connected, orientable manifold of bounded geometry.

Then the following are equivalent:
\begin{itemize}
\item $M$ admits a F{\o}lner sequence,
\item $M$ admits a fundamental class and
\item $M$ is closed at infinity.
\end{itemize}
\end{prop}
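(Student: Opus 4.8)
The plan is to close a cycle of implications: a \Folner sequence yields a fundamental class, a fundamental class forces $M$ to be closed at infinity, and being closed at infinity forces the existence of a \Folner sequence. All the necessary tools have essentially been collected in the discussion preceding the statement, so the proof amounts to assembling them in the right order and checking that the cited results apply under the present hypotheses.

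For the implication ``admits a \Folner sequence $\Rightarrow$ admits a fundamental class'' I would argue as in \cite[Propositions 6.4 \& 6.5]{roe_index_1}: given a \Folner sequence $(M_i)_i$, the functionals $\theta_i(\alpha) := \tfrac{1}{\vol M_i} \int_{M_i} \alpha$ are positive, have operator norm $1$ on $\Omega_b^m(M)$, and satisfy $\theta_i(dM) = 1$. By the Banach--Alaoglu theorem they have a weak-$^\ast$ limit point $\theta$, which is again positive with $\theta(dM) = 1 \neq 0$, and the defining \Folner condition $\vol B_r(\partial M_i)/\vol M_i \to 0$ is precisely what is needed to conclude $\theta \circ d = 0$: for $\beta \in \Omega_b^{m-1}(M)$ one integrates $d\beta$ over $M_i$, applies Stokes' theorem, and estimates the boundary contribution by $\|\beta\|_\infty \cdot \vol B_r(\partial M_i)$ for a suitable $r$ depending only on the geometry, which vanishes relative to $\vol M_i$ in the limit.

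For the implication ``fundamental class $\Rightarrow$ closed at infinity'' the estimate is immediate: after normalizing so that $\theta(dM) > 0$, positivity of $\theta$ gives, for any $f$ with $0 < C^{-1} < f < C$, the bound $\theta(f \cdot dM) \ge C^{-1} \theta(dM) > 0$; since $\theta \circ d = 0$ the functional $\theta$ descends to $\HbdR^m(M)$, so $[f \cdot dM] \neq 0$, which is exactly the statement that $M$ is closed at infinity. Finally, for ``closed at infinity $\Rightarrow$ admits a \Folner sequence'', one applies closedness at infinity to the constant function $f \equiv 1$ to get $[dM] \neq 0$, hence $\HbdR^m(M) \neq 0$. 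Then the duality isomorphism $\HbdR^m(M) \cong H_0^{\mathrm{uf}}(M; \IR)$ of \cite[Lemma 2.2]{whyte} (or, alternatively, the route through \Poincare duality with simplicial $L^\infty$-homology via \cite[Theorem 4]{attie_block_1} together with the triangulation of Theorem \ref{thm:triangulation_bounded_geometry}) yields $H_0^{\mathrm{uf}}(M; \IR) \neq 0$, and Block and Weinberger's characterization \cite[Theorem 3.1]{block_weinberger_1} of amenability then says that $M$ admits a \Folner sequence, closing the cycle.

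The only points requiring genuine care are the precise hypotheses of the imported results: orientability is used only to have a volume form $dM$ and a well-behaved integration functional, while connectedness is genuinely needed in the identification $\HbdR^m(M) \cong H_0^{\mathrm{uf}}(M; \IR)$ — without it this group can be larger and the equivalence with amenability does not translate directly. I expect no serious analytic obstacle; the argument is a bookkeeping of positivity estimates together with the two black-box isomorphisms just mentioned, so the ``hard part'' is really just verifying that each cited statement is available in the bounded-geometry, non-compact generality at hand, which the preceding sections of the paper have already arranged.
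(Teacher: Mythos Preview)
Your proof is correct and follows exactly the same cyclic argument as the paper: \Folner $\Rightarrow$ fundamental class via the weak-$^\ast$ limit of averaging functionals (Roe, Propositions 6.4 \& 6.5), fundamental class $\Rightarrow$ closed at infinity via positivity, and closed at infinity $\Rightarrow$ \Folner via the duality $\HbdR^m(M) \cong H_0^{\mathrm{uf}}(M;\IR)$ combined with Block--Weinberger's characterization of amenability. You have in fact supplied slightly more detail than the paper (the Stokes estimate and the discussion of where connectedness and orientability are used), but the route is identical.
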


We know that the universal cover $\widetilde{M}$ of a compact manifold $M$ is amenable if and only if $\pi_1(M)$ is amenable. If this is the case, then we may construct fundamental classes that respect the structure of $\widetilde{M}$ as a covering space:

\begin{prop}[{\cite[Proposition 6.6]{roe_index_1}}]\label{prop:fundamental_group_amenable_nice_fundamental_classes}
Let $M$ be a compact Riemannian manifold, denote by $\widetilde{M}$ its universal cover equipped with the pull-back metric, and let $\pi_1(M)$ be amenable.

Then $\widetilde{M}$ admits a fundamental class $\theta$ with the property
\[\theta(\pi^\ast \alpha) = \int_M \alpha\]
for every top-dimensional form $\alpha$ on $M$ and where $\pi \colon \widetilde{M} \to M$ is the covering projection.
\end{prop}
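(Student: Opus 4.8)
The plan is to build $\theta$ directly from a group-theoretic \Folner sequence for $\Gamma := \pi_1(M)$, which acts on $\widetilde M$ by deck transformations freely, properly discontinuously and cocompactly. Since $M$ is compact, $\Gamma$ is finitely generated; I would fix a compact fundamental domain $F \subset \widetilde M$ with piecewise-smooth boundary (a Dirichlet domain for the action, or the union of the closed top-dimensional cells over one $\Gamma$-lift of a finite smooth triangulation of $M$), and let $S \subset \Gamma$ be the finite symmetric set of those $\gamma$ for which $\gamma F$ shares a codimension-one face with $F$. By amenability of $\Gamma$ choose a left \Folner sequence $(E_i)_i$ for $S$, i.e. $\#(sE_i \triangle E_i)/\#E_i \to 0$ for every $s \in S$, and set $N_i := \bigcup_{\gamma \in E_i} \gamma F$.

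First I would verify that $(N_i)_i$ is a \Folner sequence for $\widetilde M$ in the sense of the manifold definition. Each $N_i$ is a compact manifold-with-corners of volume $\#E_i \cdot \vol(M)$, and its topological boundary is covered by the faces $\gamma(\partial F)$ with $\gamma$ in the $S$-boundary $\partial_S E_i$ of $E_i$; using that $F$ has bounded diameter and that (by Milnor--Švarc) the word metric on $\Gamma$ is quasi-isometric to the metric $\widetilde M$ induces on an orbit, $B_r(\partial N_i)$ meets only those $\gamma F$ with $\gamma$ within a bounded $S$-distance of $\partial_S E_i$. Since for a \Folner sequence every fixed thickening of the boundary is again $o(\#E_i)$, this gives $\vol B_r(\partial N_i) \le C_r \vol(F) \cdot o(\#E_i) = o(\vol N_i)$. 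With $(N_i)_i$ in hand I invoke the construction recalled in the footnote to Roe's Propositions 6.4 and 6.5 (just before Definition~\ref{defn:fundamental_class}): the functionals $\theta_i \in \Omega_b^m(\widetilde M)^\ast$, $\theta_i(\beta) := \frac{1}{\vol N_i}\int_{N_i}\beta$, have operator norm $1$, so by Banach--Alaoglu a weak-$\ast$ cluster point $\theta_0$ exists (take it along a free ultrafilter on $\IN$); it is positive, $\theta_0(d\widetilde M)=1$, and the \Folner property forces $\theta_0 \circ d = 0$ by Stokes' theorem applied to the domains $N_i$.

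The covering-compatibility — the genuinely new content over the generic construction — is then a direct computation. For a top-dimensional form $\alpha$ on $M$ and $\gamma \in \Gamma$ one has $\gamma^\ast \pi^\ast \alpha = \pi^\ast \alpha$, hence $\int_{N_i}\pi^\ast\alpha = \sum_{\gamma \in E_i}\int_F \pi^\ast\alpha = \#E_i \int_M \alpha$, while $\vol N_i = \#E_i \vol(M)$; therefore $\theta_i(\pi^\ast\alpha) = \frac{1}{\vol M}\int_M\alpha$ for every $i$, so the same value is inherited by the cluster point $\theta_0$. Rescaling, $\theta := \vol(M)\cdot\theta_0$ is still positive and $d$-closed with $\theta(d\widetilde M) = \vol(M) \neq 0$, i.e. a fundamental class, and now $\theta(\pi^\ast\alpha) = \int_M\alpha$ as required.

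The step I expect to be the actual work is the interface with Stokes' theorem: one must arrange that $F$, and hence each $N_i$, is a bona fide manifold-with-corners so that $\int_{N_i} d\eta = \int_{\partial N_i}\eta$ holds with the interior faces cancelling in pairs, and one must bound the $(m-1)$-volume of $\partial N_i$ uniformly by $\#\partial_S E_i$ — both relying on the cocompactness of the action and the bounded geometry of $\widetilde M$ (the latter being automatic, $\widetilde M$ covering a compact manifold). Once $(N_i)_i$ is known to be a \Folner sequence of such domains, the rest — positivity, the norm bound, the existence of the weak-$\ast$ limit, and the computation above — is routine bookkeeping.
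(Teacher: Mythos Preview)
The paper does not give its own proof of this proposition: it is stated with a bare citation to \cite[Proposition 6.6]{roe_index_1} and immediately followed by the next statement. So there is nothing to compare against in the paper itself beyond the footnote (just before Definition~\ref{defn:fundamental_class}) sketching how a fundamental class is built from a \Folner sequence by averaging and passing to a weak-$^\ast$ limit.

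Your argument is exactly that construction, specialized so that the \Folner sequence $(N_i)_i$ on $\widetilde M$ comes from a group-theoretic \Folner sequence $(E_i)_i$ in $\Gamma$ via translates of a fundamental domain. This is the natural (and standard) route, and the key observation you isolate --- that $\theta_i(\pi^\ast\alpha)$ is \emph{independent of $i$} because $N_i$ is a union of $\#E_i$ copies of a fundamental domain --- is precisely what makes the covering identity survive the weak-$^\ast$ limit. The rescaling by $\vol(M)$ at the end is correct and necessary. Your identification of the Stokes step as the only place requiring care is accurate; with a polyhedral fundamental domain (e.g.\ from a smooth triangulation of $M$) the boundary of $N_i$ is a finite union of compact $(m-1)$-faces whose total area is bounded by $C\cdot\#\partial_S E_i$, and $\int_{N_i} d\eta = \int_{\partial N_i}\eta$ then gives $|\theta_i(d\eta)| \le \|\eta\|_\infty \cdot \tfrac{\mathrm{area}(\partial N_i)}{\vol N_i} \to 0$. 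No gap.
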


At last, let us state just for the sake of completeness the relation of amenability to the linear isoparametric inequality.

\begin{prop}[{\cite[Subsection 4.1]{gromov_hyperbolic_manifolds_groups_actions}}]
Let $M$ be a connected and orientable manifold of bounded geometry.

Then $M$ is not amenable if and only if $\vol(R) \le C \cdot \vol(\partial R)$ for all $R \subset M$ and a fixed constant $C > 0$.
\end{prop}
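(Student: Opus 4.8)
The statement is Gromov's classical dichotomy relating the \Folner-sequence notion of amenability to a linear isoperimetric inequality; here $R$ ranges over compact domains in $M$ with smooth (or rectifiable) boundary and $\vol(\partial R)$ denotes $(m-1)$-dimensional volume, $m=\dim M$. The plan is to reduce the Riemannian statement to the combinatorial one on a quasi-lattice, where it is an instance of the \Folner criterion for bounded-degree graphs, and then transfer back using bounded geometry of $M$.

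\textbf{Step 1: discretisation.} Fix a uniformly discrete quasi-lattice $\Gamma \subset M$, which exists since $M$ has bounded geometry (Examples~\ref{ex:coarsely_bounded_geometry}), and let $G$ be the graph on the vertex set $\Gamma$ with an edge between $x\neq y$ whenever $d(x,y)\le 2c$, where $c$ is a coarse-density constant of $\Gamma$; then $G$ has bounded vertex degree. The inclusion $\Gamma \hookrightarrow M$ is a quasi-isometry, and among spaces of bounded geometry the existence of a \Folner sequence is a quasi-isometry invariant, so $M$ is amenable if and only if $G$ is amenable, i.e.\ $\inf_F \#\partial_V F/\#F = 0$ over finite $F\subset\Gamma$, where $\partial_V F$ is the vertex boundary. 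By the \Folner criterion for bounded-degree graphs, $G$ is non-amenable if and only if there is a constant $C_0$ with $\#F \le C_0\,\#\partial_V F$ for all finite $F$.

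\textbf{Step 2: transfer between $\Gamma$ and $M$.} Assume $\#F \le C_0\,\#\partial_V F$ for all finite $F\subset\Gamma$. Given compact $R\subset M$ with smooth boundary, set $F:=\Gamma\cap R$. Bounded geometry gives uniform two-sided volume bounds for metric balls, hence $\#F \ge \vol(R)/V_1 - (\text{error})$ with the error controlled by $\vol(B_c(\partial R))$; by the coarea formula together with the uniform lower bound $\mathcal H^{m-1}(\partial R\cap B_\rho(x))\ge c_2\rho^{m-1}$ at a fixed small scale $\rho$ for every $x\in\partial R$ (which follows from the uniform relative isoperimetric inequality in small balls of $M$, using that $\partial R$ separates $B_\rho(x)$) one gets $\vol(B_c(\partial R))\le C_1\,\vol(\partial R)$, and the same estimates give $\#\partial_V F \le C_3\,\vol(\partial R)$. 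Combining yields $\vol(R)\le C\,\vol(\partial R)$ with $C$ independent of $R$, so $M$ is not amenable. Conversely, if $M$ satisfies $\vol(R)\le C\,\vol(\partial R)$, then for finite $F\subset\Gamma$ one thickens to $R:=\overline{B_\delta(F)}$, smooths its boundary, and reads off $\#F\lesssim\vol(R)\le C\,\vol(\partial R)\lesssim\#\partial_V F$; hence $G$ is non-amenable and $M$ is not amenable.

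\textbf{Main obstacle.} Everything rests on the two uniform geometric estimates used in Step 2: a uniform lower bound for $\mathcal H^{m-1}(\partial R)$ inside a fixed-radius ball centred at a boundary point, and the dual upper bound $\vol(B_r(\partial R))\le C(r)\,\vol(\partial R)$. These are exactly the points where bounded geometry of $M$ is indispensable, and the cleanest route is via the uniform relative isoperimetric inequality on small balls (comparison with Euclidean balls up to bounded distortion) plus the coarea formula. One may also argue directly, bypassing the discretisation: from a \Folner sequence $(M_i)$ with smooth boundaries one obtains $\vol(\partial M_i)/\vol(M_i)\to 0$ (since $\vol(B_\rho(\partial M_i))\ge c_2\rho\,\vol(\partial M_i)$ while the \Folner ratio tends to $0$), ruling out the linear inequality; and from a sequence $R_j$ with $\vol(R_j)/\vol(\partial R_j)\to\infty$ the bound $\vol(B_r(\partial R_j))\le C(r)\,\vol(\partial R_j)$ shows $(R_j)$ is a \Folner sequence. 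In all variants the genuine work is the bookkeeping that makes these estimates uniform over all admissible domains, and handling boundaries that are merely rectifiable requires the covering/maximal-function form of the two estimates above.
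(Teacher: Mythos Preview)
The paper does not give its own proof of this proposition; it is stated ``just for the sake of completeness'' with a citation to Gromov \cite{gromov_hyperbolic_manifolds_groups_actions}, and no argument is supplied. So there is nothing in the paper to compare your proposal against.

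That said, your sketch follows the standard route and is essentially sound. The reduction to a bounded-degree graph on a quasi-lattice and the use of the combinatorial \Folner criterion is the natural strategy, and the direct argument you outline at the end (no discretisation, just relating $\vol(\partial M_i)$ to $\vol(B_r(\partial M_i))$ via bounded geometry) is also viable. The one point where you should be a bit more careful is the lower bound $\mathcal H^{m-1}(\partial R\cap B_\rho(x))\ge c_2\rho^{m-1}$: as stated this can fail for arbitrary smooth compact $R$ (the boundary could be very wrinkled near $x$, making the intersection much larger, but it could also be nearly tangent to $\partial B_\rho(x)$ and escape). What you really need, and what the relative isoperimetric inequality in small balls actually gives, is a lower bound on $\mathcal H^{m-1}(\partial R\cap B_\rho(x))$ in terms of $\min\{\vol(R\cap B_\rho(x)),\vol(B_\rho(x)\setminus R)\}$; combining this with a covering of $\partial R$ by balls of fixed radius then yields the tube estimate $\vol(B_c(\partial R))\le C_1\,\vol(\partial R)$ you want. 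With that adjustment the bookkeeping goes through.
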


We can also detect amenability of $M$ using the $K$-theory of the uniform Roe algebra $C_u^\ast(\Gamma)$ of a quasi-lattice $\Gamma \subset M$.\footnote{see Definition \ref{defn:coarsely_bounded_geometry}} Recall that one possible definition for the uniform Roe algebra $C_u^\ast(\Gamma)$ is the norm closure of the $^\ast$-algebra of all finite propagation operators in $\IB(\ell^2(\Gamma))$ with uniformly bounded coefficients.

\begin{prop}[{\cite{elek}}]
Let $M$ be a manifold of bounded geometry and let $\Gamma \subset M$ be a uniformly discrete quasi-lattice.

Then $M$ is amenable if and only if $[1] \not= [0] \in K_0(C_u^\ast(\Gamma))$, where $[1] \in K_0(C_u^\ast(\Gamma))$ is a certain distinguished class.
\end{prop}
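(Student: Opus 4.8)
The plan is to treat the two implications separately. Before doing so, note that $C_u^\ast(\Gamma)$ is unital — the identity operator on $\ell^2(\Gamma)$ has propagation zero and hence lies in $C_u^\ast(\Gamma)$ — so that the distinguished class $[1] \in K_0(C_u^\ast(\Gamma))$ is the class of the unit; and note that, $\Gamma$ being a quasi-lattice in $M$, it is coarsely equivalent to $M$, so that $M$ admits a \Folner sequence if and only if $\Gamma$ does as a discrete metric space (i.e.\ there are finite subsets $\Gamma_i \subseteq \Gamma$ with $\card(B_r(\Gamma_i) \setminus \Gamma_i)/\card \Gamma_i \to 0$ for every $r$). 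Since amenability of $M$ was characterised earlier via $H_0^{\mathrm{uf}}(M) \neq 0$, we may freely switch between these formulations.

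For the implication ``$M$ amenable $\Rightarrow [1] \neq [0]$'' I would reuse the construction already recalled in connection with Roe's index theorem: a \Folner sequence $(\Gamma_i)_i$ for $\Gamma$ and a free-ultrafilter functional $\tau \in (\ell^\infty)^\ast$ define a positive, normalised trace $\theta$ on $C_u^\ast(\Gamma)$ by
\[\theta(T) := \tau_i\Big( \frac{1}{\card \Gamma_i} \sum_{\gamma \in \Gamma_i} \langle T\delta_\gamma, \delta_\gamma \rangle \Big),\]
the \Folner condition being exactly what makes this functional tracial. It induces a homomorphism $\theta_\ast \colon K_0(C_u^\ast(\Gamma)) \to \IC$, and since $\theta(1) = \tau_i(1) = 1$ we get $\theta_\ast[1] = 1 \neq 0$, hence $[1] \neq [0]$. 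This direction is routine.

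For the converse, ``$M$ non-amenable $\Rightarrow [1] = [0]$'', I would invoke the Tarski-type alternative for metric spaces of bounded geometry — equivalently, the vanishing $\Huf_0(\Gamma;\IZ) = 0$ that characterises non-amenability, cf.\ \cite{block_weinberger_1, whyte}: if $\Gamma$ is non-amenable, then $\Gamma$ is paradoxical with respect to partial translations of bounded displacement, i.e.\ there are a constant $R > 0$, a partition $\Gamma = A \sqcup B$, and bijections $\psi_1 \colon A \to \Gamma$ and $\psi_2 \colon B \to \Gamma$ with $d(\gamma, \psi_j(\gamma)) \leq R$ for all $\gamma$. Each $\psi_j$ determines a partial isometry $V_j$ on $\ell^2(\Gamma)$ by $V_j \delta_\gamma := \delta_{\psi_j(\gamma)}$ on its domain and $0$ elsewhere; as $V_j$ has propagation at most $R$ and matrix coefficients in $\{0,1\}$, it lies in $C_u^\ast(\Gamma)$. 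One computes $V_1^\ast V_1 = \chi_A$, $V_2^\ast V_2 = \chi_B$ and $V_1 V_1^\ast = V_2 V_2^\ast = 1$. Since $\chi_A, \chi_B$ are orthogonal projections in $C_u^\ast(\Gamma)$ with $\chi_A + \chi_B = 1$, and $[\chi_A] = [V_1^\ast V_1] = [V_1 V_1^\ast] = [1]$ and likewise $[\chi_B] = [1]$, we obtain in $K_0(C_u^\ast(\Gamma))$ the relation $[1] = [\chi_A] + [\chi_B] = [1] + [1]$, whence $[1] = [0]$.

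I expect the only real work to lie in this second direction, and it is of two kinds. First, one must cite (and, if one wants to be self-contained, reprove via Hall's matching theorem) the correct \emph{uniform} form of the Tarski alternative, so that the pieces of the paradoxical decomposition are moved by a \emph{uniformly} bounded amount; this is precisely where bounded geometry of $M$ and the coarse density of $\Gamma$ are used, to pass from amenability of the manifold $M$ to the combinatorial statement about the discrete space $\Gamma$. Second, one must verify that $V_1, V_2, \chi_A, \chi_B$ genuinely belong to $C_u^\ast(\Gamma)$ (finite propagation, uniformly bounded coefficients) and that the Murray--von Neumann equivalences $V_j^\ast V_j \sim V_j V_j^\ast$ take place inside the algebra — both of which are immediate once the displacement bound $R$ is secured, since the implementing partial isometry $V_j$ is itself an element of $C_u^\ast(\Gamma)$.
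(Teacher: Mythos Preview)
Your proof is correct and, for the direction the paper actually treats, coincides with it: the paper does not give a self-contained proof of this proposition but merely cites Elek and then recalls only the ``amenable $\Rightarrow [1]\neq[0]$'' half, via exactly the averaged-trace construction $\theta(T)=\tau\big(\tfrac{1}{\card\Gamma_i}\sum_{\gamma\in\Gamma_i}T(\gamma,\gamma)\big)$ that you describe. Your argument for the converse via a bounded-displacement paradoxical decomposition and the resulting Murray--von Neumann equivalences $[\chi_A]=[\chi_B]=[1]$ is the standard one and goes beyond what the paper supplies; the paper leaves that direction entirely to the reference.
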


The reason why we stated the above proposition is that it introduces functionals on $K_0(C_u^\ast(\Gamma))$ associated to \Folner sequences that we will need in the definition of our index pairings. So let us recall Elek's argument: Let $(\Gamma_i)_i$ be a \Folner sequence in $\Gamma$\footnote{This means that each $\Gamma_i$ is finite and for every $r > 0$ we have $\frac{\card \partial_r \Gamma_i}{\card \Gamma_i} \xrightarrow{i \to \infty} 0$, where\[\partial_r \Gamma_i := \{\gamma \in \Gamma\colon d(\gamma,\Gamma_i) < r\text{ and }d(\gamma,\Gamma-\Gamma_i)< r\}\]and the distance is computed in $M$ (which makes sense since $\Gamma \subset M$).} and let $T \in C_u^\ast(\Gamma)$. Then we define a bounded sequence indexed by $i$ by $\frac{1}{\card \Gamma_i} \sum_{\gamma \in \Gamma_i} T(\gamma, \gamma)$. Choosing a linear functional $\tau \in (\ell^\infty)^\ast$ associated to a free ultrafilter on $\IN$\footnote{That is, if we evaluate $\tau$ on a bounded sequence, we get the limit of some convergent subsequence.} we get a linear functional $\theta$ on $C_u^\ast(\Gamma)$. The \Folner condition for $(\Gamma_i)_i$ is needed to show that $\theta$ is a trace, i.e., descends to $K_0(C_u^\ast(\Gamma))$. Then $\theta([1]) = 1$ and $\theta([0]) = 0$ for the distinguished classes $[1], [0] \in K_0(C_u^\ast(\Gamma))$.

Let us finally come to the definition of the index pairings that we are interested in.

\begin{defn}
Let $M$ be a manifold of bounded geometry, let $(M_i)_i$ be a \Folner sequence for $M$ and let $\tau \in (\ell^\infty)^\ast$ a linear functional associated to a free ultrafilter on $\IN$. Denote the resulting functional on $K_0(C_u^\ast(\Gamma))$ by $\theta$, where $\Gamma \subset M$ is a quasi-lattice.\footnote{Note that here we first have to construct from the \Folner sequence $(M_i)_i$ for $M$ a corresponding \Folner sequence $(\Gamma_i)_i$ for $\Gamma$.}

Then we define for $p=0,1$ an index pairing
\[\langle \largecdot, \largecdot \rangle_\theta \colon K^p_u(M) \otimes K_p^u(M) \to \IR\]
by the formula
\[\langle [x], [y] \rangle_\theta := \theta \big( \mu_u([x] \cap [y]) \big),\]
where $\mu_u\colon K_\ast^u(M) \to K_\ast(C_u^\ast(\Gamma))$ is the rough assembly map (see Section \ref{sec:rough_BC}).
\qed
\end{defn}

If $P$ is a symmetric and elliptic, graded uniform pseudodifferential operator acting on a graded vector bundle $E$, then there is a nice way of computing the above index pairing of $P$ with the trivial bundle $[\IC] \in K^0_u(M)$: recall from Corollary \ref{cor:schwartz_function_of_PDO_quasilocal_smoothing} that if $f \in \mathcal{S}(\IR)$ is a Schwartz function, then $f(P) \in \UPsiDO^{-\infty}(E)$, i.e., $f(P)$ is a quasilocal smoothing operator. So by Proposition \ref{prop:smoothing_op_kernel} it has a uniformly bounded integral kernel $k_{f(P)}(x,y) \in C_b^\infty(E \boxtimes E^\ast)$. Now we choose an even function $f \in \mathcal{S}(\IR)$ with $f(0) = 1$ and get a bounded sequence
\[\frac{1}{\vol M_i} \int_{M_i} \trace_s k_{f(P)}(x,x) \ dM(x),\]
where $\trace_s$ denotes the super trace (recall that $E$ is graded), on which we may evaluate $\tau$. This will coincide with the pairing $\langle [\IC], P \rangle_\theta$ and is exactly the analytic index that was defined by Roe in \cite{roe_index_1} for Dirac operators. For details why this will coincide with $\langle [\IC], P \rangle_\theta$ the reader may consult, e.g., the author's Ph.D.\ thesis \cite[Section 2.8]{engel_phd}.

Let us now define the pairing between uniform de Rham cohomology and uniform de Rham homology. So let $\beta \in C_b^\infty(\Omega^p(M))$ and $C \in \Omega_p^u(M)$, fix an $\epsilon > 0$ and choose for every $M_i \subset M$ from a \Folner sequence for $M$ a smooth cut-off function $\varphi_i \in C_c^\infty(M)$ with $\varphi_i|_{M_i} \equiv 1$, $\supp \varphi_i \subset B_\epsilon(M_i)$ and such that for all $k \in \IN_0$ the derivatives $\nabla^k \varphi_i$ are bounded in sup-norm uniformly in the index $i$. Then $\varphi_i \beta \in W^{\infty, 1}(\Omega^p(M))$ and therefore we may evaluate $C$ on it. The sequence $\frac{1}{\vol M_i} C(\varphi_i \beta)$ will be bounded and so we may apply $\tau \in (\ell^\infty)^\ast$ to it. Due to the \Folner condition for $(M_i)_i$ this pairing will descend to (co-)homology classes.

\begin{defn}
Let $M$ be a manifold of bounded geometry, let $(M_i)_i$ be a \Folner sequence for $M$ and let $\tau \in (\ell^\infty)^\ast$ a linear functional associated to a free ultrafilter on $\IN$.

For every $p \in \IN_0$ we define a pairing
\[\langle \largecdot, \largecdot \rangle_{(M_i)_i, \tau}\colon \HudRco^p(M) \otimes \HudR_p(M) \to \IC\]
by evaluating $\tau$ on the sequence $\frac{1}{\vol M_i} C(\varphi_i \beta)$, where $\beta \in \HudRco^p(M)$, $C \in \HudR_p(M)$ and the cut-off functions $\varphi_i$ are chosen as above.
\qed
\end{defn}

Note that this pairing is, similar to the pairing from Corollary \ref{cor:pairing_compactly_supported}, continuous against the topologies on $\HudRco^\ast(M)$ and on $\HudR_\ast(M)$.

Recall that in the usual case of compact manifolds the index pairing for $K$-theory and $K$-homology is compatible with the Chern-Connes character, i.e., $\langle [x], [y] \rangle = \langle \ch ([x]), \ch ([y]) \rangle$ for $[x] \in K^\ast(M)$ and $[y] \in K_\ast(M)$. The same also holds in our case here.

\begin{lem}
Denote by $\ch\colon K_u^\ast(M) \to \HudRco^\ast(M)$ the Chern character on uniform $K$-theory and by $(\alpha_\ast \circ \ch^\ast)\colon K_\ast^u(M) \to \HudR_\ast(M)$ the one on uniform $K$-homology.

Then we have
\[\big\langle [x], [y] \big\rangle_\theta = \big\langle\ch([x]), (\alpha_\ast \circ \ch^\ast)([y]) \big\rangle_{(M_i)_i, \tau}\]
for all $[x] \in K_u^p(M)$ and $[y] \in K^u_p(M)$.
\end{lem}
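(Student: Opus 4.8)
The plan is to reduce this pairing identity to the local index theorems already established, combined with the compatibility of the rough assembly map with the trace $\theta$ coming from the \Folner sequence. First I would recall that by the definition of the index pairing $\langle \largecdot, \largecdot \rangle_\theta$, we have $\langle [x], [y] \rangle_\theta = \theta(\mu_u([x] \cap [y]))$, where $[x] \cap [y] \in K_p^u(M)$ and $\mu_u$ is the rough assembly map. The key point connecting the left and right hand sides is that, for a uniform $K$-homology class $[z] \in K_p^u(M)$ represented by a uniformly finitely summable Fredholm module, the value $\theta(\mu_u([z]))$ can be computed as a \Folner-averaged integral of the super trace of the integral kernel of $f(P)$ (for a suitable Schwartz function $f$), exactly as recalled in the discussion before this lemma following \cite[Section 2.8]{engel_phd}. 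This is the uniform analogue of the statement that in the compact case the index pairing is computed via the local index density.

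The second main ingredient is Theorem \ref{thm:local_thm_twisted} (and its extension Theorem \ref{thm:local_thm_pseudos} together with Corollary \ref{cor:ch_well_defined}): the uniform homological Chern character $(\alpha_\ast \circ \ch^\ast)([z]) \in \HudR_\ast(M)$ is, up to the \Poincare duality map, represented by a bounded de Rham differential form which is the local index form of a representing operator. So concretely I would proceed as follows. Choose a \spinc manifold first, or more generally use the reduction steps from the proof of Theorem \ref{thm:local_thm_pseudos} (passing to the orientation cover, multiplying by $S^1$ in the odd-dimensional case, and using that uniform $K$-homology is spanned modulo $2$-torsion by signature operators via Theorem \ref{thm:thom}) to reduce to the case where $[y]$ is represented by a twisted Dirac operator $D_E$ with $[E] \in K_u^0(M)$, so that by \eqref{eq:cap_twisted_Dirac} we have $[E] \cap [D] = [D_E]$. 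Then by Theorem \ref{thm:local_thm_twisted}, $(\alpha_\ast \circ \ch^\ast)([D_E])$ is \Poincare dual to $\ch([E]) \wedge \ind(D) \in \HbdR^\ast(M)$, and by naturality of the cap product with $[x]$ this translates the right hand side $\langle \ch([x]), (\alpha_\ast \circ \ch^\ast)([y]) \rangle_{(M_i)_i, \tau}$ into the pairing of $\ch([x]) \wedge \ch([E]) \wedge \ind(D)$ against the uniform fundamental current. On the left hand side, $\theta(\mu_u([x] \cap [y])) = \theta(\mu_u([x] \cap [E] \cap [D]))$ is computed by the local formula as the \Folner average of $\frac{1}{\vol M_i} \int_{M_i} \ind$-density of the twisted Dirac operator $D_{E \otimes F}$ (where $F$ represents $[x]$), and by the Atiyah--Singer integrand formula this density is precisely $\ch([x]) \wedge \ch([E]) \wedge \ind(D)$ evaluated pointwise. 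The two sides therefore agree.

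More precisely, I would organise the argument so that the only genuinely new analytic input over what is already in the paper is the identification
\[
\theta(\mu_u([z])) = \tau\Big( \tfrac{1}{\vol M_i} \int_{M_i} \trace_s k_{f(P)}(x,x)\, dM(x) \Big)
\]
for $[z]$ represented by a uniformly finitely summable operator $P$, and the observation that the right hand side of this formula equals the pairing $\langle (\alpha_\ast \circ \ch^\ast)([z]) , [\mathrm{fundamental\ current}] \rangle$ paired further against $\ch([x])$ — but this last step is exactly the content of Theorem \ref{thm:local_thm_pseudos} combined with the definition of $\langle \largecdot, \largecdot\rangle_{(M_i)_i,\tau}$. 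Both pairings descend to (co)homology because of the \Folner condition (vanishing of the contributions from collar neighbourhoods of $\partial M_i$ in the limit under $\tau$), which is already used repeatedly above, so this compatibility is automatic once the form-level identity holds.

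The main obstacle I expect is making the reduction in the non-\spinc and odd-dimensional cases fully rigorous while keeping track of the uniform estimates: one must ensure that the representing operators obtained at each reduction step (orientation double cover, product with $S^1$, signature operators) remain uniform and uniformly finitely summable, and that the functional $\theta$ and the \Folner pairing behave compatibly under these operations (in particular under the covering $\widetilde M \to M$ one uses Proposition \ref{prop:fundamental_group_amenable_nice_fundamental_classes} to match fundamental classes). A secondary technical point is checking that the rough assembly map $\mu_u$ intertwines the cap product structure with the module structure of $K_\ast(C_u^\ast(\Gamma))$ in the way needed to pull $[x]$ out of the trace; this is the uniform counterpart of standard facts about the coarse assembly map and index pairings, and I would cite the relevant compatibilities rather than reprove them. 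Once these bookkeeping issues are handled, the identity follows formally from Theorem \ref{thm:local_thm_twisted} and the local computation of $\theta \circ \mu_u$.
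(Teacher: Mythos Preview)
The paper does not actually give a proof of this lemma: it is stated immediately after the sentence ``The same also holds in our case here,'' with no proof environment, so there is nothing to compare against directly. Your task is therefore to supply what the author left implicit.

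Your approach is correct but heavier than what the author seems to have in mind. You are essentially reproving Corollary~\ref{cor:pairing_global} in the course of proving the lemma, by invoking Theorems~\ref{thm:local_thm_twisted} and~\ref{thm:local_thm_pseudos} and the full reduction machinery (orientation cover, product with $S^1$, signature operators). This works, and there is no circularity since those theorems precede the lemma; but it inverts the logical organisation of the paper, which uses this lemma together with the cup/cap compatibility lemma to \emph{derive} Corollary~\ref{cor:pairing_global} from the local index theorems.

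A more direct route, closer to what ``the same as in the compact case'' suggests, is this: the identity $\langle [e],[T]\rangle = \langle \ch[e],\ch^\ast[T]\rangle$ in Connes' framework is essentially the \emph{definition} of the Chern--Connes character --- the cyclic cocycle $\ch^{\ast,2m}(T)$ is built precisely so that its pairing with a projection $e$ returns the (super)trace computing the index of $eTe$. In the uniform setting, $\theta(\mu_u([x]\cap[y]))$ unwinds to a \Folner-averaged supertrace of the kernel of $f(P)$ on the diagonal (as you note, this is the content of the paragraph preceding the lemma and of \cite[Section~2.8]{engel_phd}), and the right-hand side $\langle \ch([x]),(\alpha_\ast\circ\ch^\ast)([y])\rangle_{(M_i)_i,\tau}$ unwinds, via the definitions of $\alpha$ and of the cyclic cocycle $\ch^{\ast,2m}$, to the \emph{same} \Folner-averaged trace expression. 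The equality then holds at the level of representatives before passing to (co)homology, and one does not need to identify either side with a topological index form. This avoids the reduction steps entirely and explains why the author regards the lemma as routine.
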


The last thing that we need is the compatibility of the index pairings with cup and cap products. This is clear by definition for the index pairing for uniform $K$-theory with uniform $K$-homology, and for the pairing for uniform de Rham cohomology with uniform de Rham homology it is stated in the following lemma.

\begin{lem}
Let $[\beta] \in \HudRco^p(M)$, $[\gamma] \in \HudRco^q(M)$ and $[C] \in \HudR_{p+q}(M)$. Then we have
\[\langle [\beta] \wedge [\gamma], [C] \rangle_{(M_i)_i, \tau} = \langle [\beta], [\gamma] \cap [C] \rangle_{(M_i)_i, \tau}.\]
\end{lem}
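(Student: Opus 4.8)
The plan is to unwind all three pairings to their definitions in terms of the \Folner sequence $(M_i)_i$ and the ultrafilter functional $\tau$, and then to show that the required identity reduces to a pointwise (in $i$) statement about differential forms on $M$, which then survives the passage to the limit under $\tau$. Concretely, recall that for $[\beta]\in\HudRco^p(M)$ represented by a uniform form $\beta\in C_b^\infty(\Omega^p(M))$, for $[C]\in\HudR_{p+q}(M)$ represented by a uniform current $C$, and cut-off functions $\varphi_i$ as in the definition (with $\varphi_i|_{M_i}\equiv 1$, $\supp\varphi_i\subset B_\epsilon(M_i)$, and $\|\nabla^k\varphi_i\|_\infty$ bounded uniformly in $i$), the left-hand side is $\tau\big(\tfrac{1}{\vol M_i} C(\varphi_i\,(\beta\wedge\gamma))\big)$, where $\gamma\in C_b^\infty(\Omega^q(M))$ represents $[\gamma]$. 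The cap product $[\gamma]\cap[C]$ is represented by the current $\gamma\cap C\in\Omega^u_p(M)$ defined by $(\gamma\cap C)(\omega):=C(\omega\wedge\gamma)$ for $\omega\in W^{\infty,1}(\Omega^p(M))$ (this is the uniform de Rham analogue of the classical cap product of a form with a current, and one checks as in the compact case that it is well-defined on homology because $d(\omega\wedge\gamma)=d\omega\wedge\gamma\pm\omega\wedge d\gamma$ and $[\gamma]$, $[C]$ are closed). Hence the right-hand side unwinds to $\tau\big(\tfrac{1}{\vol M_i}(\gamma\cap C)(\varphi_i\beta)\big)=\tau\big(\tfrac{1}{\vol M_i} C(\varphi_i\beta\wedge\gamma)\big)$.

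Comparing the two sides, the only discrepancy is that the left-hand side pairs $C$ against $\varphi_i\,(\beta\wedge\gamma)=\varphi_i\beta\wedge\gamma+\varphi_i\beta\wedge(\text{nothing})$ — wait, more precisely against $\varphi_i(\beta\wedge\gamma)$, whereas the right-hand side pairs $C$ against $(\varphi_i\beta)\wedge\gamma$. But $\varphi_i(\beta\wedge\gamma)=(\varphi_i\beta)\wedge\gamma$ identically as forms, since $\varphi_i$ is a scalar function. So in fact the two bounded sequences $\tfrac{1}{\vol M_i}C(\varphi_i(\beta\wedge\gamma))$ and $\tfrac{1}{\vol M_i}C((\varphi_i\beta)\wedge\gamma)$ are literally equal term by term, and applying $\tau$ gives the identity at once. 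The subtlety I would want to check carefully is that the cut-off functions used on the two sides may a priori be different: on the left we cut off $\beta\wedge\gamma\in C_b^\infty(\Omega^{p+q}(M))$, on the right we first cut off $\beta\in C_b^\infty(\Omega^p(M))$ before wedging with $\gamma$. One must verify that both $\varphi_i(\beta\wedge\gamma)$ and $(\varphi_i\beta)\wedge\gamma$ lie in $W^{\infty,1}(\Omega^{p+q}(M))$, which follows because $\gamma$ and all its covariant derivatives are bounded in sup-norm (uniform form), $\varphi_i\beta$ is compactly supported and smooth, and the product/Leibniz rule together with the uniform bounds on $\nabla^k\varphi_i$ keeps all the relevant $W^{k,1}$-norms finite. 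And one must check that the choice of $\varphi_i$ does not affect the value under $\tau$: this is exactly the content of the \Folner condition, since two choices $\varphi_i,\varphi_i'$ differ on a set contained in $B_\epsilon(M_i)\setminus M_i$ whose volume is $o(\vol M_i)$, so $\tfrac{1}{\vol M_i}C((\varphi_i-\varphi_i')\eta)\to 0$ using that $C$ is a bounded functional on $W^{\infty,1}$ and the $W^{k,1}$-norm of $(\varphi_i-\varphi_i')\eta$ is controlled by $\vol\big(\supp(\varphi_i-\varphi_i')\big)$ times a constant depending only on $\eta$ and the uniform derivative bounds.

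The remaining point of bookkeeping is that all of the above must be carried out at the level of representatives and then shown to descend to (co-)homology, but this was already established when the pairing $\langle\largecdot,\largecdot\rangle_{(M_i)_i,\tau}$ and the cap product on uniform de Rham (co-)homology were defined (the \Folner condition guarantees that replacing a representative by a cohomologous/homologous one changes the sequence by something going to $0$ under $\tau$). So the genuine mathematical work is essentially the identity of forms $\varphi_i(\beta\wedge\gamma)=(\varphi_i\beta)\wedge\gamma$ plus the uniform estimates needed to know everything lands in the right Sobolev spaces; there is no hard analytic obstacle here. The one step I would expect to require a little care — though still routine — is confirming that the cap product $\gamma\cap C$ as I have defined it on currents agrees with the cap product $K_u^\ast\otimes K_\ast^u\to K_{\ast-\ast}^u$ transported through the isomorphisms $\alpha_\ast$ and the Chern characters, i.e. that the "$\HudRco$-side" cap product is the honest de Rham shadow of the $K$-theoretic one; but since we only need the statement as phrased (purely inside uniform de Rham (co-)homology), this compatibility is not actually invoked in the proof of this lemma itself.
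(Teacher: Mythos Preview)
The paper does not actually supply a proof of this lemma; it is stated and then immediately used, presumably because the author regards it as immediate from the definitions. Your argument is correct and is precisely the unwinding that justifies this: once one writes down the cap product of a uniform form with a uniform current as $(\gamma\cap C)(\omega)=C(\omega\wedge\gamma)$, both sides become $\tau\big(\tfrac{1}{\vol M_i}\,C(\varphi_i\beta\wedge\gamma)\big)$ because $\varphi_i(\beta\wedge\gamma)=(\varphi_i\beta)\wedge\gamma$ is an identity of forms. Your checks that $(\varphi_i\beta)\wedge\gamma\in W^{\infty,1}(\Omega^{p+q}(M))$ and that the choice of cut-offs is irrelevant modulo the \Folner condition are the right supporting observations; the self-correcting ``wait'' aside could be deleted, but the mathematics is sound.
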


So combining the above two lemmas together with the results of Section \ref{sec:local_index_thm} we finally arrive at our desired index theorem for amenable manifolds which generalizes Roe's index theorem from \cite{roe_index_1} from graded generalized Dirac operators to arbitrarily graded, symmetric, elliptic uniform pseudodifferential operators.

\begin{cor}\label{cor:pairing_global}
Let $M$ be a manifold of bounded geometry and without boundary, let $(M_i)_i$ be a \Folner sequence for $M$ and let $\tau \in (\ell^\infty)^\ast$ be a linear functional associated to a free ultrafilter on $\IN$. Denote the from the choice of \Folner sequence and functional $\tau$ resulting functional on $K_0(C_u^\ast(\Gamma))$ by $\theta$, where $\Gamma \subset M$ is a quasi-lattice.

Then for both $p \in \{ 0,1 \}$, every $[P] \in K_p^u(M)$ for $P$ a $p$-graded, symmetric, elliptic uniform pseudodifferential operator over $M$, and every $u \in K_u^p(M)$ we have
\[\langle u, [P] \rangle_\theta = \langle \ch(u) \wedge \ind(P), [M] \rangle_{(M_i)_i, \tau}.\]
\end{cor}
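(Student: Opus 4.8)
The plan is to chain together the three lemmas stated immediately above with the local index formula of Theorem \ref{thm:local_thm_pseudos}, so that the proof is essentially a bookkeeping argument. First, for $p \in \{0,1\}$ and $P$ a $p$-graded, elliptic, symmetric uniform pseudodifferential operator, Theorem \ref{thm:elliptic_symmetric_PDO_defines_uniform_Fredholm_module} supplies the class $[P] \in K_p^u(M)$, so every pairing occurring in the statement is defined; in particular the homological Chern character $\alpha_\ast \circ \ch^\ast$ is defined on $[P]$ by Corollary \ref{cor:ch_well_defined}. Applying the lemma asserting compatibility of the index pairing with the Chern characters (with $[x] = u$, $[y] = [P]$) gives
\[\langle u, [P] \rangle_\theta = \big\langle \ch(u), (\alpha_\ast \circ \ch^\ast)([P]) \big\rangle_{(M_i)_i, \tau}.\]

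Next I would invoke the finiteness-free form of Theorem \ref{thm:local_thm_pseudos}, which is legitimate by the Remark following that theorem together with Corollary \ref{cor:ch_well_defined}: the class $(\alpha_\ast \circ \ch^\ast)([P]) \in \HudR_\ast(M)$ is the image of $\ind(P) \in \HbdR^\ast(M)$ under the de Rham \Poincare duality map of Theorem \ref{thm:Poincare_de_Rham} (using Proposition \ref{prop:uniform_bounded_dRco} to regard $\ind(P)$ in $\HudRco^\ast(M)$ as well). Once the uniform de Rham fundamental class $[M] \in \HudR_m(M)$ determined by $(M_i)_i$ and $\tau$ is fixed, that duality map is cap product with $[M]$, so $(\alpha_\ast \circ \ch^\ast)([P]) = \ind(P) \cap [M]$. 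Substituting,
\[\langle u, [P] \rangle_\theta = \big\langle \ch(u), \ind(P) \cap [M] \big\rangle_{(M_i)_i, \tau},\]
and the lemma on compatibility of the $\langle \largecdot, \largecdot \rangle_{(M_i)_i, \tau}$-pairing with cup and cap products (applied componentwise to the mixed-degree classes, the relevant contribution being the degree-$m$ part) rewrites the right-hand side as $\langle \ch(u) \wedge \ind(P), [M] \rangle_{(M_i)_i, \tau}$, which is the claim. For $p = 1$ one reduces to, or runs in parallel with, the graded case by the formal $2$-periodicity of uniform $K$-theory and uniform $K$-homology together with the corresponding suspension isomorphism for uniform de Rham (co)homology, exactly as in the non-uniform situation.

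The main obstacle is not analytic but a matter of matching conventions: one must verify that the fundamental class $[M]$ implicit in the de Rham \Poincare duality isomorphism of Theorem \ref{thm:Poincare_de_Rham} coincides with the one implicit in the pairing $\langle \largecdot, \largecdot \rangle_{(M_i)_i, \tau}$ — that is, that the \Poincare duality map is genuinely implemented by capping with the \Folner-sequence fundamental class constructed from $\theta$ — so that the two normalizations agree and no spurious constant is introduced (the universal constants in the definition of $\alpha_\ast \circ \ch^\ast$ and in $\ind(P)$ must also be traced through consistently). Once this identification is pinned down, the steps above are purely formal. Finally, specializing to $P = D$ a graded ($p = 0$) Dirac operator and $u = [\IC]$, so that $\ch(u) = 1$ and the right-hand pairing unwinds to $\tau\big(\tfrac{1}{\vol M_i}\int_{M_i} \ind(D)\big)$ via Proposition \ref{prop:fundamental_group_amenable_nice_fundamental_classes} and the explicit description of the pairing, recovers Roe's index theorem from \cite{roe_index_1}.
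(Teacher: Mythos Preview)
Your proposal is correct and follows essentially the same route as the paper, which simply says ``combining the above two lemmas together with the results of Section \ref{sec:local_index_thm}''---namely, pass from $\langle u,[P]\rangle_\theta$ to $\langle \ch(u),(\alpha_\ast\circ\ch^\ast)([P])\rangle_{(M_i)_i,\tau}$ via the Chern-character compatibility lemma, identify $(\alpha_\ast\circ\ch^\ast)([P])$ with the Poincar\'e dual of $\ind(P)$ via Theorem \ref{thm:local_thm_pseudos}, and then apply the cup/cap compatibility lemma.

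Your ``main obstacle'' is a non-issue, however: the class $[M]\in\HudR_m(M)$ in the statement is simply the integration current $\omega\mapsto\int_M\omega$, which is exactly the current implementing the Poincar\'e duality map of Theorem \ref{thm:Poincare_de_Rham}. It does \emph{not} depend on $(M_i)_i$ or $\tau$; all dependence on the F{\o}lner data resides in the pairing $\langle\largecdot,\largecdot\rangle_{(M_i)_i,\tau}$ itself (as the remark after the corollary makes explicit). So there is no normalization to match, and your invocation of Proposition \ref{prop:fundamental_group_amenable_nice_fundamental_classes} in the final paragraph is also misplaced---that proposition concerns universal covers of compact manifolds and plays no role here.
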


\begin{rem}
The right hand side of the formula in the above corollary reads as
\[\tau \Big( \frac{1}{\vol M_i} \int_{M_i} \ch(u) \wedge \ind(P) \Big)\]
and this is continuous against the sup-seminorm on $\HbdR^m(M)$ with $m = \dim(M)$, i.e.,
\[\langle u, [P] \rangle_\theta \le \| \ch(u) \wedge \ind(P) \|_\infty.\]
So, again as in Remark \ref{rem:local_pairing_cont}, we see that with this pairing we can not detect operators $P$ whose index class $\ind(P) \in \HbdR^\ast(M)$ has sup-seminorm $=0$ in every degree.

Note that it seems that from the results in \cite[Part II.\S 4]{sullivan} it follows that every element in $\HbdR^m(M)$ of non-zero sup-seminorm may be detected by a \Folner sequence (i.e., the dual space $\overline{H}_{b, \mathrm{dR}}^\ast(M)$ of the reduced bounded de Rham cohomology\footnote{Reduced bounded de Rham cohomology is defined as $\overline{H}_{b, \mathrm{dR}}^\ast(M) := \HbdR^\ast(M) / \overline{[0]}$, i.e., as the Hausdorffication of bounded de Rham cohomology.} is spanned by \Folner sequences). So the difference between the statement of the above corollary and Theorem \ref{thm:local_thm_pseudos} lies, at least in top-degree, exactly in the fact that Theorem \ref{thm:local_thm_pseudos} also encompasses all the elements of sup-seminorm $=0$.
\qed
\end{rem}

\begin{example}
Let us discuss quickly an example that shows that we indeed may lose information by passing to the reduced bounded de Rham cohomology groups. Roe showed in \cite[Proposition 3.2]{roe_index_2} that if $M^m$ is a connected spin manifold of bounded geometry, then $\langle \hat{A}(M), [M] \rangle_{\largecdot, \largecdot} = 0$ for any choice of \Folner sequence and suitable functional $\tau$ if $M$ has non-negative scalar curvature, and later Whyte showed in \cite[Theorem 2.3]{whyte} that $\hat{A}(M) = [0] \in \HbdR^m(M)$ under these assumptions. So any connected spin manifold $M$ of bounded geometry with $\hat{A}(M) \not= [0] \in \HbdR^m(M)$ but $\hat{A}(M) = [0] \in \overline{H}_{b, \mathrm{dR}}^m(M)$ can not have non-negative scalar curvature, but this is not detected by the reduced group. In \cite{whyte} it is also shown how one can construct examples of manifolds whose $\hat{A}$-genus vanishes in the reduced but not in the unreduced group.
\qed
\end{example}

\section{Final remarks and open questions}

\subsection{Uniform homotopy theory}

The first part here is in the same spirit as \cite[Section 6.1]{roe_index_2}. We have been working with manifolds and vector bundles of bounded geometry in the sense that the curvature tensor and all its derivatives must be bounded. But it seems that a lot of the results presented here do not need boundedness of all the derivatives.

One can see this in the uniform (co-)homology theories that we introduced in this paper. Uniform $K$-theory may be either defined by using the $C^\ast$-algebra of uniformly continuous functions $C_u(X)$ which makes sense on every metric space $X$, or by using $C_b^\infty(M)$, where we now have high regularity. Bounded de Rham cohomology is isomorphic to uniform de Rham cohomology (here we have high regularity) and also isomorphic to $L^\infty$-simplicial cohomology when we triangulate $M$ as a simplicial complex of bounded geometry using Theorem \ref{thm:triangulation_bounded_geometry} (which is in itself an examples of the interplay between low regularity and high regularity, see Remark \ref{rem:attie_regularity}). And uniform de Rham homology is isomorphic to $L^\infty$-simplicial homology.

On the other hand, we have given in this paper definitions of the Chern characters using only the high regularity pictures of the (co-)homology theories. But the author does not see how to give corresponding definitions in the other pictures which do not refer to smoothness.

\begin{question}\label{ques:uniform_chern}
How to define the uniform Chern characters $K_u^\ast(L) \to H^\ast_\infty(L)$ and $K_\ast^u(L) \to H_\ast^\infty(L)$ for a simplicial complex $L$ of bounded geometry equipped with the metric derived from barycentric coordinates?
\end{question}

One approach might be to consider something like uniform (co-)homology theories: one could try to put a model structure on the category of uniform spaces modeling uniform homotopy theory and then one could try to show that, e.g., uniform $K$-theory is nothing more but uniform homotopy classes of uniform maps into some uniform version of the $K$-theory spectrum. Then the uniform Chern characters should be coming from transformations of uniform spectra and the above Question \ref{ques:uniform_chern} would be solved.

Another approach might be to use $\infty$-categories and a motivic approach, similar as it was carried out in the case of coarse homology theories \cite{bunke_engel}.

\begin{question}\label{ques:uniform_theory}
Does there exist a reasonable uniform homotopy theory that recovers all the uniform theories that we have considered in this article?
\end{question}

Baum and Douglas \cite{baum_douglas} defined a geometric version of $K$-homology, where the cycles are \spinc manifolds with a vector bundle over them together a map into the space. This geometric picture is quite important for the understanding of index theory and so the question is whether we also have something similar for uniform $K$-homology.

\begin{question}\label{ques:geom_pic}
Is there a geometric picture of uniform $K$-homology that coincides with the analytic one on simplicial complexes of bounded geometry?
\end{question}

A complete proof that geometric $K$-homology coincides on finite CW-complexes with analytic $K$-homology was given by Baum, Higson and Schick in \cite{baum_higson_schick}. But this proof relies on a comparison of these theories with topological $K$-homology, i.e., with the homology theory defined by the $K$-theory spectrum. And this is now exactly the connection of Question \ref{ques:geom_pic} to Question \ref{ques:uniform_theory}.

\subsection{Quasilocal operators and the uniform Roe algebra}
\label{sec:quasilocs_approximable}

In the definition of uniform pseudodifferential operators we used for the $(-\infty)$-part of them quasilocal smoothing operators, whereas the rough Baum--Connes assembly map goes into the $K$-theory of the uniform Roe algebra which is defined as the closure of the finite propagation operators with uniformly bounded coefficients. A priori the definition of quasilocal operators is more general than being in the closure of the finite propagation operators, but one is tempted to conjecture that the notions actually coincide, i.e., that every quasilocal operator is approximable by finite propagation ones. As far as the author knows this question is still open.

\begin{question}
Defining the uniform Roe algebra to consist of the quasilocal operators with uniformly bounded coefficients, will it coincide with the usual definition, i.e., is every quasilocal operator approximable by finite propagation ones?
\end{question}

Concerning the above question there is the result of Rabinovich--Roch--Silbermann \cite{RRS}, resp., of Lange--Rabinovich \cite{lange_rabinovich} that on $\IR^n$ every quasi-local operator is approximable by finite propagation operators. This was recently generalized by {\v{S}}pakula and Tikuisis \cite{st} to all spaces with finite decomposition complexity. The only other (partial) result that the author knows is his own \cite{engel_rough} that on spaces of polynomial growth one can approximate operators with a super-polynomially fast decaying dominating function by finite propagation operators.

The class of uniform pseudodifferential operators defined in this article is in the following sense connected to the above discussion: assume that we would have defined this class in such a way that the $(-\infty)$-part would be an operator which is in the \Frechet closure\footnote{That is to say, in the closure with respect to the family of norms $(\|\largecdot\|_{-k,l}, \|\largecdot^\ast\|_{-k,l})_{k,l \in \IN}$, where $\|\largecdot\|_{-k,l}$ denotes the operator norm $H^{-k}(E) \to H^l(E)$.} of the finite propagation smoothing operators. Then the results of Section \ref{sec:uniformity_PDOs} would give a direct connection to the uniform Roe algebra. Indeed, we would then be able to conclude $\overline{\UPsiDO^{-\infty}(E)} = \overline{\UPsiDO^{-1}(E)} = C_u^\ast(E)$, where $C_u^\ast(E)$ is the uniform Roe algebra of $E$, i.e., the closure of the finite propagation, uniformly locally compact operators on $E$. So there would be some merit in defining uniform pseudodifferential operators due to this direct relation to the uniform Roe algebra, though of course we could also just change the definition of the uniform Roe algebra to quasilocal operators in order to relate it to the current definition of uniform pseudodifferential operators.

But if we would do the above, i.e., changing the definition from quasilocal to approximable by finite propagation operators, there would be one piece of information missing that we do have by using quasilocal operators: recall that in the analysis of uniform pseudodifferential operators Lemma \ref{lem:exp(itP)_quasilocal} was the main technical ingredient which led, e.g., to Corollary \ref{cor:schwartz_function_of_PDO_quasilocal_smoothing} stating that if $f$ is a Schwartz function, then $f(P) \in \UPsiDO^{-\infty}(E)$ for $P$ a symmetric and elliptic uniform pseudodifferential operator of positive order. But the author does not know whether Lemma \ref{lem:exp(itP)_quasilocal} would also hold for the changed definition, i.e., whether under the conditions of that lemma the operator $e^{itP}$ would be approximable in the needed operator norm by finite propagation operators.

\begin{question}\label{quesnksdio2}
Does Lemma \ref{lem:exp(itP)_quasilocal} specialize to the statement that if the $(-\infty)$-part of $P$ is in the \Frechet closure of the finite propagation smoothing operators, then $e^{itP}$ is approximable by finite propagation operators of order $k$ in the operator norms $\|\largecdot\|_{lk,lk-k}$ for all $l \in \IZ$?
\end{question}

Given a generalized Dirac operator $D$, the construction of its rough index class\footnote{The construction of the rough index class is analogous to the construction of the coarse one. A suitable reference is, e.g., \cite[Section 4.3]{roe_coarse_cohomology}.} produces directly a representative of it with finite propagation. The reason for this is that the wave operator $e^{itD}$ has finite propagation. This construction coincides with applying the rough assembly map from Section~\ref{sec:rough_BC} to the uniform $K$-homology class $[D] \in K_\ast^u(M)$ of $D$.

If we have a symmetric and elliptic uniform pseudodifferential operator $P$, we get a rough index class $\ind(P) \in K_\ast(C_u^\ast(M))$ by first constructing $[P] \in K_\ast^u(M)$ and then mapping it by the rough assembly map to $K_\ast(C_u^\ast(M))$. But constructing $\ind(P)$ directly by the same procedure as above for Dirac operators, we get a problem: we only know from Lemma \ref{lem:exp(itP)_quasilocal} that $e^{itP}$ is a quasilocal operator with linearly decaying dominating function. Since we currently don't have an answer for the above Question~\ref{quesnksdio2}, we can not guarantee that this direct construction would produce a rough index class of $P$ which lives in the $K$-theory of the uniform Roe algebra, i.e., which is approximable by finite propagation operators.

In \cite{engel_rough} the author introduced a smooth subalgebra of the uniform Roe algebra consisting of those operators whose dominating functions is super-polynomially fast decaying. Since we showed in Lemma \ref{lem:exp(itP)_quasilocal} that $e^{itP}$ has a linearly decaying dominating function, the question is whether we can improve this result and so make it amenable to the techniques of \cite{engel_rough}. Note that Lemma \ref{lem:exp(itP)_quasilocal} does not assume anything on the dominating function of $P$, i.e., one might hope that one can get better rates of decay for the dominating function of $e^{itP}$ if one assume that $P$ itself already has good decay of its dominating funtion.

\begin{question}\label{question:wave_dominating_function}
Let $P$ be a symmetric and elliptic uniform pseudodifferential operator.

Does the dominating function of $e^{itP}$ have super-polynomial decay? Maybe if we assume that $P$ has finite propagation or a super-polynomially decaying dominating function?
\end{question}

\subsection{Analysis of uniform pseudodifferential operators}

We know that the principal symbol map $\sigma^k$ induces an isomorphism of vector spaces $\UPsiDO^{k-[1]}(E,F) \cong \Symb^{k-[1]}(E,F)$ for all $k \in \IZ$ and vector bundles $E$, $F$ of bounded geometry. For the case $k = 0$ and $E = F$ we furthermore know from Proposition \ref{prop:PsiDOs_filtered_algebra} that $\UPsiDO^{0-[1]}(E)$ is an algebra, and $\sigma^0$ will be an isomorphism of algebras.

In the case that the manifold $M$ is compact, it is known that $\sigma^0$ is continuous against the quotient norm\footnote{Which is induced from the operator norm on $\Psi \mathrm{DO}^0(E) \subset \IB(L^2(E))$. Since for $M$ compact we have $\overline{\Psi \mathrm{DO}^{-1}(E)} = \IK(L^2(E))$, the quotient norm on $\Psi \mathrm{DO}^{0-[1]}(E)$ is called the \emph{essential norm}.} on $\Psi \mathrm{DO}^{0-[1]}(E)$ and therefore $\sigma^0$ will induce an isomorphism of $C^\ast$-algebras $\overline{\Psi \mathrm{DO}^{0-[1]}(E)} \cong \overline{\Symb^{0-[1]}(E)}$.

\begin{question}
Let $M$ be a non-compact manifold of bounded geometry. Does $\sigma^0$ induce an isomorphism of $C^\ast$-algebras $\overline{\UPsiDO^{0-[1]}(E)} \cong \overline{\Symb^{0-[1]}(E)}$?
\end{question}

To show this we would have to compare the quotient norms on $\UPsiDO^{0-[1]}(E)$ and on $\Symb^{0-[1]}(E)$. The first to prove similar results in the compact case were Seeley in \cite[Lemma 11.1]{seeley} and Kohn and Nirenberg in \cite[Theorem A.4]{kohn_nirenberg}, and two years later H{\"o}rmander provided in \cite[Theorem 3.3]{hormander_ess_norm} a proof of this for his class $S_{\rho, \delta}^0$ with $\delta < \rho$ of pseudodifferential operators of order $0$. Maybe one of these proofs generalizes to our case of uniform pseudodifferential operators on open manifolds.

The main technical part in the proof of Theorem \ref{thm:elliptic_symmetric_PDO_defines_uniform_Fredholm_module} that a uniform pseudodifferential operator defines a class in uniform $K$-homology was to show that the operator $\chi(P)$ is uniformly pseudolocal for $\chi$ a normalizing function. In Proposition \ref{prop:PDO_order_0_l-uniformly-pseudolocal} we have shown that uniform pseudodifferential operators of order $0$ are automatically uniformly pseudolocal. So if we could show that the operator $\chi(P)$ is a uniform pseudodifferential operator of order $0$, the proof of Theorem \ref{thm:elliptic_symmetric_PDO_defines_uniform_Fredholm_module} would follow immediately.

\begin{question}
Under which conditions on the function $f$ (or the operator $P$) will be $f(P)$ again a uniform pseudodifferential operator?
\end{question}

For a compact manifold $M$ there are quite a few proofs that under certain conditions functions of pseudodifferential operators are again pseudodifferential operators: the first one to show such a result was seemingly Seeley in \cite{seeley_complex_powers}, where he proved it for complex powers of elliptic classical pseudodifferential operators. It was then extended by Strichartz in \cite{strichartz} from complex powers to symbols in the sense of Definition \ref{defn:symbols_on_R}, and from classical operators to all of H{\"o}rmander's class $S^k_{1,0}(M)$. And last, let us mention the result \cite[Theorem 8.7]{dimassi_sjostrand} of Dimassi and Sj{\"o}strand for $h$-pseudodifferential operators in the semi-classical setting.

Now if we want to establish similar results in our setting, we get quite fast into trouble: e.g., the proof of Strichartz does not generalize to non-compact manifolds. He crucially uses that on compact manifolds we may diagonalize elliptic operators, which is not at all the case on non-compact manifolds (consider, e.g., the Laplace operator on Euclidean space). Looking for a proof that may be generalized to the non-compact setting, we stumble over Taylor's result from \cite[Chapter XII]{taylor_pseudodifferential_operators}. There he proves a result similar to Strichartz' but with quite a different proof, which may be possibly generalized to non-compact manifolds. An evidence for this is given by Cheeger, Gromov and Taylor in \cite[Theorem 3.3]{cheeger_gromov_taylor}, since this is exactly the result that we want to prove for our uniform pseudodifferential operators, but in the special case of the operator $\sqrt{- \Delta}$, and their proof is a generalization of the one from the above cited book of Taylor. So it seems quite reasonable that we may probably extend the result of Cheeger, Gromov and Taylor to all uniform pseudodifferential operators in our sense.

The above ideas were already used by Kordyukov \cite{kordyukov_2} to derive $L^p$-estimates for functions of certain elliptic uniform pseudodifferential operators. Furthermore, in the same article he also used ideas surrounding the geometric optics equation, which are Taylor's main tool in \cite[Chapter VIII]{taylor_pseudodifferential_operators}, to show that functions of elliptic pseudodifferential operators with positive scalar principal symbol are again pseudodifferential operators.

Beals and Ueberberg both gave in their articles \cite{beals} and \cite{ueberberg} characterizations of pseudodifferential operators via certain mapping properties of these operators from the Schwartz space to its dual. From that they derived that the inverse, if it exists, of a pseudodifferential operator of order $0$ is again a pseudodifferential operator.

\begin{question}
Does there exists a similar characterization of uniform pseudodifferential operators on manifolds of bounded geometry as the one in \cite{beals} and \cite{ueberberg} by Beals and Ueberberg?
\end{question}

\subsection{Large scale index theory on manifolds with boundary}

In the compact case there is a generalization of the Atiyah--Singer index theorem to manifolds with boundary involving the $\eta$-invariant. This version of the index theorem for compact manifolds with boundary is called the Atiyah--Patodi--Singer index theorem and was introduced in \cite{atiyah_patodi_singer_1}. Of course the question whether such a theorem may also be proven in the non-compact case immediately arises.

\begin{question}
Is there a version of the, e.g., global index theorem for amenable manifolds, for manifolds of bounded geometry and with boundary? What would be the corresponding generalization of the $\eta$-invariant?
\end{question}

Note that even if we just stick to Dirac operators (i.e., if we don't try to work with uniform pseudodifferential operators) the non-compact case (of bounded geometry) is of course technically much more demanding than the compact case. Results have been achieved by Ballmann--Bär \cite{ballmann_baer} and Gro{\ss}e--Nakad \cite{grosse_nakad}.

Some version of pseudodifferential operators on certain non-compact manifolds with boundary was investigated by Schrohe \cite{schrohe}. Furthermore, there is also the work of Ammann--Lauter--Nistor \cite{ammann_lauter_nistor_2} and one should also ask to which extend it coincides, resp., differs from the one asked for here.

A proof of the index theorem for manifolds with boundary was given by Melrose in \cite{melrose_APS}. He invented the $b$-calculus, a calculus for pseudodifferential operators on manifolds with boundary, and derived the Atiyah--Patodi--Singer index theorem from it via the heat kernel approach. Therefore it would be desirable to extend his $b$-calculus to open manifolds with boundary (similarly as we extended the calculus of pseudodifferential operators to open manifolds) and then prove a version of the Atiyah--Patodi--Singer index theorem on manifolds with boundary and of bounded geometry.

\begin{question}
Can one reasonably extend the $b$-calculus of Melrose to manifolds of bounded geometry and with boundary, and then prove version of large scale index theorems for manifolds with boundary?
\end{question}

In the case of compact manifolds with boundary Piazza \cite{piazza_phd} also treated various parts of the index theorem of Atiyah--Patodi--Singer using the $b$-calculus. A connection between uniform pseudodifferential operators on manifolds of bounded geometry and the $b$-calculus was established by Albin \cite{albin}.

Another direction in which one could work is to look at higher $\rho$-invariants: in the last few years a lot of progress was made in relation to ``mapping sugery to analysis'', resp., mapping the Stolz positive scalar curvature exact sequence to analysis. Without going through all the results that have been achieved, let us mention one particular application \cite[Corollary 4.5]{xie_yu} that seems worth reshaping into our setting: if $\Gamma$ acts properly and cocompactly on $M$ and $h$ is a Riemannian metric on $\partial M$ having positive scalar curvature, then one can not extend $h$ to a complete, $\Gamma$-invariant Riemannian metric on $M$ of positive scalar curvature if $\rho(D_{\partial M},h) \not= 0 \in K_\ast(C^\ast_{L,0}(\partial M)^\Gamma)$.

\begin{question}
Can one prove a large scale version of the delocalized APS-index theorem as in \cite[Theorem 1.22]{piazza_schick} and use this to prove an analogue of the above mentioned result \cite[Corollary 4.5]{xie_yu}?
\end{question}

\bibliography{./Bibliography_Index_theory_uniform_PDOs}
\bibliographystyle{amsalpha}

\end{document}